\crefname{theorem}{Theorem}{Theorems}
\crefname{fact}{Fact}{Facts}
\crefname{note}{Note}{Notes}
\crefname{lemma}{Lemma}{Lemmas}
\crefname{alg}{Algorithm}{Algorithms}
\crefname{remark}{Remark}{Remarks}
\crefname{example}{Example}{Examples}
\crefname{prop}{Proposition}{Propositions}
\crefname{conj}{Conjecture}{Conjectures}
\crefname{cor}{Corollary}{Corollaries}
\crefname{definition}{Definition}{Definitions}
\crefname{Relation}{Relation}{Relations}
\crefname{equation}{\!\!}{\!\!} %Remove spacing around phantom equation name
\tikzstyle directed=[postaction={decorate,decoration={markings,
    mark=at position #1 with {\arrow{>}}}}]
\newcommand{\hackcenter}[1]{
 \xy (0,0)*{#1}; \endxy}
\tikzset{->-/.style={decoration={
  markings,
  mark=at position #1 with {\arrow{>}}},postaction={decorate}}}
\tikzset{middlearrow/.style={
        decoration={markings,
            mark= at position 0.5 with {\arrow{#1}} ,
        },
        postaction={decorate}
    }
}
\def\UglnA{\dot{{\bf U}}(\mathfrak{gl}_n(A))}
\def\UglmA{\dot{{\bf U}}(\mathfrak{gl}_m(A))}
\def\UglnAsop{\dot{{\bf U}}(\mathfrak{gl}_n(A^{\sop}))}
\newcommand{\con}{\ensuremath{con}}
\renewcommand{\exp}{\ensuremath{exp}}
\theoremstyle{plain}
\newtheorem{theorem}{Theorem}
\newtheorem{corollary}[theorem]{Corollary}
\newtheorem{proposition}[theorem]{Proposition}
\newtheorem{lemma}[theorem]{Lemma}
\theoremstyle{definition}
\newtheorem{example}[theorem]{Example}
\newtheorem{definition}[theorem]{Definition}
\theoremstyle{definition}
\newtheorem{remark}[theorem]{Remark}
\numberwithin{equation}{subsection}
\numberwithin{theorem}{subsection}
\newcommand{\refequal}[1]{\xy {\ar@{=}^{#1}
(-1,0)*{};(1,0)*{}};
\endxy}
\newcommand{\clr}{rgb:black,1;blue,4;red,1}
\newcommand{\Hom}{{\rm Hom}}
\renewcommand{\to}{\rightarrow}
\def\sModR{R\text{-}{\mathbf{sMod}}}
\def\smodR{R\text{-}{\mathbf{smod}}}
\def\sModk{\k \text{-}{\mathbf{sMod}}}
\def\Web{{\mathbf{Web}}}
\def\TT{{\mathbf{T}}}
\def\TTAa{{\mathbf{T}}^{\catA}_{\a}}
\def\TTAaI{{\mathbf{T}}^{A,a}_{I}}
\def\TTAaOne{{\mathbf{T}}^{A,a}_{\{1\}}}
\def\gl{\mathfrak{gl}}
\def\WebAaIntro{\mathbf{Web}^{A,a}}
\def\AffWebAaIntro{\mathbf{AffWeb}^{A,a}}
\def\WebAaI{\mathbf{Web}^{A,a}_I}
\def\WebAaIthin{\mathbf{Web}^{A,a}_{I,\textup{thin}}}
\def\WAand{W^{A,a}_{n,d}}
\def\WebAaOne{\mathbf{Web}^{A,a}_{\{1\}}}
\def\WebAaOnen{\mathbf{Web}^{A,a}_{\{1\},n}}
\def\WebAaIAlt{\mathbf{Web}^{A,a}_{I,\textup{alt}}}
\def\WebAaOne{\mathbf{Web}^{A,a}_{\{1\}}}
\def\WebAaQ{\mathbf{Web}^{\operatorname{Cl}_1,\mathbb{C}}_{\{1\}}}
\def\WrA{\mathbf{Wr}_{A}}
\def\WrAI{\mathbf{Wr}^{A}_I}
\def\catA{\mathbf{A}}
\def\catB{\mathbf{B}}
\def\catC{\mathbf{C}}
\def\catD{\mathbf{D}}
\def\a{\mathbcal{a}}
\let\hat=\widehat
\let\tilde=\widetilde
\let\epsilon=\varepsilon
\def\C{{\mathbb{C}}}
\def\N{{\mathbbm N}}
\def\Z{{\mathbbm Z}}
\def\1{\mathbbm{1}}%
\def \KK {\mathbbm{K}}
\def \LL {\mathbbm{L}}
\def \k {\mathbbm{k}}
\def \Z {\mathbbm{Z}}
\def \N {\mathbbm{N}}
\def \Ob{\operatorname{Ob}}
\newcommand\nc{\newcommand}
\nc\rnc{\renewcommand}
\nc\Kar{\operatorname{Kar}}
\nc\modQ {{\mathbb Q}}
\nc\modZ {{\mathbb Z}}
\nc\simeqto{\overset{\simeq}{\longrightarrow }}
\nc\K{\mathcal {K}}
\nc\CC{\mathbf{C}}
\nc\tE{\check{E}}
\nc\qh{\mathcal{H}}
\nc\hbm{\mathcal{B}}
\nc\bu{\mathbf{u}}
\nc\bZ{\mathbf{Z}}
\nc\theirs{\mathrm{theirs}}
\nc\ours{\mathrm{ours}}
\nc\hE{\mathcal{\hat E}}
\nc\bK{\mathbf{K}}
\nc\bw{\mathbf{w}}
\nc\bh{\mathbf{h}}
\nc\ba{\mathbf{a}}
\nc\bb{\mathbf{b}}
\nc\bbx{\mathbf{x}}
\nc\bby{\mathbf{y}}
\nc{\urcap}{\textrm{uRCap}}
\nc{\urcup}{\textrm{uRCup}}
\nc{\ulcap}{\textrm{uLCap}}
\nc{\ulcup}{\textrm{uLCup}}
\nc{\drcap}{\textrm{dRCap}}
\nc{\drcup}{\textrm{dRCup}}
\nc{\dlcap}{\textrm{dRCap}}
\nc{\dlcup}{\textrm{dRCup}}
\nc{\cl}{\mathrm{cl}}
\nc{\cala}{\mathcal{A}}
\nc{\calb}{\mathcal{B}}
\nc{\calc}{\mathcal{C}}
\nc{\bs}{\mathbf{s}}
\nc{\br}{\mathbf{r}}
\nc{\bz}{\mathbf{z}}
\nc{\bm}{\mathbf{m}}
\nc{\bmu}{\boldsymbol{\mu}}
\nc{\blam}{\boldsymbol{\lambda}}
\nc{\bnu}{\boldsymbol{\nu}}
\nc{\bom}{\boldsymbol{\omega}}
\nc{\bOne}{\boldsymbol{1}}
\nc{\bx}{\mathbf{x}}
\nc{\by}{\mathbf{y}}
\nc{\bbbf}{\mathbf{f}}
\nc{\bE}{\mathbbm{E}}
\nc{\bElong}{{\mathbbm E^{\bullet, \geq}}}
\nc{\Sk}{\mathrm{Sk}}
\nc{\Hilb}{\mathrm{Hilb}}
\nc\col{\colon\thinspace}
\DeclareMathAlphabet{\mathpzc}{OT1}{pzc}{m}{it}
\theoremstyle{theorem}
\newtheorem*{IntroTheoremA}{Theorem A}
\newtheorem*{IntroTheoremB}{Theorem B}
\newtheorem*{IntroTheoremC}{Theorem C}
\newcommand{\End}{\operatorname{End}}
\newcommand{\F}{\mathbbm{F}}
\newcommand{\sop}{\text{sop}}
\renewcommand{\SS}{\mathfrak{S}}
\newcommand{\bi}{\mathbf{i}}
\newcommand{\bj}{\mathbf{j}}
\newcommand{\bk}{\mathbf{k}}
\newcommand{\balpha}{\boldsymbol{\alpha}}
\newcommand{\bn}{\mathbf{n}}
\newcommand{\Wr}{\mathbf{Wr}}
\newcommand{\fh}{\mathfrak{h}}
\newcommand{\fin}{\text{fin}}
\newcommand{\Fun}{{\mathcal{F}\hspace{-0.02in}\mathpzc{un}}}
\newcommand{\smodA}{\mathbf{sMod}\text{-}A}
\newcommand{\Asmod}{A\text{-}\mathbf{sMod}}
\newcommand{\BasisB}{\mathtt{B}}
\newcommand{\basisb}{\mathtt{b}}
\newcommand{\ksmod}{{\k}\text{-}{\mathbf{Mod}}}
\newcommand{\ksMod}{{\k}\text{-}{\mathbf{Mod}}}
\renewcommand{\star}{\circledast}
\newcommand{\tuplambda}{\boldsymbol{\lambda}}
\newcommand{\tupmu}{\boldsymbol{\mu}}
\newcommand{\Cliff}{\operatorname{Cl}_{1}}
\newcommand{\Ser}{\operatorname{Ser}}
\newcommand{\bd}{\mathbf{d}}
\newcommand{\typeM}{\mathtt{M}}
\newcommand{\typeQ}{\mathtt{Q}}
\newcommand{\Awreathsmod}{\mathbf{smod}\text{-}\SS_{d}\wr A }
\newcommand{\eAewreathsmod}{\mathbf{smod}\text{-}\mathcal{e}(\SS_{d}\wr A)\mathcal{e}}
\newcommand{\sdim}{\operatorname{grdim}}
\newcommand{\SAmn}{\mathcal{S}_{m,n}}
\newcommand{\modglnAS}{\textup{mod}_{\mathcal{S}}\textup{-}\gl_n(A)}
\newcommand{\modglnAT}{\textup{mod}_{\mathcal{T}}\textup{-}\gl_n(A)}
\title{
Superalgebra deformations of web categories: Finite webs}
\begin{document}
\setcounter{tocdepth}{2}

\author{Nicholas Davidson}
\email{davidsonnj@cofc.edu}
\address{Department of Mathematics\\ College of Charleston \\ Charleston, SC, USA}

\author{Jonathan R. Kujawa}
\email{kujawa@ou.edu}
\address{Department of Mathematics\\ University of Oklahoma \\ Norman, OK, USA}

\author{Robert Muth}
\email{muthr@duq.edu}
\address{Department of Mathematics and Computer Science \\ Duquesne University \\ Pittsburgh, PA, USA}

\author{Jieru Zhu}
\email{jieruzhu699@gmail.com}
\address{Research Institute in Mathematics and Physics, Universit\'e catholique de Louvain, Louvain-la-Neuve, Belgium}

\thanks{The second author was supported in part by Simons Collaboration Grant for Mathematicians No.\ 525043.}

\date{\today}

\begin{abstract}  Let \(\k\) be a characteristic zero domain.
For a locally unital \(\k\)-superalgebra \(A\) with distinguished idempotents \(I\) and even subalgebra \(a \subseteq A_{\bar 0}\), we define and study an associated diagrammatic monoidal \(\k\)-linear supercategory \(\WebAaI\). This supercategory yields a diagrammatic description of the generalized Schur algebras \(T^A_a(n,d)\). We also show there is an asymptotically faithful functor from \(\WebAaI\) to the monoidal supercategory of \(\gl_n(A)\)-modules generated by symmetric powers of the natural module.  When this functor is full, the single diagrammatic supercategory \(\WebAaI\) provides a combinatorial description of this module category for all $n \geq 1$. We also use these results to establish Howe dualities between \(\gl_{m}(A)\) and \(\gl_{n}(A)\) when \(A\) is semisimple.
\end{abstract}

\maketitle

\tableofcontents

\section{Introduction}

\subsection{Generalized Schur algebras} Let $\k$ be an integral domain and let $A = A_{\bar{ 0}} \oplus A_{\bar{1}}$ be a unital $\k$-superalgebra which is free as a $\k$-supermodule.  The prefix ``super'' means that objects are $\Z_{2}=\Z/2\Z$-graded and one should take graded versions of the usual definitions; see \cref{rings} for details.  While our results hold and are already interesting for ordinary $\k$-algebras (i.e., when $A_{\bar{1}}=0$), the grading plays an important role in several of our motivating examples.

For a fixed $n,d \geq 1$, let $V_{n}= A^{\oplus n}$ and let $\SS_{d}\wr A$ be the wreath product superalgebra of the symmetric group $\SS_{d}$ and $A$.  There is a right action of $\SS_{d}\wr A$ on $V_{n}^{\otimes d}$ where $\SS_{d}$ acts by place permutations,  $A^{\otimes d}$ acts by right multiplication, and where both actions depend on the $\Z_{2}$-grading.  Here and elsewhere $\otimes = \otimes_{\k}$.  The \emph{generalized Schur superalgebra} of $A$ is defined to be
\[
S^{A}(n,d) = \End_{\SS_{d}\wr A}\left(V_{n}^{\otimes d} \right).
\]
Also associated to this data is the Lie superalgebra $\gl_{n}(A)$ of $n \times n$ matrices with entries in $A$ and the Lie bracket given by the graded commutator. This framework includes a  variety of interesting algebras.  When $A=\k$ this gives the general linear Lie algebra $\gl_{n}(\k)$ and Schur algebra $S(n,d)$.  When $A$ is the group algebra of a cyclic group of order $r$, $C_{r}$, then $\SS_{d} \wr C_{r}$ is a complex reflection group and $S^{A}(n,d)$ is the cyclotomic Schur algebra.  When $A=k[t]$ or $A=\k [t, t^{-1}]$, $\gl_{n}(A)$ is a current or loop algebra, respectively.  More generally, if $A$ is the coordinate ring of a variety (or scheme), then $\gl_{n}(A)$ is the corresponding map algebra.  If $A=\Cliff $ is the Clifford superalgebra on one odd generator, then $\gl_{n}(A)$ is the Lie superalgebra $\mathfrak{q}_{n}(\k)$ and $S^{A}(n,d)$ is the type $Q$ Schur superalgebra. If $\Lambda$ is the exterior superalgebra on one odd generator, then $\mathfrak{sl}_{2}(\Lambda)$ acts on sutured annular Khovanov homology \cite{GLW}.

There is a refined version of the generalized Schur algebra which recently came to prominence.  Fix a unital subalgebra $a \subseteq A_{\bar{0}}$ which is a free $\k$-supermodule with a free direct complement.  To the pair $(A, a)$ there is an associated full rank $\k$-subalgebra, 
\[
T^{A}_{a}(n,d) \subseteq S^{A}(n,d),
\] which is sometimes called the \emph{Schurification} of $A$.   Note that $S^{A}(n,d)$ is not lost since $T^{A}_{A_{\bar{0}}}(n,d) = S^{A}(n,d)$.

Given a ring extension $\k \subseteq \KK$ one can extend scalars to define  $T^{A}_{a}(n,d)_{\KK}$ and $S^{A}(n,d)_{\KK}$.  These will, in general, be non-isomorphic $\KK$-superalgebras of the same rank.  Both algebras are interesting, but $T^{A}_{a}(n,d)_{\KK}$ is now understood to play an especially significant role in representation theory.  Namely, if $\KK$ is a field of positive characteristic, then for a suitable zigzag superalgebra $\bar{Z}$ and subalgebra $\bar{z} \subseteq \bar{Z}_{\bar{0}}$, Evseev--Kleshchev proved the algebra $T^{\bar{Z}}_{\bar{z}}(n,d)_{\KK}$ is Morita equivalent to weight \(d\) RoCK blocks of symmetric groups  \cite{EKRoCK} --- thereby proving a conjecture of Turner \cite{Turner}.  In particular, this means that via the derived block equivalences of Chuang--Rouquier \cite{CR} $T^{\bar{Z}}_{\bar{z}}(n,d)_{\KK}$ can be viewed as the ``local'' object for a block of the symmetric group with non-abelian defect group.  It is expected other choices of $(A, a)$ will give similar results in other settings.  For example, it is conjectured that for certain explicit pairs of Brauer tree algebras $(Z,z)$ and $(Z',z')$ the algebra $T^{Z}_{z}(n,d)$ will be Morita equivalent to the weight \(d\) RoCK blocks of Schur algebras \cite[Conjecture 7.58]{KMqh} and $T^{Z'}_{z'}(n,d)$ will be Morita equivalent to weight \(d\) spin RoCK blocks \cite[Conjecture 1]{KleLiv}.

The algebra $T^{A}_{a}(n,d)_{\KK}$  also seems to be better behaved than the more naively defined $S^{A}(n,d)_{\KK}$.  For example, Kleshchev--Muth showed if $A$ is based quasi-hereditary, then $T^{A}_{a}(n,d)_{\KK}$ is again based quasi-hereditary whenever $n \geq d$ \cite{KM2}.  This need not be true for $S^{A}(n,d)_{\KK}$.  Similarly, under suitable assumptions, if $(A, a)$ is symmetric or cellular, than so is $T^{A}_{a}(n,d)$.  See \cite{AxtellgenSchur,EK,KM2,KM,KMqh} for details.

\subsection{\texorpdfstring{$(A,a)$}{(A,a)}-webs}  Our goal in this paper is to show the language of diagrammatic supercategories can naturally be used to describe $T^{A}_{a}(n,d)$, $U(\gl_{n}(A))$, and $\SS_{d}\wr A$ and their representation theories.

Let $(A,a)$ be a pair of unital superalgebras as above.  In fact, in the body of the paper we will work in the greater generality of a locally unital superalgebra \(A\) (or equivalently a small $\k$-linear supercategory).  While this level of generality is needed for applications, for ease of exposition we focus on the unital case in the introduction. 

To the data of $(A,a)$ we associate a strict monoidal supercategory defined by generators and relations, $\WebAaIntro$, which we call \emph{$(A,a)$-webs}.  The objects will be tuples of nonnegative integers with concatenation of words providing the monoidal product.  The generating morphisms of \(\WebAaIntro\) are given by the diagrams:
\begin{align*}%\label{WebAaGensIntro}
\hackcenter{
{}
}
\hackcenter{
\begin{tikzpicture}[scale=.8]
  \draw[ultra thick,blue] (0,0)--(0,0.2) .. controls ++(0,0.35) and ++(0,-0.35) .. (-0.4,0.9)--(-0.4,1);
  \draw[ultra thick,blue] (0,0)--(0,0.2) .. controls ++(0,0.35) and ++(0,-0.35) .. (0.4,0.9)--(0.4,1);
      \node[above] at (-0.4,1) {$ \scriptstyle x$};
      \node[above] at (0.4,1) {$ \scriptstyle y$};
      \node[below] at (0,0) {$ \scriptstyle x+y $};
\end{tikzpicture}} \; ,
\qquad
\qquad
\hackcenter{
\begin{tikzpicture}[scale=.8]
  \draw[ultra thick,blue ] (-0.4,0)--(-0.4,0.1) .. controls ++(0,0.35) and ++(0,-0.35) .. (0,0.8)--(0,1);
\draw[ultra thick, blue] (0.4,0)--(0.4,0.1) .. controls ++(0,0.35) and ++(0,-0.35) .. (0,0.8)--(0,1);
      \node[below] at (-0.4,0) {$ \scriptstyle x$};
      \node[below] at (0.4,0) {$ \scriptstyle y $};
      \node[above] at (0,1) {$ \scriptstyle x+y$};
\end{tikzpicture}} \; ,
\qquad
\qquad
\hackcenter{
\begin{tikzpicture}[scale=.8]
  \draw[ultra thick, blue] (0,0)--(0,0.5);
   \draw[ultra thick, blue] (0,0.5)--(0,1);
   \draw[thick, fill=yellow]  (0,0.5) circle (7pt);
    \node at (0,0.5) {$ \scriptstyle f$};
     \node[below] at (0,0) {$ \scriptstyle z $};
      \node[above] at (0,1) {$ \scriptstyle z $};
\end{tikzpicture}} \; ,
\end{align*}
for \(x,y \in \Z_{\geq 0}\), \(z \in \Z_{>0}\), and \(f \in A\) if $z=1$ and $f \in a$ if $z \geq 2$.  Our convention is to read diagrams from bottom to top.  Composition is given by vertical concatenation and the monoidal product is given by horizontal concatenation.  Morphisms in $\WebAaIntro$ are then linear combinations of diagrams built by repeated concatenation of generating diagrams, subject to a fairly simple set of local relations.  See \cref{defwebaa} for details.  In particular, there the reader will see if $A$ is  locally unital, then the strands are colored by the distinguished idempotents and there is an additional family of ``crossing'' morphisms as generators.  In the body of the paper the results described below are usually proven in this greater generality. 

The category \(\WebAaIntro\) generalizes a number of web category constructions. In particular when $A=\k$ we recover the symmetric webs for $\gl_{n}(\k)$, which are non-quantized versions of those which appear in the literature in \cite{CKM, QS, ST, TVW}, and are isomorphic to categories defined in \cite{BEPO, DKM}.  When $A = \Cliff$ is a rank one Clifford algebra, our constructions recover those of \cite{BrKuWebs}, which are a non-quantum version of those given in \cite{BrKu}.    
 See \cref{S:Examples} for more details on these and other examples.

\subsection{Main results: Webs}\label{}
Our first set of results study features of the monoidal supercategory $\WebAaIntro$.  If we view $V_{n}= A^{\oplus n}$ as row vectors of length $n$, then $\gl_{n}(A)$ acts on the right by matrix multiplication.   Thanks to the cocommutative coproduct of the universal enveloping algebra $U(\gl_{n}(A))$  and the permutation action of $\SS_{d}$, for all $d \geq 1$ there are $\gl_{n}(A)$-supermodules $V_{n}^{\otimes d}$ and $S^{d}V_{n}$, the $d$th tensor and symmetric power, respectively.  The category of right $\gl_{n}(A)$-supermodules is a monoidal supercategory. Let $\modglnAS$ be the full monoidal subcategory of $\gl_{n}(A)$-supermodules consisting of objects of the form 
\[
S^{x_{1}}V_{n} \otimes \dotsb \otimes S^{x_{t}}V_{n},
\] where $t \in \Z_{\geq 1}$ and $x_{1}, \dotsc , x_{t} \in \Z_{\geq 0}$.  

In \cref{Gthm,AsymFaith} we establish the following result.
\begin{IntroTheoremA}
For all $n \geq 1$ there is a functor of monoidal supercategories, 
\[
G_{n}: \WebAaIntro \to \modglnAS,
\] which is essentially surjective and is asymptotically faithful in the sense that for any fixed morphism space the functor defines an injective map for $n \gg 0$.
\end{IntroTheoremA}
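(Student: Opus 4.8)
The plan is to construct $G_n$ by hand on generators, verify the defining relations of $\WebAaIntro$, and then dispatch essential surjectivity (immediate) and asymptotic faithfulness (the real content) separately.

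\emph{Construction.} On objects set
\[
G_n(x_1,\dots,x_t)=S^{x_1}V_n\otimes\dots\otimes S^{x_t}V_n .
\]
The structural input is that the symmetric algebra $S(V_n)$ is a supercommutative, supercocommutative bialgebra generated by the $\k$-supermodule $V_n$, and that the left $A$-action on $V_n=A^{\oplus n}$ (one copy of the left regular module in each coordinate) commutes with the $\gl_n(A)$-action by matrix multiplication; hence every $f\in A$ induces a $\gl_n(A)$-module endomorphism of $V_n$, and — because $a\subseteq A_{\bar 0}$ is even — every $f\in a$ induces a well-defined $\gl_n(A)$-module endomorphism of $S^zV_n$ for all $z\geq 1$. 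Accordingly, send the merge on strands $x,y$ to the $(x,y)$-graded piece $S^{x+y}V_n\to S^xV_n\otimes S^yV_n$ of the coproduct, the split to the multiplication $S^xV_n\otimes S^yV_n\to S^{x+y}V_n$, the thickness-$1$ coupon labelled $f\in A$ to the endomorphism of $V_n$ induced by $f$, the thickness-$z$ coupon labelled $f\in a$ to the induced endomorphism of $S^zV_n$, and (in the locally unital setting) the crossing generators to the symmetry isomorphisms $S^xV_n\otimes S^yV_n\xrightarrow{\sim}S^yV_n\otimes S^xV_n$ of the symmetric monoidal structure on $\gl_n(A)$-supermodules.

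\emph{Relations and essential surjectivity.} One then checks that $G_n$ carries every defining relation of \cref{defwebaa} to a valid identity of $\gl_n(A)$-module maps: the (co)associativity, (co)unit and square/exchange relations among merges and splits follow from the bialgebra axioms for $S(V_n)$; the relations moving a coupon past a merge or split express the compatibility of $f$ with multiplication and comultiplication; the relations internal to a single strand reduce to the corresponding identities in $A$ (thickness $1$) or in $a$ (thickness $\geq 2$); and the relations involving crossings are naturality of the symmetry. For $A=\k$ this recovers the symmetric $\gl_n$-web functor of \cite{CKM}, so only the $A$- and $a$-decorated relations require genuinely new — but mechanical — verification. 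This yields the functor $G_n$ of \cref{Gthm}, and essential surjectivity is immediate since $\modglnAS$ is by construction the full monoidal subcategory on the objects in the image of $G_n$.

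\emph{Asymptotic faithfulness.} Fix objects $\underline{x},\underline{y}$. Since every generating morphism preserves total weight, $\Hom_{\WebAaIntro}(\underline{x},\underline{y})=0$ unless $|\underline{x}|=|\underline{y}|=:d$, so assume equality. By routine diagram manipulation (pushing crossings and coupons into normal position, as in \cite{CKM}) every morphism is a $\k$-linear combination of \emph{reduced webs}, giving a spanning set of $\Hom_{\WebAaIntro}(\underline{x},\underline{y})$ indexed by a merge--split pattern — a matrix recording how the strands of $\underline{x}$ split off and recombine into those of $\underline{y}$ — decorated by a fixed $\k$-basis of $A$ on its thickness-$1$ pieces and of $a$ on its thicker pieces; this is exactly the combinatorics of the chicken-foot bases of \cite{CKM} and of the explicit $T$-basis of $T^A_a(n,d)$ of Kleshchev--Muth \cite{KM2, EK}. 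It now suffices to show that for $n$ large enough that every such pattern is realizable with $n$ columns (e.g.\ $n\geq d$), $G_n$ maps this spanning set to a linearly independent subset of $\Hom_{\gl_n(A)}(S^{\underline{x}}V_n,S^{\underline{y}}V_n)$; granting this, the reduced webs are in fact a $\k$-basis of $\Hom_{\WebAaIntro}(\underline{x},\underline{y})$ and $G_n$ is injective on it, which is asymptotic faithfulness. To establish the linear independence one fixes monomial bases of the polynomial modules $S^{x_i}V_n$ and $S^{y_j}V_n$ adapted to a given pattern and computes the matrix coefficients of $G_n(\text{reduced web})$: evaluated on a pattern-adapted source vector it has a distinguished leading term whose coefficient is precisely the product of the chosen decoration basis elements, all remaining terms being strictly lower for an appropriate partial order on the indexing data; equivalently, $G_n$ matches the reduced webs with the Kleshchev--Muth $T$-basis of the relevant idempotent truncation of $T^A_a(n,d)$ inside $\Hom_{\gl_n(A)}(S^{\underline{x}}V_n,S^{\underline{y}}V_n)$. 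Unitriangularity over a basis gives the claim.

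\emph{Main obstacle.} The heart of the argument is this last step: choosing the partial order on decorated merge--split patterns and the adapted monomial bases so that $G_n$ is genuinely unitriangular on the reduced-web spanning set, while correctly tracking Koszul signs and analysing the thickness-$z$ coupons through the embeddings $S^zV_n\hookrightarrow V_n^{\otimes z}$ and $S^{\underline{x}}V_n\hookrightarrow V_n^{\otimes d}$; this is also exactly the place where the hypothesis $f\in a$ rather than $f\in A$ on thick strands is used (so that the induced map on $S^zV_n$ is defined and matches the $T^A_a(n,d)$-basis rather than the larger $S^A(n,d)$-basis), and where the explicit threshold behind ``$n\gg 0$'' is extracted. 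Everything preceding this step is either formal or reduces to computations already in the literature.
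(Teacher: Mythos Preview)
Your approach is essentially the paper's: define $G_n$ on generators via the bialgebra structure of $S(V_n)$ and left multiplication for coupons, verify relations, then prove asymptotic faithfulness by showing that a spanning set of reduced diagrams maps to a linearly independent set when $n\geq d$. Two minor corrections. First, you have swapped merge and split: in the paper's conventions the split goes $i^{(x+y)}\to i^{(x)}\otimes i^{(y)}$ and is sent to comultiplication, while the merge goes the other way and is sent to multiplication. Second, the ``main obstacle'' you flag---choosing a partial order to get unitriangularity---is in fact avoided in the paper: for each reduced diagram $\eta^a_{\bmu}$ they write down an explicit source vector $v^{\bmu}\in S^{\bx}_{\bi}V_n$ and a projection $p_{\bmu}$ onto a monomial $w^{\bmu}\in S^{\by}_{\bj}V_n$ such that $p_{\bmu}\circ G_n(\eta^a_{\bnu})(v^{\bmu})=\pm\delta_{\bmu,\bnu}[\bmu]^!_{\BasisB\setminus\basisb}\,w^{\bmu}$, i.e.\ the evaluation matrix is already diagonal, not merely unitriangular. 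So no ordering is needed and the role of $a$ versus $A$ shows up only in the factorial scalar $[\bmu]^!_{\BasisB\setminus\basisb}$, which is nonzero over the characteristic-zero domain $\k$. Your suggestion to identify reduced webs with the Kleshchev--Muth basis of $T^A_a(n,d)$ is correct in spirit but runs the logic backwards: in the paper the web basis theorem comes first (via this diagonal argument) and is then used to establish the isomorphism with $T^A_a(n,d)$, not the other way around.
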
   We call this the \emph{defining representation} of $\WebAaIntro$.
A fundamental tool in understanding a category given by generators and relations is explicit bases for the morphism spaces.  To this end, in \cref{SpanLem,BasisThm} we use the defining representation to give bases for the morphism spaces of $\WebAaIntro$.

\subsection{Main results: Schurification and wreath products}  
In \cref{S:webificationofSchurification} we relate $\WebAaIntro$ to the Schurification $T^{A}_{a}(n,d)$. Taking \(\Omega(n,d)\) to be the set of \(n\)-part compositions of \(d\), there exists a set of orthogonal idempotents \(\{ \xi_\bx \mid \bx \in \Omega(n,d)\} \subseteq T^A_a(n,d)\) which sum to the identity.  Therefore, 
\begin{align*}
\TT^{A,a} := \bigoplus_{\substack{n,d \geq 0\\ \bx, \by \in \Omega(n,d)}} \xi_{\by}T^A_a(n,d)\xi_{\bx} = \bigoplus_{n,d \geq 0} T^A_a(n,d)
\end{align*}
may be considered as a \(\k\)-linear supercategory with objects indexed by \(\Omega(n,d)\) and composition of morphisms given by multiplication in \(T^A_a(n,d)\). In fact, the {\em shifted star product} defined in \cref{SS:SchurificationofaGoodPair} gives \(\TT^{A,a}\) the structure of a monoidal supercategory.

We establish the following in \cref{sameSchur,moneq1}.

\begin{IntroTheoremB}  
There is an equivalence of monoidal supercategories
\begin{align*}
\TT^{A,a} \to \WebAaIntro,
\end{align*}
and for all $n,d \geq 0$, there is an explicit superalgebra isomorphism
\[
 T^{A}_{a}(n,d)  \xrightarrow{\cong} \bigoplus_{\bx, \by \in \Omega(n,d)} \WebAaIntro(\bx,\by).
\]
\end{IntroTheoremB}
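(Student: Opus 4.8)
The plan is to build the equivalence $\TT^{A,a} \to \WebAaIntro$ directly by specifying its action on objects and on the generating morphisms of $\TT^{A,a}$, then checking it is well-defined, full, faithful, and monoidal. First I would recall from \cref{SS:SchurificationofaGoodPair} the explicit generators-and-relations presentation of the Schurification $T^A_a(n,d)$ as a $\k$-algebra (equivalently of $\TT^{A,a}$ as a supercategory): the objects are the compositions $\bx \in \Omega(n,d)$, the morphisms are generated by the idempotents $\xi_\bx$ together with the elementary ``divided-power'' maps that raise or lower one part of a composition by a unit and the endomorphisms labeled by elements $f \in A$ (resp.\ $f \in a$ in the higher parts), and the relations are the ones recorded in the cited section. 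The functor sends a composition $\bx$ to the corresponding object of $\WebAaIntro$ (a tuple of nonnegative integers), sends $\xi_\bx$ to the identity web on that tuple, sends each elementary merge/split to the corresponding splitting/merging generating diagram, and sends the $f$-labeled endomorphisms to the yellow-dot generators. Well-definedness amounts to verifying that every defining relation of $\TT^{A,a}$ maps to a consequence of the web relations of \cref{defwebaa} --- this is a finite but somewhat lengthy relation-by-relation check.

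For fullness and faithfulness I would leverage Theorem~A and the basis results already established (\cref{SpanLem,BasisThm}). Concretely: the basis theorem gives an explicit $\k$-basis of each morphism space $\WebAaIntro(\bx,\by)$, indexed by combinatorial data (chicken-foot-type diagrams decorated by elements of $A$ and $a$), and the classical structure theory of generalized Schur algebras gives a matching basis of $\xi_\by T^A_a(n,d) \xi_\bx$ of the same cardinality. One then checks the functor carries one basis to the other (or at least to a triangular-unipotent change of the other), which simultaneously yields injectivity and surjectivity on each Hom-space. Alternatively --- and this is probably the cleaner route --- one observes that $G_n \colon \WebAaIntro \to \modglnAS$ from Theorem~A factors through $\TT^{A,a}$ via the classical identification $T^A_a(n,d) \cong \End_{\gl_n(A)}(S^{x_1}V_n \otimes \cdots)$ restricted to the relevant idempotent truncation, and for $n \gg 0$ both the functor $\TT^{A,a} \to \WebAaIntro$ under construction and $G_n$ become faithful, forcing the composite to be an isomorphism in large rank; since neither the Schurification nor the web morphism spaces depend on $n$, faithfulness for all $n$ follows. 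The second displayed isomorphism $T^A_a(n,d) \xrightarrow{\cong} \bigoplus_{\bx,\by} \WebAaIntro(\bx,\by)$ is then just the $(n,d)$-graded piece of the equivalence, reassembled via the orthogonal idempotents $\xi_\bx$.

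Finally, for the monoidal structure I would check that the functor intertwines the shifted star product on $\TT^{A,a}$ (\cref{SS:SchurificationofaGoodPair}) with the concatenation monoidal product on $\WebAaIntro$: on objects this is concatenation of compositions matching concatenation of tuples, and on morphisms one verifies compatibility on generators, which reduces to unwinding the definition of the shifted star product against horizontal juxtaposition of diagrams. The coherence isomorphisms are identities on both sides (both categories are strict), so no further hexagon/pentagon bookkeeping is needed.

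The main obstacle I expect is the well-definedness check: one must confirm that the full list of relations defining the Schurification --- in particular the higher divided-power/straightening relations and the relations governing how the $a$-labels interact with merges and splits --- all follow from the web relations of \cref{defwebaa}. This is where a subtle normalization (signs from the $\Z_2$-grading, or the precise scalar coefficients in the splitting relations) could go wrong, and it is the step that requires the most care; everything downstream is then a comparatively mechanical basis-count or large-$n$ faithfulness argument. A secondary technical point is handling the locally unital generality (colored strands and the extra crossing generators) uniformly, but the unital case carries the essential content.
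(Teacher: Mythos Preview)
Your primary approach rests on a premise that does not hold: \cref{SS:SchurificationofaGoodPair} does \emph{not} give a generators-and-relations presentation of $T^A_a(n,d)$. The Schurification is defined there only as the $\k$-span of the basis elements $\tilde\eta^a_{\bmu}$ inside $S^A(n,d)$, and the paper never supplies (nor cites) a complete list of relations among any generating set. So there is nothing to check ``relation by relation,'' and your well-definedness step cannot be carried out as written. Relatedly, your identification $T^A_a(n,d) \cong \End_{\gl_n(A)}(S^{x_1}V_n \otimes \cdots)$ is incorrect: that endomorphism algebra is $S^A(n,d)$, and for general $a \subsetneq A_{\bar 0}$ the Schurification is a proper subalgebra.

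The paper's route avoids these problems entirely. It never tries to define the functor on generators. Instead it exploits the fact that both sides have bases indexed by the \emph{same} combinatorial set $\mathcal{M}(1^{(\bx)},1^{(\by)})$, namely $\{\eta^a_{\bmu}\}$ for $W^{A,a}_{n,d}$ (from \cref{BasisThm}) and $\{\tilde\eta^a_{\bmu}\}$ for $T^A_a(n,d)$ (from \cite{KM}). The candidate map is the bijection $\eta^a_{\bmu} \leftrightarrow \tilde\eta^a_{\bmu}$; the entire content is proving this bijection is an \emph{algebra} homomorphism. This is done not by checking relations but by computing the action of each basis element on $V_n^{\otimes d}$: \cref{WebVdaction} shows $\eta^a_{\bmu}$ acts (via the representation induced from $G_n$) by an explicit formula, \cref{TVdactionlem} shows $\tilde\eta^a_{\bmu}$ acts by the identical formula, and since the $T^A_a(n,d)$-action on $V_n^{\otimes d}$ is faithful (\cite[Lemma 5.7]{EK}), the bijection must respect multiplication. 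Monoidality is then checked directly on basis elements: $\tilde\eta_{\bmu} \,\bar{*}\, \tilde\eta_{\bnu} = \tilde\eta_{\bom}$ and $\eta_{\bmu} \otimes \eta_{\bnu} = \eta_{\bom}$ for an explicit $\bom$. Your ``cleaner route'' via $G_n$ is in the right spirit, but the mechanism is this faithful-action comparison, not a factoring through an endomorphism identification.
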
  

This result describes \(T^A_a(n,d)\) as a diagrammatic algebra.  This allows new tools and techniques to be brought to the study of $T^{A}_{a}(n,d)$ and its representation theory. It is perhaps worth remarking that $T^{A}_{a}(n,d)$ is defined in the literature as a subalgebra of $S^{A}(n,d)$.  In contrast, here it appears in $\WebAaIntro$ with no reference to $S^{A}(n,d)$ needed.

In a similar spirit, in \cref{S:wreathcategory} we show that the wreath product superalgebras $\SS_{d} \wr A$ also naturally appear as endomorphism algebras in $\WebAaIntro$.  Namely, we define a diagrammatic monoidal supercategory $\Wr^{A}$ by generators and relations, prove in \cref{P:WreathtoThin} that it is equivalent to the full monoidal subcategory of $\WebAaIntro$ consisting of objects of the form $\left\{1^{d}:=(1, \dotsc , 1) \mid d \geq 0 \right\}$, and prove in \cref{C:WreathsAreIsomorphic} that for every $d \geq 1$ there is a superalgebra isomorphism: 
\[
\WebAaIntro(1^{d},1^d) \xrightarrow{\cong} \SS_{d} \wr A.
\]

\subsection{Main results: Howe duality}  A fundamental question is to determine when the defining representation $G_{n}$ is full.  When it is, $\WebAaIntro$ gives a complete combinatorial model for the supercategory $\modglnAS$.  Web-slingers know to expect fullness of $G_{n}$ ought to be intimately related to the existence of a Howe duality for $\gl_{n}(A)$.  In \cref{S:HoweDuality} this expectation is confirmed.

If $V_{m}=A^{\oplus m}$ is written as column vectors and $V_{n} = A^{\oplus n}$ is written as row vectors, then there are commuting actions,
\[
U(\gl_{m}(A)) \curvearrowright V_{m} \otimes_{A} V_{n} \curvearrowleft U(\gl_{n}(A)),
\] given by left and right matrix multiplication. Hence, there are commuting actions,
\begin{equation}\label{E:HoweIntro}
U(\gl_{m}(A)) \curvearrowright S^{\bullet}(V_{m} \otimes_{A} V_{n}) \curvearrowleft U(\gl_{n}(A)).
\end{equation} Having a symmetric Howe duality for the pair $(\gl_{m}(A), \gl_{n}(A))$ means, very roughly, that these two actions should mutually centralize.  In \cref{SS:HoweDuality} we formulate Howe duality in this setting and show it holds if and only if the corresponding defining representations are full.

Our strongest results in this direction are under the assumption that $\k$ is a field of characteristic zero.  In this case fullness of the defining representation $G_{n}$ is equivalent to fullness of the defining representation for the supercategory $\Wr^A$. This, in turn, is equivalent to the surjectivity of the natural map, 
\[
\SS_{d} \wr A \to \End_{S^{A}(n,d)}\left(V_{n}^{\otimes d} \right),
\]  for all $d \geq 1$.  We say \emph{Schur--Weyl duality holds for $A$} if this map is surjective for all $n,d \geq 1$.    When $\k$ is an algebraically closed field of characteristic zero and $A$ is a finite-dimensional semisimple $\k$-superalgebra, then by the double-centralizer theorem for semisimple superalgebras it follows that Schur--Weyl duality holds.  See  \cref{L:SchurWeylDuality}.  This already covers a number of examples of interest. The question remains to determine exactly when Schur--Weyl duality holds.

In the case when $\k$ is an algebraically closed field of characteristic zero and $A$ is a finite-dimensional semisimple superalgebra, we can give a stronger version of Howe duality.  Under these conditions, in \cref{L:HDMultiplicityFree} we prove the following strong multiplicity-free result.
\begin{IntroTheoremC}
 If $m,n \geq 1$ and $r$ is the minimum of $m$ and $n$, then for all $d \geq 0$,
\[
S^{d} \left(V_{m} \otimes_{A} V_{n} \right) \cong \bigoplus_{\tuplambda \in \Lambda^{A}_{+}(r,d)} L_{m}(\tuplambda) \star L_{n}(\tuplambda)^{*}
\] as $(U(\gl_{m}(A)), U(\gl_{n}(A)))$-bisupermodules.
Here $L_{t}(\tuplambda)$ denotes a certain simple $\gl_{t}(A)$-supermodule indexed by an explicit set of multipartitions of $d$, $\Lambda^{A}_{+}(r,d)$.
\end{IntroTheoremC}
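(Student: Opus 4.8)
The plan is to reduce the statement to a double-centralizer computation for the wreath product superalgebra $\SS_d\wr A$ acting on a tensor power, and then to exploit semisimplicity to force the decomposition to be multiplicity-free with the stated index set. The first step is to establish a natural isomorphism of $(U(\gl_m(A)),U(\gl_n(A)))$-bisupermodules
\[
S^{d}\!\left(V_{m}\otimes_{A}V_{n}\right)\ \cong\ V_{m}^{\otimes d}\otimes_{\SS_{d}\wr A}V_{n}^{\otimes d},
\]
where on the right $U(\gl_m(A))$ acts through the first factor, $U(\gl_n(A))$ through the second, and $\SS_d\wr A\cong A^{\otimes d}\rtimes\k\SS_d$ acts on the two tensor powers via its place-permutation and entrywise-multiplication actions. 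I would prove this by writing $S^d(-)=((-)^{\otimes d})_{\SS_d}$ for the $\SS_d$-coinvariants of the $d$th tensor power (these equal the invariants since $\k$ has characteristic zero), identifying $(V_m\otimes_A V_n)^{\otimes d}\cong V_m^{\otimes d}\otimes_{A^{\otimes d}}V_n^{\otimes d}$ compatibly with the diagonal $\SS_d$-action and the two $\gl$-actions, and observing that diagonal $\SS_d$-(co)invariants of an $A^{\otimes d}$-balanced product are exactly the $\SS_d\wr A$-balanced product. The only care needed here is keeping Koszul signs consistent with the conventions of \cref{rings}, which is routine.

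Next I would decompose the right-hand side. Since $A$ is finite-dimensional semisimple and $\k$ has characteristic zero, the superalgebra $\SS_d\wr A$ is finite-dimensional semisimple (a Maschke-type averaging argument applies to the crossed product $A^{\otimes d}\rtimes\k\SS_d$). Write its simple supermodules as $E_{\tupmu}$, indexed by a suitable set of multipartitions of $d$, and set $D_{\tupmu}:=\End_{\SS_d\wr A}(E_{\tupmu})$; by the super-Wedderburn theorem and $\k$ algebraically closed, $D_{\tupmu}\cong\k$ or $D_{\tupmu}\cong\Cliff$. Decomposing $V_m^{\otimes d}$ and $V_n^{\otimes d}$ into isotypic components and using that $(-)\otimes_{\SS_d\wr A}(-)$ annihilates cross terms, the right-hand side becomes $\bigoplus_{\tupmu}M_m(\tupmu)\otimes_{D_{\tupmu}}M_n(\tupmu)$, a direct sum of balanced tensor products of multiplicity spaces, as bisupermodules, the sum over those $\tupmu$ for which $E_{\tupmu}$ occurs in both $V_m^{\otimes d}$ and $V_n^{\otimes d}$. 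Now $S^A(n,d)=\End_{\SS_d\wr A}(V_n^{\otimes d})$ by definition, and by \cref{L:SchurWeylDuality} Schur--Weyl duality holds for $A$; together these make $(\SS_d\wr A,\,S^A(n,d))$ a double-centralizer pair on $V_n^{\otimes d}$. Consequently $S^A(n,d)$ is semisimple, each $M_n(\tupmu)$ is a simple $S^A(n,d)$-supermodule (and every one occurs exactly once), and $E_{\tupmu}$ occurs in $V_n^{\otimes d}$ precisely when $\tupmu\in\Lambda^A_+(n,d)$; pulling back along $U(\gl_n(A))\to S^A(n,d)$ identifies $M_n(\tupmu)$ with $L_n(\tupmu)$ (up to the convention discussed below), and likewise with $m$ in place of $n$. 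Since membership in $\Lambda^A_+(k,d)$ only bounds the number of parts of each component of $\tupmu$ by $k$, it is monotone in $k$, so $\Lambda^A_+(m,d)\cap\Lambda^A_+(n,d)=\Lambda^A_+(r,d)$ with $r=\min(m,n)$. This produces a decomposition $S^d(V_m\otimes_A V_n)\cong\bigoplus_{\tupmu\in\Lambda^A_+(r,d)}M_m(\tupmu)\otimes_{D_{\tupmu}}M_n(\tupmu)$ of bisupermodules. Conceptually this is the refinement --- available because everything here is semisimple --- of the bare Howe-duality statement formulated in \cref{SS:HoweDuality}, which itself applies because fullness of the functor $G_n$ follows from \cref{L:SchurWeylDuality}.

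It then remains to match this with $\bigoplus_{\tuplambda\in\Lambda^A_+(r,d)}L_m(\tuplambda)\star L_n(\tuplambda)^{*}$, and this is where the real work lies. Two points need attention. First, conventions: because $V_n=A^{\oplus n}$ is taken as row vectors with a \emph{right} $\gl_n(A)$-action, the multiplicity space $M_n(\tupmu)$ is naturally the contragredient $L_n(\tupmu)^{*}$, and the external pairing of the left $U(\gl_m(A))$-module $M_m(\tupmu)$ with the right $U(\gl_n(A))$-module $M_n(\tupmu)$ reassembles, with the Koszul signs dictated by the shifted star product of \cref{SS:SchurificationofaGoodPair}, into $L_m(\tupmu)\star L_n(\tupmu)^{*}$; relabeling $\tupmu$ by $\tuplambda$ gives the asserted sum. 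Second, and this is the main obstacle, the type $Q$ summands, where $D_{\tupmu}\cong\Cliff$: there $M_m(\tupmu)\otimes_{D_{\tupmu}}M_n(\tupmu)$ is a genuine tensor product balanced over the rank-one Clifford superalgebra, and I must verify that it is naturally isomorphic to $L_m(\tupmu)\star L_n(\tupmu)^{*}$ with no spurious multiplicity --- the point being that each of the two type-$Q$ simples carries ``half'' of a Clifford action, and the balancing identifies these. I expect to settle this by writing out the explicit super-Wedderburn form of the blocks of $\SS_d\wr A$ together with the matching parametrization of the simple $S^A(n,d)$-supermodules, or by appealing directly to the bookkeeping developed in \cref{SS:HoweDuality}; it is exactly this Clifford balancing that is responsible for the answer being multiplicity-free rather than acquiring a factor of $2$.
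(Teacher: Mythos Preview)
Your proposal is correct and follows essentially the same route as the paper's proof of \cref{L:HDMultiplicityFree}: reduce $S^d(V_m\otimes_A V_n)$ to $V_m^{\otimes d}\otimes_{\SS_d\wr A}V_n^{\otimes d}$ via the sign-adjusted isomorphism $(V_m\otimes_A V_n)^{\otimes d}\cong V_m^{\otimes d}\otimes_{A^{\otimes d}}V_n^{\otimes d}$ and $\SS_d$-(co)invariants, then insert the Schur--Weyl decompositions on both sides and collapse the cross terms.

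Two small points. First, the symbol $\star$ in the theorem is the operation on simple supermodules from \cref{SS:RepsofSemisimpleSuperalgebras}, not the shifted star product $\bar{*}$ of \cref{SS:SchurificationofaGoodPair}; your reference there is misplaced. Second, the paper makes your ``Clifford balancing'' step concrete rather than leaving it as an expectation: it computes directly, via tensor--Hom adjunction and Schur's Lemma, that $D^{\tupmu,*}\otimes_{\SS_d\wr A}D^{\tuplambda}$ has dimension $2^{\delta(\tuplambda)}$ when $\tuplambda=\tupmu$ and is zero otherwise, then compares the multiplicities of $\otimes$ versus $\star$ to conclude
\[
(L^A_m(\tupmu)\star D^{\tupmu,*})\otimes_{\SS_d\wr A}(D^{\tuplambda}\star L^A_n(\tuplambda)^*)\cong \delta_{\tuplambda,\tupmu}\,L^A_m(\tuplambda)\star L^A_n(\tuplambda)^*.
\]
This is exactly the computation your multiplicity-space language $M_m(\tupmu)\otimes_{D_{\tupmu}}M_n(\tupmu)$ is encoding, and it settles the type $\typeQ$ case without ambiguity.
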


\subsection{Future directions}
In a sequel to this paper we plan to focus on the case where \(A\) is a Frobenius algebra and study the {\em affine} web category \(\AffWebAaIntro\). This is a diagrammatic monoidal supercategory given by generators and relations as in \cref{defwebaa} coupled with an additional affine generator coupon.  Much like \(\WebAaIntro\) can be viewed as a thick-strand calculus which encompasses the wreath algebra $\SS_{d} \wr A$, \(\AffWebAaIntro\) is a thick-strand calculus which encompasses the affine wreath product algebra \(\mathcal{A}_n(A)\) defined in \cite{SavageAff}. We expect \(\AffWebAaIntro\) to play an essential role in establishing certain Morita equivalence results in general which are known to hold for \(\mathcal{A}_n(A)\) under restrictions on the characteristic of the ground field (such as in \cite{KMaffzig}).

\subsection{Acknowledgments}
A portion of this research was completed during the AIM SQuaRE collaboration {\em `Superalgebra deformations of web categories'}. The authors wish to thank the American Institute of Mathematics for their support and hospitality.

\section{ Preliminaries}
\subsection{Ground rings, superalgebras, and supercategories}\label{rings}
\subsubsection{Supermodules and supercategories}\label{supermodulesandsupercategories}
Let \(\k\) be a characteristic zero domain with field of fractions \(\KK\).   For \(R \in \{\k, \KK\}\), write \(\sModR\) for the category of left \(R\)-supermodules, and \(\smodR\) for the full subcategory of finitely generated \(R\)-supermodules. Here the prefix ``super'' indicates that we are working in the $\Z_{2}=\Z/2\Z$-graded setting.   For brevity we often leave the prefix super implicit.   We view $R$ as $\Z_{2}$-graded and concentrated in parity $\bar{0} \in \Z_{2}$ and an $R$-supermodule is simply a $\Z_{2}$-graded $R$-module.  Given a $\Z_{2}$-graded object $X = X_{\bar{0}}\oplus X_{\bar{1}}$ we write $\bar{x} \in \Z_{2}$ if $x \in X_{\bar{x}}$.  We sometimes say $x$ is \emph{even} (resp., \emph{odd}) if $\bar{x}=\bar{0}$ (resp., $\bar{x}=\bar{1}$).   Given a free $\k$-supermodule $M$ of finite rank, we write $\dim_{\k}(M)$ for the rank of $M$ and $\sdim_{\k}(M) = p|q$ for the graded rank of $M$; that is, when the rank of $M_{\bar{0}}$ is $p$ and the rank of $M_{\bar{1}}$ is $q$. 

We presume the reader is familiar with superalgebras, supermodules, (monoidal) supercategories, and the like.  In particular, a supercategory is a category which is enriched over a category of supermodules (e.g., $\sModR$).  We generally follow the conventions of \cite{BE} for (monoidal) supercategories, superfunctors, and the diagrammatic calculus for (monoidal) supercategories.

One convention is worth making explicit: we choose to write supermodule homomorphisms on the opposite side of the action.  For example, if $M$ and $N$ are left $A$-supermodules, then a supermodule homomorphism $f: M \to N$ is a $\k$-linear map which satisfies $(am)f = a(m)f$ for all $a \in A$, $m \in M$. If we instead chose to write them on the left, then a homogeneous supermodule homomorphism $g: M \to N$ would be a $\k$-linear map which satisfies $g(am) = (-1)^{\bar{a}\bar{g}}ag(m)$ for all homogeneous $a \in A$, $m\in M$.  Given a homogeneous $\k$-linear map $f:M \to N$, define $g:M \to N$ by $g(x) = (-1)^{\bar{f}\bar{x}}(x)f$. An easy check verifies this gives a bijection between the two conventions and there is no loss in choosing one over the other.

Given a set of objects $X$ in a monoidal supercategory, we write $X^{\otimes d}$ for the set of all $d$-fold tensor products of objects from $X$.  Given an object $x$ in a monoidal supercategory we sometimes write $x^{d}$ or $x^{\otimes d}$ for the $d$-fold tensor product of the object with itself. 

\subsubsection{Locally unital superalgebras}\label{locunicov}
We say that a \(\k\)-superalgebra \(A\) is {\em locally unital} (see \cite[Section 5]{BS}) if there exists a designated system \(I\) of mutually orthogonal even idempotents such that \(A = \bigoplus_{i,j \in I} j A i\).  Given a locally unital $\k$-superalgebra $A$, a locally unital left $A$-supermodule is a left $A$-supermodule which satisfies $M = \bigoplus_{i \in I} iM$.  There is an obvious notion of locally unital for right supermodules and bisupermodules.  Whenever a superalgebra is locally unital we assume its supermodules are also locally unital.

Given such $A$ and $I$ we may define an associated small \(\k\)-linear supercategory \(\catA\) such that \(\Ob(\catA) = I\) and \(\catA(i,j) = jAi\), with composition of morphisms \(f \circ g\) given by multiplication \(fg\).
In the other direction, given a small \(\k\)-linear supercategory \(\catA\) with \(\Ob(\catA) = I\), we may define a locally unital \(\k\)-superalgebra \(A = \bigoplus_{i,j \in I} \catA(i,j)\) with designated system of idempotents \(\{i \mid i \in I\}\) and multiplication given by linearly extending morphism composition.
This association \(A \leftrightarrow \catA\) gives a correspondence between locally unital \(\k\)-superalgebras and small \(\k\)-linear supercategories. 

\subsubsection{Representations of supercategories}

A \emph{(left) representation} of a $\k$-linear supercategory $\catA$ is a $\k$-linear functor $F: \catA \to \ksmod$. Supernatural transformations provide the morphisms between representations of $\catA$.   We write $\catA\text{-Rep}$ for the supercategory of representations of $\catA$.

The correspondence between locally unital \(\k\)-superalgebras and small \(\k\)-linear supercategories extends to representations.  Namely, given a left representation $F: \catA  \to \ksMod$, $\bigoplus_{i \in I} F(i)$ is naturally a left $A$-supermodule.  Conversely, if $M$ is a locally unital left $A$-supermodule with decomposition $M = \bigoplus_{i\in I} iM$ as $\k$-supermodules, then there is an associated representation $F_{M}: \catA \to \ksMod$ given on objects by $F_{M}(i) = iM$ and on morphisms by $F_{M}(a)(m) = am$ for $a \in \catA (i,j) = jAi$ and $m \in iM$.

\subsection{Good pairs} 

\begin{definition}
Let \(A\) be a locally unital \(\k\)-superalgebra with distinguished idempotent set \(I\). Let \(I \subseteq a \subseteq A_{\bar 0}\) be an even subalgebra of \(A\). We say that \((A,a)_I\) is a {\em good pair} provided that for all \(i,j \in I\), there exists a homogeneous \(\k\)-basis \({}_j\BasisB_i\) for \(jAi\) such that \({}_j\basisb_i:={}_j\BasisB_i \cap a\) is a \(\k\)-basis for \(jai\), and \(i \in {}_i \basisb_i \subseteq {}_i \BasisB_i\).
\end{definition}

We remark that this definition of good pair is slightly stronger than that in \cite[\S2A]{KM}, which mandates only that \(A/a\) and \(a\) be \(\k\)-free. Our definition includes the supposition that \(a/ \k I\) is \(\k\)-free as well. This additional requirement holds in all interesting examples known to the authors. 

In particular, note if \((A,a)_I\) is a good pair, then \(a\) is a locally unital algebra with distinguished idempotent set \(I\).  When we work with a good pair we will always presume to have fixed a basis \(\BasisB = \bigsqcup_{i,j \in I} {}_j\BasisB_i\), \(\basisb = \bigsqcup_{i,j \in I} {}_j\basisb_i\) as described above. We additionally fix some total order \(<\) on \(\BasisB\). It will be convenient to write \(A^{(1)} =A\) and \(A^{(z)} = a\) for \(z > 1\).

If \((A,a)_I\), \((A,a')_I\) are good pairs, we write \((A, a')_I \subseteq (A, a)_I\) if \((a, a')_I\) is a good pair. In particular, for any good pair \((A,a)_I\), we always have a chain of good pairs
\(
 (A, \k I)_I \subseteq (A, a)_I \subseteq (A, A_{\overline 0})_I
\).

\subsection{Combinatorics}
\subsubsection{Counting and sets}
We use the generalized binomial coefficient: 
\begin{align*}
{n \choose k}:= \frac{n(n-1) \cdots (n-k+1)}{k!} \in \Z,
\end{align*}
for any \(n \in \Z\), \(k \in \Z_{\geq 0}\).  Given integers $a,b$ with $a \leq b$, let $[a,b] = \{a, a+1, \dotsc , b \}$.  Given sets $X$ and $Y$, we write $X^{Y}$ for the set of tuples indexed by $Y$ consisting of elements from $X$. 

\subsubsection{Compositions}
For \(\bx = (x_1, \ldots, x_t) \in \Z_{\geq 0}^t\), we will set
\begin{align*}
 |\bx| = t,\qquad\lVert \bx \rVert = \sum_{k=1}^t x_k,\qquad  \bx! =  \prod_{k=1}^t (x_k!).
 \end{align*}
Set
\begin{align*}
\Omega(n,d) = \{ \bx = (x_1, \ldots, x_n) \in \Z_{\geq 0}^n \mid \lVert \bx \rVert = d\} \qquad \text{and} \qquad 
\Omega =  \bigsqcup_{n,d \in \Z_{\geq 0}}  \Omega(n,d).
\end{align*}

\subsubsection{Colored compositions}
For \(n,d \in \Z_{\geq 0}\) and a set \(I\), an {\em \(n\)-part \(I\)-colored composition of \(d\)} is the data of a composition \(\bx = (x_1, \ldots, x_n) \in \Omega(n,d)\), and a tuple \(\bi = (i_1, \ldots, i_n) \in I^n\), which we write as \(\bi^{(\bx)} = (i_1^{(x_1)}, \ldots, i_n^{(x_n)})\). For two such \(I\)-colored compositions, we will set \(\bi^{(\bx)} = \bj^{(\bx)}\) provided \(i_t = j_t\) whenever \(x_t \neq 0\); i.e., the coloring of the zero components of a composition is irrelevant. Set
\begin{align*}
\Omega_I(n,d) = \{ \bi^{(\bx)} = (i_1^{(x_1)}, \ldots, i_n^{(x_n)}) \mid \bi \in I^n, \, \bx \in \Omega(n,d)\} \qquad \text{and} \qquad 
\Omega_I =  \bigsqcup_{n,d \in \Z_{\geq 0}}  \Omega_I(n,d).
\end{align*}
We consider \(\Omega_I\) as a monoid under concatenation, with identity the empty composition \(() \in \Omega_I(0,0)\). We define the monoid of {\em reduced} \(I\)-colored compositions \(\widehat{\Omega}_I\) by imposing the following additional relation on \(\Omega_I\): \(() = (i^{(0)})\) for all \(i \in I\). In other words, we have \(\bi^{(\bx)} = \bj^{(\by)}\) in \(\widehat{\Omega}_I\) provided the colored compositions are identical after deleting all the zero components. We write
\begin{align*}
\hat\Omega_I(n,d) = \{ \bi^{(\bx)} = (i_1^{(x_1)}, \ldots, i_n^{(x_n)}) \mid \bi \in I^n, \, \bx \in \Omega(n,d)\} \subseteq \hat \Omega_I.
\end{align*}

\begin{example}
If \(I = \{\alpha, \beta\}\), then \(\alpha^{(2)} \beta^{(0)} \beta^{(3)} = \alpha^{(2)} \alpha^{(0)} \beta^{(3)}\) in both \( \Omega_I\) and \(\widehat{\Omega}_I\), while \(\alpha^{(2)} \beta^{(0)} \beta^{(3)} =  \alpha^{(2)} \beta^{(3)}\) in \(\widehat{\Omega}_I\) but not in \(\Omega_I\).
\end{example}

Recall that a \emph{partition} (resp., \emph{multi-partition}) of $d$ is a composition of $d$ with weakly decreasing parts (resp., a tuple of partitions whose total sum is $d$).  Starting in \cref{S:SWDualityandPolyReps} we will introduce a veritable zoo of partitions and multi-partitions.  We defer their introduction until then.

\section{The \texorpdfstring{$\WebAaI$}{WebAaI} category}

\subsection{Defining the \texorpdfstring{$\WebAaI$}{WebAaI} category}\label{defweb}  Let \((A, a)_I\) be a good pair.
\begin{definition}\label{defwebaa}
 Let \(\WebAaI\) be the strict monoidal \(\k\)-linear supercategory defined as follows.  
We have \(\Ob(\WebAaI)=\widehat{ \Omega}_I\). The monoidal structure on objects in \(\WebAaI\) is inherited from the monoid \(\widehat{ \Omega}_I\); in particular the objects of \(\WebAaI\) are monoidally generated by objects \(i^{(x)}\) for \(i \in I\), \(x \in \Z_{\geq 0}\), with \(i^{(0)}\) equating to the unit object \(\mathbbm{1} \in \WebAaI\) for all \(i \in I\). 

The generating morphisms of \(\WebAaI\) are given by the diagrams:
\begin{align}\label{WebAaGens}
\hackcenter{
{}
}
\hackcenter{
\begin{tikzpicture}[scale=.8]
  \draw[ultra thick,blue] (0,0)--(0,0.2) .. controls ++(0,0.35) and ++(0,-0.35) .. (-0.4,0.9)--(-0.4,1);
  \draw[ultra thick,blue] (0,0)--(0,0.2) .. controls ++(0,0.35) and ++(0,-0.35) .. (0.4,0.9)--(0.4,1);
      \node[above] at (-0.4,1) {$ \scriptstyle i^{\scriptstyle (x)}$};
      \node[above] at (0.4,1) {$ \scriptstyle i^{\scriptstyle (y)}$};
      \node[below] at (0,0) {$ \scriptstyle i^{\scriptstyle (x+y)} $};
\end{tikzpicture}}
\qquad
\qquad
\hackcenter{
\begin{tikzpicture}[scale=.8]
  \draw[ultra thick,blue ] (-0.4,0)--(-0.4,0.1) .. controls ++(0,0.35) and ++(0,-0.35) .. (0,0.8)--(0,1);
\draw[ultra thick, blue] (0.4,0)--(0.4,0.1) .. controls ++(0,0.35) and ++(0,-0.35) .. (0,0.8)--(0,1);
      \node[below] at (-0.4,0) {$ \scriptstyle i^{ \scriptstyle (x)}$};
      \node[below] at (0.4,0) {$ \scriptstyle i^{ \scriptstyle (y)}$};
      \node[above] at (0,1) {$ \scriptstyle i^{ \scriptstyle (x+y)}$};
\end{tikzpicture}}
\qquad
\qquad
\hackcenter{
\begin{tikzpicture}[scale=.8]
  \draw[ultra thick,red] (0.4,0)--(0.4,0.1) .. controls ++(0,0.35) and ++(0,-0.35) .. (-0.4,0.9)--(-0.4,1);
  \draw[ultra thick,blue] (-0.4,0)--(-0.4,0.1) .. controls ++(0,0.35) and ++(0,-0.35) .. (0.4,0.9)--(0.4,1);
      \node[above] at (-0.4,1) {$ \scriptstyle j^{ \scriptstyle (y)}$};
      \node[above] at (0.4,1) {$ \scriptstyle i^{ \scriptstyle (x)}$};
       \node[below] at (-0.4,0) {$ \scriptstyle i^{ \scriptstyle (x)}$};
      \node[below] at (0.4,0) {$ \scriptstyle j^{ \scriptstyle (y)}$};
\end{tikzpicture}}
\qquad
\qquad
\hackcenter{
\begin{tikzpicture}[scale=.8]
  \draw[ultra thick, blue] (0,0)--(0,0.5);
   \draw[ultra thick, red] (0,0.5)--(0,1);
   \draw[thick, fill=yellow]  (0,0.5) circle (7pt);
    \node at (0,0.5) {$ \scriptstyle f$};
     \node[below] at (0,0) {$ \scriptstyle i^{ \scriptstyle (z)}$};
      \node[above] at (0,1) {$ \scriptstyle j^{ \scriptstyle (z)}$};
\end{tikzpicture}}
\end{align}
for \(i,j \in I\), \(x,y \in \Z_{\geq 0}\), \(z \in \Z_{>0}\), \(f \in jA^{(z)}i\), where diagrams are to be read from bottom to top. We call these morphisms `{\em split}', `{\em merge}', `{\em crossing}' and `{\em coupon}' respectively. Splits, merges and crossings have parity \(\bar 0\), and the parity of the \(f\) coupon is \(\bar{f}\). We say a strand labeled by \(i^{(x)}\) has {\em color} \(i\) and {\em thickness} \(x\). We refer to strands of thickness 1 as {\em thin} strands, otherwise we call them {\em thick}.  Recall that \(A^{(1)} =A\) and \(A^{(z)} = a\) for \(z > 1\). Consequently, thick strands may only be decorated by coupons in the even subalgebra \(a\), whereas thin strands may be decorated with arbitrary coupons in \(A\).

Going forward, we will use the following conventions:
\begin{itemize}
\item Strands of thickness 0 (and any coupons thereon) are to be deleted;
\item Diagrams containing a strand of negative thickness are to be read as zero.
\end{itemize}

The relations imposed on the morphisms in \(\WebAaI\) are given in \cref{AssocRel} - \cref{AaIntertwine}:

%%%%%%%%%%%%%%%%%%%%%%%%%BEGIN ASSOCIATIVITY
{\em Web-associativity.} For all \(i \in I\), \(x,y,z \in \Z_{\geq 0}\):
\begin{align}\label{AssocRel}
\hackcenter{
{}
}
\hackcenter{
\begin{tikzpicture}[scale=.8]
  \draw[ultra thick,blue] (0,0)--(0,0.2) .. controls ++(0,0.35) and ++(0,-0.35) .. (-0.4,0.9)--(-0.4,1) 
  .. controls ++(0,0.35) and ++(0,-0.35) .. (0,1.7)--(0,1.8); 
    \draw[ultra thick,blue] (0,0)--(0,0.2) .. controls ++(0,0.35) and ++(0,-0.35) .. (-0.4,0.9)--(-0.4,1) 
  .. controls ++(0,0.35) and ++(0,-0.35) .. (-0.8,1.7)--(-0.8,1.8); 
  \draw[ultra thick,blue] (0,0)--(0,0.2) .. controls ++(0,0.5) and ++(0,-0.5) .. (0.8,1.5)--(0.8,1.8);
      \node[above] at (-0.8,1.8) {$ \scriptstyle i^{ \scriptstyle (x)}$};
      \node[above] at (0,1.8) {$ \scriptstyle i^{ \scriptstyle (y)}$};
      \node[above] at (0.8,1.8) {$ \scriptstyle i^{ \scriptstyle (z)}$};
      \node[below] at (0,0) {$ \scriptstyle i^{ \scriptstyle (x+y+z)}$};
      \node[left] at (-0.4,0.8) {$ \scriptstyle i^{ \scriptstyle (x+y)}$};
\end{tikzpicture}}
=
\hackcenter{
\begin{tikzpicture}[scale=.8]
  \draw[ultra thick,blue] (0,0)--(0,0.2) .. controls ++(0,0.35) and ++(0,-0.35) .. (0.4,0.9)--(0.4,1) 
  .. controls ++(0,0.35) and ++(0,-0.35) .. (0,1.7)--(0,1.8); 
    \draw[ultra thick,blue] (0,0)--(0,0.2) .. controls ++(0,0.35) and ++(0,-0.35) .. (0.4,0.9)--(0.4,1) 
  .. controls ++(0,0.35) and ++(0,-0.35) .. (0.8,1.7)--(0.8,1.8); 
  \draw[ultra thick,blue] (0,0)--(0,0.2) .. controls ++(0,0.5) and ++(0,-0.5) .. (-0.8,1.5)--(-0.8,1.8);
      \node[above] at (-0.8,1.8) {$ \scriptstyle i^{ \scriptstyle (x)}$};
      \node[above] at (0,1.8) {$ \scriptstyle i^{ \scriptstyle (y)}$};
      \node[above] at (0.8,1.8) {$ \scriptstyle i^{ \scriptstyle (z)}$};
      \node[below] at (0,0) {$ \scriptstyle i^{ \scriptstyle (x+y+z)}$};
      \node[right] at (0.4,0.8) {$ \scriptstyle i^{ \scriptstyle (y+z)}$};
\end{tikzpicture}}
\qquad
\qquad
\hackcenter{
\begin{tikzpicture}[scale=.8]
  \draw[ultra thick,blue] (0,0)--(0,-0.2) .. controls ++(0,-0.35) and ++(0,0.35) .. (-0.4,-0.9)--(-0.4,-1) 
  .. controls ++(0,-0.35) and ++(0,0.35) .. (0,-1.7)--(0,-1.8); 
    \draw[ultra thick,blue] (0,0)--(0,-0.2) .. controls ++(0,-0.35) and ++(0,0.35) .. (-0.4,-0.9)--(-0.4,-1) 
  .. controls ++(0,-0.35) and ++(0,0.35) .. (-0.8,-1.7)--(-0.8,-1.8); 
  \draw[ultra thick,blue] (0,0)--(0,-0.2) .. controls ++(0,-0.5) and ++(0,0.5) .. (0.8,-1.5)--(0.8,-1.8);
      \node[below] at (-0.8,-1.8) {$ \scriptstyle i^{ \scriptstyle (x)}$};
      \node[below] at (0,-1.8) {$ \scriptstyle i^{ \scriptstyle (y)}$};
      \node[below] at (0.8,-1.8) {$ \scriptstyle i^{ \scriptstyle (z)}$};
      \node[above] at (0,0) {$ \scriptstyle i^{ \scriptstyle (x+y+z)}$};
      \node[left] at (-0.4,-0.8) {$ \scriptstyle i^{ \scriptstyle (x+y)}$};
\end{tikzpicture}}
=
\hackcenter{
\begin{tikzpicture}[scale=.8]
  \draw[ultra thick,blue] (0,0)--(0,-0.2) .. controls ++(0,-0.35) and ++(0,0.35) .. (0.4,-0.9)--(0.4,-1) 
  .. controls ++(0,-0.35) and ++(0,0.35) .. (0,-1.7)--(0,-1.8); 
    \draw[ultra thick,blue] (0,0)--(0,-0.2) .. controls ++(0,-0.35) and ++(0,0.35) .. (0.4,-0.9)--(0.4,-1) 
  .. controls ++(0,-0.35) and ++(0,0.35) .. (0.8,-1.7)--(0.8,-1.8); 
  \draw[ultra thick,blue] (0,0)--(0,-0.2) .. controls ++(0,-0.5) and ++(0,0.5) .. (-0.8,-1.5)--(-0.8,-1.8);
      \node[below] at (-0.8,-1.8) {$ \scriptstyle i^{ \scriptstyle (x)}$};
      \node[below] at (0,-1.8) {$ \scriptstyle i^{ \scriptstyle (y)}$};
      \node[below] at (0.8,-1.8) {$ \scriptstyle i^{ \scriptstyle (z)}$};
      \node[above] at (0,0) {$ \scriptstyle i^{ \scriptstyle (x+y+z)}$};
      \node[right] at (0.4,-0.8) {$ \scriptstyle i^{ \scriptstyle (y+z)}$};
\end{tikzpicture}}
\end{align}
%%%%%%%%END ASSOCIATIVITY

%%%%%%%%%%%%%%%%BEGIN CROSSING/MERGE-SPLIT
{\em Merge-split relation.}
For all \(x,y,z,w \in \Z_{\geq 0}\) such that \(x+y = z+w\):
\begin{align}\label{MSrel}
\hackcenter{}
\hackcenter{
\begin{tikzpicture}[scale=0.8]
  \draw[ultra thick, blue] (0,0)--(0,0.2) .. controls ++(0,0.35) and ++(0,-0.35)  .. (0.7,0.8)--(0.7,1.2)
  .. controls ++(0,0.35) and ++(0,-0.35)  .. (0,1.8)--(0,2); 
    \draw[ultra thick, blue] (1.4,0)--(1.4,0.2) .. controls ++(0,0.35) and ++(0,-0.35)  .. (0.7,0.8)--(0.7,1.2)
  .. controls ++(0,0.35) and ++(0,-0.35)  .. (1.4,1.8)--(1.4,2); 
    %\draw[thick] (0,0)--(0,-0.2) .. controls ++(0,-0.35) and ++(0,0.35) .. (0.4,-0.9)--(0.4,-1) 
  %.. controls ++(0,-0.35) and ++(0,0.35) .. (0.8,-1.7)--(0.8,-1.8); 
  %\draw[thick] (0,0)--(0,-0.2) .. controls ++(0,-0.5) and ++(0,0.5) .. (-0.8,-1.5)--(-0.8,-1.8);
     \node[below] at (0,0) {$\scriptstyle i^{\scriptstyle (x)}$};
     \node[below] at (1.4,0) {$\scriptstyle i^{\scriptstyle (y)}$};
      \node[above] at (0,2) {$\scriptstyle i^{\scriptstyle (z)}$};
      \node[above] at (1.4,2) {$\scriptstyle i^{\scriptstyle (w)}$};
                  \node[left] at (0.6,1) {$\scriptstyle i^{\scriptstyle (x+y)}$};
      %\node[below] at (0.4,0.5) {$1$};
      %\node[above] at (0.4,1.5) {$1$};
\end{tikzpicture}}
=
\sum_{t \in \Z_{\geq 0}}
\hackcenter{
\begin{tikzpicture}[scale=0.8]
\draw[ultra thick, blue] (0,0)--(0,2);
\draw[ultra thick, blue] (1.4,0)--(1.4,2);
  \draw[ultra thick, blue] (0,0)--(0,0.3) .. controls ++(0,0.35) and ++(0,-0.35)  .. (1.4,1.7)--(1.4,2); 
    \draw[ultra thick, blue] (1.4,0)--(1.4,0.3) .. controls ++(0,0.35) and ++(0,-0.35)  .. (0,1.7)--(0,2); 
    %\draw[thick] (0,0)--(0,-0.2) .. controls ++(0,-0.35) and ++(0,0.35) .. (0.4,-0.9)--(0.4,-1) 
  %.. controls ++(0,-0.35) and ++(0,0.35) .. (0.8,-1.7)--(0.8,-1.8); 
  %\draw[thick] (0,0)--(0,-0.2) .. controls ++(0,-0.5) and ++(0,0.5) .. (-0.8,-1.5)--(-0.8,-1.8);
     \node[below] at (0,0) {$\scriptstyle i^{\scriptstyle (x)}$};
     \node[below] at (1.4,0) {$\scriptstyle i^{\scriptstyle (y)}$};
      \node[above] at (0,2) {$\scriptstyle i^{\scriptstyle (z)}$};
      \node[above] at (1.4,2) {$\scriptstyle i^{\scriptstyle (w)}$};
            %\node[above] at (0.4,1.4) {$\scriptstyle c-a+t$};
      \node[below] at (0.5,0.8) {$\scriptstyle i^{\scriptstyle (t)}$};
            \node[left] at (0,1) {$\scriptstyle i^{\scriptstyle (x-t)}$};
      \node[right] at (1.4,1) {$\scriptstyle i^{\scriptstyle (w-t)}$};
      %\node[below] at (0.4,0.5) {$1$};
      %\node[above] at (0.4,1.5) {$1$};
\end{tikzpicture}}
\end{align}
%%%%%%%%%%%%%%%%END CROSSING/MERGE-SPLIT

%%%%%%%%%%%%%%%%BEGIN KNOTHOLE
{\em Knothole relation.} For all \(x,y \in \Z_{\geq 0}\):
\begin{align}\label{KnotholeRel}
\hackcenter{}
\hackcenter{
\begin{tikzpicture}[scale=0.8]
  \draw[ultra thick, blue] (0,-0.1)--(0,0.1) .. controls ++(0,0.35) and ++(0,-0.35) .. (-0.4,0.6)--(-0.4,0.9);
    \draw[ultra thick, blue]  (-0.4,0.9)--(-0.4,1.2) 
  .. controls ++(0,0.35) and ++(0,-0.35) .. (0,1.7)--(0,1.9);
  \draw[ultra thick, blue] (0,-0.1)--(0,0.1) .. controls ++(0,0.35) and ++(0,-0.35) .. (0.4,0.6)--(0.4,0.9);
    \draw[ultra thick, blue]  (0.4,0.9)--(0.4,1.2) 
  .. controls ++(0,0.35) and ++(0,-0.35) .. (0,1.7)--(0,1.9);
  .. controls ++(0,0.35) and ++(0,-0.35) .. (0,1.7)--(0,1.9);
       \node[above] at (0,1.8) {$\scriptstyle i^{\scriptstyle(x+y)}$};
         \node[below] at (0,-0.1) {$\scriptstyle i^{\scriptstyle(x+y)}$};
             \node[right] at (0.4,0.9) {$\scriptstyle i^{\scriptstyle(y)}$};
               \node[left] at (-0.4,0.9) {$\scriptstyle i^{\scriptstyle(x)}$};
                %\node at (0,0.9) {$\scriptstyle \cdots $};
\end{tikzpicture}}
\;
=
\;
{x+y \choose x}
\hspace{-2mm}
\hackcenter{
\begin{tikzpicture}[scale=0.8]
  \draw[ultra thick, blue] (0,-0.1)--(0,0.9);
  \draw[ultra thick, blue] (0,0.9)--(0,1.9);
     \node[below] at (0,-0.1) {$\scriptstyle i^{\scriptstyle(x+y)}$};
      \node[above] at (0,1.9) {$\scriptstyle i^{\scriptstyle(x+y)}$};
\end{tikzpicture}}
\end{align}
%%%%%%%%%%%%%%%%END KNOTHOLE

%%%%%%%%%%%%%%%%%%%%%%%%BEGIN COXETER RELATIONS
{\em Coxeter relations.} For all {\em distinct} \(i,j,k \in I\) and \(x,y,z \in \Z_{\geq 0}\):
\begin{align}\label{Cox}
\hackcenter{}
\hackcenter{
\begin{tikzpicture}[scale=.8]
  \draw[ultra thick, blue] (0,0)--(0,0.1) .. controls ++(0,0.35) and ++(0,-0.35)  .. (0.8,0.8)--(0.8,1) .. controls ++(0,0.35) and ++(0,-0.35)  .. (0,1.7)--(0,1.8); 
    \draw[ultra thick, red] (0.8 ,0)--(0.8, 0.1) .. controls ++(0,0.35) and ++(0,-0.35)  .. (0,0.8)--(0,1) .. controls ++(0,0.35) and ++(0,-0.35)  .. (0.8,1.7)--(0.8,1.8);
    %\draw[ultra thick, blue] (0,0)--(0,-0.2) .. controls ++(0,-0.35) and ++(0,0.35) .. (0.4,-0.9)--(0.4,-1) 
  %.. controls ++(0,-0.35) and ++(0,0.35) .. (0.8,-1.7)--(0.8,-1.8); 
  %\draw[ultra thick, blue] (0,0)--(0,-0.2) .. controls ++(0,-0.5) and ++(0,0.5) .. (-0.8,-1.5)--(-0.8,-1.8);
     \node[below] at (0,0) {$ \scriptstyle i^{ \scriptstyle (x)}$};
     \node[below] at (0.8,0) {$ \scriptstyle j^{ \scriptstyle (y)}$};
       \node[above] at (0,1.8) {$ \scriptstyle i^{ \scriptstyle (x)}$};
     \node[above] at (0.8,1.8) {$ \scriptstyle j^{ \scriptstyle (y)}$};
     %\node[above] at (0,2) {$ b$};
      %\node[above] at (1.4,2) {$ a$};
      %\node[below] at (0.4,0.5) {$1$};
      %\node[above] at (0.4,1.5) {$1$};
\end{tikzpicture}}
\;
=
\;
\hackcenter{
\begin{tikzpicture}[scale=.8]
  \draw[ultra thick, blue] (0,0)--(0,1.8); 
    \draw[ultra thick, red] (0.8 ,0)--(0.8,1.8);
    %\draw[ultra thick, blue] (0,0)--(0,-0.2) .. controls ++(0,-0.35) and ++(0,0.35) .. (0.4,-0.9)--(0.4,-1) 
  %.. controls ++(0,-0.35) and ++(0,0.35) .. (0.8,-1.7)--(0.8,-1.8); 
  %\draw[ultra thick, blue] (0,0)--(0,-0.2) .. controls ++(0,-0.5) and ++(0,0.5) .. (-0.8,-1.5)--(-0.8,-1.8);
     \node[below] at (0,0) {$ \scriptstyle i^{ \scriptstyle (x)}$};
     \node[below] at (0.8,0) {$ \scriptstyle j^{ \scriptstyle (y)}$};
       \node[above] at (0,1.8) {$ \scriptstyle i^{ \scriptstyle (x)}$};
     \node[above] at (0.8,1.8) {$ \scriptstyle j^{ \scriptstyle (y)}$};
     %\node[above] at (0,2) {$ b$};
      %\node[above] at (1.4,2) {$ a$};
      %\node[below] at (0.4,0.5) {$1$};
      %\node[above] at (0.4,1.5) {$1$};
\end{tikzpicture}}
\qquad
\qquad
\hackcenter{
\begin{tikzpicture}[scale=.8]
  \draw[ultra thick, red] (0.2,0)--(0.2,0.1) .. controls ++(0,0.35) and ++(0,-0.35) .. (-0.4,0.9)
  .. controls ++(0,0.35) and ++(0,-0.35) .. (0.2,1.7)--(0.2,1.8);
  \draw[ultra thick, blue] (-0.6,0)--(-0.6,0.1) .. controls ++(0,0.35) and ++(0,-0.35) .. (1,1.7)--(1,1.8);
  \draw[ultra thick, green] (1,0)--(1,0.1) .. controls ++(0,0.35) and ++(0,-0.35) .. (-0.6,1.7)--(-0.6,1.8);
   \node[below] at (0.2,0) {$ \scriptstyle j^{ \scriptstyle (y)}$};
        \node[below] at (-0.6,0) {$ \scriptstyle i^{ \scriptstyle (x)}$};
         \node[below] at (1,0) {$ \scriptstyle k^{\scriptstyle (z)}$};
          \node[above] at (0.2,1.8) {$ \scriptstyle j^{ \scriptstyle (y)}$};
        \node[above] at (-0.6,1.8) {$ \scriptstyle k^{\scriptstyle (z)}$};
         \node[above] at (1,1.8) {$ \scriptstyle i^{ \scriptstyle (x)}$};
\end{tikzpicture}}
=
\hackcenter{
\begin{tikzpicture}[scale=.8]
  \draw[ultra thick, red] (0.2,0)--(0.2,0.1) .. controls ++(0,0.35) and ++(0,-0.35) .. (0.8,0.9)
  .. controls ++(0,0.35) and ++(0,-0.35) .. (0.2,1.7)--(0.2,1.8);
  \draw[ultra thick, blue] (-0.6,0)--(-0.6,0.1) .. controls ++(0,0.35) and ++(0,-0.35) .. (1,1.7)--(1,1.8);
  \draw[ultra thick, green] (1,0)--(1,0.1) .. controls ++(0,0.35) and ++(0,-0.35) .. (-0.6,1.7)--(-0.6,1.8);
   \node[below] at (0.2,0) {$ \scriptstyle j^{ \scriptstyle (y)}$};
        \node[below] at (-0.6,0) {$ \scriptstyle i^{ \scriptstyle (x)}$};
         \node[below] at (1,0) {$ \scriptstyle k^{\scriptstyle (z)}$};
          \node[above] at (0.2,1.8) {$ \scriptstyle j^{ \scriptstyle (y)}$};
        \node[above] at (-0.6,1.8) {$ \scriptstyle k^{\scriptstyle (z)}$};
         \node[above] at (1,1.8) {$ \scriptstyle i^{ \scriptstyle (x)}$};
\end{tikzpicture}}
\end{align}
%%%%%%%%%%%%%%%%%%%%%%%%END COXETER RELATIONS

%%%%%%%%%%%%%%%%%%%%%%%BEGIN SPLIT INTERTWINE RELATIONS
{\em Split-intertwining relations.} For all {\em distinct} \(i,j \in I\) and \(x,y,z \in \Z_{\geq 0}\):
\begin{align}\label{SplitIntertwineRel}
\hackcenter{}
\hackcenter{
{}
}
\hackcenter{
\begin{tikzpicture}[scale=.8]
\draw[ultra thick, blue] (1.6,0)--(1.6,-0.2) .. controls ++(0,-0.35) and ++(0,0.35) .. (0.8,-1.2)--(0.8,-1.4)
        .. controls ++(0,-0.35) and ++(0,0.35) .. (0.4,-2)--(0.4,-2.2);
 \draw[ultra thick, blue] (0.8,0)--(0.8,-0.2) .. controls ++(0,-0.35) and ++(0,0.35) .. (0,-1.2)--(0,-1.4)
        .. controls ++(0,-0.35) and ++(0,0.35) .. (0.4,-2)--(0.4,-2.2);       
  \draw[ultra thick, red] (0,0)--(0,-0.2) .. controls ++(0,-0.35) and ++(0,0.35) .. (1.4,-1.2)--(1.4,-2.2);        
  %\draw[ultra thick, blue] (0,0)--(0,0.2) .. controls ++(0,0.35) and ++(0,-0.35) .. (-0.4,0.9)--(-0.4,1);
  %\draw[ultra thick, blue] (0,0)--(0,0.2) .. controls ++(0,0.35) and ++(0,-0.35) .. (0.4,0.9)--(0.4,1);
      \node[above] at (0,0) {$ \scriptstyle j^{\scriptstyle (z)}$};
      \node[above] at (0.8,0) {$ \scriptstyle i^{ \scriptstyle (x)}$};
      \node[above] at (1.6,0) {$ \scriptstyle i^{ \scriptstyle (y)}$};
       \node[below] at (0.4,-2.2) {$ \scriptstyle i^{ \scriptstyle (x+y)}$};
        \node[below] at (1.4,-2.2) {$ \scriptstyle j^{\scriptstyle (z)}$};
\end{tikzpicture}}
=
\hackcenter{
\begin{tikzpicture}[scale=.8]
\draw[ultra thick, red] (0,0)--(0,-0.6) .. controls ++(0,-0.35) and ++(0,0.35) .. (1.4,-2)--(1.4,-2.2);
\draw[ultra thick, blue] (0.8,0)--(0.8,-0.2) .. controls ++(0,-0.35) and ++(0,0.35) .. (1.2,-0.8)--(1.2,-1)
        .. controls ++(0,-0.35) and ++(0,0.35) .. (0.4,-2)--(0.4,-2.2);
        \draw[ultra thick, blue] (1.6,0)--(1.6,-0.2) .. controls ++(0,-0.35) and ++(0,0.35) .. (1.2,-0.8)--(1.2,-1)
        .. controls ++(0,-0.35) and ++(0,0.35) .. (0.4,-2)--(0.4,-2.2);
 %\draw[ultra thick, blue] (0.8,0)--(0.8,0.2) .. controls ++(0,0.35) and ++(0,-0.35) .. (0,1.2)--(0,1.4)
       % .. controls ++(0,0.35) and ++(0,-0.35) .. (0.4,2)--(0.4,2.2);       
  %\draw[ultra thick, blue] (0,0)--(0,0.2) .. controls ++(0,0.35) and ++(0,-0.35) .. (1.6,1.2)--(1.6,2.2);        
  %\draw[ultra thick, blue] (0,0)--(0,0.2) .. controls ++(0,0.35) and ++(0,-0.35) .. (-0.4,0.9)--(-0.4,1);
  %\draw[ultra thick, blue] (0,0)--(0,0.2) .. controls ++(0,0.35) and ++(0,-0.35) .. (0.4,0.9)--(0.4,1);
      \node[above] at (0,0) {$ \scriptstyle j^{\scriptstyle (z)}$};
      \node[above] at (0.8,0) {$ \scriptstyle i^{ \scriptstyle (x)}$};
      \node[above] at (1.6,0) {$ \scriptstyle i^{ \scriptstyle (y)}$};
       \node[below] at (0.4,-2.2) {$ \scriptstyle i^{ \scriptstyle (x+y)}$};
        \node[below] at (1.4,-2.2) {$ \scriptstyle j^{\scriptstyle (z)}$};
\end{tikzpicture}}
\qquad
\qquad
\hackcenter{
\begin{tikzpicture}[scale=.8]
\draw[ultra thick, blue] (-1.6,0)--(-1.6,-0.2) .. controls ++(0,-0.35) and ++(0,0.35) .. (-0.8,-1.2)--(-0.8,-1.4)
        .. controls ++(0,-0.35) and ++(0,0.35) .. (-0.4,-2)--(-0.4,-2.2);
 \draw[ultra thick, blue] (-0.8,0)--(-0.8,-0.2) .. controls ++(0,-0.35) and ++(0,0.35) .. (0,-1.2)--(0,-1.4)
        .. controls ++(0,-0.35) and ++(0,0.35) .. (-0.4,-2)--(-0.4,-2.2);       
  \draw[ultra thick, red] (0,0)--(0,-0.2) .. controls ++(0,-0.35) and ++(0,0.35) .. (-1.4,-1.2)--(-1.4,-2.2);        
  %\draw[ultra thick, blue] (0,0)--(0,0.2) .. controls ++(0,0.35) and ++(0,-0.35) .. (-0.4,0.9)--(-0.4,1);
  %\draw[ultra thick, blue] (0,0)--(0,0.2) .. controls ++(0,0.35) and ++(0,-0.35) .. (0.4,0.9)--(0.4,1);
      \node[above] at (0,0) {$ \scriptstyle j^{\scriptstyle (z)}$};
      \node[above] at (-0.8,0) {$ \scriptstyle i^{ \scriptstyle (y)}$};
      \node[above] at (-1.6,0) {$ \scriptstyle i^{ \scriptstyle (x)}$};
       \node[below] at (-0.4,-2.2) {$ \scriptstyle i^{ \scriptstyle (x+y)}$};
        \node[below] at (-1.4,-2.2) {$ \scriptstyle j^{\scriptstyle (z)}$};
\end{tikzpicture}}
=
\hackcenter{
\begin{tikzpicture}[scale=.8]
\draw[ultra thick, red] (0,0)--(0,-0.6) .. controls ++(0,-0.35) and ++(0,0.35) .. (-1.4,-2)--(-1.4,-2.2);
\draw[ultra thick, blue] (-0.8,0)--(-0.8,-0.2) .. controls ++(0,-0.35) and ++(0,0.35) .. (-1.2,-0.8)--(-1.2,-1)
        .. controls ++(0,-0.35) and ++(0,0.35) .. (-0.4,-2)--(-0.4,-2.2);
        \draw[ultra thick, blue] (-1.6,0)--(-1.6,-0.2) .. controls ++(0,-0.35) and ++(0,0.35) .. (-1.2,-0.8)--(-1.2,-1)
        .. controls ++(0,-0.35) and ++(0,0.35) .. (-0.4,-2)--(-0.4,-2.2);
 %\draw[thick] (0.8,0)--(0.8,0.2) .. controls ++(0,0.35) and ++(0,-0.35) .. (0,1.2)--(0,1.4)
       % .. controls ++(0,0.35) and ++(0,-0.35) .. (0.4,2)--(0.4,2.2);       
  %\draw[thick] (0,0)--(0,0.2) .. controls ++(0,0.35) and ++(0,-0.35) .. (1.6,1.2)--(1.6,2.2);        
  %\draw[thick] (0,0)--(0,0.2) .. controls ++(0,0.35) and ++(0,-0.35) .. (-0.4,0.9)--(-0.4,1);
  %\draw[thick] (0,0)--(0,0.2) .. controls ++(0,0.35) and ++(0,-0.35) .. (0.4,0.9)--(0.4,1);
      \node[above] at (0,0) {$ \scriptstyle j^{\scriptstyle (z)}$};
      \node[above] at (-0.8,0) {$ \scriptstyle i^{ \scriptstyle (y)}$};
      \node[above] at (-1.6,0) {$ \scriptstyle i^{ \scriptstyle (x)}$};
       \node[below] at (-0.4,-2.2) {$ \scriptstyle i^{ \scriptstyle (x+y)}$};
        \node[below] at (-1.4,-2.2) {$ \scriptstyle j^{\scriptstyle (z)}$};
\end{tikzpicture}}
\end{align}
%%%%%%%%%%%%%%%%%%%%%%%%%END SPLIT INTERTWINE RELATIONS

%%%%%%%%%%%%%%%%%%%%%%%BEGIN MERGE INTERTWINE RELATIONS
{\em Merge-intertwining relations.} For all {\em distinct} \(i,j \in I\) and \(x,y,z \in \Z_{\geq 0}\):
\begin{align}\label{MergeIntertwineRel}
\hackcenter{}
\hackcenter{
\begin{tikzpicture}[scale=.8]
\draw[ultra thick, blue] (1.6,0)--(1.6,0.2) .. controls ++(0,0.35) and ++(0,-0.35) .. (0.8,1.2)--(0.8,1.4)
        .. controls ++(0,0.35) and ++(0,-0.35) .. (0.4,2)--(0.4,2.2);
 \draw[ultra thick, blue] (0.8,0)--(0.8,0.2) .. controls ++(0,0.35) and ++(0,-0.35) .. (0,1.2)--(0,1.4)
        .. controls ++(0,0.35) and ++(0,-0.35) .. (0.4,2)--(0.4,2.2);       
  \draw[ultra thick, red] (0,0)--(0,0.2) .. controls ++(0,0.35) and ++(0,-0.35) .. (1.4,1.2)--(1.4,2.2);        
  %\draw[ultra thick, blue] (0,0)--(0,0.2) .. controls ++(0,0.35) and ++(0,-0.35) .. (-0.4,0.9)--(-0.4,1);
  %\draw[ultra thick, blue] (0,0)--(0,0.2) .. controls ++(0,0.35) and ++(0,-0.35) .. (0.4,0.9)--(0.4,1);
      \node[below] at (0,0) {$ \scriptstyle j^{\scriptstyle (z)}$};
      \node[below] at (0.8,0) {$ \scriptstyle i^{ \scriptstyle (x)}$};
      \node[below] at (1.6,0) {$ \scriptstyle i^{ \scriptstyle (y)}$};
       \node[above] at (0.4,2.2) {$ \scriptstyle i^{ \scriptstyle (x+y)}$};
        \node[above] at (1.4,2.2) {$ \scriptstyle j^{\scriptstyle (z)}$};
\end{tikzpicture}}
=
\hackcenter{
\begin{tikzpicture}[scale=.8]
\draw[ultra thick, red] (0,0)--(0,0.6) .. controls ++(0,0.35) and ++(0,-0.35) .. (1.4,2)--(1.4,2.2);
\draw[ultra thick, blue] (0.8,0)--(0.8,0.2) .. controls ++(0,0.35) and ++(0,-0.35) .. (1.2,0.8)--(1.2,1)
        .. controls ++(0,0.35) and ++(0,-0.35) .. (0.4,2)--(0.4,2.2);
        \draw[ultra thick, blue] (1.6,0)--(1.6,0.2) .. controls ++(0,0.35) and ++(0,-0.35) .. (1.2,0.8)--(1.2,1)
        .. controls ++(0,0.35) and ++(0,-0.35) .. (0.4,2)--(0.4,2.2);
 %\draw[ultra thick, blue] (0.8,0)--(0.8,0.2) .. controls ++(0,0.35) and ++(0,-0.35) .. (0,1.2)--(0,1.4)
       % .. controls ++(0,0.35) and ++(0,-0.35) .. (0.4,2)--(0.4,2.2);       
  %\draw[ultra thick, blue] (0,0)--(0,0.2) .. controls ++(0,0.35) and ++(0,-0.35) .. (1.6,1.2)--(1.6,2.2);        
  %\draw[ultra thick, blue] (0,0)--(0,0.2) .. controls ++(0,0.35) and ++(0,-0.35) .. (-0.4,0.9)--(-0.4,1);
  %\draw[ultra thick, blue] (0,0)--(0,0.2) .. controls ++(0,0.35) and ++(0,-0.35) .. (0.4,0.9)--(0.4,1);
      \node[below] at (0,0) {$ \scriptstyle j^{\scriptstyle (z)}$};
      \node[below] at (0.8,0) {$ \scriptstyle i^{ \scriptstyle (x)}$};
      \node[below] at (1.6,0) {$ \scriptstyle i^{ \scriptstyle (y)}$};
       \node[above] at (0.4,2.2) {$ \scriptstyle i^{ \scriptstyle (x+y)}$};
        \node[above] at (1.4,2.2) {$ \scriptstyle j^{\scriptstyle (z)}$};
\end{tikzpicture}}
\qquad
\qquad
\hackcenter{
\begin{tikzpicture}[scale=.8]
\draw[ultra thick, blue] (-1.6,0)--(-1.6,0.2) .. controls ++(0,0.35) and ++(0,-0.35) .. (-0.8,1.2)--(-0.8,1.4)
        .. controls ++(0,0.35) and ++(0,-0.35) .. (-0.4,2)--(-0.4,2.2);
 \draw[ultra thick, blue] (-0.8,0)--(-0.8,0.2) .. controls ++(0,0.35) and ++(0,-0.35) .. (0,1.2)--(0,1.4)
        .. controls ++(0,0.35) and ++(0,-0.35) .. (-0.4,2)--(-0.4,2.2);       
  \draw[ultra thick, red] (0,0)--(0,0.2) .. controls ++(0,0.35) and ++(0,-0.35) .. (-1.4,1.2)--(-1.4,2.2);        
  %\draw[ultra thick, blue] (0,0)--(0,0.2) .. controls ++(0,0.35) and ++(0,-0.35) .. (-0.4,0.9)--(-0.4,1);
  %\draw[ultra thick, blue] (0,0)--(0,0.2) .. controls ++(0,0.35) and ++(0,-0.35) .. (0.4,0.9)--(0.4,1);
      \node[below] at (0,0) {$ \scriptstyle j^{\scriptstyle (z)}$};
      \node[below] at (-0.8,0) {$ \scriptstyle i^{ \scriptstyle (y)}$};
      \node[below] at (-1.6,0) {$ \scriptstyle i^{ \scriptstyle (x)}$};
       \node[above] at (-0.4,2.2) {$ \scriptstyle i^{ \scriptstyle (x+y)}$};
        \node[above] at (-1.4,2.2) {$ \scriptstyle j^{\scriptstyle (z)}$};
\end{tikzpicture}}
=
\hackcenter{
\begin{tikzpicture}[scale=.8]
\draw[ultra thick, red] (0,0)--(0,0.6) .. controls ++(0,0.35) and ++(0,-0.35) .. (-1.4,2)--(-1.4,2.2);
\draw[ultra thick, blue] (-0.8,0)--(-0.8,0.2) .. controls ++(0,0.35) and ++(0,-0.35) .. (-1.2,0.8)--(-1.2,1)
        .. controls ++(0,0.35) and ++(0,-0.35) .. (-0.4,2)--(-0.4,2.2);
        \draw[ultra thick, blue] (-1.6,0)--(-1.6,0.2) .. controls ++(0,0.35) and ++(0,-0.35) .. (-1.2,0.8)--(-1.2,1)
        .. controls ++(0,0.35) and ++(0,-0.35) .. (-0.4,2)--(-0.4,2.2);
 %\draw[ultra thick, blue] (0.8,0)--(0.8,0.2) .. controls ++(0,0.35) and ++(0,-0.35) .. (0,1.2)--(0,1.4)
       % .. controls ++(0,0.35) and ++(0,-0.35) .. (0.4,2)--(0.4,2.2);       
  %\draw[ultra thick, blue] (0,0)--(0,0.2) .. controls ++(0,0.35) and ++(0,-0.35) .. (1.6,1.2)--(1.6,2.2);        
  %\draw[ultra thick, blue] (0,0)--(0,0.2) .. controls ++(0,0.35) and ++(0,-0.35) .. (-0.4,0.9)--(-0.4,1);
  %\draw[ultra thick, blue] (0,0)--(0,0.2) .. controls ++(0,0.35) and ++(0,-0.35) .. (0.4,0.9)--(0.4,1);
      \node[below] at (0,0) {$ \scriptstyle j^{\scriptstyle (z)}$};
      \node[below] at (-0.8,0) {$ \scriptstyle i^{ \scriptstyle (y)}$};
      \node[below] at (-1.6,0) {$ \scriptstyle i^{ \scriptstyle (x)}$};
       \node[above] at (-0.4,2.2) {$ \scriptstyle i^{ \scriptstyle (x+y)}$};
        \node[above] at (-1.4,2.2) {$ \scriptstyle j^{\scriptstyle (z)}$};
\end{tikzpicture}}
\end{align}
%%%%%%%%%%%%%%%%%%%%%%%END MERGE INTERTWINE RELATIONS

%%%%%%%%%%%%%%%%%%%%%%%%BEGIN A RELATIONS
{\em Coupon relations.} For all \(i,j,k \in I\), \(x \in \Z_{\geq 0}\), \(f,g \in jA^{(x)}i\), \(h \in kA^{(x)}j\), \(\alpha \in \k\):
\begin{align}\label{AaRel1}
\hackcenter{}
\hackcenter{
\begin{tikzpicture}[scale=.8]
  \draw[ultra thick, blue] (0,0)--(0,1.8);
   \draw[thick, fill=yellow]  (0,0.9) circle (9pt);
    \node at (0,0.9) {$ \scriptstyle i$};
     \node[below] at (0,0) {$ \scriptstyle i^{ \scriptstyle (x)}$};
           \node[above] at (0,1.8) {$ \scriptstyle i^{ \scriptstyle (x)}$};
\end{tikzpicture}}
\;
=
\hackcenter{
\begin{tikzpicture}[scale=.8]
  \draw[ultra thick, blue] (0,0)--(0,1.8);
     \node[below] at (0,0) {$ \scriptstyle i^{ \scriptstyle (x)}$};
           \node[above] at (0,1.8) {$ \scriptstyle i^{ \scriptstyle (x)}$};
\end{tikzpicture}}
\qquad
\qquad
\hackcenter{
\begin{tikzpicture}[scale=.8]
  \draw[ultra thick, blue] (0,0)--(0,0.9);
    \draw[ultra thick, red] (0,0.9)--(0,1.8);
    \draw[thick, fill=yellow]  (0,0.9) circle (9pt);
    \node at (0,0.9) {$ \scriptstyle \alpha f$};
     \node[below] at (0,0) {$ \scriptstyle i^{ \scriptstyle (x)}$};
       \node[above] at (0,1.8) {$ \scriptstyle j^{ \scriptstyle (x)}$};
\end{tikzpicture}}
\;
=
\;
\alpha^x
\hackcenter{
\begin{tikzpicture}[scale=.8]
  \draw[ultra thick, blue] (0,0)--(0,0.9);
    \draw[ultra thick, red] (0,0.9)--(0,1.8);
 \draw[thick, fill=yellow]  (0,0.9) circle (7pt);
    \node at (0,0.9) {$ \scriptstyle f$};
     \node[below] at (0,0) {$ \scriptstyle i^{ \scriptstyle (x)}$};
      \node[above] at (0,1.8) {$ \scriptstyle j^{ \scriptstyle (x)}$};
\end{tikzpicture}}
\end{align}
%%%%%
\begin{align}\label{AaRel2}
\hackcenter{
\begin{tikzpicture}[scale=.8]
  \draw[ultra thick, blue] (0,0)--(0,0.5);
    \draw[ultra thick, red] (0,0.5)--(0,1.3);
      \draw[ultra thick, green] (0,1.3)--(0,1.8);
     \draw[thick, fill=yellow]  (0,0.5) circle (7pt);
    \node at (0,0.5) {$ \scriptstyle f$};
   \draw[thick, fill=yellow]  (0,1.3) circle (7pt);
    \node at (0,1.3) {$\scriptstyle h$};
     \node[below] at (0,0) {$ \scriptstyle i^{ \scriptstyle (x)}$};
      \node[above] at (0,1.8) {$ \scriptstyle k^{\scriptstyle (x)}$};
         \node[left] at (-0.1,0.9) {$ \scriptstyle j^{ \scriptstyle (x)}$};
\end{tikzpicture}}
\;
=
\;
\hackcenter{
\begin{tikzpicture}[scale=.8]
  \draw[ultra thick, blue] (0,0)--(0,0.9);
    \draw[ultra thick, green] (0,0.9)--(0,1.8);
      \draw[thick, fill=yellow]  (0,0.9) circle (9pt);
    \node at (0,0.9) {$\scriptstyle hf$};
     \node[below] at (0,0) {$ \scriptstyle i^{ \scriptstyle (x)}$};
      \node[above] at (0,1.8) {$  \scriptstyle k^{\scriptstyle (x)}$};
\end{tikzpicture}}
\qquad
\qquad
\hackcenter{
\begin{tikzpicture}[scale=.8]
  \draw[ultra thick, blue] (0,0)--(0,0.9);
    \draw[ultra thick, red] (0,0.9)--(0,1.8);
     \draw[thick, fill=yellow]  (0,0.9) circle (11pt);
    \node at (0,0.9) {$ \scriptstyle f\hspace{-0.3mm}+\hspace{-0.2mm}g$};
     \node[below] at (0,0) {$ \scriptstyle i^{ \scriptstyle (x)}$};
      \node[above] at (0,1.8) {$ \scriptstyle j^{ \scriptstyle (x)}$};
\end{tikzpicture}}
\;
=
\sum_{t =0}^{x}
\;
\hackcenter{
\begin{tikzpicture}[scale=.8]
  \draw[ultra thick, blue] (0,0)--(0,0.1) .. controls ++(0,0.35) and ++(0,-0.35) .. (-0.4,0.6)--(-0.4,0.9); 
 \draw[ultra thick, red] (-0.4,0.9)--(-0.4,1.2) .. controls ++(0,0.35) and ++(0,-0.35) .. (0,1.7)--(0,1.8);
  \draw[ultra thick, blue] (0,0)--(0,0.1) .. controls ++(0,0.35) and ++(0,-0.35) .. (0.4,0.6)--(0.4,0.9); 
 \draw[ultra thick, red] (0.4,0.9)--(0.4,1.2) .. controls ++(0,0.35) and ++(0,-0.35) .. (0,1.7)--(0,1.8);
 \draw[thick, fill=yellow]  (-0.4,0.9) circle (7pt);
    \draw[thick, fill=yellow]  (0.4,0.9) circle (7pt);
       \node[above] at (0,1.8) {$ \scriptstyle j^{ \scriptstyle (x)}$};
         \node[below] at (0,0) {$ \scriptstyle i^{ \scriptstyle (x)}$};
           \node at (-0.4,0.9) {$ \scriptstyle f$};
             \node at (0.4,0.9) {$ \scriptstyle g$};
             \node[right] at (0.3,0.3) {$ \scriptstyle i^{\scriptstyle (x-t)}$};
               \node[left] at (-0.3,0.3) {$ \scriptstyle i^{\scriptstyle (t)}$};
\end{tikzpicture}}
\end{align}
%%%%%%%%%%%%%%%%%%%%%END A RELATIONS

%%%%%%%%%%%%%%%%%%%%%BEGIN ODD KNOTHOLE
{\em Odd knothole annihilation.} For all \(i,j \in I\), \(f \in jA_{\bar 1}i\):
\begin{align}\label{OddKnotholeRel}
\hackcenter{}
\hackcenter{
\begin{tikzpicture}[scale=.8]
  \draw[ultra thick, blue] (0,0)--(0,0.1) .. controls ++(0,0.35) and ++(0,-0.35) .. (-0.4,0.6)--(-0.4,0.9); 
 \draw[ultra thick, red] (-0.4,0.9)--(-0.4,1.2) .. controls ++(0,0.35) and ++(0,-0.35) .. (0,1.7)--(0,1.8);
  \draw[ultra thick, blue] (0,0)--(0,0.1) .. controls ++(0,0.35) and ++(0,-0.35) .. (0.4,0.6)--(0.4,0.9); 
 \draw[ultra thick, red] (0.4,0.9)--(0.4,1.2) .. controls ++(0,0.35) and ++(0,-0.35) .. (0,1.7)--(0,1.8);
 \draw[thick, fill=yellow]  (-0.4,0.9) circle (7pt);
    \draw[thick, fill=yellow]  (0.4,0.9) circle (7pt);
       \node[above] at (0,1.8) {$  \scriptstyle j^{ \scriptstyle (2)}$};
         \node[below] at (0,0) {$  \scriptstyle i^{ \scriptstyle (2)}$};
           \node at (-0.4,0.9) {$ \scriptstyle f$};
             \node at (0.4,0.9) {$ \scriptstyle f$};
             \node[right] at (0.3,0.3) {$ \scriptstyle  i^{ \scriptstyle (1)}$};
               \node[left] at (-0.3,0.3) {$ \scriptstyle  i^{ \scriptstyle (1)}$};
\end{tikzpicture}}
=
0
\end{align}
%%%%%%%%%%%%%%%%%%%%%END ODD KNOTHOLE

%%%%%%%%%%%%%%%%%%%%%BEGIN A-SPLIT/MERGE
{\em \(A\)-split/merge relations.} For all \(i,j \in I, x,y \in \Z_{\geq 0}, f \in jA^{(x+y)}i\):
\begin{align}\label{TAaSMRel}
\hackcenter{}
\hackcenter{
\begin{tikzpicture}[scale=.8]
  \draw[ultra thick, red] (0,0.5)--(0,1) .. controls ++(0,0.35) and ++(0,-0.35) .. (-0.4,1.7)--(-0.4,1.8);
  \draw[ultra thick, red] (0,0.5)--(0,1) .. controls ++(0,0.35) and ++(0,-0.35) .. (0.4,1.7)--(0.4,1.8);
  \draw[ultra thick, blue] (0,0)--(0,0.5);
     \draw[thick, fill=yellow]  (0,0.5) circle (7pt);
    \node at (0,0.5) {$ \scriptstyle f$};
         \node[below] at (0,0) {$ \scriptstyle i^{ \scriptstyle (x+y)}$};
      \node[above] at (-0.4,1.8) {$ \scriptstyle j^{ \scriptstyle (x)}$};
            \node[above] at (0.4,1.8) {$ \scriptstyle j^{ \scriptstyle (y)}$};
\end{tikzpicture}}
\;
=
\;
\hackcenter{
\begin{tikzpicture}[scale=.8]
  \draw[ultra thick, blue] (0,0)--(0,0.2) .. controls ++(0,0.35) and ++(0,-0.35) .. (-0.4,0.9)--(-0.4,1)--(-0.4,1.3);
  \draw[ultra thick, blue] (0,0)--(0,0.2) .. controls ++(0,0.35) and ++(0,-0.35) .. (0.4,0.9)--(0.4,1)--(0.4,1.3);
  \draw[ultra thick, red] (-0.4,1.3)--(-0.4,1.8);
    \draw[ultra thick, red] (0.4,1.3)--(0.4,1.8);
     \draw[thick, fill=yellow]  (-0.4,1.3) circle (7pt);
     \draw[thick, fill=yellow]  (0.4,1.3) circle (7pt);
    \node at (-0.4,1.3) {$ \scriptstyle f$};
      \node at (0.4,1.3) {$ \scriptstyle f$};
         \node[below] at (0,0) {$ \scriptstyle i^{ \scriptstyle (x+y)}$};
      \node[above] at (-0.4,1.8) {$ \scriptstyle j^{ \scriptstyle (x)}$};
            \node[above] at (0.4,1.8) {$ \scriptstyle j^{ \scriptstyle (y)}$};
\end{tikzpicture}}
\qquad
\qquad
\hackcenter{
\begin{tikzpicture}[scale=.8]
  \draw[ultra thick, blue] (0,-0.5)--(0,-1) .. controls ++(0,-0.35) and ++(0,0.35) .. (-0.4,-1.7)--(-0.4,-1.8);
  \draw[ultra thick, blue] (0,-0.5)--(0,-1) .. controls ++(0,-0.35) and ++(0,0.35) .. (0.4,-1.7)--(0.4,-1.8);
  \draw[ultra thick, red] (0,0)--(0,-0.5);
     \draw[thick, fill=yellow]  (0,-0.5) circle (7pt);
    \node at (0,-0.5) {$ \scriptstyle f$};
         \node[above] at (0,0) {$ \scriptstyle  j^{ \scriptstyle (x+y)}$};
      \node[below] at (-0.4,-1.8) {$ \scriptstyle i^{ \scriptstyle (x)}$};
            \node[below] at (0.4,-1.8) {$ \scriptstyle i^{ \scriptstyle (y)}$};
\end{tikzpicture}}
\;
=
\;
\hackcenter{
\begin{tikzpicture}[scale=.8]
  \draw[ultra thick, red] (0,0)--(0,-0.2) .. controls ++(0,-0.35) and ++(0,0.35) .. (-0.4,-0.9)--(-0.4,-1)--(-0.4,-1.3);
  \draw[ultra thick, red] (0,0)--(0,-0.2) .. controls ++(0,-0.35) and ++(0,0.35) .. (0.4,-0.9)--(0.4,-1)--(0.4,-1.3);
  \draw[ultra thick, blue] (-0.4,-1.3)--(-0.4,-1.8);
    \draw[ultra thick, blue] (0.4,-1.3)--(0.4,-1.8);
     \draw[thick, fill=yellow]  (-0.4,-1.3) circle (7pt);
     \draw[thick, fill=yellow]  (0.4,-1.3) circle (7pt);
    \node at (-0.4,-1.3) {$ \scriptstyle f$};
      \node at (0.4,-1.3) {$ \scriptstyle f$};
         \node[above] at (0,0) {$ \scriptstyle j^{\scriptstyle (x+y)}$};
      \node[below] at (-0.4,-1.8) {$ \scriptstyle i^{ \scriptstyle (x)}$};
            \node[below] at (0.4,-1.8) {$ \scriptstyle i^{ \scriptstyle (y)}$};
\end{tikzpicture}}
\end{align}
%%%%%%%%%%%%%%%%%%%%%END A-SPLIT/MERGE

%%%%%%%%%%%%%%%%%%%%%BEGIN A-INTERTWINE
{\em \(A\)-intertwining relations.} For all \(i,j,k \in I, x,y \in \Z_{\geq 0}, f \in jA^{(x)}i\):
\begin{align}\label{AaIntertwine}
\hackcenter{}
\hackcenter{
\begin{tikzpicture}[scale=.8]
  \draw[ultra thick, blue] (0,0)--(0,0.5);
  \draw[ultra thick, red] (0,0.5)--(0,1) .. controls ++(0,0.35) and ++(0,-0.35)  .. (0.8,1.8)--(0.8,2); 
    \draw[ultra thick, green] (0.8,0)--(0.8,1) .. controls ++(0,0.35) and ++(0,-0.35)  .. (0,1.8)--(0,2); 
    %\draw[thick] (0,0)--(0,-0.2) .. controls ++(0,-0.35) and ++(0,0.35) .. (0.4,-0.9)--(0.4,-1) 
  %.. controls ++(0,-0.35) and ++(0,0.35) .. (0.8,-1.7)--(0.8,-1.8); 
  %\draw[thick] (0,0)--(0,-0.2) .. controls ++(0,-0.5) and ++(0,0.5) .. (-0.8,-1.5)--(-0.8,-1.8);
        \draw[thick, fill=yellow]  (0,0.5) circle (7pt);
    \node at (0,0.5) {$ \scriptstyle f$};
     \node[below] at (0,0) {$ \scriptstyle i^{ \scriptstyle (x)}$};
     \node[below] at (0.8,0) {$  \scriptstyle  k^{ \scriptstyle (y)}$};
     \node[above] at (0,2) {$  \scriptstyle  k^{ \scriptstyle (y)}$};
      \node[above] at (0.8,2) {$ \scriptstyle j^{ \scriptstyle (x)}$};
      %\node[below] at (0.4,0.5) {$1$};
      %\node[above] at (0.4,1.5) {$1$};
\end{tikzpicture}}
\;
=
\;
\hackcenter{
\begin{tikzpicture}[scale=.8]
  \draw[ultra thick, blue] (0,0)--(0,0.2) .. controls ++(0,0.35) and ++(0,-0.35)  .. (0.8,1)--(0.8,1.5); 
   \draw[ultra thick, red] (0.8,1.5)--(0.8,2); 
    \draw[ultra thick, green] (0.8,0)--(0.8,0.2) .. controls ++(0,0.35) and ++(0,-0.35)  .. (0,1)--(0,2); 
    %\draw[thick] (0,0)--(0,-0.2) .. controls ++(0,-0.35) and ++(0,0.35) .. (0.4,-0.9)--(0.4,-1) 
  %.. controls ++(0,-0.35) and ++(0,0.35) .. (0.8,-1.7)--(0.8,-1.8); 
  %\draw[thick] (0,0)--(0,-0.2) .. controls ++(0,-0.5) and ++(0,0.5) .. (-0.8,-1.5)--(-0.8,-1.8);
        \draw[thick, fill=yellow]  (0.8,1.5) circle (7pt);
    \node at (0.8,1.5) {$ \scriptstyle f$};
     \node[below] at (0,0) {$ \scriptstyle i^{ \scriptstyle (x)}$};
     \node[below] at (0.8,0) {$  \scriptstyle  k^{ \scriptstyle (y)}$};
     \node[above] at (0,2) {$  \scriptstyle  k^{ \scriptstyle (y)}$};
      \node[above] at (0.8,2) {$ \scriptstyle j^{ \scriptstyle (x)}$};
      %\node[below] at (0.4,0.5) {$1$};
      %\node[above] at (0.4,1.5) {$1$};
\end{tikzpicture}}
\qquad
\qquad
\hackcenter{
\begin{tikzpicture}[scale=.8]
  \draw[ultra thick, green] (0,0)--(0,1) .. controls ++(0,0.35) and ++(0,-0.35)  .. (0.8,1.8)--(0.8,2); 
    \draw[ultra thick, red] (0.8,0.5)--(0.8,1) .. controls ++(0,0.35) and ++(0,-0.35)  .. (0,1.8)--(0,2); 
       \draw[ultra thick, blue] (0.8,0)--(0.8,0.5); 
    %\draw[thick] (0,0)--(0,-0.2) .. controls ++(0,-0.35) and ++(0,0.35) .. (0.4,-0.9)--(0.4,-1) 
  %.. controls ++(0,-0.35) and ++(0,0.35) .. (0.8,-1.7)--(0.8,-1.8); 
  %\draw[thick] (0,0)--(0,-0.2) .. controls ++(0,-0.5) and ++(0,0.5) .. (-0.8,-1.5)--(-0.8,-1.8);
        \draw[thick, fill=yellow]  (0.8,0.5) circle (7pt);
    \node at (0.8,0.5) {$ \scriptstyle f$};
     \node[below] at (0,0) {$ \scriptstyle  k^{ \scriptstyle (y)}$};
     \node[below] at (0.8,0) {$ \scriptstyle i^{ \scriptstyle (x)}$};
     \node[above] at (0,2) {$ \scriptstyle j^{ \scriptstyle (x)}$};
      \node[above] at (0.8,2) {$  \scriptstyle k^{ \scriptstyle (y)}$};
      %\node[below] at (0.4,0.5) {$1$};
      %\node[above] at (0.4,1.5) {$1$};
\end{tikzpicture}}
\;
=
\;
\hackcenter{
\begin{tikzpicture}[scale=.8]
  \draw[ultra thick, green] (0,0)--(0,0.2) .. controls ++(0,0.35) and ++(0,-0.35)  .. (0.8,1)--(0.8,2); 
     \draw[ultra thick, blue] (0.8,0)--(0.8,0.2) .. controls ++(0,0.35) and ++(0,-0.35)  .. (0,1)--(0,1.5); 
    \draw[ultra thick, red] (0,1.5)--(0,2); 
    %\draw[thick] (0,0)--(0,-0.2) .. controls ++(0,-0.35) and ++(0,0.35) .. (0.4,-0.9)--(0.4,-1) 
  %.. controls ++(0,-0.35) and ++(0,0.35) .. (0.8,-1.7)--(0.8,-1.8); 
  %\draw[thick] (0,0)--(0,-0.2) .. controls ++(0,-0.5) and ++(0,0.5) .. (-0.8,-1.5)--(-0.8,-1.8);
       \draw[thick, fill=yellow]  (0,1.5) circle (7pt);
    \node at (0,1.5) {$ \scriptstyle f$};
     \node[below] at (0,0) {$  \scriptstyle k^{ \scriptstyle (y)}$};
     \node[below] at (0.8,0) {$ \scriptstyle i^{ \scriptstyle (x)}$};
     \node[above] at (0,2) {$ \scriptstyle j^{ \scriptstyle (x)}$};
      \node[above] at (0.8,2) {$  \scriptstyle k^{ \scriptstyle (y)}$};
      %\node[below] at (0.4,0.5) {$1$};
      %\node[above] at (0.4,1.5) {$1$};
\end{tikzpicture}}
\end{align}
%%%%%%%%%%%%%%%%%%%%%END A-INTERTWINE
\end{definition}

%%%%%%%%%%%%%%%%%%%%%%%%%%%%%%%%%
%%%%%%%%%%%%%%%%%%%%%%%%%%%%%%%%%
%%%%%%%%%%%%%%%%%%%%%%%%%%%%%%%%%
%%%%%%%%%%%%%%%%%%%%%%%%%%%%%%%%%
%%%%%%%%%%%%%%%%%%%%%%%%%%%%%%%%%
%%%%%%%%%%%%%%%%%%%%%%%%%%%%%%%%%

\begin{remark}\label{monorels}
The relations \cref{Cox,SplitIntertwineRel,MergeIntertwineRel} are imposed in \(\WebAaI\) only for distinct combinations of colors in order to allow for a more minimal presentation. It will be established in \cref{arbcol} that these relations in fact hold in \(\WebAaI\) for arbitrary combinations of colors. 
\end{remark}

\begin{remark}
In the common situation where \(I= \{1\}\) (and hence \(A\) is a unital algebra as in the introduction), the relations need not be extensive as they appear. All strands in \(\WebAaOne\) will be monotonically colored and there is no contribution from relations \cref{Cox,SplitIntertwineRel,MergeIntertwineRel} which stipulate distinctly colored strands. However, monotonic versions of these relations nonetheless hold in this category, as noted in \cref{monorels}.
\end{remark}

\subsection{An alternate presentation of \texorpdfstring{$\WebAaI$}{WebAa}}\label{alterpres}
It will follow from \cref{imprels} that there exists a slightly modified presentation for \(\WebAaI\), as follows. We may define \(\WebAaI\) as in \cref{defwebaa}, save for replacing the relations \cref{MSrel,KnotholeRel} with the following two relations:

%%%%%%%%%%%%%%%%%%%%%%%%BEGIN RUNG SWAP
{\em Rung swap.} For all \(i \in I, x,y,r,s \in \Z_{\geq 0}\):
\begin{align}\label{DiagSwitchRel}
\hackcenter{}
\hackcenter{
\begin{tikzpicture}[scale=.8]
\draw[ultra thick, blue] (0,0)--(0,2);
\draw[ultra thick, blue] (1.6,0)--(1.6,2);
  \draw[ultra thick, blue] (0,0)--(0,0.2) .. controls ++(0,0.35) and ++(0,-0.35)  .. (1.6,0.9)--(1.6,2); 
    \draw[ultra thick, blue] (1.6,0)--(1.6,1.1) .. controls ++(0,0.35) and ++(0,-0.35)  .. (0,1.8)--(0,2); 
     \node[below] at (0,0) {$ \scriptstyle i^{ \scriptstyle (x)}$};
     \node[below] at (1.6,0) {$ \scriptstyle i^{ \scriptstyle (y)}$};
     \node[above] at (0,2) {$\scriptstyle i^{ \scriptstyle (x-s+r)} $};
      \node[above] at (2,2) {$\scriptstyle i^{ \scriptstyle (y+s-r)}$};
      \node[left] at (0,1) {$ \scriptstyle i^{ \scriptstyle (x-s)}$};
      \node[right] at (1.6,1) {$ \scriptstyle i^{ \scriptstyle (y+s)}$};
      \node[above] at (0.8,1.5) {$ \scriptstyle i^{ \scriptstyle (r)}$};
      \node[below] at (0.8,0.5) {$ \scriptstyle i^{ \scriptstyle (s)}$};
\end{tikzpicture}}
\;
=
\;
\sum_{t \in \Z_{\geq 0}}
{ x-y+r-s
\choose
t}
\hackcenter{
\begin{tikzpicture}[scale=.8]
\draw[ultra thick, blue] (0,0)--(0,2);
\draw[ultra thick, blue] (1.6,0)--(1.6,2);
  \draw[ultra thick, blue] (1.6,0)--(1.6,0.2) .. controls ++(0,0.35) and ++(0,-0.35)  .. (0,0.8)--(0,2); 
    \draw[ultra thick, blue] (0,0)--(0,1.2) .. controls ++(0,0.35) and ++(0,-0.35)  .. (1.6,1.8)--(1.6,2); 
      \node[below] at (0,0) {$ \scriptstyle i^{ \scriptstyle (x)}$};
     \node[below] at (1.6,0) {$ \scriptstyle i^{ \scriptstyle (y)}$};
     \node[above] at (0,2) {$ \scriptstyle i^{\scriptstyle (x-s+r)}$};
      \node[above] at (1.6,2) {$ \scriptstyle i^{ \scriptstyle (y+s-r)}$};
      \node[left] at (0,1) {$ \scriptstyle i^{ \scriptstyle (x+r-t)}$};
      \node[right] at (1.6,1) {$ \scriptstyle i^{ \scriptstyle (y-r+t)}$};
      \node[above] at (0.8,1.5) {$ \scriptstyle i^{ \scriptstyle (s-t)}$};
      \node[below] at (0.8,0.5) {$ \scriptstyle i^{ \scriptstyle (r-t)}$};
\end{tikzpicture}}
\end{align}
%%%%%%%%%%%%%%%%%%%%%%%%END RUNG SWAP

%%%%%%%%%%%%%%%%%%%%%%%%BEGIN CROSSING/WEB RELATION
{\em Monotone crossing relation.} For all \(i \in I, x,y \in \Z_{\geq 0}\):
\begin{align}\label{CrossingWebRel}
\hackcenter{}
\hackcenter{
\begin{tikzpicture}[scale=.8]
  \draw[ultra thick, blue] (0,0)--(0,0.2) .. controls ++(0,0.6) and ++(0,-0.6)  .. (1.4,1.8)--(1.4,2); 
    \draw[ultra thick, blue] (1.4,0)--(1.4,0.2) .. controls ++(0,0.6) and ++(0,-0.6)  .. (0,1.8)--(0,2); 
    %\draw[thick] (0,0)--(0,-0.2) .. controls ++(0,-0.35) and ++(0,0.35) .. (0.4,-0.9)--(0.4,-1) 
  %.. controls ++(0,-0.35) and ++(0,0.35) .. (0.8,-1.7)--(0.8,-1.8); 
  %\draw[thick] (0,0)--(0,-0.2) .. controls ++(0,-0.5) and ++(0,0.5) .. (-0.8,-1.5)--(-0.8,-1.8);
     \node[below] at (0,0) {$ \scriptstyle i^{ \scriptstyle (x)}$};
     \node[below] at (1.4,0) {$ \scriptstyle i^{ \scriptstyle (y)}$};
     \node[above] at (0,2) {$ \scriptstyle i^{ \scriptstyle (y)}$};
      \node[above] at (1.4,2) {$ \scriptstyle i^{ \scriptstyle (x)}$};
      %\node[below] at (0.4,0.5) {$1$};
      %\node[above] at (0.4,1.5) {$1$};
\end{tikzpicture}}
\;
=
\sum_{t \in \Z_{\geq 0}}
(-1)^{t}
\hackcenter{
\begin{tikzpicture}[scale=.8]
\draw[ultra thick, blue] (0,0)--(0,2);
\draw[ultra thick, blue] (1.4,0)--(1.4,2);
  \draw[ultra thick, blue] (0,0)--(0,0.2) .. controls ++(0,0.35) and ++(0,-0.35)  .. (1.4,0.9)--(1.4,2); 
    \draw[ultra thick, blue] (1.4,0)--(1.4,1.1) .. controls ++(0,0.35) and ++(0,-0.35)  .. (0,1.8)--(0,2); 
     \node[below] at (0,0) {$ \scriptstyle i^{ \scriptstyle (x)}$};
     \node[below] at (1.4,0) {$ \scriptstyle i^{ \scriptstyle (y)}$};
     \node[above] at (0,2) {$ \scriptstyle i^{\scriptstyle (y)}$};
      \node[above] at (1.4,2) {$ \scriptstyle i^{\scriptstyle (x)}$};
      \node[left] at (0,1) {$ \scriptstyle i^{\scriptstyle (t)}$};
      \node[right] at (1.4,1) {$ \scriptstyle i^{\scriptstyle (y+x-t)}$};
      \node[above] at (0.7,1.52) {$ \scriptstyle i^{\scriptstyle (y-t)}$};
      \node[below] at (0.7,0.5) {$ \scriptstyle i^{\scriptstyle (x-t)}$};
\end{tikzpicture}}
\end{align}
%%%%%%%%%%%%%%%%%%%%%%%%END CROSSING/WEB RELATION
In certain contexts, one or the other of these presentations may prove to be more useful.

\begin{remark}
Again, in the situation when \(I = \{1\}\) we may use \cref{CrossingWebRel} to {\em define} the sole monotone crossing morphism in \(\WebAaOne\) in terms of split and crossing morphisms, and hence do not require any crossing generators in our presentation.
\end{remark}

\section{Implied relations in \texorpdfstring{$\WebAaI$}{WebAa}}\label{imprels}
In this section we establish a number of useful relations in \(\WebAaI\), as well as the assertion from \cref{alterpres} that \(\WebAaI\) has an alternate presentation.    

\subsection{Equivalence of presentations.}
For clarity, throughout this section we will let \(\WebAaI\) designate the category defined in \cref{defwebaa}, and \(\WebAaIAlt\) designate the category defined in \cref{alterpres}.  It will be established in \cref{SAME} that these categories are isomorphic.
\begin{lemma}\label{Same1}
The relations \cref{DiagSwitchRel,CrossingWebRel} hold in \(\WebAaI\).
\end{lemma}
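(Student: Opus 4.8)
The plan is to derive \cref{DiagSwitchRel,CrossingWebRel} from the relations of \cref{defwebaa}. Since every strand occurring in either relation carries a single color $i \in I$, the only relations that can intervene are web-associativity \cref{AssocRel}, the merge-split relation \cref{MSrel}, and the knothole relation \cref{KnotholeRel}; in particular the monotone crossing of $i^{(x)}$ over $i^{(y)}$ is already constrained by \cref{MSrel}, since the crossing ladders on the right-hand side of \cref{MSrel} contain crossings. I would run the argument as an induction on the total thickness $x+y$, establishing the two relations in tandem, as they are entangled through those crossing ladders.

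I would begin with the rung-swap relation \cref{DiagSwitchRel}. Reading its left-hand side from the bottom up, it is the composite of a split $i^{(x)} \to i^{(x-s)} \otimes i^{(s)}$, a merge of the $i^{(s)}$-rung into the $i^{(y)}$-strand, a split $i^{(y+s)} \to i^{(r)} \otimes i^{(y+s-r)}$, and a merge of the $i^{(r)}$-rung back into the left strand. Using \cref{AssocRel} I would re-associate so that, on the right strand, the merge $i^{(y)} \otimes i^{(s)} \to i^{(y+s)}$ is immediately followed by the split $i^{(y+s)} \to i^{(r)} \otimes i^{(y+s-r)}$; then \cref{MSrel} rewrites that merge-split pair as a sum of crossing ladders, and a further application of \cref{AssocRel} (absorbing the leftover bottom-left split and top-left merge) identifies each summand --- apart from those killed by a strand of negative thickness --- as a rung-swap diagram with strictly smaller rung thicknesses, to which the inductive hypothesis applies. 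Comparing coefficients on the two sides then reduces to the generalized Vandermonde convolution, which produces the coefficient $\binom{x-y+r-s}{t}$ of \cref{DiagSwitchRel}. The base case $rs = 0$ is immediate: one rung is then trivial and the claimed sum collapses to its unique surviving term, which is the left-hand side by inspection.

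For the monotone crossing relation \cref{CrossingWebRel}, I would invoke \cref{MSrel} with the special parameters $(x,y,y,x)$: one of its crossing ladders is then exactly the monotone crossing of $i^{(x)}$ over $i^{(y)}$, while every other summand is a composite of splits, merges, and a crossing of strictly smaller thickness. Solving for the monotone crossing, rewriting the lower crossings by the inductive hypothesis, and simplifying the resulting composites of splits and merges with \cref{DiagSwitchRel} (just proved) and \cref{KnotholeRel}, the claim \cref{CrossingWebRel} becomes the assertion that a signed sum of rung-swap ladders telescopes, which follows from $\sum_{t}(-1)^{t}\binom{m}{t}=0$ for $m > 0$. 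The main obstacle will be the bookkeeping of the generalized binomial coefficients: each application of \cref{MSrel} yields nested sums of ladders whose coefficients must be reorganized into the single coefficient in \cref{DiagSwitchRel}, and this reorganization genuinely relies on the conventions that a diagram with a negative-thickness strand vanishes and that $\binom{n}{k}$ is defined for all $n \in \Z$.
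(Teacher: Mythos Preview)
Your overall plan (simultaneous induction on $x+y$) differs from the paper's direct, non-inductive argument, and your inductive step for \cref{DiagSwitchRel} has a genuine gap. When you apply \cref{MSrel} to the merge--split on the right strand, each summand is a \emph{crossing} ladder: the two diagonal rungs actually intersect, and that intersection is an instance of the crossing generator. Web-associativity \cref{AssocRel} only re-brackets splits and merges; it cannot remove a crossing. So after your ``further application of \cref{AssocRel}'' the summands are still crossing ladders, not rung-swap diagrams, and your inductive hypothesis for \cref{DiagSwitchRel} does not apply to them. To salvage the induction you would first have to invoke \cref{CrossingWebRel} on each sub-crossing, but then you must verify that these sub-crossings have strictly smaller total thickness (which fails in the boundary case $s=x$), and the cascade of rewrites this produces is considerably more involved than the single Vandermonde identity you anticipate.

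The paper sidesteps all of this by never attempting to eliminate the crossings. It uses \cref{MSrel}, \cref{AssocRel}, and \cref{KnotholeRel} to express \emph{both} sides of \cref{DiagSwitchRel} as $\k$-linear combinations of the \emph{same} family of crossing ladders, indexed by a single parameter $u$, and then checks that the two coefficient sequences agree via Chu--Vandermonde; no induction is required. The same device handles \cref{CrossingWebRel}: its right-hand side is expanded into that common family of crossing ladders, and the identity $\sum_{t}(-1)^{t}\binom{x-u}{t}=\delta_{x,u}$ collapses the sum to the single crossing ladder with $u=x$, whose side strands have thickness zero and which is therefore exactly the monotone crossing.
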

\begin{proof}
We have:
\begin{align}\label{simcomps}
\hackcenter{}
\hackcenter{
\begin{tikzpicture}[scale=.8]
\draw[ultra thick, blue] (0,0)--(0,2);
\draw[ultra thick, blue] (1.6,0)--(1.6,2);
  \draw[ultra thick, blue] (0,0)--(0,0.2) .. controls ++(0,0.35) and ++(0,-0.35)  .. (1.6,0.9)--(1.6,2); 
    \draw[ultra thick, blue] (1.6,0)--(1.6,1.1) .. controls ++(0,0.35) and ++(0,-0.35)  .. (0,1.8)--(0,2); 
     \node[below] at (0,0) {$ \scriptstyle i^{ \scriptstyle (x)}$};
     \node[below] at (1.6,0) {$ \scriptstyle i^{ \scriptstyle (y)}$};
     \node[above] at (0,2) {$\scriptstyle i^{ \scriptstyle (x-s+r)} $};
      \node[above] at (2,2) {$\scriptstyle i^{ \scriptstyle (y+s-r)}$};
      \node[left] at (0,1) {$ \scriptstyle i^{ \scriptstyle (x-s)}$};
      \node[right] at (1.6,1) {$ \scriptstyle i^{ \scriptstyle (y+s)}$};
      \node[above] at (0.8,1.5) {$ \scriptstyle i^{ \scriptstyle (r)}$};
      \node[below] at (0.8,0.5) {$ \scriptstyle i^{ \scriptstyle (s)}$};
\end{tikzpicture}}
\substack{\cref{MSrel}\\
=\\{}}
\sum_{u \in \Z_{\geq 0}}
\hackcenter{
\begin{tikzpicture}[scale=.8]
\draw[ultra thick, blue] (0,0)--(0,2);
\draw[ultra thick, blue] (1.6,0)--(1.6,2);
  \draw[ultra thick, blue] (0,0)--(0,0.2) .. controls ++(0,0.35) and ++(0,-0.35)  .. (0.4,0.6)--(0.4,1); 
    \draw[ultra thick, blue] (0.4,1)--(0.4,1.4) .. controls ++(0,0.35) and ++(0,-0.35)  .. (0,1.8)--(0,2); 
      \draw[ultra thick, blue] (0.4,0.6) .. controls ++(0,0.35) and ++(0,-0.35)  .. (1.6,1.7); 
        \draw[ultra thick, blue] (1.6,0.3) .. controls ++(0,0.35) and ++(0,-0.35)  .. (0.4,1.4); 
     \node[below] at (0,0) {$ \scriptstyle i^{ \scriptstyle (x)}$};
     \node[below] at (1.6,0) {$ \scriptstyle i^{ \scriptstyle (y)}$};
     \node[above] at (0,2) {$\scriptstyle i^{ \scriptstyle (x-s +r)}$};
      \node[above] at (2,2) {$\scriptstyle i^{ \scriptstyle (y+s-r)}$};
      \node[left] at (0,1) {$ \scriptstyle i^{ \scriptstyle (x-s)}$};
      \node[right] at (1.6,1) {$ \scriptstyle i^{ \scriptstyle (y+s)}$};
      \node[above] at (0.55,1.55) {$ \scriptstyle i^{ \scriptstyle (r)}$};
      \node[below] at (0.6,0.6) {$ \scriptstyle i^{ \scriptstyle (s)}$};
       \node[] at (1.05,1.55) {$ \scriptstyle i^{ \scriptstyle (u)}$};
\end{tikzpicture}}
\substack{\cref{AssocRel},\cref{KnotholeRel}\\
=\\{}}
\sum_{u \in \Z_{\geq 0}}
{ x- u \choose x-s}
\hspace{-5mm}
\hackcenter{
\begin{tikzpicture}[scale=.8]
\draw[ultra thick, blue] (0,0)--(0,2);
\draw[ultra thick, blue] (1.6,0)--(1.6,2);
      \draw[ultra thick, blue] (0,0.3) .. controls ++(0,0.35) and ++(0,-0.35)  .. (1.6,1.7); 
        \draw[ultra thick, blue] (1.6,0.3) .. controls ++(0,0.35) and ++(0,-0.35)  .. (0,1.7); 
     \node[below] at (0,0) {$ \scriptstyle i^{ \scriptstyle (x)}$};
     \node[below] at (1.6,0) {$ \scriptstyle i^{ \scriptstyle (y)}$};
     \node[above] at (0,2) {$\scriptstyle i^{ \scriptstyle (x-s+r)} $};
      \node[above] at (2,2) {$\scriptstyle i^{ \scriptstyle (y+s-r)}$};
       \node[] at (1.1,1.6) {$ \scriptstyle i^{ \scriptstyle (u)}$};
\end{tikzpicture}}.
\end{align}
On the other hand, by similar computations to \cref{simcomps}, we have that:
\begin{align*}
&
\sum_{t \in \Z_{\geq 0}}
{ x-y+r-s
\choose
t}
\hspace{-4mm}
\hackcenter{
\begin{tikzpicture}[scale=.8]
\draw[ultra thick, blue] (0,0)--(0,2);
\draw[ultra thick, blue] (1.6,0)--(1.6,2);
  \draw[ultra thick, blue] (1.6,0)--(1.6,0.2) .. controls ++(0,0.35) and ++(0,-0.35)  .. (0,0.8)--(0,2); 
    \draw[ultra thick, blue] (0,0)--(0,1.2) .. controls ++(0,0.35) and ++(0,-0.35)  .. (1.6,1.8)--(1.6,2); 
      \node[below] at (0,0) {$ \scriptstyle i^{ \scriptstyle (x)}$};
     \node[below] at (1.6,0) {$ \scriptstyle i^{ \scriptstyle (y)}$};
     \node[above] at (0,2) {$ \scriptstyle i^{\scriptstyle (x-s+r)}$};
      \node[above] at (1.6,2) {$ \scriptstyle i^{ \scriptstyle (y+s-r)}$};
      %\node[left] at (0,1) {$ \scriptstyle i^{ \scriptstyle (x+r-t)}$};
      %\node[right] at (1.6,1) {$ \scriptstyle i^{ \scriptstyle (y-r+t)}$};
      \node[above] at (0.8,1.5) {$ \scriptstyle i^{ \scriptstyle (s-t)}$};
      \node[below] at (0.8,0.5) {$ \scriptstyle i^{ \scriptstyle (r-t)}$};
\end{tikzpicture}}
=
\sum_{t, u \in \Z_{\geq 0}}
{ x-y+r-s
\choose
t}
{y -r + s - u \choose s- t - u}
\hspace{-4mm}
\hackcenter{
\begin{tikzpicture}[scale=.8]
\draw[ultra thick, blue] (0,0)--(0,2);
\draw[ultra thick, blue] (1.6,0)--(1.6,2);
      \draw[ultra thick, blue] (0,0.3) .. controls ++(0,0.35) and ++(0,-0.35)  .. (1.6,1.7); 
        \draw[ultra thick, blue] (1.6,0.3) .. controls ++(0,0.35) and ++(0,-0.35)  .. (0,1.7); 
     \node[below] at (0,0) {$ \scriptstyle i^{ \scriptstyle (x)}$};
     \node[below] at (1.6,0) {$ \scriptstyle i^{ \scriptstyle (y)}$};
     \node[above] at (0,2) {$\scriptstyle i^{ \scriptstyle (x-s+r)} $};
      \node[above] at (2,2) {$\scriptstyle i^{ \scriptstyle (y+s-r)}$};
       \node[] at (1.1,1.6) {$ \scriptstyle i^{ \scriptstyle (u)}$};
\end{tikzpicture}}.
\end{align*}
But, by the Chu-Vandermonde identity, we have that
\begin{align*}
{x-u \choose x-s} = \sum_{u \in \Z_{\geq 0}}{ x-y+r-s
\choose
t}
{y -r + s - u \choose s- t - u},
\end{align*}
so \cref{DiagSwitchRel} holds. Furthermore, we have that
\begin{align*}
\sum_{t \in \Z_{\geq 0}}
(-1)^{t}
\hackcenter{
\begin{tikzpicture}[scale=.8]
\draw[ultra thick, blue] (0,0)--(0,2);
\draw[ultra thick, blue] (1.4,0)--(1.4,2);
  \draw[ultra thick, blue] (0,0)--(0,0.2) .. controls ++(0,0.35) and ++(0,-0.35)  .. (1.4,0.9)--(1.4,2); 
    \draw[ultra thick, blue] (1.4,0)--(1.4,1.1) .. controls ++(0,0.35) and ++(0,-0.35)  .. (0,1.8)--(0,2); 
     \node[below] at (0,0) {$ \scriptstyle i^{ \scriptstyle (x)}$};
     \node[below] at (1.4,0) {$ \scriptstyle i^{ \scriptstyle (y)}$};
     \node[above] at (0,2) {$ \scriptstyle i^{\scriptstyle (y)}$};
      \node[above] at (1.4,2) {$ \scriptstyle i^{\scriptstyle (x)}$};
      \node[left] at (0,1) {$ \scriptstyle i^{\scriptstyle (t)}$};
      \node[right] at (1.4,1) {$ \scriptstyle i^{\scriptstyle (y+x-t)}$};
      \node[above] at (0.7,1.52) {$ \scriptstyle i^{\scriptstyle (y-t)}$};
      \node[below] at (0.7,0.5) {$ \scriptstyle i^{\scriptstyle (x-t)}$};
\end{tikzpicture}}
=
\sum_{t,u \in \Z_{\geq 0}}
(-1)^t
{ x- u \choose t}
\hackcenter{
\begin{tikzpicture}[scale=.8]
\draw[ultra thick, blue] (0,0)--(0,2);
\draw[ultra thick, blue] (1.6,0)--(1.6,2);
      \draw[ultra thick, blue] (0,0.3) .. controls ++(0,0.35) and ++(0,-0.35)  .. (1.6,1.7); 
        \draw[ultra thick, blue] (1.6,0.3) .. controls ++(0,0.35) and ++(0,-0.35)  .. (0,1.7); 
     \node[below] at (0,0) {$ \scriptstyle i^{ \scriptstyle (x)}$};
     \node[below] at (1.6,0) {$ \scriptstyle i^{ \scriptstyle (y)}$};
     \node[above] at (0,2) {$\scriptstyle i^{ \scriptstyle (y)} $};
      \node[above] at (1.6,2) {$\scriptstyle i^{ \scriptstyle (x)}$};
       \node[] at (1.1,1.6) {$ \scriptstyle i^{ \scriptstyle (u)}$};
\end{tikzpicture}}
=
\hackcenter{
\begin{tikzpicture}[scale=.8]
  \draw[ultra thick, blue] (0,0)--(0,0.2) .. controls ++(0,0.6) and ++(0,-0.6)  .. (1.4,1.8)--(1.4,2); 
    \draw[ultra thick, blue] (1.4,0)--(1.4,0.2) .. controls ++(0,0.6) and ++(0,-0.6)  .. (0,1.8)--(0,2); 
    %\draw[thick] (0,0)--(0,-0.2) .. controls ++(0,-0.35) and ++(0,0.35) .. (0.4,-0.9)--(0.4,-1) 
  %.. controls ++(0,-0.35) and ++(0,0.35) .. (0.8,-1.7)--(0.8,-1.8); 
  %\draw[thick] (0,0)--(0,-0.2) .. controls ++(0,-0.5) and ++(0,0.5) .. (-0.8,-1.5)--(-0.8,-1.8);
     \node[below] at (0,0) {$ \scriptstyle i^{ \scriptstyle (x)}$};
     \node[below] at (1.4,0) {$ \scriptstyle i^{ \scriptstyle (y)}$};
     \node[above] at (0,2) {$ \scriptstyle i^{ \scriptstyle (y)}$};
      \node[above] at (1.4,2) {$ \scriptstyle i^{ \scriptstyle (x)}$};
      %\node[below] at (0.4,0.5) {$1$};
      %\node[above] at (0.4,1.5) {$1$};
\end{tikzpicture}},
\end{align*}
where the first equality follows by similar computations to \cref{simcomps}, and the second equality follows from the binomial theorem, since \(\sum_{t \in \Z_{\geq 0}} (-1)^t {x - u \choose t} = \delta_{x,u}\). Thus \cref{CrossingWebRel} holds as well.
\end{proof}

\begin{lemma}\label{Same2}
The relations \cref{MSrel,KnotholeRel} hold in \(\WebAaIAlt\).
\end{lemma}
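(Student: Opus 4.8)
The plan is to derive each of \cref{MSrel} and \cref{KnotholeRel} inside \(\WebAaIAlt\) using only \cref{DiagSwitchRel}, \cref{CrossingWebRel}, and the relations shared by the two presentations (above all web-associativity \cref{AssocRel}); together with \cref{Same1} this produces mutually inverse identity-on-generators monoidal functors \(\WebAaI \leftrightarrows \WebAaIAlt\) and hence the isomorphism \cref{SAME}. I would begin with the knothole relation, since it falls out as a pure degeneration of the rung swap: specialize \cref{DiagSwitchRel} to \(y = 0\) and \(r = s\). On the left-hand side the right strand then has thickness \(0\), so after deleting thickness-\(0\) strands both rungs are forced to carry the entire adjacent strand and the diagram collapses to exactly the split-then-merge bigon of \cref{KnotholeRel} — no invocation of \cref{AssocRel} is even required. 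On the right-hand side, the outgoing split of the thickness-\(0\) strand forces \(r-t = 0 = t-r\), so only the \(t = s\) summand survives, and it evaluates to \(\binom{x}{s}\,\mathrm{id}\). Relabelling the split thicknesses gives \cref{KnotholeRel} verbatim.

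For \cref{MSrel} I would use the complementary degeneration \(s = x\) of \cref{DiagSwitchRel} (and rename \(r\) to \(z\), so \(w = x+y-z\)): now it is the left strand between the two rungs that has thickness \(0\), and the left-hand side of \cref{DiagSwitchRel} collapses precisely to the merge-then-split diagram appearing on the left of \cref{MSrel}, while its right-hand side becomes a \(\binom{z-y}{t}\)-weighted sum of crossingless ("monotone ladder") diagrams. It then remains to identify this weighted sum with the right-hand side of \cref{MSrel}. Each summand of the latter contains a single crossing; I would rewrite that crossing via the monotone crossing relation \cref{CrossingWebRel} as an alternating binomial sum of monotone ladders, then use web-associativity \cref{AssocRel} — together with the knothole relation just established, to collapse any bigons that appear — to transport both expressions into a single explicit \(\k\)-linear combination of a fixed family of monotone ladder diagrams. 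Equality of the two combinations then reduces to a binomial coefficient identity, namely a Chu--Vandermonde summation, exactly as in the computations of \cref{Same1} (see \cref{simcomps}).

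The substantive work is entirely in this second step. Crucially, no basis for the morphism spaces of \(\WebAaIAlt\) is available at this point — such a basis is obtained only later, using precisely the diagrammatic description being set up — so one cannot simply "compare coefficients": both sides must be moved, using the available relations alone, into literally the same linear combination of diagrams. The bookkeeping — tracking exactly which pairs of strands cross, applying \cref{CrossingWebRel} to each, and reorganizing the resulting double (and triple) sums by \cref{AssocRel} into a common normal form, then recognizing the resulting combinatorial identity as Chu--Vandermonde — is where essentially all the difficulty lies; the first step, by contrast, is immediate once the correct specialization of \cref{DiagSwitchRel} is spotted.
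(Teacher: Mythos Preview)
Your treatment of \cref{KnotholeRel} is correct and is exactly what the paper does: it is the degeneration \(y=0\) (with \(r=s\)) of \cref{DiagSwitchRel}.

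Your route to \cref{MSrel} is genuinely different from the paper's. The paper does \emph{not} specialize \cref{DiagSwitchRel} at \(s=x\); instead it argues by induction on \(x+y\). In the case \(y=z\) it subtracts the monotone crossing from the merge--split, expands via \cref{CrossingWebRel}, and then applies the \emph{induction hypothesis} (i.e.\ \cref{MSrel} for a strictly smaller thickness) to a sub-merge--split appearing inside each resulting ladder; after \cref{AssocRel} and the already-established \cref{KnotholeRel} this collapses to the desired sum by the binomial theorem. The case \(y\neq z\) is then reduced to this one using \cref{DiagSwitchRel} and an index-shift identity.

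Your direct strategy can be made to work, but not with the toolkit you list. After you resolve the crossing in each \(X_t\) by \cref{CrossingWebRel} you obtain diagrams of the shape ``outer split / inner Type~A ladder / outer merge''. Associativity and one knothole collapse the \emph{left} half, but on the right you are left with a three-strand picture in which the middle strand undergoes a merge followed by a split --- precisely a smaller instance of the merge--split you are trying to prove. That sub-configuration cannot be removed with \cref{AssocRel} and \cref{KnotholeRel} alone; you need either (i) an inductive appeal to \cref{MSrel} at smaller thickness (which is what the paper does), or (ii) a further application of \cref{DiagSwitchRel} to the inner Type~A ladder, converting it to Type~B so that the bottom rung can be absorbed into the \(y\)-split by split-associativity and the remaining bigon collapsed by \cref{KnotholeRel}. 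Option (ii) does go through and the final bookkeeping is a Chu--Vandermonde sum as you predict, but it costs a second layer of rung-swaps that your write-up does not mention. In short: the degeneration \(s=x\) is a nice observation, but ``\cref{AssocRel} plus \cref{KnotholeRel}'' is not enough to finish --- you must also feed \cref{DiagSwitchRel} back in, or use induction as the paper does.
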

\begin{proof}
First, we note that \cref{KnotholeRel} can be seen as the special case \(y = 0\) in \cref{DiagSwitchRel}, so we now focus on establishing \cref{MSrel}.
We go by induction on \(n:=x+y\), with the base case \(n=0\) being trivial. Fix \(n\) and assume the claim holds for all \(x'+y'<n\). We first consider the case \(y=z\). We have
\begin{align*}
\hackcenter{}
\hackcenter{
\begin{tikzpicture}[scale=0.8]
  \draw[ultra thick, blue] (0,0)--(0,0.2) .. controls ++(0,0.35) and ++(0,-0.35)  .. (0.7,0.8)--(0.7,1.2)
  .. controls ++(0,0.35) and ++(0,-0.35)  .. (0,1.8)--(0,2); 
    \draw[ultra thick, blue] (1.4,0)--(1.4,0.2) .. controls ++(0,0.35) and ++(0,-0.35)  .. (0.7,0.8)--(0.7,1.2)
  .. controls ++(0,0.35) and ++(0,-0.35)  .. (1.4,1.8)--(1.4,2); 
    %\draw[thick] (0,0)--(0,-0.2) .. controls ++(0,-0.35) and ++(0,0.35) .. (0.4,-0.9)--(0.4,-1) 
  %.. controls ++(0,-0.35) and ++(0,0.35) .. (0.8,-1.7)--(0.8,-1.8); 
  %\draw[thick] (0,0)--(0,-0.2) .. controls ++(0,-0.5) and ++(0,0.5) .. (-0.8,-1.5)--(-0.8,-1.8);
     \node[below] at (0,0) {$\scriptstyle i^{\scriptstyle (x)}$};
     \node[below] at (1.4,0) {$\scriptstyle i^{\scriptstyle (y)}$};
      \node[above] at (0,2) {$\scriptstyle i^{\scriptstyle (y)}$};
      \node[above] at (1.4,2) {$\scriptstyle i^{\scriptstyle (x)}$};
      %\node[below] at (0.4,0.5) {$1$};
      %\node[above] at (0.4,1.5) {$1$};
\end{tikzpicture}}
-
\hackcenter{
\begin{tikzpicture}[scale=.8]
  \draw[ultra thick, blue] (0,0)--(0,0.2) .. controls ++(0,0.6) and ++(0,-0.6)  .. (1.4,1.8)--(1.4,2); 
    \draw[ultra thick, blue] (1.4,0)--(1.4,0.2) .. controls ++(0,0.6) and ++(0,-0.6)  .. (0,1.8)--(0,2); 
    %\draw[thick] (0,0)--(0,-0.2) .. controls ++(0,-0.35) and ++(0,0.35) .. (0.4,-0.9)--(0.4,-1) 
  %.. controls ++(0,-0.35) and ++(0,0.35) .. (0.8,-1.7)--(0.8,-1.8); 
  %\draw[thick] (0,0)--(0,-0.2) .. controls ++(0,-0.5) and ++(0,0.5) .. (-0.8,-1.5)--(-0.8,-1.8);
     \node[below] at (0,0) {$ \scriptstyle i^{ \scriptstyle (x)}$};
     \node[below] at (1.4,0) {$ \scriptstyle i^{ \scriptstyle (y)}$};
     \node[above] at (0,2) {$ \scriptstyle i^{ \scriptstyle (y)}$};
      \node[above] at (1.4,2) {$ \scriptstyle i^{ \scriptstyle (x)}$};
      %\node[below] at (0.4,0.5) {$1$};
      %\node[above] at (0.4,1.5) {$1$};
\end{tikzpicture}}
&
\substack{
\cref{CrossingWebRel}\\
=\\
{}
}
\sum_{t \in \Z_{\geq 1}}
(-1)^{t+1}
\hackcenter{
\begin{tikzpicture}[scale=.8]
\draw[ultra thick, blue] (0,0)--(0,2);
\draw[ultra thick, blue] (1.4,0)--(1.4,2);
  \draw[ultra thick, blue] (0,0)--(0,0.2) .. controls ++(0,0.35) and ++(0,-0.35)  .. (1.4,0.9)--(1.4,2); 
    \draw[ultra thick, blue] (1.4,0)--(1.4,1.1) .. controls ++(0,0.35) and ++(0,-0.35)  .. (0,1.8)--(0,2); 
     \node[below] at (0,0) {$ \scriptstyle i^{ \scriptstyle (x)}$};
     \node[below] at (1.4,0) {$ \scriptstyle i^{ \scriptstyle (y)}$};
     \node[above] at (0,2) {$ \scriptstyle i^{\scriptstyle (y)}$};
      \node[above] at (1.4,2) {$ \scriptstyle i^{\scriptstyle (x)}$};
      \node[left] at (0,1) {$ \scriptstyle i^{\scriptstyle (t)}$};
     % \node[right] at (1.4,1) {$ \scriptstyle i^{\scriptstyle (y+x-t)}$};
      \node[above] at (0.7,1.52) {$ \scriptstyle i^{\scriptstyle (y-t)}$};
      \node[below] at (0.7,0.5) {$ \scriptstyle i^{\scriptstyle (x-t)}$};
\end{tikzpicture}}
=
\sum_{
\substack{
t \in \Z_{\geq 1}\\
u \in \Z_{\geq 0}
}
}
(-1)^{t+1}
\hackcenter{
\begin{tikzpicture}[scale=.8]
\draw[ultra thick, blue] (0,0)--(0,2);
\draw[ultra thick, blue] (1.6,0)--(1.6,2);
  \draw[ultra thick, blue] (0,0)--(0,0.2) .. controls ++(0,0.35) and ++(0,-0.35)  .. (0.4,0.6)--(0.4,1); 
    \draw[ultra thick, blue] (0.4,1)--(0.4,1.4) .. controls ++(0,0.35) and ++(0,-0.35)  .. (0,1.8)--(0,2); 
      \draw[ultra thick, blue] (0.4,0.6) .. controls ++(0,0.35) and ++(0,-0.35)  .. (1.6,1.7); 
        \draw[ultra thick, blue] (1.6,0.3) .. controls ++(0,0.35) and ++(0,-0.35)  .. (0.4,1.4); 
     \node[below] at (0,0) {$ \scriptstyle i^{ \scriptstyle (x)}$};
     \node[below] at (1.6,0) {$ \scriptstyle i^{ \scriptstyle (y)}$};
     \node[above] at (0,2) {$\scriptstyle i^{ \scriptstyle (y)} $};
      \node[above] at (2,2) {$\scriptstyle i^{ \scriptstyle (x)}$};
      \node[left] at (0,1) {$ \scriptstyle i^{ \scriptstyle (t)}$};
      \node[right] at (1.6,1) {$ \scriptstyle i^{ \scriptstyle (x-u)}$};
      \node[above] at (0.65,1.55) {$ \scriptstyle i^{ \scriptstyle (y-t)}$};
      \node[below] at (0.65,0.55) {$ \scriptstyle i^{ \scriptstyle (x-t)}$};
       \node[] at (1.05,1.55) {$ \scriptstyle i^{ \scriptstyle (u)}$};
\end{tikzpicture}}
\\
&
\substack{
\cref{AssocRel},\cref{KnotholeRel}
\\
=
\\
{}
}
\sum_{
\substack{
t \in \Z_{\geq 1}\\
u \in \Z_{\geq 0}
}
}
(-1)^{t+1}
{x-u \choose t}
\hackcenter{
\begin{tikzpicture}[scale=.8]
\draw[ultra thick, blue] (0,0)--(0,2);t
\draw[ultra thick, blue] (1.6,0)--(1.6,2);
      \draw[ultra thick, blue] (0,0.3) .. controls ++(0,0.35) and ++(0,-0.35)  .. (1.6,1.7); 
        \draw[ultra thick, blue] (1.6,0.3) .. controls ++(0,0.35) and ++(0,-0.35)  .. (0,1.7); 
     \node[below] at (0,0) {$ \scriptstyle i^{ \scriptstyle (x)}$};
     \node[below] at (1.6,0) {$ \scriptstyle i^{ \scriptstyle (y)}$};
     \node[above] at (0,2) {$\scriptstyle i^{ \scriptstyle (y)} $};
      \node[above] at (2,2) {$\scriptstyle i^{ \scriptstyle (x)}$};
       \node[] at (1.1,1.6) {$ \scriptstyle i^{ \scriptstyle (u)}$};
\end{tikzpicture}}
=
\sum_{u =0}^{x-1}
\hackcenter{
\begin{tikzpicture}[scale=.8]
\draw[ultra thick, blue] (0,0)--(0,2);t
\draw[ultra thick, blue] (1.6,0)--(1.6,2);
      \draw[ultra thick, blue] (0,0.3) .. controls ++(0,0.35) and ++(0,-0.35)  .. (1.6,1.7); 
        \draw[ultra thick, blue] (1.6,0.3) .. controls ++(0,0.35) and ++(0,-0.35)  .. (0,1.7); 
     \node[below] at (0,0) {$ \scriptstyle i^{ \scriptstyle (x)}$};
     \node[below] at (1.6,0) {$ \scriptstyle i^{ \scriptstyle (y)}$};
     \node[above] at (0,2) {$\scriptstyle i^{ \scriptstyle (y)} $};
      \node[above] at (2,2) {$\scriptstyle i^{ \scriptstyle (x)}$};
       \node[] at (1.1,1.6) {$ \scriptstyle i^{ \scriptstyle (u)}$};
\end{tikzpicture}},
\end{align*}
where the second equality follows by the induction assumption, and the fourth equality follows from the fact that \(\sum_{t \in \Z_{\geq 1}}(-1)^{t+1}{x - u \choose t} = -\delta_{x,u} + 1\) for all \(u \leq x\) by the binomial theorem. This completes the induction step in the case \(y=z\). 

Now assume that \(y>z\), (the case \(z>y\) may be approached in a similar fashion). We have
\begin{align*}
\hackcenter{}
\hackcenter{
\begin{tikzpicture}[scale=0.8]
  \draw[ultra thick, blue] (0,0)--(0,0.2) .. controls ++(0,0.35) and ++(0,-0.35)  .. (0.7,0.8)--(0.7,1.2)
  .. controls ++(0,0.35) and ++(0,-0.35)  .. (0,1.8)--(0,2); 
    \draw[ultra thick, blue] (1.4,0)--(1.4,0.2) .. controls ++(0,0.35) and ++(0,-0.35)  .. (0.7,0.8)--(0.7,1.2)
  .. controls ++(0,0.35) and ++(0,-0.35)  .. (1.4,1.8)--(1.4,2); 
    %\draw[thick] (0,0)--(0,-0.2) .. controls ++(0,-0.35) and ++(0,0.35) .. (0.4,-0.9)--(0.4,-1) 
  %.. controls ++(0,-0.35) and ++(0,0.35) .. (0.8,-1.7)--(0.8,-1.8); 
  %\draw[thick] (0,0)--(0,-0.2) .. controls ++(0,-0.5) and ++(0,0.5) .. (-0.8,-1.5)--(-0.8,-1.8);
     \node[below] at (0,0) {$\scriptstyle i^{\scriptstyle (x)}$};
     \node[below] at (1.4,0) {$\scriptstyle i^{\scriptstyle (y)}$};
      \node[above] at (0,2) {$\scriptstyle i^{\scriptstyle (z)}$};
      \node[above] at (1.4,2) {$\scriptstyle i^{\scriptstyle (w)}$};
      %\node[below] at (0.4,0.5) {$1$};
      %\node[above] at (0.4,1.5) {$1$};
\end{tikzpicture}}
&
\substack{
\cref{DiagSwitchRel}
\\
=
\\
{}
}
\sum_{t \in \Z_{\geq 0}}
{ z-y
\choose
t}
\hackcenter{
\begin{tikzpicture}[scale=.8]
\draw[ultra thick, blue] (0,0)--(0,2);
\draw[ultra thick, blue] (1.6,0)--(1.6,2);
  \draw[ultra thick, blue] (1.6,0)--(1.6,0.2) .. controls ++(0,0.35) and ++(0,-0.35)  .. (0,0.8)--(0,2); 
    \draw[ultra thick, blue] (0,0)--(0,1.2) .. controls ++(0,0.35) and ++(0,-0.35)  .. (1.6,1.8)--(1.6,2); 
      \node[below] at (0,0) {$ \scriptstyle i^{ \scriptstyle (x)}$};
     \node[below] at (1.6,0) {$ \scriptstyle i^{ \scriptstyle (y)}$};
     \node[above] at (0,2) {$ \scriptstyle i^{\scriptstyle (z)}$};
      \node[above] at (1.6,2) {$ \scriptstyle i^{ \scriptstyle (w)}$};
      \node[left] at (0,1) {$ \scriptstyle i^{ \scriptstyle (x+z-t)}$};
      \node[right] at (1.6,1) {$ \scriptstyle i^{ \scriptstyle (y-z+t)}$};
      \node[above] at (0.8,1.5) {$ \scriptstyle i^{ \scriptstyle (x-t)}$};
      \node[below] at (0.8,0.5) {$ \scriptstyle i^{ \scriptstyle (z-t)}$};
\end{tikzpicture}}
=
\sum_{t,u \in \Z_{\geq 0}}{y+z \choose t}
\hackcenter{
\begin{tikzpicture}[scale=.8]
\draw[ultra thick, blue] (0,0)--(0,2);
\draw[ultra thick, blue] (-1.6,0)--(-1.6,2);
  \draw[ultra thick, blue] (0,0)--(0,0.2) .. controls ++(0,0.35) and ++(0,-0.35)  .. (-0.4,0.6)--(-0.4,1); 
    \draw[ultra thick, blue] (-0.4,1)--(-0.4,1.4) .. controls ++(0,0.35) and ++(0,-0.35)  .. (0,1.8)--(0,2); 
      \draw[ultra thick, blue] (-0.4,0.6) .. controls ++(0,0.35) and ++(0,-0.35)  .. (-1.6,1.7); 
        \draw[ultra thick, blue] (-1.6,0.3) .. controls ++(0,0.35) and ++(0,-0.35)  .. (-0.4,1.4); 
     \node[below] at (0,0) {$ \scriptstyle i^{ \scriptstyle (y)}$};
     \node[below] at (-1.6,0) {$ \scriptstyle i^{ \scriptstyle (x)}$};
     \node[above] at (0,2) {$\scriptstyle i^{ \scriptstyle (w)} $};
      \node[above] at (-1.6,2) {$\scriptstyle i^{ \scriptstyle (z)}$};
      %\node[left] at (0,1) {$ \scriptstyle i^{ \scriptstyle (t)}$};
      %\node[right] at (-1.6,1) {$ \scriptstyle i^{ \scriptstyle (x-u)}$};
      %\node[above] at (-0.65,1.55) {$ \scriptstyle i^{ \scriptstyle (y-t)}$};
      \node[below] at (-1,0.8) {$ \scriptstyle i^{ \scriptstyle (u)}$};
       \node[] at (-0.6,1.8) {$ \scriptstyle i^{ \scriptstyle (x-t)}$};
\end{tikzpicture}}\\
&
\substack{
\cref{AssocRel},\cref{KnotholeRel}
\\
=
{}
\\
}
\sum_{t,u \in \Z_{\geq 0}}{z-y \choose t}
 {w-u \choose  w-x+t}
\hackcenter{
\begin{tikzpicture}[scale=.8]
\draw[ultra thick, blue] (0,0)--(0,2);
\draw[ultra thick, blue] (1.6,0)--(1.6,2);
      \draw[ultra thick, blue] (0,0.3) .. controls ++(0,0.35) and ++(0,-0.35)  .. (1.6,1.7); 
        \draw[ultra thick, blue] (1.6,0.3) .. controls ++(0,0.35) and ++(0,-0.35)  .. (0,1.7); 
     \node[below] at (0,0) {$ \scriptstyle i^{ \scriptstyle (x)}$};
     \node[below] at (1.6,0) {$ \scriptstyle i^{ \scriptstyle (y)}$};
     \node[above] at (0,2) {$\scriptstyle i^{ \scriptstyle (z)} $};
      \node[above] at (2,2) {$\scriptstyle i^{ \scriptstyle (w)}$};
       \node[] at (0.6,0.45) {$ \scriptstyle i^{ \scriptstyle (u)}$};
\end{tikzpicture}}
=
\sum_{u \in \Z_{\geq 0}}
\hackcenter{
\begin{tikzpicture}[scale=.8]
\draw[ultra thick, blue] (0,0)--(0,2);
\draw[ultra thick, blue] (1.6,0)--(1.6,2);
      \draw[ultra thick, blue] (0,0.3) .. controls ++(0,0.35) and ++(0,-0.35)  .. (1.6,1.7); 
        \draw[ultra thick, blue] (1.6,0.3) .. controls ++(0,0.35) and ++(0,-0.35)  .. (0,1.7); 
     \node[below] at (0,0) {$ \scriptstyle i^{ \scriptstyle (x)}$};
     \node[below] at (1.6,0) {$ \scriptstyle i^{ \scriptstyle (y)}$};
     \node[above] at (0,2) {$\scriptstyle i^{ \scriptstyle (z)} $};
      \node[above] at (2,2) {$\scriptstyle i^{ \scriptstyle (w)}$};
       \node[] at (0.6,0.45) {$ \scriptstyle i^{ \scriptstyle (u)}$};
\end{tikzpicture}},
\end{align*}
where the second equality follows from the induction assumption and the fourth equality follows by an application of the index shift formula (see \cite[(6.69)]{G}). This completes the induction step, and the proof.
\end{proof}

\cref{Same1,Same2} together imply

\begin{corollary}\label{SAME}
There is an isomorphism \(\WebAaI \cong \WebAaIAlt\) given by the identity on objects and diagrams.
\end{corollary}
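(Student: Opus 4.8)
The plan is to invoke the universal property of a strict monoidal $\k$-linear supercategory presented by generators and relations. By construction (see \cref{alterpres}), the categories $\WebAaI$ and $\WebAaIAlt$ have precisely the same object set $\widehat{\Omega}_I$, the same generating morphisms --- the splits, merges, crossings, and coupons of \cref{defwebaa} --- and the same ambient structure; the presentations differ only in that $\WebAaI$ imposes the relations \cref{MSrel,KnotholeRel}, whereas $\WebAaIAlt$ imposes \cref{DiagSwitchRel,CrossingWebRel} in their place. All remaining defining relations, namely \cref{AssocRel,Cox,SplitIntertwineRel,MergeIntertwineRel,AaRel1,AaRel2,OddKnotholeRel,TAaSMRel,AaIntertwine}, are common to the two presentations.

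First I would define a monoidal $\k$-linear superfunctor $\Phi \colon \WebAaI \to \WebAaIAlt$ that is the identity on objects and sends each generating morphism to the identically-drawn generating morphism of $\WebAaIAlt$. To check that $\Phi$ is well defined it suffices to verify that the images of the generators satisfy all the defining relations of $\WebAaI$. The shared relations hold in $\WebAaIAlt$ by definition, and the two remaining relations \cref{MSrel,KnotholeRel} hold in $\WebAaIAlt$ by \cref{Same2}; hence $\Phi$ exists. Symmetrically, I would define $\Psi \colon \WebAaIAlt \to \WebAaI$ by the identity on objects and generators: here the shared relations hold in $\WebAaI$ by definition, and the relations \cref{DiagSwitchRel,CrossingWebRel} hold in $\WebAaI$ by \cref{Same1}, so $\Psi$ is also well defined.

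Finally, both composites $\Psi \circ \Phi$ and $\Phi \circ \Psi$ fix every object and every generating morphism; since the generators generate all morphisms under composition and the monoidal product, each composite equals the identity functor. Thus $\Phi$ and $\Psi$ are mutually inverse isomorphisms of monoidal supercategories, and the corollary follows.

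The only point deserving a moment of care --- and the closest thing to an obstacle here --- is confirming that the two presentations genuinely share the same generator set and the same type of ambient structure, so that ``the identity on generators'' is meaningful and the universal property of a presented category applies; this is immediate from the description of $\WebAaIAlt$ in \cref{alterpres} as defined exactly as in \cref{defwebaa} save for the replacement of two relations. Otherwise the argument is pure bookkeeping, the substantive content having already been discharged in \cref{Same1,Same2}.
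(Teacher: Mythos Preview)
Your argument is correct and is essentially the same approach as the paper's, which simply states that \cref{Same1,Same2} together imply the result; you have spelled out the standard universal-property argument that the paper leaves implicit.
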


From this point forward we will work with \(\WebAaI\) as defined in \cref{defwebaa} but also freely use relations \cref{DiagSwitchRel,CrossingWebRel} thanks to \cref{Same1}.

\subsection{Monotone relations in \texorpdfstring{$\WebAaI$}{WebAa}} As the statements throughout this subsection deal with monotone diagrams (where all strands are colored by the same \(i\) for some \(i \in I\)), for the sake of space in proofs we will often write \(x\) in place of \(i^{(x)}\), etc.  Because the idempotents and coupons do not play a role here, the relations in this section are analogous to those which have appeared in previous work; e.g., \cite{CKM, QS, ST, TVW} for the quantum setting, and \cite{BEPO} for the non-quantum setting.  In particular, most results which appear here can be extracted from existing work in the literature, but we choose to provide full proofs for the reader's reference.

\begin{lemma}\label{RevDiagSwitchLem} The following equality holds in \(\WebAaI\):
\begin{align*}
\hackcenter{}
\hackcenter{
\begin{tikzpicture}[scale=0.8]
\draw[ ultra thick, color=blue] (0,0)--(0,2);
\draw[ ultra thick, color=blue] (1.4,0)--(1.4,2);
  \draw[ ultra thick, color=blue] (1.4,0)--(1.4,0.2) .. controls ++(0,0.35) and ++(0,-0.35)  .. (0,0.8)--(0,2); 
    \draw[ ultra thick, color=blue] (0,0)--(0,1.2) .. controls ++(0,0.35) and ++(0,-0.35)  .. (1.4,1.8)--(1.4,2); 
     \node[below] at (0,0) {$\scriptstyle i^{\scriptstyle(a)}$};
     \node[below] at (1.4,0) {$\scriptstyle i^{\scriptstyle(b)}$};
     \node[above] at (0,2) {$\scriptstyle i^{\scriptstyle(a-s+r)}$};
      \node[above] at (1.6,2) {$\scriptstyle i^{\scriptstyle(b+s-r)}$};
      \node[left] at (0,1) {$\scriptstyle i^{\scriptstyle(a+r)}$};
      \node[right] at (1.4,1) {$\scriptstyle i^{\scriptstyle(b-r)}$};
      \node[above] at (0.7,1.5) {$\scriptstyle i^{\scriptstyle(s)}$};
      \node[below] at (0.7,0.5) {$\scriptstyle i^{\scriptstyle(r)}$};
\end{tikzpicture}}
\;
=
\;
\sum_{t \in \Z_{\geq 0}}
\binom{ -a+b-r+s}{t}
\hackcenter{
\begin{tikzpicture}[scale=0.8]
\draw[ ultra thick, color=blue] (0,0)--(0,2);
\draw[ ultra thick, color=blue] (1.4,0)--(1.4,2);
  \draw[ ultra thick, color=blue] (0,0)--(0,0.2) .. controls ++(0,0.35) and ++(0,-0.35)  .. (1.4,0.8)--(1.4,2); 
    \draw[ ultra thick, color=blue] (1.4,0)--(1.4,1.2) .. controls ++(0,0.35) and ++(0,-0.35)  .. (0,1.8)--(0,2); 
     \node[below] at (0,0) {$\scriptstyle i^{\scriptstyle(a)}$};
     \node[below] at (1.4,0) {$\scriptstyle i^{\scriptstyle(b)}$};
     \node[above] at (0,2) {$\scriptstyle i^{\scriptstyle(a-s+r)}$};
      \node[above] at (1.6,2) {$\scriptstyle i^{\scriptstyle(b+s-r)}$};
      \node[left] at (0,1) {$\scriptstyle i^{\scriptstyle(a-s+t)}$};
      \node[right] at (1.4,1) {$\scriptstyle i^{\scriptstyle(b+s-t)}$};
      \node[above] at (0.7,1.6) {$\scriptstyle i^{\scriptstyle(r-t)}$};
      \node[below] at (0.7,0.5) {$\scriptstyle i^{\scriptstyle(s-t)}$};
\end{tikzpicture}},
\end{align*}
for all \(i \in I\), $a,b, r, s \in \Z_{\geq 0}$.
\end{lemma}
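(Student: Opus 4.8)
The plan is to deduce \cref{RevDiagSwitchLem} from the already-established relation \cref{DiagSwitchRel} by means of the left--right reflection symmetry of \(\WebAaI\). Reflecting a string diagram through a vertical axis sends a split to a split, a merge to a merge, a crossing to a crossing, and a coupon to a coupon (with its label and parity unchanged), while reversing the left-to-right order of the boundary points; reading diagrams from bottom to top is unaffected. So this prescription defines a \(\k\)-linear superfunctor \(\rho\colon \WebAaI \to \WebAaI\) with \(\rho\circ\rho = \Id\), \(\rho(g\circ f) = \rho(g)\circ\rho(f)\), and \(\rho(f\otimes g) = \rho(g)\otimes\rho(f)\), acting on objects by reversing words. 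Since every generator appearing in the relations needed below has parity \(\bar 0\), no Koszul signs intervene.

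To justify that \(\rho\) is well defined one checks that it carries each defining relation \cref{AssocRel}--\cref{AaIntertwine} to another instance of a defining relation. This is immediate on inspection: web-associativity, the merge--split relation \cref{MSrel}, the knothole relation \cref{KnotholeRel}, the coupon relations, the odd knothole relation, and the Coxeter/braid relations are each carried to an instance of themselves under the harmless relabellings \(x\leftrightarrow y\), \((i,j,k)\mapsto(k,j,i)\), and so on, while the split-, merge-, and \(A\)-intertwining relations each come in a left/right mirror pair that \(\rho\) interchanges; the same check applies to the alternate-presentation relations \cref{DiagSwitchRel} and \cref{CrossingWebRel}. Now \cref{DiagSwitchRel} is a valid identity in \(\WebAaI\) by \cref{Same1}. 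Applying \(\rho\) to it and then performing the change of variables \(x\mapsto b,\ y\mapsto a,\ r\mapsto s,\ s\mapsto r\) converts the left-hand picture of \cref{DiagSwitchRel} into the left-hand picture of \cref{RevDiagSwitchLem}, converts each summand on the right back into the merge--split picture on the right-hand side of \cref{RevDiagSwitchLem}, and converts the coefficient \(\binom{x-y+r-s}{t}\) into \(\binom{-a+b-r+s}{t}\). This is precisely the asserted equality.

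The main work is bookkeeping, not ideas: one must run through the (long) list of relations of \cref{defwebaa} to see that \(\rho\) respects each of them, and must track the reflection's effect on every strand label in \cref{DiagSwitchRel} carefully enough to be certain that after the substitution the statement becomes \cref{RevDiagSwitchLem} on the nose --- in particular that the lower rung of the reflected diagram is a right-to-left rung of the correct thickness. If instead a self-contained argument in the style of \cref{Same1} and \cref{Same2} is wanted, one can compute directly: apply \cref{MSrel} to the merge-immediately-followed-by-split segment along the left strand of the left-hand side of \cref{RevDiagSwitchLem}, use web-associativity \cref{AssocRel} to free two knothole bigons on the through-strands and collapse them via \cref{KnotholeRel}, arriving at a double sum of the basic thickness-\(u\) crossing diagrams with binomial coefficients; expand the right-hand side the same way after first invoking \cref{DiagSwitchRel} on each term; and reconcile the two coefficient arrays using the Chu--Vandermonde identity (as in \cref{Same1}) together with the index-shift identity \cite[(6.69)]{G} (as in \cref{Same2}). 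On that route the delicate step is the combinatorial identity matching the two double sums, with the usual care demanded by the generalized binomial coefficient \(\binom{-a+b-r+s}{t}\) when \(-a+b-r+s<0\).
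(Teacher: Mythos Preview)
Your reflection argument is correct and takes a genuinely different route from the paper. The paper proceeds by induction on \(n=r+s\): writing \(L(x,y)\) and \(R(x,y)\) for the two ladder shapes, it applies \cref{DiagSwitchRel} itself to rewrite
\(L(r,s)=R(r,s)-\sum_{w>0}\binom{a-b+r-s}{w}L(r-w,s-w)\),
feeds the inductive hypothesis into each \(L(r-w,s-w)\), and then collapses the resulting double sum with Chu--Vandermonde. Your approach instead sets up the horizontal reflection \(\rho\) once and for all and reads off \cref{RevDiagSwitchLem} as the mirror of \cref{DiagSwitchRel}. The payoff is conceptual: once \(\rho\) is in hand, every subsequent ``mirror'' identity comes for free; the cost is the one-time verification that \(\rho\) respects the full relation list, which the paper's local induction avoids. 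One small technical correction: in a monoidal \emph{super}category the compatible rule is \(\rho(f\otimes g)=(-1)^{\bar f\,\bar g}\rho(g)\otimes\rho(f)\), not the signless version you state (the signless version is inconsistent with the super interchange law). This does not affect your argument, since every defining relation of \(\WebAaI\) either involves only even generators, involves a single odd coupon tensored against identities, or is \cref{OddKnotholeRel} where the extra sign is absorbed by the zero on the right; and \cref{DiagSwitchRel} itself contains no coupons at all.
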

\begin{proof}
Recall that in diagrams we write \(x\) in place of \(i^{(x)}\), as mentioned in the introduction to this subsection.
For space, we write
\begin{align}\label{RLDef}
L(x,y) :=
\hackcenter{}
\hackcenter{
\begin{tikzpicture}[scale=0.8]
\draw[ ultra thick, color=blue] (0,0)--(0,2);
\draw[ ultra thick, color=blue] (1.4,0)--(1.4,2);
  \draw[ ultra thick, color=blue] (1.4,0)--(1.4,0.2) .. controls ++(0,0.35) and ++(0,-0.35)  .. (0,0.8)--(0,2); 
    \draw[ ultra thick, color=blue] (0,0)--(0,1.2) .. controls ++(0,0.35) and ++(0,-0.35)  .. (1.4,1.8)--(1.4,2); 
     \node[above] at (0,-0.4) {$\scriptstyle a$};
     \node[above] at (1.4,-0.4) {$\scriptstyle b$};
     \node[above] at (0,2) {$\scriptstyle a-y+x$};
      \node[above] at (1.4,2) {$\scriptstyle b+y-x$};
      \node[left] at (0,1) {$\scriptstyle a+x$};
      \node[right] at (1.4,1) {$\scriptstyle b-x$};
      \node[above] at (0.7,1.5) {$\scriptstyle y$};
      \node[below] at (0.7,0.5) {$\scriptstyle x$};
\end{tikzpicture}}
\qquad
\qquad
\textup{and}
\qquad
\qquad
R(x,y):=
\hackcenter{
\begin{tikzpicture}[scale=0.8]
\draw[ ultra thick, color=blue] (0,0)--(0,2);
\draw[ ultra thick, color=blue] (1.4,0)--(1.4,2);
  \draw[ ultra thick, color=blue] (0,0)--(0,0.2) .. controls ++(0,0.35) and ++(0,-0.35)  .. (1.4,0.8)--(1.4,2); 
    \draw[ ultra thick, color=blue] (1.4,0)--(1.4,1.2) .. controls ++(0,0.35) and ++(0,-0.35)  .. (0,1.8)--(0,2); 
     \node[above] at (0,-0.4) {$\scriptstyle a$};
     \node[above] at (1.4,-0.4) {$\scriptstyle b$};
     \node[above] at (0,2) {$\scriptstyle a-y+x$};
      \node[above] at (1.4,2) {$\scriptstyle b+y-x$};
      \node[left] at (0,1) {$\scriptstyle a-y$};
      \node[right] at (1.4,1) {$\scriptstyle b+y$};
      \node[above] at (0.7,1.5) {$\scriptstyle x$};
      \node[below] at (0.7,0.5) {$\scriptstyle y$};
\end{tikzpicture}},
\end{align}
for any \(x,y \in \Z_{\geq 0}\). We prove the lemma statement by induction on \(n=r+s\). The claim is trivial for \(n=0,1\), so let \(n \geq 2\) and assume the claim holds for all \(n' < n\). We have
\begin{align*}
L(r,s) 
\;
&\substack{ \textup{\cref{DiagSwitchRel}} \\ =\\ \,}
\;
 R(r,s) - \sum_{w\in \Z_{>0}} \binom{a-b+r-s}{w} L(r-w,s-w)\\
 &=
 R(r,s)-\sum_{w \in \Z_{> 0}} \binom{a-b+r-s}{w} \sum_{u \in \Z_{\geq 0}} \binom{-a+b - r + s}{u} R(r-w-u,s-w-u)\\
 &=R(r,s)
 -\sum_{t \in \Z_{>0}} \sum_{w=1}^t \binom{a- b+r-s}{w} \binom{-a+b-r+s}{t-w} R(r-t,s-t)\\
 &=
 R(r,s)+\sum_{t \in \Z_{>0}} \binom{-a+b-r+s}{t} R(r-t,s-t),
\end{align*}
where the second equality follows by the induction assumption, the last equality follows from the Chu--Vandermonde identity. This completes the proof.
\end{proof}

\begin{lemma}\label{CoxeterWeb}
The following equalities hold in \(\WebAaI\):
\begin{align*}
\hackcenter{}
\hackcenter{
\begin{tikzpicture}[scale=0.8]
\draw[ ultra thick, color=blue] (0,0)--(0,2);
\draw[ ultra thick, color=blue] (1.6,0)--(1.6,2);
\draw[ ultra thick, color=blue] (3.2,0)--(3.2,2);
\draw[ ultra thick, color=blue] (0,0)--(0,1) .. controls ++(0,0.35) and ++(0,-0.35) .. (1.6,1.6)--(1.6,2);
\draw[ ultra thick, color=blue] (0,0)--(0,0.2) .. controls ++(0,0.35) and ++(0,-0.35) .. (1.6,0.8)--(1.6,2);
\draw[ ultra thick, color=blue] (1.6,0)--(1.6,1) .. controls ++(0,0.35) and ++(0,-0.35) .. (3.2,1.6)--(3.2,2);
      \node[below] at (0.8,0.5) {$\scriptstyle i^{\scriptstyle(s)}$};
      \node[above] at (0.8,1.3) {$\scriptstyle i^{\scriptstyle(s'')}$};
      \node[above] at (2.4,1.3) {$\scriptstyle i^{\scriptstyle(s')}$};
      \node[below] at (0,0) {$\scriptstyle i^{\scriptstyle(a)}$};
      \node[below] at (1.6,0) {$\scriptstyle i^{\scriptstyle(b)}$};
      \node[below] at (3.2,0) {$\scriptstyle i^{\scriptstyle(c)}$};
\end{tikzpicture}}
=
\sum_{t \in \Z_{\geq 0}}
\binom{s-s'+s''}{t}
\hackcenter{
\begin{tikzpicture}[scale=0.8]
\draw[ ultra thick, color=blue] (0,0)--(0,2);
\draw[ ultra thick, color=blue] (1.6,0)--(1.6,2);
\draw[ ultra thick, color=blue] (3.3,0)--(3.3,2);
\draw[ ultra thick, color=blue] (1.6,0)--(1.6,1.2) .. controls ++(0,0.35) and ++(0,-0.35) .. (3.3,1.8)--(3.3,2);
\draw[ ultra thick, color=blue] (1.6,0)--(1.6,0.4) .. controls ++(0,0.35) and ++(0,-0.35) .. (3.3,1)--(3.3,2);
\draw[ ultra thick, color=blue] (0,0)--(0,0.4) .. controls ++(0,0.35) and ++(0,-0.35) .. (1.6,1)--(1.6,2);
      \node[above] at (0.8,0.7) {$\scriptstyle i^{\scriptstyle(s+s'')}$};
      \node[below] at (2.4,0.7) {$\scriptstyle i^{\scriptstyle(s''-t)}$};
      \node[above] at (2.5,1.6) {$\scriptstyle i^{\scriptstyle(s'\hspace{-0.5mm}-\hspace{-0.5mm}s''\hspace{-0.7mm}+t)}$};
      \node[below] at (0,0) {$\scriptstyle i^{\scriptstyle(a)}$};
      \node[below] at (1.6,0) {$\scriptstyle i^{\scriptstyle(b)}$};
      \node[below] at (3.3,0) {$\scriptstyle i^{\scriptstyle(c)}$};
\end{tikzpicture}},
\\
\hackcenter{}
\hackcenter{
\begin{tikzpicture}[scale=0.8]
\draw[ ultra thick, color=blue] (0,0)--(0,-2);
\draw[ ultra thick, color=blue] (1.6,0)--(1.6,-2);
\draw[ ultra thick, color=blue] (3.2,0)--(3.2,-2);
\draw[ ultra thick, color=blue] (0,0)--(0,-1) .. controls ++(0,-0.35) and ++(0,0.35) .. (1.6,-1.6)--(1.6,-2);
\draw[ ultra thick, color=blue] (0,0)--(0,-0.2) .. controls ++(0,-0.35) and ++(0,0.35) .. (1.6,-0.8)--(1.6,-2);
\draw[ ultra thick, color=blue] (1.6,0)--(1.6,-1) .. controls ++(0,-0.35) and ++(0,0.35) .. (3.2,-1.6)--(3.2,-2);
      \node[above] at (0.8,-0.5) {$\scriptstyle i^{\scriptstyle(r'')}$};
      \node[below] at (0.8,-1.3) {$\scriptstyle i^{\scriptstyle(r)}$};
      \node[above] at (2.4,-1.3) {$\scriptstyle i^{\scriptstyle(r')}$};
      \node[below] at (0,-2) {$\scriptstyle i^{\scriptstyle(a)}$};
      \node[below] at (1.6,-2) {$\scriptstyle i^{\scriptstyle(b)}$};
      \node[below] at (3.2,-2) {$\scriptstyle i^{\scriptstyle(c)}$};
\end{tikzpicture}}
=
\sum_{t \in \Z_{\geq 0}}
\binom{r-r'+r''}{t}
\hackcenter{
\begin{tikzpicture}[scale=0.8]
\draw[ ultra thick, color=blue] (0,0)--(0,-2);
\draw[ ultra thick, color=blue] (1.6,0)--(1.6,-2);
\draw[ ultra thick, color=blue] (3.3,0)--(3.3,-2);
\draw[ ultra thick, color=blue] (1.6,0)--(1.6,-1.2) .. controls ++(0,-0.35) and ++(0,0.35) .. (3.3,-1.8)--(3.3,-2);
\draw[ ultra thick, color=blue] (1.6,0)--(1.6,-0.4) .. controls ++(0,-0.35) and ++(0,0.35) .. (3.3,-1)--(3.3,-2);
\draw[ ultra thick, color=blue] (0,0)--(0,-0.4) .. controls ++(0,-0.35) and ++(0,0.35) .. (1.6,-1)--(1.6,-2);
      \node[below] at (0.8,-0.7) {$\scriptstyle i^{(r+r'')}$};
      \node[below] at (2.5,-1.5) {$\scriptstyle i^{(r''-t)}$};
      \node[above] at (2.5,-0.6) {$\scriptstyle i^{(r'\hspace{-0.5mm}-\hspace{-0.5mm}r''\hspace{-0.5mm}+t)}$};
      \node[below] at (0,-2) {$\scriptstyle i^{(a)}$};
      \node[below] at (1.6,-2) {$\scriptstyle i^{(b)}$};
      \node[below] at (3.3,-2) {$\scriptstyle i^{(c)}$};
\end{tikzpicture}},
\end{align*}
for all \(i \in I\),  \(a,b,c,r,r',r'',s,s',s'' \in \Z_{\geq 0}\).
\end{lemma}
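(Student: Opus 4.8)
The plan is to derive both identities from relations already available for monotone diagrams, namely web-associativity \cref{AssocRel}, the merge-split relation \cref{MSrel}, the knothole relation \cref{KnotholeRel}, the rung-swap relation \cref{DiagSwitchRel} and its reverse \cref{RevDiagSwitchLem}, together with the Chu--Vandermonde identity and the index-shift identity \cite[(6.69)]{G} used earlier in this section. As in the rest of this subsection every strand carries a single color $i$, so idempotents and coupons are irrelevant and we write $x$ for $i^{(x)}$ throughout.

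For the first identity I would proceed as follows. In the left-hand diagram the two rungs of thicknesses $s$ and $s''$ joining the left and middle strands are separated, along the middle strand, only by the point where the rung of thickness $s'$ departs toward the right strand. First I would use \cref{MSrel} (or, equivalently, \cref{DiagSwitchRel}) on the two-strand subdiagram formed by the relevant portion of the middle strand and one of those rungs, so as to slide the departure point of the $s'$ rung past the arrival of one of the left--middle rungs; after regrouping the middle strand's thicknesses with \cref{AssocRel}, the two left--middle rungs become adjacent and merge, via \cref{AssocRel} and \cref{KnotholeRel}, into a single rung of thickness $s+s''$, matching the rung in the right-hand diagram and introducing one binomial factor from a knothole. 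What remains on the middle and right strands is a short ladder; expanding it by \cref{DiagSwitchRel} produces a second summation index. The outcome is a double sum of diagrams with coefficients that are products of two generalized binomial coefficients, and the inner sum collapses by Chu--Vandermonde to the single sum $\sum_{t}\binom{s-s'+s''}{t}(\cdots)$ appearing on the right-hand side, after relabeling. If tidier bookkeeping is desired one may instead run the whole argument as an induction on $s+s'+s''$, exactly in the style of the proofs of \cref{Same2} and \cref{RevDiagSwitchLem}.

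The second identity is proved by the entirely parallel computation obtained by replacing the split versions of \cref{AssocRel} by their merge versions and \cref{DiagSwitchRel} by \cref{RevDiagSwitchLem}; equivalently, it is the top-to-bottom reflection of the first identity, a reflection which interchanges splits and merges and carries each monotone relation used above to another such relation already in hand, while leaving the Chu--Vandermonde step untouched, and which sends the triple $(s,s',s'')$ to $(r,r',r'')$.

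The one genuine obstacle is bookkeeping: keeping track of how the thicknesses on all three strands change under each rewrite, and in particular choosing the orientation of each rung so that \cref{AssocRel} and \cref{DiagSwitchRel} apply on the correct side. Recognizing the final nested binomial sum as an instance of Chu--Vandermonde (possibly after one application of \cite[(6.69)]{G}) is then routine, as in the earlier lemmas of this section.
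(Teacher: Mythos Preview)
The paper's own proof is a single sentence: it cites \cite[Lemma~2.9]{E} and observes that the argument there, which uses only \cref{AssocRel} and \cref{DiagSwitchRel}, transfers verbatim to $\WebAaI$ because those relations hold here. So there is nothing detailed to compare against; your plan to carry out a direct diagrammatic computation is exactly what the citation is pointing to, and your reflection argument for the second identity is fine.

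One caution about your opening move. As written, it does not work: \cref{MSrel} and \cref{DiagSwitchRel} each govern a pair of rungs between the \emph{same} two strands, but here the $s'$ rung joins the middle and right strands while $s$ and $s''$ join the left and middle strands. There is no single application of either relation to ``slide the departure point of the $s'$ rung past'' one of the left--middle rungs, because on neither adjacent pair of strands do you have two rungs of opposite orientation to which \cref{DiagSwitchRel} applies, nor a merge--split on a common pair to which \cref{MSrel} applies. The inductive organization you mention at the end of your first paragraph (in the style of \cref{Same2} and \cref{RevDiagSwitchLem}, peeling off one unit of thickness and invoking \cref{AssocRel} and \cref{DiagSwitchRel} at each step) is the route that actually goes through, and is consistent with the paper's remark that only those two relations are needed. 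The extra tools you list (\cref{MSrel}, \cref{KnotholeRel}, \cref{RevDiagSwitchLem}) are themselves consequences of \cref{AssocRel} and \cref{DiagSwitchRel}, so invoking them is harmless but not essential.
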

\begin{proof}
The proof given in \cite[Lemma 2.9]{E}, in the context of a web category where relations identical to \cref{AssocRel,DiagSwitchRel} hold is directly applicable to our \(\WebAaI\) category.
\end{proof}

\begin{lemma}\label{CrossAbsorb}
For all \(i \in I\), \(a,b \in \Z_{\geq 0}\) we have the following equalities in \(\WebAaI\):
\begin{align*}
\hackcenter{}
\hackcenter{
\begin{tikzpicture}[scale=0.8]
  \draw[ ultra thick, color=blue] (0,0)--(0,0.2) .. controls ++(0,0.35) and ++(0,-0.35)  .. (0.8,0.8) .. controls ++(0,0.35) and ++(0,-0.35)  .. (0.4,1.4)--(0.4,1.6); 
    \draw[ ultra thick, color=blue] (0.8 ,0)--(0.8, 0.2) .. controls ++(0,0.35) and ++(0,-0.35)  .. (0,0.8) .. controls ++(0,0.35) and ++(0,-0.35)  .. (0.4,1.4)--(0.4,1.6);
     \node[below] at (0,0) {$\scriptstyle i^{\scriptstyle(a)}$};
     \node[below] at (0.8,0) {$\scriptstyle i^{\scriptstyle(b)}$};
       \node[above] at (0.4,1.6) {$\scriptstyle i^{\scriptstyle(a+b)}$};
\end{tikzpicture}}
=
\hackcenter{
\begin{tikzpicture}[scale=0.8]
  \draw[ ultra thick, color=blue] (0,0)--(0,0.8) .. controls ++(0,0.35) and ++(0,-0.35)  .. (0.4,1.4)--(0.4,1.6); 
    \draw[ ultra thick, color=blue] (0.8 ,0)--(0.8,0.8) .. controls ++(0,0.35) and ++(0,-0.35)  .. (0.4,1.4)--(0.4,1.6);
     \node[below] at (0,0) {$\scriptstyle i^{\scriptstyle(a)}$};
     \node[below] at (0.8,0) {$\scriptstyle i^{\scriptstyle(b)}$};
       \node[above] at (0.4,1.6) {$\scriptstyle i^{\scriptstyle(a+b)}$};
\end{tikzpicture}}
\qquad
\qquad
\textup{and}
\qquad
\qquad
\hackcenter{
\begin{tikzpicture}[scale=0.8]
  \draw[ ultra thick, color=blue] (0,0)--(0,-0.2) .. controls ++(0,-0.35) and ++(0,0.35)  .. (0.8,-0.8) .. controls ++(0,-0.35) and ++(0,0.35)  .. (0.4,-1.4)--(0.4,-1.6); 
    \draw[ ultra thick, color=blue] (0.8 ,0)--(0.8, -0.2) .. controls ++(0,-0.35) and ++(0,0.35)  .. (0,-0.8) .. controls ++(0,-0.35) and ++(0,0.35)  .. (0.4,-1.4)--(0.4,-1.6);
     \node[above] at (0,0) {$\scriptstyle i^{\scriptstyle(a)}$};
     \node[above] at (0.8,0) {$\scriptstyle i^{\scriptstyle(b)}$};
       \node[below] at (0.4,-1.6) {$\scriptstyle i^{\scriptstyle(a+b)}$};
\end{tikzpicture}}
=
\hackcenter{
\begin{tikzpicture}[scale=0.8]
  \draw[ ultra thick, color=blue] (0,0)--(0,-0.8) .. controls ++(0,-0.35) and ++(0,0.35)  .. (0.4,-1.4)--(0.4,-1.6); 
    \draw[ ultra thick, color=blue] (0.8 ,0)--(0.8,-0.8) .. controls ++(0,-0.35) and ++(0,0.35)  .. (0.4,-1.4)--(0.4,-1.6);
     \node[above] at (0,0) {$\scriptstyle i^{\scriptstyle(a)}$};
     \node[above] at (0.8,0) {$\scriptstyle i^{\scriptstyle(b)}$};
       \node[below] at (0.4,-1.6) {$\scriptstyle i^{\scriptstyle(a+b)}$};
\end{tikzpicture}}.
\end{align*}
\end{lemma}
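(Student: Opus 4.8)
The plan is to reduce both identities to a single alternating binomial sum by expanding the monotone crossing via relation \cref{CrossingWebRel} (which holds in \(\WebAaI\) by \cref{Same1}) and then collapsing the resulting ladders using web-associativity \cref{AssocRel} and the knothole relation \cref{KnotholeRel}.

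For the first identity, write the crossing \(i^{(a)}\otimes i^{(b)}\to i^{(b)}\otimes i^{(a)}\) via \cref{CrossingWebRel} as \(\sum_{t\geq 0}(-1)^t D_t\), where \(D_t\) is the ladder that splits \(i^{(a)}\) into \(i^{(t)}\otimes i^{(a-t)}\), merges the \(i^{(a-t)}\) into the \(i^{(b)}\)-strand, then peels a strand of thickness \(b-t\) back off the resulting \(i^{(a+b-t)}\)-strand and remerges it with the \(i^{(t)}\)-strand. Postcomposing \(D_t\) with the merge \(i^{(b)}\otimes i^{(a)}\to i^{(a+b)}\) and using \cref{AssocRel} to regroup the two consecutive merges at the top (and the split at the bottom), the diagram displays a split-then-merge of the pair \((i^{(b-t)},i^{(a)})\) inside an \(i^{(a+b-t)}\)-strand and a split-then-merge of the pair \((i^{(t)},i^{(a-t)})\) inside an \(i^{(a)}\)-strand; by \cref{KnotholeRel} these collapse to the scalars \(\binom{a+b-t}{a}\) and \(\binom{a}{t}\), leaving exactly the plain merge \(i^{(a)}\otimes i^{(b)}\to i^{(a+b)}\). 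Thus the left-hand side equals \(\bigl(\sum_{t\geq 0}(-1)^t\binom{a}{t}\binom{a+b-t}{a}\bigr)\) times the merge, and this coefficient is \(1\) by the classical identity \(\sum_{t\geq 0}(-1)^t\binom{a}{t}\binom{n-t}{m}=\binom{n-a}{m-a}\) applied with \(n=a+b\), \(m=a\) (see \cite{G}).

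The second equality is handled by the mirror-image computation: expand the crossing \(i^{(b)}\otimes i^{(a)}\to i^{(a)}\otimes i^{(b)}\) by \cref{CrossingWebRel}, precompose with the split \(i^{(a+b)}\to i^{(b)}\otimes i^{(a)}\), regroup the consecutive splits at the bottom using \cref{AssocRel}, and collapse the resulting split-then-merge pairs via \cref{KnotholeRel}; this yields the same alternating sum of binomial coefficients times the plain split \(i^{(a+b)}\to i^{(a)}\otimes i^{(b)}\), which again evaluates to \(1\). I expect the only real work to be bookkeeping — tracking the thicknesses through each ladder so that the associativity moves line up precisely with a knothole collapse, and confirming that the leftover diagram in each summand is genuinely the unadorned merge (resp.\ split) with no residual rung — but no serious obstacle beyond this, since the binomial identity is standard and all the relations needed are already available.
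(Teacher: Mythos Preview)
Your proposal is correct and follows essentially the same route as the paper's proof: expand the monotone crossing via \cref{CrossingWebRel}, then use associativity and knothole collapses to reduce each summand to the plain merge (resp.\ split) times \((-1)^t\binom{a}{t}\binom{a+b-t}{a}\), and finish with the standard alternating binomial identity. The paper phrases the two collapse steps as applications of \cref{DiagSwitchRel} (of which \cref{KnotholeRel} is the degenerate case) and parametrizes by \(s=a-t\), \(r=b-t\), arriving at the identical sum \(\sum_t(-1)^t\binom{b+a-t}{a}\binom{a}{t}=1\) via Euler's finite difference theorem; your use of \cref{AssocRel}+\cref{KnotholeRel} directly is the same manipulation in slightly different packaging.
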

\begin{proof}
We prove the first equality. The second is similar. We have
\begin{align*}
\hackcenter{
{}
}
\hackcenter{
\begin{tikzpicture}[scale=0.8]
  \draw[ ultra thick, color=blue] (0,0)--(0,0.2) .. controls ++(0,0.35) and ++(0,-0.35)  .. (0.8,1)--(0.8,1.1) .. controls ++(0,0.35) and ++(0,-0.35)  .. (0.4,1.9)--(0.4,2.2); 
    \draw[ ultra thick, color=blue] (0.8 ,0)--(0.8, 0.2) .. controls ++(0,0.35) and ++(0,-0.35)  .. (0,1)--(0,1.1) .. controls ++(0,0.35) and ++(0,-0.35)  .. (0.4,1.9)--(0.4,2.2);
     \node[below] at (0,0) {$\scriptstyle a$};
     \node[below] at (0.8,0) {$\scriptstyle b$};
       \node[above] at (0.4,2.2) {$\scriptstyle a+b$};
\end{tikzpicture}}
&=
\sum_{
s-r=a-b
}
(-1)^{a-s}
\hackcenter{
\begin{tikzpicture}[scale=0.8]
\draw[ ultra thick, color=blue] (0,0)--(0,2.2);
 \draw[ ultra thick, color=blue] (0.8,0)--(0.8,1.5) .. controls ++(0,0.35) and ++(0,-0.35) .. (0,2.1)--(0,2.2);
  \draw[ ultra thick, color=blue] (0,0)--(0,0.2) .. controls ++(0,0.35) and ++(0,-0.35) .. (0.8,0.8);
  \draw[ ultra thick, color=blue] (0.8,0)--(0.8,0.8) .. controls ++(0,0.35) and ++(0,-0.35) .. (0,1.4);      \node[below] at (0,0) {$\scriptstyle a$};
      \node[below] at (0.8,0) {$\scriptstyle b$};
           \node[below] at (0.4,0.5) {$\scriptstyle s$};
           \node[above] at (0.4,1.1) {$\scriptstyle r$};
             \node[above] at (0,2.2) {$\scriptstyle a+b$};
\end{tikzpicture}}
\;
\substack{ \textup{\cref{DiagSwitchRel}} \\ =}
\;
\sum_{
s-r=a-b
}
(-1)^{a-s}
\binom{b+s}{r}
\hackcenter{
\begin{tikzpicture}[scale=0.8]
\draw[ ultra thick, color=blue] (0,0)--(0,2.2);
  \draw[ ultra thick, color=blue] (0,0)--(0,0.2) .. controls ++(0,0.35) and ++(0,-0.35) .. (0.8,0.8);
  \draw[ ultra thick, color=blue] (0.8,0)--(0.8,0.8) .. controls ++(0,0.35) and ++(0,-0.35) .. (0,1.4);
      \node[below] at (0,0) {$\scriptstyle a$};
      \node[below] at (0.8,0) {$\scriptstyle b$};
           \node[below] at (0.4,0.5) {$\scriptstyle s$};
           \node[above] at (0.5,1.1) {$\scriptstyle b+s$};
             \node[above] at (0,2.2) {$\scriptstyle a+b$};
\end{tikzpicture}}
\\
&
\;
\substack{ \textup{\cref{AssocRel}} \\ =}
\;
\sum_{
s-r=a-b
}
(-1)^{a-s}
\binom{b+s}{r}
\hackcenter{
\begin{tikzpicture}[scale=0.8]
\draw[ ultra thick, color=blue] (0,0)--(0,2.2);
  \draw[ ultra thick, color=blue] (0,0)--(0,0.2) .. controls ++(0,0.35) and ++(0,-0.35) .. (0.4,0.7)
     .. controls ++(0,0.35) and ++(0,-0.35) .. (0,1.2);
  \draw[ ultra thick, color=blue] (0.8,0)--(0.8,1.2) .. controls ++(0,0.35) and ++(0,-0.35) .. (0,1.8);
      \node[below] at (0,0) {$\scriptstyle a$};
      \node[below] at (0.8,0) {$\scriptstyle b$};
           \node[below] at (0.4,0.5) {$\scriptstyle s$};
             \node[above] at (0,2.2) {$\scriptstyle a+b$};
\end{tikzpicture}}
\;
\substack{ \textup{\cref{DiagSwitchRel}} \\ =}
\;
\sum_{
s-r=a-b
}
(-1)^{a-s}
\binom{b+s}{r}
\binom{a}{s}
\hackcenter{
\begin{tikzpicture}[scale=0.8]
\draw[ ultra thick, color=blue] (0,0)--(0,2.2);
  \draw[ ultra thick, color=blue] (0.8,0)--(0.8,1.2) .. controls ++(0,0.35) and ++(0,-0.35) .. (0,1.8);
      \node[below] at (0,0) {$\scriptstyle a$};
      \node[below] at (0.8,0) {$\scriptstyle b$};
             \node[above] at (0,2.2) {$\scriptstyle a+b$};
\end{tikzpicture}}.
\end{align*}
Considering the coefficient in the last term and using the substitution \(t:=a-s\), we may write:
\begin{align*}
\sum_{
s-r=a-b
}
(-1)^{a-s}
\binom{b+s}{r}
\binom{a}{s}
=
\sum_{t=0}^a
(-1)^t
\binom{b+a-t}{a}
\binom{a}{t}=1.
\end{align*}
The last equality follows from an application of Euler's finite difference theorem (see \cite[(10.13)]{G}). This completes the proof.
\end{proof}

\begin{lemma}\label{DoubleCross}
For all \(i \in I\), \(a,b \in \Z_{\geq 0}\), the following equality holds in \(\WebAaI\):
\begin{align*}
\hackcenter{}
\hackcenter{
\begin{tikzpicture}[scale=0.8]
  \draw[ ultra thick, color=blue] (0,0)--(0,0.2) .. controls ++(0,0.35) and ++(0,-0.35)  .. (0.8,0.8)--(0.8,1) .. controls ++(0,0.35) and ++(0,-0.35)  .. (0,1.6)--(0,1.8); 
    \draw[ ultra thick, color=blue] (0.8 ,0)--(0.8, 0.2) .. controls ++(0,0.35) and ++(0,-0.35)  .. (0,0.8)--(0,1) .. controls ++(0,0.35) and ++(0,-0.35)  .. (0.8,1.6)--(0.8,1.8);
     \node[below] at (0,0) {$\scriptstyle i^{\scriptstyle(a)}$};
     \node[below] at (0.8,0) {$\scriptstyle i^{\scriptstyle(b)}$};
  \end{tikzpicture}}
\;
=
\;
\hackcenter{
\begin{tikzpicture}[scale=0.8]
  \draw[ ultra thick, color=blue] (0,0)--(0,1.8); 
    \draw[ ultra thick, color=blue] (0.8 ,0)--(0.8,1.8);
       \node[below] at (0,0) {$\scriptstyle i^{\scriptstyle(a)}$};
     \node[below] at (0.8,0) {$\scriptstyle i^{\scriptstyle(b)}$};
\end{tikzpicture}}.
\end{align*}
\end{lemma}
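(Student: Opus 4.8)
The plan is to argue exactly as in the computations already carried out in this subsection (compare the proofs of \cref{Same1,Same2,CrossAbsorb}). Throughout I write $x$ for $i^{(x)}$ and work with monotone ($i$-colored) diagrams, and I write $c(x,y)\colon (x)\otimes(y)\to (y)\otimes(x)$ for the monotone crossing, so that the left-hand side of the claimed equality is $c(b,a)\circ c(a,b)\colon (a)\otimes(b)\to(a)\otimes(b)$. First dispose of the degenerate cases: if $a=0$ or $b=0$ then the thickness-$0$ strand and both crossings are deleted by our conventions, so assume $a,b\geq 1$. The strategy is then to expand \emph{both} crossings via the monotone crossing relation \eqref{CrossingWebRel}: write $c(a,b)=\sum_{s\geq 0}(-1)^s M_s$, where each $M_s$ is a ``double ladder'' built from a split $(a)\to(s)\otimes(a-s)$, a merge on the middle two strands, a split, and a final merge producing the thick outputs $(b)\otimes(a)$; and similarly $c(b,a)=\sum_{r\geq 0}(-1)^r N_r$.

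The reduction step is to simplify each composite $N_r\circ M_s$. At the interface there is a merge $(s)\otimes(b-s)\to(b)$ immediately followed by a split $(b)\to(r)\otimes(b-r)$; resolve this with the merge–split relation \eqref{MSrel}, and then repeatedly apply web-associativity \eqref{AssocRel}, the knothole relation \eqref{KnotholeRel}, and \cref{RevDiagSwitchLem} to rewrite $N_r\circ M_s$ as a linear combination of elementary ladders $E_u\colon(a)\otimes(b)\to(a)\otimes(b)$ (split off thickness $u$ from the left strand, send it across and back), with $E_0=\id_{(a)\otimes(b)}$. Finally, collect the coefficient of each $E_u$ in $\sum_{r,s}(-1)^{r+s}N_r\circ M_s$; the coefficients are polynomial expressions in generalized binomial coefficients which collapse, by the Chu–Vandermonde identity together with Euler's finite-difference theorem (\cite[(10.13)]{G}), to coefficient $1$ on $E_0=\id$ and $0$ on every $E_u$ with $u\geq 1$. (The smallest case $a=b=1$ already illustrates the mechanism: there $c(1,1)$ equals the split–merge composite minus the identity, and $\eqref{KnotholeRel}$ gives $c(1,1)\circ c(1,1)=\id$.) The main obstacle is purely bookkeeping — tracking the nested summation indices through the associativity/knothole reductions and matching the resulting coefficient sum to the correct classical identity.

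As a shortcut, note that the entire argument uses only relations \eqref{AssocRel} and \eqref{DiagSwitchRel} (equivalently, the alternate presentation of \cref{alterpres}), so — just as for \cref{CoxeterWeb} — the computation already done in \cite[Lemma~2.9 and its proof]{E} applies verbatim to $\WebAaI$, and one may simply cite it.
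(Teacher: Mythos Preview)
Your outline is essentially the paper's proof: expand both crossings via \eqref{CrossingWebRel}, reduce the resulting four-rung ladder to the two-rung ladders $R(m,m)$ (your $E_m$), and collapse the coefficients by binomial identities so that only $R(0,0)=\id$ survives. The one tactical difference is that the paper does \emph{not} invoke \eqref{MSrel} at the interface; instead it applies \cref{RevDiagSwitchLem} directly to the two middle rungs (the $r$-rung coming left and the $s'$-rung going right), which swaps them into adjacent same-direction pairs that then combine via \eqref{KnotholeRel} to produce $R(s+s'-t,\,s+s'-t)$ with an explicit binomial factor. This is cleaner than your route through \eqref{MSrel}, because it keeps everything in two-strand ladder form throughout and yields the coefficient $\binom{s+s'}{t}\binom{s+s'-t}{s'+a-b}\binom{s+s'-t}{s}$ immediately; the collapse then needs only the Chu--Vandermonde identity and the binomial theorem, not Euler's finite-difference theorem.

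One correction: your ``shortcut'' is not valid as stated. The reference \cite[Lemma~2.9]{E} is invoked in this paper only for \cref{CoxeterWeb}, a three-strand rung-rewriting identity, and does not establish the double-crossing relation. The computation here genuinely has to be done (and the paper does it in full).
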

\begin{proof}
We have
\begin{align*}
\hackcenter{}
\hackcenter{
\begin{tikzpicture}[scale=0.8]
  \draw[ ultra thick, color=blue] (0,0)--(0,0.2) .. controls ++(0,0.35) and ++(0,-0.35)  .. (0.8,1.4)--(0.8,1.6) .. controls ++(0,0.35) and ++(0,-0.35)  .. (0,2.8)--(0,3); 
    \draw[ ultra thick, color=blue] (0.8 ,0)--(0.8, 0.2) .. controls ++(0,0.35) and ++(0,-0.35)  .. (0,1.4)--(0,1.6) .. controls ++(0,0.35) and ++(0,-0.35)  .. (0.8,2.8)--(0.8,3);
     \node[above] at (0,-0.4) {$\scriptstyle a$};
     \node[above] at (0.8,-0.4) {$\scriptstyle b$};
\end{tikzpicture}}
&=
\hackcenter{}
\sum_{
\substack{
s-r=a-b\\
s' - r' = b-a}
}
(-1)^{a+b-s-s'}
\hackcenter{
\begin{tikzpicture}[scale=0.8]
\draw[ ultra thick, color=blue] (0,0)--(0,3);
\draw[ ultra thick, color=blue] (1.4,0)--(1.4,3);
  \draw[ ultra thick, color=blue] (0,0)--(0,0.2) .. controls ++(0,0.35) and ++(0,-0.35)  .. (1.4,0.7)--(1.4,0.9)
      .. controls ++(0,0.35) and ++(0,-0.35)  .. (0,1.4)--(0,1.6)
       .. controls ++(0,0.35) and ++(0,-0.35)  .. (1.4,2.1)--(1.4,2.3)
       .. controls ++(0,0.35) and ++(0,-0.35)  .. (0,2.8)--(0,3);
     \node[above] at (0,-0.4) {$\scriptstyle a$};
     \node[above] at (1.4,-0.4) {$\scriptstyle b$};
         \node[above] at (0.7,2.55) {$\scriptstyle r'$};
      \node[above] at (0.7,1.85) {$\scriptstyle s'$};
      \node[below] at (0.7,1.15) {$\scriptstyle r$};
      \node[below] at (0.7,0.45) {$\scriptstyle s$};
\end{tikzpicture}}
\;
\substack{ \textup{\cref{RevDiagSwitchLem}} \\ =\\ \,}
\;
\sum_{\
\substack{
s-r=a-b\\
s' - r' = b-a\\
t \in \Z_{\geq 0}
}}
(-1)^{a+b-s-s'}
\binom{s+s'}{t}
\hackcenter{
\begin{tikzpicture}[scale=0.8]
\draw[ ultra thick, color=blue] (0,0)--(0,3);
\draw[ ultra thick, color=blue] (1.4,0)--(1.4,3);
  \draw[ ultra thick, color=blue] (0,0)--(0,0.2) .. controls ++(0,0.35) and ++(0,-0.35)  .. (1.4,0.7)--(1.4,2.3)
       .. controls ++(0,0.35) and ++(0,-0.35)  .. (0,2.8)--(0,3);
    \draw[ ultra thick, color=blue] (0,0)--(0,0.9)
      .. controls ++(0,0.35) and ++(0,-0.35)  .. (1.4,1.4)--(1.4,1.6)
       .. controls ++(0,0.35) and ++(0,-0.35)  .. (0,2.1)--(0, 3);
     \node[above] at (0,-0.4) {$\scriptstyle a$};
     \node[above] at (1.4,-0.4) {$\scriptstyle b$};
         \node[above] at (0.7,2.55) {$\scriptstyle r'$};
      \node[above] at (0.7,1.85) {$\scriptstyle r-t$};
      \node[below] at (0.7,1.15) {$\scriptstyle s'-t$};
      \node[below] at (0.7,0.45) {$\scriptstyle s$};
\end{tikzpicture}}
\\
&
\;
\substack{ \textup{ \cref{KnotholeRel}} \\ =\\ \,}
\;
\sum_{\
\substack{
s-r=a-b\\
s' - r' = b-a\\
t \in \Z_{\geq 0}
}}
(-1)^{a+b-s-s'}
\binom{s+s' - t}{s'+a-b}
\binom{s+s' - t}{s}
\binom{s+s'}{t}
R(s+s'-t,s+s'-t),
\end{align*}
where we use the fact that \(s+s' = r+r'\) and \(r' = s'-a+b\) in the last equality, and the notation of \cref{RLDef}. Writing \(m:=s+s' - t\), \(k:=s+s'+a-b\), \(c:=b-a\), we may rewrite this as
\begin{align}\label{binsum1}
\sum_{ k,m \in \Z_{\geq 0}}
(-1)^{k}
\binom{c+k}{m}
\sum_{s=0}^k
\binom{m}{k-s}
\binom{m}{s}
R(m,m).
\end{align}
Now we may use the binomial identities
\begin{align*}
\binom{c +k}{m} = \sum_{u=0}^m \binom{c}{m-u} \binom{k}{u}
\qquad 
\qquad
\textup{and}
\qquad
\qquad
\sum_{s=0}^k \binom{m}{k-s} \binom{m}{s} = \binom{2m}{k}
\end{align*}
to rewrite \cref{binsum1} as
\begin{align*}
&\sum_{m \in \Z_{\geq 0}} \sum_{u=0}^m
 \binom{c}{m-u} \sum_{k=0}^{2m} (-1)^k \binom{2m}{k} \binom{k}{u} R(m,m)\\
 &\hspace{20mm}=
 \sum_{m \in \Z_{\geq 0}} \sum_{u=0}^m
 \binom{c}{m-u} \sum_{k=0}^{2m} (-1)^k \binom{2m}{u} \binom{2m-u}{k-u} R(m,m)\\
  &\hspace{20mm}=
 \sum_{m \in \Z_{\geq 0}} \sum_{u=0}^m \binom{2m}{u}
 \binom{c}{m-u} \sum_{\ell=0}^{2m-u} (-1)^{(\ell+u)}  \binom{2m-u}{\ell} R(m,m)\\
   &\hspace{20mm}=R(0,0),
\end{align*}
since the binomial theorem implies
\(
\sum_{\ell=0}^{2m-u} (-1)^\ell  \binom{2m-u}{\ell} = 0
\)
whenever \(m>0\).
\end{proof}

\begin{lemma}\label{SplitMergeCross} 
For all \(i \in I\), \(a,b,c \in \Z_{\geq 0}\), the following equalities hold in \(\WebAaI\):
\begin{align*}
\hackcenter{
{}
}
\hackcenter{
\begin{tikzpicture}[scale=0.8]
\draw[ ultra thick, color=blue] (1.6,0)--(1.6,0.2) .. controls ++(0,0.35) and ++(0,-0.35) .. (0.8,1.2)--(0.8,1.4)
        .. controls ++(0,0.35) and ++(0,-0.35) .. (0.4,2)--(0.4,2.2);
 \draw[ ultra thick, color=blue] (0.8,0)--(0.8,0.2) .. controls ++(0,0.35) and ++(0,-0.35) .. (0,1.2)--(0,1.4)
        .. controls ++(0,0.35) and ++(0,-0.35) .. (0.4,2)--(0.4,2.2);       
  \draw[ ultra thick, color=blue] (0,0)--(0,0.2) .. controls ++(0,0.35) and ++(0,-0.35) .. (1.4,1.2)--(1.4,2.2);        
       \node[below] at (0,0) {$\scriptstyle i^{\scriptstyle(c)}$};
      \node[below] at (0.8,0) {$\scriptstyle i^{\scriptstyle(a)}$};
      \node[below] at (1.6,0) {$\scriptstyle i^{\scriptstyle(b)}$};
       \node[above] at (0.4,2.2) {$\scriptstyle i^{\scriptstyle(a+b)}$};
        \node[above] at (1.4,2.2) {$\scriptstyle i^{\scriptstyle(c)}$};
\end{tikzpicture}}
=
\hackcenter{
\begin{tikzpicture}[scale=0.8]
\draw[ ultra thick, color=blue] (0,0)--(0,0.6) .. controls ++(0,0.35) and ++(0,-0.35) .. (1.4,2)--(1.4,2.2);
\draw[ ultra thick, color=blue] (0.8,0)--(0.8,0.2) .. controls ++(0,0.35) and ++(0,-0.35) .. (1.2,0.8)--(1.2,1)
        .. controls ++(0,0.35) and ++(0,-0.35) .. (0.4,2)--(0.4,2.2);
        \draw[ ultra thick, color=blue] (1.6,0)--(1.6,0.2) .. controls ++(0,0.35) and ++(0,-0.35) .. (1.2,0.8)--(1.2,1)
        .. controls ++(0,0.35) and ++(0,-0.35) .. (0.4,2)--(0.4,2.2);
       \node[below] at (0,0) {$\scriptstyle i^{\scriptstyle(c)}$};
      \node[below] at (0.8,0) {$\scriptstyle i^{\scriptstyle(a)}$};
      \node[below] at (1.6,0) {$\scriptstyle i^{\scriptstyle(b)}$};
       \node[above] at (0.4,2.2) {$\scriptstyle i^{\scriptstyle(a+b)}$};
        \node[above] at (1.4,2.2) {$\scriptstyle i^{\scriptstyle(c)}$};
\end{tikzpicture}},
\qquad
\qquad
\hackcenter{
\begin{tikzpicture}[scale=0.8]
\draw[ ultra thick, color=blue] (-1.6,0)--(-1.6,0.2) .. controls ++(0,0.35) and ++(0,-0.35) .. (-0.8,1.2)--(-0.8,1.4)
        .. controls ++(0,0.35) and ++(0,-0.35) .. (-0.4,2)--(-0.4,2.2);
 \draw[ ultra thick, color=blue] (-0.8,0)--(-0.8,0.2) .. controls ++(0,0.35) and ++(0,-0.35) .. (0,1.2)--(0,1.4)
        .. controls ++(0,0.35) and ++(0,-0.35) .. (-0.4,2)--(-0.4,2.2);       
  \draw[ ultra thick, color=blue] (0,0)--(0,0.2) .. controls ++(0,0.35) and ++(0,-0.35) .. (-1.4,1.2)--(-1.4,2.2);        
      \node[below] at (0,0) {$\scriptstyle i^{\scriptstyle(c)}$};
      \node[below] at (-0.8,0) {$\scriptstyle i^{\scriptstyle(b)}$};
      \node[below] at (-1.6,0) {$\scriptstyle i^{\scriptstyle(a)}$};
       \node[above] at (-0.4,2.2) {$\scriptstyle i^{\scriptstyle(a+b)}$};
        \node[above] at (-1.4,2.2) {$\scriptstyle i^{\scriptstyle(c)}$};
\end{tikzpicture}}
=
\hackcenter{
\begin{tikzpicture}[scale=0.8]
\draw[ ultra thick, color=blue] (0,0)--(0,0.6) .. controls ++(0,0.35) and ++(0,-0.35) .. (-1.4,2)--(-1.4,2.2);
\draw[ ultra thick, color=blue] (-0.8,0)--(-0.8,0.2) .. controls ++(0,0.35) and ++(0,-0.35) .. (-1.2,0.8)--(-1.2,1)
        .. controls ++(0,0.35) and ++(0,-0.35) .. (-0.4,2)--(-0.4,2.2);
        \draw[ ultra thick, color=blue] (-1.6,0)--(-1.6,0.2) .. controls ++(0,0.35) and ++(0,-0.35) .. (-1.2,0.8)--(-1.2,1)
        .. controls ++(0,0.35) and ++(0,-0.35) .. (-0.4,2)--(-0.4,2.2);
      \node[below] at (0,0) {$\scriptstyle i^{\scriptstyle(c)}$};
      \node[below] at (-0.8,0) {$\scriptstyle i^{\scriptstyle(b)}$};
      \node[below] at (-1.6,0) {$\scriptstyle i^{\scriptstyle(a)}$};
       \node[above] at (-0.4,2.2) {$\scriptstyle i^{\scriptstyle(a+b)}$};
        \node[above] at (-1.4,2.2) {$\scriptstyle i^{\scriptstyle(c)}$};
\end{tikzpicture}},
\\
\hackcenter{
{}
}
\hackcenter{
\begin{tikzpicture}[scale=0.8]
\draw[ ultra thick, color=blue] (1.6,0)--(1.6,-0.2) .. controls ++(0,-0.35) and ++(0,0.35) .. (0.8,-1.2)--(0.8,-1.4)
        .. controls ++(0,-0.35) and ++(0,0.35) .. (0.4,-2)--(0.4,-2.2);
 \draw[ ultra thick, color=blue] (0.8,0)--(0.8,-0.2) .. controls ++(0,-0.35) and ++(0,0.35) .. (0,-1.2)--(0,-1.4)
        .. controls ++(0,-0.35) and ++(0,0.35) .. (0.4,-2)--(0.4,-2.2);       
  \draw[ ultra thick, color=blue] (0,0)--(0,-0.2) .. controls ++(0,-0.35) and ++(0,0.35) .. (1.4,-1.2)--(1.4,-2.2);        
        \node[above] at (0,0) {$\scriptstyle i^{\scriptstyle(c)}$};
      \node[above] at (0.8,0) {$\scriptstyle i^{\scriptstyle(a)}$};
      \node[above] at (1.6,0) {$\scriptstyle i^{\scriptstyle(b)}$};
       \node[below] at (0.4,-2.2) {$\scriptstyle i^{\scriptstyle(a+b)}$};
        \node[below] at (1.4,-2.2) {$\scriptstyle i^{\scriptstyle(c)}$};
\end{tikzpicture}}
=
\hackcenter{
\begin{tikzpicture}[scale=0.8]
\draw[ ultra thick, color=blue] (0,0)--(0,-0.6) .. controls ++(0,-0.35) and ++(0,0.35) .. (1.4,-2)--(1.4,-2.2);
\draw[ ultra thick, color=blue] (0.8,0)--(0.8,-0.2) .. controls ++(0,-0.35) and ++(0,0.35) .. (1.2,-0.8)--(1.2,-1)
        .. controls ++(0,-0.35) and ++(0,0.35) .. (0.4,-2)--(0.4,-2.2);
        \draw[ ultra thick, color=blue] (1.6,0)--(1.6,-0.2) .. controls ++(0,-0.35) and ++(0,0.35) .. (1.2,-0.8)--(1.2,-1)
        .. controls ++(0,-0.35) and ++(0,0.35) .. (0.4,-2)--(0.4,-2.2);
        \node[above] at (0,0) {$\scriptstyle i^{\scriptstyle(c)}$};
      \node[above] at (0.8,0) {$\scriptstyle i^{\scriptstyle(a)}$};
      \node[above] at (1.6,0) {$\scriptstyle i^{\scriptstyle(b)}$};
       \node[below] at (0.4,-2.2) {$\scriptstyle i^{\scriptstyle(a+b)}$};
        \node[below] at (1.4,-2.2) {$\scriptstyle i^{\scriptstyle(c)}$};
\end{tikzpicture}},
\qquad
\qquad
\hackcenter{
\begin{tikzpicture}[scale=0.8]
\draw[ ultra thick, color=blue] (-1.6,0)--(-1.6,-0.2) .. controls ++(0,-0.35) and ++(0,0.35) .. (-0.8,-1.2)--(-0.8,-1.4)
        .. controls ++(0,-0.35) and ++(0,0.35) .. (-0.4,-2)--(-0.4,-2.2);
 \draw[ ultra thick, color=blue] (-0.8,0)--(-0.8,-0.2) .. controls ++(0,-0.35) and ++(0,0.35) .. (0,-1.2)--(0,-1.4)
        .. controls ++(0,-0.35) and ++(0,0.35) .. (-0.4,-2)--(-0.4,-2.2);       
  \draw[ ultra thick, color=blue] (0,0)--(0,-0.2) .. controls ++(0,-0.35) and ++(0,0.35) .. (-1.4,-1.2)--(-1.4,-2.2);        
       \node[above] at (0,0) {$\scriptstyle i^{\scriptstyle(c)}$};
      \node[above] at (-0.8,0) {$\scriptstyle i^{\scriptstyle(b)}$};
      \node[above] at (-1.6,0) {$\scriptstyle i^{\scriptstyle(a)}$};
       \node[below] at (-0.4,-2.2) {$\scriptstyle i^{\scriptstyle(a+b)}$};
        \node[below] at (-1.4,-2.2) {$\scriptstyle i^{\scriptstyle(c)}$};
\end{tikzpicture}}
=
\hackcenter{
\begin{tikzpicture}[scale=0.8]
\draw[ ultra thick, color=blue] (0,0)--(0,-0.6) .. controls ++(0,-0.35) and ++(0,0.35) .. (-1.4,-2)--(-1.4,-2.2);
\draw[ ultra thick, color=blue] (-0.8,0)--(-0.8,-0.2) .. controls ++(0,-0.35) and ++(0,0.35) .. (-1.2,-0.8)--(-1.2,-1)
        .. controls ++(0,-0.35) and ++(0,0.35) .. (-0.4,-2)--(-0.4,-2.2);
        \draw[ ultra thick, color=blue] (-1.6,0)--(-1.6,-0.2) .. controls ++(0,-0.35) and ++(0,0.35) .. (-1.2,-0.8)--(-1.2,-1)
        .. controls ++(0,-0.35) and ++(0,0.35) .. (-0.4,-2)--(-0.4,-2.2);
       \node[above] at (0,0) {$\scriptstyle i^{\scriptstyle(c)}$};
      \node[above] at (-0.8,0) {$\scriptstyle i^{\scriptstyle(b)}$};
      \node[above] at (-1.6,0) {$\scriptstyle i^{\scriptstyle(a)}$};
       \node[below] at (-0.4,-2.2) {$\scriptstyle i^{\scriptstyle(a+b)}$};
        \node[below] at (-1.4,-2.2) {$\scriptstyle i^{\scriptstyle(c)}$};
\end{tikzpicture}},
\end{align*} 
for all \(a,b, c \in \Z_{\geq 0}\).
\end{lemma}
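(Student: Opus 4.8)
The plan is to first reduce the four identities to one by symmetries of the monotone part of the presentation, and then to prove that one by the same rung manipulations as in the proofs of \cref{RevDiagSwitchLem}, \cref{CrossAbsorb} and \cref{DoubleCross}.

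Since every diagram occurring in the statement is monotone (all strands carry the single color \(i\) and there are no coupons), I would work inside the full subcategory of \(\WebAaI\) on monotone diagrams, which is governed entirely by \cref{AssocRel}, \cref{MSrel}/\cref{DiagSwitchRel}, \cref{KnotholeRel}, \cref{CrossingWebRel} and their consequences \cref{RevDiagSwitchLem}, \cref{CoxeterWeb}, \cref{CrossAbsorb}, \cref{DoubleCross}. This collection of relations is stable under the contravariant ``flip a diagram upside down'' operation (which interchanges splits with merges and fixes crossings) and under the covariant ``mirror image'' operation (which reverses the tensor order and turns \cref{DiagSwitchRel} into \cref{RevDiagSwitchLem} and \cref{CrossingWebRel} into its mirror). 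The two second-row identities are the upside-down flips of the two first-row identities, and the second identity in each row is the mirror image of the first; hence it suffices to establish the very first identity, namely that a crossing can be pulled through a merge.

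For that one I would expand the crossing of the \(i^{(c)}\) strand past the \(i^{(a+b)}\) strand on the left-hand side by means of \cref{CrossingWebRel}, obtaining an alternating sum over a parameter \(t\) of diagrams built only from splits and merges; I would then precompose with the morphism that is the identity on the \(i^{(c)}\) strand together with the merge \(i^{(a)}\otimes i^{(b)}\to i^{(a+b)}\), and push that merge upward through the sum using \cref{AssocRel} and the rung-swap relation \cref{DiagSwitchRel} (with \cref{KnotholeRel} to collapse the resulting bigons). The outcome is a double sum of products of binomial coefficients applied to precisely the crossing diagrams that appear on the right-hand side; expanding the right-hand side by the same recipe gives a single such sum, and the two agree after an application of the Chu--Vandermonde identity, possibly supplemented by an index-shift identity in the style of \cite[(6.69)]{G}. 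The remaining three identities then follow by the symmetries above.

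I expect the genuine obstacle to be exactly this final resummation: lining up the summation ranges and the arguments of the binomial coefficients so that one named identity closes the computation, rather than an ad hoc cancellation. If that proves unwieldy, an alternative is to exploit that in characteristic zero \(i^{(d)}\) is a retract of \((i^{(1)})^{\otimes d}\) (section \(\tfrac1{d!}\) times the full split, retraction the full merge, by \cref{AssocRel} and \cref{KnotholeRel}): composing both sides of the identity with full splits and full merges reduces it to an identity among all-thin diagrams, where \cref{DoubleCross} and \cref{CoxeterWeb} identify the thin-strand crossings with a symmetric-group action and the claim becomes the evident compatibility of that action with the thin merge \(i^{(1)}\otimes i^{(1)}\to i^{(2)}\). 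I would pursue whichever of the two routes yields the shorter write-up.
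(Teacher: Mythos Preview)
Your main strategy—reduce to one identity by the flip and mirror symmetries, expand the crossings via \cref{CrossingWebRel} into rung diagrams, then manipulate using \cref{AssocRel}, \cref{DiagSwitchRel}, \cref{KnotholeRel} and collapse via a binomial identity—is exactly the paper's approach. Two tactical points are worth flagging. First, the paper starts from the side with the two separate crossings (of \(c\) past \(a\) and of \(c\) past \(b\)), expands both via \cref{CrossingWebRel}, and works toward the single-crossing expansion of the other side; along the way it makes essential use of \cref{CoxeterWeb} twice, which you did not mention and which is what lets the three-strand rung diagrams be straightened out. Second, the identity that closes the computation is Euler's finite difference theorem (as in the proof of \cref{CrossAbsorb}), not Chu--Vandermonde: after all the rung moves one arrives at a coefficient \(\kappa(L,R)\) expressed as a double sum, and two applications of Euler's finite difference reduce it to \((-1)^{L+a+b}\delta_{b,R}\), which is exactly the coefficient in the \cref{CrossingWebRel} expansion of the right-hand side. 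So your instinct that ``lining up the summation ranges'' is the real work is correct, but the named identity is different from the one you guessed.

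Your alternative route via explosion to thin strands is not the paper's method and has two obstacles at this stage of the exposition. One is circularity: the thin-strand braid relation \cref{BraidCox} is proved in the paper \emph{using} \cref{SplitMergeCross}, so you would need an independent argument for it. The other is that explosion followed by contraction only yields \((a!\,b!\,c!\,(a{+}b)!)\cdot\mathrm{LHS}=(a!\,b!\,c!\,(a{+}b)!)\cdot\mathrm{RHS}\) via \cref{L:ExplCon}, and cancelling that scalar requires the morphism space to be torsion-free over the characteristic-zero domain \(\k\)—a fact that follows from \cref{BasisThm} but is not yet available here.
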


\begin{proof}
We prove the first equality. The others are proved via analogous arguments. We have
\begin{align*}
\hackcenter{
{}
}
\hackcenter{
\begin{tikzpicture}[scale=0.8]
\draw[ ultra thick, color=blue] (1.6,0)--(1.6,0.2) .. controls ++(0,0.35) and ++(0,-0.35) .. (0.8,1.7)--(0.8,1.9)
        .. controls ++(0,0.35) and ++(0,-0.35) .. (0.4,2.5)--(0.4,3);
 \draw[ ultra thick, color=blue] (0.8,0)--(0.8,0.2) .. controls ++(0,0.35) and ++(0,-0.35) .. (0,1.7)--(0,1.9)
        .. controls ++(0,0.35) and ++(0,-0.35) .. (0.4,2.5)--(0.4,3);       
  \draw[ ultra thick, color=blue] (0,0)--(0,0.2) .. controls ++(0,0.35) and ++(0,-0.35) .. (1.6,1.7)--(1.6,3);        
      \node[below] at (0,0) {$\scriptstyle c$};
      \node[below] at (0.8,0) {$\scriptstyle a$};
      \node[below] at (1.6,0) {$\scriptstyle b$};
       \node[above] at (0.4,3) {$\scriptstyle a+b$};
        \node[above] at (1.6,3) {$\scriptstyle c$};
\end{tikzpicture}}
&=
\sum_{
\substack{
s-r=c-a\\
s'-r'=c-b
}}
(-1)^{s+s'}
\hackcenter{
\begin{tikzpicture}[scale=0.8]
\draw[ ultra thick, color=blue] (0,0)--(0,3);
\draw[ ultra thick, color=blue] (1.6,0)--(1.6,3);
 \draw[ ultra thick, color=blue] (0.8,0)--(0.8,2.3) .. controls ++(0,0.35) and ++(0,-0.35) .. (0,2.9)--(0,3);
  \draw[ ultra thick, color=blue] (0,0)--(0,0.2) .. controls ++(0,0.35) and ++(0,-0.35) .. (0.8,0.8);
  \draw[ ultra thick, color=blue] (0.8,0)--(0.8,0.8) .. controls ++(0,0.35) and ++(0,-0.35) .. (0,1.4);
  \draw[ ultra thick, color=blue] (0.8,0)--(0.8,1.1) .. controls ++(0,0.35) and ++(0,-0.35) .. (1.6,1.7);
  \draw[ ultra thick, color=blue] (1.6,0)--(1.6,1.7) .. controls ++(0,0.35) and ++(0,-0.35) .. (0.8,2.3);
      \node[below] at (0,0) {$\scriptstyle c$};
      \node[below] at (0.8,0) {$\scriptstyle a$};
      \node[below] at (1.6,0) {$\scriptstyle b$};
       \node[above] at (0,3) {$\scriptstyle a+b$};
          \node[above] at (1.6,3) {$\scriptstyle c$};
           \node[below] at (0.4,0.6) {$\scriptstyle s$};
           \node[above] at (0.4,1.1) {$\scriptstyle r$};
             \node[above] at (0.4,2.6) {$\scriptstyle b$};
           \node[below] at (1.2,1.5) {$\scriptstyle s'$};
           \node[above] at (1.2,2) {$\scriptstyle r'$};
\end{tikzpicture}}
\;
\substack{ \textup{\cref{AssocRel}} \\ =\\ \,}
\;
\sum_{
\substack{
s-r=c-a\\
s'-r'=c-b
}}
(-1)^{s+s'}
\hackcenter{
\begin{tikzpicture}[scale=0.8]
\draw[ ultra thick, color=blue] (0,0)--(0,3);
\draw[ ultra thick, color=blue] (1.6,0)--(1.6,3);
 \draw[ ultra thick, color=blue] (0.8,0)--(0.8,2.3) .. controls ++(0,0.35) and ++(0,-0.35) .. (0,2.9)--(0,3);
  \draw[ ultra thick, color=blue] (0,0)--(0,0.2) .. controls ++(0,0.35) and ++(0,-0.35) .. (0.8,0.8);
  \draw[ ultra thick, color=blue] (0.8,0)--(0.8,1.5) .. controls ++(0,0.35) and ++(0,-0.35) .. (0,2.1);
  \draw[ ultra thick, color=blue] (0.8,0)--(0.8,0.9) .. controls ++(0,0.35) and ++(0,-0.35) .. (1.6,1.5);
  \draw[ ultra thick, color=blue] (1.6,0)--(1.6,1.7) .. controls ++(0,0.35) and ++(0,-0.35) .. (0.8,2.3);
      \node[below] at (0,0) {$\scriptstyle c$};
      \node[below] at (0.8,0) {$\scriptstyle a$};
      \node[below] at (1.6,0) {$\scriptstyle b$};
       \node[above] at (0,3) {$\scriptstyle a+b$};
          \node[above] at (1.6,3) {$\scriptstyle c$};
           \node[below] at (0.4,0.6) {$\scriptstyle s$};
           \node[above] at (0.4,1.3) {$\scriptstyle r$};
             \node[above] at (0.4,2.6) {$\scriptstyle b$};
           \node[below] at (1.2,1.3) {$\scriptstyle s'$};
           \node[above] at (1.2,2) {$\scriptstyle r'$};
\end{tikzpicture}}
\\
\hackcenter{}
&
\;
\substack{ \textup{\cref{CoxeterWeb}} \\ =\\ \,}
\;
\sum_{
\substack{
s-r=c-a\\
s'-r'=c-b
}}
(-1)^{s+s'}
\binom{a+s-s'}{r}
\hackcenter{
\begin{tikzpicture}[scale=0.8]
\draw[ ultra thick, color=blue] (0,0)--(0,3);
\draw[ ultra thick, color=blue] (1.6,0)--(1.6,3);
 \draw[ ultra thick, color=blue] (0.8,0)--(0.8,2.1) .. controls ++(0,0.35) and ++(0,-0.35) .. (0,2.7)--(0,3);
  \draw[ ultra thick, color=blue] (0,0)--(0,0.2) .. controls ++(0,0.35) and ++(0,-0.35) .. (0.8,0.8);
  \draw[ ultra thick, color=blue] (0.8,0)--(0.8,0.9) .. controls ++(0,0.35) and ++(0,-0.35) .. (1.6,1.5);
  \draw[ ultra thick, color=blue] (1.6,0)--(1.6,1.5) .. controls ++(0,0.35) and ++(0,-0.35) .. (0.8,2.1);
      \node[below] at (0,0) {$\scriptstyle c$};
      \node[below] at (0.8,0) {$\scriptstyle a$};
      \node[below] at (1.6,0) {$\scriptstyle b$};
       \node[above] at (0,3) {$\scriptstyle a+b$};
          \node[above] at (1.6,3) {$\scriptstyle c$};
           \node[below] at (0.4,0.6) {$\scriptstyle s$};
             \node[above] at (0.4,2.4) {$\scriptstyle b+r$};
           \node[below] at (1.2,1.3) {$\scriptstyle s'$};
           \node[above] at (1.2,1.8) {$\scriptstyle r'$};
\end{tikzpicture}}
\\
\hackcenter{}
&
\;
\substack{ \textup{\cref{DiagSwitchRel}} \\ =\\ \,}
\;
\sum_{
\substack{
s-r=c-a\\
s'-r'=c-b\\
t \in \Z_{\geq 0}
}}
(-1)^{s+s'}
\binom{a+s-s'}{r}
\binom{r}{t}
\hackcenter{
\begin{tikzpicture}[scale=0.8]
\draw[ ultra thick, color=blue] (0,0)--(0,3);
\draw[ ultra thick, color=blue] (1.6,0)--(1.6,3);
 \draw[ ultra thick, color=blue] (0.8,0)--(0.8,2.1) .. controls ++(0,0.35) and ++(0,-0.35) .. (0,2.7)--(0,3);
  \draw[ ultra thick, color=blue] (0,0)--(0,0.2) .. controls ++(0,0.35) and ++(0,-0.35) .. (0.8,0.8);
  \draw[ ultra thick, color=blue] (1.6,0)--(1.6,0.9) .. controls ++(0,0.35) and ++(0,-0.35) .. (0.8,1.5);
  \draw[ ultra thick, color=blue] (0.8,0)--(0.8,1.5) .. controls ++(0,0.35) and ++(0,-0.35) .. (1.6,2.1);
      \node[below] at (0,0) {$\scriptstyle c$};
      \node[below] at (0.8,0) {$\scriptstyle a$};
      \node[below] at (1.6,0) {$\scriptstyle b$};
       \node[above] at (0,3) {$\scriptstyle a+b$};
          \node[above] at (1.6,3) {$\scriptstyle c$};
           \node[below] at (0.4,0.6) {$\scriptstyle s$};
             \node[above] at (0.4,2.4) {$\scriptstyle b+r$};
           \node[below] at (1.2,1.3) {$\scriptstyle r'\hspace{-0.5mm}-t$};
           \node[above] at (1.2,1.8) {$\scriptstyle s'\hspace{-0.5mm}-t$};
\end{tikzpicture}}
\\
&
\;
\substack{ \textup{\cref{AssocRel}} \\ =\\ \,}
\;
\sum_{
\substack{
s-r=c-a\\
s'-r'=c-b\\
t \in \Z_{\geq 0}
}}
(-1)^{s+s'}
\binom{a+s-s'}{r}
\binom{r}{t}
\hackcenter{
\begin{tikzpicture}[scale=0.8]
\draw[ ultra thick, color=blue] (0.8,0)--(0.8,2.8);
\draw[ ultra thick, color=blue] (1.6,0)--(1.6,2.8);
\draw[ ultra thick, color=blue] (0,0)--(0,1.8) .. controls ++(0,0.35) and ++(0,-0.35) .. (0.8,2.4)--(0.8,2.8);
  \draw[ ultra thick, color=blue] (0,0)--(0,0.7) .. controls ++(0,0.35) and ++(0,-0.35) .. (0.8,1.3);
  \draw[ ultra thick, color=blue] (1.6,0)--(1.6,0.2) .. controls ++(0,0.35) and ++(0,-0.35) .. (0.8,0.8);
  \draw[ ultra thick, color=blue] (0.8,0)--(0.8,1.5) .. controls ++(0,0.35) and ++(0,-0.35) .. (1.6,2.1);
      \node[below] at (0,0) {$\scriptstyle c$};
      \node[below] at (0.8,0) {$\scriptstyle a$};
      \node[below] at (1.6,0) {$\scriptstyle b$};
       \node[above] at (0.8,2.8) {$\scriptstyle a+b$};
          \node[above] at (1.6,2.8) {$\scriptstyle c$};
           \node[below] at (0.4,1) {$\scriptstyle s$};
             \node[above] at (0.4,2.1) {$\scriptstyle c-s\;$};
           \node[below] at (1.2,1.15) {$\scriptstyle r'\hspace{-0.5mm}-t$};
           \node[above] at (1.2,1.8) {$\scriptstyle s'\hspace{-0.5mm}-t$};
\end{tikzpicture}}
\\
&
\;
\substack{ \textup{\cref{CoxeterWeb}} \\ =\\ \,}
\;
\sum_{
\substack{
s-r=c-a\\
s'-r'=c-b\\
t \in \Z_{\geq 0}\\
u \in \Z_{\geq 0}
}}
(-1)^{s+s'}
\binom{a+s-s'}{r}
\binom{r}{t}
\binom{c-s'+t}{u}
\hackcenter{
\begin{tikzpicture}[scale=0.8]
\draw[ ultra thick, color=blue] (0.8,0)--(0.8,2.8);
\draw[ ultra thick, color=blue] (2.2,0)--(2.2,2.8);
  \draw[ ultra thick, color=blue] (0,0)--(0,1) .. controls ++(0,0.35) and ++(0,-0.35) .. (0.8,1.6);
  \draw[ ultra thick, color=blue] (2.2,0)--(2.2,0.2) .. controls ++(0,0.35) and ++(0,-0.35) .. (0.8,0.8);
  \draw[ ultra thick, color=blue] (0.8,0)--(0.8,0.8) .. controls ++(0,0.35) and ++(0,-0.35) .. (2.2,1.4);
  \draw[ ultra thick, color=blue] (0.8,0)--(0.8,1.7) .. controls ++(0,0.35) and ++(0,-0.35) .. (2.2,2.3);
      \node[below] at (0,0) {$\scriptstyle c$};
      \node[below] at (0.8,0) {$\scriptstyle a$};
      \node[below] at (2.2,0) {$\scriptstyle b$};
       \node[above] at (0.8,2.8) {$\scriptstyle a+b$};
          \node[above] at (2.2,2.8) {$\scriptstyle c$};
           \node[below] at (1.5,0.6) {$\scriptstyle r'\hspace{-0.5mm}-t$};
           \node[above] at (1.5,1.15) {$\scriptstyle c\hspace{-0.5mm}-s-u$};
\end{tikzpicture}}
\\
&
=
\sum_{
\substack{
s-r=c-a\\
s'-r'=c-b\\
t \in \Z_{\geq 0}\\
u \in \Z_{\geq 0}
}}
(-1)^{s+s'}
\binom{a+s-s'}{r}
\binom{r}{t}
\binom{c-s'+t}{u}
\hackcenter{
\begin{tikzpicture}[scale=0.8]
\draw[ ultra thick, color=blue] (0,0)--(0,2.8);
\draw[ ultra thick, color=blue] (1.4,0)--(1.4,2.8);
\draw[ ultra thick, color=blue] (0,0)--(0,1.8) .. controls ++(0,0.35) and ++(0,-0.35) .. (1.4,2.4)--(1.4,2.8);
\draw[ ultra thick, color=blue] (1.4,0)--(1.4,1.1) .. controls ++(0,0.35) and ++(0,-0.35) .. (0,1.7);
\draw[ ultra thick, color=blue] (2.8,0)--(2.8,1.2) .. controls ++(0,0.35) and ++(0,-0.35) .. (1.4,1.8);
\draw[ ultra thick, color=blue] (2.8,0)--(2.8,0.3) .. controls ++(0,0.35) and ++(0,-0.35) .. (1.4,0.9);
       \node[below] at (0,0) {$\scriptstyle c$};
      \node[below] at (1.4,0) {$\scriptstyle a$};
      \node[below] at (2.8,0) {$\scriptstyle b$};
       \node[above] at (0,2.8) {$\scriptstyle a+b$};
          \node[above] at (1.4,2.8) {$\scriptstyle c$};
           \node[below] at (2.1,0.7) {$\scriptstyle r'\hspace{-0.5mm}-t$};
           \node[below] at (0.7,1.4) {$\scriptstyle r+r'\;\;$};
            \node[below] at (0.7,1.1) {$\scriptstyle -t+u$};
           %\node[above] at (1.5,1.15) {$\scriptstyle c\hspace{-0.5mm}-s-u$};
\end{tikzpicture}}.
\end{align*}
Now, fixing \(L,R \in\Z_{\geq 0}\), we consider the coefficient \(\kappa(L,R)\) of
\begin{align*}
\hackcenter{}
\hackcenter{
\begin{tikzpicture}[scale=0.8]
\draw[ ultra thick, color=blue] (0,0)--(0,2.8);
\draw[ ultra thick, color=blue] (1.4,0)--(1.4,2.8);
\draw[ ultra thick, color=blue] (0,0)--(0,1.8) .. controls ++(0,0.35) and ++(0,-0.35) .. (1.4,2.4)--(1.4,2.8);
\draw[ ultra thick, color=blue] (1.4,0)--(1.4,1.1) .. controls ++(0,0.35) and ++(0,-0.35) .. (0,1.7);
\draw[ ultra thick, color=blue] (2.8,0)--(2.8,1.2) .. controls ++(0,0.35) and ++(0,-0.35) .. (1.4,1.8);
\draw[ ultra thick, color=blue] (2.8,0)--(2.8,0.3) .. controls ++(0,0.35) and ++(0,-0.35) .. (1.4,0.9);
        \node[below] at (0,0) {$\scriptstyle c$};
      \node[below] at (1.4,0) {$\scriptstyle a$};
      \node[below] at (2.8,0) {$\scriptstyle b$};
       \node[above] at (0,2.8) {$\scriptstyle a+b$};
          \node[above] at (1.4,2.8) {$\scriptstyle c$};
           \node[below] at (2.1,0.6) {$\scriptstyle R$};
           \node[below] at (0.7,1.4) {$\scriptstyle L$};
           \end{tikzpicture}}
\end{align*}
in the sum above. Using the substitutions
\(
A=L-R-a+c-s, \,B=c-s'
\), we have
\begin{align*}
\kappa(L,R)
&=
(-1)^{L+R+a}
\sum_{B=0}^{ b-R}
\sum_{A=0}^{b-R}
(-1)^{A+B}
\binom{L-R-A+B}{B}
\binom{L-R-A}{b-R-B}
\binom{b-R}{A}
\\
&=
\sum_{B=0}^{ b-R}
\sum_{A=0}^{b-R}
(-1)^{a+A+B}
\binom{b-R}{B}
\binom{L-R-A+B}{b-R}
\binom{b-R}{A}
\\
&=
(-1)^{L+R+a}
\sum_{B=0}^{ b-R}
(-1)^{B}
\binom{b-R}{B}
\sum_{A=0}^{b-R}
(-1)^{A}
\binom{L-R-A+B}{b-R}
\binom{b-R}{A}.
\end{align*}
An application of Euler's finite difference theorem (see \cite[(10.13)]{G}) shows that 
\begin{align*}
\sum_{A=0}^{b-R}
(-1)^{A}
\binom{L-R-A+B}{b-R}
\binom{b-R}{A} =1,
\end{align*}
so 
\begin{align*}
\kappa(L,R)
&=
(-1)^{L+R+a}
\sum_{B=0}^{ b-R}
(-1)^{B}
\binom{b-R}{B}
=(-1)^{L+a+b} \delta_{b,R},
\end{align*}
by the binomial theorem. Therefore, we have
\begin{align*}
\hackcenter{
{}
}
\hackcenter{
\begin{tikzpicture}[scale=0.8]
\draw[ ultra thick, color=blue] (1.6,0)--(1.6,0.2) .. controls ++(0,0.35) and ++(0,-0.35) .. (0.8,1.7)--(0.8,1.9)
        .. controls ++(0,0.35) and ++(0,-0.35) .. (0.4,2.5)--(0.4,2.8);
 \draw[ ultra thick, color=blue] (0.8,0)--(0.8,0.2) .. controls ++(0,0.35) and ++(0,-0.35) .. (0,1.7)--(0,1.9)
        .. controls ++(0,0.35) and ++(0,-0.35) .. (0.4,2.5)--(0.4,2.8);       
  \draw[ ultra thick, color=blue] (0,0)--(0,0.2) .. controls ++(0,0.35) and ++(0,-0.35) .. (1.6,1.7)--(1.6,2.8);        
       \node[below] at (0,0) {$\scriptstyle c$};
      \node[below] at (0.8,0) {$\scriptstyle a$};
      \node[below] at (1.6,0) {$\scriptstyle b$};
       \node[above] at (0.4,2.8) {$\scriptstyle a+b$};
        \node[above] at (1.6,2.8) {$\scriptstyle c$};
\end{tikzpicture}}
=
\sum_{M-L = c-(a+b)}
(-1)^{(a+b)-L}
\hackcenter{
\begin{tikzpicture}[scale=0.8]
\draw[ ultra thick, color=blue] (0,0)--(0,2.8);
\draw[ ultra thick, color=blue] (1.4,0)--(1.4,2.8);
\draw[ ultra thick, color=blue] (0,0)--(0,1.8) .. controls ++(0,0.35) and ++(0,-0.35) .. (1.4,2.4)--(1.4,2.8);
\draw[ ultra thick, color=blue] (1.4,0)--(1.4,1.1) .. controls ++(0,0.35) and ++(0,-0.35) .. (0,1.7);
\draw[ ultra thick, color=blue] (2.8,0)--(2.8,0.3) .. controls ++(0,0.35) and ++(0,-0.35) .. (1.4,0.9);
        \node[below] at (0,0) {$\scriptstyle c$};
      \node[below] at (1.4,0) {$\scriptstyle a$};
      \node[below] at (2.8,0) {$\scriptstyle b$};
       \node[above] at (0,2.8) {$\scriptstyle a+b$};
          \node[above] at (1.4,2.8) {$\scriptstyle c$};
           \node[below] at (0.7,1.4) {$\scriptstyle L$};
            \node[above] at (0.7,2.1) {$\scriptstyle M$};
           \end{tikzpicture}}
=
\hackcenter{
\begin{tikzpicture}[scale=0.8]
\draw[ ultra thick, color=blue] (0,0)--(0,0.6) .. controls ++(0,0.35) and ++(0,-0.35) .. (1.4,2)--(1.4,2.8);
\draw[ ultra thick, color=blue] (0.8,0)--(0.8,0.2) .. controls ++(0,0.35) and ++(0,-0.35) .. (1.2,0.8)--(1.2,1)
        .. controls ++(0,0.35) and ++(0,-0.35) .. (0.4,2)--(0.4,2.8);
        \draw[ ultra thick, color=blue] (1.6,0)--(1.6,0.2) .. controls ++(0,0.35) and ++(0,-0.35) .. (1.2,0.8)--(1.2,1)
        .. controls ++(0,0.35) and ++(0,-0.35) .. (0.4,2)--(0.4,2.8);
       \node[below] at (0,0) {$\scriptstyle c$};
      \node[below] at (0.8,0) {$\scriptstyle a$};
      \node[below] at (1.6,0) {$\scriptstyle b$};
       \node[above] at (0.4,2.8) {$\scriptstyle a+b$};
        \node[above] at (1.4,2.8) {$\scriptstyle c$};
\end{tikzpicture}},
\end{align*}
completing the proof.
\end{proof}

\begin{lemma}\label{BraidCox}
Monotone crossings in \(\WebAaI\) satisfy the braid relation:
\begin{align*}
\hackcenter{}
\hackcenter{
\begin{tikzpicture}[scale=0.8]
  \draw[ ultra thick, color=blue] (0.2,0)--(0.2,0.1) .. controls ++(0,0.35) and ++(0,-0.35) .. (-0.4,0.75)
  .. controls ++(0,0.35) and ++(0,-0.35) .. (0.2,1.4)--(0.2,1.5);
  \draw[ ultra thick, color=blue] (-0.4,0)--(-0.4,0.1) .. controls ++(0,0.35) and ++(0,-0.35) .. (0.8,1.4)--(0.8,1.5);
  \draw[ ultra thick, color=blue] (0.8,0)--(0.8,0.1) .. controls ++(0,0.35) and ++(0,-0.35) .. (-0.4,1.4)--(-0.4,1.5);
   \node[below] at (0.2,0) {$\scriptstyle i^{(b)}$};
        \node[below] at (-0.4,0) {$\scriptstyle i^{(a)}$};
         \node[below] at (0.8,0) {$\scriptstyle i^{(c)}$};
          \node[above] at (0.2,1.5) {$\scriptstyle i^{(b)}$};
        \node[above] at (-0.4,1.5) {$\scriptstyle i^{(c)}$};
         \node[above] at (0.8,1.5) {$\scriptstyle i^{(a)}$};
\end{tikzpicture}}
=
\hackcenter{
\begin{tikzpicture}[scale=0.8]
  \draw[ ultra thick, color=blue] (0.2,0)--(0.2,0.1) .. controls ++(0,0.35) and ++(0,-0.35) .. (0.8,0.75)
  .. controls ++(0,0.35) and ++(0,-0.35) .. (0.2,1.4)--(0.2,1.5);
  \draw[ ultra thick, color=blue] (-0.4,0)--(-0.4,0.1) .. controls ++(0,0.35) and ++(0,-0.35) .. (0.8,1.4)--(0.8,1.5);
  \draw[ ultra thick, color=blue] (0.8,0)--(0.8,0.1) .. controls ++(0,0.35) and ++(0,-0.35) .. (-0.4,1.4)--(-0.4,1.5);
   \node[below] at (0.2,0) {$\scriptstyle i^{(b)}$};
        \node[below] at (-0.4,0) {$\scriptstyle i^{(a)}$};
         \node[below] at (0.8,0) {$\scriptstyle i^{(c)}$};
          \node[above] at (0.2,1.5) {$\scriptstyle i^{(b)}$};
        \node[above] at (-0.4,1.5) {$\scriptstyle i^{(c)}$};
         \node[above] at (0.8,1.5) {$\scriptstyle i^{(a)}$};
\end{tikzpicture}},
\end{align*}
for all \(a,b,c \in \Z_{\geq 0}\).
\end{lemma}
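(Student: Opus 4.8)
The plan is to prove the braid relation by reducing it to the case $a=b=c=1$ of thin strands and then settling that case by an elementary Temperley--Lieb-type computation. Write $\beta_{a,b}\colon i^{(a)}\otimes i^{(b)}\to i^{(b)}\otimes i^{(a)}$ for the monotone crossing, which by \cref{Same1} is the morphism appearing on the right-hand side of \cref{CrossingWebRel}. The inputs for the reduction are \cref{CrossAbsorb}, \cref{SplitMergeCross}, and \cref{DoubleCross}: together these say that splits and merges slide through monotone crossings, that a crossing is absorbed by a merge or split recombining its strands, and that $\beta_{b,a}\circ\beta_{a,b}=\id$. Working over the fraction field $\KK$, where $a!,b!,c!$ are invertible, I would resolve each thick strand as a cable: with $\pi_k\colon i^{(k)}\to (i^{(1)})^{\otimes k}$ and $\iota_k\colon (i^{(1)})^{\otimes k}\to i^{(k)}$ the iterated split and merge, repeated use of \cref{KnotholeRel} gives $\iota_k\circ\pi_k=k!\cdot\id_{i^{(k)}}$, hence $\id_{i^{(k)}}=\tfrac1{k!}\iota_k\pi_k$; combining this with \cref{CrossAbsorb} and \cref{SplitMergeCross} yields
\[
\beta_{a,b}=\tfrac1{a!\,b!}\,(\iota_b\otimes\iota_a)\circ C_{a,b}\circ(\pi_a\otimes\pi_b),
\]
where $C_{a,b}$ is the cabled crossing of $a$ thin strands past $b$ thin strands, i.e. the composite of $\beta_{1,1}$'s obtained from sliding, which realizes a reduced word for the corresponding block transposition (well defined once $\beta_{1,1}$ is an involution, the case $a=b=1$ of \cref{DoubleCross}).

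For the thin case one computes directly: taking $x=y=1$ in \cref{CrossingWebRel} gives $\beta_{1,1}=e-\id$, where $e$ is the endomorphism of $i^{(1)}\otimes i^{(1)}$ obtained by merging to $i^{(2)}$ and then splitting back. By \cref{KnotholeRel} the opposite composite ``split then merge'' equals $\binom{2}{1}\id_{i^{(2)}}=2\,\id_{i^{(2)}}$, so $e^{2}=2e$, and with $e_1=e\otimes\id$, $e_2=\id\otimes e$ on $(i^{(1)})^{\otimes 3}$ the relations $e_1e_2e_1=e_1$ and $e_2e_1e_2=e_2$ follow from \cref{AssocRel} and \cref{KnotholeRel} (much as in the proof of \cref{CoxeterWeb}). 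Writing $\sigma_1=e_1-\id$, $\sigma_2=e_2-\id$ for the two thin crossings and expanding using $e_i^2=2e_i$ and $e_ie_je_i=e_i$, both $\sigma_1\sigma_2\sigma_1$ and $\sigma_2\sigma_1\sigma_2$ come out equal to $e_1+e_2-e_1e_2-e_2e_1-\id$, which establishes the braid relation for thin strands. Feeding this back into the cabled formula above, and cancelling the matching $\iota_\bullet$, $\pi_\bullet$ and factorial prefactors on the two sides (again using \cref{CrossAbsorb} and \cref{SplitMergeCross}), the identity for general $(a,b,c)$ reduces to the equality of the two reduced-word cables built from $\beta_{1,1}$'s, which holds since $\beta_{1,1}$ satisfies the braid and involution relations.

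The main obstacle is the descent from $\KK$ back to the ground domain $\k$, since the cabling step genuinely inverts $a!,b!,c!$ (this is the usual phenomenon that seeing a thick strand as a symmetrised cable of thin strands requires characteristic zero). I would handle this by noting that the monotone part of $\WebAaI$ (all strands a single color $i$, no coupons) is the base change along $\Z\to\k$ of a fixed $\Z$-linear category with integer structure constants, so the difference of the two sides of the braid relation is a fixed integral combination of diagrams; its vanishing over $\KK$ forces vanishing over $\k$ once these diagrams are known to be $\mathbb{Q}$-linearly independent, which follows from the defining representation studied later. Failing that --- and more in keeping with the self-contained computational style of \cref{SplitMergeCross} and \cref{DoubleCross} --- one can avoid the fraction field entirely: expand both sides through \cref{CrossingWebRel} into sums of ladder diagrams, bring them to a common form using \cref{CoxeterWeb} and \cref{AssocRel}, and match coefficients via the Chu--Vandermonde and Euler finite-difference identities (cf. \cite[(6.69),(10.13)]{G}); alternatively one may transport the corresponding computation from \cite{CKM} or \cite{E}, which applies verbatim given that \cref{AssocRel} and \cref{DiagSwitchRel} hold in $\WebAaI$.
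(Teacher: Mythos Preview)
Your overall strategy---reduce to thin strands by cabling, then settle $a=b=c=1$ by hand---is genuinely different from the paper's approach, which never passes to thin strands. The paper instead expands the single $a$--$c$ crossing via \cref{CrossingWebRel} and then repeatedly applies \cref{SplitMergeCross} and \cref{DoubleCross} to slide the $b$-strand crossings past the resulting splits and merges until the other side appears; the argument is integral throughout and uses no binomial identities beyond those already absorbed into the cited lemmas.

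Your thin-strand step contains a genuine error: the relation $e_1e_2e_1=e_1$ is \emph{false} for $e=\text{(merge then split)}$ on $i^{(1)}\otimes i^{(1)}$. Under the functor $G_n$ the merge sends $v\otimes w\mapsto vw$ and the split sends $vw\mapsto v\otimes w+w\otimes v$, so $e$ acts as $1+\tau$, not as a Temperley--Lieb cup-cap projector. Computing in $\k\SS_3$ with $e_i=1+s_i$ gives
\[
e_1e_2e_1=2+2s_1+s_2+s_1s_2+s_2s_1+s_1s_2s_1\neq 1+s_1=e_1,
\]
and your claimed common value $e_1+e_2-e_1e_2-e_2e_1-\id$ for $\sigma_1\sigma_2\sigma_1$ is correspondingly wrong. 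What \emph{does} hold, and what would salvage the argument, is $e_1e_2e_1-e_2e_1e_2=e_1-e_2$; but establishing this diagrammatically (via \cref{MSrel} and \cref{AssocRel}) is essentially the thin braid relation itself, so the Temperley--Lieb shortcut is illusory.

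The $\KK$-versus-$\k$ descent is a secondary issue, but your first fix forward-references the basis theorem, and your fallback (``expand via \cref{CrossingWebRel} and match coefficients'') is exactly the kind of direct computation the paper carries out---at which point the cabling detour has bought nothing.
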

\begin{proof}
We have
\begin{align*}
\hackcenter{}
\hackcenter{
\begin{tikzpicture}[scale=0.8]
  \draw[ ultra thick, color=blue] (0,0)--(0,0.2)  .. controls ++(0,0.35) and ++(0,-0.35)  .. (1.6, 3.4)--(1.6,3.6);  
   \draw[ ultra thick, color=blue] (1.6,0)--(1.6,0.2)  .. controls ++(0,0.35) and ++(0,-0.35)  .. (0, 3.4)--(0,3.6);  
   \draw[ ultra thick, color=blue] (0.8,0)--(0.8,0.2)  .. controls ++(0,0.35) and ++(0,-0.35) .. (0, 1.2)--(0,2.4)
         .. controls ++(0,0.35) and ++(0,-0.35) .. (0.8, 3.4)--(0.8,3.6);  
      \node[below] at (0,0) {$\scriptstyle a$};
        \node[below] at (0.8,0) {$\scriptstyle b$};
         \node[below] at (1.6,0) {$\scriptstyle c$};
\end{tikzpicture}}
\hspace{-2mm}
&=
\sum_{s-r = a-c}
(-1)^{a-s}
\hackcenter{
\begin{tikzpicture}[scale=0.8]
  \draw[ ultra thick, color=blue] (0,0)--(0,0.2)  .. controls ++(0,0.35) and ++(0,-0.35) .. (0.8, 1.2)--(0.8,2.4)
         .. controls ++(0,0.35) and ++(0,-0.35) .. (0, 3.4)--(0,3.6);  
   \draw[ ultra thick, color=blue] (0.8,0)--(0.8,0.2)  .. controls ++(0,0.35) and ++(0,-0.35) .. (0, 1.2)--(0,2.4)
         .. controls ++(0,0.35) and ++(0,-0.35) .. (0.8, 3.4)--(0.8,3.6);  
  \draw[ ultra thick, color=blue] (0.8,1.2)  .. controls ++(0,0.35) and ++(0,-0.35) .. (1.6, 1.8);
    \draw[ ultra thick, color=blue] (1.6,1.8)  .. controls ++(0,0.35) and ++(0,-0.35) .. (0.8, 2.4);
      \draw[ ultra thick, color=blue] (1.6,0)--(1.6,3.6);        
      \node[below] at (0,0) {$\scriptstyle a$};
        \node[below] at (0.8,0) {$\scriptstyle b$};
         \node[below] at (1.6,0) {$\scriptstyle c$};
          \node[above] at (1.2,2.1) {$\scriptstyle r$};
          \node[below] at (1.2,1.5) {$\scriptstyle s$};
\end{tikzpicture}}
\;
\substack{ \textup{\cref{SplitMergeCross}} \\ =\\ \,}
\;
\sum_{s-r = a-c}
(-1)^{a-s}
\hackcenter{
\begin{tikzpicture}[scale=0.8]
  \draw[ ultra thick, color=blue] (0,0)--(0,0.2)  .. controls ++(0,0.35) and ++(0,-0.35) .. (0.8, 1.8)
         .. controls ++(0,0.35) and ++(0,-0.35) .. (0, 3.4)--(0,3.6);  
    \draw[ ultra thick, color=blue] (0.8,0)--(0.8,0.2)  .. controls ++(0,0.35) and ++(0,-0.35) .. (0, 1.8)
         .. controls ++(0,0.35) and ++(0,-0.35) .. (0.8, 3.4)--(0.8,3.6);     
    \draw[ ultra thick, color=blue] (0,0.2)  .. controls ++(0,0.35) and ++(0,-0.35) .. (1.6, 1.4);       
      \draw[ ultra thick, color=blue] (1.6,2.2)  .. controls ++(0,0.35) and ++(0,-0.35) .. (0, 3.4);  
      \draw[ ultra thick, color=blue] (1.6,0)--(1.6,3.6);        
      \node[below] at (0,0) {$\scriptstyle a$};
        \node[below] at (0.8,0) {$\scriptstyle b$};
         \node[below] at (1.6,0) {$\scriptstyle c$};
           \node[above] at (1.2,2.6) {$\scriptstyle r$};
          \node[below] at (1.2,1) {$\scriptstyle s$};
\end{tikzpicture}}
\\
\;
&
\substack{ \textup{\cref{DoubleCross}} \\ =\\ \,}
\;
\sum_{s-r = a-c}
(-1)^{a-s}
\hackcenter{
\begin{tikzpicture}[scale=0.8]
  \draw[ ultra thick, color=blue] (0,0)--(0,3.6);  
    \draw[ ultra thick, color=blue] (0.8,0)--(0.8,0.2)  .. controls ++(0,0.35) and ++(0,-0.35) .. (1.6, 0.8)
        .. controls ++(0,0.35) and ++(0,-0.35) .. (0.8, 1.8)
         .. controls ++(0,0.35) and ++(0,-0.35) .. (1.6, 2.8)
        .. controls ++(0,0.35) and ++(0,-0.35) .. (0.8, 3.4)--(0.8,3.6);
     \draw[ ultra thick, color=blue] (1.6,0)--(1.6,0.2)  .. controls ++(0,0.35) and ++(0,-0.35) .. (0.8, 0.8)
        .. controls ++(0,0.35) and ++(0,-0.35) .. (1.6, 1.4)
        .. controls ++(0,0.35) and ++(0,-0.35) .. (1.6, 2.2)
         .. controls ++(0,0.35) and ++(0,-0.35) .. (0.8, 2.8)
        .. controls ++(0,0.35) and ++(0,-0.35) .. (1.6, 3.4)--(1.6,3.6);     
    \draw[ ultra thick, color=blue] (0,0.8)  .. controls ++(0,0.35) and ++(0,-0.35) .. (1.6, 1.7);       
      \draw[ ultra thick, color=blue] (1.6,1.9)  .. controls ++(0,0.35) and ++(0,-0.35) .. (0, 2.8);       
      \node[below] at (0,0) {$\scriptstyle a$};
        \node[below] at (0.8,0) {$\scriptstyle b$};
         \node[below] at (1.6,0) {$\scriptstyle c$};
               \node[above] at (0.4,2.5) {$\scriptstyle r$};
          \node[below] at (0.4,1.1) {$\scriptstyle s$};
\end{tikzpicture}}
\;
\substack{ \textup{\cref{SplitMergeCross}} \\ =\\ \,}
\;
\sum_{s-r = a-c}
(-1)^{a-s}
\hackcenter{
\begin{tikzpicture}[scale=0.8]
  \draw[ ultra thick, color=blue] (0,0)--(0,3.6);  
    \draw[ ultra thick, color=blue] (0.8,0)--(0.8,0.2)  .. controls ++(0,0.35) and ++(0,-0.35) .. (1.6, 0.8)--(1.6,1.2)
       .. controls ++(0,0.35) and ++(0,-0.35) .. (0.8, 1.8)
        .. controls ++(0,0.35) and ++(0,-0.35) .. (1.6, 2.4)--(1.6,2.8)
        .. controls ++(0,0.35) and ++(0,-0.35) .. (0.8, 3.4)--(0.8,3.6);
 \draw[ ultra thick, color=blue] (1.6,0)--(1.6,0.2)  .. controls ++(0,0.35) and ++(0,-0.35) .. (0.8, 0.8)--(0.8,1.2)
       .. controls ++(0,0.35) and ++(0,-0.35) .. (1.6, 1.8)
        .. controls ++(0,0.35) and ++(0,-0.35) .. (0.8, 2.4)--(0.8,2.8)
        .. controls ++(0,0.35) and ++(0,-0.35) .. (1.6, 3.4)--(1.6,3.6);
    \draw[ ultra thick, color=blue] (0,0.4)  .. controls ++(0,0.35) and ++(0,-0.35) .. (0.8, 1);       
      \draw[ ultra thick, color=blue] (0.8,2.6)  .. controls ++(0,0.35) and ++(0,-0.35) .. (0, 3.2);       
      \node[below] at (0,0) {$\scriptstyle a$};
        \node[below] at (0.8,0) {$\scriptstyle b$};
         \node[below] at (1.6,0) {$\scriptstyle c$};
           \node[above] at (0.4,2.9) {$\scriptstyle r$};
          \node[below] at (0.4,0.7) {$\scriptstyle s$};
\end{tikzpicture}}
\\
&
\;
\substack{ \textup{\cref{DoubleCross}} \\ =\\ \,}
\;
\sum_{s-r = a-c}
(-1)^{a-s}
\hackcenter{
\begin{tikzpicture}[scale=0.8]
  \draw[ ultra thick, color=blue] (0,0)--(0,3.6);  
    \draw[ ultra thick, color=blue] (0.8,0)--(0.8,0.2)  .. controls ++(0,0.35) and ++(0,-0.35) .. (1.6, 0.8)--(1.6,2.8)
        .. controls ++(0,0.35) and ++(0,-0.35) .. (0.8, 3.4)--(0.8,3.6);
 \draw[ ultra thick, color=blue] (1.6,0)--(1.6,0.2)  .. controls ++(0,0.35) and ++(0,-0.35) .. (0.8, 0.8)--(0.8,2.8)
        .. controls ++(0,0.35) and ++(0,-0.35) .. (1.6, 3.4)--(1.6,3.6);
    \draw[ ultra thick, color=blue] (0,0.4)  .. controls ++(0,0.35) and ++(0,-0.35) .. (0.8, 1);       
      \draw[ ultra thick, color=blue] (0.8,2.6)  .. controls ++(0,0.35) and ++(0,-0.35) .. (0, 3.2);       
      \node[below] at (0,0) {$\scriptstyle a$};
        \node[below] at (0.8,0) {$\scriptstyle b$};
         \node[below] at (1.6,0) {$\scriptstyle c$};
         \node[above] at (0.4,2.9) {$\scriptstyle r$};
          \node[below] at (0.4,0.7) {$\scriptstyle s$};
\end{tikzpicture}}
=
\hackcenter{
\begin{tikzpicture}[scale=0.8]
  \draw[ ultra thick, color=blue] (0,0)--(0,0.2)  .. controls ++(0,0.35) and ++(0,-0.35)  .. (1.6, 3.4)--(1.6,3.6);  
   \draw[ ultra thick, color=blue] (1.6,0)--(1.6,0.2)  .. controls ++(0,0.35) and ++(0,-0.35)  .. (0, 3.4)--(0,3.6);  
   \draw[ ultra thick, color=blue] (0.8,0)--(0.8,0.2)  .. controls ++(0,0.35) and ++(0,-0.35) .. (1.6, 1.2)--(1.6,2.4)
         .. controls ++(0,0.35) and ++(0,-0.35) .. (0.8, 3.4)--(0.8,3.6);  
      \node[below] at (0,0) {$\scriptstyle a$};
        \node[below] at (0.8,0) {$\scriptstyle b$};
         \node[below] at (1.6,0) {$\scriptstyle c$};
\end{tikzpicture}},
\end{align*}
as desired.
\end{proof}

We may encapsulate \cref{DoubleCross,SplitMergeCross,BraidCox} with the following corollary:

\begin{corollary}\label{arbcol}
The relations \cref{Cox,SplitIntertwineRel,MergeIntertwineRel} hold for arbitrary \(i,j,k,\ell,m \in I\), \(x,y,z \in \Z_{\geq 0}\).
\end{corollary}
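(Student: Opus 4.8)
The plan is to reduce every instance of \cref{Cox,SplitIntertwineRel,MergeIntertwineRel} involving coinciding colors to relations already established: the versions with pairwise distinct colors hold by the very definition of \(\WebAaI\) in \cref{defwebaa}, so only the degenerate color patterns remain. First I would treat the monochromatic cases, in which every strand carries a single color \(i \in I\). The double-crossing relation (the first equality of \cref{Cox} with \(i=j\)) is exactly \cref{DoubleCross}; the braid relation (the second equality of \cref{Cox} with \(i=j=k\)) is \cref{BraidCox}; and each of the four variants of \cref{SplitIntertwineRel,MergeIntertwineRel} with \(i=j\) is one of the four equalities of \cref{SplitMergeCross}. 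The only point to check here is that the thicknesses and the left/right and up/down conventions of the pictures in \cref{SplitMergeCross} match those of \cref{SplitIntertwineRel,MergeIntertwineRel}; since \cref{SplitMergeCross} was stated in precisely these four flavors, this is just a matter of comparing diagrams.

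It then remains to handle the braid relation of \cref{Cox} when exactly two of the three colors agree --- say the two strands crossing in the bottom \(\sigma_1\) share a color \(i\) while the third strand has a distinct color \(k\), the other two placements being entirely analogous. Here I would follow the template of the proof of \cref{BraidCox}: on each side of the relation, resolve the single monochromatic \(i\)-\(i\) crossing into a signed sum of (split \(i\))-then-(merge \(i\)) diagrams using the monotone crossing relation \cref{CrossingWebRel}, which is available in \(\WebAaI\) by \cref{Same1}; slide the \(k\)-colored strand past the resulting \(i\)-splits and \(i\)-merges using the split- and merge-intertwining relations \cref{SplitIntertwineRel,MergeIntertwineRel} for the \emph{distinct} pair \((i,k)\) together with web-associativity \cref{AssocRel}; undo a superfluous pair of \(i\)-\(k\) crossings with the double-crossing relation of \cref{Cox} for \((i,k)\); and finally repackage the leftover \(i\)-split/\(i\)-merge into an \(i\)-\(i\) crossing via \cref{CrossingWebRel} again. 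Because the intertwining and double-crossing relations invoked along the way are exactly those in the two-distinct-colors case, no new relations are needed. Note that the genuinely two-color relations --- the first equality of \cref{Cox} and all of \cref{SplitIntertwineRel,MergeIntertwineRel} --- admit only the ``equal'' and ``distinct'' dichotomies, so the braid relation is the only place where a truly mixed color pattern occurs.

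The step I expect to be the main obstacle is the bookkeeping in this last argument: \cref{CrossingWebRel} introduces infinite binomial sums, and sliding the \(k\)-strand through produces further such sums, so confirming that everything telescopes to the single diagram on the other side will require the same binomial identities (Chu--Vandermonde, Euler's finite difference theorem, the index-shift formula) already used in the proofs of \cref{Same1,RevDiagSwitchLem,DoubleCross,SplitMergeCross,BraidCox}. To keep this under control I would avoid redoing the manipulations by hand and instead factor each degenerate diagram through a single monochromatic \(i\)-merge of the two like-colored strands, so that all the binomial cancellations take place inside the ``black box'' already supplied by \cref{SplitMergeCross} and \cref{RevDiagSwitchLem}; one is then left only with the outer, color-blind diagrammatic identities to verify.
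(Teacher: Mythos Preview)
Your proposal is correct and follows essentially the same route as the paper: the monochromatic instances of \cref{Cox,SplitIntertwineRel,MergeIntertwineRel} are exactly \cref{DoubleCross,BraidCox,SplitMergeCross}, and for the two-color braid relation one expands the lone same-color crossing via \cref{CrossingWebRel}, slides the resulting splits and merges past the differently-colored strand using the distinct-color intertwining relations, and reassembles.

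One remark: you are overestimating the difficulty of the last step. No new binomial identities are needed. When you expand the monochromatic crossing on one side of the braid relation via \cref{CrossingWebRel}, you obtain a sum of ladder diagrams indexed by \(t\); the distinct-color relations \cref{SplitIntertwineRel,MergeIntertwineRel,Cox} move each individual ladder term past the \(k\)-strand without changing its shape or coefficient, and the resulting sum is then exactly the \cref{CrossingWebRel} expansion of the monochromatic crossing on the other side. So the argument is term-by-term, and the Chu--Vandermonde and Euler identities you anticipate do not enter.
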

\begin{proof}
That the claim holds for \cref{SplitIntertwineRel}, \cref{SplitMergeCross}, and the left side of \cref{Cox} is immediate from \cref{DoubleCross,SplitMergeCross,BraidCox}. The right side of \cref{Cox} holds when \(i = j = k\) by \cref{BraidCox}. To establish the right side of \cref{Cox} when \(|\{i,j,k\}| = 2\), we note that any same-color crossings may be rewritten as combinations of splits and merges via \cref{CrossingWebRel}, and then \cref{SplitIntertwineRel,MergeIntertwineRel} may be used to pull these splits and merges across the differently-colored strand, and then rewritten as same-colored crossings again to achieve the result.
\end{proof}

\section{The defining representation}\label{S:WebAaRep}
%of \texorpdfstring{$\WebAa$}{WebAa}

\subsection{The twist map} \label{twistsec}
Let \(M,N \in \sModk\).  Recall that for short we write $M \otimes N$ for $M \otimes_{\k} N$.  We have the \emph{twist map}, 
\begin{align}\label{ExpTwist}
\tau_{M,N}: M\otimes N \to N \otimes M,
\end{align}
given by 
\begin{align*}
\tau_{M,N}(m \otimes n) = (-1)^{\overline m \cdot \overline n}n \otimes m,
\end{align*}
for homogeneous \(m \in M\), \(n \in N\). The tensor operation and the twist make \(\sModk\) into a symmetric monoidal supercategory. 

We have a right action of the symmetric group \(\mathfrak{S}_d\) on \(M^{\otimes d}\) given by signed permutations:
\begin{align*}
( m_1 \otimes \cdots \otimes m_d).\tau =  (-1)^{\langle \tau; (m_1, \ldots, m_d) \rangle} m_{\tau (1)} \otimes \cdots \otimes m_{\tau (d)}
\end{align*}
where
\begin{align*}
\langle \tau; (m_1, \ldots, m_d) \rangle := \sum_{
\substack{
1 \leq r < s \leq d\\
\tau(r) > \tau(s)
}
}
\overline{m}_r \overline{m}_s.
\end{align*}

\subsection{Symmetric powers}\label{sympowsec} Let \(M \in \sModk\). The {\em tensor superalgebra} \(M^\bullet = \bigoplus_{x= 0}^\infty M^{\otimes x}\) is a \(\k\)-superalgebra under the product \(\otimes\). 
Let \(J\subseteq M^\bullet\) be the ideal generated by elements of the form
\begin{align*}
m_1 \otimes m_2 - (-1)^{\overline{m}_1 \overline{m}_2}m_2 \otimes m_1,
\qquad
\textup{and}
\qquad
m_3 \otimes m_3, 
\end{align*}
ranging over all homogeneous \(m_1, m_2 \in M\), and all \(m_3 \in M_{\bar 1}\). The {\em symmetric superalgebra} is \(S^\bullet M := M^{\bullet}/J\).
Let \(\sigma: M^\bullet \to S^\bullet M\) denote the canonical quotient map of \(\k\)-supermodules.
For \(x \in \Z_{\geq 0}\), denote the \(\k\)-supermodule
\(S^xM := \sigma(M^{\otimes x})\), so that \(S^\bullet M = \bigoplus_{x =0}^\infty S^xM\).
Note in particular that \(S^1M \cong M\) and \(S^0M \cong  \k\). 

The symmetric superbialgebra \(S^\bullet M\) has associative product \(\nabla: S^\bullet M\otimes S^\bullet M \to S^\bullet M\),  coassociative coproduct \(\Delta: S^\bullet M \to S^\bullet M \otimes S^\bullet M\), and projections \(p_x:S^\bullet M \to S^xM\) and inclusions \(\iota_x: S^xM \to S^\bullet M\) for all \(x \in \Z_{\geq 0}\).

\subsubsection{The split map}
We define the \emph{split map} \({}_M\textup{spl}^{x,y}_{x+y}: S^{x+y}M \to S^xM \otimes S^yM\) by  
\begin{align*}
{}_M\textup{spl}^{x,y}_{x+y} = (p_x \otimes p_y) \circ \Delta \circ \iota_{x+y}.
\end{align*}
Explicitly,
\begin{align}\label{ExpSpl}
{}_M\textup{spl}^{x,y}_{x+y}(m_1 \cdots m_{x+y})= \sum_{\substack{T = \{t_1< \cdots< t_x\} \\ U = \{u_1< \cdots<u_y\} \\ T \cup U = \{1,\ldots, x+y\} }} (-1)^{\varepsilon(T,U)}m_{t_1} \cdots m_{t_x} \otimes m_{u_1} \cdots m_{u_y},
\end{align}
for all homogeneous \(m_1, \ldots, m_{x+y} \in M\), where \(\varepsilon(T,U) \in \Z_2\) is defined by
\begin{align*}
\varepsilon(T,U)=\#\{(t,u) \in T \times U \mid t>u, \,\overline{m}_t = \overline{m}_u = \bar 1\}.
\end{align*}

\subsubsection{The merge map}
We may similarly define the \emph{merge map}
\(
{}_M\textup{mer}_{x,y}^{x+y}: S^xM \otimes S^xM \to S^{x+y}M
\)
by  \[{}_M\textup{mer}_{x,y}^{x+y}:= p_{x+y} \circ \nabla \circ (\iota_x \otimes \iota_y).\] 
Explicitly, 
\begin{align}\label{ExpMer}
{}_M\textup{mer}_{x,y}^{x+y}(m_1 \cdots m_x \otimes m_1' \cdots m_y') = m_1 \cdots m_x m_1' \cdots m_y',
\end{align}
for all \(m_1, \ldots, m_x, m_1', \ldots, m_y' \in M\).

\subsection{The superalgebra  \texorpdfstring{$M_n(A)$}{Mn(A)}  and the Lie superalgebra \texorpdfstring{$\gl_n(A)$}{gln(A)}} 
Fix \(n \in \Z_{\geq 0}\). Let \(A\) be a locally unital \(\k\)-superalgebra  with distinguished idempotent set \(I\). The matrix algebra \(M_n(A) = M_n(\k) \otimes  A\) is also naturally a \(\k\)-superalgebra. For \(r,s \in [1,n]\), \(f \in A\) let \(E_{r,s}^f := E_{r,s} \otimes f\), the matrix where the $(r,s)$-entry is $f$ and all others are zero. For homogeneous \(f\in A\) we have \(\overline{E^f_{r,s}} = \overline f\). We write \(\mathfrak{gl}_n(A)\) for  \(M_n(A)\) viewed as a Lie superalgebra using the supercommutator bracket.  That is,  \(\gl_n(A) = M_n(A)\) as \(\k\)-supermodules and
\begin{align*}
[E_{r,s}^f, E_{r',s'}^{f'}] = \delta_{s, r'} E_{r,s'}^{ff'} - (-1)^{\bar f \bar f'} \delta_{s', r} E_{r',s}^{f'f}
\end{align*}
for all \(r,s, r', s' \in [1,n]\) and homogeneous \(f, f' \in A\).
We write \(U(\gl_n(A))\) for the universal enveloping superalgebra of \(\gl_n(A)\).

When $I$ is a finite set $A$ is unital.  In this case, let $\fh = \fh_{n}$ be the $\k$-submodule of $\gl_{n}(A)$ generated by $\left\{E_{r,r}^{1} \mid 1 \leq r \leq n  \right\}$.  Let $X(T_{n}) = \bigoplus_{r=1}^{n} \Z \varepsilon_{r}$ be the free abelian group on generators $\varepsilon_{1}, \dotsc , \varepsilon_{n}$ and let
\[
\Lambda_{n} = \left\{\lambda = \sum_{i=1}^{n}\lambda_{i}\varepsilon_{i} \in X(T_{n}) \mid \lambda_{i} \geq 0 \text{ for $i=1, \dotsc , n$} \right\}.
\]  Given a left $\gl_{n}(A)$-module $M$, a vector $u \in M$ has \emph{weight} $\lambda = \sum_{k=1}^{n} \lambda_{i}\varepsilon_{i} \in X(T_{n})$ if $E_{i,i}^{1}u = \lambda_{i} u$ for all $1 \leq i \leq  n$.  The set of all vectors in $M$ of weight $\lambda$ is the $\lambda$ \emph{weight space} of $M$.  There are obvious right module versions of these definitions, as well.

\subsection{Colored symmetric power modules for \texorpdfstring{$\gl_{n}(A)$}{gln(A)}} \label{leftmultact}
For \(i \in I\), write \({}_i V = \bigoplus_{j \in I} iAj \subseteq A\), and \({}_i V_n = ({}_i V)^{\oplus n}\).
For \(f \in {}_iV, r \in [1,n]\), we will write \(v_r^f\) to denote \(f\) in the \(r\)th component of \({}_iV_n\). 

There is a right action of \(\gl_n(A)\) on \({}_iV_n\) by right matrix multiplication on row vectors, so that
\begin{align*}
v_r^f \cdot E_{s,t}^{g} = \delta_{r,s} v_t^{fg}
\end{align*}
for all \(r,s,t \in [1,n]\), \(f \in {}_iV\), \(g \in A\).
The coproduct on \(U(\gl_n(A))\) given by \(\Delta(u) = 1 \otimes u + u \otimes 1\) for $u \in \gl_{n}(A)$ makes the category \(\textup{mod-}\gl_n(A)\) of right \(\gl_n(A)\)-supermodules into a monoidal supercategory.  Namely, for all \(M,N \in \textup{mod-}\gl_n(A)\) the action of $\gl_n(A)$ on $M\otimes N$ is given on homogeneous elements by
\begin{align*}
(m \otimes n) \cdot u = (-1)^{\overline u \,\overline n}mu \otimes n + m \otimes nu.
\end{align*} 

Let \(x \in \Z_{>0}\). We will denote the \(x\)th symmetric power \(S^x({}_iV_n)\) by \(S^x_iV_n\).
Then, via the coproduct of \(U(\gl_n(A))\), \(S^x_i V_n \in \textup{mod-}\gl_n(A)\) with the action given on homogeneous elements by
\begin{align*}
v_{r_1}^{f_1} \cdots v_{r_x}^{f_x} \cdot E_{s,t}^{g}
=
\sum_{u =1}^x 
(-1)^{\bar g ( \bar f_{u+1} + \cdots + \bar f_x)} \delta_{r_u s}v_{r_1}^{f_1} \cdots v_{r_{u-1}}^{f_{u-1}} v_{t}^{f_u g} v_{u+1}^{f_{u+1}} \cdots v_x^{f_x}.
\end{align*}
For all \(f \in jA^{(x)}i\), we have a \(\k\)-linear map
\(
L_x^f: S^x_iV_n \to S^x_jV_n 
\)
given by
\begin{align}\label{ExpL}
L_x^f(v_{r_1}^{g_1} \cdots v_{r_x}^{g_x}) = v_{r_1}^{fg_1} \cdots v_{r_x}^{fg_x}
\end{align}
for all \(r_t \in [1,n]\), \(g_t \in {}_iV\), \(t \in [1,x]\). Note that when \(x \geq 2\), \(\bar{f} = \bar 0\), so there is no need to worry about signs. 

We write \(\modglnAS\) for the full monoidal subcategory of right \(\gl_n(A)\)-modules generated by the objects \(S^{x}_{i}V_n\), for all \(x \in \Z_{\geq 0}\), \(i \in I\), so that
\begin{align*}
\Ob(\modglnAS) = \left\{ S^{\bx}_{\bi}V_n := S^{x_1}_{i_1}V_n \otimes \cdots \otimes S^{x_t}_{i_t}V_n \mid t \in \Z_{\geq 0}, \bx \in  \Z_{\geq 0}^t, \bi \in I^t\right\}.
\end{align*}
It is straightforward to check that the maps  \(\tau_{S_i^xV_n, S_j^y V_n}\), \({}_{{}_iV_n}\textup{spl}_{x+y}^{x,y}\), \({}_{{}_iV_n}\textup{mer}_{x+y}^{x,y}\), \(L_x^f\) as defined in \cref{ExpTwist,ExpSpl,ExpMer,ExpL} are even homomorphisms of right \(\gl_n(A)\)-supermodules.

\subsection{The defining representation of \texorpdfstring{$\WebAaI$}{WebAaI}}

\begin{theorem}\label{Gthm}
There is a monoidal superfunctor
\begin{align*}
G_n: \WebAaI \to \modglnAS
\end{align*}
given by \(G_ni^{(x)} = S^x_iV_n\) on generating objects, and
\begin{align*}
\hackcenter{
{}
}
\hackcenter{
\begin{tikzpicture}[scale=.8]
  \draw[ultra thick,blue] (0,0)--(0,0.2) .. controls ++(0,0.35) and ++(0,-0.35) .. (-0.4,0.9)--(-0.4,1);
  \draw[ultra thick,blue] (0,0)--(0,0.2) .. controls ++(0,0.35) and ++(0,-0.35) .. (0.4,0.9)--(0.4,1);
      \node[above] at (-0.4,1) {$ \scriptstyle i^{\scriptstyle (x)}$};
      \node[above] at (0.4,1) {$ \scriptstyle i^{\scriptstyle (y)}$};
      \node[below] at (0,0) {$ \scriptstyle i^{\scriptstyle (x+y)} $};
\end{tikzpicture}}
\mapsto
{}_{{}_iV_n} \textup{spl}_{x+y}^{x,y}
\qquad
\hackcenter{
\begin{tikzpicture}[scale=.8]
  \draw[ultra thick,blue ] (-0.4,0)--(-0.4,0.1) .. controls ++(0,0.35) and ++(0,-0.35) .. (0,0.8)--(0,1);
\draw[ultra thick, blue] (0.4,0)--(0.4,0.1) .. controls ++(0,0.35) and ++(0,-0.35) .. (0,0.8)--(0,1);
      \node[below] at (-0.4,0) {$ \scriptstyle i^{ \scriptstyle (x)}$};
      \node[below] at (0.4,0) {$ \scriptstyle i^{ \scriptstyle (y)}$};
      \node[above] at (0,1) {$ \scriptstyle i^{ \scriptstyle (x+y)}$};
\end{tikzpicture}}
\mapsto
{}_{{}_iV_n} \textup{mer}_{x,y}^{x+y}
\qquad
\hackcenter{
\begin{tikzpicture}[scale=.8]
  \draw[ultra thick,red] (0.4,0)--(0.4,0.1) .. controls ++(0,0.35) and ++(0,-0.35) .. (-0.4,0.9)--(-0.4,1);
  \draw[ultra thick,blue] (-0.4,0)--(-0.4,0.1) .. controls ++(0,0.35) and ++(0,-0.35) .. (0.4,0.9)--(0.4,1);
      \node[above] at (-0.4,1) {$ \scriptstyle j^{ \scriptstyle (y)}$};
      \node[above] at (0.4,1) {$ \scriptstyle i^{ \scriptstyle (x)}$};
       \node[below] at (-0.4,0) {$ \scriptstyle i^{ \scriptstyle (x)}$};
      \node[below] at (0.4,0) {$ \scriptstyle j^{ \scriptstyle (y)}$};
\end{tikzpicture}}
\mapsto
\tau_{S^x_iV_n, S^y_jV_n}
\qquad
\hackcenter{
\begin{tikzpicture}[scale=.8]
  \draw[ultra thick, blue] (0,0)--(0,0.5);
   \draw[ultra thick, red] (0,0.5)--(0,1);
   \draw[thick, fill=yellow]  (0,0.5) circle (7pt);
    \node at (0,0.5) {$ \scriptstyle f$};
     \node[below] at (0,0) {$ \scriptstyle i^{ \scriptstyle (x)}$};
      \node[above] at (0,1) {$ \scriptstyle j^{ \scriptstyle (x)}$};
\end{tikzpicture}}
\mapsto L_x^f
\end{align*}
on generating morphisms, for all \(x,y \in \Z_{\geq 0}\), \(i,j \in I\), and \(f \in jA^{(x)}i\).
\end{theorem}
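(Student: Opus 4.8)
The plan is to invoke the universal property of $\WebAaI$ as a strict monoidal supercategory presented by generators and relations. To construct $G_n$ it suffices to specify it on generating objects and morphisms compatibly with the monoidal product on objects, respecting the declared parities of morphisms, and then to check that the image of each defining relation \cref{AssocRel}--\cref{AaIntertwine} is a valid identity in $\modglnAS$. On objects, $G_n i^{(x)} = S^x_i V_n$ extends monoidally to all of $\Ob(\WebAaI) = \hat\Omega_I$ once we observe that $S^0_i V_n \cong \k$ is the monoidal unit, so that the identification $i^{(0)} = \mathbbm{1}$ in $\hat\Omega_I$ is respected. On morphisms, the remarks following \cref{ExpL} already establish that $\tau$, $\textup{spl}$, and $\textup{mer}$ are even homomorphisms of right $\gl_n(A)$-supermodules and that $L_x^f$ is a homomorphism of parity $\bar f$, matching the parities of the split, merge, crossing, and coupon generators. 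Hence the entire content of the theorem is the verification of the relations, which I would group according to the structural feature responsible for each.

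First, the relations that are formal consequences of $(S^\bullet({}_iV_n), \nabla, \Delta)$ being a graded (co)commutative superbialgebra. Web-associativity \cref{AssocRel} is coassociativity of $\Delta$ (pre- and post-composed with the projections $p_\bullet$ and inclusions $\iota_\bullet$) for the split, together with associativity of $\nabla$ for the merge. The merge-split relation \cref{MSrel} is exactly the bialgebra compatibility $\Delta \circ \nabla = (\nabla \otimes \nabla)\circ(\id \otimes \tau \otimes \id)\circ(\Delta \otimes \Delta)$, read off in each fixed output bidegree $(z,w)$; the internal summation index $t$ records the bidegrees of the two middle tensor factors. The knothole relation \cref{KnotholeRel} is the evaluation of $p_{x+y}\circ\nabla\circ(\iota_x\otimes\iota_y)\circ(p_x\otimes p_y)\circ\Delta\circ\iota_{x+y}$, which on a degree-$(x+y)$ monomial returns $\binom{x+y}{x}$ copies of it.

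Second, the relations that follow formally from the symmetric monoidal structure on $\modglnAS$ and the naturality of $\tau$. The first equation of \cref{Cox} is $\tau_{S^y_jV_n,S^x_iV_n}\circ\tau_{S^x_iV_n,S^y_jV_n} = \id$, and the second is the standard hexagon/Yang--Baxter identity for the symmetry of a symmetric monoidal category. The split- and merge-intertwining relations \cref{SplitIntertwineRel,MergeIntertwineRel} and the $A$-intertwining relations \cref{AaIntertwine} all express naturality of $\tau$ with respect to the morphisms $\textup{spl}$, $\textup{mer}$, and $L_x^f$ respectively, and hold because $\tau$ is the symmetry of $\sModk$ restricted to $\gl_n(A)$-modules. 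The simpler coupon relations come straight from \cref{ExpL}: $L_x^i = \id_{S^x_iV_n}$ since the distinguished idempotent $i$ acts as $1$ on ${}_iV$, $L_x^{\alpha f} = \alpha^x L_x^f$ since the scalar is pulled out of each of the $x$ factors, and $L_x^h\circ L_x^f = L_x^{hf}$ by associativity in $A$; these give \cref{AaRel1} and the first half of \cref{AaRel2}. The $A$-split/merge relations \cref{TAaSMRel}, which say $\textup{spl}\circ L_{x+y}^f = (L_x^f\otimes L_y^f)\circ\textup{spl}$ and dually for $\textup{mer}$, are immediate from \cref{ExpSpl,ExpMer,ExpL}.

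The remaining two identities are genuine computations in the symmetric superalgebra $S^\bullet({}_iV_n)$, and this is where the real work lies. The decomposition of $L_x^{f+g}$ as a sum over $t$ in \cref{AaRel2} amounts to expanding $\prod_{k=1}^x (f+g)g_k$ inside the graded-commutative algebra $S^\bullet({}_jV_n)$ as a sum of monomials indexed by subsets, matched against the composite $\textup{mer}\circ(L_t^f\otimes L_{x-t}^g)\circ\textup{spl}$, and it requires carefully tracking the Koszul signs $\varepsilon(T,U)$ from \cref{ExpSpl}. The odd knothole annihilation \cref{OddKnotholeRel} similarly reduces to checking that for odd $f$ the two terms produced by $\textup{mer}\circ(L_1^f\otimes L_1^f)\circ\textup{spl}$ on $S^2_iV_n$ cancel in $S^2_jV_n$. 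I expect these sign-bookkeeping verifications — where the parity of $f$ interacts nontrivially with the Koszul signs $\varepsilon(T,U)$ appearing in \cref{ExpSpl} — to be the main obstacle; the binomial identities involved (Chu--Vandermonde and its relatives) and all the remaining relations are either routine or formal.
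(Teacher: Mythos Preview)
Your approach is correct and essentially the same as the paper's: both reduce the construction of $G_n$ to verifying that each defining relation \cref{AssocRel}--\cref{AaIntertwine} holds in $\modglnAS$, and both identify the additive coupon relation (right half of \cref{AaRel2}) and the odd knothole annihilation \cref{OddKnotholeRel} as the only places requiring actual computation. Your conceptual framing via the superbialgebra structure and naturality of the symmetry is a nice organizational device that the paper leaves implicit.

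One point where you overestimate the difficulty: you anticipate that the right half of \cref{AaRel2} will involve delicate interaction between the parity of $f,g$ and the Koszul signs $\varepsilon(T,U)$, but in fact the paper observes that for $x \geq 2$ the coupon labels lie in $jA^{(x)}i = jai \subseteq A_{\bar 0}$, so $f$ and $g$ are automatically even and the signs $\varepsilon(T,U)$ arising from the split are not disturbed by the $L^f$ maps. The $x=1$ case is just linearity. The same observation trivializes \cref{TAaSMRel}. So the only genuinely signed computation is \cref{OddKnotholeRel}, which is a two-line check. Also, no binomial identities such as Chu--Vandermonde are needed here; those appear only in \cref{imprels} when deriving implied relations, not when verifying the defining ones.
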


\begin{proof}
We simply check that images of relations \cref{AssocRel}--\cref{AaIntertwine} are preserved by the functor \(G_n\). This is routine. In particular, relations \cref{AssocRel,DiagSwitchRel,CrossingWebRel} can be checked as in \cite[Theorem 6.1.1]{DKM} (where full details are included in the arXiv version of that paper). Relations \cref{Cox}--\cref{MergeIntertwineRel} are straightforward, as are \cref{TAaSMRel,AaIntertwine}, noting that \(\bar{f} = \bar 0\) for \(f\)-coupons on thick strands.

The left side of \cref{AaRel1} is clear. For the right side, we have
\begin{align*}
L_x^{\alpha f} (v_{r_1}^{g_1} \cdots v_{r_x}^{g_x}) &= v_{r_1}^{\alpha fg_1} \cdots v_{r_x}^{\alpha fg_x} = \alpha^x v_{r_1}^{fg_1} \cdots v_{r_x}^{fg_x} = \alpha^x L_x^f(v_{r_1}^{g_1} \cdots v_{r_x}^{g_x}),
\end{align*}
for all \(\alpha \in \k\), \(f\in jA{i}\), \(r_t \in [1,n]\), \(g_t \in {}_iV\), \(t \in [1,x]\), as desired.

The left side of \cref{AaRel2} is clear. The right side is clear for \(x=1\), so assume \(x>1\), and therefore \(\bar{f}=\overline{g} = \bar 0\). Then 
for all 
\(r_t \in [1,n]\), \(h_t \in {}_iV\), \(t \in [1,x]\), we have:
\begin{align*}
L^{f+g}_x
(v_{r_1}^{h_1} \cdots v_{r_x}^{h_x}) &= v_{r_1}^{(f+g)h_1} \cdots v_{r_x}^{(f+g)h_x} \\
&= \sum_{t=0}^x
\sum_{
\substack{
B = \{b_1 < \cdots < b_t\}
\\
C= \{c_1 < \cdots < c_{x-t}\}
\\
B \cup C = [1,x]}
}
v_{r_1}^{(\delta_{1 \in B}f+\delta_{1 \in C}g)h_1} \cdots v_{r_x}^{ (\delta_{x \in B}f+\delta_{x \in C}g)h_x} 
\\
&= \sum_{t=0}^x
\sum_{
\substack{
B = \{b_1 < \cdots < b_t\}
\\
C= \{c_1 < \cdots < c_{x-t}\}
\\
B \cup C = [1,x]}
}
(-1)^{\varepsilon(B,C)}v_{r_{b_1}}^{fh_{b_1}} \cdots v_{r_{b_t}}^{fh_{b_t}} v_{r_{c_1}}^{gh_{c_1}} \cdots v_{r_{c_{x-t}}}^{gh_{c_{x-t}}},
\end{align*}
where
\begin{align*}
\varepsilon(B,C) = \#\{
(b,c) \in B \times C \mid b >c, \bar h_b = \bar h_c= \bar 1
\}.
\end{align*}
It is easily seen, using the formulas \cref{ExpSpl,ExpMer}, that this last line is equal to
\begin{align*}
 \sum_{t=0}^x
{}_{{}_jV_n} \textup{mer}_{t,x-t}^{x} \circ (L_t^f \otimes L_{x-t}^g) \circ {}_{{}_iV_n}\textup{spl}^{t,x-t}_{x}
(v_{r_1}^{h_1} \cdots v_{r_x}^{h_x}),
\end{align*}
as desired.

Finally, for \cref{OddKnotholeRel}, let \(i,j \in I\), \(f \in jA_{\bar 1}i\). Then for \(r_1, r_2 \in [1,n]\), \(h_1, h_2 \in {}_iV\), we have
\begin{align*}
{}_{{}_jV_n} \textup{mer}_{1,1}^{2} \circ (L_1^f \otimes L_1^f) \circ {}_{{}_iV_n}\textup{spl}^{1,1}_{2}(v_{r_1}^{h_1}v_{r_2}^{h_2})
&=
(-1)^{\bar h_1}v_{r_1}^{fh_1}v_{r_2}^{fh_2}
+
(-1)^{\bar h_2(\bar h_1+1)}v_{r_2}^{fh_2}v_{r_1}^{fh_1}\\
&=
(-1)^{\bar h_1}v_{r_1}^{fh_1}v_{r_2}^{fh_2}
+
(-1)^{\bar h_1+1}v_{r_1}^{fh_1}v_{r_2}^{fh_2} = 0,
\end{align*}
completing the proof.
\end{proof}

\section{Special morphisms and bases for \texorpdfstring{$\WebAaI$}{WebAaI}}

\subsection{Explosion and contraction}\label{ExplSec}

Define morphisms 
\begin{align}\label{E:multisplitmerge}
\hackcenter{}
y_i^{(x_1, \ldots, x_n)}:=
\hackcenter{
\begin{tikzpicture}[scale=0.8]
 \draw[ultra thick, color=blue] (0,-0.2)--(0,0);
 \draw[ultra thick, color=blue] (0,0) .. controls ++(0,0.5) and ++(0,-0.5) .. (-1.2-.6,1);
  \draw[ultra thick, color=blue] (0,0) .. controls ++(0,0.5) and ++(0,-0.5) .. (-0.6-.3,1);
    \draw[ultra thick, color=blue] (0,0) .. controls ++(0,0.5) and ++(0,-0.5) .. (0.6+.3,1);
       \draw[ultra thick, color=blue] (0,0) .. controls ++(0,0.5) and ++(0,-0.5) .. (1.2+.6,1);
          \node at (0,1) {$\scriptstyle \cdots $};
            \node[below] at (0.1,-0.2) {$\scriptstyle i^{(\scriptstyle x_1 + \cdots + x_{n})} $};
             \node[above] at (-1.2-.6,1) {$\scriptstyle i^{(\scriptstyle x_1)} $};
              \node[above] at (-0.6-.3,1) {$\scriptstyle i^{(\scriptstyle x_2)} $};
               \node[above] at (0.6+.3,1) {$\scriptstyle i^{(\scriptstyle x_{n-1})} $};
                \node[above] at (1.2+.8,0.97) {$\scriptstyle i^{(\scriptstyle x_{n})} $};
\end{tikzpicture}}
;
\qquad
\qquad
z_i^{(x_1, \ldots, x_n)}:=
\hackcenter{
\begin{tikzpicture}[scale=0.8]
 \draw[ultra thick, color=blue] (0,0.2)--(0,0);
 \draw[ultra thick, color=blue] (0,0) .. controls ++(0,-0.5) and ++(0,0.5) .. (-1.2-.6,-1);
  \draw[ultra thick, color=blue] (0,0) .. controls ++(0,-0.5) and ++(0,0.5) .. (-0.6-.3,-1);
    \draw[ultra thick, color=blue] (0,0) .. controls ++(0,-0.5) and ++(0,0.5) .. (0.6+.3,-1);
       \draw[ultra thick, color=blue] (0,0) .. controls ++(0,-0.5) and ++(0,0.5) .. (1.2+.6,-1);
          \node at (0,-1) {$\scriptstyle \cdots $};
            \node[above] at (0.1,0.2) {$\scriptstyle i^{(\scriptstyle x_1 + \cdots + x_{n})} $};
             \node[below] at (-1.2-.6,-1) {$\scriptstyle i^{(\scriptstyle x_1)} $};
              \node[below] at (-0.6-.3,-1) {$\scriptstyle i^{(\scriptstyle x_2)} $};
               \node[below] at (0.6+.3,-1) {$\scriptstyle i^{(\scriptstyle x_{n-1})} $};
                \node[below] at (1.2+.8,-1) {$\scriptstyle i^{(\scriptstyle x_{n})} $};
\end{tikzpicture}},
\end{align}
in \(\WebAaI(i^{(x_1 + \cdots + x_n)}, i^{(x_1)} \cdots  i^{(x_n)})\)
and
 \(\WebAaI(i^{(x_1)} \cdots i^{(x_n)}, i^{(x_1 + \cdots + x_n)})\), respectively.  The diagrams should be interpreted as \(n-1\) vertically composed splits (or merges). By \cref{AssocRel} the resulting morphism is independent of the split (or merge) order. 

For \(\bi^{(\bx)} = i_1^{(x_1)} \cdots i_t^{(x_t)} \in \textup{Ob}(\WebAaI)\) define \(\bi^\bx = (i_1^{(1)})^{ x_1}  \cdots (i_t^{(1)})^{ x_t} \in \textup{Ob}(\WebAaI)\), and 
\begin{align*}
Y_{\bi^{(\bx)}}&:= y_{i_1}^{(1, \ldots, 1)} \otimes \cdots \otimes y_{i_t}^{(1, \ldots, 1)} \in \WebAaI(\bi^{(\bx)}, \bi^\bx)\\
Z_{\bi^{(\bx)}}&:= z_{i_1}^{(1, \ldots, 1)} \otimes \cdots \otimes z_{i_t}^{(1, \ldots, 1)} \in \WebAaI(\bi^\bx,\bi^{(\bx)})
\end{align*}
Then for \(\bi^{(\bx)}, \bj^{(\by)} \in \textup{Ob}(\WebAaI)\), we have linear maps: 
\begin{align*}
\exp_{\bi^{(\bx)}, \bj^{(\by)}}: \WebAaI(\bi^{(\bx)}, \bj^{(\by)}) \to \WebAaI(\bi^\bx, \bj^\by),
\qquad
f \mapsto Y_{\bj^{(\by)}}\circ f\circ Z_{\bi^{(\bx)}}\\
\con_{\bi^{(\bx)}, \bj^{(\by)}}: \WebAaI(\bi^{\bx}, \bj^{\by}) \to \WebAaI(\bi^{(\bx)}, \bj^{(\by)}),
\qquad
f \mapsto Z_{\bj^{(\by)}}\circ f\circ Y_{\bi^{(\bx)}}
\end{align*}
We refer to these maps a \textit{explosion} and \textit{contraction}, respectively. 
The next lemma follows by repeated application of \cref{KnotholeRel}.

\begin{lemma}\label{L:ExplCon} For all \(f \in \WebAaI(\bi^{(\bx)}, \bj^{(\by)})\), we have
\begin{align*}
\con_{\bi^{(\bx)}, \bj^{(\by)}} \circ \exp_{\bi^{(\bx)}, \bj^{(\by)}} (f)= \bx! \,\by! \, f.
\end{align*}
\end{lemma}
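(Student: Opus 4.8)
The plan is to reduce the statement to a purely ``monotone'' (single-color) computation and then to the elementary combinatorics of the knothole relation \cref{KnotholeRel}. First I would unpack the definitions: for $f \in \WebAaI(\bi^{(\bx)}, \bj^{(\by)})$ we have
\[
\con_{\bi^{(\bx)}, \bj^{(\by)}} \circ \exp_{\bi^{(\bx)}, \bj^{(\by)}}(f) = Z_{\bj^{(\by)}} \circ \bigl(Y_{\bj^{(\by)}} \circ f \circ Z_{\bi^{(\bx)}}\bigr)\circ Y_{\bi^{(\bx)}} = \bigl(Z_{\bj^{(\by)}} Y_{\bj^{(\by)}}\bigr) \circ f \circ \bigl(Z_{\bi^{(\bx)}} Y_{\bi^{(\bx)}}\bigr),
\]
using associativity of composition. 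So the whole claim follows once we show that for any object $\bi^{(\bx)} = i_1^{(x_1)} \cdots i_t^{(x_t)}$ the endomorphism $Z_{\bi^{(\bx)}} \circ Y_{\bi^{(\bx)}}$ of $\bi^{(\bx)}$ equals $\bx!$ times the identity. Since $Y_{\bi^{(\bx)}}$ and $Z_{\bi^{(\bx)}}$ are tensor products of the single-strand maps $y_{i_k}^{(1,\dots,1)}$ and $z_{i_k}^{(1,\dots,1)}$ over $k=1,\dots,t$, and the monoidal product of identity morphisms is the identity, by functoriality of $\otimes$ it suffices to treat one factor: show $z_i^{(1,\dots,1)} \circ y_i^{(1,\dots,1)} = x!\cdot \id_{i^{(x)}}$ for the $x$-fold splitting/merging of a single strand of thickness $x$.

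Next I would establish that single-strand fact by induction on $x$, which is where \cref{KnotholeRel} enters. Writing $y^{(1^x)}$ for the $(x-1)$-fold iterated split of $i^{(x)}$ into $x$ thin strands (well-defined by web-associativity \cref{AssocRel}, as noted after \cref{E:multisplitmerge}), we can factor $y^{(1^x)}$ as: first split $i^{(x)}$ into $i^{(x-1)}\otimes i^{(1)}$, then apply $y^{(1^{x-1})} \otimes \id_{i^{(1)}}$; similarly for $z^{(1^x)}$. Then
\[
z^{(1^x)} \circ y^{(1^x)} = \bigl(\mathrm{merge}\bigr) \circ \bigl((z^{(1^{x-1})} y^{(1^{x-1})}) \otimes \id_{i^{(1)}}\bigr) \circ \bigl(\mathrm{split}\bigr).
\]
Wait — this isn't quite right because the inner $z^{(1^{x-1})} y^{(1^{x-1})}$ sits only on the first $x-1$ thin strands while the last strand passes straight through, so by the induction hypothesis the middle becomes $(x-1)!\cdot \id_{i^{(x-1)}} \otimes \id_{i^{(1)}}$, leaving $(x-1)!$ times the composite of the split $i^{(x)} \to i^{(x-1)}\otimes i^{(1)}$ followed by the merge $i^{(x-1)}\otimes i^{(1)} \to i^{(x)}$. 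By \cref{KnotholeRel} with $(x,y) \rightsquigarrow (x-1,1)$ this composite equals $\binom{x}{x-1} = x$ times $\id_{i^{(x)}}$. Multiplying, $z^{(1^x)}\circ y^{(1^x)} = (x-1)!\cdot x \cdot \id_{i^{(x)}} = x!\cdot \id_{i^{(x)}}$, completing the induction (base case $x=0$ or $x=1$ being trivial). Taking tensor products over the components $k=1,\dots,t$ gives $Z_{\bi^{(\bx)}} \circ Y_{\bi^{(\bx)}} = \bx!\cdot \id_{\bi^{(\bx)}}$, and likewise $Z_{\bj^{(\by)}} \circ Y_{\bj^{(\by)}} = \by!\cdot \id_{\bj^{(\by)}}$.

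Finally I would assemble: since $\bx!$ and $\by!$ are scalars in $\k$, they are central, so
\[
\con_{\bi^{(\bx)}, \bj^{(\by)}} \circ \exp_{\bi^{(\bx)}, \bj^{(\by)}}(f) = (\by!\cdot\id_{\bj^{(\by)}}) \circ f \circ (\bx!\cdot\id_{\bi^{(\bx)}}) = \bx!\,\by!\, f,
\]
as claimed. I do not expect a serious obstacle here; the only point requiring a little care is the bookkeeping in the induction step — making sure the iterated split/merge is broken off correctly (one thin strand at a time versus the remaining thick strand) so that the induction hypothesis applies to exactly the $x-1$ strands where $z^{(1^{x-1})}y^{(1^{x-1})}$ acts, and that the leftover ``thick-into-thick-and-thin'' cap-cup is precisely an instance of \cref{KnotholeRel} with the stated binomial coefficient. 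Everything else is formal manipulation in a strict monoidal supercategory (associativity of composition, interchange for $\otimes$), and the parities play no role since splits and merges are all even.
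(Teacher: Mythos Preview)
Your proof is correct and is precisely the argument the paper has in mind: the paper's own proof is the single line ``follows by repeated application of \cref{KnotholeRel},'' and your write-up simply fills in those details by reducing to $Z_{\bi^{(\bx)}}Y_{\bi^{(\bx)}} = \bx!\cdot\id$ on each tensor factor and then inducting via \cref{KnotholeRel}.
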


\subsection{Additional coupons}

Let \(x \in \Z_{\geq 0}\), \(i,j \in I\), \(f \in jAi\). We first define some shorthand coupons in \(\WebAaI(i^{(x)}, j^{(x)})\).
Set:
\begin{align}\label{greendotdef}
{}
\hackcenter{
\begin{tikzpicture}[scale=0.8]
  \draw[ultra thick, blue] (0,0)--(0,0.9);
  \draw[ultra thick, red] (0,0.9)--(0,1.8);
  \draw[thick, fill=yellow]  (0,0.9) circle (9pt);
    \node at (0,0.9) {$\scriptstyle f^{\scriptstyle \diamond}$};
     \node[below] at (0,0) {$\scriptstyle i^{\scriptstyle(x)}$};
      \node[above] at (0,1.8) {$\scriptstyle j^{\scriptstyle(x)}$};
\end{tikzpicture}}
=
\hackcenter{
\begin{tikzpicture}[scale=0.8]
  \draw[ultra thick, blue] (0,0)--(0,0.1) .. controls ++(0,0.35) and ++(0,-0.35) .. (-0.6,0.6)--(-0.6,0.9);
    \draw[ultra thick, red]  (-0.6,0.9)--(-0.6,1.2) 
  .. controls ++(0,0.35) and ++(0,-0.35) .. (0,1.7)--(0,1.8);
  \draw[ultra thick, blue] (0,0)--(0,0.1) .. controls ++(0,0.35) and ++(0,-0.35) .. (0.6,0.6)--(0.6,0.9);
    \draw[ultra thick, red]  (0.6,0.9)--(0.6,1.2) 
  .. controls ++(0,0.35) and ++(0,-0.35) .. (0,1.7)--(0,1.8);
  .. controls ++(0,0.35) and ++(0,-0.35) .. (0,1.7)--(0,1.8);
   \draw[thick, fill=yellow] (-0.6,0.9) circle (7pt);
     \draw[thick, fill=yellow] (0.6,0.9) circle (7pt);
       \node[above] at (0,1.8) {$\scriptstyle j^{\scriptstyle(x)}$};
         \node[below] at (0,0) {$\scriptstyle i^{\scriptstyle(x)}$};
           \node at (-0.6,0.9) {$\scriptstyle f$};
             \node at (0.6,0.9) {$\scriptstyle f$};
             \node[right] at (0.5,0.3) {$\scriptstyle i^{\scriptstyle(1)}$};
               \node[left] at (-0.5,0.3) {$\scriptstyle i^{\scriptstyle(1)}$};
                \node at (0,0.9) {$\scriptstyle \cdots $};
\end{tikzpicture}}
\qquad
\text{ and }
\qquad
\hackcenter{
\begin{tikzpicture}[scale=0.8]
  \draw[ultra thick, blue] (0,0)--(0,0.9);
  \draw[ultra thick, red] (0,0.9)--(0,1.8);
  \draw[thick, fill=yellow]  (0,0.9) circle (9pt);
    \node at (0,0.9) {$\scriptstyle f^{\scriptstyle a}$};
     \node[below] at (0,0) {$\scriptstyle i^{\scriptstyle(x)}$};
      \node[above] at (0,1.8) {$\scriptstyle j^{\scriptstyle(x)}$};
\end{tikzpicture}}
\;
&=
\begin{cases}
\;\;\;\;
\hackcenter{
\begin{tikzpicture}[scale=0.8]
  \draw[ultra thick, blue] (0,0)--(0,0.9);
  \draw[ultra thick, red] (0,0.9)--(0,1.8);
     \draw[thick, fill=yellow] (0,0.9) circle (8pt);
    \node at (0,0.9) {$\scriptstyle f$};
     \node[below] at (0,0) {$\scriptstyle i^{\scriptstyle(x)}$};
      \node[above] at (0,1.8) {$\scriptstyle j^{\scriptstyle(x)}$};
\end{tikzpicture}}
&
\textup{if }f \in jai, \textup{ or } f \in jA_{\bar 1}i, \,x=1;
\\
\;\;\;\;
\hackcenter{
\begin{tikzpicture}[scale=0.8]
  \draw[ultra thick, blue] (0,0)--(0,0.9);
  \draw[ultra thick, red] (0,0.9)--(0,1.8);
     \draw[thick, fill=yellow] (0,0.9) circle (9pt);
    \node at (0,0.9) {$\scriptstyle f^{\scriptstyle \diamond}$};
     \node[below] at (0,0) {$\scriptstyle i^{\scriptstyle(x)}$};
      \node[above] at (0,1.8) {$\scriptstyle j^{\scriptstyle(x)}$};
\end{tikzpicture}}
&
\textup{if }
f \in jA_{\bar 0}i \backslash jai.
\end{cases}
\end{align}

We will say a finitely supported function \(\mu: {}_j\BasisB_i \to \Z_{\geq 0}\) is a {\em restricted \( {}_j\BasisB_i\)-composition} provided that \(\mu(b) \leq 1\) whenever \(\overline{b} = \bar 1\). We write \(|\mu| = \sum_{b \in  {}_j\BasisB_i} \mu(b) \in \Z_{\geq 0}\).
Then for such \(\mu\), enumerating \(\textup{supp}(\mu) = \{b_1 <  \cdots < b_d\}\), we define the following morphisms in \(\WebAaI(i^{(|\mu|)}, j^{(|\mu|)})\):
\begin{align}\label{mudef}
{}
\hackcenter{
\begin{tikzpicture}[scale=0.8]
  \draw[ultra thick, blue] (0,0)--(0,0.9);
  \draw[ultra thick, red] (0,0.9)--(0,1.8);
  \draw[thick, fill=yellow]  (0,0.9) circle (9pt);
    \node at (0,0.9) {$\scriptstyle \mu^{\scriptstyle \diamond}$};
     \node[below] at (0,0) {$\scriptstyle i^{\scriptstyle(|\mu|)}$};
      \node[above] at (0,1.8) {$\scriptstyle j^{\scriptstyle(|\mu|)}$};
\end{tikzpicture}}
=
\hackcenter{
\begin{tikzpicture}[scale=0.8]
  \draw[ultra thick, blue] (0,0)--(0,0.1) .. controls ++(0,0.35) and ++(0,-0.35) .. (-0.6,0.6)--(-0.6,0.9);
    \draw[ultra thick, red]  (-0.6,0.9)--(-0.6,1.2) 
  .. controls ++(0,0.35) and ++(0,-0.35) .. (0,1.7)--(0,1.8);
  \draw[ultra thick, blue] (0,0)--(0,0.1) .. controls ++(0,0.35) and ++(0,-0.35) .. (0.6,0.6)--(0.6,0.9);
    \draw[ultra thick, red]  (0.6,0.9)--(0.6,1.2) 
  .. controls ++(0,0.35) and ++(0,-0.35) .. (0,1.7)--(0,1.8);
  .. controls ++(0,0.35) and ++(0,-0.35) .. (0,1.7)--(0,1.8);
   \draw[thick, fill=yellow] (-0.6,0.9) circle (9pt);
     \draw[thick, fill=yellow] (0.6,0.9) circle (9pt);
       \node[above] at (0,1.8) {$\scriptstyle j^{\scriptstyle(|\mu|)}$};
         \node[below] at (0,0) {$\scriptstyle i^{\scriptstyle(|\mu|)}$};
           \node at (-0.6,0.9) {$\scriptstyle b_1^{\scriptstyle \diamond}$};
             \node at (0.6,0.9) {$\scriptstyle b_d^{\scriptstyle \diamond}$};
             \node[right] at (0.5,0.3) {$\scriptstyle i^{\scriptstyle(\mu(b_d))}$};
               \node[left] at (-0.5,0.3) {$\scriptstyle i^{\scriptstyle(\mu(b_1))}$};
                \node at (0,0.9) {$\scriptstyle \cdots $};
\end{tikzpicture}}
\qquad
\text{ and }
\qquad
\hackcenter{
\begin{tikzpicture}[scale=0.8]
  \draw[ultra thick, blue] (0,0)--(0,0.9);
  \draw[ultra thick, red] (0,0.9)--(0,1.8);
  \draw[thick, fill=yellow]  (0,0.9) circle (9pt);
    \node at (0,0.9) {$\scriptstyle \mu^{\scriptstyle a}$};
     \node[below] at (0,0) {$\scriptstyle i^{\scriptstyle(|\mu|)}$};
      \node[above] at (0,1.8) {$\scriptstyle j^{\scriptstyle(|\mu|)}$};
\end{tikzpicture}}
=
\hackcenter{
\begin{tikzpicture}[scale=0.8]
  \draw[ultra thick, blue] (0,0)--(0,0.1) .. controls ++(0,0.35) and ++(0,-0.35) .. (-0.6,0.6)--(-0.6,0.9);
    \draw[ultra thick, red]  (-0.6,0.9)--(-0.6,1.2) 
  .. controls ++(0,0.35) and ++(0,-0.35) .. (0,1.7)--(0,1.8);
  \draw[ultra thick, blue] (0,0)--(0,0.1) .. controls ++(0,0.35) and ++(0,-0.35) .. (0.6,0.6)--(0.6,0.9);
    \draw[ultra thick, red]  (0.6,0.9)--(0.6,1.2) 
  .. controls ++(0,0.35) and ++(0,-0.35) .. (0,1.7)--(0,1.8);
  .. controls ++(0,0.35) and ++(0,-0.35) .. (0,1.7)--(0,1.8);
   \draw[thick, fill=yellow] (-0.6,0.9) circle (9pt);
     \draw[thick, fill=yellow] (0.6,0.9) circle (9pt);
       \node[above] at (0,1.8) {$\scriptstyle j^{\scriptstyle(|\mu|)}$};
         \node[below] at (0,0) {$\scriptstyle i^{\scriptstyle(|\mu|)}$};
           \node at (-0.6,0.9) {$\scriptstyle b_1^{\scriptstyle a}$};
             \node at (0.6,0.9) {$\scriptstyle b_d^{\scriptstyle a}$};
             \node[right] at (0.5,0.3) {$\scriptstyle i^{\scriptstyle(\mu(b_d))}$};
               \node[left] at (-0.5,0.3) {$\scriptstyle i^{\scriptstyle(\mu(b_1))}$};
                \node at (0,0.9) {$\scriptstyle \cdots $};
\end{tikzpicture}}.
\end{align}

\subsection{A spanning set for morphisms}
Let \(\bi^{(\bx)} = i_1^{(x_1)} \cdots i_t^{(x_t)}, \,\bj^{(\by)} = j_1^{(y_1)} \cdots j_u^{(y_u)} \in \textup{Ob}(\WebAaI)\). Let \(\mathcal{M}(\bi^{(\bx)}, \bj^{(\by)})\) be the collection of tuples \(\bmu = ( \mu_{r,s})_{r \in [1,t], s \in [1,u]}\), where each \(\mu_{r,s}\) is a restricted \({}_{j_s}\BasisB_{i_r}\)-composition, and we have
\begin{align*}
\sum_{s \in [1,u]} |\mu_{r,s}| = x_r \;\;\;(\forall r \in[1,t])
\qquad
\text{and}
\qquad
\sum_{r \in [1,t]} |\mu_{r,s}| = y_s \;\;\;(\forall s \in[1,u]).
\end{align*}
For \(\bmu \in \mathcal{M}(\bi^{(\bx)},\bj^{(\by)})\), we define an associated diagram \(\eta_{\bmu}^a \in \WebAaI(\bi^{(\bx)}, \bi^{(\by)})\) by setting
\begin{align}\label{basiseldef}
\hackcenter{}
{}
\eta_{\bmu}^a&:=
\hackcenter{
\begin{tikzpicture}[scale=1.1]
  \draw[ultra thick, blue] (0,1)--(0,1.1) .. controls ++(0,0.35) and ++(0,-0.35) .. (-0.6,1.6)--(-0.6,1.9);
  \draw[ultra thick, blue] (0,1)--(0,1.1) .. controls ++(0,0.35) and ++(0,-0.35) .. (0.6,1.6)--(0.6,1.9);
   %\draw[thick, fill=yellow] (-0.6,0.9) circle (7pt);
     %\draw[thick, fill=yellow] (0.6,0.9) circle (7pt);
       %\node[above] at (0,1.8) {$\scriptstyle j^{\scriptstyle(x+y)}$};
         \node[below] at (0,1) {$\scriptstyle i_1^{\scriptstyle(x_1)}$};
           %\node at (-0.6,0.9) {$\scriptstyle f$};
             %\node at (0.6,0.9) {$\scriptstyle f$};
             \node[right] at (0.4,1.2) {$\scriptstyle i_1^{\scriptstyle(|\mu_{1,u}|)}$};
               \node[left] at (-0.4,1.2) {$\scriptstyle i_1^{\scriptstyle(|\mu_{1,1})|}$};
                \node at (0,1.5) {$\scriptstyle \cdots $};
     %%%%%
  \draw[ultra thick, red] (0+3,1)--(0+3,1.1) .. controls ++(0,0.35) and ++(0,-0.35) .. (-0.6+3,1.6)--(-0.6+3,1.9);
  \draw[ultra thick, red] (0+3,1)--(0+3,1.1) .. controls ++(0,0.35) and ++(0,-0.35) .. (0.6+3,1.6)--(0.6+3,1.9);
   %\draw[thick, fill=yellow] (-0.6,0.9) circle (7pt);
     %\draw[thick, fill=yellow] (0.6,0.9) circle (7pt);
       %\node[above] at (0,1.8) {$\scriptstyle j^{\scriptstyle(x+y)}$};
         \node[below] at (0+3,1) {$\scriptstyle i_t^{\scriptstyle(x_t)}$};
           %\node at (-0.6,0.9) {$\scriptstyle f$};
             %\node at (0.6,0.9) {$\scriptstyle f$};
             \node[right] at (0.4+3,1.2) {$\scriptstyle i_t^{\scriptstyle(|\mu_{t,u}|)}$};
               \node[left] at (-0.4+3,1.2) {$\scriptstyle i_t^{\scriptstyle(|\mu_{t,1}|)}$};
                \node at (0+3,1.5) {$\scriptstyle \cdots $};    
   %%%%%%%%
    %%%%%%%%%    
    \draw[ultra thick, blue] (-0.6,1.6)--(-0.6,2.7); 
      \draw[ultra thick, orange] (-0.6,2.1)--(-0.6,2.7); 
               \draw[thick, fill=yellow] (-0.6,2.1) circle (9pt);
    \node at (-0.6,2.1) {$\scriptstyle \mu_{1,1}^{\scriptstyle a}$};
    %%%%%%%%%
        \draw[ultra thick, blue] (0.6,1.6)--(0.6,2.7); 
        \draw[ultra thick, green] (0.6,2.1)--(0.6,2.7); 
               \draw[thick, fill=yellow] (0.6,2.1) circle (9pt);
    \node at (0.6,2.1) {$\scriptstyle \mu_{1,u}^{\scriptstyle a}$};
         %%%%%%%%%
        \draw[ultra thick, red] (2.4,1.6)--(2.4,2.7); 
        \draw[ultra thick, orange] (2.4,2.1)--(2.4,2.7); 
               \draw[thick, fill=yellow] (2.4,2.1) circle (9pt);
    \node at (2.4,2.1) {$\scriptstyle \mu_{t,1}^{\scriptstyle a}$};
     %%%%%%%%%
        \draw[ultra thick, red] (3.6,1.6)--(3.6,2.7); 
         \draw[ultra thick, green] (3.6,2.1)--(3.6,2.7); 
               \draw[thick, fill=yellow] (3.6,2.1) circle (9pt);
    \node at (3.6,2.1) {$\scriptstyle \mu_{t,u}^{\scriptstyle a}$};
       %%%%%%%%  
    %%%%%%%%%   
      \draw[ultra thick, orange] (0,4.2-0)--(0,4.2-0.1) .. controls ++(0,-0.35) and ++(0,0.35) .. (-0.6,4.2-0.6)--(-0.6,4.2-0.9);
  \draw[ultra thick, orange] (0,4.2-0)--(0,4.2-0.1) .. controls ++(0,-0.35) and ++(0,0.35) .. (0.6,4.2-0.6)--(0.6,4.2-0.9);
   %\draw[thick, fill=yellow] (-0.6,0.9) circle (7pt);
     %\draw[thick, fill=yellow] (0.6,0.9) circle (7pt);
       %\node[above] at (0,1.8) {$\scriptstyle j^{\scriptstyle(x+y)}$};
         \node[above] at (0,4.2-0) {$\scriptstyle j_1^{\scriptstyle(y_1)}$};
           %\node at (-0.6,0.9) {$\scriptstyle f$};
             %\node at (0.6,0.9) {$\scriptstyle f$};
             \node[right] at (0.5,4.3-0.3) {$\scriptstyle j_1^{\scriptstyle(|\mu_{t,1}|)}$};
               \node[left] at (-0.3,4.3-0.3) {$\scriptstyle j_1^{\scriptstyle(|\mu_{1,1}|)}$};
                \node at (0,4.2-0.7) {$\scriptstyle \cdots $};
     %%%%%
  \draw[ultra thick, green] (0+3,4.2-0)--(0+3,4.2-0.1) .. controls ++(0,-0.35) and ++(0,0.35) .. (-0.6+3,4.2-0.6)--(-0.6+3,4.2-0.9);
  \draw[ultra thick, green] (0+3,4.2-0)--(0+3,4.2-0.1) .. controls ++(0,-0.35) and ++(0,0.35) .. (0.6+3,4.2-0.6)--(0.6+3,4.2-0.9);
   %\draw[thick, fill=yellow] (-0.6,0.9) circle (7pt);
     %\draw[thick, fill=yellow] (0.6,0.9) circle (7pt);
       %\node[above] at (0,1.8) {$\scriptstyle j^{\scriptstyle(x+y)}$};
         \node[above] at (0+3,4.2-0) {$\scriptstyle j_u^{\scriptstyle(y_u)}$};
           %\node at (-0.6,0.9) {$\scriptstyle f$};
             %\node at (0.6,0.9) {$\scriptstyle f$};
             \node[right] at (0.5+3,4.3-0.3) {$\scriptstyle j_u^{\scriptstyle(|\mu_{t,u}|)}$};
               \node[left] at (-0.3+3,4.3-0.3) {$\scriptstyle j_u^{\scriptstyle(|\mu_{1,u}|)}$};
                \node at (0+3,4.2-0.7) {$\scriptstyle \cdots $};   
        %%%%%%%%%%%%%%        
       \node at (1.5,2.1) {$\scriptstyle \cdots $};          
        \node at (0,2.1) {$\scriptstyle \cdots $};     
         \node at (3,2.1) {$\scriptstyle \cdots $};     
              \draw[thick, fill=gray!50!white] (-.8,2.7)--(3.8,2.7)--(3.8,3.3)--(-.8,3.3)--(-.8,2.7);   
     \node at (1.5, 3) {$\scriptstyle Z$};  
\end{tikzpicture}},
\end{align}
where \(Z\) is any diagram composed only of crossings, in which the the strand labeled with thickness \(|\mu_{r,s}|\) at the bottom of \(Z\) is carried to the strand labeled with thickness \(|\mu_{r,s}|\) at the top of \(Z\). We similarly define \(\eta_{\bmu}^\diamond \in \WebAaI(\bi^{(\bx)}, \bi^{(\by)})\) by replacing all \(a\) superscripts in the diagram above with \(\diamond\) superscripts.

We also associate to \(\bmu\) and any \(\mathcal{m} \subseteq \BasisB\) the following scalar:
\begin{align*}
[\bmu]_{\mathcal{m}}^! = \prod_{\substack{ r \in [1,t], \, s \in [1,u] \\ b \in \mathcal{m}}} 
\mu_{r,s}(b)!
\qquad
\in
\k.
\end{align*}

\begin{lemma}\label{startogather}
 For \(\bi^{(\bx)}, \bj^{(\by)} \in \textup{Ob}(\WebAaI)\), \(\bmu \in \mathcal{M}(\bi^{(\bx)}, \bj^{(\by)})\), we have:
\begin{align*}
\eta_{\bmu}^\diamond = [\bmu]^!_{\basisb} \eta^a_{\bmu}.
\end{align*}
\end{lemma}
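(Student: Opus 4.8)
The plan is to reduce the statement to an identity about a single coupon and then assemble. By the definition in \cref{basiseldef}, the diagram $\eta_\bmu^\diamond$ differs from $\eta_\bmu^a$ only in that each coupon $\mu_{r,s}^a$ ($r\in[1,t]$, $s\in[1,u]$) is replaced by $\mu_{r,s}^\diamond$; every other constituent of $\eta_\bmu^a$ — the split webs at the bottom, the crossing diagram $Z$, and the merge webs at the top — is coupon-free and is left unchanged. Since $\eta_\bmu^a$ is assembled from these pieces by composition and tensor product in the $\k$-linear supercategory $\WebAaI$, it suffices to prove for each fixed $i,j\in I$ and each restricted ${}_j\BasisB_i$-composition $\mu$ the identity
\begin{align*}
\mu^\diamond = \Big(\prod_{b\in\basisb}\mu(b)!\Big)\,\mu^a \qquad \text{in } \WebAaI(i^{(|\mu|)},j^{(|\mu|)}),
\end{align*}
as the lemma then follows by pulling the scalars $\prod_{b\in\basisb}\mu_{r,s}(b)!$ out of the composite and observing $\prod_{r,s}\prod_{b\in\basisb}\mu_{r,s}(b)! = [\bmu]^!_\basisb$.

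For the displayed identity, recall from \cref{mudef} that both $\mu^\diamond$ and $\mu^a$ are built by splitting $i^{(|\mu|)}$ into strands of thicknesses $\mu(b_1),\dots,\mu(b_d)$, where $\textup{supp}(\mu)=\{b_1<\dots<b_d\}$, decorating the $k$-th strand with $b_k^\diamond$ (respectively $b_k^a$), and then merging back; by \cref{AssocRel} the order of the splits and merges is irrelevant. Hence it is enough to show, for each $b\in{}_j\BasisB_i$ and each $x\ge 1$, that $b^\diamond = x!\,b^a$ in $\WebAaI(i^{(x)},j^{(x)})$ when $b\in\basisb$, and $b^\diamond=b^a$ otherwise. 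Granting this, $\mu^\diamond = \big(\prod_{k:\,b_k\in\basisb}\mu(b_k)!\big)\mu^a$, which equals $\big(\prod_{b\in\basisb}\mu(b)!\big)\mu^a$ since $\mu(b)=0$, hence $\mu(b)!=1$, for $b\notin\textup{supp}(\mu)$. Note that because $\mu$ is a \emph{restricted} ${}_j\BasisB_i$-composition we have $\mu(b)\le 1$ for every odd $b$, so odd basis elements are only ever placed on thin strands.

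It remains to verify the single-coupon identity, which we do by cases on $b$. If $b$ is even with $b\notin\basisb$, then $b^a := b^\diamond$ by \cref{greendotdef}, so the identity holds with factor $1$. If $b$ is odd, then $b\notin\basisb$ and, by the previous remark, only the thin case $x=1$ occurs; but the explosion appearing in $b^\diamond$ (see \cref{greendotdef}) is trivial on a thin strand, so $b^\diamond=b=b^a$, again with factor $1$. (One could alternatively observe that for odd $b$ and $x\ge 2$ one has $b^\diamond=0$ by \cref{OddKnotholeRel}, but this case does not arise here.) Finally suppose $b\in\basisb$; then $b$ is even and lies in $jai$, so $b^a$ is just the coupon $b$ placed on the thick strand $i^{(x)}$, while by \cref{greendotdef} the morphism $b^\diamond$ is obtained by splitting $i^{(x)}$ into $x$ thin strands, decorating each with $b$, and merging the resulting thin strands of color $j$ back to $j^{(x)}$. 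Because $b\in jai$, the element $b$ may decorate strands of any thickness, so the $A$-merge relation \cref{TAaSMRel} can be applied repeatedly to push all $x$ copies of $b$ up through the iterated merge, turning them into a single coupon $b$ on $i^{(x)}$ preceded by the iterated merge of $x$ thin strands of color $i$ back to $i^{(x)}$. That iterated merge, precomposed with the original iterated split of $i^{(x)}$ into thin strands, equals $x!\cdot\id_{i^{(x)}}$ by repeated use of the knothole relation \cref{KnotholeRel} (equivalently, by \cref{L:ExplCon}). Therefore $b^\diamond = x!\cdot b = x!\cdot b^a$, as required.

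The main obstacle is the last case: correctly using \cref{TAaSMRel} to commute the $x$ copies of $b$ past the iterated merge and then identifying the residual split--merge as $x!\cdot\id$. The remaining care needed is purely bookkeeping — ensuring the case distinction ($b\in\basisb$ versus not) matches exactly the definition of $[\bmu]^!_\basisb$ as a product over $\basisb$, and that the restricted-composition hypothesis is used to rule out odd coupons on thick strands.
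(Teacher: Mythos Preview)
Your proof is correct and follows essentially the same approach as the paper: the paper's proof simply says ``Follows from \cref{mudef} and repeated applications of the relations \cref{TAaSMRel,KnotholeRel},'' and your argument is exactly a careful unpacking of that sentence --- reducing to a single $b^\diamond$ versus $b^a$ comparison and, in the key case $b\in\basisb$, using \cref{TAaSMRel} to consolidate the $x$ thin-strand coupons into one thick-strand coupon and then \cref{KnotholeRel} (equivalently \cref{L:ExplCon}) to collapse the remaining split--merge to $x!\cdot\id$.
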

\begin{proof}
Follows from \cref{mudef} and repeated applications of the relations \cref{TAaSMRel,KnotholeRel}.
\end{proof}

\subsection{Action on colored symmetric power modules}

\begin{lemma}\label{explicitmaplemma}
Let \(\bi^{(\bx)} = i_1^{(x_1)} \cdots i_t^{(x_t)}, \bj^{(\by)}=j_1^{(y_1)} \cdots j_u^{(y_u)} \in \textup{Ob}(\WebAaI)\), and \(\bmu \in \mathcal{M}(\bi^{(\bx)}, \bj^{(\by)})\). Then the map
\begin{align*}
G_n(\eta^a_{\bmu}): S^{\bx}_{\bi}V \to S^{\by}_{\bj}V
\end{align*}
is given by 
\begin{align}\label{Getaformula}
G_n(\eta^a_{\bmu}):
\bigotimes_{\alpha = 1}^t \prod_{\gamma = 1}^{x_\alpha} v_{\ell_\gamma^{(\alpha)}}^{f_\gamma^{(\alpha)}}
&
\mapsto
[\bmu]^!_{\BasisB\backslash \basisb}
\sum_T
(-1)^{\sigma_1(T) + \sigma_2(T) + \sigma_3(T)}
\bigotimes_{\beta = 1}^u
\prod_{\alpha = 1}^t
\prod_{c \in \BasisB}^{<}
\prod^<_{\gamma \in T_{\alpha,\beta}^c} v_{\ell_\gamma^{(\alpha)}}^{cf_\gamma^{(\alpha)}}
\end{align}
for all \(\ell_\gamma^{(\alpha)} \in [1,n]\), \(f_\gamma^{(\alpha)} \in {}_{i_\alpha}V\), where the sum ranges over all tuples of sets \(T = (T_{r,s}^b)_{b \in \BasisB, r \in [1,t], s \in [1,u]}\) such that \(T_{r,s}^b \subseteq [1, x_r]\), \(|T_{r,s}^b| = \mu_{r,s}(b)\) and \(\bigsqcup_{s,b} T_{r,s}^b = [1,x_r]\), and 
\begin{align*}
\sigma_1(T) = 
\sum_{\substack{
b \in \BasisB\\
r'<r \in [1,t]\\
s \in [1,u]
}}
\sum_{
\gamma \in [1,x_{r'}]
}
\overline{\mu}_{r,s}(b) \bar{b}\bar{f}^{(r')}_{\gamma} 
+
\sum_{\substack{
b, b' \in \BasisB \\
r \in [1,t] \\
s'< s \in [1,u]
}}
\sum_{\gamma \in T^{b'}_{r,s'}} \overline{\mu}_{r,s}(b)\bar{b} \bar{f}_{\gamma}^{(r)}
+
\sum_{\substack{
b' < b \in \BasisB\\
r \in [1,t]\\
s \in [1,u]
}}
\sum_{\gamma \in T^{b'}_{r,s}}
\overline{\mu}_{r,s}(b)\bar{b} \bar{f}_{\gamma}^{(r)};
\end{align*}
\begin{align*}
\sigma_2(T) = 
\sum_{
\substack{
r \in [1,t] \\
 s' < s \in [1,u] \\
\gamma < \gamma' \in [1,x_r]
}}
\sum_{\substack{
\gamma \in T^b_{r,s}\\
\gamma' \in T^{b'}_{r,s'}
}}
\bar{f}_\gamma^{(r)} \bar{f}_{\gamma'}^{(r)};
\qquad
\qquad
\sigma_3(T) = 
\sum_{\substack{ b,b' \in \BasisB \\ r <r' \in [1,t] \\ s'<s \in[1,u]  }} 
\sum_{\substack{ \gamma \in T^b_{r,s} \\ \gamma' \in T^{b'}_{r',s'}}} (\bar{b} + \bar{f}_\gamma^{(r)})(\bar{b}' + \bar{f}_{\gamma'}^{(r')}).
\end{align*}
\end{lemma}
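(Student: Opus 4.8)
The plan is to use that $G_n$ is a monoidal superfunctor (\cref{Gthm}), so that $G_n(\eta^a_{\bmu})$ is simply the composite of the images of the split, merge, crossing, and coupon generators out of which $\eta^a_{\bmu}$ is built in \eqref{basiseldef} and \eqref{mudef}. Reading that diagram from bottom to top, $\eta^a_{\bmu}$ factors into four stages: (i) for each $r\in[1,t]$, a multi-split of $i_r^{(x_r)}$ into strands of thicknesses $|\mu_{r,1}|,\dots,|\mu_{r,u}|$; (ii) on the $(r,s)$-strand the coupon $\mu^a_{r,s}$, which is itself a multi-split into pieces of thickness $\mu_{r,s}(b)$ over $b\in\textup{supp}(\mu_{r,s})$, followed by the coupons $b^a$, followed by a multi-merge; (iii) the crossing box $Z$; (iv) for each $s\in[1,u]$, a multi-merge of the strands of thickness $|\mu_{1,s}|,\dots,|\mu_{t,s}|$ into $j_s^{(y_s)}$. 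First I would record $G_n$ of each stage and then compose.

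For (i) and the splits inside (ii), iterating the explicit split formula \eqref{ExpSpl} shows that $G_n$ of a multi-split sends a symmetric product to the sum, over all ways of partitioning the factor indices into blocks of the prescribed sizes (with each block in induced order), of the corresponding terms with the sign $\varepsilon$ of \eqref{ExpSpl}; combining the two levels of splitting produces exactly the index sets $T^b_{r,s}$ of the statement, with $\bigsqcup_{s,b}T^b_{r,s}=[1,x_r]$. For the coupons I would first compute $G_n(b^{\diamond})$: since $b^{\diamond}$ is an explode--decorate--recombine diagram, iterating the knothole relation \eqref{KnotholeRel} (cf. \cref{L:ExplCon}) together with \eqref{ExpL} gives $G_n(b^{\diamond})=\mu_{r,s}(b)!\,L^{b}$ on the relevant thickness; by the case split in \eqref{greendotdef}, $b^a=b$ for $b\in\basisb$ and for odd $b$ (where necessarily $\mu_{r,s}(b)\le 1$), while $b^a=b^{\diamond}$ for even $b\notin\basisb$ --- which is precisely the origin of the scalar $[\bmu]^!_{\BasisB\setminus\basisb}$, and \cref{startogather} is the matching $\diamond$-versus-$a$ bookkeeping for the $\basisb$-part. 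For (iii), $G_n(Z)$ is the signed permutation obtained by iterating the twist \eqref{ExpTwist}; one checks from the combinatorics of $Z$ that two strands $(r,s)$ and $(r',s')$ with $r<r'$ are interchanged by $Z$ exactly when $s'<s$, and that the incurred sign is the product of the total parities of the two fully decorated strands. Finally $G_n$ of the multi-merges in (iv) is concatenation in the grouped order, which assembles the target symmetric products $\prod_{\beta}\prod_{\alpha}\prod^{<}_{c}\prod^{<}_{\gamma\in T^c_{\alpha,\beta}}v^{cf^{(\alpha)}_{\gamma}}_{\ell^{(\alpha)}_{\gamma}}$ on the right-hand side of \eqref{Getaformula}.

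Composing stages (i)--(iv) then yields \eqref{Getaformula}, with the scalar $[\bmu]^!_{\BasisB\setminus\basisb}$ coming from the $b^{\diamond}$-coupons, \emph{provided} the signs are matched correctly --- and this is where essentially all the work lies. The total sign is a product of three kinds of contributions which I would organize into $\sigma_1+\sigma_2+\sigma_3$: the Koszul signs incurred when the tensor products of $L^{b}$-coupons (equivalently the block maps $\mu^a_{r,s}$) are applied to a tensor of factors --- each coupon passing the factors carried by all earlier blocks, i.e. those of source strands $r'<r$, of earlier target blocks $s'<s$ within the same source strand, and of earlier $b'$-subblocks within a given $(r,s)$ --- which recombine into $\sigma_1(T)$ (here $\overline{\mu}_{r,s}(b)$ is read as the parity of $b^{\mu_{r,s}(b)}$, vanishing for $b$ even and equal to $\bar b$ for $b$ odd); the $\varepsilon$-signs from the splits routing two factors $\gamma<\gamma'$ of a common source strand into target blocks whose output order is reversed, recombining into $\sigma_2(T)$; and the crossing signs of $G_n(Z)$ described above, giving $\sigma_3(T)$. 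The main obstacle is purely this sign bookkeeping: one must track the symmetric-monoidal Koszul signs against the explicit combinatorial signs in \eqref{ExpSpl} and \eqref{ExpTwist} and verify they regroup exactly as $\sigma_1+\sigma_2+\sigma_3$; once the ordering conventions for the blocks and for the output products are fixed the remaining manipulations are routine, and (as in the proof of \cref{Gthm}) the even coupons $b^a$ on thick strands carry no extra signs, which is what keeps the $\mu_{r,s}(b)\ge 2$ part sign-free.
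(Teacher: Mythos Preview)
Your proposal is correct and follows essentially the same approach as the paper's proof: decompose $\eta^a_{\bmu}$ into splits, coupons, the crossing box $Z$, and merges, apply $G_n$ via the explicit formulas \eqref{ExpTwist}, \eqref{ExpSpl}, \eqref{ExpMer}, \eqref{ExpL}, and bookkeep the signs into $\sigma_1$ (from the coupons), $\sigma_2$ (from the splits), and $\sigma_3$ (from $Z$), with the scalar $[\bmu]^!_{\BasisB\setminus\basisb}$ coming from the $b^a$-versus-$b^\diamond$ case split in \eqref{greendotdef}. The paper's proof is a terse sketch of exactly this; your version simply spells out more of the sign accounting.
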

\begin{proof}
The formula follows directly from \cref{Gthm} and the formulas \cref{ExpTwist,ExpSpl,ExpMer,ExpL}. The sign \((-1)^{\sigma_1(T)}\) arises via the coupon morphisms in \(\eta_{\bmu}^{a}\), the sign \((-1)^{\sigma_2(T)}\) arises via the split morphisms, and the sign \((-1)^{\sigma_3(T)}\) arises via the \(Z\) morphism. The scalar \([\bmu]^!_{\BasisB \backslash \basisb}\) arises from consideration of \cref{greendotdef} and the following calculation:
\begin{align*}
{}
G_n\left(
\hackcenter{
\begin{tikzpicture}[scale=0.8]
  \draw[ultra thick, blue] (0,0.3)--(0,0.9);
  \draw[ultra thick, red] (0,0.9)--(0,1.5);
  \draw[thick, fill=yellow]  (0,0.9) circle (9pt);
    \node at (0,0.9) {$\scriptstyle b^{\scriptstyle a}$};
     \node[below] at (0,0.3) {$\scriptstyle i^{\scriptstyle(m)}$};
      \node[above] at (0,1.5) {$\scriptstyle j^{\scriptstyle(m)}$};
\end{tikzpicture}}
\right)
(v_{z_1}^{g_1} \cdots v_{z_m}^{g_m})
\;
&=
\begin{cases}
v_{z_1}^{bg_1} \cdots v_{z_m}^{bg_m}
&
\textup{if } b\in {}_j\basisb_i, \textup{ or } b \in  ({}_j\BasisB_i)_{\bar 1}, \,x=1;
\\
m!\,v_{z_1}^{bg_1} \cdots v_{z_m}^{bg_m}
&
\textup{if }
b \in ({}_j\BasisB_i)_{\bar 0} \backslash {}_j\basisb_i,
\end{cases}
\end{align*}
for all \(m \in \Z_{\geq 0}\), \(z_1, \ldots, z_m \in [1,n]\), \(g_1, \ldots, g_m \in {}_iV\).
\end{proof}

\subsection{Spanning results for \texorpdfstring{$\WebAaI$}{WebAaI}}
\begin{proposition}\label{SpanLem}
The morphisms \(\{\eta^a_{\bmu} \mid \bmu \in \mathcal{M}(\bi^{(\bx)}, \bi^{(\by)})\}\) comprise a \(\k\)-spanning set for \( \WebAaI \left( \bi^{(\bx)}, \bi^{(\by)}\right) \).
\end{proposition}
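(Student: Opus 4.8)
The plan is to show that every diagram in $\WebAaI(\bi^{(\bx)},\bi^{(\by)})$ can be rewritten, using the relations and the implied relations established in \cref{imprels}, as a $\k$-linear combination of the $\eta^a_{\bmu}$. First I would invoke \cref{L:ExplCon}: since $\con \circ \exp$ acts on $\WebAaI(\bi^{(\bx)},\bi^{(\by)})$ as multiplication by the nonzero scalar $\bx!\,\by!$ (nonzero because $\k$ has characteristic zero), every morphism $f$ equals $(\bx!\,\by!)^{-1}\con(\exp(f))$, so it suffices to understand $\exp(f) \in \WebAaI(\bi^\bx,\bi^\by)$, a morphism between objects all of whose strands are thin. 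Thus I reduce the problem to spanning the thin-strand morphism spaces and then pushing the answer back through contraction.

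Next I would set up a normal-form/straightening argument for a general diagram $g$ in the thin-strand space. Any generating diagram is a split, merge, crossing, or coupon; by the associativity relation \cref{AssocRel}, the merge-split relation \cref{MSrel}, the Coxeter/braid relations (which hold in full generality by \cref{arbcol} and \cref{BraidCox}), the intertwining relations \cref{SplitIntertwineRel,MergeIntertwineRel}, and the $A$-intertwining and coupon relations \cref{TAaSMRel,AaIntertwine,AaRel1,AaRel2}, I can move all coupons to a fixed horizontal level and collect all splits below and all merges above that level, with the intervening part built only of crossings. Using the coupon relations \cref{AaRel2}, products of coupons on a strand combine into a single coupon labeled by a product of basis elements; expanding each such label in the fixed basis $\BasisB$ and using \cref{greendotdef} together with $\k$-linearity of coupons, I may assume each coupon is labeled by a single basis element $b \in \BasisB$, with the convention $f^a$ absorbing the diamond-splitting device of \cref{greendotdef}. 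The odd-knothole relation \cref{OddKnotholeRel} kills any configuration in which two odd basis elements $b$ would be fed into the same merge, which is exactly the restriction $\mu_{r,s}(b)\le 1$ for $\bar b = \bar 1$ in the definition of a restricted $\BasisB$-composition. Bookkeeping which strands carry which basis element, between which input block $r$ and output block $s$, produces precisely a tuple $\bmu \in \mathcal{M}(\bi^{(\bx)},\bi^{(\by)})$, and the resulting diagram is (after reordering crossings via the braid and Coxeter relations, which only changes things by putting $Z$ in the normalized form) exactly $\exp$ applied to $\eta^a_{\bmu}$ up to a scalar. Contracting and using \cref{L:ExplCon} again then expresses $g$ (hence $f$) as a combination of the $\eta^a_{\bmu}$.

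The main obstacle I expect is the straightening step itself: verifying that one really can use the available relations to migrate every coupon to a single level and to segregate splits, merges, and crossings into the three horizontal strata of \cref{basiseldef}, without getting stuck. The delicate points are (i) passing coupons across differently colored crossings, which is handled by \cref{AaIntertwine} now valid for arbitrary colors by \cref{arbcol}, and (ii) that when two coupons meet at a split or merge one can always either merge their labels via \cref{AaRel2} or, if the colors/thicknesses force a branching, re-expand using \cref{TAaSMRel} and \cref{greendotdef}; one must check this process terminates (an induction on the total thickness $\lVert\bx\rVert$, as used already in the proof of \cref{Same2}, should work). A secondary but routine point is that the scalar relating $\con(\exp(\eta^a_{\bmu}))$ to $\eta^a_{\bmu}$ — a product of factorials as in \cref{startogather} — is nonzero in characteristic zero, so no span is lost. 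Once the normal form is in hand, the statement follows immediately; the companion statement that these morphisms are moreover a \emph{basis} (\cref{BasisThm}) will require the defining representation $G_n$ of \cref{Gthm} and the explicit formula of \cref{explicitmaplemma} to prove linear independence, but that is beyond what is needed here.
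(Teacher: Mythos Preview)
Your overall strategy --- explode to thin strands via \cref{L:ExplCon}, straighten there, then contract back --- is different from the paper's, which straightens thick-strand diagrams directly without ever passing to thin strands. The reduction to thin strands is legitimate in characteristic zero, and the thin-strand spanning set (permutations decorated with single basis coupons) is easy to obtain from the Coxeter, coupon, and merge-split relations. But there is a genuine gap at the re-assembly step.

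The claim that a thin-strand normal form $g$ (a permutation $\omega$ with a coupon $b_k$ on each strand) is ``exactly $\exp$ applied to $\eta^a_{\bmu}$ up to a scalar'' is false. For instance, take $\bx=\by=(2)$ with one colour and $\bmu$ given by $\mu_{1,1}(1)=2$; then $\eta^a_{\bmu}$ is the identity on $1^{(2)}$, and $\exp(\eta^a_{\bmu})=Y\circ Z$ equals $\mathrm{id}+\text{crossing}$ by \cref{MSrel}, which is not a scalar multiple of either thin basis diagram. Consequently your invocation of \cref{L:ExplCon} in the last step does not apply: you are not contracting something of the form $\exp(\eta^a_{\bmu})$. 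What you actually need is the statement that $\con(g)=Z_{\bj^{(\by)}}\circ g\circ Y_{\bi^{(\bx)}}$ lies in the span of the $\eta^a_{\bmu}$, and this is not automatic. Proving it requires using \cref{CrossAbsorb} to absorb within-block crossings into the outer splits and merges, associativity \cref{AssocRel} to regroup the splits and merges by destination block and coupon label, \cref{KnotholeRel} and \cref{TAaSMRel} to consolidate parallel thin strands carrying the same $b\in\basisb$ into a single $b$-coupon on a thicker strand, and \cref{OddKnotholeRel} to see that two parallel thin strands with the same odd coupon contract to zero. (Your remark that \cref{OddKnotholeRel} ``kills any configuration in which two odd basis elements $b$ would be fed into the same merge'' is misplaced: in the thin-strand space there are no merges, so this relation only intervenes at the contraction stage.)

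That reorganisation of $\con(g)$ into an $\eta^a_{\bmu}$ is precisely the substantive content of the paper's argument, which it carries out directly on thick-strand diagrams: it first uses \cref{MSrel}, \cref{SplitIntertwineRel}--\cref{MergeIntertwineRel}, \cref{TAaSMRel}, and \cref{AaIntertwine} to force any diagram into the shape ``splits, then single-strand pieces $h_{r,s}\in\WebAaI(i_r^{(m_{r,s})},j_s^{(m_{r,s})})$, then crossings, then merges'', and then handles each $h_{r,s}$ separately by the same argument specialised to a single thick strand, grouping equal coupons via \cref{KnotholeRel}, \cref{greendotdef}, and \cref{OddKnotholeRel}. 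In effect your explode/contract detour postpones but does not avoid this work; once you supply the missing step, you have reproduced the paper's proof inside the contraction.
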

\begin{proof}
Let \(F\) be a diagram in \( \WebAaI(\bi^{(\bx)}, \bi^{(\by)})\). Then we may rewrite \(F\) as a \(\k\)-linear combination of diagrams composed of the generating splits, merges, crossings and coupons, where:
\begin{itemize}
\item no merge occurs below any split (via \cref{MSrel});
\item no crossing occurs below any split (via \cref{SplitIntertwineRel,MergeIntertwineRel});
\item no crossing occurs below any coupon (via  \cref{AaIntertwine});
\item no coupon occurs above any merge or below any split (via \cref{TAaSMRel});
\item there is at most one coupon on any given strand (via  \cref{AaRel2});
\item all coupons are elements of \(\BasisB\) (via  \cref{AaRel1,AaRel2});
\end{itemize}
Applying relations in this manner, and together with \cref{AssocRel} and \cref{CrossAbsorb}, we may express \(F\) as a \(\k\)-linear combination of diagrams of the form
\begin{align}\label{midwaydiag}
\hackcenter{}
{}
\hackcenter{
\begin{tikzpicture}[scale=1.1]
  \draw[ultra thick, blue] (0,1)--(0,1.1) .. controls ++(0,0.35) and ++(0,-0.35) .. (-0.6,1.6)--(-0.6,1.9);
  \draw[ultra thick, blue] (0,1)--(0,1.1) .. controls ++(0,0.35) and ++(0,-0.35) .. (0.6,1.6)--(0.6,1.9);
   %\draw[thick, fill=yellow] (-0.6,0.9) circle (7pt);
     %\draw[thick, fill=yellow] (0.6,0.9) circle (7pt);
       %\node[above] at (0,1.8) {$\scriptstyle j^{\scriptstyle(x+y)}$};
         \node[below] at (0,1) {$\scriptstyle i_1^{\scriptstyle(x_1)}$};
           %\node at (-0.6,0.9) {$\scriptstyle f$};
             %\node at (0.6,0.9) {$\scriptstyle f$};
             \node[right] at (0.4,1.2) {$\scriptstyle i_1^{\scriptstyle(m_{1,u})}$};
               \node[left] at (-0.4,1.2) {$\scriptstyle i_1^{\scriptstyle(m_{1,1})}$};
                \node at (0,1.5) {$\scriptstyle \cdots $};
     %%%%%
  \draw[ultra thick, red] (0+3,1)--(0+3,1.1) .. controls ++(0,0.35) and ++(0,-0.35) .. (-0.6+3,1.6)--(-0.6+3,1.9);
  \draw[ultra thick, red] (0+3,1)--(0+3,1.1) .. controls ++(0,0.35) and ++(0,-0.35) .. (0.6+3,1.6)--(0.6+3,1.9);
   %\draw[thick, fill=yellow] (-0.6,0.9) circle (7pt);
     %\draw[thick, fill=yellow] (0.6,0.9) circle (7pt);
       %\node[above] at (0,1.8) {$\scriptstyle j^{\scriptstyle(x+y)}$};
         \node[below] at (0+3,1) {$\scriptstyle i_t^{\scriptstyle(x_t)}$};
           %\node at (-0.6,0.9) {$\scriptstyle f$};
             %\node at (0.6,0.9) {$\scriptstyle f$};
             \node[right] at (0.4+3,1.2) {$\scriptstyle i_t^{\scriptstyle(m_{t,u})}$};
               \node[left] at (-0.4+3,1.2) {$\scriptstyle i_t^{\scriptstyle(m_{t,1})}$};
                \node at (0+3,1.5) {$\scriptstyle \cdots $};    
   %%%%%%%%
    %%%%%%%%%    
    \draw[ultra thick, blue] (-0.6,1.6)--(-0.6,2.7); 
      \draw[ultra thick, orange] (-0.6,2.1)--(-0.6,2.7); 
       \draw[thick, fill=gray!50!white] (3.3-4.2,1.8)--(3.9-4.2,1.8)--(3.9-4.2,2.4)--(3.3-4.2,2.4)--(3.3-4.2,1.8);
               %\draw[thick, fill=green] (-0.6,2.1) circle (9pt);
    \node at (-0.6,2.1) {$\scriptstyle h_{1,1}$};
    %%%%%%%%%
        \draw[ultra thick, blue] (0.6,1.6)--(0.6,2.7); 
        \draw[ultra thick, green] (0.6,2.1)--(0.6,2.7); 
              % \draw[thick, fill=green] (0.6,2.1) circle (9pt);
               \draw[thick, fill=gray!50!white] (3.3-3,1.8)--(3.9-3,1.8)--(3.9-3,2.4)--(3.3-3,2.4)--(3.3-3,1.8);
    \node at (0.6,2.1) {$\scriptstyle h_{1,u}$};
         %%%%%%%%%
        \draw[ultra thick, red] (2.4,1.6)--(2.4,2.7); 
        \draw[ultra thick, orange] (2.4,2.1)--(2.4,2.7); 
                  \draw[thick, fill=gray!50!white] (3.3-1.2,1.8)--(3.9-1.2,1.8)--(3.9-1.2,2.4)--(3.3-1.2,2.4)--(3.3-1.2,1.8);
              % \draw[thick, fill=green] (2.4,2.1) circle (9pt);
    \node at (2.4,2.1) {$\scriptstyle h_{t,1}$};
     %%%%%%%%%
        \draw[ultra thick, red] (3.6,1.6)--(3.6,2.7); 
         \draw[ultra thick, green] (3.6,2.1)--(3.6,2.7); 
          \draw[thick, fill=gray!50!white] (3.3,1.8)--(3.9,1.8)--(3.9,2.4)--(3.3,2.4)--(3.3,1.8);
               %\draw[thick, fill=green] (3.6,2.1) circle (9pt);
    \node at (3.6,2.1) {$\scriptstyle h_{t,u}$};
       %%%%%%%%  
    %%%%%%%%%   
      \draw[ultra thick, orange] (0,4.2-0)--(0,4.2-0.1) .. controls ++(0,-0.35) and ++(0,0.35) .. (-0.6,4.2-0.6)--(-0.6,4.2-0.9);
  \draw[ultra thick, orange] (0,4.2-0)--(0,4.2-0.1) .. controls ++(0,-0.35) and ++(0,0.35) .. (0.6,4.2-0.6)--(0.6,4.2-0.9);
   %\draw[thick, fill=yellow] (-0.6,0.9) circle (7pt);
     %\draw[thick, fill=yellow] (0.6,0.9) circle (7pt);
       %\node[above] at (0,1.8) {$\scriptstyle j^{\scriptstyle(x+y)}$};
         \node[above] at (0,4.2-0) {$\scriptstyle j_1^{\scriptstyle(y_1)}$};
           %\node at (-0.6,0.9) {$\scriptstyle f$};
             %\node at (0.6,0.9) {$\scriptstyle f$};
             \node[right] at (0.5,4.3-0.3) {$\scriptstyle j_1^{\scriptstyle(m_{t,1})}$};
               \node[left] at (-0.3,4.3-0.3) {$\scriptstyle j_1^{\scriptstyle(m_{1,1})}$};
                \node at (0,4.2-0.7) {$\scriptstyle \cdots $};
     %%%%%
  \draw[ultra thick, green] (0+3,4.2-0)--(0+3,4.2-0.1) .. controls ++(0,-0.35) and ++(0,0.35) .. (-0.6+3,4.2-0.6)--(-0.6+3,4.2-0.9);
  \draw[ultra thick, green] (0+3,4.2-0)--(0+3,4.2-0.1) .. controls ++(0,-0.35) and ++(0,0.35) .. (0.6+3,4.2-0.6)--(0.6+3,4.2-0.9);
   %\draw[thick, fill=yellow] (-0.6,0.9) circle (7pt);
     %\draw[thick, fill=yellow] (0.6,0.9) circle (7pt);
       %\node[above] at (0,1.8) {$\scriptstyle j^{\scriptstyle(x+y)}$};
         \node[above] at (0+3,4.2-0) {$\scriptstyle j_u^{\scriptstyle(y_u)}$};
           %\node at (-0.6,0.9) {$\scriptstyle f$};
             %\node at (0.6,0.9) {$\scriptstyle f$};
             \node[right] at (0.5+3,4.3-0.3) {$\scriptstyle j_u^{\scriptstyle(m_{t,u})}$};
               \node[left] at (-0.3+3,4.3-0.3) {$\scriptstyle j_u^{\scriptstyle(m_{1,u})}$};
                \node at (0+3,4.2-0.7) {$\scriptstyle \cdots $};   
        %%%%%%%%%%%%%%        
       \node at (1.5,2.1) {$\scriptstyle \cdots $};          
        \node at (0,2.1) {$\scriptstyle \cdots $};     
         \node at (3,2.1) {$\scriptstyle \cdots $};     
              \draw[thick, fill=gray!50!white] (-.8,2.7)--(3.8,2.7)--(3.8,3.3)--(-.8,3.3)--(-.8,2.7);   
     \node at (1.5, 3) {$\scriptstyle Z$};  
\end{tikzpicture}},
\end{align}
where \(Z\) is as in \cref{basiseldef}, \(\sum_{s=1}^{u} m_{r,s} = x_r\) for all \(r \in [1,t]\), \(\sum_{r=1}^{t} m_{r,s} = y_s\) for all \(s \in [1,u]\), and \(h_{r,s}\) is some diagram in \(\WebAaI( i_r^{(m_{r,s})}, j_s^{(m_{r,s})})\).

Comparing \cref{midwaydiag} with \cref{basiseldef}, it remains to prove that for any \(i,j \in I\), \(x \in \Z_{\geq 0}\), the morphism space \(\WebAaI(i^{(x)}, j^{(x)})\) is spanned by the diagrams
\begin{align}\label{midway3}
\hackcenter{
\begin{tikzpicture}[scale=0.8]
  \draw[ultra thick, blue] (0,0.3)--(0,0.9);
  \draw[ultra thick, red] (0,0.9)--(0,1.5);
  \draw[thick, fill=yellow]  (0,0.9) circle (9pt);
    \node at (0,0.9) {$\scriptstyle \mu^{\scriptstyle a}$};
     \node[below] at (0,0.3) {$\scriptstyle i^{\scriptstyle(x)}$};
      \node[above] at (0,1.5) {$\scriptstyle j^{\scriptstyle(x)}$};
\end{tikzpicture}}
\end{align}
as \(\mu\) ranges over \({}_j\BasisB_i\)-compositions of \(x\). Indeed, applying the argument that led us to \cref{midwaydiag} in the special case \(t=u=1\), we have that  \(\WebAaI(i^{(x)}, j^{(x)})\) is spanned by diagrams of the form
\begin{align}\label{midway2}
\hackcenter{
\begin{tikzpicture}[scale=0.8]
  \draw[ultra thick, blue] (0,0)--(0,0.1) .. controls ++(0,0.35) and ++(0,-0.35) .. (-0.6,0.6)--(-0.6,0.9);
    \draw[ultra thick, red]  (-0.6,0.9)--(-0.6,1.2) 
  .. controls ++(0,0.35) and ++(0,-0.35) .. (0,1.7)--(0,1.8);
  \draw[ultra thick, blue] (0,0)--(0,0.1) .. controls ++(0,0.35) and ++(0,-0.35) .. (0.6,0.6)--(0.6,0.9);
    \draw[ultra thick, red]  (0.6,0.9)--(0.6,1.2) 
  .. controls ++(0,0.35) and ++(0,-0.35) .. (0,1.7)--(0,1.8);
  .. controls ++(0,0.35) and ++(0,-0.35) .. (0,1.7)--(0,1.8);
   \draw[thick, fill=yellow] (-0.6,0.9) circle (9pt);
     \draw[thick, fill=yellow] (0.6,0.9) circle (9pt);
       \node[above] at (0,1.8) {$\scriptstyle j^{\scriptstyle(x)}$};
         \node[below] at (0,0) {$\scriptstyle i^{\scriptstyle(x)}$};
           \node at (-0.6,0.9) {$\scriptstyle b_{\scriptstyle 1}$};
             \node at (0.6,0.9) {$\scriptstyle b_{\scriptstyle d}$};
             \node[right] at (0.5,0.3) {$\scriptstyle i^{\scriptstyle(\ell_d)}$};
               \node[left] at (-0.5,0.3) {$\scriptstyle i^{\scriptstyle(\ell_1)}$};
                \node at (0,0.9) {$\scriptstyle \cdots $};
\end{tikzpicture}}
\end{align}
where \(\ell_1 + \cdots + \ell_d = x\), \(b_1\leq  \cdots\leq b_d \in {}_j\BasisB_i\). Assume \(z\) is such that \(b_z =b_{z+1} = \cdots = b_{z+k}\) for some \(z, k >0\). Then, at the expense of perhaps introducing a scalar, we may group these into a single strand with the shorthand coupon \(
\hackcenter{
\begin{tikzpicture}[scale=0.8]
  \draw[thick, fill=yellow]  (0,0) circle (8pt);
    \node at (0,0) {$\scriptstyle b_{\scriptstyle z}^{\scriptstyle a}$};
\end{tikzpicture}}
\) as follows.

\begin{itemize} 
\item If \(b_z \in {}_j\basisb_i\), then repeated application of \cref{KnotholeRel} yields a scalar multiple of the coupon 
\(
\hackcenter{
\begin{tikzpicture}[scale=0.8]
  \draw[thick, fill=yellow]  (0,0) circle (8pt);
    \node at (0,0) {$\scriptstyle b_{\scriptstyle z}$};
\end{tikzpicture}}
=
\hackcenter{
\begin{tikzpicture}[scale=0.8]
  \draw[thick, fill=yellow]  (0,0) circle (8pt);
    \node at (0,0) {$\scriptstyle b_{\scriptstyle z}^{\scriptstyle a}$};
\end{tikzpicture}}
\) on a strand of thickness \(\ell_z + \cdots + \ell_{z+k}\).

\item If \(b_z \in ({}_j\BasisB_i)_{\overline 0} \backslash {}_j\basisb_i\), then \(\ell_z = \cdots = \ell_{z+k} = 1\), which yields the shorthand coupon \(
\hackcenter{
\begin{tikzpicture}[scale=0.8]
  \draw[thick, fill=yellow]  (0,0) circle (8pt);
    \node at (0,0) {$\scriptstyle b_{\scriptstyle z}^{\scriptstyle \diamond}$};
\end{tikzpicture}}
=
\hackcenter{
\begin{tikzpicture}[scale=0.8]
  \draw[thick, fill=yellow]  (0,0) circle (8pt);
    \node at (0,0) {$\scriptstyle b_{\scriptstyle z}^{\scriptstyle a}$};
\end{tikzpicture}}
\)
on a strand of thickness \(k+1\).
\item If \(b_z \in ({}_j\BasisB_i)_{\bar 1}\), then the diagram is zero by \cref{OddKnotholeRel}.
\end{itemize}
Once identical basis elements in \cref{midway2} are collected onto single strands as above, we  arrive at (a scalar multiple of) a morphism of the form \cref{midway3} as desired, completing the proof.
\end{proof}

%%%%%%%%%%%%%%%%%%%%%%%%%%%%%%%%%%
%%%%%%%%%%%%%%%%%%%%%%%%%%%%%%%%%%

\subsection{Basis results for \texorpdfstring{$\WebAaI$}{WebAaI}} 

\begin{proposition}\label{injhom}
Let \(\bi^{(\bx)} = i_1^{(x_1)} \cdots i_t^{(x_t)}, \bj^{(\by)}=j_1^{(y_1)} \cdots j_u^{(y_u)} \in \textup{Ob}(\WebAaI)\), and assume \(n \geq \lVert \bx\rVert = \lVert \by \rVert\). Then the morphisms \(\{\eta_{\bmu}^{a} \mid \bmu \in \mathcal{M}(\bi^{(\bx)}, \bj^{(\by)})\}\) have linearly independent images under the functor \(G_n\) and, moreover, \(G_n\) defines an injective map
\begin{align*}
G_n: \WebAaI(\bi^{(\bx)}, \bj^{(\by)}) \to \Hom_{\gl_n(A)}(S^{\bx}_{\bi}V, S^{\by}_{\bj}V).
\end{align*}
\end{proposition}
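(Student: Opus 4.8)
The plan is to combine the spanning result from \cref{SpanLem} with an explicit linear-independence computation for the images $G_n(\eta^a_{\bmu})$. Since \cref{SpanLem} tells us that $\{\eta^a_{\bmu} \mid \bmu \in \mathcal{M}(\bi^{(\bx)}, \bj^{(\by)})\}$ spans $\WebAaI(\bi^{(\bx)}, \bj^{(\by)})$, once we know these morphisms have linearly independent images under $G_n$ it follows simultaneously that they form a basis of the morphism space, that $G_n$ is injective on that space, and (trivially) that the image vectors are linearly independent. So the entire content is the linear independence of $\{G_n(\eta^a_{\bmu})\}$ in $\Hom_{\gl_n(A)}(S^{\bx}_{\bi}V, S^{\by}_{\bj}V)$, under the hypothesis $n \geq \lVert \bx \rVert = \lVert \by \rVert$.

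First I would use the explicit formula for $G_n(\eta^a_{\bmu})$ from \cref{explicitmaplemma}, namely \cref{Getaformula}, and evaluate it on a carefully chosen ``generic'' basis vector of $S^{\bx}_{\bi}V$. The key observation is that, for each $r\in[1,t]$, one can choose the indices $\ell_\gamma^{(r)}\in[1,n]$ in the $r$th tensor factor to be \emph{pairwise distinct across all of $r$ and $\gamma$}; this is exactly where the hypothesis $n\geq\lVert\bx\rVert$ is used — there are $\lVert\bx\rVert$ such indices in total. Similarly choose the $f_\gamma^{(r)}\in{}_{i_r}V$ to be (distinct) basis elements, say take $f_\gamma^{(r)} = i_r$ itself (or elements of $\BasisB$ to keep track of colors). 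With all the $v$-indices distinct, the monomial $\prod_{\beta=1}^u \prod_{\alpha=1}^t \prod_{c\in\BasisB}^{<}\prod_{\gamma\in T^c_{\alpha,\beta}}^{<} v_{\ell_\gamma^{(\alpha)}}^{cf_\gamma^{(\alpha)}}$ appearing in \cref{Getaformula} determines $T$ up to reordering within factors: the set of indices $\ell$ landing in the $\beta$th tensor slot, together with the ``color'' $c$ attached via the decoration $c f_\gamma^{(\alpha)}$, recovers each $T^c_{\alpha,\beta}$. Thus distinct tuples $\bmu$ (hence distinct valid $T$-shapes) produce monomials supported on disjoint sets of symmetric-power basis vectors — at least after we arrange that the decorating basis elements of $\BasisB$ are all distinguishable, which is possible since $\BasisB$ is a genuine basis. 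The signs $(-1)^{\sigma_1+\sigma_2+\sigma_3}$ and the nonzero scalar $[\bmu]^!_{\BasisB\setminus\basisb}$ (nonzero in the characteristic-zero domain $\k$) do not cause cancellation because for a \emph{fixed} target monomial the contributing terms all come from a single $\bmu$.

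More precisely, the step I would carry out is: for each $\bmu\in\mathcal{M}(\bi^{(\bx)},\bj^{(\by)})$, exhibit a particular target monomial $w_{\bmu}\in S^{\by}_{\bj}V$ that appears with nonzero coefficient in $G_n(\eta^a_{\bmu})(v)$ for the generic input $v$, and does \emph{not} appear in $G_n(\eta^a_{\bnu})(v)$ for any $\bnu\neq\bmu$. Given such a family, a linear relation $\sum_{\bmu}c_{\bmu}G_n(\eta^a_{\bmu})=0$ evaluated at $v$ and then projected onto the coefficient of $w_{\bmu}$ forces $c_{\bmu}=0$. The natural choice of $w_{\bmu}$ is the ``identity-shape'' term, where each $T^b_{r,s}$ is taken to be an initial segment of the appropriate size — but any consistent choice works as long as the reconstruction argument above applies. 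I expect the main obstacle to be the bookkeeping: verifying cleanly that the input data $(\ell_\gamma^{(\alpha)}, f_\gamma^{(\alpha)})$ can be chosen so that the map $T\mapsto(\text{multiset of decorated } v\text{'s in each slot})$ is injective on the relevant set of $T$-shapes, and that the coefficient of $w_{\bmu}$ in $G_n(\eta^a_{\bmu})(v)$ is genuinely nonzero (here one uses that $\k$ is a domain of characteristic zero so that $[\bmu]^!_{\BasisB\setminus\basisb}\ne0$ and the restricted compositions keep all the relevant factorials equal to $1$ in the odd case). None of this is conceptually hard; it is the combinatorial heart of why the defining representation is asymptotically faithful, and it parallels the corresponding arguments for ordinary symmetric webs (e.g. the $A=\k$ case), with the idempotents and coupon-decorations threaded through.
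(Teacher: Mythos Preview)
Your proposal is correct and takes essentially the same approach as the paper: evaluate $G_n(\eta^a_{\bmu})$ via the explicit formula of \cref{explicitmaplemma} on an input with pairwise distinct row-indices (using $n \geq \lVert\bx\rVert$), and observe that the resulting target monomial determines $\bmu$, so the nonzero scalar $[\bmu]^!_{\BasisB\setminus\basisb}$ yields linear independence. The only cosmetic difference is that the paper uses a $\bmu$-dependent test vector $v^{\bmu}$ together with a projection $p_{\bmu}$ onto a target monomial $w^{\bmu}$, obtaining $p_{\bmu}\circ G_n(\eta^a_{\bnu})(v^{\bmu}) = \pm\delta_{\bmu,\bnu}[\bmu]^!_{\BasisB\setminus\basisb}w^{\bmu}$, whereas you use a single generic input and argue disjointness of supports --- these are equivalent packagings of the same combinatorics.
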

\begin{proof}
Choose some enumeration of tuples
\begin{align*}
\{(b_1, r_1, s_1), \ldots, (b_z, r_z, s_z) \} = \{(b,r,s) \mid r \in [1,t], s \in [1,u], b \in {}_{j_s}\BasisB_{i_r}, 
\mu_{r,s}(b)>0
\}.
\end{align*}
Note that by definition of \(\bmu\) we have \(n \geq z\), so we may define associated elements
\begin{align*}
v^{\bmu} = \bigotimes_{\ell = 1}^t \prod_{m=1}^z (v_m^{i_\ell})^{\delta_{r_m, \ell}
\mu_{\ell, s_m}(b_m)
}
\in 
S^{\bx}_{\bi}V,
\end{align*}
and
\begin{align*}
w^{\bmu} = \bigotimes_{\ell = 1}^u \prod_{m=1}^z (v_m^{b_m})^{\delta_{s_m, \ell}
\mu_{r_m, \ell}(b_m)
}
\in 
S^{\by}_{\bj}V.
\end{align*}

For all \(y \in \Z_{\geq 0}\), \(j \in I\), \(S^y_jV\) has a \(\k\)-basis consisting of restricted monomials in \(v_k^b\), ranging over \(k \in [1,n]\), \(i \in I\), and \(b \in {}_j\BasisB_i\). Here, {\em restricted} means that  the multiplicity of \(v_k^b\) in the monomial is less than or equal to one when \(\bar b = \bar 1\). Fixing such a basis for each \(S^y_jV\), this gives a basis \(W\) of tensor products of monomials for \(S^\by_\bj V\). Up to sign, \(w^{\bmu}\) is an element of \(W\), so we have a map \(p_{\bmu}: S^\by_\bj V \to \k w^{\bmu}\) given by linear projection along \(W\). Now it follows from \cref{explicitmaplemma} that
\begin{align*}
p_{\bmu} \circ G(\eta_{\bnu}^\a)(v^{\bmu}) = 
\pm \delta_{\bmu, \bnu} [\bmu]^!_{\BasisB\backslash \basisb}w^{\bmu},
\end{align*}
for all \(\bmu, \bnu \in \mathcal{M}(\bi^{(\bx)}, \bj^{(\by)})\).
Thus the images under \(G_n\) of morphisms \(\{\eta_{\bmu}^{a} \mid \bmu \in \mathcal{M}(\bi^{(\bx)}, \bi^{(\by)})\}\) are linearly independent.
\end{proof}

The next result follows immediately.
\begin{corollary}\label{AsymFaith}
The functors \(\{G_n \mid n \in \Z_{\geq 0}\}\) are asymptotically locally faithful on \( \WebAaI\). That is, for any fixed morphism space in \(\WebAaI\), the functor \(G_n\) defines an injective map for \(n \gg 0\).
\end{corollary}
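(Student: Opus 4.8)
The plan is to obtain \cref{AsymFaith} as an essentially formal consequence of \cref{injhom}, which already carries all the weight. First I would fix an arbitrary morphism space $\WebAaI(\bi^{(\bx)}, \bj^{(\by)})$, say with $\bi^{(\bx)} = i_1^{(x_1)} \cdots i_t^{(x_t)}$ and $\bj^{(\by)} = j_1^{(y_1)} \cdots j_u^{(y_u)}$, and aim to produce a threshold $N$ — depending only on this pair of objects — beyond which $G_n$ restricts to an injective map on this space.

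Next I would dispose of the degenerate situation. If $\lVert \bx \rVert \neq \lVert \by \rVert$, then $\mathcal{M}(\bi^{(\bx)}, \bj^{(\by)}) = \emptyset$: any $\bmu = (\mu_{r,s})$ in this set would satisfy $\lVert \bx \rVert = \sum_{r} x_r = \sum_{r,s} |\mu_{r,s}| = \sum_s y_s = \lVert \by \rVert$. Hence by \cref{SpanLem} the morphism space $\WebAaI(\bi^{(\bx)}, \bj^{(\by)})$ is zero, and $G_n$ is vacuously injective on it for every $n$, so any $N$ (e.g.\ $N=0$) works. In the complementary case $\lVert \bx \rVert = \lVert \by \rVert =: d$, I would simply set $N = d$ and invoke \cref{injhom}: for all $n \geq d$ the map $G_n \colon \WebAaI(\bi^{(\bx)}, \bj^{(\by)}) \to \Hom_{\gl_n(A)}(S^{\bx}_{\bi}V, S^{\by}_{\bj}V)$ is injective. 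Taking $N = \max(\lVert \bx \rVert, \lVert \by \rVert)$ in both cases shows that on the fixed morphism space $G_n$ is injective for $n \gg 0$, which is exactly the statement of asymptotic local faithfulness.

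Since the genuine content is packaged inside \cref{injhom} — which in turn rests on the explicit matrix-entry formula of \cref{explicitmaplemma} and the spanning result \cref{SpanLem} — there is no substantial obstacle remaining at this stage. The only points requiring a little care are the bookkeeping in the degenerate (thickness-mismatched) case and making explicit that the bound ``$n \gg 0$'' can be chosen uniformly, namely as $\max(\lVert\bx\rVert,\lVert\by\rVert)$, for each individual morphism space; no single $n$ need work for all morphism spaces simultaneously, and indeed one should not expect that.
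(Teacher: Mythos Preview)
Your proof is correct and follows the same approach as the paper, which simply states that the result ``follows immediately'' from \cref{injhom}. You have made explicit the degenerate case $\lVert\bx\rVert \neq \lVert\by\rVert$ and the choice of threshold $N = \max(\lVert\bx\rVert,\lVert\by\rVert)$, but this is exactly the routine bookkeeping the paper leaves to the reader.
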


\cref{SpanLem,injhom} also yield the following result.
\begin{corollary}\label{BasisThm}
The morphisms \(\{\eta_{\bmu}^a \mid \bmu \in \mathcal{M}(\bi^{(\bx)}, \bj^{(\by)})\}\) comprise a \(\k\)-basis for \( \WebAaI(\bi^{(\bx)}, \bj^{(\by)})\).
\end{corollary}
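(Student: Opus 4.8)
The plan is to combine the two preceding propositions directly. First I would dispose of a degenerate case: if $\lVert \bx \rVert \neq \lVert \by \rVert$, then the defining constraints on a tuple $\bmu = (\mu_{r,s})$ belonging to $\mathcal{M}(\bi^{(\bx)}, \bj^{(\by)})$ force $\sum_{r,s}\lvert \mu_{r,s}\rvert$ to equal both $\lVert \bx\rVert$ and $\lVert \by \rVert$, so $\mathcal{M}(\bi^{(\bx)}, \bj^{(\by)}) = \emptyset$. By \cref{SpanLem} the empty set then spans $\WebAaI(\bi^{(\bx)}, \bj^{(\by)})$, so that morphism space is zero, and the claim (an empty basis for the zero space) holds vacuously.

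So assume $\lVert \bx \rVert = \lVert \by \rVert =: d$ and fix any $n \geq d$. By \cref{SpanLem}, the set $\{\eta^a_{\bmu} \mid \bmu \in \mathcal{M}(\bi^{(\bx)}, \bj^{(\by)})\}$ spans $\WebAaI(\bi^{(\bx)}, \bj^{(\by)})$ over $\k$. By \cref{injhom}, since $n \geq d$, the images $G_n(\eta^a_\bmu)$ are linearly independent in $\Hom_{\gl_n(A)}(S^{\bx}_{\bi}V, S^{\by}_{\bj}V)$. Since $G_n$ is a $\k$-linear functor, any nontrivial $\k$-linear dependence among the $\eta^a_\bmu$ in $\WebAaI(\bi^{(\bx)}, \bj^{(\by)})$ would be carried to a nontrivial linear dependence among their images; hence the $\eta^a_\bmu$ are themselves linearly independent. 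A linearly independent spanning set is a basis, which finishes the proof.

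There is essentially no obstacle remaining once \cref{SpanLem,injhom} are in hand; the only points requiring a moment's care are the bookkeeping in the degenerate case above, and the observation that linear independence is already detected by applying the single linear functor $G_n$ for one sufficiently large $n$, rather than needing the full asymptotic faithfulness statement of \cref{AsymFaith}.
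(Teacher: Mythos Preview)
Your argument is correct and is exactly the approach the paper takes: it records \cref{BasisThm} as an immediate consequence of \cref{SpanLem} (spanning) and \cref{injhom} (linear independence via $G_n$ for $n \geq \lVert\bx\rVert = \lVert\by\rVert$), without further comment. Your explicit handling of the degenerate case $\lVert\bx\rVert \neq \lVert\by\rVert$ is a small bonus the paper leaves implicit.
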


\subsection{A remark on arbitrary commutative rings}\label{SS:remarkonArbRings}
Recalling that \(\k\) is a characteristic zero domain, let \(\LL\) be an arbitrary commutative ring of characteristic \(p \geq 0\) equipped with ring homomorphism \(\k\to \LL\). Letting \(A_\LL := \LL \otimes_\k A, a_\LL := \LL \otimes_\k a\), we may follow \cref{defwebaa} to construct an \(\LL\)-linear monoidal supercategory \(\Web^{A_\LL, a_\LL}_I\). As all relations are defined integrally, this category is isomorphic to \(\LL \otimes_\k \WebAaI\), and thus the morphism set of \cref{BasisThm} comprises an \(\LL\)-basis for morphism spaces in \(\Web^{A_\LL, a_\LL}_I\).

More generally if \((A', a')_I\) is a good pair of \(\LL\)-superalgebras which are not necessarily a scalar extension of \(\k\)-superalgebras, we may still construct the \(\LL\)-linear monoidal supercategory \(\Web^{A', a'}_I\) as in \cref{defwebaa}. In this setting \cref{SpanLem} holds, but there is now no guarantee of a basis result analogous to \cref{BasisThm}.

\subsection{Good subpairs}

In this subsection we assume we have a good subpair \((A, a')_I \subseteq (A, a)\). There is then an inclusion functor \(\iota_{a',a}: \Web^{A,a'}_I \to \WebAaI\) given by the identity on objects and diagrams.

\begin{lemma}\label{imunder}
For all \(\bi^{(\bx)}, \bj^{(\by)} \in \textup{Ob}(\WebAaI)\) and \(\bmu \in \mathcal{M}( \bi^{(\bx)}, \bj^{(\by)})\), we have
\begin{align*}
\iota_{a', a}(\eta^{a'}_{\bmu}) = [\bmu]^!_{\basisb\backslash \basisb'} \,\eta^a_{\bmu}.
\end{align*}
\end{lemma}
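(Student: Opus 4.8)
The plan is to unwind the definitions of $\eta^{a'}_{\bmu}$ and $\eta^a_{\bmu}$ from \cref{basiseldef} and reduce the claim to a purely local statement about the shorthand coupons. Both diagrams have exactly the same underlying shape (the same splits, merges, and the crossing block $Z$); the only difference is that $\eta^{a'}_{\bmu}$ is built from the coupons $\mu_{r,s}^{a}$ computed \emph{inside} $\Web^{A,a'}_I$ while $\eta^a_{\bmu}$ uses the coupons $\mu_{r,s}^{a}$ computed inside $\WebAaI$. Since $\iota_{a',a}$ is the identity on diagrams and a monoidal superfunctor, it suffices to compare $\iota_{a',a}(\mu^{a'})$ with $\mu^{a}$ for a single restricted ${}_j\BasisB_i$-composition $\mu$, and then the tuple-indexed product $[\bmu]^!_{\basisb\backslash\basisb'}=\prod_{r,s}\prod_{b\in\basisb\backslash\basisb'}\mu_{r,s}(b)!$ will assemble from the single-coupon scalars.

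First I would fix $i,j\in I$ and a restricted ${}_j\BasisB_i$-composition $\mu$ and examine the definition \cref{mudef} of $\mu^{a}$: it is a composite of $y$-type splits, the shorthand coupons $b_k^{a}$ on strands of thickness $\mu(b_k)$, and $z$-type merges. The superscript-$a$ coupon $b^{a}$ is defined in \cref{greendotdef} by cases according to whether $b$ lies in $\basisb$, is odd, or is even but not in $\basisb$; the crucial point is that whether $b\in{}_j\BasisB_i$ is a ``small-algebra'' element depends on which good pair we are working with. For $b\in\basisb'$ the coupon $b^{a}$ has the same meaning in both categories (it is just the coupon $b$ on a thick strand, legal since $b\in a'\subseteq a$). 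For $b\in\basisb\backslash\basisb'$, however: in $\WebAaI$ we have $b\in{}_j\basisb_i$, so $b^{a}=b$ (the plain coupon on a thick strand); whereas in $\Web^{A,a'}_I$ we have $b\in({}_j\BasisB_i)_{\bar 0}\backslash{}_j\basisb'_i$ (it is even since elements of $\basisb\subseteq a\subseteq A_{\bar 0}$), so $b^{a}=b^{\diamond}$, an exploded coupon on a strand of thickness $\mu(b)$. Applying \cref{startogather} — or directly the computation in its proof, which uses \cref{TAaSMRel} and \cref{KnotholeRel} to re-gather a $b^{\diamond}$ coupon into a plain $b$ coupon at the cost of a factor $\mu(b)!$ — gives $\iota_{a',a}(b^{\diamond}$ on thickness $\mu(b)) = \mu(b)!\cdot(b$ on thickness $\mu(b))$. (For $b\in\basisb'$ the factor is trivially $1=\mu(b)!$ only if $\mu(b)\le 1$, but here we do get exactly the coupon, so the contribution to the product is $1$, matching that $b\notin\basisb\backslash\basisb'$.) Multiplying these local identities over all $b\in\mathrm{supp}(\mu)$ yields $\iota_{a',a}(\mu^{a'})=\big(\prod_{b\in\basisb\backslash\basisb'}\mu(b)!\big)\,\mu^a$, where for $b\notin\basisb\backslash\basisb'$ the factor is $1$.

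Finally I would plug this back into \cref{basiseldef}: $\eta^{a'}_{\bmu}$ is, up to the fixed frame of splits/merges/$Z$ (which $\iota_{a',a}$ preserves verbatim), the monoidal product $\bigotimes_{r,s}\mu_{r,s}^{a'}$, so $\iota_{a',a}(\eta^{a'}_{\bmu})=\bigotimes_{r,s}\iota_{a',a}(\mu_{r,s}^{a'})=\prod_{r,s}\big(\prod_{b\in\basisb\backslash\basisb'}\mu_{r,s}(b)!\big)\,\eta^a_{\bmu}=[\bmu]^!_{\basisb\backslash\basisb'}\,\eta^a_{\bmu}$, as desired. The only genuinely delicate point — and the one I would write out carefully rather than wave at — is the case analysis identifying precisely when the coupon $b^{a}$ changes meaning between the two categories, i.e.\ confirming that $\basisb\backslash\basisb'$ is exactly the set of basis elements that are ``large'' in the $a'$-world but ``small'' in the $a$-world, so that the $\diamond$-vs-plain discrepancy (and hence the factorial) occurs on exactly those strands; everything else is a bookkeeping assembly of \cref{startogather}-type re-gatherings.
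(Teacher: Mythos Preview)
Your proposal is correct and follows essentially the same approach as the paper: both reduce to a coupon-by-coupon comparison, observing that the diagrams $\eta^{a'}_{\bmu}$ and $\eta^{a}_{\bmu}$ differ only in the shorthand coupons, and that the discrepancy occurs precisely for $b\in\basisb\backslash\basisb'$, where $b^{a'}=b^{\diamond}$ regathers to $\mu(b)!\cdot b^{a}$ via \cref{TAaSMRel} and \cref{KnotholeRel}. The paper's proof is more terse but identical in substance; one small notational slip in your write-up is that you sometimes write $b^{a}$ where you mean the coupon computed in $\Web^{A,a'}_I$ (i.e.\ $b^{a'}$), and you should make explicit the remaining case $b\in\BasisB\backslash\basisb$ (where $b^{a'}=b^{a}$ since both equal $b^{\diamond}$ if $\bar b=\bar 0$, or both equal the plain thin-strand coupon if $\bar b=\bar 1$).
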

\begin{proof}
Let \(\eta^a_{\bmu} \in \WebAaI(\bi^{(\bx)}, \bj^{(\by)})\), \(\eta^{a'}_{\bmu} \in \Web^{A,a'}_I(\bi^{(\bx)}, \bj^{(\by)})\), be the basis diagrams associated to \(\bmu\) as defined in their respective categories. These diagrams are identical, save for the shorthand coupons on each being of the form \(
\hackcenter{
\begin{tikzpicture}[scale=0.7]
  \draw[thick, fill=yellow]  (0,0) circle (11pt);
    \node at (0,0) {$\scriptstyle b^{\scriptstyle a}$};
\end{tikzpicture}}
\)
and
\(
\hackcenter{
\begin{tikzpicture}[scale=0.7]
  \draw[thick, fill=yellow]  (0,0) circle (11pt);
    \node at (0,0) {$\scriptstyle b^{\scriptstyle a'}$};
\end{tikzpicture}}
\)
respectively.
When \(b \notin \basisb\) or \(b \in \basisb'\), the image under \(\iota_{a',a}\) of the coupon \(
\hackcenter{
\begin{tikzpicture}[scale=0.7]
  \draw[thick, fill=yellow]  (0,0) circle (11pt);
    \node at (0,0) {$\scriptstyle b^{\scriptstyle a'}$};
\end{tikzpicture}}
\)
is
\(
\hackcenter{
\begin{tikzpicture}[scale=0.7]
  \draw[thick, fill=yellow]  (0,0) circle (11pt);
    \node at (0,0) {$\scriptstyle b^{\scriptstyle a}$};
\end{tikzpicture}}
\). When \(b \in \basisb \backslash \basisb'\), the image under \(\iota_{a',a}\) of the coupon \(
\hackcenter{
\begin{tikzpicture}[scale=0.7]
  \draw[thick, fill=yellow]  (0,0) circle (11pt);
    \node at (0,0) {$\scriptstyle b^{\scriptstyle a'}$};
\end{tikzpicture}}
\)
on a strand of thickness \(m\)
is
\(
\hackcenter{
\begin{tikzpicture}[scale=0.7]
  \draw[thick, fill=yellow]  (0,0) circle (11pt);
    \node at (0,0) {$\scriptstyle b^{\scriptstyle \diamond}$};
\end{tikzpicture}}
=
m!
\;
\hackcenter{
\begin{tikzpicture}[scale=0.7]
  \draw[thick, fill=yellow]  (0,0) circle (11pt);
    \node at (0,0) {$\scriptstyle b^{\scriptstyle a}$};
\end{tikzpicture}}
\)
by 
\cref{KnotholeRel}, which implies the result.
\end{proof}

\cref{imunder} and \cref{BasisThm} imply the following result.

\begin{corollary}\label{aprimeembedding}
The functor \(\iota_{a',a}: \Web^{A,a'}_I \to \WebAaI\) is faithful, and for all \(\bi^{(\bx)}, \bj^{(\by)} \in \textup{Ob}(\WebAaI)\), the image of \(\Web^{A,a'}_I(\bi^{(\bx)}, \bj^{(\by)})\) under \(\iota_{a',a}\) is the full-rank \(\k\)-submodule
\begin{equation*}
\k\{[\bmu]_{\basisb \backslash \basisb'}^!\eta_{\bmu}^a \mid \bmu \in \mathcal{M}( \bi^{(\bx)}, \bj^{(\by)})\} \subseteq \WebAaI(\bi^{(\bx)}, \bj^{(\by)}).
\end{equation*}
That is, \(\Web^{A,a'}_I\) may be identified as a \(\k\)-form in \(\WebAaI\).
\end{corollary}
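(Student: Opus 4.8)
The plan is to read this off directly from the basis theorem \cref{BasisThm} together with the computation \cref{imunder}. The first point to pin down is that the two categories $\Web^{A,a'}_I$ and $\WebAaI$ have morphism spaces indexed by the \emph{same} combinatorial set. Indeed, $\mathcal{M}(\bi^{(\bx)}, \bj^{(\by)})$ consists of tuples of restricted $\BasisB$-compositions, which depends only on the ambient superalgebra $A$ and its fixed basis $\BasisB$, not on the choice of even subalgebra; and, by the good subpair hypothesis, the fixed bases may be taken nested, $\basisb' \subseteq \basisb \subseteq \BasisB$, so the basis diagrams $\eta^{a'}_{\bmu}$ and $\eta^{a}_{\bmu}$ are defined over the same index set. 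Applying \cref{BasisThm} to the good pair $(A,a')_I$ then shows $\{\eta^{a'}_{\bmu} \mid \bmu \in \mathcal{M}(\bi^{(\bx)}, \bj^{(\by)})\}$ is a $\k$-basis of $\Web^{A,a'}_I(\bi^{(\bx)}, \bj^{(\by)})$, while the same theorem for $(A,a)_I$ gives that $\{\eta^{a}_{\bmu} \mid \bmu \in \mathcal{M}(\bi^{(\bx)}, \bj^{(\by)})\}$ is a $\k$-basis of $\WebAaI(\bi^{(\bx)}, \bj^{(\by)})$.

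Next I would feed these bases into \cref{imunder}, which says $\iota_{a',a}(\eta^{a'}_{\bmu}) = [\bmu]^!_{\basisb\backslash\basisb'}\,\eta^a_{\bmu}$. Since $\k$ is a characteristic zero domain and $[\bmu]^!_{\basisb\backslash\basisb'}$ is a product of factorials of nonnegative integers, this scalar is a nonzero element of $\k$. Therefore $\iota_{a',a}$ sends the $\k$-basis $\{\eta^{a'}_{\bmu}\}$ bijectively onto a family of nonzero scalar multiples of the \emph{distinct} basis vectors $\eta^a_{\bmu}$; such a family is automatically $\k$-linearly independent in $\WebAaI(\bi^{(\bx)}, \bj^{(\by)})$. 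Hence on each morphism space $\iota_{a',a}$ is injective with image exactly $\k\{[\bmu]^!_{\basisb\backslash\basisb'}\,\eta^a_{\bmu} \mid \bmu \in \mathcal{M}(\bi^{(\bx)}, \bj^{(\by)})\}$, and, the functor being the identity on objects, injectivity on all morphism spaces is precisely faithfulness. Full-rankness is then immediate: writing $c_{\bmu} := [\bmu]^!_{\basisb\backslash\basisb'}\neq 0$, the image $\k\{c_{\bmu}\eta^a_{\bmu}\}$ is free of the same rank $|\mathcal{M}(\bi^{(\bx)}, \bj^{(\by)})|$ as the free module $\WebAaI(\bi^{(\bx)}, \bj^{(\by)})$, with torsion quotient; this is exactly the assertion that $\Web^{A,a'}_I$ is a $\k$-form of $\WebAaI$.

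I do not expect a genuine obstacle here — everything reduces to \cref{BasisThm} and \cref{imunder}. The one subtlety worth flagging in the exposition is that $\iota_{a',a}$ is merely a faithful embedding and \emph{not} an equivalence: passing from $a'$ to $a$ genuinely enlarges the relevant morphism spaces by the factorial scalars $c_{\bmu}$, so one should present the statement as a "$\k$-form" rather than claiming an isomorphism of categories, and the hypothesis that $\k$ has characteristic zero (more precisely, that these factorials do not vanish in $\k$) is what makes the scalars $c_{\bmu}$ nonzero and hence the embedding full-rank.
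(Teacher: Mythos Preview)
Your proof is correct and follows exactly the paper's approach: the paper's proof is a one-line citation of \cref{imunder} and \cref{BasisThm}, and you have simply unpacked the details of how those two results combine. Your observations about the common index set $\mathcal{M}(\bi^{(\bx)}, \bj^{(\by)})$ and the nonvanishing of the factorial scalars in characteristic zero are precisely the points needed to make the deduction rigorous.
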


\begin{remark}
In particular, one may use \cref{aprimeembedding} to identify \(\WebAaI\) with the \(\k\)-form in \(\Web^{A,A_{\bar 0}}_I\) spanned by morphisms of the form \([\bmu]_{\BasisB_{\bar 0} \backslash \basisb}^! \eta^{A_{\bar 0}}_{\bmu}\). One may compare this feature with the definition of the Schurification of $(A,a)$ described in \cref{SS:SchurificationofaGoodPair}.
\end{remark}

\subsection{Truncation} Let \((A, a)_I\) be a good pair and assume \(I\) is finite. As \(A, a\) are both unital with identity \(1 = \sum_{i \in I} i\), we may consider alternatively the good pair \((A, a)_{\{1\}}\). Then the category \(\WebAaI\) is an idempotent truncation of \(\Web^{A,a}_{\{1\}}\), in a sense we make clear in \cref{trunc1}.

For \(i \in I, x \in \N\), set
\begin{align*}
\varepsilon_{i^{(x)}} :=
\hackcenter{
\begin{tikzpicture}[scale=.8]
  \draw[ultra thick, black ] (0,0)--(0,0.5);
   \draw[ultra thick, black] (0,0.5)--(0,1);
   \draw[thick, fill=yellow]  (0,0.5) circle (7pt);
    \node at (0,0.5) {$ \scriptstyle i$};
     \node[below] at (0,0) {$ \scriptstyle 1^{ \scriptstyle (x)}$};
      \node[above] at (0,1) {$ \scriptstyle 1^{ \scriptstyle (x)}$};
\end{tikzpicture}}
\in \Web^{A, a}_{\{1\}}(1^{(x)}, 1^{(x)}).
\end{align*}
Now, we define an `\(I\)-truncated' version of \(\Web^{A, a}_{\{1\}}\) as follows. Let \({}_I\Web^{A, a}_{\{1\}}\) be the monoidal supercategory with \(\Ob({}_I\Web^{A, a}_{\{1\}}) = \Lambda_I\), and for \(\bi^{(\bx)} = i_1^{(x_1)}  \cdots i_r^{(x_r)}, \bj^{(\by)} = j_1^{(y_1)} \cdots  j_s^{(y_s)} \in \Ob({}_I\Web^{A, a}_{\{1\}})\), we set:
\begin{align}\label{ItruncHom}
{}_I\Web^{A, a}_{\{1\}}(\bi^{(\bx)}, \bj^{(\by)}) =(\varepsilon_{j_1^{(y_1)}} \otimes \cdots \otimes \varepsilon_{j_s^{(y_s)}}) \circ
\Web^{A, a}_{\{1\}}(1^{(\bx)}, 1^{(\by)})
\circ
 (\varepsilon_{i_1^{(x_1)}} \otimes \cdots \otimes \varepsilon_{i_r^{(x_r)}}),
\end{align}
with composition of morphisms inherited from \(\Web^{A, a}_{\{1\}}\). 

\begin{theorem}\label{trunc1}
There is an isomorphism of monoidal supercategories 
\begin{align*}
\iota_I:  \WebAaI \to {}_I\Web^{A, a}_{\{1\}}
\end{align*}
given by \(i^{(x)} \mapsto i^{(x)}\) on generating objects, and 
\begin{align*}
\hackcenter{
{}
}
\hackcenter{
\begin{tikzpicture}[scale=.8]
  \draw[ultra thick,blue] (0,-0.5)--(0,0.2) .. controls ++(0,0.35) and ++(0,-0.35) .. (-0.4,0.9)--(-0.4,1.5);
  \draw[ultra thick,blue] (0,-0.5)--(0,0.2) .. controls ++(0,0.35) and ++(0,-0.35) .. (0.4,0.9)--(0.4,1.5);
      \node[above] at (-0.4,1.5) {$ \scriptstyle i^{\scriptstyle (x)}$};
      \node[above] at (0.4,1.5) {$ \scriptstyle i^{\scriptstyle (y)}$};
      \node[below] at (0,-0.5) {$ \scriptstyle i^{\scriptstyle (x+y)} $};
\end{tikzpicture}}
\mapsto
\hackcenter{
\begin{tikzpicture}[scale=.8]
  \draw[ultra thick,black] (0,-0.5)--(0,0.2) .. controls ++(0,0.35) and ++(0,-0.35) .. (-0.4,0.9)--(-0.4,1.5);
  \draw[ultra thick,black] (0,-0.5)--(0,0.2) .. controls ++(0,0.35) and ++(0,-0.35) .. (0.4,0.9)--(0.4,1.5);
      \node[above] at (-0.4,1.5) {$ \scriptstyle 1^{\scriptstyle (x)}$};
      \node[above] at (0.4,1.5) {$ \scriptstyle 1^{\scriptstyle (y)}$};
      \node[below] at (0,-0.5) {$ \scriptstyle 1^{\scriptstyle (x+y)} $};
       \draw[thick, fill=yellow]  (0.4,1) circle (7pt);
        \draw[thick, fill=yellow]  (-0.4,1) circle (7pt);
         \draw[thick, fill=yellow]  (0,0) circle (7pt);
             \node at (0,0) {$ \scriptstyle i$};
                 \node at (0.4,1) {$ \scriptstyle i$};
                     \node at (-0.4,1) {$ \scriptstyle i$};
\end{tikzpicture}}
\qquad
\hackcenter{
\begin{tikzpicture}[scale=.8]
  \draw[ultra thick,blue] (0,0.5)--(0,-0.2) .. controls ++(0,-0.35) and ++(0,0.35) .. (-0.4,-0.9)--(-0.4,-1.5);
  \draw[ultra thick,blue] (0,0.5)--(0,-0.2) .. controls ++(0,-0.35) and ++(0,0.35) .. (0.4,-0.9)--(0.4,-1.5);
      \node[below] at (-0.4,-1.5) {$ \scriptstyle i^{\scriptstyle (x)}$};
      \node[below] at (0.4,-1.5) {$ \scriptstyle i^{\scriptstyle (y)}$};
      \node[above] at (0,0.5) {$ \scriptstyle i^{\scriptstyle (x+y)} $};
\end{tikzpicture}}
\mapsto
\hackcenter{
\begin{tikzpicture}[scale=.8]
  \draw[ultra thick,black] (0,0.5)--(0,-0.2) .. controls ++(0,-0.35) and ++(0,0.35) .. (-0.4,-0.9)--(-0.4,-1.5);
  \draw[ultra thick,black] (0,0.5)--(0,-0.2) .. controls ++(0,-0.35) and ++(0,0.35) .. (0.4,-0.9)--(0.4,-1.5);
      \node[below] at (-0.4,-1.5) {$ \scriptstyle 1^{\scriptstyle (x)}$};
      \node[below] at (0.4,-1.5) {$ \scriptstyle 1^{\scriptstyle (y)}$};
      \node[above] at (0,0.5) {$ \scriptstyle 1^{\scriptstyle (x+y)} $};
       \draw[thick, fill=yellow]  (0.4,-1) circle (7pt);
        \draw[thick, fill=yellow]  (-0.4,-1) circle (7pt);
         \draw[thick, fill=yellow]  (0,0) circle (7pt);
             \node at (0,0) {$ \scriptstyle i$};
                 \node at (0.4,-1) {$ \scriptstyle i$};
                     \node at (-0.4,-1) {$ \scriptstyle i$};
\end{tikzpicture}}
\qquad
\hackcenter{
\begin{tikzpicture}[scale=.8]
  \draw[ultra thick,red] (0.4,-0.5)--(0.4,0.1) .. controls ++(0,0.35) and ++(0,-0.35) .. (-0.4,0.9)--(-0.4,1.5);
  \draw[ultra thick,blue] (-0.4,-0.50)--(-0.4,0.1) .. controls ++(0,0.35) and ++(0,-0.35) .. (0.4,0.9)--(0.4,1.5);
      \node[above] at (-0.4,1.5) {$ \scriptstyle j^{ \scriptstyle (y)}$};
      \node[above] at (0.4,1.5) {$ \scriptstyle i^{ \scriptstyle (x)}$};
       \node[below] at (-0.4,-0.5) {$ \scriptstyle i^{ \scriptstyle (x)}$};
      \node[below] at (0.4,-0.5) {$ \scriptstyle j^{ \scriptstyle (y)}$};
\end{tikzpicture}}
\mapsto
\hackcenter{
\begin{tikzpicture}[scale=.8]
  \draw[ultra thick,black] (0.4,-0.5)--(0.4,0.1) .. controls ++(0,0.35) and ++(0,-0.35) .. (-0.4,0.9)--(-0.4,1.5);
  \draw[ultra thick,black] (-0.4,-0.50)--(-0.4,0.1) .. controls ++(0,0.35) and ++(0,-0.35) .. (0.4,0.9)--(0.4,1.5);
      \node[above] at (-0.4,1.5) {$ \scriptstyle 1^{ \scriptstyle (y)}$};
      \node[above] at (0.4,1.5) {$ \scriptstyle 1^{ \scriptstyle (x)}$};
       \node[below] at (-0.4,-0.5) {$ \scriptstyle 1^{ \scriptstyle (x)}$};
      \node[below] at (0.4,-0.5) {$ \scriptstyle 1^{ \scriptstyle (y)}$};
             \draw[thick, fill=yellow]  (0.4,1) circle (7pt);
        \draw[thick, fill=yellow]  (-0.4,1) circle (7pt);
                     \draw[thick, fill=yellow]  (0.4,0) circle (7pt);
        \draw[thick, fill=yellow]  (-0.4,0) circle (7pt);
                   \node at (0.4,0) {$ \scriptstyle j$};
                     \node at (-0.4,0) {$ \scriptstyle i$};
                                \node at (0.4,1) {$ \scriptstyle i$};
                     \node at (-0.4,1) {$ \scriptstyle j$};
\end{tikzpicture}}
\qquad
\hackcenter{
\begin{tikzpicture}[scale=.8]
  \draw[ultra thick, blue] (0,-0.5)--(0,0.5);
   \draw[ultra thick, red] (0,0.5)--(0,1.5);
   \draw[thick, fill=yellow]  (0,0.5) circle (7pt);
    \node at (0,0.5) {$ \scriptstyle f$};
     \node[below] at (0,-0.5) {$ \scriptstyle i^{ \scriptstyle (z)}$};
      \node[above] at (0,1.5) {$ \scriptstyle j^{ \scriptstyle (z)}$};
\end{tikzpicture}}
\mapsto
\hackcenter{
\begin{tikzpicture}[scale=.8]
  \draw[ultra thick, black] (0,-0.5)--(0,0.5);
   \draw[ultra thick, black] (0,0.5)--(0,1.5);
   \draw[thick, fill=yellow]  (0,0.5) circle (7pt);
    \node at (0,0.5) {$ \scriptstyle f$};
       \draw[thick, fill=yellow]  (0,1.1) circle (7pt);
    \node at (0,1.1) {$ \scriptstyle j$};
       \draw[thick, fill=yellow]  (0,-0.1) circle (7pt);
    \node at (0,-0.1) {$ \scriptstyle i$};
     \node[below] at (0,-0.5) {$ \scriptstyle 1^{ \scriptstyle (z)}$};
      \node[above] at (0,1.5) {$ \scriptstyle 1^{ \scriptstyle (z)}$};
\end{tikzpicture}}
\end{align*}
on generating morphisms.
\end{theorem}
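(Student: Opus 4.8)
The plan is to show that \(\iota_I\) is a well-defined monoidal superfunctor which is bijective on objects and fully faithful; since a fully faithful functor that is bijective on objects is an isomorphism of categories (and \(\iota_I\) is monoidal by construction), this yields the theorem. Throughout abbreviate \(\varepsilon_{\bi^{(\bx)}} := \varepsilon_{i_1^{(x_1)}} \otimes \cdots \otimes \varepsilon_{i_t^{(x_t)}}\). Since \(\sum_{i \in I} i = 1\) in both \(A\) and \(a\), the coupon relations \cref{AaRel1,AaRel2} show that the \(\varepsilon_{i^{(x)}}\) are mutually orthogonal idempotents summing to the identity of \(1^{(x)}\), so by \cref{ItruncHom} the truncated hom-space \({}_I\Web^{A, a}_{\{1\}}(\bi^{(\bx)}, \bj^{(\by)})\) is the ``block'' \(\varepsilon_{\bj^{(\by)}} \circ \Web^{A,a}_{\{1\}}(1^{(\bx)}, 1^{(\by)}) \circ \varepsilon_{\bi^{(\bx)}}\), with \(\varepsilon_{i^{(x)}}\) serving as the identity of \(i^{(x)}\).

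\emph{Well-definedness.} The image under \(\iota_I\) of each generating morphism visibly lies in the appropriate block, so it remains to verify that the images of the relations \cref{AssocRel}--\cref{AaIntertwine} hold in \({}_I\Web^{A, a}_{\{1\}}\). For a given relation, decorate every strand of both sides with idempotent \(i\)-coupons; using \cref{AaRel1,AaRel2,TAaSMRel,AaIntertwine} one slides these coupons freely past splits, merges, crossings and other coupons, reducing the identity to the corresponding monotone (single-colour) relation in \(\Web^{A,a}_{\{1\}}\) --- which holds by \cref{arbcol} (along with the coupon relations themselves for the coupon generators). I expect this to be the most laborious step, although each individual verification is a short diagram manipulation.

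\emph{Faithfulness.} Fix \(\bmu \in \mathcal{M}(\bi^{(\bx)}, \bj^{(\by)})\) and expand the basis diagram \(\eta^a_{\bmu} \in \WebAaI(\bi^{(\bx)}, \bj^{(\by)})\) of \cref{basiseldef} as a composite of generators. Applying \(\iota_I\), the interior idempotent coupons introduced collapse via \cref{AaRel2} and are absorbed into the \(\mu_{r,s}\)-coupons, since every constituent coupon \(b\) there lies in \({}_{j_s}\BasisB_{i_r}\) and so satisfies \(j_s b = b = b i_r\); sliding the boundary \(\varepsilon\)-coupons inward through the outer merges/splits (\cref{TAaSMRel}) and the crossings of \(Z\) (\cref{AaIntertwine}), they too are absorbed. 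Hence \(\iota_I(\eta^a_{\bmu})\) equals the diagram \(\eta^a_{\bmu}\) computed in \(\Web^{A,a}_{\{1\}}\). As \(\mathcal{M}(\bi^{(\bx)}, \bj^{(\by)}) \subseteq \mathcal{M}(1^{(\bx)}, 1^{(\by)})\), these are distinct elements of the \(\k\)-basis of \(\Web^{A,a}_{\{1\}}(1^{(\bx)}, 1^{(\by)})\) given by \cref{BasisThm}, hence linearly independent; so \(\iota_I\) is faithful.

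\emph{Fullness.} By \cref{SpanLem} the space \(\Web^{A,a}_{\{1\}}(1^{(\bx)}, 1^{(\by)})\) is spanned by \(\{\eta^a_{\bmu} \mid \bmu \in \mathcal{M}(1^{(\bx)}, 1^{(\by)})\}\), so its block \({}_I\Web^{A, a}_{\{1\}}(\bi^{(\bx)}, \bj^{(\by)})\) is spanned by the \(\varepsilon_{\bj^{(\by)}} \circ \eta^a_{\bmu} \circ \varepsilon_{\bi^{(\bx)}}\). Sliding the boundary idempotents inward as above sends a constituent coupon \(b\) in position \((r,s)\) to \(j_s b i_r \in j_s A i_r\), which re-expands over \(\BasisB\) as a \(\k\)-combination of elements of \({}_{j_s}\BasisB_{i_r}\). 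Distributing and then re-collecting equal basis coupons onto single strands exactly as in the proof of \cref{SpanLem} (via \cref{KnotholeRel}), each \(\varepsilon_{\bj^{(\by)}} \circ \eta^a_{\bmu} \circ \varepsilon_{\bi^{(\bx)}}\) becomes a \(\k\)-linear combination of diagrams \(\eta^a_{\bnu}\) with \(\bnu \in \mathcal{M}(\bi^{(\bx)}, \bj^{(\by)})\), each of which equals \(\iota_I(\eta^a_{\bnu})\) by the previous step. Thus \(\iota_I\) is full, and the theorem follows. Beyond the volume of relations in the first step, the main subtlety is the scalar bookkeeping in these last two steps arising from the \(\diamond\)-shorthand coupons on thick strands (cf.\ \cref{mudef,greendotdef}).
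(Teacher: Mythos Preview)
Your proof is correct and follows essentially the same approach as the paper: both establish well-definedness by checking relations, then use \cref{BasisThm} (applied in both \(\WebAaI\) and \(\Web^{A,a}_{\{1\}}\)) to show \(\iota_I\) is bijective on morphism spaces. Your fullness argument can be streamlined---since \(\BasisB = \bigsqcup_{i,j} {}_j\BasisB_i\), sliding the boundary idempotents into \(\eta^a_{\bmu}\) either annihilates it (if some \(\mu_{r,s}\) is supported outside \({}_{j_s}\BasisB_{i_r}\)) or fixes it, so the truncation projection sends each basis element of \(\Web^{A,a}_{\{1\}}(1^{(\bx)}, 1^{(\by)})\) to \(0\) or itself, immediately giving the basis of the block without any re-expansion.
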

\begin{proof}
It is straightforward to check that \(\iota_I\) is a well-defined functor. In consideration of \cref{ItruncHom} and \cref{BasisThm} as applied to \(\WebAaI\) and \(\Web_{\{1\}}^{A,a}\), it is also straightforward to check that \(\iota_I\) induces a bijection on \(\k\)-bases for morphism spaces, completing the proof.
\end{proof}

\section{Connection to Schurification}\label{S:webificationofSchurification}

\subsection{The algebra \texorpdfstring{$\WAand$}{WAand}}\label{SS:CategoricalSchurificationofaGoodPair}
We have come to one of our primary motivations: the connection between \(\WebAaI\) and the generalized Schur algebra (or, `Schurification') $T^{A}_{a}(n,d)$ defined in \cite{KM}. Throughout this section, we assume that \((A,a)_I\) is a good pair, and \(I\) is a finite set. We write \(1 = \sum_{i \in I}i\) for the unit in \(A\) and \(a\). 
For \(\bx = (x_1, \ldots, x_t) \in \Z_{\geq 0}^t\) we write \(1^{(\bx)}:=1^{(x_1)} \cdots 1^{(x_t)} \in \Omega_{\{1\}}\).

For \(n,d \in \Z_{\geq0}, \bx, \by \in \Omega(n,d)\), we set \(\WAand(\bx, \by) =  \WebAaOne(1^{(\bx)}, 1^{(\by)})\). Then we set 
\begin{align}\label{WAandDef}
\WAand := \bigoplus_{\bx, \by \in \Omega(n,d)} \WAand(\bx, \by)
\end{align}
and view \(\WAand\) as a unital superalgebra, as follows. For \(\bx, \bx', \by, \by' \in \Omega(n,d)\), \(f \in \WAand(\bx, \by), f' \in \WAand(\bx', \by')\), we set \(ff' = \delta_{\bx, \by'} f \circ f' \in \WAand(\bx', \by)\), where the composition is taken in \(\WebAaOne\). 
The unit in \(\WAand\) is 
\(
\mathbbm{1} = \sum_{\bx \in \Omega(n,d)} \textup{Id}_{1^{(\bx)}}.
\)
By \cref{BasisThm} \(\WAand(\bx,\by)\) has \(\k\)-basis \(\{\eta^a_{\bmu} \mid \bmu \in \mathcal{M}(1^{(\bx)}, 1^{(\by)})\}\).

\subsection{The Schurification of  \texorpdfstring{$(A, a)$}{(A,a)}}\label{SS:SchurificationofaGoodPair}
In this section we follow \cite{KM} to define the generalized Schur algebra \(T^{A}_a(n,d)\) and refer the interested reader to that paper for more detail. The tensor product \(\k\)-superalgebra \(M_n(A)^{\otimes d}\) has a signed permutation action of the symmetric group \(\mathfrak{S}_d\) as described in \cref{twistsec}. We write
\begin{align*}
S^A(n,d) := (M_n(A)^{\otimes d})^{\mathfrak{S}_d}
\end{align*}
for the sub-superalgebra of \(\mathfrak{S}_d\)-invariants. \\

Let \(d_1, d_2 \in \Z_{\geq 0}\), with \(\xi_k \in S^A(n,d_k)\) for \(k =1,2\). Let \(\mathscr{D}^{d_1,d_2}\) be a set of right coset representatives for \((\mathfrak{S}_{d_1} \times \mathfrak{S}_{d_2}) \backslash \mathfrak{S}_{d_1 + d_2}\). Then we define the associative {\em star product}:
\begin{align*}
\xi_1 * \xi_2 := \sum_{\sigma \in \mathscr{D}^{d_1,d_2}} (\xi_1 \otimes \xi_2)^\sigma \in S^A(n,d_1 + d_2).
\end{align*}
If \(U = \{u_1<  \ldots< u_t\}\) is a finite indexing set equipped with total order \(<\), we denote 
\begin{align*}
\scalebox{2}{\textasteriskcentered}^<_{u \in U} \xi_u  = \xi_{u_1} * \cdots * \xi_{u_t}.
\end{align*}
We have `left' and `right'-sided injective algebra homomorphisms \({\tt L}_n^{n+m}, {\tt R}_n^{n+m}: M_n(A) \to M_{n+m}(A)\) given by \({\tt L}_n^{n+m}(E^b_{r,s}) = E^b_{r,s}\) and \({\tt R}_n^{n+m}(E^b_{r,s}) = E^b_{r+m,s+m}\), which induce algebra homomorphisms  \({\tt L}_n^{n+m}, {\tt R}_n^{n+m}: S^A(n,d) \to S^A(n+m,d)\). For all \(\xi_1 \in S^A(n_1, d_1)\), \(\xi_2 \in S^A(n_2, d_2)\), we define the {\em shifted} star product:
\begin{align*}
\xi_1 \bar{*} \,\xi_2 := {\tt L}_{n_1}^{n_1+n_2}(\xi_1) * {\tt R}_{n_2}^{n_1 + n_2}(\xi_2) \in S^A(n_1 + n_2, d_1 + d_2).
\end{align*}

For \(\bx, \by \in \Omega(n,d)\),  \(\bmu \in \mathcal{M}(1^{(\bx)}, 1^{(\bx)})\) we set
\begin{align*}
\tilde{\eta}_{\bmu}^{a} :=  [\bmu]^!_{\BasisB \backslash \basisb} \left(
\scalebox{2}{\textasteriskcentered}_{r=1}^n
\scalebox{2}{\textasteriskcentered}_{s=1}^n
\scalebox{2}{\textasteriskcentered}_{b \in \BasisB}^<
(E_{s,r}^b)^{\otimes \mu_{r,s}(b)}
  \right)
  \in S^A(n,d),
  \end{align*}
viewing $(E_{s,r}^b)^{\otimes \mu_{r,s}(b)}\in S^A(n,\mu_{r,s}(b))$.  While our notation in this paper is slightly different, it is straightforward to check that if \(\bb = (b_1, \ldots, b_d) \in \BasisB^d, \br = (r_1, \ldots, r_d), \bs = (s_1, \ldots, s_d) \in [1,n]^d\) are such that 
 \begin{align*}
 \#\{ t \in [1,d] \mid b_t = b, r_t = r, s_t = s\} = \mu_{r,s}(b), \qquad \textup{ for all } r,s \in [1,n], b \in \BasisB,
 \end{align*}
 then we have that \(\tilde{\eta}^a_{\bmu}\) is equal to \(\pm\eta_{\bb, \br, \bs}\), as defined in \cite[\S3B]{KM}. Then we define
  \begin{align*}
T^A_a(n,d) := \k \{ \tilde{\eta}_{\bmu}^{a} \mid \bx, \by \in \Omega(n,d), \bmu \in \mathcal{M}(1^{(\bx)}, 1^{(\by)})\} \subseteq S^A(n,d).
  \end{align*}

\begin{lemma}\cite[Lemma 3.10, Proposition 3.12]{KM}\label{KMbasis}
 \(T^A_a(n,d)\) is a unital sub-superalgebra of \(S^A(n,d)\), with \(\k\)-basis 
\(
 \{ \tilde{\eta}_{\bmu}^{a} \mid \bx, \by \in \Omega(n,d), \bmu \in \mathcal{M}(1^{(\bx)}, 1^{(\by)})\} \).
\end{lemma}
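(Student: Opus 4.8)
The statement is Lemma~\ref{KMbasis}, which is attributed to \cite[Lemma 3.10, Proposition 3.12]{KM}. Since the result is quoted verbatim from the literature, the cleanest approach is to reduce the two assertions---that \(T^A_a(n,d)\) is a unital sub-superalgebra of \(S^A(n,d)\), and that the set \(\{\tilde\eta^a_{\bmu}\}\) is a \(\k\)-basis---to the corresponding statements in \cite{KM} via the dictionary established in the preceding paragraph. First I would make precise the comparison between our generators \(\tilde\eta^a_{\bmu}\) and the elements \(\eta_{\bb,\br,\bs}\) of \cite[\S3B]{KM}: given \(\bmu \in \mathcal{M}(1^{(\bx)},1^{(\by)})\), choose any triple \((\bb,\br,\bs) \in \BasisB^d \times [1,n]^d \times [1,n]^d\) whose ``type'' records the multiplicities \(\mu_{r,s}(b)\) as in the displayed condition, and verify the claimed equality \(\tilde\eta^a_{\bmu} = \pm \eta_{\bb,\br,\bs}\) by unwinding the definition of the shifted star product and comparing with \cite{KM}'s construction. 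This is a bookkeeping step: the scalar \([\bmu]^!_{\BasisB\setminus\basisb}\) is exactly the normalization appearing in \cite{KM}, and the sign ambiguity is harmless since we only assert membership of the spanning set and then invoke their basis theorem.

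Next, I would observe that the map \(\bmu \mapsto (\text{type of }\bb,\br,\bs)\) sets up a bijection between \(\bigsqcup_{\bx,\by \in \Omega(n,d)} \mathcal{M}(1^{(\bx)},1^{(\by)})\) and the indexing set used in \cite[Proposition 3.12]{KM} for the \(\k\)-basis of \(T^A_a(n,d)\) (there indexed by certain equivalence classes of triples \((\bb,\br,\bs)\), two triples being equivalent when related by the diagonal \(\mathfrak{S}_d\)-action). Concretely, the data of \(\bmu\) is precisely the data of such an equivalence class, since specifying all multiplicities \(\mu_{r,s}(b)\) determines the \(\mathfrak{S}_d\)-orbit of \((\bb,\br,\bs)\) and conversely. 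Feeding this bijection into \cite[Lemma 3.10]{KM} (which gives that \(T^A_a(n,d)\) is closed under multiplication and contains the identity---this is where the good-pair hypothesis and the star-product structure are used) yields that \(T^A_a(n,d)\) is a unital sub-superalgebra, and feeding it into \cite[Proposition 3.12]{KM} yields that \(\{\tilde\eta^a_{\bmu}\}\) is a \(\k\)-basis.

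The only genuinely substantive point---and the main obstacle---is verifying that our normalization and ordering conventions match those of \cite{KM} on the nose, so that the quoted results apply without gaps. In particular one must check: (i) that the fixed total order \(<\) on \(\BasisB\) chosen in our setup is compatible with the order used to define the iterated star product \(\scalebox{1.5}{\textasteriskcentered}^<_{b \in \BasisB}\) in \cite{KM}; (ii) that our \(\tilde\eta^a_{\bmu}\), which is built as a shifted star product indexed first by the row \(r\), then the column \(s\), then the basis element \(b\), agrees up to sign with \cite{KM}'s \(\eta_{\bb,\br,\bs}\) built from a word \((\bb,\br,\bs)\); and (iii) that the scalar \([\bmu]^!_{\BasisB\setminus\basisb}\) is the correct factorial normalization making \(\tilde\eta^a_{\bmu}\) an \emph{integral} element of \(S^A(n,d)\) rather than merely a rational one. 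All three are routine once the conventions are laid side by side, but they require care because the two papers use genuinely different notational packaging for the same combinatorial data. Having dispatched these, the lemma follows immediately by citation, so no further argument is needed.
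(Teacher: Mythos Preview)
Your proposal is correct and matches the paper's approach: the lemma is stated with the citation \cite[Lemma 3.10, Proposition 3.12]{KM} in its header and given no proof, the preceding paragraph having already recorded the identification \(\tilde\eta^a_{\bmu} = \pm\eta_{\bb,\br,\bs}\) that translates between the two notational systems. Your write-up simply makes explicit the bookkeeping that the paper leaves to the reader.
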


\subsection{Connecting algebras}
For fixed $n \geq 1$, set \(V_n=A^{\oplus n}\).  Viewing this as column vectors of height $n$ with entries in $A$, we consider this as a left \(M_n(A)\)-supermodule via matrix multiplication.

\begin{lemma}\label{WebVdaction}
There is a well-defined \(\k\)-superalgebra map \(\tau:\WAand \to \End_\k\left(V_n^{\otimes d}\right)\) given by 
\begin{align}\label{Vdaction}
\tau(\eta_{\bmu}^{a}) :\bigotimes_{\gamma = 1}^d v_{r_\gamma}^{f_\gamma} \mapsto
[\bmu]^!_{\BasisB \backslash \basisb}
\sum_{T} (-1)^{\sigma(T)} \bigotimes_{\gamma = 1}^d v_{s_\gamma^T}^{b_\gamma^T f_\gamma}.
\end{align}
for all \(\bx, \by \in \Omega(n,d)\), \(\bmu \in \mathcal{M}(1^{(\bx)}, 1^{(\by)})\). 
Here the sum ranges over all \(T = (T_{r,s}^b)\) such that \(T_{r,s}^b \subseteq \{\gamma \in [1,d] \mid r_\gamma = r\}\), $\bigsqcup_{b,s}T^b_{r,s}=\{\gamma\in [1,d]\:|\:r_{\gamma}=r \}$, and \(|T_{r,s}^b| = \mu_{r,s}(b)\). For \(\gamma \in [1,d]\), we let \(s_\gamma^T \in [1,n]\), \(b_\gamma^T \in \BasisB\) be the unique elements such that \(\gamma \in T_{r_\gamma, s_\gamma^T}^{b_\gamma^T}\). Finally, the sign is given by
\begin{align*}
\sigma(T) = \sum_{1 \leq \gamma < \gamma' \leq d} \bar b_{\gamma'}^T \bar f_\gamma
+
 \sum_{\substack{ 1 \leq \gamma < \gamma' \leq d \\  r_{\gamma '} < r_{\gamma}   }}
 \bar b_\gamma^T \bar b_{\gamma'}^T
 +
  \sum_{\substack{ 1 \leq \gamma < \gamma' \leq d \\  r_{\gamma '} = r_{\gamma}  \\ s_{\gamma '}^T< s_{\gamma}^T}}
   \bar b_\gamma^T \bar b_{\gamma'}^T
   +
     \sum_{\substack{ 1 \leq \gamma < \gamma' \leq d \\  r_{\gamma '} = r_{\gamma}  \\ s_{\gamma '}^T= s_{\gamma}^T \\ b_{\gamma '}^T < b_\gamma^T}}
   \bar b_\gamma^T \bar b_{\gamma'}^T.
 \end{align*}

\end{lemma}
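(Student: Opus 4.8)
\textbf{Proof plan for \cref{WebVdaction}.}

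The plan is to establish the well-definedness of $\tau$ by exhibiting it as (essentially) the composite of the defining representation $G_n$ of \cref{Gthm} with the explosion/contraction machinery of \cref{ExplSec}, thereby avoiding any direct verification of the relations \cref{AssocRel}--\cref{AaIntertwine}. First I would recall that $V_n^{\otimes d}$ carries a right action of $\SS_d \wr A$, and that $V_n = A^{\oplus n} = \bigoplus_{i \in I} {}_iV_n$ (as column vectors), so $V_n^{\otimes d} = \bigoplus S_{\bi}^{\mathbf{1}}V_n$ over appropriate tuples $\bi \in I^d$; here $S_{\bi}^{\mathbf{1}}V_n$ denotes the tensor product of one-dimensional-thickness colored symmetric powers $S^1_{i_1}V_n \otimes \cdots \otimes S^1_{i_d}V_n$. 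Under this identification, $\End_\k(V_n^{\otimes d})$ decomposes as a sum of $\Hom$ spaces $\Hom_\k(S_{\bi}^{\mathbf 1}V_n, S_{\bj}^{\mathbf 1}V_n)$, each of which contains the image under $G_n$ of the morphism space $\WebAaI(\bi^{\mathbf 1}, \bj^{\mathbf 1})$ in the locally unital web category for the good pair $(A,a)_I$.

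The key point is to relate $\WAand(\bx,\by) = \WebAaOne(1^{(\bx)}, 1^{(\by)})$ to these thin-strand morphism spaces. I would use the explosion map $\exp_{1^{(\bx)}, 1^{(\by)}}: \WebAaOne(1^{(\bx)}, 1^{(\by)}) \to \WebAaOne(1^{\bx}, 1^{\by})$ from \cref{ExplSec}, together with the truncation isomorphism $\iota_I: \WebAaI \to {}_I\Web^{A,a}_{\{1\}}$ of \cref{trunc1}, to identify the all-thin part of $\WebAaOne$ with morphisms between tensor powers of thin strands colored by elements of $I$. Composing, one gets a $\k$-linear map $\WAand \to \End_\k(V_n^{\otimes d})$, and because the source algebra structure on $\WAand$ is composition in $\WebAaOne$ (\cref{WAandDef}) while the target structure is composition in $\End_\k$, functoriality of $G_n$ together with \cref{L:ExplCon} (which tells us $\con \circ \exp$ is multiplication by a nonzero scalar $\bx!\,\by!$, harmless since $\k$ has characteristic zero) will show this map is an algebra homomorphism. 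The explicit formula \cref{Vdaction} is then obtained by feeding the basis diagram $\eta_{\bmu}^a$ through this composite and comparing with the formula of \cref{explicitmaplemma}: the sum over set-partitions $T = (T^b_{r,s})$ and the sign $\sigma(T)$ are exactly what $G_n(\eta_{\bmu}^a)$ produces (via $\sigma_1, \sigma_2, \sigma_3$), specialized to the thin-strand situation where every $x_r$-type thickness is $1$, so the $[\bmu]^!_{\basisb}$-type factors collapse and only $[\bmu]^!_{\BasisB \backslash \basisb}$ survives as in \cref{greendotdef}.

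The main obstacle will be the \textbf{sign bookkeeping}: reconciling the sign $\sigma(T)$ in \cref{Vdaction} — which is organized by the global linear order $1 \leq \gamma < \gamma' \leq d$ on tensor factors of $V_n^{\otimes d}$ — with the signs $\sigma_1(T) + \sigma_2(T) + \sigma_3(T)$ of \cref{explicitmaplemma}, which are organized by the web-diagram data (coupon crossings, split morphisms, and the permutation block $Z$). Since $V_n^{\otimes d}$ here is a \emph{single} tensor string rather than a tensor product of genuinely thick symmetric powers, the split maps contribute trivially (thickness-one strands cannot be split nontrivially), so $\sigma_2$ drops out, and one must check that $\sigma_1 + \sigma_3$ reorganizes into the four-term expression for $\sigma(T)$ after tracking how the coupons $b^T_\gamma$ are transported past the source vectors $f_\gamma$ and past each other. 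I would handle this by a careful but routine induction on $d$, or equivalently by directly expanding the composite $G_n(Y_{\bj^{(\by)}}) \circ G_n(\eta^a_{\bmu}\text{-core}) \circ G_n(Z_{\bi^{(\bx)}})$ using \cref{ExpTwist,ExpSpl,ExpMer,ExpL} and matching terms; the symmetric-monoidal naturality of the twist $\tau_{M,N}$ guarantees the answer is independent of the choices made in building $Z$, so any convenient reduced word suffices. Once the formula is verified on the basis $\{\eta^a_{\bmu}\}$ of \cref{BasisThm}, linearity extends $\tau$ to all of $\WAand$ and the proof is complete.
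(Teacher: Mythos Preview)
Your strategy—build $\tau$ from the functor $G_n$ rather than verify relations directly—is also the paper's, but the paper executes it differently, and your version has a genuine gap.

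The paper works on the module side, not the category side. It constructs an injective $\k$-linear map $\chi: V_n^{\otimes d}\hookrightarrow\bigoplus_{\bx}S^{\bx}V_n$ that sorts tensor factors by their subscript $r_\gamma$ (picking up a Koszul sign $\langle\br,\bbbf\rangle$), then sets $\tau(f):=\chi^{-1}\circ G_n(f)\circ\chi$. Functoriality of $G_n$ makes this a superalgebra map immediately. The formula \cref{Vdaction} comes from pushing the expression for $G_n(\eta_{\bmu}^a)$ in \cref{explicitmaplemma}—with \emph{all} of $\sigma_1,\sigma_2,\sigma_3$ present—through $\chi$ and $\chi^{-1}$, together with one further reordering sign $\sigma_4'$, and checking that the total collapses to $\sigma(T)$.

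Your explosion route breaks in two places. First, $\exp$ is not multiplicative: one has $\exp(g)\circ\exp(f)=\by!\,\exp(g\circ f)$ from the middle factor $Z_{1^{(\by)}}Y_{1^{(\by)}}$, so $G_n\circ\exp$ is not an algebra homomorphism, and \cref{L:ExplCon} (which concerns $\con\circ\exp$) does not address this. Rescaling by $1/\bx!$ restores multiplicativity but sends the identity of $\WAand$ to a Young symmetrizer rather than $\id_{V_n^{\otimes d}}$, and in any case does not produce \cref{Vdaction} on the nose. Second, and more seriously, your claim that ``$\sigma_2$ drops out'' conflates thin-strand \emph{objects} with thin-strand \emph{basis elements}: the exploded morphism $\exp(\eta_{\bmu}^a)=Y_{1^{(\by)}}\,\eta_{\bmu}^a\,Z_{1^{(\bx)}}$ lives in $\WebAaOne(1^d,1^d)$, but it is not of the form $\eta_{\bnu}^a$, so \cref{explicitmaplemma} does not apply to it with vanishing split-sign. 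To compute its $G_n$-image you must still apply $G_n(Z)$, then \cref{explicitmaplemma} on the thick-strand $\eta_{\bmu}^a$ with all three signs, then $G_n(Y)$—which is exactly the paper's computation, except that the paper's $\chi$ has the sorting built in and is injective, making the bookkeeping cleaner. (The detour through $\iota_I$ is also unnecessary here: the lemma lives entirely in $\WebAaOne$.)
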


\begin{proof}
There is an injective \(\k\)-module homomorphism:
\begin{align}\label{Vdsymisom}
\chi: 
V_n^{\otimes d} \to \bigoplus_{\bx  \in \Omega(n,d)} S^{\bx}V_n, \qquad
\bigotimes_{\gamma = 1}^d v_{r_\gamma}^{f_\gamma} \mapsto
(-1)^{\langle \br, \bbbf\rangle}
 \bigotimes_{\alpha=1}^n \prod_{\substack{\gamma \in [1,d] \\ r_\gamma = \alpha }}^< v_\gamma^{f_\gamma} \in S^{\by}V_n,
\end{align}
where \(\br = (r_1, \ldots, r_d) \in [1,n]^d\), \(\bbbf = (f_1, \ldots, f_d) \in A^d\), \(y_{\alpha} = \#\{\gamma \mid r_\gamma = \alpha\}\), and
\begin{align*}
\langle \br, \bbbf \rangle := \sum_{\substack{1 \leq \gamma < \gamma' \leq d \\ r_{\gamma '}< r_{\gamma}}} \bar f_\gamma \bar f_{\gamma'}.
\end{align*}
Here and below the symbol \(\prod^<_{u \in U}\) indicates that the product should be taken in increasing order in accordance with the order \(<\) on the set \(U\). %Moreover, \(\chi(V^{\otimes d})\) is a \(\WebAa^{n,d}\)-submodule of \(\bigoplus_{\bx  \in \Lambda(n,d)} S^{\bx}V\).

Then we have 
\begin{align}\label{uglyeqs1}
G_n(\eta_{\bmu}^a) \circ &\chi \left( \bigotimes_{\gamma = 1}^d v_{r_\gamma}^{f_\gamma}\right)
=
(-1)^{\langle \br, \bbbf\rangle} G_n( \eta_{\bmu}^a)
\left( \bigotimes_{\alpha=1}^n \prod_{\substack{\gamma \in [1,d] \\ r_\gamma = \alpha }}^< v_\gamma^{f_\gamma}
\right)\\
&= [\bmu]^!_{\BasisB \backslash \basisb}(-1)^{\langle \br, \bbbf\rangle} \sum_T 
(-1)^{\sigma_1'(T) + \sigma_2'(T) + \sigma_3'(T)}
\bigotimes_{s=1}^n \prod_{r=1}^n \prod_{b \in \BasisB}^< \prod_{\gamma \in T_{r,s}^b}^< v_\gamma^{b^T_\gamma f_\gamma}
\label{uglyeqs2}
\\
&=
[\bmu]^!_{\BasisB \backslash \basisb}
(-1)^{\langle \br, \bbbf\rangle} \sum_T 
(-1)^{\sigma_1'(T) + \sigma_2'(T) + \sigma_3'(T)+ \sigma_4'(T)}
\bigotimes_{s=1}^n \prod_{\substack{\gamma \in [1,d] \\ s_\gamma^T = s}}^<
v_\gamma^{b^T_\gamma f_\gamma}
\label{uglyeqs3}
\\
&=
[\bmu]^!_{\BasisB \backslash \basisb}
(-1)^{\langle \br, \bbbf\rangle}
\sum_T
(-1)^{\sigma_1'(T) + \sigma_2'(T) + \sigma_3'(T)+ \sigma_4'(T) + \langle (s_1^T, \ldots, s_d^T),(b_1^T f_1, \ldots, b_d^Tf_d) \rangle }
\chi\left(
 \bigotimes_{\gamma =1}^d v_{s_\gamma^T}^{b_\gamma^T f_\gamma}
\right).\label{uglyeqs4}
\end{align}
In the above expressions,
\begin{align*}
\sigma_1'(T) &= \sum_{\substack{
1 \leq \gamma < \gamma' \leq d \\
r_\gamma = r_{\gamma'}\\
s_{\gamma'} < s_\gamma
}}
\bar f_\gamma \bar f_{\gamma '}
+
\sum_{\substack{
1 \leq \gamma < \gamma' \leq d \\
r_\gamma = r_{\gamma'}\\
s_{\gamma}^T = s_{\gamma'}^T\\
b_{\gamma '}^T < b_\gamma^T
}}
\bar f_\gamma \bar f_{\gamma '},
\\
\sigma_2'(T) &=
\sum_{\substack{
\gamma, \gamma' \in [1,d] \\
r_\gamma < r_{\gamma'}
}}
\bar b_{\gamma '}^T \bar f_\gamma
+
\sum_{\substack{
\gamma, \gamma' \in [1,d]\\
r_\gamma = r_{\gamma'}\\
s_\gamma^T < s_{\gamma '}^T
}}
\bar b_{\gamma '}^T \bar f_\gamma
+
\sum_{\substack{
\gamma, \gamma' \in [1,d]\\
r_\gamma = r_{\gamma '}\\
s_\gamma^T = s_{\gamma '}^T\\
b_\gamma^T < b_{\gamma '}^T
}}
\bar b_{\gamma '}^T \bar f_\gamma,\\
\sigma_3'(T) &=
\sum_{\substack{
\gamma, \gamma' \in [1,d]\\
r_\gamma < r_{\gamma '}\\
s_{\gamma '}^T < s_\gamma^T
}}
( \bar b_\gamma^T + \bar f_\gamma)(\bar b_{\gamma '}^T + \bar f_{\gamma'}),\\
\sigma_4'(T) &= 
\sum_{\substack{
1 \leq \gamma < \gamma' \leq d \\
s_\gamma^T = s_{\gamma '}^T\\
r_{\gamma '} < r_\gamma
}}
( \bar b_\gamma^T + \bar f_\gamma)(\bar b_{\gamma '}^T+ \bar f_{\gamma'}).
\end{align*}
The equality \cref{uglyeqs2} follows from \cref{Getaformula}, where \(\sigma_1'(T), \sigma_2'(T), \sigma_3'(T)\) are the values \(\sigma_1(T), \sigma_2(T), \sigma_3(T)\) reindexed in terms of \(\gamma \in [1,d]\). The equality \cref{uglyeqs3} follows from reordering the monomials in each tensor factor so that subscript indices are in increasing order, which incurs the sign associated with \(\sigma_4'(T)\). The equality \cref{uglyeqs4} is a direct application of \cref{Vdsymisom}.
It is straightforward, if tedious, to verify that for all \(T\) we have
\begin{align*}
\langle \br, \bbbf \rangle + \sigma_1'(T) + \sigma_2'(T) + \sigma_3'(T) + \sigma_4'(T) +
\langle (s_1^T, \ldots, s_d^T),(b_1^T f_1, \ldots, b_d^Tf_d) \rangle 
+ \sigma(T) 
=
\sum_{\substack{
1 \leq \gamma < \gamma ' \leq d\\
r_\gamma = r_{\gamma '}\\
s_\gamma^T = s_{\gamma'}^T\\
b_\gamma^T = b_{\gamma '}^T
}}
b_{\gamma '}^T \bar f_\gamma.
\end{align*}
But by the definition of \(\bmu\), if \(r_\gamma = r_\gamma'\), \(s_\gamma^T = s_{\gamma'}^T\), \(b_\gamma^T = b_{\gamma '}^T\), then we must have \(\bar b_{\gamma '}^T = \bar 0\). Therefore
\begin{align*}
\langle \br, \bbbf \rangle + \sigma_1'(T) + \sigma_2'(T) + \sigma_3'(T) + \sigma_4'(T) +
\langle (s_1^T, \ldots, s_d^T),(b_1^T f_1, \ldots, b_d^Tf_d) \rangle 
=\sigma(T).
\end{align*}
Writing \(\chi^{-1}\) for the inverse map \(\chi(V_n^{\otimes d}) \to V_n^{\otimes d}\), it follows then that taking \(\tau(f):=  \chi^{-1} \circ G_n(f) \circ \chi\) for any nonzero \(f \in \WAand\) satisfies the claim.
\end{proof}

\begin{lemma}\label{TVdactionlem}
The left multiplication action of \(T^A_a(n,d)\) on \(V_n^{\otimes d}\) gives a \(\k\)-superalgebra map \(\tilde{\tau}: T^A_a(n,d) \to \End_\k(V_n^{\otimes d})\), with
\begin{align}\label{TVdaction}
\tilde{\tau}(\tilde{\eta}_{\bmu}^{a}) :\bigotimes_{\gamma = 1}^d v_{r_\gamma}^{f_\gamma} \mapsto
[\bmu]^!_{\BasisB \backslash \basisb}
\sum_{T} (-1)^{\sigma(T)} \bigotimes_{\gamma = 1}^d v_{s_\gamma^T}^{b_\gamma^T f_\gamma}.
\end{align}
for all \(\bx, \by \in \Omega(n,d)\), \(\bmu \in \mathcal{M}(1^{(\bx)}, 1^{(\by)})\), where \(T\) and \(\sigma(T)\) are as in \cref{WebVdaction}.
\end{lemma}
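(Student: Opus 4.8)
The plan is to handle two things: that $\tilde\tau$ is a well-defined $\k$-superalgebra homomorphism, which is essentially formal, and the formula \eqref{TVdaction}, which I would obtain by a direct expansion paralleling the proof of \cref{WebVdaction}.

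First, recall the inclusions $T^A_a(n,d) \subseteq S^A(n,d) = (M_n(A)^{\otimes d})^{\SS_d} \subseteq M_n(A)^{\otimes d}$, and that $M_n(A)^{\otimes d}$ is a $\k$-superalgebra acting on the $\k$-supermodule $V_n^{\otimes d} = (A^{\oplus n})^{\otimes d}$ by the factorwise action
\[
(\xi_1 \otimes \cdots \otimes \xi_d)\cdot(w_1 \otimes \cdots \otimes w_d) = \pm\,(\xi_1 w_1) \otimes \cdots \otimes (\xi_d w_d),
\]
where each $M_n(A)$ acts on $V_n$ by matrix multiplication on column vectors and the sign is the Koszul sign incurred in moving each $\xi_k$ past the $w_\ell$ with $\ell<k$. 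Restricting this action along the inclusion produces the $\k$-superalgebra map $\tilde\tau$, and under the standard identification $(M_n(A)^{\otimes d})^{\SS_d} = \End_{\SS_d \wr A}(V_n^{\otimes d})$ this is exactly the left multiplication action in the statement. (It is moreover injective, since $S^A(n,d)$ acts faithfully on $V_n^{\otimes d}$ by construction, though this is not needed here.)

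To extract \eqref{TVdaction}, fix $\bmu \in \mathcal{M}(1^{(\bx)}, 1^{(\by)})$ and a basis vector $\bigotimes_{\gamma=1}^d v_{r_\gamma}^{f_\gamma}$. Unwinding the star and shifted-star products in the definition of $\tilde\eta_\bmu^a$, this element equals $[\bmu]^!_{\BasisB\backslash\basisb}$ times the signed sum, over representatives of the relevant coset space, of the distinct rearrangements of an elementary tensor $E^{b_1}_{s_1,r_1} \otimes \cdots \otimes E^{b_d}_{s_d,r_d}$ whose $(b,r,s)$-multiplicities are exactly the $\mu_{r,s}(b)$ (this is the element $\pm\,\eta_{\bb,\br,\bs}$ of \cite{KM} recorded before \cref{KMbasis}). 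Applying the factorwise action to such a summand, its $\gamma$th tensor factor sends $v_{r_\gamma}^{f_\gamma}$ to $\delta_{r_{\pi(\gamma)},r_\gamma}\, v_{s_{\pi(\gamma)}}^{b_{\pi(\gamma)}f_\gamma}$, so the summand indexed by $\pi$ survives exactly when $r_{\pi(\gamma)} = r_\gamma$ for all $\gamma$. The surviving $\pi$ are in bijection with the tuples $T = (T^b_{r,s})$ of the statement via $T^b_{r,s} = \{\gamma : r_\gamma = r,\ s_{\pi(\gamma)} = s,\ b_{\pi(\gamma)} = b\}$, and the multiplicity and disjointness constraints on $T$ translate the defining constraints on $\bmu$. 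This yields $\tilde\tau(\tilde\eta_\bmu^a)(\bigotimes_\gamma v_{r_\gamma}^{f_\gamma}) = [\bmu]^!_{\BasisB\backslash\basisb}\sum_T (\pm)\bigotimes_\gamma v_{s^T_\gamma}^{b^T_\gamma f_\gamma}$, which has exactly the required shape.

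What remains, and this is the only genuine work, is to verify that the total sign of the $T$-summand equals $(-1)^{\sigma(T)}$. Three sources contribute: the symmetrization sign of $\pi$ (equivalently, the sign of rearranging the $E$-factors into the order dictated by $r_1, \ldots, r_d$), the Koszul signs from commuting each $E$-entry past the $v$-entries to its left during the factorwise action, and the sign in $\tilde\eta_\bmu^a = \pm\,\eta_{\bb,\br,\bs}$. Collecting these is the same bookkeeping as in the proof of \cref{WebVdaction} --- the step described there as ``straightforward, if tedious'' --- and one again uses that $\bar b = \bar 0$ whenever $\mu_{r,s}(b)>1$ to discard the terms coming from coincident indices. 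I expect this sign identification to be the main obstacle. A useful shortcut is that $\tilde\tau(\tilde\eta_\bmu^a)$ and the endomorphism $\tau(\eta_\bmu^a) = \chi^{-1}\circ G_n(\eta_\bmu^a)\circ\chi$ of \cref{WebVdaction} are produced by formally identical expansions once the elementary tensors comprising $\tilde\eta_\bmu^a$ are matched with the coupons and crossings comprising $\eta_\bmu^a$ through the map $\chi$ of \eqref{Vdsymisom}; this lets one transport the sign $\sigma(T)$ directly from \cref{WebVdaction} rather than recompute it from scratch.
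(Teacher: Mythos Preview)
Your approach is essentially the paper's: expand the star product defining $\tilde\eta_\bmu^a$, observe that only the summands compatible with the row indices $r_\gamma$ survive (giving the bijection with $T$), and identify the sign. The paper's execution is slightly more direct in two respects. First, it works straight from the star-product definition of $\tilde\eta_\bmu^a$, so the third sign source you list (the $\pm$ in $\tilde\eta_\bmu^a = \pm\,\eta_{\bb,\br,\bs}$) never enters; that equality is only a pointer to the notation of \cite{KM} and plays no role in the argument. Second, rather than appealing to the bookkeeping of \cref{WebVdaction} or a transport-of-signs shortcut, the paper simply records the two explicit contributions
\[
\tilde\sigma_1(T) \;=\; \sum_{\substack{\gamma<\gamma'\\ r_{\gamma'}<r_\gamma}} \bar b_\gamma^T\bar b_{\gamma'}^T
\;+\; \sum_{\substack{\gamma<\gamma'\\ r_{\gamma'}=r_\gamma,\ s_{\gamma'}^T<s_\gamma^T}} \bar b_\gamma^T\bar b_{\gamma'}^T
\;+\; \sum_{\substack{\gamma<\gamma'\\ r_{\gamma'}=r_\gamma,\ s_{\gamma'}^T=s_\gamma^T,\ b_{\gamma'}^T<b_\gamma^T}} \bar b_\gamma^T\bar b_{\gamma'}^T,
\qquad
\tilde\sigma_2(T) \;=\; \sum_{\gamma<\gamma'} \bar b_{\gamma'}^T\bar f_\gamma,
\]
coming respectively from the $\SS_d$-symmetrization in the star product and from the Koszul sign of the factorwise action, and then observes $\tilde\sigma_1(T)+\tilde\sigma_2(T)=\sigma(T)$ by inspection of the definition of $\sigma(T)$ in \cref{WebVdaction}. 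Your shortcut of matching expansions through $\chi$ is heuristically right but, since the very next theorem uses the equality $\tau(\eta_\bmu^a)=\tilde\tau(\tilde\eta_\bmu^a)$ as its main input, it is cleaner here to pin down the sign independently as the paper does.
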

\begin{proof}
We have
\begin{align}
\tilde{\tau}(\tilde{\eta}_{\bmu}^{a})\left(\bigotimes_{\gamma = 1}^d v_{r_\gamma}^{f_\gamma}  \right)
&= [\bmu]^!_{\BasisB \backslash \basisb} \left(
\scalebox{2}{\textasteriskcentered}_{r=1}^n
\scalebox{2}{\textasteriskcentered}_{s=1}^n
\scalebox{2}{\textasteriskcentered}_{b \in \BasisB}^<
(E_{s,r}^b)^{\otimes \mu_{r,s}(b)}
  \right)
\cdot
\bigotimes_{\gamma = 1}^d v_{r_\gamma}^{f_\gamma}
\\
  &=
  [\bmu]^!_{\BasisB \backslash \basisb} \left(
  \sum_T
  (-1)^{\tilde{\sigma}_1(T)}
  \bigotimes_{\gamma=1}^d
  E_{s_\gamma^T, r_\gamma}^{b_\gamma^T}
  \right)
  \cdot 
  \bigotimes_{\gamma = 1}^d v_{r_\gamma}^{f_\gamma} \label{uglyeq5}\\
  &= 
    [\bmu]^!_{\BasisB \backslash \basisb} 
  \sum_T
  (-1)^{\tilde{\sigma}_1(T) + \tilde{\sigma}_2(T)} v_{s_\gamma^T}^{b_\gamma^T f_\gamma},\label{uglyeq6}
\end{align}
where
\begin{align*}
\tilde{\sigma}_1(T) &=  \sum_{\substack{ 1 \leq \gamma < \gamma' \leq d \\  r_{\gamma '} < r_{\gamma}   }}
 \bar b_\gamma^T \bar b_{\gamma'}^T
 +
  \sum_{\substack{ 1 \leq \gamma < \gamma' \leq d \\  r_{\gamma '} = r_{\gamma}  \\ s_{\gamma '}^T< s_{\gamma}^T}}
   \bar b_\gamma^T \bar b_{\gamma'}^T
   +
     \sum_{\substack{ 1 \leq \gamma < \gamma' \leq d \\  r_{\gamma '} = r_{\gamma}  \\ s_{\gamma '}^T= s_{\gamma}^T \\. b_{\gamma '}^T < b_\gamma^T}}
   \bar b_\gamma^T \bar b_{\gamma'}^T\\
   \tilde{\sigma}_2(T) &=
   \sum_{1 \leq \gamma < \gamma' \leq d} \bar b_{\gamma'}^T \bar f_\gamma.
\end{align*}
The equality \cref{uglyeq5} follows by deleting all terms in
\(\scalebox{2}{\textasteriskcentered}_{r=1}^n
\scalebox{2}{\textasteriskcentered}_{s=1}^n
\scalebox{2}{\textasteriskcentered}_{b \in \BasisB}^<
(E_{s,r}^b)^{\otimes \mu_{r,s}(b)}\) which act as zero on \( \bigotimes_{\gamma = 1}^d v_{r_\gamma}^{f_\gamma} \). Noting that \(\tilde{\sigma}_1(T) + \tilde{\sigma}_2(T) = \sigma(T)\), the result follows.
\end{proof}

\begin{theorem}\label{sameSchur}
We have an isomorphism of superalgebras
\begin{align*}
w:\WAand \xrightarrow{\sim} T^A_a(n,d),
\qquad
\eta_{\bmu}^a \mapsto \tilde{\eta}_{\bmu}^a,
\end{align*}
for all \(\bx, \by \in \Omega(n,d)\), \(\bmu \in \mathcal{M}(1^{(\bx)}, 1^{(\by)})\).
\end{theorem}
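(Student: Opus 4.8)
The plan is to realize $w$ first as an isomorphism of $\k$-supermodules and then promote it to an algebra isomorphism by comparing the two representations on $V_n^{\otimes d}$ furnished by \cref{WebVdaction} and \cref{TVdactionlem}. For the first step, by \cref{BasisThm} the family $\{\eta^a_{\bmu}\}$, indexed by triples $(\bx,\by,\bmu)$ with $\bx,\by \in \Omega(n,d)$ and $\bmu \in \mathcal{M}(1^{(\bx)},1^{(\by)})$, is a homogeneous $\k$-basis of $\WAand$, while by \cref{KMbasis} (i.e.\ \cite[Lemma 3.10, Proposition 3.12]{KM}) the family $\{\tilde{\eta}^a_{\bmu}\}$ indexed by the same data is a homogeneous $\k$-basis of $T^A_a(n,d)$; moreover $\eta^a_{\bmu}$ and $\tilde{\eta}^a_{\bmu}$ have the same parity, since both are assembled from the same basis elements $b \in \BasisB$. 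Sending $\eta^a_{\bmu} \mapsto \tilde{\eta}^a_{\bmu}$ and extending $\k$-linearly therefore defines a parity-preserving $\k$-linear bijection $w$.

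Next I would verify that $w$ intertwines the two action maps, i.e.\ $\tilde{\tau} \circ w = \tau$ as $\k$-linear maps $\WAand \to \End_{\k}(V_n^{\otimes d})$. This is immediate from the explicit formulas: the right-hand sides of \cref{Vdaction} and \cref{TVdaction} are literally the same expression — the same sum over tuples $T$, the same scalar $[\bmu]^!_{\BasisB \backslash \basisb}$, and the same sign $\sigma(T)$ — so $\tau(\eta^a_{\bmu}) = \tilde{\tau}(\tilde{\eta}^a_{\bmu}) = \tilde{\tau}(w(\eta^a_{\bmu}))$ on each basis vector, and both composites are $\k$-linear. The key observation is then that $\tilde{\tau}$ is injective: $T^A_a(n,d)$ is by definition a $\k$-submodule of $S^A(n,d) \subseteq M_n(A)^{\otimes d}$, and $\tilde{\tau}$ is the restriction of the left multiplication action of $M_n(A)^{\otimes d}$ on the column space $V_n^{\otimes d}$, which is faithful: $M_n(A)$ acts faithfully on $V_n = A^{\oplus n}$ because $A$ is unital, and faithfulness passes to the $d$-fold tensor power by a routine scalar-extension argument over the fraction field $\KK$ (using that $M_n(A)^{\otimes d}$ is $\k$-torsion-free). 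It is worth stressing that one cannot substitute $\tau$ for $\tilde{\tau}$ here, since $\tau$ is only known to be injective once $n \geq d$ via \cref{injhom}, whereas \cref{sameSchur} is asserted for all $n,d \geq 0$.

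Finally I would assemble these facts. Both $\tau$ and $\tilde{\tau}$ are homomorphisms of $\k$-superalgebras by \cref{WebVdaction} and \cref{TVdactionlem}, so for $f,g \in \WAand$ we get $\tilde{\tau}\bigl(w(fg)\bigr) = \tau(fg) = \tau(f)\tau(g) = \tilde{\tau}\bigl(w(f)\bigr)\tilde{\tau}\bigl(w(g)\bigr) = \tilde{\tau}\bigl(w(f)w(g)\bigr)$, and injectivity of $\tilde{\tau}$ forces $w(fg) = w(f)w(g)$; likewise $\tilde{\tau}\bigl(w(1_{\WAand})\bigr) = \tau(1_{\WAand}) = \mathrm{id}_{V_n^{\otimes d}} = \tilde{\tau}\bigl(1_{T^A_a(n,d)}\bigr)$ yields $w(1_{\WAand}) = 1_{T^A_a(n,d)}$. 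Hence $w$ is a unital, parity-preserving, multiplicative bijection, i.e.\ an isomorphism of superalgebras. The genuinely substantive work sits upstream, in establishing the action lemmas \cref{WebVdaction} and \cref{TVdactionlem}, where the matching of signs is delicate; the only remaining obstacle at this stage is the faithfulness of $\tilde{\tau}$, which is elementary, after which the theorem is a formal consequence.
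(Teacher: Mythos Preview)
Your argument is correct and follows essentially the same route as the paper's proof: match the actions via \cref{WebVdaction} and \cref{TVdactionlem}, use injectivity of $\tilde\tau$, and then conclude. The only cosmetic differences are that the paper constructs $w$ as the composite $\tilde\tau^{-1}\circ\tau$ (so multiplicativity is automatic and the bijection on bases is checked afterward), and that it cites \cite[Lemma~5.7]{EK} for the injectivity of $\tilde\tau$ rather than arguing it directly; your sketch of faithfulness is a bit terse --- the cleanest justification is that $M_n(A)^{\otimes d}\cong M_{n^d}(A^{\otimes d})$ and $V_n^{\otimes d}\cong (A^{\otimes d})^{\oplus n^d}$, so the action is faithful since $A^{\otimes d}$ is unital --- but the conclusion is correct.
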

\begin{proof}
By \cref{WebVdaction,TVdactionlem}, we have \(\tau(\eta_{\bmu}^\a) = \tilde{\tau}(\tilde{\eta}_{\bmu}^a)\) for all \(\bmu\), so it follows from \cref{BasisThm} and \cref{KMbasis} that \(\tau(\WAand ) = \tilde{\tau}(T^A_a(n,d))\).
We have an inclusion \(\iota: T^A_a(n,d) \hookrightarrow S^A(n,d)\), and a map \(\tilde{\tau}': S^A(n,d) \to \End_\k(V_n^{\otimes d})\) induced by the left action of \(S^{A}(n,d)\) on \(V_n^{\otimes d}\). By \cite[Lemma 5.7]{EK}, \(\tilde{\tau}'\) is injective. Thus \(\tilde{\tau} = \tilde{\tau}' \circ \iota\) is injective, so it follows that \(\tilde{\tau}( T^A_a(n,d)) \cong T^A_a(n,d)\). Writing \(\tilde{\tau}^{-1}\) for the inverse superalgebra map \(\tilde{\tau}( T^A_a(n,d)) \xrightarrow{\sim} T^A_a(n,d)\), we thus have a superalgebra map \(\tilde{\tau}^{-1} \circ \tau: \WAand  \to T_a^A(n,d)\) which gives a bijection \(\eta_{\bmu}^a \mapsto \tilde{\eta}_{\bmu}^a\) on bases, proving the result.
\end{proof}

There is a natural right action of the wreath product superalgebra $\SS_{d} \wr A$  on \(V_n^{\otimes d}\), where \(A\) acts on \(V_n = A^{\oplus n}\) diagonally and \(\mathfrak{S}_d\) acts by permutation of tensor factors.

\begin{corollary}
We have
\begin{align*}
W^{A, A_{\bar 0}}_{n,d} \cong S^A(n,d) \cong \End_{ \SS_{d} \wr A }(V_n^{\otimes d}).
\end{align*}
\end{corollary}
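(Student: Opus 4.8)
The plan is to deduce this corollary by assembling results already established in the excerpt. The first isomorphism $W^{A,A_{\bar 0}}_{n,d} \cong S^A(n,d)$ is an immediate consequence of \cref{sameSchur} in the special case $a = A_{\bar 0}$: by definition $S^A(n,d) = T^A_{A_{\bar 0}}(n,d)$ (this is the remark from the introduction that $T^A_{A_{\bar 0}}(n,d) = S^A(n,d)$, and it also follows by comparing the bases in \cref{KMbasis} with the full rank statement), so \cref{sameSchur} with the good pair $(A, A_{\bar 0})_{\{1\}}$ gives $W^{A,A_{\bar 0}}_{n,d} = \bigoplus_{\bx,\by} \WebAaOne^{A,A_{\bar 0}}(1^{(\bx)}, 1^{(\by)}) \xrightarrow{\sim} T^A_{A_{\bar 0}}(n,d) = S^A(n,d)$.

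For the second isomorphism $S^A(n,d) \cong \End_{\SS_d \wr A}(V_n^{\otimes d})$, I would recall the standard double-centralizer-style argument: the right action of $\SS_d \wr A$ on $V_n^{\otimes d}$ and the left action of $M_n(A)^{\otimes d}$ commute, and $S^A(n,d) = (M_n(A)^{\otimes d})^{\SS_d}$ is exactly the subalgebra of $\SS_d$-invariants. One checks that the image of the left action map $S^A(n,d) \to \End_\k(V_n^{\otimes d})$ lands in $\End_{\SS_d \wr A}(V_n^{\otimes d})$ because $S^A(n,d)$-elements are $\SS_d$-invariant (hence commute with the $\SS_d$-action) and act by matrix multiplication on each tensor factor (hence commute with the diagonal $A$-action). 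Injectivity of this map is precisely \cite[Lemma 5.7]{EK}, which was already invoked in the proof of \cref{sameSchur}. Surjectivity onto the full centralizer is the part requiring the genuine computation: one shows that any $\k$-linear endomorphism of $V_n^{\otimes d}$ commuting with the $\SS_d \wr A$-action must arise from an $\SS_d$-invariant element of $M_n(A)^{\otimes d}$. This is a Schur--Weyl-type statement for the wreath product and is presumably classical in this generality (it should follow by the same bookkeeping with basis elements $v_{r_1}^{f_1} \otimes \cdots \otimes v_{r_d}^{f_d}$ that appears throughout \cref{S:webificationofSchurification}).

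Concretely, I would argue as follows. Fix $\phi \in \End_{\SS_d \wr A}(V_n^{\otimes d})$. Since $\phi$ commutes with the diagonal right $A$-action, it is determined by its values on the "standard" tensors $v_{r_1}^{e_{i_1}} \otimes \cdots \otimes v_{r_d}^{e_{i_d}}$ where the $e_i$ range over a homogeneous basis of $A$ adapted to the idempotents; and since $\phi$ commutes with the $\SS_d$-action, these values are constrained so that $\phi$ is determined by a single $\SS_d$-orbit representative in each weight. Unwinding, the data of $\phi$ is exactly the data of an element of $(M_n(A)^{\otimes d})^{\SS_d} = S^A(n,d)$, and tracing through shows $\phi = \tilde\tau'(\xi)$ for that $\xi$. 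Combined with injectivity of $\tilde\tau'$ from \cite[Lemma 5.7]{EK}, this gives $S^A(n,d) \xrightarrow{\sim} \End_{\SS_d \wr A}(V_n^{\otimes d})$.

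The main obstacle I anticipate is the surjectivity direction: verifying that every $\SS_d \wr A$-linear endomorphism is accounted for by an $\SS_d$-invariant matrix tensor requires care with the $\Z_2$-grading and the sign conventions from \cref{twistsec}, and one must be attentive to the locally unital structure (the idempotents $I$) so that "commuting with the diagonal $A$-action" is exploited correctly. However, since all the sign-bookkeeping infrastructure is already in place in \cref{WebVdaction,TVdactionlem}, and since the statement is a mild repackaging of the classical wreath-product Schur--Weyl duality, I expect this to be routine rather than deep; alternatively, one can sidestep most of it by citing the corresponding double-centralizer result in \cite{KM} or \cite{EK} directly, if it is stated there in the needed generality.
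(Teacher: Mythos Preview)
Your proposal is correct and matches the paper's approach for the first isomorphism: both invoke \cref{sameSchur} together with the identification $T^A_{A_{\bar 0}}(n,d) = S^A(n,d)$.

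For the second isomorphism, the paper simply cites \cite[Lemma 5.7]{EK}, which already states the full isomorphism $S^A(n,d) \cong \End_{\SS_d \wr A}(V_n^{\otimes d})$, not merely injectivity. Your detailed surjectivity sketch is therefore unnecessary (though not wrong in spirit), and your closing remark that one could ``sidestep most of it by citing the corresponding double-centralizer result in \cite{EK} directly'' is exactly what the paper does. The only minor misreading is your inference, from its use in the proof of \cref{sameSchur}, that \cite[Lemma 5.7]{EK} gives only injectivity; in fact that was just one consequence of the stronger isomorphism stated there.
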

\begin{proof}
The left isomorphism follows by \cref{sameSchur}, since \(T^A_{A_{\bar 0}}(n,d) =S^A(n,d)\), as explained in \cite[Remark 3.17]{KM}. The right isomorphism is due to \cite[Lemma 5.7]{EK}.
\end{proof}

\subsection{Monoidal equivalence}
Recall that \((A,a)_I\) is a good pair, with \(I\) finite and \(1 = \sum_{i \in I}i\). For any \(\bi^{(\bx)} = i_1^{(x_1)} \cdots i_n^{(x_n)} \in \Omega_I(n,d)\), define
\begin{align}\label{ixidems}
\xi_{\bi^{(\bx)}} = (E^{i_1}_{1,1})^{\otimes x_1} * \cdots *(E^{i_n}_{n,n})^{\otimes x_n} \in T^A_a(n,d).
\end{align}
We remark that, if \(\bmu \in \mathcal{M}(\bi^{(\bx)}, \bi^{(\bx)})\) is defined by \(\mu_{r,s}(b) = \sum_{k=1}^n \delta_{r,k}\delta_{s,k} \delta_{b,i_k} x_k\) for all \(r,s,b\), then it follows that \(\xi_{\bi^{(\bx)}} = \tilde{\eta}_{\bmu}^a\). The set \(\{ \xi_{\bi^{(\bx)}} \mid \bi^{(\bx)} \in \Omega_I(n,d)\}\) is a collection of orthogonal idempotents in \(T^A_a(n,d)\), and we have \(1_{T^A_a(n,d)} = \sum_{\bi^{(\bx)} \in \Omega_{I}(n,d)} \xi_{\bi^{(\bx)}}\)  if \(I = \{1\}\).

\subsubsection{The category \texorpdfstring{$\TTAaI$}{TTAaI}} We define a monoidal supercategory \(\TTAaI\) as follows.
Set \(\Ob(\TTAa) = \Omega_I\), with the monoidal structure on objects inherited from \(\Omega_I\). For \(\bi^{(\bx)}, \bj^{( \by)} \in \Omega_I\), define morphism spaces by 
\begin{align*}
\TTAaI(\bi^{(\bx)},\bj^{(\by)}) = \xi_{\bj^{(\by)}} T^A_a(n,d) \xi_{\bi^{(\bx)}} \subseteq T^A_a(n,d)
\end{align*}
provided \(\bi^{(\bx)}, \bj^{( \by)} \in \Omega_I(n,d)\) for some \(n,d\), and set \(\TTAaI(\bi^{(\bx)},\bj^{(\by)}) = 0\) otherwise.
Composition of morphisms is given by multiplication in \(T^A_a(n,d)\), and the monoidal structure on morphisms is given by the shifted star product \(\bar{*}\). It is straightforward to check (see for instance \cite[\S5C]{KM}) that this yields a well-defined monoidal supercategory.

\begin{remark}
If \(I =\{1\}\), then, considered as algebras, we have 
\begin{align*}
\tilde{\TT}^{A,a}_{\{1\}} = \bigoplus_{1^{(\bx)}, 1^{(\by)} \in \Omega_{\{1\}}(n,d)} \xi_{1^{(\by)}} T^A_a(n,d) \xi_{1^{(\bx)}} = \bigoplus_{n,d \geq 0} T^A_a(n,d),
\end{align*}
so \(\TTAaOne\) serves as an umbrella for all of the Schurifications of \((A,a)\). More generally, \(\tilde{\TT}^{A,a}_{I}\) is an idempotent truncation of \(\bigoplus_{n,d \geq 0} T^A_a(n,d)\).
\end{remark}

\begin{theorem}\label{moneq1}
We have an equivalence of monoidal supercategories
\begin{align*}
\chi:\TTAaI \to \WebAaI
\end{align*}
given by the identity on objects, and \(\tilde{\eta}_{\bmu} \mapsto \eta_{\bmu}\) on morphisms.
\end{theorem}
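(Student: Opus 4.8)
The plan is to build $\chi$ from the data already assembled and then verify it is a strict monoidal functor that is bijective on $\Hom$-spaces. On objects $\chi$ is the identity (the monoid $\Omega_I$ surjects onto $\hat\Omega_I$ by deleting zero components, and all the constructions below descend to $\hat\Omega_I$, so this causes no trouble). On a $\Hom$-space $\TTAaI(\bi^{(\bx)},\bj^{(\by)}) = \xi_{\bj^{(\by)}}T^A_a(n,d)\xi_{\bi^{(\bx)}}$, declare $\chi(\tilde\eta_\bmu^a) = \eta_\bmu^a$ and extend $\k$-linearly. To see this is well defined I would first record that $\{\tilde\eta_\bmu^a \mid \bmu \in \mathcal{M}(\bi^{(\bx)},\bj^{(\by)})\}$ is a $\k$-basis of $\xi_{\bj^{(\by)}}T^A_a(n,d)\xi_{\bi^{(\bx)}}$: by \cref{KMbasis} the $\tilde\eta_\bnu^a$ form a $\k$-basis of $T^A_a(n,d)$, and comparing the star-product description of $\xi_{\bi^{(\bx)}}$ in \cref{ixidems} with that of $\tilde\eta_\bnu^a$ shows $\xi_{\bj^{(\by)}}\tilde\eta_\bnu^a\xi_{\bi^{(\bx)}}$ equals $\tilde\eta_\bnu^a$ when $\bnu\in\mathcal{M}(\bi^{(\bx)},\bj^{(\by)})$ and vanishes otherwise (the idempotents select exactly the row/column sizes and colors recorded by $\bnu$). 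On the web side, \cref{BasisThm} supplies the $\k$-basis $\{\eta_\bmu^a \mid \bmu\in\mathcal{M}(\bi^{(\bx)},\bj^{(\by)})\}$ of $\WebAaI(\bi^{(\bx)},\bj^{(\by)})$. Since both the $\Hom$-space and the index set $\mathcal{M}(\bi^{(\bx)},\bj^{(\by)})$ depend only on the reduced colored compositions, $\chi$ is a $\k$-linear bijection on each $\Hom$-space; in particular it will be full and faithful once shown to be a functor.

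Next I would check functoriality. For identities, the remark preceding \cref{moneq1} identifies $\xi_{\bi^{(\bx)}}$ with $\tilde\eta_{\bmu_0}^a$ for the diagonal $\bmu_0$; unwinding the diagram \cref{basiseldef} for $\bmu_0$ (the splits and merges are trivial, the coupons are identity-coupons removed by \cref{AaRel1}, $Z$ is trivial, and thickness-$0$ strands are deleted) gives $\eta_{\bmu_0}^a = \id_{\bi^{(\bx)}}$, so $\chi(\id)=\id$. For composition, observe that a composable pair in $\TTAaI$ necessarily lives in a single $T^A_a(n,d)$, where composition is the algebra multiplication, while in $\WebAaI$ composition is vertical stacking of diagrams. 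In the case $I=\{1\}$ this is precisely the content of \cref{sameSchur}, which states that $\eta_\bmu^a\mapsto\tilde\eta_\bmu^a$ is a superalgebra isomorphism $\WAand \xrightarrow{\sim} T^A_a(n,d)$. For general $I$, I would either transport this through the idempotent-truncation isomorphism \cref{trunc1} (and its evident analogue for $\TTAaI$), or argue directly via \cref{WebVdaction} and \cref{TVdactionlem}: these show $\eta_\bmu^a$ and $\tilde\eta_\bmu^a$ act on $V_n^{\otimes d}$ by the identical formula (\cref{Vdaction} agreeing with \cref{TVdaction}), and the proof of \cref{sameSchur} shows both actions are faithful, so preservation of composition in $\End_\k(V_n^{\otimes d})$ forces it for $\chi$.

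Then I would verify $\chi$ is strict monoidal. Both sides are strict monoidal with unit the empty composition, so it suffices to show $\chi$ preserves the monoidal product on morphisms (it clearly preserves the unit). On the $\TT$-side the monoidal product is the shifted star product $\bar{\ast}$; expanding its definition in terms of ${\tt L}_n^{n+m}$, ${\tt R}_m^{n+m}$ and the star product $\ast$, together with the star-product formula for $\tilde\eta_\bmu^a$, yields $\tilde\eta_\bmu^a\,\bar{\ast}\,\tilde\eta_\bnu^a = \tilde\eta_{\bmu\oplus\bnu}^a$, where $\bmu\oplus\bnu$ denotes the ``block sum'' of the two arrays of restricted compositions. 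On the web side, horizontal concatenation of the diagrams \cref{basiseldef} for $\eta_\bmu^a$ and $\eta_\bnu^a$, after rearranging splits/merges via \cref{AssocRel} and choosing the crossing part $Z$ of $\eta_{\bmu\oplus\bnu}^a$ to be the juxtaposition of the two crossing parts, equals $\eta_{\bmu\oplus\bnu}^a$. Matching these two computations also requires reconciling the Koszul interchange signs inherent in the monoidal supercategory $\WebAaI$ with the signs built into $\ast$ and $\bar{\ast}$. Finally, $\chi$ is surjective on objects and fully faithful by the above, hence an equivalence (passing to reduced representatives if one prefers an honest isomorphism of skeleta).

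I expect the sign bookkeeping to be the main obstacle: proving that $\chi$ respects composition and the monoidal product on the nose is exactly a matter of tracking super-signs precisely, and this is the same delicate combinatorial verification that underlies \cref{WebVdaction} --- there the total sign $\sigma(T)$ is shown to equal $\langle\br,\bbbf\rangle+\sigma_1'(T)+\sigma_2'(T)+\sigma_3'(T)+\sigma_4'(T)+\langle(s_1^T,\dots,s_d^T),(b_1^Tf_1,\dots,b_d^Tf_d)\rangle$ --- now carried out through the monoidal product as well. A lesser point is the bookkeeping around zero components of colored compositions, which is harmless since every construction in sight descends to $\hat\Omega_I$.
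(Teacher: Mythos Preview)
Your proposal is correct and follows essentially the same approach as the paper. The paper is more terse: it packages functoriality and full faithfulness in one stroke by writing $\chi$ on $\Hom$-spaces as the composite $\iota_I^{-1}\circ w^{-1}$ of the algebra isomorphism $w$ from \cref{sameSchur} and the category isomorphism $\iota_I$ from \cref{trunc1}, and then verifies the monoidal structure exactly via your block-sum observation $\tilde\eta_{\bmu}^a\,\bar{*}\,\tilde\eta_{\bnu}^a=\tilde\eta_{\bom}^a$ and $\eta_{\bmu}^a\otimes\eta_{\bnu}^a=\eta_{\bom}^a$. Your worry about Koszul signs in the monoidal product is slightly over-cautious: since $\bom$ is block-diagonal, the crossing part $Z$ for $\eta_{\bom}^a$ can be taken as the juxtaposition of the two $Z$'s with no strands crossing between blocks, so no super-interchange signs arise.
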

\begin{proof}
The isomorphisms \(w\) of \cref{sameSchur} and \(\iota_I\) of \cref{trunc1} yield an isomorphism
\begin{align*}
\TTAaOne(\bi^{(\bx)}, \bj^{(\by)}) = \tilde{\eta}_{\bj^{(\by)}} T^A_a(n,d)\tilde{\eta}_{\bi^{(\bx)}} \xrightarrow{w^{-1}} 
\eta_{\bj^{(\by)}} W^{A,a}_{n,d}\eta_{\bi^{(\bx)}} =  {}_I\WebAaOne(\bi^{(\bx)}, \bj^{(\by)})
\xrightarrow{\iota_I^{-1}} \WebAaI(\bi^{(\bx)}, \bj^{(\by)})
\end{align*}
for every \(\bi^{(\bx)}, \bj^{(\by)} \in \Omega_I\) which sends \(\tilde{\eta}_{\bmu} \mapsto \eta_{\bmu}\), so the functor \(\chi\) is well-defined and fully faithful. That \(\chi\) respects monoidal structure follows from the following observation. Letting \(\bmu \in \mathcal{M}(\bi^{(\bx)}, \bj^{(\by)}), \bnu \in \mathcal{M}(\bk^{(\bz)}, \bm^{(\bw)})\), for some \(\bi^{(\bx)}, \bj^{(\by)}, \in \Omega_I(n_1,d_1)\), \(\bk^{(\bz)}, \bm^{(\bw)} \in \Omega_I(n_2,d_2)\), define \(\bom \in \mathcal{M}(\bi^{(\bx)} \bk^{(\bz)}, \bj^{(\by)} \bm^{(\bw)})\) by setting \(\omega_{r,s}(b) = \mu_{r,s}(b)\) for \(r,s \in [1,n_1]\), and \(\omega_{r+n_1,s+n_1}(b) = \nu_{r,s}(b)\) for \(r,s \in [1,n_2]\). Then it is straightforward to check that 
 \(\tilde{\eta}_{\bmu} \bar{*} \, \tilde{\eta}_{\bnu} = \tilde{\eta}_{\bom}\) and \(\eta_{\bmu} \otimes \eta_{\bnu} = \eta_{\bom}\), yielding the result.
\end{proof}

\section{The wreath category}\label{S:wreathcategory}

\subsection{The wreath category, I}\label{SS:wreathcategory}  Now, we return to the assumption that \((A,a)_I\) is a good pair, where \(I\) is not necessarily finite.
\begin{definition}\label{defwr}
The {\em wreath category} \(\WrAI\) is the \(\k\)-linear strict monoidal supercategory defined as follows.  The objects are monoidally generated by objects \(i\) for \(i \in I\). Then we have
 \begin{align*}
 \textup{Ob}(\WrAI) = \left\{\bi:= i_1 \otimes \cdots \otimes i_t \mid t \in \Z_{\geq 0}, \bi =( i_1, \ldots, i_t) \in I^t \right\}.
 \end{align*}
The generating morphisms of \(\WrA\) are given by the diagrams:
\begin{align}\label{WrAGens}
\hackcenter{
}
\hackcenter{
\begin{tikzpicture}[scale=.8]
  \draw[ultra thick,red] (0.4,0)--(0.4,0.1) .. controls ++(0,0.35) and ++(0,-0.35) .. (-0.4,0.9)--(-0.4,1);
  \draw[ultra thick,blue] (-0.4,0)--(-0.4,0.1) .. controls ++(0,0.35) and ++(0,-0.35) .. (0.4,0.9)--(0.4,1);
      \node[above] at (-0.4,1) {$ \scriptstyle j$};
      \node[above] at (0.4,1) {$ \scriptstyle i$};
       \node[below] at (-0.4,0) {$ \scriptstyle i$};
      \node[below] at (0.4,0) {$ \scriptstyle j$};
\end{tikzpicture}}
\qquad
\qquad
\hackcenter{
\begin{tikzpicture}[scale=.8]
  \draw[ultra thick, blue] (0,0)--(0,0.5);
   \draw[ultra thick, red] (0,0.5)--(0,1);
   \draw[thick, fill=yellow]  (0,0.5) circle (7pt);
    \node at (0,0.5) {$ \scriptstyle f$};
     \node[below] at (0,0) {$ \scriptstyle i$};
      \node[above] at (0,1) {$ \scriptstyle j$};
\end{tikzpicture}}
\end{align}
for \(i,j \in I\), \(f \in jAi\). Crossings have parity \(\bar 0\), and the parity of the \(f\) coupon is \(\bar{f}\). Morphisms satisfy the following relations:

%%%%%%%%%%%%%%%%%%%%%%%%BEGIN COXETER RELATIONS
{\em Coxeter relations.} For all \(i,j,k \in I\):
\begin{align}\label{CoxThin}
\hackcenter{}
\hackcenter{
\begin{tikzpicture}[scale=.8]
  \draw[ultra thick, blue] (0,0)--(0,0.1) .. controls ++(0,0.35) and ++(0,-0.35)  .. (0.8,0.8)--(0.8,1) .. controls ++(0,0.35) and ++(0,-0.35)  .. (0,1.7)--(0,1.8); 
    \draw[ultra thick, red] (0.8 ,0)--(0.8, 0.1) .. controls ++(0,0.35) and ++(0,-0.35)  .. (0,0.8)--(0,1) .. controls ++(0,0.35) and ++(0,-0.35)  .. (0.8,1.7)--(0.8,1.8);
    %\draw[ultra thick, blue] (0,0)--(0,-0.2) .. controls ++(0,-0.35) and ++(0,0.35) .. (0.4,-0.9)--(0.4,-1) 
  %.. controls ++(0,-0.35) and ++(0,0.35) .. (0.8,-1.7)--(0.8,-1.8); 
  %\draw[ultra thick, blue] (0,0)--(0,-0.2) .. controls ++(0,-0.5) and ++(0,0.5) .. (-0.8,-1.5)--(-0.8,-1.8);
     \node[below] at (0,0) {$ \scriptstyle i$};
     \node[below] at (0.8,0) {$ \scriptstyle j$};
       \node[above] at (0,1.8) {$ \scriptstyle i$};
     \node[above] at (0.8,1.8) {$ \scriptstyle j$};
     %\node[above] at (0,2) {$ b$};
      %\node[above] at (1.4,2) {$ a$};
      %\node[below] at (0.4,0.5) {$1$};
      %\node[above] at (0.4,1.5) {$1$};
\end{tikzpicture}}
\;
=
\;
\hackcenter{
\begin{tikzpicture}[scale=.8]
  \draw[ultra thick, blue] (0,0)--(0,1.8); 
    \draw[ultra thick, red] (0.8 ,0)--(0.8,1.8);
    %\draw[ultra thick, blue] (0,0)--(0,-0.2) .. controls ++(0,-0.35) and ++(0,0.35) .. (0.4,-0.9)--(0.4,-1) 
  %.. controls ++(0,-0.35) and ++(0,0.35) .. (0.8,-1.7)--(0.8,-1.8); 
  %\draw[ultra thick, blue] (0,0)--(0,-0.2) .. controls ++(0,-0.5) and ++(0,0.5) .. (-0.8,-1.5)--(-0.8,-1.8);
     \node[below] at (0,0) {$ \scriptstyle i$};
     \node[below] at (0.8,0) {$ \scriptstyle j$};
       \node[above] at (0,1.8) {$ \scriptstyle i$};
     \node[above] at (0.8,1.8) {$ \scriptstyle j$};
     %\node[above] at (0,2) {$ b$};
      %\node[above] at (1.4,2) {$ a$};
      %\node[below] at (0.4,0.5) {$1$};
      %\node[above] at (0.4,1.5) {$1$};
\end{tikzpicture}}
\qquad
\qquad
\hackcenter{
\begin{tikzpicture}[scale=.8]
  \draw[ultra thick, red] (0.2,0)--(0.2,0.1) .. controls ++(0,0.35) and ++(0,-0.35) .. (-0.4,0.9)
  .. controls ++(0,0.35) and ++(0,-0.35) .. (0.2,1.7)--(0.2,1.8);
  \draw[ultra thick, blue] (-0.6,0)--(-0.6,0.1) .. controls ++(0,0.35) and ++(0,-0.35) .. (1,1.7)--(1,1.8);
  \draw[ultra thick, green] (1,0)--(1,0.1) .. controls ++(0,0.35) and ++(0,-0.35) .. (-0.6,1.7)--(-0.6,1.8);
   \node[below] at (0.2,0) {$ \scriptstyle j$};
        \node[below] at (-0.6,0) {$ \scriptstyle i$};
         \node[below] at (1,0) {$ \scriptstyle k$};
          \node[above] at (0.2,1.8) {$ \scriptstyle j$};
        \node[above] at (-0.6,1.8) {$ \scriptstyle k$};
         \node[above] at (1,1.8) {$ \scriptstyle i$};
\end{tikzpicture}}
=
\hackcenter{
\begin{tikzpicture}[scale=.8]
  \draw[ultra thick, red] (0.2,0)--(0.2,0.1) .. controls ++(0,0.35) and ++(0,-0.35) .. (0.8,0.9)
  .. controls ++(0,0.35) and ++(0,-0.35) .. (0.2,1.7)--(0.2,1.8);
  \draw[ultra thick, blue] (-0.6,0)--(-0.6,0.1) .. controls ++(0,0.35) and ++(0,-0.35) .. (1,1.7)--(1,1.8);
  \draw[ultra thick, green] (1,0)--(1,0.1) .. controls ++(0,0.35) and ++(0,-0.35) .. (-0.6,1.7)--(-0.6,1.8);
   \node[below] at (0.2,0) {$ \scriptstyle j$};
        \node[below] at (-0.6,0) {$ \scriptstyle i$};
         \node[below] at (1,0) {$ \scriptstyle k$};
          \node[above] at (0.2,1.8) {$ \scriptstyle j$};
        \node[above] at (-0.6,1.8) {$ \scriptstyle k$};
         \node[above] at (1,1.8) {$ \scriptstyle i$};
\end{tikzpicture}}
\end{align}

{\em Coupon relations.} For all \(i,j,k \in I\), \(f,g \in jAi\), \(h \in kAj\), \(\alpha \in \k\):
\begin{align}\label{AaRel1Thin}
\hackcenter{}
\hackcenter{
\begin{tikzpicture}[scale=.8]
  \draw[ultra thick, blue] (0,0)--(0,1.8);
   \draw[thick, fill=yellow]  (0,0.9) circle (8pt);
    \node at (0,0.9) {$ \scriptstyle i$};
     \node[below] at (0,0) {$ \scriptstyle i$};
           \node[above] at (0,1.8) {$ \scriptstyle i$};
\end{tikzpicture}}
\;
=
\hackcenter{
\begin{tikzpicture}[scale=.8]
  \draw[ultra thick, blue] (0,0)--(0,1.8);
     \node[below] at (0,0) {$ \scriptstyle i$};
           \node[above] at (0,1.8) {$ \scriptstyle i$};
\end{tikzpicture}}
\qquad
\qquad
\hackcenter{
\begin{tikzpicture}[scale=.8]
  \draw[ultra thick, blue] (0,0)--(0,0.9);
    \draw[ultra thick, red] (0,0.9)--(0,1.8);
    \draw[thick, fill=yellow]  (0,0.9) circle (9pt);
    \node at (0,0.9) {$ \scriptstyle \alpha f$};
     \node[below] at (0,0) {$ \scriptstyle i $};
       \node[above] at (0,1.8) {$ \scriptstyle j $};
\end{tikzpicture}}
\;
=
\;
\alpha\;
\hackcenter{
\begin{tikzpicture}[scale=.8]
  \draw[ultra thick, blue] (0,0)--(0,0.9);
    \draw[ultra thick, red] (0,0.9)--(0,1.8);
 \draw[thick, fill=yellow]  (0,0.9) circle (7pt);
    \node at (0,0.9) {$ \scriptstyle f$};
     \node[below] at (0,0) {$ \scriptstyle i $};
      \node[above] at (0,1.8) {$ \scriptstyle j $};
\end{tikzpicture}}
\qquad
\qquad
\hackcenter{
\begin{tikzpicture}[scale=.8]
  \draw[ultra thick, blue] (0,0)--(0,0.5);
    \draw[ultra thick, red] (0,0.5)--(0,1.3);
      \draw[ultra thick, green] (0,1.3)--(0,1.8);
     \draw[thick, fill=yellow]  (0,0.5) circle (7pt);
    \node at (0,0.5) {$ \scriptstyle f$};
   \draw[thick, fill=yellow]  (0,1.3) circle (7pt);
    \node at (0,1.3) {$\scriptstyle h$};
     \node[below] at (0,0) {$ \scriptstyle i $};
      \node[above] at (0,1.8) {$ \scriptstyle k $};
         \node[left] at (-0.1,0.9) {$ \scriptstyle j $};
\end{tikzpicture}}
\;
=
\;
\hackcenter{
\begin{tikzpicture}[scale=.8]
  \draw[ultra thick, blue] (0,0)--(0,0.9);
    \draw[ultra thick, green] (0,0.9)--(0,1.8);
      \draw[thick, fill=yellow]  (0,0.9) circle (9pt);
    \node at (0,0.9) {$\scriptstyle hf$};
     \node[below] at (0,0) {$ \scriptstyle i $};
      \node[above] at (0,1.8) {$  \scriptstyle k $};
\end{tikzpicture}}
\qquad
\qquad
\hackcenter{
\begin{tikzpicture}[scale=.8]
  \draw[ultra thick, blue] (0,0)--(0,0.9);
    \draw[ultra thick, red] (0,0.9)--(0,1.8);
     \draw[thick, fill=yellow]  (0,0.9) circle (11pt);
    \node at (0,0.9) {$ \scriptstyle f\hspace{-0.3mm}+\hspace{-0.2mm}g$};
     \node[below] at (0,0) {$ \scriptstyle i $};
      \node[above] at (0,1.8) {$ \scriptstyle j$};
\end{tikzpicture}}
\;
=
\hackcenter{
\begin{tikzpicture}[scale=.8]
  \draw[ultra thick, blue] (0,0)--(0,0.9);
    \draw[ultra thick, red] (0,0.9)--(0,1.8);
      \draw[thick, fill=yellow]  (0,0.9) circle (9pt);
    \node at (0,0.9) {$\scriptstyle f$};
     \node[below] at (0,0) {$ \scriptstyle i $};
      \node[above] at (0,1.8) {$  \scriptstyle j $};
\end{tikzpicture}}
+
\hackcenter{
\begin{tikzpicture}[scale=.8]
  \draw[ultra thick, blue] (0,0)--(0,0.9);
    \draw[ultra thick, red] (0,0.9)--(0,1.8);
      \draw[thick, fill=yellow]  (0,0.9) circle (9pt);
    \node at (0,0.9) {$\scriptstyle g$};
     \node[below] at (0,0) {$ \scriptstyle i $};
      \node[above] at (0,1.8) {$  \scriptstyle j $};
\end{tikzpicture}}
\end{align}

{\em \(A\)-intertwining relations.} For all \(i,j,k \in I,  f \in jAi\):
\begin{align}\label{AaIntertwineThin}
\hackcenter{}
\hackcenter{
\begin{tikzpicture}[scale=.8]
  \draw[ultra thick, blue] (0,0)--(0,0.5);
  \draw[ultra thick, red] (0,0.5)--(0,1) .. controls ++(0,0.35) and ++(0,-0.35)  .. (0.8,1.8)--(0.8,2); 
    \draw[ultra thick, green] (0.8,0)--(0.8,1) .. controls ++(0,0.35) and ++(0,-0.35)  .. (0,1.8)--(0,2); 
    %\draw[thick] (0,0)--(0,-0.2) .. controls ++(0,-0.35) and ++(0,0.35) .. (0.4,-0.9)--(0.4,-1) 
  %.. controls ++(0,-0.35) and ++(0,0.35) .. (0.8,-1.7)--(0.8,-1.8); 
  %\draw[thick] (0,0)--(0,-0.2) .. controls ++(0,-0.5) and ++(0,0.5) .. (-0.8,-1.5)--(-0.8,-1.8);
        \draw[thick, fill=yellow]  (0,0.5) circle (7pt);
    \node at (0,0.5) {$ \scriptstyle f$};
     \node[below] at (0,0) {$ \scriptstyle i $};
     \node[below] at (0.8,0) {$  \scriptstyle  k $};
     \node[above] at (0,2) {$  \scriptstyle  k $};
      \node[above] at (0.8,2) {$ \scriptstyle j $};
      %\node[below] at (0.4,0.5) {$1$};
      %\node[above] at (0.4,1.5) {$1$};
\end{tikzpicture}}
\;
=
\;
\hackcenter{
\begin{tikzpicture}[scale=.8]
  \draw[ultra thick, blue] (0,0)--(0,0.2) .. controls ++(0,0.35) and ++(0,-0.35)  .. (0.8,1)--(0.8,1.5); 
   \draw[ultra thick, red] (0.8,1.5)--(0.8,2); 
    \draw[ultra thick, green] (0.8,0)--(0.8,0.2) .. controls ++(0,0.35) and ++(0,-0.35)  .. (0,1)--(0,2); 
    %\draw[thick] (0,0)--(0,-0.2) .. controls ++(0,-0.35) and ++(0,0.35) .. (0.4,-0.9)--(0.4,-1) 
  %.. controls ++(0,-0.35) and ++(0,0.35) .. (0.8,-1.7)--(0.8,-1.8); 
  %\draw[thick] (0,0)--(0,-0.2) .. controls ++(0,-0.5) and ++(0,0.5) .. (-0.8,-1.5)--(-0.8,-1.8);
        \draw[thick, fill=yellow]  (0.8,1.5) circle (7pt);
    \node at (0.8,1.5) {$ \scriptstyle f$};
     \node[below] at (0,0) {$ \scriptstyle i$};
     \node[below] at (0.8,0) {$  \scriptstyle  k $};
     \node[above] at (0,2) {$  \scriptstyle  k $};
      \node[above] at (0.8,2) {$ \scriptstyle j $};
      %\node[below] at (0.4,0.5) {$1$};
      %\node[above] at (0.4,1.5) {$1$};
\end{tikzpicture}}
\qquad
\qquad
\hackcenter{
\begin{tikzpicture}[scale=.8]
  \draw[ultra thick, green] (0,0)--(0,1) .. controls ++(0,0.35) and ++(0,-0.35)  .. (0.8,1.8)--(0.8,2); 
    \draw[ultra thick, red] (0.8,0.5)--(0.8,1) .. controls ++(0,0.35) and ++(0,-0.35)  .. (0,1.8)--(0,2); 
       \draw[ultra thick, blue] (0.8,0)--(0.8,0.5); 
    %\draw[thick] (0,0)--(0,-0.2) .. controls ++(0,-0.35) and ++(0,0.35) .. (0.4,-0.9)--(0.4,-1) 
  %.. controls ++(0,-0.35) and ++(0,0.35) .. (0.8,-1.7)--(0.8,-1.8); 
  %\draw[thick] (0,0)--(0,-0.2) .. controls ++(0,-0.5) and ++(0,0.5) .. (-0.8,-1.5)--(-0.8,-1.8);
        \draw[thick, fill=yellow]  (0.8,0.5) circle (7pt);
    \node at (0.8,0.5) {$ \scriptstyle f$};
     \node[below] at (0,0) {$ \scriptstyle  k $};
     \node[below] at (0.8,0) {$ \scriptstyle i $};
     \node[above] at (0,2) {$ \scriptstyle j $};
      \node[above] at (0.8,2) {$  \scriptstyle k $};
      %\node[below] at (0.4,0.5) {$1$};
      %\node[above] at (0.4,1.5) {$1$};
\end{tikzpicture}}
\;
=
\;
\hackcenter{
\begin{tikzpicture}[scale=.8]
  \draw[ultra thick, green] (0,0)--(0,0.2) .. controls ++(0,0.35) and ++(0,-0.35)  .. (0.8,1)--(0.8,2); 
     \draw[ultra thick, blue] (0.8,0)--(0.8,0.2) .. controls ++(0,0.35) and ++(0,-0.35)  .. (0,1)--(0,1.5); 
    \draw[ultra thick, red] (0,1.5)--(0,2); 
    %\draw[thick] (0,0)--(0,-0.2) .. controls ++(0,-0.35) and ++(0,0.35) .. (0.4,-0.9)--(0.4,-1) 
  %.. controls ++(0,-0.35) and ++(0,0.35) .. (0.8,-1.7)--(0.8,-1.8); 
  %\draw[thick] (0,0)--(0,-0.2) .. controls ++(0,-0.5) and ++(0,0.5) .. (-0.8,-1.5)--(-0.8,-1.8);
       \draw[thick, fill=yellow]  (0,1.5) circle (7pt);
    \node at (0,1.5) {$ \scriptstyle f$};
     \node[below] at (0,0) {$  \scriptstyle k $};
     \node[below] at (0.8,0) {$ \scriptstyle i $};
     \node[above] at (0,2) {$ \scriptstyle j $};
      \node[above] at (0.8,2) {$  \scriptstyle k $};
      %\node[below] at (0.4,0.5) {$1$};
      %\node[above] at (0.4,1.5) {$1$};
\end{tikzpicture}}
\end{align}
\end{definition}

Let \(\WebAaIthin\) be the full monoidal subcategory of \(\WebAaI\) consisting of the `thin strands'; that is, the full subcategory generated by the objects \(\{i^{(1)} \mid i \in I\}\), so that
\begin{align*}
\Ob(\WebAaIthin) = \left\{ i_1^{(1)} \otimes \cdots \otimes i_t^{(1)} \mid t \in \Z_{\geq 0}, i_1, \ldots, i_t \in I\right\}.
\end{align*}

\begin{proposition}\label{P:WreathtoThin}
There is an isomorphism of monoidal supercategories,
\[
\iota_{\textup{Wr}}: \WrAI \to \WebAaIthin,
\] given by \(i \mapsto i^{(1)}\) on generating objects, and 
\begin{align*}
\hackcenter{
}
\hackcenter{
\begin{tikzpicture}[scale=.8]
  \draw[ultra thick,red] (0.4,0)--(0.4,0.1) .. controls ++(0,0.35) and ++(0,-0.35) .. (-0.4,0.9)--(-0.4,1);
  \draw[ultra thick,blue] (-0.4,0)--(-0.4,0.1) .. controls ++(0,0.35) and ++(0,-0.35) .. (0.4,0.9)--(0.4,1);
      \node[above] at (-0.4,1) {$ \scriptstyle j$};
      \node[above] at (0.4,1) {$ \scriptstyle i$};
       \node[below] at (-0.4,0) {$ \scriptstyle i$};
      \node[below] at (0.4,0) {$ \scriptstyle j$};
\end{tikzpicture}}
\mapsto
\hackcenter{
\begin{tikzpicture}[scale=.8]
  \draw[ultra thick,red] (0.4,0)--(0.4,0.1) .. controls ++(0,0.35) and ++(0,-0.35) .. (-0.4,0.9)--(-0.4,1);
  \draw[ultra thick,blue] (-0.4,0)--(-0.4,0.1) .. controls ++(0,0.35) and ++(0,-0.35) .. (0.4,0.9)--(0.4,1);
      \node[above] at (-0.4,1) {$ \scriptstyle j^{\scriptstyle (1)}$};
      \node[above] at (0.4,1) {$ \scriptstyle i^{\scriptstyle (1)}$};
       \node[below] at (-0.4,0) {$ \scriptstyle i^{\scriptstyle (1)}$};
      \node[below] at (0.4,0) {$ \scriptstyle j^{\scriptstyle (1)}$};
\end{tikzpicture}}
\qquad
\qquad
\hackcenter{
\begin{tikzpicture}[scale=.8]
  \draw[ultra thick, blue] (0,0)--(0,0.5);
   \draw[ultra thick, red] (0,0.5)--(0,1);
   \draw[thick, fill=yellow]  (0,0.5) circle (7pt);
    \node at (0,0.5) {$ \scriptstyle f$};
     \node[below] at (0,0) {$ \scriptstyle i$};
      \node[above] at (0,1) {$ \scriptstyle j$};
\end{tikzpicture}}
\mapsto
\hackcenter{
\begin{tikzpicture}[scale=.8]
  \draw[ultra thick, blue] (0,0)--(0,0.5);
   \draw[ultra thick, red] (0,0.5)--(0,1);
   \draw[thick, fill=yellow]  (0,0.5) circle (7pt);
    \node at (0,0.5) {$ \scriptstyle f$};
     \node[below] at (0,0) {$ \scriptstyle i^{\scriptstyle (1)}$};
      \node[above] at (0,1) {$ \scriptstyle j^{\scriptstyle (1)}$};
\end{tikzpicture}}
\end{align*}
on generating morphisms,
for all \(i,j \in I\), \(f \in jAi\). 
\end{proposition}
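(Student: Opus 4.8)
The plan is to establish three things — that $\iota_{\textup{Wr}}$ is a well-defined strict monoidal superfunctor, that it is full, and that it is faithful — which together with the evident bijection on objects yield the asserted isomorphism of monoidal supercategories.

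\emph{Well-definedness.} Since $\WebAaIthin$ is a full monoidal subcategory of $\WebAaI$, it suffices to check that the image under $\iota_{\textup{Wr}}$ of each defining relation of $\WrAI$ holds in $\WebAaI$. The Coxeter relations \cref{CoxThin} are exactly the thickness-one instances of \cref{Cox}, which hold for \emph{all} color combinations by \cref{arbcol}. The coupon relations \cref{AaRel1Thin} are the $x=1$ instances of \cref{AaRel1} and \cref{AaRel2}; in the last of these the sum $\sum_{t=0}^{x}$ collapses to the terms $t=0,1$, each involving only a thickness-zero strand (deleted) and a thin strand, giving precisely the additivity relation for coupons. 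The $A$-intertwining relations \cref{AaIntertwineThin} are the $x=y=1$ instances of \cref{AaIntertwine}. Since $\iota_{\textup{Wr}}$ sends the monoidal unit to the monoidal unit and is defined compatibly with $\otimes$ on the generators \cref{WrAGens}, it is a strict monoidal superfunctor; on objects it is the identification of two copies of the free monoid on $I$, hence a bijection.

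\emph{Fullness.} By \cref{BasisThm}, for thin objects $\bi^{(\bx)},\bj^{(\by)}$ (so that $\bx,\by$ are tuples of $1$'s) the space $\WebAaIthin(\bi^{(\bx)},\bj^{(\by)})$ is spanned by the diagrams $\eta^a_{\bmu}$, $\bmu\in\mathcal{M}(\bi^{(\bx)},\bj^{(\by)})$. Inspecting \cref{basiseldef} in this case, every $|\mu_{r,s}|$ is $0$ or $1$, every split and merge appearing is an identity on a thin strand, and each coupon $\mu_{r,s}^a$ reduces to a single basis-element coupon on a thin strand; thus $\eta^a_{\bmu}$ is a composite of thin crossings and thin coupons, so it lies in the image of $\iota_{\textup{Wr}}$. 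Hence $\iota_{\textup{Wr}}$ is full.

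\emph{Faithfulness.} For $\bmu\in\mathcal{M}(\bi^{(\bx)},\bj^{(\by)})$ as above, let $\zeta_{\bmu}\in\WrAI(\bi,\bj)$ be the obvious preimage of $\eta^a_{\bmu}$, namely basis-element coupons on the incoming strands composed with the fixed crossing pattern $Z$ of \cref{basiseldef}, so that $\iota_{\textup{Wr}}(\zeta_{\bmu})=\eta^a_{\bmu}$. It remains to show $\{\zeta_{\bmu}\}$ spans $\WrAI(\bi,\bj)$ over $\k$: then, since the $\eta^a_{\bmu}$ form a $\k$-basis of $\WebAaIthin(\bi^{(\bx)},\bj^{(\by)})$ by \cref{BasisThm}, their preimages $\zeta_{\bmu}$ are linearly independent, hence a basis, and $\iota_{\textup{Wr}}$ is an isomorphism on this hom-space — and therefore an isomorphism of monoidal supercategories. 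To prove the spanning claim, one performs a straightening in $\WrAI$: an arbitrary diagram is a $\k$-combination of composites of crossings and coupons; using \cref{AaIntertwineThin} one slides every coupon down past all crossings, then merges coupons stacked on a common strand via $h\circ f=hf$ (third relation of \cref{AaRel1Thin}), then expands each coupon in the basis $\BasisB$ using the additivity and scaling relations of \cref{AaRel1Thin} and deletes identity coupons; the remaining pure-crossing diagram realizes a colored bijection $\bi\to\bj$ and is brought to the canonical reduced form recorded in $\zeta_{\bmu}$ using \cref{CoxThin}.

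\emph{Main obstacle.} The coupon manipulations above are routine, so the one point requiring care is the reduction of the pure-crossing part to a canonical form using \emph{only} the relations \cref{CoxThin}: since both the braid relation and the quadratic relation ``double crossing $=$ identity'' are present, the Tits/Matsumoto word property for the symmetric group shows any word in crossings realizing a given colored bijection equals the chosen reduced one — this is the key step, and everything else is bookkeeping.
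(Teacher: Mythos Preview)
Your argument is correct and follows essentially the same route as the paper: verify well-definedness via the relations of $\WebAaI$ (the paper cites \cref{AaRel1}, \cref{AaRel2}, \cref{arbcol}), establish a spanning set of ``coupons-then-crossings'' diagrams in $\WrAI$ by the straightening you describe, and observe that their images are exactly the thin-case $\eta^a_{\bmu}$, which form a $\k$-basis by \cref{BasisThm}. The paper condenses your separate fullness/faithfulness steps into the single observation that a $\k$-linear map sending a spanning set onto a basis of a free module is an isomorphism; your invocation of Matsumoto for the crossing reduction is what the paper labels ``straightforward.''
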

\begin{proof}
That \(\iota_{\textup{Wr}}\) is well-defined follows from \cref{AaRel1,AaRel2,arbcol}. By the defining relations in \(\WrAI\), it is straightforward to see that for \(\bi = i_1 \otimes \cdots \otimes i_r, \bj = j_1 \otimes \cdots \otimes j_r \in \Ob(\WrAI)\) the morphism space \(\WrAI(\bi, \bj)\) is spanned by diagrams of the form:
\begin{align}
\hackcenter{}
{}
\hackcenter{
\begin{tikzpicture}[scale=1.1]
      \draw[ultra thick, blue] (0.25,-1)--(0.25,0.75); 
       \draw[ultra thick, blue] (1.75,-1)--(1.75,0.75); 
       %%%%%
          \draw[thick, fill=yellow]  (0.25,-0.5) circle (7pt);
             \draw[thick, fill=yellow]  (1.75,-0.5) circle (7pt);
    \node at (0.25,-0.5) {$ \scriptstyle b_{1}$};
       \node at (1.75,-0.5) {$ \scriptstyle b_r$};
                %%%%%%  
              \draw[thick, fill=gray!50!white] (0,0)--(2,0)--(2,0.5)--(0,0.5)--(0,0); 
     \node at (1,0.25) {$\scriptstyle Z$};  
     %%%%%%
     \node[above] at (0.25,0.75) {$\scriptstyle j_1$};
       \node[above] at (1.75,0.75) {$\scriptstyle j_r$};
            %%%%%%
     \node[below] at (0.25,-1) {$\scriptstyle i_1$};
       \node[below] at (1.75,-1) {$\scriptstyle i_r$};
            %%%%%%
              \node[] at (1,-0.5) {$\scriptstyle \cdots $};
               \node[] at (1,-1) {$\scriptstyle \cdots $};
                \node[] at (1,0.75) {$\scriptstyle \cdots $};
\end{tikzpicture}},
\end{align}
for some \(\omega \in \mathfrak{S}_r\), \(b_k \in {}_{j_{\omega (k)}}\BasisB_{i_k}\), and where for all \(k \in [1,r]\), \(Z\) consists only of crossings and carries the \(k\)th strand below to the \(\omega (k)\)th strand above.
 But, by \cref{BasisThm}, the images of such morphisms comprise a \(\k\)-basis for \(\WebAaIthin(\iota_{\textup{Wr}} \bi, \iota_{\textup{Wr}}\bj)\), so \(\iota_{\textup{Wr}}\) is an isomorphism.
\end{proof}

\subsection{The wreath category, II}\label{SS:ThinStrandsandWreathProducts} We give another formulation of the wreath product category.  Fix $d \geq 0$.  Let $\boldsymbol{\mathcal{W}}^{A }_I(d)$ be the $\k$-linear supercategory defined as follows.  The objects are the elements of the set
\[
\left\{\bi = 1_{\SS_{d}}\otimes i_{1} \otimes \dotsb \otimes i_{d}  \mid i_{1}, \dotsc , i_{d} \in I \right\}.
\] For objects $\bi$ and $\bj$, let
\[
\boldsymbol{\mathcal{W}}^{A}_I(d) (\bi, \bj ) = \bigoplus_{\sigma \in \SS_{d}} \sigma \otimes j_{\sigma 1} A i_1 \otimes \dotsb \otimes j_{\sigma d} A i_d .
\]  Composition is given on homogeneous morphisms by the rule 
\[
(\sigma \otimes f_{1} \otimes \dotsb \otimes f_{d}) \circ (\tau \otimes g_{1} \otimes \dotsb \otimes g_{d}) = (-1)^{\langle \tau; (f_{1}, \dotsc , f_{d}) \rangle} \sigma\tau \otimes (f_{\tau(1)} g_{1}) \otimes \dotsb \otimes (f_{\tau(d)}  g_{d}),
\] where the sign is according to the signed place permutation action as in \cref{twistsec}.
Let 
\[
\boldsymbol{\mathcal{W}}^{A}_I = \bigoplus_{d \geq 0} \boldsymbol{\mathcal{W}}^{A}_I(d).
\]

This is a monoidal supercategory via concatenation.  Namely, given $d_{1}, d_{2} \geq 0$ let $\iota = \iota_{d_{1}, d_{2}}: \SS_{d_{1}} \times \SS_{d_{2}} \hookrightarrow \SS_{d_{1}+d_{2}}$ be the embedding which identifies $\SS_{d_{1}} \times \SS_{d_{2}}$ as a Young subgroup of $\SS_{d_{1}+d_{2}}$ in the obvious way. Then the tensor product of objects $\bi = 1_{\SS_{d_{1}}} \otimes i_{1} \otimes \dotsb \otimes i_{d_{1}}$ and $\bj = 1_{\SS_{d_{2}}} \otimes j_{1} \otimes \dotsb \otimes j_{d_{2}}$ is given by
\[
\bi \otimes \bj =  1_{\SS_{d_{1}+d_{2}}} \otimes i_{1} \otimes \dotsb \otimes i_{d_{1}} \otimes j_{1} \otimes \dotsb \otimes j_{d_{2}}.
\]  The tensor product of morphisms is given by 
\[
(\sigma \otimes f_{1} \otimes \dotsb \otimes f_{d_{1}}) \otimes (\tau \otimes g_{1} \otimes \dotsb \otimes g_{d_{2}}) = \iota(\sigma, \tau) \otimes f_{1} \otimes \dotsb \otimes f_{d_{1}} \otimes g_{1} \otimes \dotsb \otimes g_{d_{2}}.
\]

\begin{theorem}\label{T:anotherwreathproductcategory}  There is an equivalence of monoidal $k$-linear supercategories
\[
\WrAI \cong \boldsymbol{\mathcal{W}}^{A}_I.
\]

\end{theorem}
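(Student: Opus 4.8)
The plan is to construct a functor $\Phi \colon \boldsymbol{\mathcal{W}}^{A}_I \to \WrAI$ directly on objects and generating-type morphisms, check that it respects composition and the monoidal structure, and then prove it is fully faithful and essentially surjective by comparing explicit bases for the morphism spaces on both sides. On objects, $\Phi$ sends $1_{\SS_d} \otimes i_1 \otimes \dotsb \otimes i_d$ to $i_1 \otimes \dotsb \otimes i_d \in \Ob(\WrAI)$; this is a bijection between the object sets that is obviously monoidal, and in fact one could even arrange for $\Phi$ to be an isomorphism rather than just an equivalence, but phrasing it as an equivalence costs nothing. On morphisms, the category $\boldsymbol{\mathcal{W}}^{A}_I(\bi,\bj)$ is spanned by elements $\sigma \otimes f_1 \otimes \dotsb \otimes f_d$, and I would send such an element to the $\WrAI$-morphism obtained by first placing the coupon $f_k \in j_{\sigma k} A i_k$ on the $k$th strand and then applying a reduced crossing diagram realizing $\sigma$ (using the Coxeter relations \cref{CoxThin} to see that the choice of reduced word is irrelevant, exactly as in the spanning-set discussion in the proof of \cref{P:WreathtoThin}). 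The $A$-intertwining relations \cref{AaIntertwineThin} let me slide coupons past crossings, so this assignment is well defined independent of whether the coupons are drawn below or interleaved with the crossings.

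The key verification is that $\Phi$ is a superfunctor: one must check that the composition rule in $\boldsymbol{\mathcal{W}}^{A}_I$ — which carries the sign $(-1)^{\langle \tau; (f_1,\dotsc,f_d)\rangle}$ from the signed place-permutation action and multiplies $f_{\tau(k)} g_k$ in $A$ — matches what happens in $\WrAI$ when one stacks the corresponding diagrams. The sign bookkeeping here is the analogue of the sign in \cref{twistsec}, and it is forced: when a coupon of parity $\overline{f}_{\tau(k)}$ crosses past a coupon of parity $\overline{g_r}$ via a crossing of parity $\bar 0$, the super-interchange law in a monoidal supercategory introduces exactly the sign $(-1)^{\overline f \cdot \overline g}$, summed over the inversions of $\tau$. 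Multiplication of coupons along a strand is handled by the middle relation in \cref{AaRel1Thin}, namely $h \circ f = hf$. Compatibility with the tensor product is immediate from the definition of $\otimes$ on $\boldsymbol{\mathcal{W}}^{A}_I$ via $\iota_{d_1,d_2}$ and horizontal concatenation in $\WrAI$.

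To see $\Phi$ is fully faithful, I would produce a basis for $\boldsymbol{\mathcal{W}}^{A}_I(\bi,\bj)$ and show $\Phi$ carries it bijectively to a basis of $\WrAI(\Phi\bi,\Phi\bj)$. On the source side, fixing a homogeneous $\k$-basis $\BasisB$ of $A$ with ${}_j\BasisB_i$ spanning $jAi$, the set $\{\sigma \otimes b_1 \otimes \dotsb \otimes b_d \mid \sigma \in \SS_d,\ b_k \in {}_{j_{\sigma k}}\BasisB_{i_k}\}$ is a $\k$-basis of $\boldsymbol{\mathcal{W}}^{A}_I(\bi,\bj)$ by construction. On the target side, combining \cref{P:WreathtoThin} with \cref{BasisThm} (applied to thin-strand morphism spaces in $\WebAaI$, i.e.\ to $\bx = \by = (1,\dotsc,1)$), one gets precisely that $\WrAI(\Phi\bi,\Phi\bj)$ has a $\k$-basis indexed by the same data: a permutation realized by a reduced crossing diagram together with a single $\BasisB$-coupon on each strand. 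So $\Phi$ sends basis to basis, hence is an isomorphism on each morphism space. The main obstacle — and the only place where care is genuinely required — is the sign calculation in the functoriality check: one must confirm that $\langle \tau; (f_1,\dotsc,f_d)\rangle$ as defined via the place-permutation action in \cref{twistsec} is exactly the total sign accumulated by all the coupon-past-coupon transpositions occurring when the diagram for $\tau \otimes g_\bullet$ is stacked on top of the diagram for $\sigma \otimes f_\bullet$ and then simplified to reduced-crossing-plus-single-coupon form using \cref{CoxThin} and \cref{AaIntertwineThin}. Once that sign identity is nailed down, essential surjectivity is trivial (every object of $\WrAI$ is literally in the image of $\Phi$), and the theorem follows.
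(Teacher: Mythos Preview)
Your proposal is correct and relies on the same key ingredients as the paper --- namely \cref{P:WreathtoThin} and \cref{BasisThm} to match bases on both sides --- so the heart of the argument is identical.

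The one genuine difference is direction. The paper defines a functor $H\colon \WrAI \to \boldsymbol{\mathcal{W}}^{A}_I$ on the \emph{generators} of $\WrAI$ (sending the crossing to $s_1\otimes(1_i\otimes 1_j)$ and the $f$-coupon to $1\otimes f$); well-definedness then reduces to checking that the defining relations \cref{CoxThin}--\cref{AaIntertwineThin} hold in $\boldsymbol{\mathcal{W}}^{A}_I$, which is immediate. Fullness and essential surjectivity are evident, and faithfulness follows from the basis comparison. By contrast, you build $\Phi$ in the opposite direction on the explicit elements $\sigma\otimes f_1\otimes\dotsb\otimes f_d$, and so must verify functoriality directly --- this is exactly the sign computation you flagged as the ``main obstacle'', matching $\langle\tau;(f_1,\dotsc,f_d)\rangle$ against the signs produced by the super-interchange law when sliding coupons past crossings. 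The paper's direction sidesteps that calculation entirely at the cost of a (trivial) relation check; your direction makes the wreath-product structure more transparent but pays for it with bookkeeping. Either way works.
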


\begin{proof} Let $s_{1}\in \SS_{2}$ be the nontrivial element.  A check of the relations given in \cref{defwr} shows there is a $\k$-linear monoidal functor $H: \WrAI \to \boldsymbol{\mathcal{W}}^{A}_I$ defined as follows.  On generating objects it is defined by $H(i) =  1_{\SS_{1}} \otimes i$ for all objects $i$ of $A$.  On morphisms, if $f \in jAi$, then on the generating morphisms of $\WrAI$ the functor is given by
\[
H\left(\hackcenter{\begin{tikzpicture}[scale=.8]
  \draw[ultra thick,red] (0.4,0)--(0.4,0.1) .. controls ++(0,0.35) and ++(0,-0.35) .. (-0.4,0.9)--(-0.4,1);
  \draw[ultra thick,blue] (-0.4,0)--(-0.4,0.1) .. controls ++(0,0.35) and ++(0,-0.35) .. (0.4,0.9)--(0.4,1);
      \node[above] at (-0.4,1) {$ \scriptstyle j$};
      \node[above] at (0.4,1) {$ \scriptstyle i$};
       \node[below] at (-0.4,0) {$ \scriptstyle i$};
      \node[below] at (0.4,0) {$ \scriptstyle j$};
\end{tikzpicture}}\right) = s_{1} \otimes (1_{i} \otimes 1_{j}) \in \Hom_{\mathbf{\mathcal{W}}^{A}_I}\left( 1_{\SS_{d}}\otimes i \otimes j, 1_{\SS_{d}}\otimes  j \otimes i \right),
\]
and
\[
H \left(
\hackcenter{\begin{tikzpicture}[scale=.8]
  \draw[ultra thick, blue] (0,0)--(0,0.5);
   \draw[ultra thick, red] (0,0.5)--(0,1);
   \draw[thick, fill=yellow]  (0,0.5) circle (7pt);
    \node at (0,0.5) {$ \scriptstyle f$};
     \node[below] at (0,0) {$ \scriptstyle i$};
      \node[above] at (0,1) {$ \scriptstyle j$};
\end{tikzpicture}} \right) = 1_{\SS_{0}} \otimes f \in \Hom_{\mathbf{\mathcal{W}}^{A}_I}\left( 1_{\SS_{d}}\otimes i,  1_{\SS_{d}}\otimes  j \right).
\]

It is evident that this functor is essentially surjective and full.  Combining \cref{P:WreathtoThin,BasisThm} yields bases for the morphism spaces of $\WrA$. Applying $H$ to these bases yields bases for the morphism spaces of $\boldsymbol{\mathcal{W}}^{A}_I$.  Thus $H$ is an equivalence of categories. 
\end{proof}

For each $d \geq 0$ consider the full subcategory $\WrAI (d)$ of $\WrAI$ consisting of all objects 
\[
I^{\otimes d}:=\left\{\bi = i_{1} \otimes \dotsb \otimes i_{d} \mid i_{1}, \dotsc , i_{d} \in I \right\}.
\]
  Since each of the morphisms in $\WrAI$ preserve the number of objects in a tensor product we have the decomposition
\[
\WrAI = \bigoplus_{d \geq 0} \WrAI (d).
\]  Correspondingly, for each $d \geq 0$ there is a superalgebra 
\[
\operatorname{Wr}^{A}_I (d) = \bigoplus_{\bi, \bj \in \Ob(\WrAI (d))} \Hom_{\WrAI}\left(\bi, \bj \right).
\]

 Let $\SS_{d} \wr A$ denote the wreath product superalgebra.  Since $A$ has a distingished set of idempotents $I$, there is a corresponding set of distinguished idempotents for $\SS_{d} \wr A$:
\[
1_{\SS_{d}} \otimes I^{\otimes d}=\left\{ 1_{\SS_{d}} \otimes i_{1} \otimes \dotsb \otimes i_{d} \mid i_{1}, \dotsc , i_{d} \in I \right\}.
\]  The equivalence in the previous theorem restricts to define an equivalence $\WrAI (d) \cong \boldsymbol{\mathcal{W}}^{A}_I(d)$ for every $d \geq 0$ and yields the following result.

\begin{corollary}\label{C:WreathsAreIsomorphic} For every $d \geq 0$, there is an even isomorphism of idempotented superalgebras,
\[
\psi_{A, d} :\operatorname{Wr}^{A}_I (d)\to \SS_{d} \wr A.
\]
\end{corollary}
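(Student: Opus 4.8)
The plan is to deduce the corollary directly from \cref{T:anotherwreathproductcategory} by tracking how the equivalence $\WrAI \cong \boldsymbol{\mathcal{W}}^{A}_I$ behaves on the fixed-$d$ summand, and then observing that the superalgebra $\boldsymbol{\mathcal{W}}^{A}_I(d)$ obtained by collapsing the hom-spaces of $\boldsymbol{\mathcal{W}}^{A}_I(d)$ into one direct sum is literally (a presentation of) $\SS_d \wr A$. First I would note that every generating morphism of $\WrAI$ preserves the length of a tensor word and hence $\WrAI$ decomposes as $\bigoplus_{d\geq 0}\WrAI(d)$; the same is true for $\boldsymbol{\mathcal{W}}^{A}_I$ by construction, with the $\SS_d$-component of each hom-space living entirely in the $d$-th summand. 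Since the functor $H$ of \cref{T:anotherwreathproductcategory} sends $I^{\otimes d}$-objects to $(1_{\SS_d}\otimes I^{\otimes d})$-objects and is an equivalence, it restricts to an equivalence $\WrAI(d)\to \boldsymbol{\mathcal{W}}^{A}_I(d)$ for each $d$. An equivalence of $\k$-linear supercategories with the same (finite, in the locally unital sense) object sets induces an isomorphism of the associated idempotented superalgebras $\bigoplus_{\bi,\bj}\Hom(\bi,\bj)$; applying this here gives an isomorphism $\operatorname{Wr}^A_I(d)\xrightarrow{\sim}\bigoplus_{\bi,\bj}\Hom_{\boldsymbol{\mathcal{W}}^A_I(d)}(\bi,\bj)$.

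Next I would identify the target. By the definition of $\boldsymbol{\mathcal{W}}^{A}_I(d)$ in \cref{SS:ThinStrandsandWreathProducts}, we have
\[
\bigoplus_{\bi,\bj}\Hom_{\boldsymbol{\mathcal{W}}^A_I(d)}(\bi,\bj) = \bigoplus_{\sigma \in \SS_d}\ \bigoplus_{\substack{i_1,\dots,i_d\in I\\ j_1,\dots,j_d\in I}} \sigma \otimes j_{\sigma 1}Ai_1 \otimes \dotsb \otimes j_{\sigma d}Ai_d,
\]
and collapsing the inner direct sums over the $i$'s and $j$'s just gives $\bigoplus_{\sigma\in\SS_d}\sigma\otimes A^{\otimes d}$, with multiplication exactly the signed wreath-product multiplication (the sign $(-1)^{\langle\tau;(f_1,\dots,f_d)\rangle}$ from \cref{twistsec} and the twisted multiplication $f_{\tau(k)}g_k$) — this is precisely the multiplication in $\SS_d\wr A$. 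Under this identification the distinguished idempotents $1_{\SS_d}\otimes I^{\otimes d}$ correspond to the identity endomorphisms of the objects, and the whole structure is that of $\SS_d\wr A$ as an idempotented superalgebra with distinguished idempotent set $1_{\SS_d}\otimes I^{\otimes d}$ as stated in the paragraph preceding the corollary. Composing, we obtain the desired even isomorphism $\psi_{A,d}:\operatorname{Wr}^A_I(d)\to \SS_d\wr A$. That $\psi_{A,d}$ is even follows since $H$ preserves parity of morphisms (it sends parity-$\bar 0$ crossings to $s_1\otimes(1\otimes 1)$ and an $f$-coupon to $1_{\SS_0}\otimes f$ of matching parity).

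The only subtlety — and the step I would spell out most carefully — is the passage from an equivalence of supercategories to an isomorphism of the associated algebras $\bigoplus_{\bi,\bj}\Hom(\bi,\bj)$. An arbitrary equivalence need not be an isomorphism on objects, so a priori one only gets a Morita equivalence of the algebras. Here, however, $H$ is defined to be a bijection on the generating objects $I$ (hence on all of $I^{\otimes d}$, matching them with $1_{\SS_d}\otimes I^{\otimes d}$), so $H$ restricted to $\WrAI(d)$ is bijective on objects and fully faithful, i.e. an \emph{isomorphism} of supercategories onto its image, which is all of $\boldsymbol{\mathcal{W}}^A_I(d)$ by essential surjectivity and bijectivity on objects. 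An isomorphism of small $\k$-linear supercategories with the same object set induces an isomorphism of the corresponding locally unital superalgebras by the correspondence recalled in \cref{locunicov}, and this isomorphism is even since $H$ preserves parity. This handles the one genuine gap; everything else is bookkeeping with the definitions already in place.
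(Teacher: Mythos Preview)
Your proposal is correct and follows essentially the same approach as the paper: restrict the equivalence $H$ of \cref{T:anotherwreathproductcategory} to the $d$-th summand and identify the resulting algebra with $\SS_d\wr A$. The paper leaves the corollary unproved beyond the sentence ``the equivalence in the previous theorem restricts to define an equivalence $\WrAI(d)\cong\boldsymbol{\mathcal{W}}^A_I(d)$''; your additional care in noting that $H$ is \emph{bijective} on objects (not merely essentially surjective), and hence yields an isomorphism rather than only a Morita equivalence of the associated locally unital superalgebras, is a genuine clarification of a point the paper glosses over.
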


\section{Fullness of representations}\label{S:FunctorFullness}

\subsection{Defining representations}\label{SS:DefiningReps}

Write $\modglnAT$ for the full monoidal subcategory of right $U(\gl_n(A))$-supermodules of the form ${}_{i_{1}}V_{n} \otimes \dotsb \otimes {}_{i_{d}}V_{n} $ for some $d \geq 0$ and $i_{1}, \dotsc , i_{d} \in I$.  For a fixed $d$, let  $\modglnAT(d)$ denote the full subcategory consisting of objects in the aforementioned form with precisely $d$ terms in the tensor product.

We consider the following functors to be the defining representation of the diagrammatic category in question:
\begin{gather*}
G_{n}:  \WebAaI \to \modglnAS, \\
F_{n} := G_{n} \circ \iota_{\textup{Wr}}:  \WrAI\to  \modglnAT, \\
F_{n}^{d}:   \WrAI(d) \to  \modglnAT(d),
\end{gather*}
where $G_{n}$ is from \cref{Gthm} and $\iota_{\textup{Wr}}$ is from \cref{P:WreathtoThin}.
It is evident that the functors $G_{n}$, $F_{n}$, and $F_{n}^{d}$ are essentially surjective.

\subsection{Fullness for the web category}\label{SS:ReductionToThinStrands} 
The following result allows a reduction to thin strands when working over a field of characteristic zero.
\begin{proposition}\label{P:ReductionToThinStrands}  Assume $\k$ is a field of characteristic zero. For a given $n \geq0$, the functor, 
\[
G_{n}: \WebAaI \to \modglnAS,
\]
is full if and only if the functor, 
\[
F_{n}:  \WrAI \to \modglnAT,
\]
is full.
\end{proposition}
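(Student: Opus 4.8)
The plan is to exploit the explosion/contraction maps of \cref{ExplSec}, which relate thick-strand morphism spaces to thin-strand ones in a way compatible with the defining representations. Recall from \cref{L:ExplCon} that for any $f \in \WebAaI(\bi^{(\bx)}, \bj^{(\by)})$ we have $\con \circ \exp(f) = \bx! \by! f$, and since $\k$ has characteristic zero the scalars $\bx!$, $\by!$ are invertible. Thus $\exp_{\bi^{(\bx)},\bj^{(\by)}}$ is (split) injective and $\WebAaI(\bi^{(\bx)}, \bj^{(\by)})$ is a direct summand of $\WebAaI(\bi^{\bx}, \bj^{\by}) = \WebAaIthin(\bi^{\bx}, \bj^{\by})$, which by \cref{P:WreathtoThin} is isomorphic to $\WrAI(\bi^{\bx}, \bj^{\by})$. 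The analogous statement holds on the module side: the split and merge maps $\textup{spl}$ and $\textup{mer}$ of \cref{sympowsec} (more precisely their iterated versions $G_n(Y_{\bi^{(\bx)}})$ and $G_n(Z_{\bi^{(\bx)}})$) exhibit $S^{\bx}_{\bi}V_n$ as a direct summand of $({}_{i_1}V_n)^{\otimes x_1} \otimes \cdots$, i.e. of a tensor power of thin modules, again because the relevant composite is multiplication by a nonzero scalar (this is the image under $G_n$ of \cref{L:ExplCon}, or can be checked directly using \cref{ExpSpl,ExpMer}).

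The key step is then a diagram chase. One direction is essentially immediate: if $G_n$ is full, then its restriction to the full subcategory $\WebAaIthin$ is full, and composing with the isomorphism $\iota_{\textup{Wr}}$ of \cref{P:WreathtoThin} shows $F_n = G_n \circ \iota_{\textup{Wr}}$ is full onto $\modglnAT$ (noting that $G_n$ carries $\WebAaIthin$ onto the full subcategory of $\modglnAS$ on objects $S^1_{i}V_n \cong {}_iV_n$, which is exactly $\modglnAT$). For the converse, suppose $F_n$ is full, equivalently the restriction of $G_n$ to $\WebAaIthin$ is full onto $\modglnAT$. Given a morphism $\phi \in \Hom_{\gl_n(A)}(S^{\bx}_{\bi}V_n, S^{\by}_{\bj}V_n)$, pre- and post-compose with the merge and split maps to obtain $\tilde\phi := G_n(Y_{\bj^{(\by)}}) \circ \phi \circ G_n(Z_{\bi^{(\bx)}}) \in \Hom_{\gl_n(A)}$ between tensor powers of thin modules; by thin-strand fullness there is $g \in \WebAaIthin(\bi^{\bx}, \bj^{\by})$ with $G_n(g) = \tilde\phi$. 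Set $h := \frac{1}{\bx!\,\by!}\, \con_{\bi^{(\bx)},\bj^{(\by)}}(g) \in \WebAaI(\bi^{(\bx)}, \bj^{(\by)})$. Then $G_n(h) = \frac{1}{\bx!\,\by!}\, G_n(Z_{\bj^{(\by)}}) \circ G_n(g) \circ G_n(Y_{\bi^{(\bx)}}) = \frac{1}{\bx!\,\by!}\, G_n(Z_{\bj^{(\by)}} \circ Y_{\bj^{(\by)}}) \circ \phi \circ G_n(Z_{\bi^{(\bx)}} \circ Y_{\bi^{(\bx)}})$; applying the knothole relation \cref{KnotholeRel} (equivalently \cref{L:ExplCon} with $f = \id$) to see that $Z \circ Y$ maps to the appropriate scalar multiple of the identity, this collapses to $\phi$. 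Since an arbitrary object of $\modglnAS$ is a tensor product of the $S^x_iV_n$ and $G_n$ is monoidal, fullness on the generating objects suffices, so $G_n$ is full.

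The main obstacle is bookkeeping rather than conceptual: one must verify carefully that the composites $G_n(Z_{\bi^{(\bx)}} \circ Y_{\bi^{(\bx)}})$ really equal scalar multiples of identity morphisms of $S^{\bx}_{\bi}V_n$ (not merely of the identity on the ambient tensor power), and that $\con$ and $\exp$ behave correctly with respect to the monoidal structure so that the reduction to generating objects is legitimate. Both follow from the tensor-product factorizations $Y_{\bi^{(\bx)}} = y_{i_1}^{(1,\ldots,1)} \otimes \cdots \otimes y_{i_t}^{(1,\ldots,1)}$ together with functoriality of $G_n$ and the associativity relation \cref{AssocRel}, and one should remark that the same argument works for the full subcategories on objects with a fixed number of tensor factors, giving the statement for each $F_n^d$ simultaneously. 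I also note in passing that characteristic zero is used only through the invertibility of $\bx!$; this is exactly the hypothesis of the proposition.
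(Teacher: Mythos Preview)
Your proposal is correct and follows essentially the same approach as the paper: both directions use the explosion and contraction maps of \cref{ExplSec} together with \cref{L:ExplCon} and the invertibility of $\bx!\,\by!$ in characteristic zero; the paper merely cites \cite[Lemma~6.7.1, Theorem~6.7.2]{DKM} for the diagram chase that you write out explicitly. One small quibble: your closing sentence ``fullness on the generating objects suffices, since $G_n$ is monoidal'' is not a valid general principle (morphisms between tensor products need not be tensor products of morphisms), but it is also unnecessary---your explosion/contraction argument already applies to an arbitrary object $\bi^{(\bx)}$, so fullness on all morphism spaces is established directly.
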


\begin{proof}   For the proof, let $\widehat{G}_{n}:\WebAaIthin \to \modglnAT$ be the restriction of $G_{n}$.  Since $\iota_{Wr}$ is an equivalence, $F_{n}$ will be full if and only if $\widehat{G}_{n}$ is full.  If $G_{n}$ is full then clearly the restriction $\widehat{G}_{n}$ to a full subcategory is still full.  On the other hand, assume $\widehat{G}_{n}$ is full.    A standard argument using the explosion and contraction maps of \cref{ExplSec} shows $G_{n}$ must also be full. Namely, the explosion and contraction maps along with \cref{L:ExplCon} establishes a commutative diagram as in \cite[Lemma 6.7.1]{DKM} and the argument used in the proof of \cite[Theorem 6.7.2]{DKM} then readily adapts to this setting.  
\end{proof}

\subsection{Fullness for the wreath category}\label{SS:fullnessforwreathcategory}

Consider
\begin{equation}\label{E:tensorspace}
\bigoplus_{\bi \in \WrAI (d)} F^{d}_{n}(\bi ) = \bigoplus_{\bi = i_{1}\otimes \dotsb \otimes i_{d} \in I^{\otimes d}} i_{1}V_{n} \otimes \dotsb \otimes i_{d}V_{n} \cong V_{n}^{\otimes d}.
\end{equation}
This is a left $\operatorname{Wr}^{A}_{I}(d)$-supermodule and, via $\psi_{A , d}$ from \cref{C:WreathsAreIsomorphic}, a left $\SS_{d} \wr A$-supermodule.  This action coincides with the action on \(V_{n}^{\otimes d}\), where \(A^{\otimes d}\) acts diagonally by left multiplication and \(\mathfrak{S}_d\) acts by signed permutation of tensor factors. Furthermore, this action commutes with the right action by $\gl_{n}(A)$ given by right matrix multiplication (where we write $V_{n} = A^{\oplus n}$ as row vectors).

Let 
\[
\rho^{A}_{n, d}: \SS_{d} \wr A  \to \End_{\k}\left(V_{ n}^{\otimes d} \right)
\] be the corresponding representation.
We call a $\k$-linear map $f:V_{ n}^{\otimes d} \to V_{ n}^{\otimes d}$ \emph{finite} with respect to $I$ if there is a finite set $J \subseteq I^{\otimes d}$ such that $f( F^{d}_{n}(\bi ) ) = 0$ whenever $\bi \not\in J$ and the image of $f$ lies in $\bigoplus_{\bj \in J} F^{d}_{n}(\bj )$.  We write $ \End_{\k}\left(V_{ n}^{\otimes d} \right)^{\fin}$ for the $\k$-subsuperalgebra of all finite maps and note that it contains the image of $\rho_{n,d}^A$.

\begin{proposition}\label{P:fullnesseqaulssurjective} For any fixed $n \geq 0$, the functor 
\[
F_{n}: \WrAI \to  \modglnAT
\] 
is full if and only if the superalgebra map 
\[
\rho^{A}_{n,d}:  \SS_{d} \wr A \to \End_{U(\gl_n(A))}\left(V_{n}^{\otimes d} \right)^{\textup{\fin}}
\]
is surjective for all $d \geq 0$.  
\end{proposition}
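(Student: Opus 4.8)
The plan is to unwind both sides of the claimed equivalence into statements about the single superalgebra homomorphism $\rho^A_{n,d}$, degree by degree in $d$. First I would observe that $F_n$ is a monoidal functor that respects the decomposition $\WrAI = \bigoplus_{d \geq 0} \WrAI(d)$ and $\modglnAT = \bigoplus_{d\geq 0}\modglnAT(d)$ (since crossings and coupons preserve the number of tensor factors), so $F_n$ is full if and only if each restriction $F_n^d \colon \WrAI(d) \to \modglnAT(d)$ is full. Next, fullness of $F_n^d$ is equivalent to surjectivity of the induced map on the associated ``path'' superalgebras, i.e.\ on
\begin{align*}
\operatorname{Wr}^A_I(d) = \bigoplus_{\bi, \bj} \Hom_{\WrAI}(\bi,\bj) \longrightarrow \bigoplus_{\bi,\bj} \Hom_{U(\gl_n(A))}\bigl( F_n^d(\bi), F_n^d(\bj) \bigr),
\end{align*}
because $F_n^d$ is essentially surjective and a functor is full exactly when it is surjective on each $\Hom$-space, which here can be bundled into surjectivity of a single (idempotented) algebra map since the objects of $\modglnAT(d)$ are all summands of $V_n^{\otimes d}$.

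\textbf{Key identifications.} The second step is to identify the target superalgebra. By \cref{E:tensorspace}, $\bigoplus_{\bi \in \WrAI(d)} F_n^d(\bi) \cong V_n^{\otimes d}$ as a right $U(\gl_n(A))$-supermodule, and the decomposition $V_n^{\otimes d} = \bigoplus_{\bi \in I^{\otimes d}} F_n^d(\bi)$ exhibits a system of orthogonal idempotents (projection onto each $i_1V_n \otimes \dotsb \otimes i_dV_n$) which lies in $\End_{U(\gl_n(A))}(V_n^{\otimes d})$. The subalgebra generated by these idempotents together with all $\Hom$-maps between the summands is precisely $\End_{U(\gl_n(A))}(V_n^{\otimes d})^{\fin}$: a $U(\gl_n(A))$-endomorphism of $V_n^{\otimes d}$ respecting only finitely many of the $F_n^d(\bi)$ is the same datum as a finitely-supported collection of $\Hom$-maps between the summands. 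Under these identifications, $F_n^d$ on the path algebra becomes exactly the composite $\psi_{A,d}$ from \cref{C:WreathsAreIsomorphic} followed by the representation $\rho^A_{n,d}$; that is, since $\psi_{A,d}\colon \operatorname{Wr}^A_I(d) \xrightarrow{\sim} \SS_d \wr A$ is an isomorphism, fullness of $F_n^d$ is equivalent to surjectivity of $\rho^A_{n,d}\colon \SS_d \wr A \to \End_{U(\gl_n(A))}(V_n^{\otimes d})^{\fin}$. Running over all $d \geq 0$ gives the proposition.

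\textbf{Bookkeeping points.} A few small things need care. One must check that the left $\operatorname{Wr}^A_I(d)$-action on $\bigoplus_\bi F_n^d(\bi)$ transported along $\psi_{A,d}$ really is the stated signed place-permutation-plus-diagonal-multiplication action of $\SS_d \wr A$; this is a direct comparison of the definitions of $H$ in \cref{T:anotherwreathproductcategory}, of $\iota_{\textup{Wr}}$, and of $G_n$ on crossings and coupons (crossings go to twist maps $\tau$, coupons to the maps $L_1^f$, which is exactly diagonal left multiplication on $V_n = A^{\oplus n}$). One must also confirm that the two actions genuinely commute — the $\SS_d \wr A$-action on the left and the $\gl_n(A)$-action on the right — which is the familiar fact that place permutations and diagonal left multiplication commute with right matrix multiplication, and is implicit in the monoidality of $G_n$ already proved in \cref{Gthm}. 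Finally, one notes that the image of $\rho^A_{n,d}$ always lands in the finite part, so the codomain is the right one.

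\textbf{Main obstacle.} I expect the only real work to be the careful matching of signs and conventions in the identification ``$F_n^d$ on path algebras $=\rho^A_{n,d}\circ\psi_{A,d}$'': one is juggling the signed permutation action from \cref{twistsec}, the sign conventions in $G_n$ coming from the split/merge/twist formulas \cref{ExpTwist,ExpSpl,ExpMer,ExpL}, and the composition sign in $\boldsymbol{\mathcal{W}}^A_I$. Everything else is formal category theory (fullness $\Leftrightarrow$ surjectivity on $\Hom$-spaces $\Leftrightarrow$ surjectivity of an idempotented algebra map) once the correct target algebra $\End_{U(\gl_n(A))}(V_n^{\otimes d})^{\fin}$ has been pinned down, and that pinning-down is exactly what the notion of ``finite map with respect to $I$'' was introduced to handle.
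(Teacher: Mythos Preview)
Your overall strategy matches the paper's and the identification of $F_n^d$ on path algebras with $\rho^A_{n,d}\circ\psi_{A,d}$ is correct. However, there is a genuine gap in your first step. You assert the decomposition $\modglnAT = \bigoplus_{d\geq 0}\modglnAT(d)$ and justify it by ``crossings and coupons preserve the number of tensor factors.'' That justification only tells you the source decomposes and that $F_n$ respects it; it says nothing about the \emph{target}. To conclude that $F_n$ is full (in particular for the ``if'' direction), you must know that $\Hom_{U(\gl_n(A))}\bigl({}_{i_1}V_n\otimes\cdots\otimes{}_{i_{d_1}}V_n,\; {}_{j_1}V_n\otimes\cdots\otimes{}_{j_{d_2}}V_n\bigr)=0$ whenever $d_1\neq d_2$, and this is a statement about \emph{all} $\gl_n(A)$-homomorphisms, not just those in the image of $F_n$.

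The paper supplies exactly this missing argument: given a nonzero $f\colon X\to Y$ with $X$ in degree $d_1$ and $Y$ in degree $d_2$, pick $x$ with $f(x)\neq 0$, choose a finite sum $e$ of distinguished idempotents acting as the identity on all $A$-entries involved in $x$ and $f(x)$, and observe that $\sum_{r=1}^n E_{r,r}^e$ scales $x$ by $d_1$ and $f(x)$ by $d_2$, forcing $d_1=d_2$. This weight-type argument is what legitimizes the block decomposition of $\modglnAT$; once you insert it, the rest of your proof goes through and coincides with the paper's.
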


\begin{proof}  We first claim that if there is a nonzero morphism from an object of $\modglnAT (d_{1})$ to an object of $\modglnAT (d_{2})$, then $d_{1}= d_{2}$.  For any finite set of elements in $A$ there is an $e \in A$ given by a sum of distinguished idempotents which acts as the identity on those elements when right multiplied.  In turn, given a fixed vector in an object of $\modglnAT (d)$ there is an $e \in A$ such that $\sum_{r=1}^{n} E_{r,r}^{e} \in \gl_{n}(A )$ acts on that vector by the scalar $d \in \k$.  Now, say there is an object $X$ of $\modglnAT (d_{1})$ and an object $Y$ of $\modglnAT (d_{2})$ with a nonzero $\gl_{n}(A )$-supermodule homomorphism $f:X \to Y$.  Choose $x \in X$ so that $f(x) \neq 0$.  Then there exists $e \in A$ such that  $\sum_{r=1}^{n} E_{r,r}^{e} $ scales $x$ by $d_{1}$ and scales $f(x)$ by $d_{2}$.  It follows that $d_{1}=d_{2}$, as claimed. 

From the previous paragraph it follows there is a decomposition
\begin{equation*}
\modglnAT = \bigoplus_{d \geq 0} \modglnAT (d).
\end{equation*}
Thus the functor $F_{n}$ is full if and only if the functor $F^{d}_{n}$ is full for every $d \geq 0$. 
However, from the discussion at the beginning of this section, this happens if and only if the induced superalgebra map
\begin{equation*}
\rho_{n,d}^{A}:  \SS_{d} \wr A \to \End_{U(\gl_{n}(A))}\left(V_{n}^{\otimes d} \right)^{\fin}
\end{equation*}
is surjective.
\end{proof}

Combining \cref{L:SchurWeylDuality,P:ReductionToThinStrands,P:fullnesseqaulssurjective} gives the following situation where the defining representations $G_{n}$ and $F_{n}$ are known to be full.

\begin{proposition}\label{C:Fullness} If $\k$ is a field of characteristic zero and $A$ is a finite-dimensional semisimple superalgebra, then the functors 
\begin{align*}
G_{n} &: \WebAaI \to \modglnAS, \\
F_{n} &:  \WrAI \to  \modglnAT,
\end{align*}
are full.
\end{proposition}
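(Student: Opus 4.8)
The statement combines three ingredients already available in the excerpt. First, \cref{L:SchurWeylDuality} (referenced in the introduction) asserts that when $\k$ is an algebraically closed field of characteristic zero and $A$ is finite-dimensional semisimple, the natural map $\SS_{d}\wr A\to\End_{S^A(n,d)}(V_n^{\otimes d})$ is surjective for all $n,d\ge 1$ --- i.e.\ Schur--Weyl duality holds for $A$. Second, \cref{P:fullnesseqaulssurjective} says $F_n$ is full if and only if $\rho^{A}_{n,d}\colon\SS_{d}\wr A\to\End_{U(\gl_n(A))}(V_n^{\otimes d})^{\fin}$ is surjective for every $d\ge 0$. Third, \cref{P:ReductionToThinStrands} says (over a characteristic-zero field) that $G_n$ is full if and only if $F_n$ is full. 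So the plan is: (i) reduce fullness of $G_n$ to fullness of $F_n$ via \cref{P:ReductionToThinStrands}; (ii) reduce fullness of $F_n$ to surjectivity of $\rho^A_{n,d}$ for all $d$ via \cref{P:fullnesseqaulssurjective}; (iii) deduce that surjectivity from \cref{L:SchurWeylDuality}.

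\textbf{The step that needs care} is reconciling the two endomorphism algebras appearing in (ii) and (iii): \cref{L:SchurWeylDuality} gives surjectivity onto $\End_{S^A(n,d)}(V_n^{\otimes d})$, while \cref{P:fullnesseqaulssurjective} wants surjectivity onto $\End_{U(\gl_n(A))}(V_n^{\otimes d})^{\fin}$. First, the finiteness qualifier is harmless here: since $A$ is finite-dimensional and $I$ is finite, $I^{\otimes d}$ is finite, so every $\k$-linear endomorphism of $V_n^{\otimes d}$ is finite with respect to $I$, and the superscript $\fin$ may simply be dropped. Second, one must observe $\End_{U(\gl_n(A))}(V_n^{\otimes d})=\End_{S^A(n,d)}(V_n^{\otimes d})$. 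This follows because the image of $U(\gl_n(A))$ acting on $V_n^{\otimes d}$ is exactly $S^A(n,d)$ (equivalently $T^A_{A_{\bar 0}}(n,d)$): indeed $S^A(n,d)=\End_{\SS_d\wr A}(V_n^{\otimes d})$ by the corollary following \cref{sameSchur}, and in characteristic zero the image of $U(\gl_n(A))$ in $\End_\k(V_n^{\otimes d})$ coincides with this centralizer by the standard tensor-power/symmetric-power argument (the cocommutative coproduct makes $V_n^{\otimes d}$ a $\gl_n(A)$-module whose image is generated by symmetrizations of $M_n(A)^{\otimes d}$, which is precisely the $\SS_d$-invariant subalgebra $S^A(n,d)$). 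Since two subalgebras of $\End_\k(V_n^{\otimes d})$ have the same module-endomorphism algebra whenever they generate the same subalgebra, we get the desired equality.

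\textbf{Putting it together.} Assuming $\k$ is algebraically closed (the general characteristic-zero case follows by a faithfully-flat base change $\k\to\bar\k$, since all the relevant Hom-spaces are finite free $\k$-modules with bases given by \cref{BasisThm} and \cref{KMbasis}, so fullness can be checked after extending scalars), \cref{L:SchurWeylDuality} gives that $\rho^A_{n,d}\colon\SS_d\wr A\to\End_{S^A(n,d)}(V_n^{\otimes d})=\End_{U(\gl_n(A))}(V_n^{\otimes d})^{\fin}$ is surjective for all $n,d\ge 1$; the $d=0$ case is trivial. By \cref{P:fullnesseqaulssurjective}, $F_n\colon\WrAI\to\modglnAT$ is full. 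By \cref{P:ReductionToThinStrands}, $G_n\colon\WebAaI\to\modglnAS$ is full. This gives both assertions of the proposition. I expect the only genuine obstacle is the bookkeeping in the previous paragraph --- verifying that $\End_{U(\gl_n(A))}$ and $\End_{S^A(n,d)}$ really agree and that the $\fin$ decoration is vacuous --- everything else is a direct citation chain. It would be cleanest to isolate the equality $\End_{U(\gl_n(A))}(V_n^{\otimes d})^{\fin}=\End_{S^A(n,d)}(V_n^{\otimes d})$ as a short preliminary remark, perhaps folded into the discussion at the start of \cref{SS:fullnessforwreathcategory} where $V_n^{\otimes d}$ is identified as a $\SS_d\wr A$-module commuting with the $\gl_n(A)$-action.
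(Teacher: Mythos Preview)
Your proposal is correct and follows exactly the paper's approach: the paper's proof is the one-line ``Combining \cref{L:SchurWeylDuality,P:ReductionToThinStrands,P:fullnesseqaulssurjective}'', which is precisely your citation chain (iii)$\Rightarrow$(ii)$\Rightarrow$(i). Your extra bookkeeping about $\End_{U(\gl_n(A))}(V_n^{\otimes d})^{\fin}=\End_{S^A(n,d)}(V_n^{\otimes d})$ is handled in the paper by \cref{L:EnvelopingAlgebraisontoSchurAlgebra} (surjectivity of $U(\gl_n(A))\to S^A(n,d)$ in characteristic zero), which you could cite directly in place of your informal ``standard tensor-power/symmetric-power argument''; the base-change step to remove the algebraic-closure hypothesis is something you supply that the paper leaves implicit.
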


\section{The category \texorpdfstring{$\UglnA$}{U(gln(A))}}

\subsection{The category \texorpdfstring{$\UglnA$}{U(gln(A))}}\label{SS:UglnACategoryDef} In this section we assume $(A,a)_I$ is a good pair, \(I\) is finite, and \(1 = \sum_{i \in I} i\).
Let \(\Lambda_n\) be the free \(\Z_{\geq 0}\)-module of rank \(n\) on basis \(\{\varepsilon_r \mid r \in [1,n]\}\). For \(\lambda \in \Lambda_n\) we write \(\lambda_r\) for the coefficient of \(\varepsilon_r\) in \(\lambda\).
\begin{definition} \label{UglnAdef}
Let \(\UglnA\) be the \(\k\)-linear supercategory consisting of objects \(\Lambda_n \), and of morphisms generated by symbols
\begin{align*}
\{\mu E_{r,s}^f\lambda: \lambda \to \mu \mid r,s \in [1,n],\; \lambda, \mu \in \Lambda, \;\mu + \varepsilon_s = \lambda  + \varepsilon_r, \;f \in A\}.
\end{align*}
We will often omit the symbols \(\lambda\) and \(\mu\), writing \(E_{r,s}^f\) when the domain/codomain of the morphism is general or clear by context. 
The parities of morphisms are given by \(\overline{E_{r,s}^f} = \bar f\). 
Morphisms in \(\UglnA\) are subject to the following relations, for all admissible \(\lambda \in \Lambda_n\), \(r,s,p,q \in [1,n]\), and homogeneous \(f,g \in A\):
\begin{enumerate}
\item \(E^{kf}_{r,s}= kE^f_{r,s}\) for all \(k \in \k\).
\item \(E^{f+g}_{r,s}= E^f_{r,s}  + E^g_{r,s}\).
\item \( E^{1}_{r,r}\lambda = \lambda_r \textup{id}_\lambda\).
\item \(
E_{p,q}^fE_{r,s}^g  - (-1)^{\bar f \bar g}E_{r,s}^gE_{p,q}^f = \delta_{q,r} E_{p,s}^{fg} - (-1)^{\bar f \bar g}\delta_{p,s} E_{r,q}^{gf}.
\)
\item \((E^f_{r,s})^2 = \delta_{r,s}E^{f^2}_{r,s}\) if \(\bar f = \bar 1\).
\end{enumerate}
\end{definition}

\begin{remark}
If \(\k\) is a ring where 2 is invertible, relation (5) is redundant, as \(2(E^f_{r,s})^2 = 2\delta_{r,s}E^{f^2}_{r,s}\) is implied by (4) when \(\bar f = \bar 1\).
\end{remark}

\subsection{A functor from \texorpdfstring{$\UglnA$}{U(gln(A))}  to  \texorpdfstring{$\WebAaOnen$}{WebAA0n}}
Let \(\WebAaOnen\) be the full subcategory of \(\WebAaOne\) consisting of objects \(1^{(\lambda_1)} \cdots 1^{(\lambda_n)}\), for \(\lambda = \sum \lambda_{i}\varepsilon_{i} \in \Lambda_n\). In this section, as all strands are monotonically colored by \(1\), for sake of space we will often write \(x\) for the object \(1^{(x)}\in \WebAaOne\), and \(\lambda\) for the object \(1^{(\lambda_1)}  \cdots  1^{(\lambda_n)}\) in \(\WebAaOne\).

Let \(t \in \Z_{> 0}\), \(r,s \in [1,n]\), \(\lambda \in \Lambda_n\), and \(f \in A^{(t)}\). We define the following associated morphisms in \(\WebAaOnen\):

\begin{align*}
e^{(f,t)}_{[r,s],\lambda}
&:=
\begin{cases}
\hackcenter{}
\hackcenter{
\begin{tikzpicture}[scale=0.9]
\draw[ultra thick, blue] (0,0)--(0,1);
\node at (0.5, 0.5) {$\scriptstyle \cdots $};
\draw[ultra thick, blue] (1,0)--(1,1);
\draw[ultra thick, blue] (1.8,0)--(1.8,1);
\node at (2.3, 0.95) {$\scriptstyle \cdots $};
\node at (2.3, 0.05) {$\scriptstyle \cdots $};
\node at (3.2, 0.65) {$\scriptstyle t $};
\draw[ultra thick, blue] (2.8,0)--(2.8,1);
\draw[ultra thick, blue] (3.6,0)--(3.6,1);
\node at (4.1, 0.5) {$\scriptstyle \cdots $};
\draw[ultra thick, blue] (4.6,0)--(4.6,1);
  \draw[ultra thick, blue] (3.6,0.1)--(3.6,0.2) .. controls ++(0,0.35) and ++(0,-0.35) .. (1,0.8)--(1,0.9);
  \node[below] at (0, 0) {$\scriptstyle \lambda_1 $};
    \node[below] at (1, 0) {$\scriptstyle \lambda_r $};
        \node[below] at (1.8, 0) {$\scriptstyle \lambda_{r+1} $};
            \node[below] at (2.8, 0) {$\scriptstyle \lambda_{s-1} $};
               \node[below] at (3.6, 0) {$\scriptstyle \lambda_s $};
               \node[below] at (4.6, 0) {$\scriptstyle \lambda_n $};
                 \node[above] at (0, 1) {$\scriptstyle \lambda_1 $};
    \node[above] at (1,1) {$\scriptstyle \lambda_r +t$\;\;};
        \node[above] at (1.8, 1) {$\scriptstyle \lambda_{r+1} $};
            \node[above] at (2.8, 1) {$\scriptstyle \lambda_{s-1} $};
               \node[above] at (3.6, 1) {\;\;$\scriptstyle \lambda_s -t$};
               \node[above] at (4.6, 1) {$\scriptstyle \lambda_n $};
                \draw[thick, fill=yellow] (2.3, 0.5) circle (6pt);
       \node[] at (2.3, 0.5) {$\scriptstyle f$};
\end{tikzpicture}}
&
\textup{if }r < s;
\\
\\
\hackcenter{
\begin{tikzpicture}[scale=0.9]
\draw[ultra thick, blue] (0,0)--(0,1);
\node at (0.5, 0.5) {$\scriptstyle \cdots $};
\draw[ultra thick, blue] (1,0)--(1,1);
\draw[ultra thick, blue] (1.8,0)--(1.8,1);
\node at (2.3, 0.95) {$\scriptstyle \cdots $};
\node at (2.3, 0.05) {$\scriptstyle \cdots $};
\node at (1.4, 0.65) {$\scriptstyle t $};
\draw[ultra thick, blue] (2.8,0)--(2.8,1);
\draw[ultra thick, blue] (3.6,0)--(3.6,1);
\node at (4.1, 0.5) {$\scriptstyle \cdots $};
\draw[ultra thick, blue] (4.6,0)--(4.6,1);
  \draw[ultra thick, blue] (1,0.1)--(1,0.2) .. controls ++(0,0.35) and ++(0,-0.35) .. (3.6,0.8)--(3.6,0.9);
  \node[below] at (0, 0) {$\scriptstyle \lambda_1 $};
    \node[below] at (1, 0) {$\scriptstyle \lambda_s $};
        \node[below] at (1.8, 0) {$\scriptstyle \lambda_{s+1} $};
            \node[below] at (2.8, 0) {$\scriptstyle \lambda_{r-1} $};
               \node[below] at (3.6, 0) {$\scriptstyle \lambda_r $};
               \node[below] at (4.6, 0) {$\scriptstyle \lambda_n $};
                 \node[above] at (0, 1) {$\scriptstyle \lambda_1 $};
    \node[above] at (1,1) {$\scriptstyle \lambda_s -t$\;\;};
        \node[above] at (1.8, 1) {$\scriptstyle \lambda_{s+1} $};
            \node[above] at (2.8, 1) {$\scriptstyle \lambda_{r-1} $};
               \node[above] at (3.6, 1) {\;\;$\scriptstyle \lambda_r +t$};
               \node[above] at (4.6, 1) {$\scriptstyle \lambda_n $};
                               \draw[thick, fill=yellow] (2.3, 0.5) circle (6pt);
       \node[] at (2.3, 0.5) {$\scriptstyle f$};
\end{tikzpicture}}
&
\textup{if }r>s;
\\
\\
\hackcenter{
\begin{tikzpicture}[scale=0.9]
\draw[ultra thick, blue] (0,0)--(0,1);
\node at (0.5, 0.5) {$\scriptstyle \cdots $};
\draw[ultra thick, blue] (1,0)--(1,1);
%\draw[ultra thick, blue] (2.3,0)--(2.3,1);
 \draw[ultra thick, blue] (2.3,0)--(2.3,0.1) .. controls ++(0,0.1) and ++(0,-0.1) .. (2.8,0.4)--(2.8,0.6)  .. controls ++(0,0.15) and ++(0,-0.15) .. (2.3,0.9)--(2.3,1) ;
  \draw[ultra thick, blue] (2.3,0)--(2.3,0.1) .. controls ++(0,0.1) and ++(0,-0.1) .. (1.8,0.4)--(1.8,0.6)  .. controls ++(0,0.15) and ++(0,-0.15) .. (2.3,0.9)--(2.3,1) ;
\node at (2.6, 0.1) {$\scriptstyle t $};
\node at (1.65, 0.1) {$\scriptstyle \lambda_r \hspace{-0.2mm}-\hspace{-0.2mm} t $};
\draw[ultra thick, blue] (3.6,0)--(3.6,1);
\node at (4.1, 0.5) {$\scriptstyle \cdots $};
\draw[ultra thick, blue] (4.6,0)--(4.6,1);
   \node[below] at (0, 0) {$\scriptstyle \lambda_1 $};
      \node[below] at (2.3, 0) {$\scriptstyle \lambda_r $};
       \node[above] at (2.3, 1) {$\scriptstyle \lambda_r $};
    \node[below] at (1, 0) {$\scriptstyle \lambda_{r-1} $};
               \node[below] at (3.6, 0) {$\scriptstyle \lambda_{r+1} $};
               \node[below] at (4.6, 0) {$\scriptstyle \lambda_n $};
                 \node[above] at (0, 1) {$\scriptstyle \lambda_1 $};
    \node[above] at (1,1) {$\scriptstyle \lambda_{r -1}$};
               \node[above] at (3.6, 1) {$\scriptstyle \lambda_{r +1}$};
               \node[above] at (4.6, 1) {$\scriptstyle \lambda_n $};
                               \draw[thick, fill=yellow] (2.8, 0.5) circle (6pt);
       \node[] at (2.8, 0.5) {$\scriptstyle f$};
\end{tikzpicture}}
&
\textup{if }r=s.
\end{cases}
\end{align*}
We occasionally omit the \(\lambda\) subscript and write \(e_{[r,s]}^{(f,t)}\) when the domain is general or clear from context.

\begin{theorem}\label{T:UdottoWebs}  For each $n \geq 0$ there is a functor 
\[
W_{n}: \UglnA \to \WebAaOnen 
\] given on objects by 
\[
W_{n}\left( \sum_{i=1}^{n}\lambda_{i}\varepsilon_{i}\right) =1^{(\lambda_{1})} \dotsb  1^{(\lambda_{i})}
\] and on morphisms by 
\[
W_{n}\left( E_{r,s}^{f}\lambda\right) = e_{[r,s], \lambda}^{(f,1)}.
\]
\end{theorem}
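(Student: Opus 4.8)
The plan is to verify that $W_n$ is well-defined by checking that each of the five defining relations of $\UglnA$ from \cref{UglnAdef} is sent to a valid relation in $\WebAaOnen$. Relations (1) and (2) are immediate: the coupon relations \cref{AaRel1,AaRel2} in $\WebAaI$ directly give $W_n(E^{kf}_{r,s}) = e^{(kf,1)}_{[r,s]} = k\,e^{(f,1)}_{[r,s]} = k\,W_n(E^f_{r,s})$ and additivity $e^{(f+g,1)}_{[r,s]} = e^{(f,1)}_{[r,s]} + e^{(g,1)}_{[r,s]}$, since on a thin strand the coupon scalars behave linearly (the $\binom{x+y}{x}$ correction and the $\alpha^x$ in \cref{AaRel1} both collapse when $x=1$). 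Relation (3), $E^1_{r,r}\lambda = \lambda_r\,\mathrm{id}_\lambda$, follows from the knothole relation \cref{KnotholeRel}: the morphism $e^{(1,1)}_{[r,r],\lambda}$ splits off a thin strand from the $r$th strand (of thickness $\lambda_r$), applies the trivial coupon $\mathbf{1}$ via \cref{AaRel1}, and merges back, which by \cref{KnotholeRel} equals $\binom{\lambda_r}{1} = \lambda_r$ times the identity.

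The heart of the argument is relation (4), the supercommutator relation. I would compute $W_n(E^f_{p,q})\circ W_n(E^g_{r,s}) = e^{(f,1)}_{[p,q]} \circ e^{(g,1)}_{[r,s]}$ as a composite of thin splits, merges, coupons, and crossings in $\WebAaOnen$, and likewise for the opposite-order composite. The calculation naturally breaks into cases according to how the index pairs $(p,q)$ and $(r,s)$ overlap: disjoint (all four distinct, or two equal but no $q=r$/$p=s$ adjacency), the case $q = r$ but $p \neq s$, the case $p = s$ but $q \neq r$, and the degenerate cases where more indices coincide. In the disjoint case, the two morphisms act on separate strands, so they commute up to the Koszul sign $(-1)^{\bar f \bar g}$ — this is exactly the super-interchange law for the monoidal supercategory, together with \cref{AaIntertwine} and \cref{SplitIntertwineRel,MergeIntertwineRel} to slide the pieces past each other; the right side of relation (4) vanishes and both $\delta$ terms are zero. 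In the case $q = r$, one expects a "resolution" of the composed diagram: the split/merge occurring on the shared strand $r$ can be simplified using the merge-split relation \cref{MSrel} (equivalently \cref{DiagSwitchRel}), and the surviving terms are the commuted product plus a single term in which the two thin strands merge through the coupons — and by the coupon composition relation \cref{AaRel2}, the composed coupon is $fg$, producing precisely $\delta_{q,r} e^{(fg,1)}_{[p,s]} = \delta_{q,r} W_n(E^{fg}_{p,s})$. The case $p=s$ is symmetric and produces the $-(-1)^{\bar f\bar g}\delta_{p,s} W_n(E^{gf}_{r,q})$ term. Relation (5), $(E^f_{r,s})^2 = \delta_{r,s} E^{f^2}_{r,s}$ for $\bar f = \bar 1$, splits into $r = s$ (handled as in relation (4) by \cref{AaRel2}, giving the $f^2$ coupon, noting $f^2$ is even) and $r \neq s$, where the square of an off-diagonal generator with an odd coupon vanishes by the odd knothole annihilation relation \cref{OddKnotholeRel}: the two copies of the thin strand carrying the coupon $f$ get merged, and \cref{OddKnotholeRel} kills the result.

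The main obstacle I anticipate is the bookkeeping of signs in relation (4): tracking the Koszul signs that arise from sliding odd coupons past crossings and past other coupons, and matching them against the signs implicit in the super-supercommutator. Because all the generators $e^{(f,1)}_{[r,s]}$ involve only thin strands, there is no combinatorial explosion from thickness, so each diagrammatic identity is a short computation — but the sign conventions for the crossing morphisms (parity $\bar 0$) interacting with odd coupons, and the ordering conventions in \cref{twistsec}, must be applied carefully and consistently. An alternative, possibly cleaner route is to invoke the defining representation: by \cref{Gthm}, $G_n$ is a functor $\WebAaOnen \to \modglnAS$, and one checks directly that the composite $G_n \circ (\text{candidate } W_n)$ sends the generators $E^f_{r,s}$ to the standard action of $\gl_n(A)$ on $\bigoplus S^{\bx}V_n$, which manifestly satisfies relations (1)--(5) (this is essentially the content of the bracket formula in the definition of $\gl_n(A)$). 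Then \cref{AsymFaith} (asymptotic faithfulness of $G_n$) shows the images of the two sides of each relation under $G_n$ agree for $n \gg 0$, hence the relations hold in $\WebAaOnen$ for all $n$ by the integrality remark in \cref{SS:remarkonArbRings} — or more simply, one works in the universal category $\Web^{A,a}_{\{1\}}$ first. I would present the direct diagrammatic proof as the main argument, but keep the representation-theoretic check in reserve as a sanity verification of the signs.
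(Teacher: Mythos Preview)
Your proposal is correct and follows essentially the same approach as the paper: direct verification that each of the five defining relations in \cref{UglnAdef} is preserved, using precisely the web relations you name (the paper's proof is a single sentence citing \cref{AaRel1} for (1), \cref{AaRel2} for (2), \cref{KnotholeRel} for (3), \cref{AssocRel,MSrel,AaRel2} for (4), and \cref{MSrel,OddKnotholeRel} for (5)). Your alternative route via $G_n$ and asymptotic faithfulness is not used in the paper and, as you note, requires the extra step of working in the universal web category to pass from ``holds for $n \gg 0$'' to ``holds for all $n$''; the direct diagrammatic argument is both shorter and what the paper does.
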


\begin{proof}
We check that the defining relations in \cref{UglnAdef} are preserved by \(W_n\). Indeed, (1) follows from \cref{AaRel1}, (2) follows from \cref{AaRel2}, (3) follows from \cref{KnotholeRel}, (4) follows from \cref{AssocRel,MSrel,AaRel2}, and (5) follows from \cref{MSrel,OddKnotholeRel}.
\end{proof}

\begin{lemma}\label{thickgen}
Let \(k \in \Z_{>0} \cup \{\infty\}\), and set
\begin{align}\label{Ysets}
Y^n_k :=
\left\{ \left.  e_{[r,s],\lambda}^{(f, t)} \; \right| \lambda \in \Lambda_n, r,s \in [1,n], 1 \leq t \leq k, f \in  A^{(t)} \right\}.
\end{align}
Let \(S^n_k\) denote the \(\k\)-linear subcategory of \(\WebAaOnen\) consisting of all objects in \(\WebAaOnen\) and with morphisms generated by \(Y^n_k\). For all \(\lambda, \nu \in \Lambda_n\), \(r,s \in [1,n]\), \(1 \leq t \leq k\),  \(f \in A^{(t)}\), and \(y' \in S^n_k(\lambda - t\varepsilon_s, \nu- t\varepsilon_r)\), the morphism 
\begin{align*}
y=
\hackcenter{}
\hackcenter{
\begin{tikzpicture}[scale=0.8]
 \draw[ultra thick, color=blue] (0,0)--(0,1);
  \node at (0.5, 0.25) {$\scriptstyle \cdots $};
 \draw[ultra thick, color=blue] (1,0)--(1,1);
 \node at (2.5, 0.25) {$\scriptstyle \cdots $};
  \draw[ultra thick, color=blue] (3,0)--(3,1);
  %%%%%%%%%%
   \draw[ultra thick, color=blue] (0,1.5)--(0,3);
  \node at (1, 2.75) {$\scriptstyle \cdots $};
 \draw[ultra thick, color=blue] (2,1.5)--(2,3);
 \node at (2.5, 2.75) {$\scriptstyle \cdots $};
  \draw[ultra thick, color=blue] (3,1.5)--(3,3);
        %%%%%%%%%%
   \draw[thick, fill=orange] (-0.2,1)--(3.2,1)--(3.2,1.6)--(-0.2,1.6)--(-0.2,1);
    \node at (1.5,1.3) {$ y'$};
    %%%%%%%%%
   %%%%%%%%%% 
    \draw[ultra thick, color=blue] (1,0.1)--(1,0.2) .. controls ++(0,0.6) and ++(0,-0.6) .. (3.6,1)--(3.6,2)
    .. controls ++(0,0.6) and ++(0,-0.6) .. (2,2.8)--(2,2.9);
  %%%%%%%%%%
   \node[below] at (0, 0) {$\scriptstyle \lambda_1 $};
    \node[below] at (1, 0) {$\scriptstyle \lambda_s $};
    \node[below] at (3, 0) {$\scriptstyle \lambda_n $};
    \node[above] at (0, 3) {$\scriptstyle \nu_1 $};
    \node[above] at (2, 3) {$\scriptstyle \nu_r $};
    \node[above] at (3, 3) {$\scriptstyle \nu_n $}; 
     \node[right] at (3.6, 1) {$\scriptstyle t $};  
   %%%%%%%%%%
                                  \draw[thick, fill=yellow] (3.6, 1.85) circle (7pt);
       \node[] at (3.6, 1.85) {$\scriptstyle f$};
      \end{tikzpicture}}
\end{align*}
is in \(S^n_k(\lambda, \nu)\).
\end{lemma}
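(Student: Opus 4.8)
The statement asserts that a morphism $y$, built by placing a generator $e^{(f,t)}_{[r,s],\lambda}$-type diagram ``on the right'' of an arbitrary $S^n_k$-morphism $y'$ (with appropriate rung/crossing plumbing), again lies in the subcategory $S^n_k$ generated by the set $Y^n_k$ of thick $e$-generators. The plan is to proceed by showing that the particular thick-strand move pictured — a thickness-$t$ strand carrying an $f$-coupon which starts at position $s$ at the bottom, passes to the right of the entire block $y'$, and re-enters at position $r$ at the top — can be rewritten as a composite of genuine $Y^n_k$ generators together with $y'$ itself, up to the relations of $\WebAaI$ already established in \cref{imprels}.

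First I would reduce to the case where $y'$ is a single generator from $Y^n_k$: since $S^n_k$ is by definition the subcategory generated by $Y^n_k$, any $y'$ is a $\k$-linear combination of composites (vertical) and tensor products (horizontal) of such generators and identities, and it suffices to check that conjugating-past each elementary piece preserves membership. Formally, I would induct on the number of $Y^n_k$-generators appearing in a chosen decomposition of $y'$. The base case ($y'$ an identity, or a single generator) is the heart of the matter; the inductive step is a matter of inserting identities and using that the ``pass a thick $f$-strand to the right of a horizontal/vertical composite'' operation distributes over composition and tensor product, which follows by planar isotopy of diagrams together with \cref{AaIntertwine} (the $A$-intertwining relations) to commute the $f$-coupon past strands of the neighboring generator.

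For the base case, I would use the already-proven \emph{thick-strand calculus}: the generator $e^{(f,t)}_{[r,s],\lambda}$ is itself, by its own definition, a composite of a split, a thickness-$t$ horizontal transport (a chain of crossings with an $f$-coupon inserted), and a merge. The key technical input is \cref{CrossAbsorb}, \cref{SplitMergeCross}, \cref{BraidCox}, and \cref{arbcol}, which together say that a thick strand may be slid freely past splits, merges, and other crossings at the cost only of introducing other morphisms built from the same pieces — hence staying inside $S^n_k$ — and \cref{AaIntertwine}, which lets the $f$-coupon slide along with it. Concretely, I would take the picture of $y$, resolve the split and merge flanking $y'$ into elementary split/merge generators via \cref{AssocRel}, push the thickness-$t$ transport strand leftward through $y'$ one elementary generator at a time (each such step is an instance of \cref{SplitMergeCross} or \cref{AaIntertwineThin}/\cref{AaIntertwine} or \cref{BraidCox}), and collect the result as a composite of $e^{(g,u)}_{[p,q],\cdot}$ generators. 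The scalars and combinatorial sums produced by relations like \cref{MSrel} and \cref{DiagSwitchRel} are harmless: each summand is still a morphism expressed through $Y^n_k$.

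The main obstacle I anticipate is bookkeeping: ensuring that every intermediate diagram produced when sliding the thick strand past $y'$ is manifestly a composite of generators of the \emph{restricted} form appearing in $Y^n_k$ — i.e., that one never needs a split or merge as a standalone morphism, only in the packaged combination $e^{(f,t)}_{[r,s]}$. This is exactly the subtlety that makes $S^n_k$ a subcategory worth naming rather than all of $\WebAaOnen$; so the careful part of the argument is to always ``re-absorb'' a split produced on the left of a crossing into a neighboring merge (via \cref{CrossAbsorb} and \cref{DoubleCross}) so that the net effect is a genuine $e$-generator. I expect this to work because the $e^{(f,t)}_{[r,s]}$ diagrams are precisely designed to be closed under this kind of manipulation, mirroring how $\gl_n(A)$ is generated by the $E^f_{r,s}$ under the bracket; once the base case is nailed down, the induction over the structure of $y'$ is routine.
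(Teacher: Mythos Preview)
Your high-level structure --- induct on the length $m$ of a $Y^n_k$-word for $y'$ and peel off the bottommost generator --- matches the paper's. The genuine gap is that you are missing a second, simultaneous induction on the thickness $t$ of the outer strand, and this is precisely where your argument collapses into the unjustified assertion ``each summand is still a morphism expressed through $Y^n_k$.''

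Concretely, wrapping does \emph{not} distribute over composition in the way your inductive step assumes. Write $y'=y''\circ e^{(g,\ell)}_{[v,u]}$ with $y''$ of length $m-1$. If $v\neq s$ the bottom generator slides below the outer split and induction on $m$ applies; this part is fine. But if $v=s$, the merge of $e^{(g,\ell)}_{[s,u]}$ lands on the very strand from which the outer $t$-strand splits. Resolving this via \cref{CrossingWebRel} and then \cref{DiagSwitchRel} does not give a single $t$-wrap of $y''$: the outer strand breaks into \emph{two} outer strands of thicknesses $t'$ and $t''=t-t'$ (emanating from positions $s$ and $u$, carrying coupons $f$ and $fg$), with a genuine generator $e^{(g,\ell')}_{[s,u]}$ now sitting cleanly below. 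When $t'=0$ or $t''=0$ one wrap vanishes and induction on $m$ finishes; but when both are positive you face a \emph{double} wrap of $y''$, which your single-variable hypothesis does not cover. The paper's fix is to assume the claim for all $(m',t')$ with $m'<m$ \emph{or} $t'<t$: then the inner $t''$-wrap of $y''$ lies in $S^n_k$ since $t''<t$, and the outer $t'$-wrap of \emph{that} lies in $S^n_k$ since $t'<t$.

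A smaller geometric point: the outer strand never passes \emph{through} the box $y'$ --- it goes around on the right --- so \cref{SplitMergeCross}, \cref{BraidCox}, \cref{CrossAbsorb} are not being used to ``slide past $y'$'' as you describe. All of the nontrivial interaction is local to the split at position $s$ (and dually the merge at $r$); your picture of pushing the transport strand leftward through $y'$ one generator at a time does not match what actually has to be done.
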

\begin{proof}
We may assume that \(y'\) is itself a composition of \(m\) many morphisms in \(Y_1^n\). 
First, note that if \(t=0\), the claim holds trivially, and if \(m=0\), then \(y = e_{[r,s], \lambda}^{(f,t)} \in Y^n_k\), so we now make an inductive assumption that \(m>0, t>0\) and the claim holds when \(m'<m\) or \(t'<t\).

As \(m>0\), we may write \(y'=y''e_{[v,u]}^{(g,\ell)}\) for some \(v,u \in [1,n]\), \(1 \leq \ell \leq k\), \(g \in A^{(\ell)}\), where \(y''\) is a composition of \(m-1\) morphisms from \(Y_k^n\). If \(v \neq s\), then \(e_{[v,u]}^{(g,\ell)}\) moves freely down below the split on the \(s\)th strand in \(y\). Then the induction assumption on \(m\) may be used to complete the proof of the claim. So we now assume that \(v = s\), so \(y'=y''e_{[s,u]}^{(g,\ell)}\), and we may write, using \cref{AssocRel,TAaSMRel,AaIntertwine}:
\begin{align*}
y=
\hackcenter{}
\hackcenter{
\begin{tikzpicture}[scale=0.8]
 \draw[ultra thick, color=blue] (0,-1)--(0,1);
  \node at (0.27, 0) {$\scriptstyle \cdots $};
 \draw[ultra thick, color=blue] (0.5,-1)--(0.5,1);
 \node at (2.77, 0) {$\scriptstyle \cdots $};
  \draw[ultra thick, color=blue] (2.5,-1)--(2.5,1);
% \node at (1.5, 0.5) {$\scriptstyle \cdots $};
  \draw[ultra thick, color=blue] (3,-1)--(3,1);
  %%%%%%%%%%
   \draw[ultra thick, color=blue] (0,1.5)--(0,3);
  \node at (1, 2.75) {$\scriptstyle \cdots $};
 \draw[ultra thick, color=blue] (2,1.5)--(2,3);
 \node at (2.5, 2.75) {$\scriptstyle \cdots $};
  \draw[ultra thick, color=blue] (3,1.5)--(3,3);
        %%%%%%%%%%
   \draw[thick, fill=orange] (-0.2,1)--(3.2,1)--(3.2,1.6)--(-0.2,1.6)--(-0.2,1);
    \node at (1.5,1.3) {$ y''$};
    %%%%%%%%%
   %%%%%%%%%% 
    \draw[ultra thick, color=blue] (0.5,-0.3)--(0.5,-0.2) .. controls ++(0,0.6) and ++(0,-0.6) .. (3.6,1)--(3.6,2)
    .. controls ++(0,0.6) and ++(0,-0.6) .. (2,2.8)--(2,2.9);
        \draw[ultra thick, color=blue] (0.5,0.7)--(0.5,0.8) .. controls ++(0,-0.6) and ++(0,0.4) .. (2.5,-0.9);
  %%%%%%%%%%
   \node[below] at (0, -1) {$\scriptstyle \lambda_1 $};
    \node[below] at (0.5, -1) {$\scriptstyle \lambda_s $};
        \node[below] at (2.5, -1) {$\scriptstyle \lambda_u $};
    \node[below] at (3, -1) {$\scriptstyle \lambda_n $};
    \node[above] at (0, 3) {$\scriptstyle \nu_1 $};
    \node[above] at (2, 3) {$\scriptstyle \nu_r $};
    \node[above] at (3, 3) {$\scriptstyle \nu_n $}; 
     \node[right] at (3.6, 1) {$\scriptstyle t $};  
   %%%%%%%%%%
                                  \draw[thick, fill=yellow] (3.6, 1.85) circle (7pt);
       \node[] at (3.6, 1.85) {$\scriptstyle f$};
                                         \draw[thick, fill=yellow] (2,-0.5) circle (7pt);
       \node[] at (2,-0.5) {$\scriptstyle g$};
          \node[] at (1.4,-0.4) {$\scriptstyle \ell$};
      \end{tikzpicture}}.
\end{align*}
(Note that the above equality holds even when \(u=s\)). Now, applying \cref{CrossingWebRel}, \(y\) may be rewritten as a \(\k\)-linear combination of diagrams of the form

\begin{align*}
\hackcenter{}
\hackcenter{
\begin{tikzpicture}[scale=0.8]
 \draw[ultra thick, color=blue] (0,-1)--(0,1);
  \node at (0.27, 0) {$\scriptstyle \cdots $};
 \draw[ultra thick, color=blue] (0.5,-1)--(0.5,1);
 \node at (2.77, 0) {$\scriptstyle \cdots $};
  \draw[ultra thick, color=blue] (2.5,-1)--(2.5,1);
% \node at (1.5, 0.5) {$\scriptstyle \cdots $};
  \draw[ultra thick, color=blue] (3,-1)--(3,1);
  %%%%%%%%%%
   \draw[ultra thick, color=blue] (0,1.5)--(0,3);
  \node at (1, 2.75) {$\scriptstyle \cdots $};
 \draw[ultra thick, color=blue] (2,1.5)--(2,3);
 \node at (2.5, 2.75) {$\scriptstyle \cdots $};
  \draw[ultra thick, color=blue] (3,1.5)--(3,3);
        %%%%%%%%%%
   \draw[thick, fill=orange] (-0.2,1)--(3.2,1)--(3.2,1.6)--(-0.2,1.6)--(-0.2,1);
    \node at (1.5,1.3) {$ y''$};
    %%%%%%%%%
   %%%%%%%%%% 
    \draw[ultra thick, color=blue] (2,0.4) .. controls ++(0,0.6) and ++(0,-0.6) .. (3.6,1.1)--(3.6,2)
    .. controls ++(0,0.6) and ++(0,-0.6) .. (2,2.8)--(2,2.9);
        \draw[ultra thick, color=blue] (2,0.4)--(2,-0.3) .. controls ++(0,-0.6) and ++(0,0.4) .. (2.5,-0.9);
          \draw[ultra thick, color=blue] (0.5,-0.9)  .. controls ++(0,0.4) and ++(0,-0.4) .. (1,-0.4);
            \draw[ultra thick, color=blue] (1,-0.4)--(1,0.4)  .. controls ++(0,0.4) and ++(0,-0.4) .. (0.5,0.9);
               \draw[ultra thick, color=blue] (1,-0.5)  .. controls ++(0,0.4) and ++(0,-0.4) .. (2,0);
                \draw[ultra thick, color=blue] (1,0.5)  .. controls ++(0,-0.4) and ++(0,0.4) .. (2,0);
  %%%%%%%%%%
   \node[below] at (0, -1) {$\scriptstyle \lambda_1 $};
    \node[below] at (0.5, -1) {$\scriptstyle \lambda_s $};
        \node[below] at (2.5, -1) {$\scriptstyle \lambda_u $};
    \node[below] at (3, -1) {$\scriptstyle \lambda_n $};
    \node[above] at (0, 3) {$\scriptstyle \nu_1 $};
    \node[above] at (2, 3) {$\scriptstyle \nu_r $};
    \node[above] at (3, 3) {$\scriptstyle \nu_n $}; 
     \node[right] at (3.6, 1) {$\scriptstyle t $};  
   %%%%%%%%%%
                                  \draw[thick, fill=yellow] (3.6, 1.85) circle (7pt);
       \node[] at (3.6, 1.85) {$\scriptstyle f$};
                                         \draw[thick, fill=yellow]   (2,-0.55) circle (7pt);
       \node[] at (2,-0.55) {$\scriptstyle g$};
          \node[] at (2.25,-0.88) {$\scriptstyle \ell$};
      \end{tikzpicture}},
\end{align*}
which may then, using \cref{AssocRel,KnotholeRel} be written as \(\k\)-linear combinations of diagrams of the form 
\begin{align*}
\hackcenter{}
\hackcenter{
\begin{tikzpicture}[scale=0.8]
 \draw[ultra thick, color=blue] (0,-1)--(0,1);
  \node at (0.27, 0) {$\scriptstyle \cdots $};
 \draw[ultra thick, color=blue] (0.5,-1)--(0.5,1);
 \node at (2.77, 0) {$\scriptstyle \cdots $};
  \draw[ultra thick, color=blue] (2.5,-1)--(2.5,1);
% \node at (1.5, 0.5) {$\scriptstyle \cdots $};
  \draw[ultra thick, color=blue] (3,-1)--(3,1);
  %%%%%%%%%%
   \draw[ultra thick, color=blue] (0,1.5)--(0,3);
  \node at (1, 2.75) {$\scriptstyle \cdots $};
 \draw[ultra thick, color=blue] (2,1.5)--(2,3);
 \node at (2.5, 2.75) {$\scriptstyle \cdots $};
  \draw[ultra thick, color=blue] (3,1.5)--(3,3);
        %%%%%%%%%%
   \draw[thick, fill=orange] (-0.2,1)--(3.2,1)--(3.2,1.6)--(-0.2,1.6)--(-0.2,1);
    \node at (1.5,1.3) {$ y''$};
    %%%%%%%%%
   %%%%%%%%%% 
    \draw[ultra thick, color=blue] (2,0.4) .. controls ++(0,0.6) and ++(0,-0.6) .. (3.6,1.1)--(3.6,2)
    .. controls ++(0,0.6) and ++(0,-0.6) .. (2,2.8)--(2,2.9);
        \draw[ultra thick, color=blue] (2,0.4)--(2,-0.3) .. controls ++(0,-0.6) and ++(0,0.4) .. (2.5,-0.9);
               \draw[ultra thick, color=blue] (0.5,-0.5)  .. controls ++(0,0.4) and ++(0,-0.4) .. (2,0);
                \draw[ultra thick, color=blue] (0.5,0.7)  .. controls ++(0,-0.4) and ++(0,0.4) .. (2,0.2);
  %%%%%%%%%%
   \node[below] at (0, -1) {$\scriptstyle \lambda_1 $};
    \node[below] at (0.5, -1) {$\scriptstyle \lambda_s $};
        \node[below] at (2.5, -1) {$\scriptstyle \lambda_u $};
    \node[below] at (3, -1) {$\scriptstyle \lambda_n $};
    \node[above] at (0, 3) {$\scriptstyle \nu_1 $};
    \node[above] at (2, 3) {$\scriptstyle \nu_r $};
    \node[above] at (3, 3) {$\scriptstyle \nu_n $}; 
     \node[right] at (3.6, 1) {$\scriptstyle t $};  
   %%%%%%%%%%
                                  \draw[thick, fill=yellow] (3.6, 1.85) circle (7pt);
       \node[] at (3.6, 1.85) {$\scriptstyle f$};
                                         \draw[thick, fill=yellow]   (2,-0.55) circle (7pt);
       \node[] at (2,-0.55) {$\scriptstyle g$};
          \node[] at (2.25,-0.88) {$\scriptstyle \ell$};
      \end{tikzpicture}},
\end{align*}
which in turn, using \cref{DiagSwitchRel}, may be written as a \(\k\)-linear combinations of diagrams of the form
\begin{align*}
\hackcenter{}
\hackcenter{
\begin{tikzpicture}[scale=0.8]
 \draw[ultra thick, color=blue] (0,-1)--(0,1);
  \node at (0.27, 0) {$\scriptstyle \cdots $};
 \draw[ultra thick, color=blue] (0.5,-1)--(0.5,1);
 \node at (2.77, 0) {$\scriptstyle \cdots $};
  \draw[ultra thick, color=blue] (2.5,-1)--(2.5,1);
% \node at (1.5, 0.5) {$\scriptstyle \cdots $};
  \draw[ultra thick, color=blue] (3,-1)--(3,1);
  %%%%%%%%%%
   \draw[ultra thick, color=blue] (0,1.5)--(0,3);
  \node at (1, 2.75) {$\scriptstyle \cdots $};
 \draw[ultra thick, color=blue] (2,1.5)--(2,3);
 \node at (2.5, 2.75) {$\scriptstyle \cdots $};
  \draw[ultra thick, color=blue] (3,1.5)--(3,3);
        %%%%%%%%%%
   \draw[thick, fill=orange] (-0.2,1)--(3.2,1)--(3.2,1.6)--(-0.2,1.6)--(-0.2,1);
    \node at (1.5,1.3) {$ y''$};
    %%%%%%%%%
   %%%%%%%%%% 
    \draw[ultra thick, color=blue] (2,0.5) .. controls ++(0,0.6) and ++(0,-0.6) .. (3.6,1.2)--(3.6,2)
    .. controls ++(0,0.6) and ++(0,-0.6) .. (2,2.8)--(2,2.9);
        \draw[ultra thick, color=blue] (2,0.5)--(2,-0.3) .. controls ++(0,-0.6) and ++(0,0.4) .. (2.5,-0.9);
               \draw[ultra thick, color=blue] (2,-0.3)  .. controls ++(0,0.4) and ++(0,-0.4) .. (0.5,0.1);
                \draw[ultra thick, color=blue] (2,0.5)  .. controls ++(0,-0.4) and ++(0,0.4) .. (0.5,0.1);
  %%%%%%%%%%
   \node[below] at (0, -1) {$\scriptstyle \lambda_1 $};
    \node[below] at (0.5, -1) {$\scriptstyle \lambda_s $};
        \node[below] at (2.5, -1) {$\scriptstyle \lambda_u $};
    \node[below] at (3, -1) {$\scriptstyle \lambda_n $};
    \node[above] at (0, 3) {$\scriptstyle \nu_1 $};
    \node[above] at (2, 3) {$\scriptstyle \nu_r $};
    \node[above] at (3, 3) {$\scriptstyle \nu_n $}; 
     \node[right] at (3.6, 1) {$\scriptstyle t $};  
   %%%%%%%%%%
                                  \draw[thick, fill=yellow] (3.6, 1.85) circle (7pt);
       \node[] at (3.6, 1.85) {$\scriptstyle f$};
                                         \draw[thick, fill=yellow]   (2,-0.55) circle (7pt);
       \node[] at (2,-0.55) {$\scriptstyle g$};
          \node[] at (2.25,-0.88) {$\scriptstyle \ell$};
      \end{tikzpicture}}.
\end{align*}
Finally, applications of \cref{AssocRel,TAaSMRel} allow us to write such diagrams as \(\k\)-linear combinations of diagrams of the form
\begin{align*}
\hackcenter{}
\hackcenter{
\begin{tikzpicture}[scale=0.8]
 \draw[ultra thick, color=blue] (0,-1)--(0,1);
  \node at (0.27, 0) {$\scriptstyle \cdots $};
 \draw[ultra thick, color=blue] (0.5,-1)--(0.5,1);
 \node at (2.77, 0) {$\scriptstyle \cdots $};
  \draw[ultra thick, color=blue] (2.5,-1)--(2.5,1);
% \node at (1.5, 0.5) {$\scriptstyle \cdots $};
  \draw[ultra thick, color=blue] (3,-1)--(3,1);
  %%%%%%%%%%
   \draw[ultra thick, color=blue] (0,1.5)--(0,3);
  \node at (1, 2.75) {$\scriptstyle \cdots $};
 \draw[ultra thick, color=blue] (2,1.5)--(2,3);
 \node at (2.5, 2.75) {$\scriptstyle \cdots $};
  \draw[ultra thick, color=blue] (3,1.5)--(3,3);
        %%%%%%%%%%
   \draw[thick, fill=orange] (-0.2,1)--(3.2,1)--(3.2,1.6)--(-0.2,1.6)--(-0.2,1);
    \node at (1.5,1.3) {$ y''$};
    %%%%%%%%%
   %%%%%%%%%% 
    \draw[ultra thick, color=blue] (0.5,0) .. controls ++(0,0.8) and ++(0,-0.8) .. (3.6,1.2)--(3.6,1.8)
    .. controls ++(0,0.6) and ++(0,-0.6) .. (2,2.6)--(2,2.7);
       \draw[ultra thick, color=blue] (2.5,0) .. controls ++(0,0.6) and ++(0,-0.8) .. (4.2,1.2)--(4.2,2)
    .. controls ++(0,0.6) and ++(0,-0.6) .. (2,2.8)--(2,2.9);
        %\draw[ultra thick, color=blue] (2,0.5)--(2,-0.3) .. controls ++(0,-0.6) and ++(0,0.4) .. (2.5,-0.9);
               \draw[ultra thick, color=blue] (2.5,-0.8)  .. controls ++(0,0.4) and ++(0,-0.4) .. (0.5,-0.2);
                %\draw[ultra thick, color=blue] (2,0.5)  .. controls ++(0,-0.4) and ++(0,0.4) .. (0.5,0.1);
  %%%%%%%%%%
   \node[below] at (0, -1) {$\scriptstyle \lambda_1 $};
    \node[below] at (0.5, -1) {$\scriptstyle \lambda_s $};
        \node[below] at (2.5, -1) {$\scriptstyle \lambda_u $};
    \node[below] at (3, -1) {$\scriptstyle \lambda_n $};
    \node[above] at (0, 3) {$\scriptstyle \nu_1 $};
    \node[above] at (2, 3) {$\scriptstyle \nu_r $};
    \node[above] at (3, 3) {$\scriptstyle \nu_n $}; 
     \node[right] at (3.55, 1.2) {$\scriptstyle t' $};  
       \node[right] at (4.15, 1.2) {$\scriptstyle t'' $};  
             \node[right] at (1.2, -0.75) {$\scriptstyle \ell' $};  
   %%%%%%%%%%
                                  \draw[thick, fill=yellow] (3.6, 1.75) circle (7pt);
       \node[] at (3.6, 1.75) {$\scriptstyle f$};
              \draw[thick, fill=yellow] (4.2, 1.75) circle (7pt);
       \node[] at (4.2, 1.75) {$\scriptstyle f\hspace{-0.2mm}g$};
                                         \draw[thick, fill=yellow]  (2.1,-0.6)circle (7pt);
       \node[] at (2.1,-0.6) {$\scriptstyle g$};
         % \node[] at (1.4,-0.4) {$\scriptstyle \ell$};
      \end{tikzpicture}}
\end{align*}
where \(t' + t'' = t\) and \(\ell' + t'' = \ell\). If \(t' = 0\) or \(t''=0\), then such diagrams are in \(S^n_k(\lambda, \nu)\) by the induction assumption on \(m\). If \(t',t''>0\), then applying the induction assumption on \(t\) to the \(t'\)-thick strand, and subsequently to the \(t''\)-thick strand shows that such diagrams are in \(S^n_k(\lambda, \nu)\). This completes the induction step and the proof of the lemma.
\end{proof}

\begin{theorem}
Morphisms in \(\WebAaOnen\) are generated under composition by the set \(Y^n_\infty\) in \cref{Ysets}. If \(\k\) is a field of characteristic zero, then morphisms in \(\WebAaOnen\) are generated under composition by the set
\begin{align*}
\tilde{Y}_1^n := \left\{ e_{[r,s],\lambda}^{(1,1)} \mid \lambda \in \Lambda_n, r,s \in [1,n]\right\} \cup \left\{e_{[r,r], \lambda}^{(f,1)} \mid \lambda \in \Lambda_n, r \in [1,n], f \in A\right\} \subseteq Y^n_1.
\end{align*}
\end{theorem}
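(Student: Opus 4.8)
The statement has two parts. The first claim --- that morphisms in $\WebAaOnen$ are generated under composition by $Y^n_\infty$ --- is the bulk of the work and rests on the combinatorial analysis already assembled. The plan is to take an arbitrary morphism $F \in \WebAaOnen(\lambda, \nu)$ and, using \cref{BasisThm}, write it as a $\k$-linear combination of the basis diagrams $\eta^a_{\bmu}$. It therefore suffices to show each $\eta^a_{\bmu}$ lies in the subcategory $S^n_\infty$ generated by $Y^n_\infty$. Recall from \cref{basiseldef} that $\eta^a_{\bmu}$ is built from multi-splits $y_i$, coupon-decorated strands, a crossing block $Z$, and multi-merges $z_i$. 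The strategy is to peel this diagram apart one elementary move at a time, from the bottom up, and recognize each move as (a composite built from) a morphism of the form $e^{(f,t)}_{[r,s],\lambda}$. Concretely, I would first observe that each generator $e^{(f,t)}_{[r,s],\lambda}$ is exactly the data of a single split, followed by a coupon $f$ on a thickness-$t$ strand, followed by a merge of that strand back into a neighboring one, possibly flanked by crossings --- so a generator packages one "elementary hop" of thickness $t$. Then \cref{thickgen} is precisely the inductive engine: it shows that the collection $S^n_k$ of diagrams obtainable from $Y^n_k$ is closed under prepending a thickness-$t$ split/coupon/merge to an already-constructed diagram. Applying \cref{thickgen} repeatedly (inducting on the number of elementary hops appearing in a normal form for $\eta^a_\bmu$, and using \cref{AssocRel}, \cref{MSrel}, \cref{TAaSMRel}, \cref{AaIntertwine}, and \cref{CrossingWebRel} to reduce any diagram to such a normal form) shows $\eta^a_\bmu \in S^n_\infty(\lambda,\nu)$, which gives the first claim.

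For the second claim, assume $\k$ has characteristic zero, so that all the integers $\binom{n}{k}$ and factorials appearing in the implied relations are invertible. The goal is to show every generator $e^{(f,t)}_{[r,s],\lambda} \in Y^n_\infty$ already lies in the subcategory generated by $\tilde{Y}^n_1$. The plan is a two-step reduction: (i) reduce arbitrary thickness $t$ to thickness $1$, and (ii) reduce arbitrary coupons on off-diagonal ($r\neq s$) strands to the trivial coupon. For (i): a thickness-$t$ hop $e^{(f,t)}_{[r,s],\lambda}$ can be written, via the knothole relation \cref{KnotholeRel} (which introduces the invertible scalar $\binom{\cdot}{\cdot}$), as a composite in which the thick strand is first exploded into $t$ thin strands using multi-splits, each carrying the coupon $f$ (using \cref{TAaSMRel} to push the coupon across the split), then individually hopped across via thin generators $e^{(f,1)}_{[r,s]}$, then re-merged. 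This expresses $e^{(f,t)}_{[r,s],\lambda}$ as a $\k$-linear combination of composites of thickness-$1$ generators after dividing by the relevant binomial/factorial --- here is where characteristic zero is essential. For (ii): given a thin off-diagonal hop $e^{(f,1)}_{[r,s],\lambda}$ with $f \in A$ arbitrary, one writes it as the composite of the diagonal coupon $e^{(f,1)}_{[s,s],\lambda'}$ (which is in $\tilde{Y}^n_1$ since diagonal coupons with arbitrary $A$-labels are explicitly allowed) followed by the bare hop $e^{(1,1)}_{[r,s]}$ --- this is exactly the statement that sliding a coupon along a strand and then moving the strand equals first moving then the coupon was already on the source strand, which follows from \cref{AaIntertwine} together with the coupon-composition relations \cref{AaRel2}. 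This handles every generator in $Y^n_1 \setminus \tilde{Y}^n_1$, and combined with step (i) shows $Y^n_\infty$ lies in the subcategory generated by $\tilde{Y}^n_1$; together with the first part of the theorem this proves the second claim.

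The main obstacle, I expect, is the bookkeeping in reducing $\eta^a_\bmu$ to a "normal form" that \cref{thickgen} can be applied to iteratively: the basis diagrams involve the crossing block $Z$ and tensor products over both the source and target indices, so one has to be careful that the inductive hypothesis of \cref{thickgen} (which concerns prepending a single split/coupon/merge below an element of $S^n_k$) can actually be invoked --- this means isolating, at each stage, a "lowest" elementary move, moving all crossings and other coupons out of its way using \cref{AaIntertwine} and the Coxeter/intertwining relations, and checking that what remains after removing it is still a legitimate morphism in $\WebAaOnen$ (i.e. has object source/target of the form $\sum \lambda_i \varepsilon_i$). This is essentially an induction on a suitable complexity measure (say, total thickness of all splits, then number of elementary moves), but stating the measure and verifying it decreases under each reduction is the delicate part. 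The scalar-tracking in the characteristic-zero step (ii) is routine by comparison, but worth doing carefully to confirm no denominator vanishes.
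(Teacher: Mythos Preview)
Your approach to the first claim is essentially the paper's, though the paper's induction is cleaner: rather than normalizing $\eta^a_{\bmu}$ and counting ``elementary hops'', one simply inducts on $d = \lVert\lambda\rVert$. Any nonzero $\bmu$ has some $\mu_{s,r}(b) > 0$; pulling the corresponding thickness-$\mu_{s,r}(b)$ strand with its $b$-coupon off to the right side of $\eta^a_{\bmu}$ leaves an $\eta^a_{\bmu'}$ with strictly smaller $d$, and this is exactly the shape to which \cref{thickgen} applies. No preliminary normalization is needed.

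Your second part has two genuine gaps. In step (i), the ``explode and hop'' picture works for $r \neq s$ --- indeed \cref{KnotholeRel} gives $(e^{(f,1)}_{[r,s]})^t = t!\, e^{(f,t)}_{[r,s]}$ directly --- but it fails for the diagonal case $r = s$. There is no hop, and iterating the thin knothole $e^{(f,1)}_{[r,r]}$ does not produce $e^{(f,t)}_{[r,r]}$: the former applies $f$ to one symmetric factor $t$ times in succession (possibly the same one repeatedly), while the latter applies $f$ once to each of $t$ distinct factors. Fully exploding strand $r$ into thin pieces is also forbidden, since the intermediate objects then have more than $n$ parts and leave $\WebAaOnen$. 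The paper handles $r=s$ by a separate induction on $t$, splitting off a single thin $f$-strand around the outside and invoking \cref{thickgen} on the remaining $e^{(f,t-1)}_{[r,r]}$.

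The error in step (ii) is sharper: it is \emph{not} true that $e^{(f,1)}_{[r,s]}$ equals the composite $e^{(1,1)}_{[r,s]} \circ e^{(f,1)}_{[s,s]}$. Under the defining representation, $e^{(f,1)}_{[s,s]}$ applies $f$ to one symmetric factor of strand $s$, and $e^{(1,1)}_{[r,s]}$ then transfers one (possibly \emph{different}) factor to strand $r$; the cross-terms where these factors differ do not vanish. The correct identity, derived from \cref{MSrel}, is the commutator
\[
e^{(f,1)}_{[r,s]} \;=\; e^{(1,1)}_{[r,s]}\, e^{(f,1)}_{[s,s]} \;-\; e^{(f,1)}_{[s,s]}\, e^{(1,1)}_{[r,s]},
\]
which does lie in the subcategory generated by $\tilde Y^n_1$. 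Your appeal to \cref{AaIntertwine} is misplaced here --- that relation slides coupons past crossings, not past merges and splits.
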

\begin{proof}
For the first claim, we have that morphism spaces in \(\WebAaOnen\) are spanned by elements of the form \(\eta^{a}_{\bmu}\) by \cref{SpanLem}.
We show that such spanning elements belong to the set \(S^n_\infty\) by induction on \(d=|\lambda|\).
Let \( \eta^{a}_{\bmu}\)  for some \(\bmu \in \mathcal{M}(\lambda, \nu)\), with \(|\lambda| = d\). If \(d=0\), then \(\eta^a_{\bmu}\) is the identity morphism. Thus we may assume \(d>0\), and that the claim holds for all \(d'<d\). Then there exists some \(r,s \in [1,n]\), \(b \in \mathcal{B}\) such that \(\mu_{s,r}(b) >0\), and so, pulling the strand with the coupon \(b\) and thickness \(\mu_{s,r}(b)\) off to the right in \(\eta^{a}_{\bmu}\), we may write
\begin{align*}
\eta^{\a}_{\bmu}=\pm
\hackcenter{}
\hackcenter{
\begin{tikzpicture}[scale=0.8]
 \draw[ultra thick, color=blue] (0,0)--(0,1);
  \node at (0.5, 0.25) {$\scriptstyle \cdots $};
 \draw[ultra thick, color=blue] (1,0)--(1,1);
 \node at (2.5, 0.25) {$\scriptstyle \cdots $};
  \draw[ultra thick, color=blue] (3,0)--(3,1);
  %%%%%%%%%%
   \draw[ultra thick, color=blue] (0,1.5)--(0,3);
  \node at (1, 2.75) {$\scriptstyle \cdots $};
 \draw[ultra thick, color=blue] (2,1.5)--(2,3);
 \node at (2.5, 2.75) {$\scriptstyle \cdots $};
  \draw[ultra thick, color=blue] (3,1.5)--(3,3);
        %%%%%%%%%%
   \draw[thick, fill=orange] (-0.2,1)--(3.2,1)--(3.2,1.6)--(-0.2,1.6)--(-0.2,1);
    \node at (1.5,1.3) {$ \eta'$};
    %%%%%%%%%
   %%%%%%%%%% 
    \draw[ultra thick, color=blue] (1,0.1)--(1,0.2) .. controls ++(0,0.6) and ++(0,-0.6) .. (3.6,1)--(3.6,2)
    .. controls ++(0,0.6) and ++(0,-0.6) .. (2,2.8)--(2,2.9);
  %%%%%%%%%%
   \node[below] at (0, 0) {$\scriptstyle \lambda_1 $};
    \node[below] at (1, 0) {$\scriptstyle \lambda_s $};
    \node[below] at (3, 0) {$\scriptstyle \lambda_n $};
    \node[above] at (0, 3) {$\scriptstyle \nu_1 $};
    \node[above] at (2, 3) {$\scriptstyle \nu_r $};
    \node[above] at (3, 3) {$\scriptstyle \nu_n $}; 
     \node[right] at (3.6, 1) {$\scriptstyle \mu_{s,r}(b) $};  
   %%%%%%%%%%
                                  \draw[thick, fill=yellow] (3.6, 1.85) circle (7pt);
       \node[] at (3.6, 1.85) {$\scriptstyle b$};
      \end{tikzpicture}},
\end{align*}
where \(\eta' = \eta^a_{\bmu'}\) for some \(\bmu' \in \mathcal{M}(\lambda - \mu_{s,r}(b)\varepsilon_s, \nu - \mu_{s,r}(b)\varepsilon_r)\). By the induction assumption on \(d\), we have that \(\eta' \in S_\infty^n\), and hence \(\eta^{a}_{\bmu} \in S_\infty^n\) by \cref{thickgen}. This completes the induction step and the proof of the first claim above.

For the second claim, assume that \(\k\) is a field of characteristic zero. We first show that the morphisms in \(Y_\infty^n\) are morphisms in \(S^n_1\). Note that, when \(r \neq s\), we have \(e_{[r,s]}^{(f,t)} = \frac{1}{t!} (e_{[r,s]}^{(f,1)})^t \) by repeated applications of \cref{KnotholeRel}, so \(e_{[r,s]}^{(f,t)} \in S^n_1\). Next, we show by induction on \(t\) that \(e_{[r,r]}^{(f,t)} \in S_1^n\). Indeed, for \(t >1\), we have:
\begin{align*}
\hackcenter{}
e_{[r,r],\lambda}^{(f,t)}=
\hackcenter{
\begin{tikzpicture}[scale=0.9]
\draw[ultra thick, blue] (1,-0.5)--(1,1.5);
\node[] at (1.5,0.5) {$\scriptstyle \cdots$};
\node[] at (3.2,0.5) {$\scriptstyle \cdots$};
%\draw[ultra thick, blue] (2.3,0)--(2.3,1);
 \draw[ultra thick, blue] (2.3,-0.5)--(2.3,0.15) .. controls ++(0,0.1) and ++(0,-0.1) .. (2.6,0.4)--(2.6,0.6)  .. controls ++(0,0.15) and ++(0,-0.15) .. (2.3,0.85)--(2.3,1.5) ;
  \draw[ultra thick, blue] (2.3,-0.5)--(2.3,0.15) .. controls ++(0,0.1) and ++(0,-0.1) .. (2,0.4)--(2,0.6)  .. controls ++(0,0.15) and ++(0,-0.15) .. (2.3,0.85)--(2.3,1.25) ;
\node[right] at (2.3, 0.13) {$\scriptstyle t $};
\node[left] at (2.15, 0.13) {$\scriptstyle \lambda_r - t $};
%\node at (1.65, 0.1) {$\scriptstyle \lambda_r \hspace{-0.2mm}-\hspace{-0.2mm} t $};
\draw[ultra thick, blue] (3.6,-0.5)--(3.6,1.5);
      \node[below] at (2.3, -0.5) {$\scriptstyle \lambda_r $};
       \node[above] at (2.3, 1.5) {$\scriptstyle \lambda_r $};
    \node[below] at (1, -0.5) {$\scriptstyle \lambda_{1} $};
               \node[below] at (3.6, -0.5) {$\scriptstyle \lambda_{n} $};
    \node[above] at (1,1.5) {$\scriptstyle \lambda_{1}$};
               \node[above] at (3.6, 1.5) {$\scriptstyle \lambda_{n}$};
                               \draw[thick, fill=yellow] (2.6, 0.5) circle (6pt);
       \node[] at (2.6, 0.5) {$\scriptstyle f$};
\end{tikzpicture}}
\substack{ (\textup{\ref{KnotholeRel}}) \\ =}
\left( \frac{1}{t} \right)
\hackcenter{
\begin{tikzpicture}[scale=0.9]
\draw[ultra thick, blue] (1,-0.5)--(1,1.5);
\node[] at (1.5,0.5) {$\scriptstyle \cdots$};
\node[] at (3.2,0.5) {$\scriptstyle \cdots$};
%\draw[ultra thick, blue] (2.3,0)--(2.3,1);
 \draw[ultra thick, blue] (2.3,-0.5)--(2.3,0.15) .. controls ++(0,0.1) and ++(0,-0.1) .. (2.6,0.4)--(2.6,0.6)  .. controls ++(0,0.15) and ++(0,-0.15) .. (2.3,0.85)--(2.3,1.5) ;
  \draw[ultra thick, blue] (2.3,-0.5)--(2.3,0.15) .. controls ++(0,0.1) and ++(0,-0.1) .. (2,0.4)--(2,0.6)  .. controls ++(0,0.15) and ++(0,-0.15) .. (2.3,0.85)--(2.3,1.25) ;
   \draw[ultra thick, blue] (2.3,-0.5) .. controls ++(0,0.6) and ++(0,-0.6) .. (4,0.3)--(4,0.7)  .. controls ++(0,0.6) and ++(0,-0.6) .. (2.3,1.5);
\node[right] at (2.35, 0.12) {$\scriptstyle t-1 $};
\node[left] at (2.15, 0.12) {$\scriptstyle \lambda_r - t $};
\node[right] at (2.6, -0.3) {$\scriptstyle 1 $};
%\node at (1.65, 0.1) {$\scriptstyle \lambda_r \hspace{-0.2mm}-\hspace{-0.2mm} t $};
\draw[ultra thick, blue] (3.6,-0.5)--(3.6,1.5);
      \node[below] at (2.3, -0.5) {$\scriptstyle \lambda_r $};
       \node[above] at (2.3, 1.5) {$\scriptstyle \lambda_r $};
    \node[below] at (1, -0.6) {$\scriptstyle \lambda_{1} $};
               \node[below] at (3.6, -0.5) {$\scriptstyle \lambda_{n} $};
    \node[above] at (1.1,1.45) {$\scriptstyle \lambda_{1}$};
               \node[above] at (3.6, 1.5) {$\scriptstyle \lambda_{n}$};
                               \draw[thick, fill=yellow] (2.6, 0.5) circle (6pt);
       \node[] at (2.6, 0.5) {$\scriptstyle f$};
                                    \draw[thick, fill=yellow] (4, 0.5) circle (6pt);
       \node[] at (4, 0.5) {$\scriptstyle f$};
\end{tikzpicture}}.
\end{align*}
Note that the central part of this last diagram is \(e_{[r,r], \lambda- \varepsilon_r}^{(f,t-1)}\), which is in \(S_1^n\) by the induction assumption. But then \(e_{[r,r],\lambda}^{(f,t)} \in S_1^n\) by \cref{thickgen}.

Thus we have that \(\WebAaOnen\) is generated under composition by \(Y_1^n\) when \(\k\) is a field of characteristic zero. To show that it is generated by the smaller set \(\tilde{Y}_1^n\), we note that for all \(f \in A\), \(r,s \in [1,n]\), we have
\begin{align*}
e_{[r,s]}^{(f,1)} = e_{[r,s]}^{(1,1)} e_{[s,s]}^{(f,1)} - e_{[s,s]}^{(f,1)} e_{[r,s]}^{(1,1)},
\end{align*}
by an application of \cref{MSrel}.
\end{proof}

The following is an immediate corollary of the previous result.
\begin{corollary}\label{C:FullnessofWebFunctor}  Let $\k$ be a field of characteristic zero.  Then the functor 
\[
W_{n}: \UglnA \to \WebAaOnen
\] is full for all $n \geq 0$
\end{corollary}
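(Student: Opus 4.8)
The statement in question is \cref{C:FullnessofWebFunctor}: for $\k$ a field of characteristic zero, the functor $W_n : \UglnA \to \WebAaOnen$ from \cref{T:UdottoWebs} is full for all $n \geq 0$.

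The plan is to deduce fullness directly from the immediately preceding theorem, which asserts that morphisms in $\WebAaOnen$ are generated under composition by the set $\tilde{Y}_1^n = \{ e_{[r,s],\lambda}^{(1,1)}\} \cup \{ e_{[r,r],\lambda}^{(f,1)}\}$ when $\k$ has characteristic zero. The key observation is simply that every generator in $\tilde Y_1^n$ lies in the image of $W_n$: by \cref{T:UdottoWebs} we have $W_n(E_{r,s}^f \lambda) = e_{[r,s],\lambda}^{(f,1)}$, so in particular $W_n(E_{r,s}^1\lambda) = e_{[r,s],\lambda}^{(1,1)}$ and $W_n(E_{r,r}^f\lambda) = e_{[r,r],\lambda}^{(f,1)}$, which together give all of $\tilde Y_1^n$. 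First I would fix a morphism space $\WebAaOnen(\lambda,\nu)$ and note that, since $W_n$ is bijective on objects (indeed $W_n$ is the identity on objects up to the notational identification $\sum_i \lambda_i \varepsilon_i \leftrightarrow 1^{(\lambda_1)}\cdots 1^{(\lambda_n)}$), it suffices to show every $\k$-linear generator of $\WebAaOnen(\lambda,\nu)$ is hit.

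The argument is then: any morphism $g \in \WebAaOnen(\lambda,\nu)$ is a $\k$-linear combination of composites $e_{[r_\ell,s_\ell],\lambda^{(\ell)}}^{(1,1)}$ and $e_{[r_\ell,r_\ell],\lambda^{(\ell)}}^{(f_\ell,1)}$ of generators in $\tilde Y_1^n$, where the intermediate weights $\lambda^{(\ell)}$ are determined by composability. Each such generator equals $W_n$ applied to the corresponding $E$-morphism in $\UglnA$ (with its source/target weights), and $W_n$ is a $\k$-linear functor, hence preserves composition and $\k$-linear combinations. Therefore the composite, and any $\k$-linear combination of such composites, is $W_n$ of a morphism in $\UglnA(\lambda,\nu)$. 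This shows $W_n : \UglnA(\lambda,\nu)\to \WebAaOnen(\lambda,\nu)$ is surjective, i.e. $W_n$ is full. One small point worth spelling out is that the $E$-morphisms whose $W_n$-images one composes must themselves be composable in $\UglnA$; but this is automatic because the constraint $\mu + \varepsilon_s = \lambda + \varepsilon_r$ in \cref{UglnAdef} matches precisely the source/target weight shift of $e_{[r,s],\lambda}^{(f,1)}$ recorded in the diagrams defining it, so a composable string of web generators lifts to a composable string in $\UglnA$.

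This is essentially a one-line corollary, so there is no serious obstacle; the only thing requiring a modicum of care is bookkeeping of weights along composites to confirm the lift to $\UglnA$ is legitimate, and the observation that $W_n$ being essentially surjective (in fact bijective on objects) is what lets fullness on morphism spaces be checked weight-space by weight-space. I would state the proof in two sentences: the preceding theorem exhibits generators of $\WebAaOnen$ all lying in the image of $W_n$ by \cref{T:UdottoWebs}, and a $\k$-linear functor whose image contains a generating set of composable morphisms in each hom-space is full.

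\begin{proof}
By the preceding theorem, since $\k$ is a field of characteristic zero, every morphism space in $\WebAaOnen$ is spanned under composition and $\k$-linear combination by the generating set
\[
\tilde{Y}_1^n = \left\{ e_{[r,s],\lambda}^{(1,1)} \mid \lambda \in \Lambda_n, r,s \in [1,n]\right\} \cup \left\{e_{[r,r], \lambda}^{(f,1)} \mid \lambda \in \Lambda_n, r \in [1,n], f \in A\right\}.
\]
By \cref{T:UdottoWebs}, the functor $W_n$ is the identity on objects (under the identification $\sum_i \lambda_i \varepsilon_i \leftrightarrow 1^{(\lambda_1)} \cdots 1^{(\lambda_n)}$) and satisfies $W_n(E_{r,s}^f \lambda) = e_{[r,s],\lambda}^{(f,1)}$. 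In particular every element of $\tilde Y_1^n$ lies in the image of $W_n$: we have $e_{[r,s],\lambda}^{(1,1)} = W_n(E_{r,s}^1 \lambda)$ and $e_{[r,r],\lambda}^{(f,1)} = W_n(E_{r,r}^f \lambda)$.

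Now fix $\lambda, \nu \in \Lambda_n$ and let $g \in \WebAaOnen(\lambda, \nu)$. Then $g$ is a $\k$-linear combination of composites of elements of $\tilde Y_1^n$; each such composite is a string of generators $e_{[r_\ell, s_\ell], \lambda^{(\ell)}}^{(f_\ell, 1)}$ whose intermediate weights $\lambda^{(0)} = \lambda, \lambda^{(1)}, \ldots, \lambda^{(k)} = \nu$ satisfy $\lambda^{(\ell)} + \varepsilon_{s_\ell} = \lambda^{(\ell-1)} + \varepsilon_{r_\ell}$, matching the source/target shift recorded in the diagrams defining $e_{[r_\ell,s_\ell],\lambda^{(\ell-1)}}^{(f_\ell,1)}$. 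Hence the symbols $\lambda^{(\ell)} E_{r_\ell, s_\ell}^{f_\ell} \lambda^{(\ell-1)}$ are composable morphisms in $\UglnA$, and their composite $h \in \UglnA(\lambda, \nu)$ satisfies $W_n(h) = e_{[r_k,s_k],\lambda^{(k-1)}}^{(f_k,1)} \circ \cdots \circ e_{[r_1,s_1],\lambda^{(0)}}^{(f_1,1)}$ since $W_n$ is a functor. As $W_n$ is moreover $\k$-linear, taking the corresponding $\k$-linear combination of such $h$ produces a morphism in $\UglnA(\lambda, \nu)$ whose image under $W_n$ is $g$. Therefore $W_n \colon \UglnA(\lambda,\nu) \to \WebAaOnen(\lambda,\nu)$ is surjective for all $\lambda, \nu$, i.e. $W_n$ is full.
\end{proof}
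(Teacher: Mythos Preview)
Your proof is correct and follows exactly the paper's intended approach: the paper simply states that the corollary is immediate from the preceding theorem, and you have spelled out the details of that deduction—namely that every element of $\tilde{Y}_1^n$ equals $W_n(E_{r,s}^f\lambda)$ for appropriate $r,s,f,\lambda$, so functoriality and $\k$-linearity of $W_n$ give surjectivity on each morphism space.
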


\section{Polynomial representations and Schur--Weyl duality}\label{S:SWDualityandPolyReps}

\subsection{Polynomial representations}\label{SS:polyreps} In this section we assume $(A,a)_I$ is a good pair, \(I\) is finite, and \(1 = \sum_{i \in I} i\).  Let $\KK$ be a field equipped with a unital ring homomorphism $\k \to \KK$.

Recall from \cref{SS:fullnessforwreathcategory} that $V_{n}^{\otimes d}$ is a $(\SS_{d}\wr A, U(\gl_{n}(A))$-bisupermodule. Thus, after base change, there is a superalgebra homomorphism
\[
\zeta^{d}_{n, \KK}: U(\gl_{n}(A_{\KK} )) \to S^{A}(n,d)_{\KK}.
\] 

\begin{lemma}\label{L:EnvelopingAlgebraisontoSchurAlgebra}  If $\KK$ is characteristic zero, then the homomorphism $\zeta^{d}_{n, \KK}$ is surjective.
\end{lemma}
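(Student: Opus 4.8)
The statement $\zeta^d_{n,\KK}\colon U(\gl_n(A_\KK)) \to S^A(n,d)_\KK$ is surjective (in characteristic zero) should be deduced from the fullness results already established for the web category. First I would unwind the definition: the homomorphism $\zeta^d_{n,\KK}$ is the one induced by the $(\SS_d\wr A, U(\gl_n(A)))$-bimodule structure on $V_n^{\otimes d}$, so its image is the subalgebra of $S^A(n,d)_\KK = \End_{\SS_d\wr A}(V_n^{\otimes d})_\KK$ generated by the action of $U(\gl_n(A_\KK))$. The key point is that $S^A(n,d)_\KK$ itself can be identified, via \cref{sameSchur} (and its base-change version noted in \cref{SS:remarkonArbRings}), with the endomorphism algebra $W^{A_\KK,(A_\KK)_{\bar 0}}_{n,d} = \bigoplus_{\bx,\by\in\Omega(n,d)}\Web^{A_\KK,(A_\KK)_{\bar 0}}_{\{1\}}(1^{(\bx)},1^{(\by)})$. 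Under this identification the action of $\gl_n(A_\KK)$ on $V_n^{\otimes d}$ is exactly the image of the generators $E^f_{r,s}$ under the functor chain $W_n$ followed by the equivalence to the Schur algebra.

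Concretely, here is the sequence of reductions I would carry out. Combining \cref{moneq1} and \cref{trunc1}, the Schur algebra $T^A_a(n,d)_\KK = S^A(n,d)_\KK$ (taking $a = (A_\KK)_{\bar 0}$) is an idempotent truncation of the web category $\Web^{A_\KK,(A_\KK)_{\bar 0}}_{\{1\}}$, namely $S^A(n,d)_\KK \cong \bigoplus_{\bx,\by\in\Omega(n,d)} {}_{\{1\}}\Web(1^{(\bx)},1^{(\by)})$; more to the point, via \cref{trunc1} applied with the finite index set $I' = [1,n]$ of ``coordinate'' idempotents, $S^A(n,d)_\KK$ is identified with $\bigoplus_{\bx,\by} \WebAaI(1^{(\bx)},1^{(\by)})$ for the good pair $(A_\KK,(A_\KK)_{\bar 0})_{I'}$ — but what I actually want is simpler. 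Restrict attention to the full subcategory $\WebAaOnen$ on objects $1^{(\lambda_1)}\cdots 1^{(\lambda_n)}$, $\lambda\in\Lambda_n$ with $\lVert\lambda\rVert = d$; the corresponding endomorphism algebra $\bigoplus_{\lambda,\mu}\WebAaOnen(\lambda,\mu)$ with $\lVert\lambda\rVert=\lVert\mu\rVert=d$ is exactly $S^A(n,d)_\KK$ by \cref{sameSchur}. Now by \cref{C:FullnessofWebFunctor}, the functor $W_n\colon \UglnA \to \WebAaOnen$ is full (over the characteristic zero field $\KK$). Fullness of $W_n$ says precisely that every morphism in $\WebAaOnen$ is in the $\KK$-span of images $W_n(x)$ of morphisms $x$ in $\UglnA$, i.e. of the $e^{(f,1)}_{[r,s],\lambda}$ and their composites. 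Transporting this across the isomorphism $S^A(n,d)_\KK \cong \bigoplus_{\lambda,\mu}\WebAaOnen(\lambda,\mu)$, and observing that $W_n(E^f_{r,s}\lambda) = e^{(f,1)}_{[r,s],\lambda}$ corresponds under this isomorphism to $\zeta^d_{n,\KK}$ of the generator $E^f_{r,s}$ of $U(\gl_n(A_\KK))$ (this is just chasing the explicit formulas for the $\gl_n(A)$-action on symmetric powers in \cref{leftmultact} against the image of $W_n$), we conclude that the image of $\zeta^d_{n,\KK}$ spans all of $S^A(n,d)_\KK$, hence $\zeta^d_{n,\KK}$ is surjective.

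\textbf{The main obstacle.} The only genuinely non-formal point is verifying that the square
\[
\begin{CD}
U(\gl_n(A_\KK)) @>{W_n}>> \bigoplus_{\lambda,\mu}\WebAaOnen(\lambda,\mu) \\
@| @AA{\cong}A \\
U(\gl_n(A_\KK)) @>{\zeta^d_{n,\KK}}>> S^A(n,d)_\KK
\end{CD}
\]
commutes, i.e. that the web-categorical realization of $\gl_n(A)$ via $W_n$ agrees with its action on tensor/symmetric space that defines the Schur algebra. This amounts to matching the formula \cref{ExpL} and the coproduct action in \cref{leftmultact} against the split/merge images under $G_n$ from \cref{Gthm} restricted to thin strands, then passing through the isomorphism $\chi$ of \cref{Vdsymisom} used in the proof of \cref{sameSchur}; it is a bookkeeping check of signs and indices of the same flavor as the one already done in \cref{WebVdaction}, so no new ideas are needed, but it must be written carefully. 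Everything else — fullness, the identification of the Schur algebra with a web endomorphism algebra, the base change to $\KK$ — is quoted directly from the results above. I would also remark that the characteristic zero hypothesis enters solely through \cref{C:FullnessofWebFunctor} (equivalently through the denominators $1/t!$ in the proof that $Y^n_1$ generates), which is exactly where one expects it.
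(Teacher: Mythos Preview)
Your strategy is different from the paper's and is salvageable, but as written it has a real gap. The paper's proof is external and short: it notes that $U(\gl_n(\KK)) \to S(n,d)_\KK$ (the ordinary Schur algebra) is classically surjective, uses characteristic zero to identify $S^A(n,d)_\KK = T^A_{\k 1}(n,d)_\KK$, and then invokes \cite[Theorem~2]{KM}, which says $T^A_{\k 1}(n,d)_\KK$ is generated by $S^{\k 1}(n,d)_\KK$ together with the images $\zeta^d_n(x)$ for $x\in\gl_n(A_\KK)$. Your route via \cref{C:FullnessofWebFunctor} and \cref{sameSchur} stays entirely within the paper and avoids quoting \cite{KM}, which is pleasant.

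The gap is that your commutative square does not typecheck: $W_n$ has source the \emph{idempotented} category $\UglnA$, not $U(\gl_n(A_\KK))$, so the top arrow is undefined. Concretely, a single web $e^{(f,1)}_{[r,s],\lambda}$ does \emph{not} correspond under the isomorphism of \cref{sameSchur} to $\zeta^d_{n,\KK}(E^f_{r,s})$; rather $\zeta^d_{n,\KK}(E^f_{r,s})$ corresponds to the sum $\sum_{\lVert\lambda\rVert=d} e^{(f,1)}_{[r,s],\lambda}$. Fullness of $W_n$ hands you the individual $e^{(f,1)}_{[r,s],\lambda}$ as generators of the web algebra, but you have not shown these lie in the image of $\zeta^d_{n,\KK}$. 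The missing step is to produce the weight idempotents $\xi_\lambda$ in that image: the elements $\zeta^d_{n,\KK}(E^1_{r,r})$ are commuting operators with integer eigenvalues in $[0,d]$, and Lagrange interpolation (here is a second use of characteristic zero, contrary to your final remark) writes each $\xi_\lambda$ as a polynomial in them. Then $e^{(f,1)}_{[r,s],\lambda}$ corresponds to $\xi_\mu\,\zeta^d_{n,\KK}(E^f_{r,s})\,\xi_\lambda$ and your argument goes through. This is not the ``bookkeeping check'' you describe in your obstacle paragraph; it is a genuine additional ingredient.
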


\begin{proof}  Consider the good pair $(A, \k 1)_{\{1\}}$. First, observe that $S^{\k 1}(n,d)_{\KK}=S(n,d)_{\KK}$ is the ordinary Schur algebra. It is well known that the corresponding map $U(\gl_{n}(\KK )) \to S(n,d)_{\KK}$ is surjective and thus the restriction of $\zeta^{d}_{n, \KK}$ to the subalgebra $ U(\gl_{n}(\KK 1 )) \subseteq  U(\gl_{n}(A ))$,
\[
\zeta^{d}_{n, \KK}: U(\gl_{n}(\KK 1 )) \to S^{\k  1}(n,d)_{\KK},
\] is surjective.

From the definition of $T^{A}_{\k 1}(n,d)$ given in \cref{SS:SchurificationofaGoodPair} and the fact that $\KK$ has characteristic zero it follows that $S^{A}(n,d)_{\KK} = T^{A}_{\k 1} (n,d)_{\KK}$.  By \cite[Theorem 2]{KM}, $ T^{A}_{\k 1} (n,d)_{\KK}$ is generated as a superalgebra by $\left\{\zeta^{d}_{n}(x) \mid x \in \gl_{n}(A_{\KK}) \right\}$ along with the subsuperalgebra $S^{\k 1}(n,d)_{\KK}$. Since both of these sets are in the image of $\zeta^{d}_{n}$, the result follows.
\end{proof} 

By inflation through $\zeta^{d}_{n, \KK}$ we can view the category of right $S^{A}(n,d)_{\KK}$-supermodules as a subcategory of right $U(\gl_{n}(A_{\KK}))$-supermodules which we call the \emph{polynomial representations of degree $d$}.  When $\KK$ is characteristic zero the previous result implies this is a full subcategory.   We will freely identify these categories without comment in what follows.

\subsection{Schur--Weyl duality}\label{SS:SchurWeylDuality}

Recall from \cref{SS:fullnessforwreathcategory} that the image of $\rho^{A}_{n,d}$ lies in the finite part of the centralizer of $S^{A}(n,d)$:
\begin{equation}\label{E:SWmap2}
\rho^{A}_{n,d}: \SS_{d} \wr A  \to  \End_{S^{A}(n,d)}\left(V_{n}^{\otimes d} \right)^{\fin}.
\end{equation}

Let $\KK$ be an algebraically closed field of characteristic different from two equipped with a unital ring homomorphism $\k \to \KK$.  Upon base change to $\KK$ there is a superalgebra map
\begin{equation}\label{E:SWmap}
\rho^{A_{\KK}}_{n,d}: \SS_{d} \wr A_{\KK}  \to  \End_{S^{A}(n,d)_{\KK}}\left(V_{n, \KK}^{\otimes d} \right)^{\fin} = \End_{T^{A}_{a}(n,d)_{\KK}}\left(V_{n, \KK}^{\otimes d} \right)^{\fin}.
\end{equation}

We remark that $\k$ is an integral domain, $V_{n}^{\otimes d}$ is a free $\k$-supermodule, and $T^{A}_{a}(n,d)$ is a $\k$-lattice of $S^{A}(n,d)$.  From this it follows that $\End_{S^{A}(n,d)}(V_{n}^{\otimes d}) = \End_{T^{A}_{a}(n,d)}(V_{n}^{\otimes d})$ and, hence, after base change, the equality of endomorphism algebras given in \cref{E:SWmap} holds.  We say \emph{Schur--Weyl duality holds for $A$ over $\KK$} if $\rho^{A_{\KK}}_{n,d}$ is surjective for all $n, d \geq 0$.  By \cref{P:fullnesseqaulssurjective} this is equivalent to the functor $F_{n}$ being full for all $n \geq 0$. 

\begin{remark}\label{R:SchurWeylDuality}
The results of \cite[Section 5.3]{EK} show $\rho^{A_{\k }}_{n,d}$ is surjective whenever $n \geq d$ and $|I| < \infty$.
\end{remark}

 Determining when Schur--Weyl duality holds is an interesting question which is beyond the scope of this paper.  The following result verifies it in a situation which cover many cases of interest.

\begin{theorem}\label{L:SchurWeylDuality}  Let $\KK$ be an algebraically closed field of characteristic zero equipped with a unital ring homomorphism $\k \to \KK$. If $A_{\KK}$ is a finite-dimensional semisimple $\KK$-superalgebra, then Schur--Weyl duality holds for $A$ over $\KK$.
\end{theorem}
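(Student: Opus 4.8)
The plan is to reduce the statement to the classical double-centralizer theorem for semisimple superalgebras. We want to show that $\rho^{A_{\KK}}_{n,d}\colon \SS_d \wr A_{\KK} \to \End_{S^A(n,d)_{\KK}}(V_{n,\KK}^{\otimes d})^{\fin}$ is surjective for all $n,d \geq 0$. Since everything happens in fixed degree $d$ (and the $\fin$ decoration is automatic once $|I| < \infty$ because then $A$ is unital and $V_n^{\otimes d}$ is finitely generated), it suffices to fix $n$ and $d$ and prove surjectivity of $\SS_d \wr A_{\KK} \to \End_{S^A(n,d)_{\KK}}(V_{n,\KK}^{\otimes d})$. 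The first step is to recall that $V_{n,\KK}^{\otimes d}$ is a $(\SS_d \wr A_{\KK},\, U(\gl_n(A_{\KK})))$-bisupermodule, and by \cref{L:EnvelopingAlgebraisontoSchurAlgebra} the image of $U(\gl_n(A_{\KK}))$ acting on the right is exactly $S^A(n,d)_{\KK}$; hence $\End_{S^A(n,d)_{\KK}}(V_{n,\KK}^{\otimes d}) = \End_{U(\gl_n(A_{\KK}))}(V_{n,\KK}^{\otimes d})$.

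Next I would invoke semisimplicity. Because $\KK$ is algebraically closed of characteristic zero and $A_{\KK}$ is finite-dimensional semisimple, the wreath product $\SS_d \wr A_{\KK}$ is again finite-dimensional and semisimple: it is a crossed-product-type construction of the semisimple superalgebra $A_{\KK}^{\otimes d}$ with the group algebra $\KK\SS_d$ (which is semisimple in characteristic zero), and Maschke-type arguments in the super setting give semisimplicity. One can cite the standard fact about the semisimplicity of wreath products, or argue directly that $\SS_d \wr A_{\KK} \cong \bigoplus$ of matrix superalgebras after base change. With $\SS_d \wr A_{\KK}$ semisimple, $V_{n,\KK}^{\otimes d}$ is a semisimple $\SS_d \wr A_{\KK}$-module, so the double-centralizer theorem for finite-dimensional semisimple (super)algebras applies: if $B = \SS_d \wr A_{\KK}$ acts on a finite-dimensional module $M$ and $E = \End_B(M)$, then the natural map $B \to \End_E(M)$ is surjective. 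Taking $M = V_{n,\KK}^{\otimes d}$ and $E = \End_{\SS_d \wr A_{\KK}}(V_{n,\KK}^{\otimes d})$, this gives $B \twoheadrightarrow \End_E(M)$.

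It then remains to identify $E = \End_{\SS_d \wr A_{\KK}}(V_{n,\KK}^{\otimes d})$ with $S^A(n,d)_{\KK}$. This is precisely the (base-changed) Schur-algebra-as-double-centralizer statement: by definition $S^A(n,d) = \End_{\SS_d \wr A}(V_n^{\otimes d})$ (see the introduction and the corollary following \cref{sameSchur}, which records $S^A(n,d) \cong \End_{\SS_d \wr A}(V_n^{\otimes d})$ via \cite[Lemma 5.7]{EK}), and this isomorphism commutes with base change because $V_n^{\otimes d}$ is $\k$-free of finite rank and the endomorphism algebra is compatible with the flat extension $\k \to \KK$. Hence $E \cong S^A(n,d)_{\KK}$, and the surjection $B \to \End_E(M)$ becomes exactly $\rho^{A_{\KK}}_{n,d}\colon \SS_d \wr A_{\KK} \to \End_{S^A(n,d)_{\KK}}(V_{n,\KK}^{\otimes d})$, as desired. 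The main obstacle, and the point deserving the most care, is the bookkeeping in the super setting: one must make sure the classical double-centralizer theorem is applied in the correct $\Z_2$-graded form (so that $\End_B$ and $\End_E$ denote homogeneous endomorphisms and parities are tracked), and that semisimplicity of $\SS_d \wr A_{\KK}$ is genuinely available in the super sense — for this I would lean on the structure theory of semisimple superalgebras over an algebraically closed field (each is a sum of matrix superalgebras of type $\typeM$ or $\typeQ$) and the fact that wreathing with $\KK\SS_d$ preserves this, rather than reproving it from scratch.
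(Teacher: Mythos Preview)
Your approach is correct and essentially identical to the paper's: both argue that $\SS_d \wr A_{\KK}$ is semisimple, identify $S^A(n,d)_{\KK}$ with $\End_{\SS_d \wr A_{\KK}}(V_{n,\KK}^{\otimes d})$ via \cite[Lemma~5.7]{EK}, and then apply the double-centralizer theorem for finite-dimensional semisimple superalgebras. Your invocation of \cref{L:EnvelopingAlgebraisontoSchurAlgebra} is harmless but unnecessary, since the identification $E \cong S^A(n,d)_{\KK}$ already suffices without passing through $U(\gl_n(A_{\KK}))$.
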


\begin{proof}  Since $A_{\KK}$ is semisimple it follows from \cref{R:ArbitraryA} that $\mathfrak{S}_d \wr A_{\KK}$ is also semisimple.  Since  $ S^{A}(n,d)_{\KK} \cong \End_{ \SS_d \wr A_{\KK}}\left(V_{n, \KK}^{\otimes d} \right)$ by \cite[Lemma 5.7]{EK},  Schur--Weyl duality holds by the double centralizer theorem for finite-dimensional semisimple superalgebras (e.g., see \cite[Proposition 3.5]{CW}).
\end{proof}

\subsection{Representations of finite-dimensional semisimple superalgebras}\label{SS:RepsofSemisimpleSuperalgebras}
 For the remainder of this section we assume $\k$ is an algebraically closed field of characteristic zero and that $A$ is finite-dimensional and semisimple.  We remind the reader that we allow all (not just parity preserving) supermodule homomorphisms and that our convention is to write maps on the side opposite to the action.

We give a brief overview of the representation theory of finite-dimensional superalgebras.  Details can be found in, for example, \cite{BK, CW, KleshBook}.  We will discuss what happens for left supermodules and the reader can make the obvious translations to right supermodules and bisupermodules.  Given an $A$-supermodule $M$, write $\Pi M$ for the parity shift of $M$. For short we sometimes write $M^{p|q} = M^{\oplus p} \oplus (\Pi M)^{\oplus q}$. A simple $A$-supermodule $S$ is \emph{type $\typeM$} if it remains irreducible as an $A$-\emph{module} (ie.,  if $\Z_{2}$-gradings are ignored).  A simple supermodule $S$ is of type $\typeM$ if and only if $\End_{A}(S)$ is $1|0$-dimensional.  Otherwise, $S \cong T \oplus \Pi T$ for some simple $A$-module $T$.  In this case $S$ is said to be of type $\typeQ$.  A simple supermodule $S$ is of type $\typeQ$ if and only if $\End_{A}(S)$ is $1|1$-dimensional, in which case $S$ admits an odd involution.  As another characterization, a simple supermodule $S$ is of type $\typeQ $ if $S$ and $\Pi S$ are isomorphic via an even map and of type $\typeM$ if they are only isomorphic via an odd map. 

Correspondingly, there are two types of finite-dimensional simple superalgebras: the superalgebra of linear endomorphisms of an $m|n$-dimensional superspace $V$, $M(V)$; and the superalgebra of linear endomorphisms of an $n|n$-dimensional superspace $V$ which preserve an odd involution, $Q(V)$.  In both cases $V$ is the unique simple supermodule and is of type $\typeM$ and type $\typeQ$, respectively.  More generally, if $A$ is a semisimple finite-dimensional associative unital superalgebra, then the Artin--Wedderburn theorem for superalgebras states that
\begin{equation}\label{E:SemisimpleDecomposition}
A \cong \left(\bigoplus_{V \text{ of type $\typeM$}} M(V)  \right) \oplus \left(  \bigoplus_{V \text{ of type $\typeQ $}} Q(V) \right)
\end{equation}
as superalgebras where the direct sum is over a complete, irredundant set of simple supermodules for $A$.  If $V$ is of type $\typeM$, then upon choosing a suitably ordered homogeneous basis of $m$ even and $n$ odd vectors we can write 
\begin{equation}\label{E:supermatrix}
M(V) \cong  M_{m|n}(\k ) = \left\{ \left(  \begin{matrix} W & X\\
                                                 Y  & Z
\end{matrix} \right)  \right\}.
\end{equation}
where $W$, $X$, $Y$, and $Z$ are matrices over $\k$ with $W$ being $m \times m$, $X$ being $m \times n$, $Y$ being $n \times m$, and $Z$ being $n \times n$.  Then $V \cong \k^{m|n}$ with $M(V)$ acting by left multiplication.  If $V$ is of type $\typeQ$, then upon choosing a suitably ordered homogenous basis of $n$ even and $n$ odd vectors which are interchanged by the odd involution we have $Q(V) \cong Q_{n}(\k )$ being matrices of the form \cref{E:supermatrix} with the additional condition that $W=Z$ and $X=Y$. Again $V \cong \k^{n|n}$.

If $A$ and $B$ are associative superalgebras and $S$ and $T$ are simple $A$- and $B$-supermodules, respectively, then the tensor product $S \otimes T$ is an $A \otimes B$-supermodule.  If both $S$ and $T$ are of type $\typeM$, then $S \otimes T$ is a simple $A \otimes B$-supermodule of type $\typeM$. If exactly one of $S$ and $T$ are of type $\typeQ $, then $S \otimes T$ is a simple $A \otimes B$-supermodule of type $\typeQ$. If both $S$ and $T$ are of type $\typeQ$, then $S \otimes T \cong (S \star T) \oplus \Pi (S \star T)$ for a simple $A \otimes B$-supermodule $S \star T$ of type $\typeM$ (we make an arbitrary fixed choice of the simple subsupermodule denoted $S \star T$).   We extend notation by declaring $S \star T$ to be the simple $A \otimes B$-supermodule $S \otimes T$ in the other cases. Applying this construction and ranging over all possible simple supermodules $S$ and $T$ yields a complete, irredundant set of simple $A \otimes B$-supermodules.

For a finite-dimensional semisimple superalgebra $A$, let $\Xi_{A}$ be an index set for the simple left $A$-supermodules and write $S^{\alpha}$ for the simple labeled by $\alpha \in \Xi_{A}$.  For definiteness which will simplify notation later, we sometimes fix an order on $\Xi_{A}$ and write $\Xi_{A}=\left\{1,\dotsc ,\ell \right\}$ according to that order.  
Define a function 
\[
\delta: \Xi_{A} \to \{0,1 \}
\]
by $\delta (\alpha)=0$ if $S^{\alpha}$ is of type $\typeM$ and $\delta (\alpha) =1$ if $S^{\alpha}$ is of type $\typeQ$.

Applying the tensor product rule discussed above shows that 
\begin{equation}\label{E:AdSimples}
\left\{ S^{\balpha}:=S^{\alpha_{1}} \star S^{\alpha_{2}} \star \dotsb \star S^{\alpha_{d}} \mid \balpha = (\alpha_{1}, \dotsc , \alpha_{d}) \in \Xi_{A}^{d} \right\}
\end{equation}
is a complete, irredundant set of simple $A^{\otimes d}$-supermodules.  Furthermore, if we define 
\[
\delta (\balpha) = \begin{cases} 0, \text{ if  $\sum_{k=1}^{d} \delta (\alpha_{k})$ is even};\\
                             1, \text{ if  $\sum_{k=1}^{d} \delta (\alpha_{k})$ is odd},
\end{cases}  
\] then $S^{\balpha}$ is of type $\typeM$ if  $\delta (\balpha ) = 0$ and of type $\typeQ $ if $\delta (\balpha ) = 1$.

Given an element $\sigma \in \SS_{d}$ and an $A^{\otimes d}$-supermodule $M$, let ${}^{\sigma}M=M$ as a $\k$-supermodule with $A^{\otimes d}$ action given by $(a_{1}\otimes \dotsb \otimes a_{d}).m = ((a_{1}\otimes \dotsb \otimes a_{d})\sigma)m$, where $\sigma$ acts on $A^{\otimes d}$ by signed place permutation.  There is an even isomorphism of $A^{\otimes d}$-supermodules,
\[
{}^{\sigma}\left(S^{\alpha_{1}} \otimes S^{\alpha_{2}} \otimes \dotsb \otimes S^{\alpha_{d}} \right) \cong S^{\alpha_{\sigma(1)}} \otimes S^{\alpha_{\sigma(2)}} \otimes \dotsb \otimes S^{\alpha_{\sigma(d)}}.
\] Hence, after possibly replacing some of the simple supermodules in \cref{E:AdSimples} with their parity shifts, there is an even isomorphism of $A^{\otimes d}$-supermodules, 
\begin{equation}\label{E:TwistedAtensordSimples}
{}^{\sigma}S^{\balpha}  \cong S^{\sigma\cdot \balpha},
\end{equation} for all $\balpha \in \Xi_{A}^{d}$ and all $\sigma \in \SS_{d}$, where $\sigma$ acts on the left on $\Xi_{A}^{d}$ by place permutation.

\subsection{Representations of wreath product superalgebras}\label{SS:RepsofWreathProductSuperalgebras}
For nonnegative integers $n$ and $d$, let
\[
\Lambda^{0}_{+}(n,d) \qquad \text{and} \qquad \Lambda^{1}_{+}(n,d)
\] be the set of partitions of $d$ with at most $n$ nonzero parts, and the set of strict partitions of $d$ with at most $n$ nonzero parts, respectively.  Recall that a partition is  \emph{strict} if it has no nonzero repeated parts.
For fixed $d \geq 0$, set
\[
\Lambda^{0}_{+}(d) = \bigcup_{n \geq 0} \Lambda^{0}_{+}(n,d) \qquad \text{and} \qquad  \Lambda^{1}_{+}(d) = \bigcup_{n \geq 0} \Lambda^{1}_{+}(n,d).
\]
Define $\delta (\lambda) = 0$ for all $\lambda \in \Lambda^{0}_{+}(d)$.    Given any partition $\lambda$, let $\ell (\lambda)$ be the number of nonzero parts in $\lambda$.  For  $\lambda \in \Lambda^{1}_{+}(d)$, define
\[
\delta (\lambda) = \begin{cases} 0, & \text{ if $\ell(\lambda)$ is even};\\
                                 1, & \text{ if $\ell(\lambda)$ is odd}.
\end{cases}
\]

Recall that we have a fixed total order on $\Xi_{A}$ and use it to identify $\Xi_{A}$ with $\left\{1, \dotsc , \ell \right\}$.  Given a sequence of nonnegative integers $\bn= (n_{1}, \dotsc , n_{\ell})$ indexed by $\Xi_{A}$ and a fixed $d\geq 0$,  let
\[
\Lambda^{A}_{+}(\bn, d) = \left\{\tuplambda = (\lambda^{(1)}, \dotsc , \lambda^{(\ell)}) \; \left| \; \lambda^{(\alpha)} \in \Lambda^{\delta (\alpha)}_{+}(n_{\alpha},d_{\alpha}), \sum_{\alpha=1}^{\ell} d_{\alpha} = d \right. \right\}
\] be the set of $\Xi_{A}$-indexed multi-partitions of $d$ where the partition $\lambda^{(\alpha)}$ has at most $n_{\alpha}$ nonzero parts, and $\lambda^{(\alpha)}$ is strict whenever $S^{\alpha}$ is of type $\typeQ$. Set 
\[
\Lambda^{A}_{+}(d) = \bigcup_{\bn \in \Z_{\geq 0}^{\Xi_{A}}} \Lambda^{A}_{+}(\bn, d).
\]  Define 
\[
\delta: \Lambda^{A}_{+}(d) \to \{0,1 \}
\]
by 
\[
\delta (\lambda^{(1)}, \dotsc , \lambda^{(\ell)}) = \begin{cases} 0, &\text{ if $\sum_{\alpha=1}^{\ell}  \delta (\lambda^{(\alpha)})$ is even};\\
                                                                1, &\text{ if $\sum_{\alpha=1}^{\ell} \delta (\lambda^{(\alpha)})$ is odd}.
\end{cases}
\]

If $A$ is semisimple, then $\SS_{d} \wr A$ is semisimple for all $d \geq 0$.  Thus the representation theory of $\SS_{d} \wr A$ comes down to describing the simple supermodules.  These were given by Rosso--Savage in \cite[Proposition 4.3]{RossoSavage} and shown to be indexed by $\Lambda^{A}_{+}(d)$.  As we will need a description of these simple supermodules which is somewhat different than theirs, we provide an alternate proof.

We will use without comment the following observation.  Let $M$ be a $\SS_{d} \wr A$-supermodule. If, when it is viewed as a $A^{\otimes d}$-supermodule via restriction, $M$ contains a submodule isomorphic to $S^{\balpha}:=S^{\alpha_{1}} \star \dotsb \star S^{\alpha_{d}}$ for some $\balpha = (\alpha_{1}, \dotsc , \alpha_{d}) \in \Xi_{A}^{d}$, then it also contains $\sigma S^{\balpha} \cong {}^{\sigma}S^{\balpha} \cong S^{\sigma \cdot \balpha}$ for any $\sigma \in \SS_{d}$.  A similar statement obviously applies to supermodules for the Young subalgebras of $\SS_{d} \wr A$.

\subsubsection{Simple supermodules for Young subalgebras}\label{SSS:simplesforYoungsubalgebras}  Let $S= S^{\alpha}$ be a simple $A$ supermodule.  Let $e \in A$ be a minimal even idempotent such that $Ae \cong S$.   By the tensor product rule given in \cref{SS:RepsofSemisimpleSuperalgebras}, $S^{\star d}$ is a simple supermodule for $A^{\otimes d}$ and $e^{\otimes d}(S^{\beta_{1}}\star \dotsb \star S^{\beta_{d}}) \neq 0$ if and only if $\beta_{1}=\dotsb = \beta_{d}=\alpha$. 

Let $\mathcal{e}$ denote the even idempotent $1_{\k \SS_{d}} \otimes e^{\otimes d} \in \SS_{d} \wr A$.  Recalling the theory of idempotent truncation (e.g., see \cite[Section 6.2]{Green} or \cite[Section 2]{BK}), there is an exact functor
\[
F_{\mathcal{e}}: \Awreathsmod  \to  \eAewreathsmod  
\] given by $F_{\mathcal{e}}M = \mathcal{e}M$.  This functor defines a type-preserving bijection between the simple $\SS_{d} \wr A$-supermodules not annihilated by $F_{\mathcal{e}}$ and the simple $\mathcal{e}(\SS_{d}\wr A)\mathcal{e} \cong \SS_{d} \wr (eAe)$-supermodules.   From the previous paragraph we see that for a simple $\SS_{d}\wr A$-supermodule $T$, $F_{\mathcal{e}}T \neq 0$ if and only if $T$ contains a simple $A^{\otimes d}$-supermodule which is isomorphic to $S^{\star d}$.

Contemplating the matrix form of $A$ given in \cref{SS:RepsofSemisimpleSuperalgebras} shows that if $S$ is of type $\typeM$, then $eAe \cong \k$ and $\mathcal{e}(\SS_{d} \wr A)\mathcal{e} \cong \SS_{d}\wr (eAe) \cong \k \SS_{d}$, where we view $\k \SS_{d}$ as a superalgebra concentrated in parity $\bar{0}$.  For $\lambda \in \Lambda^{0}_{+}(d)$ let $D^{\lambda^{(\alpha)}}$ be the simple $\SS_{d} \wr A$-supermodule for which $F_{e}D^{\lambda^{(\alpha)}}$ is evenly isomorphic to the simple $\k \SS_{d}$-module $T^{0, \lambda^{(\alpha)}}$ (viewed as a supermodule concentrated in parity $\bar{0}$) labeled by the partition $\lambda$.  Then the set 
\[
\left\{D^{\lambda^{(\alpha)}} \mid \lambda \in \Lambda_{+}^{0}(d) \right\}
\] is a complete, irredundant set of simple $\SS_{d} \wr A$-supermodules which contain an $A^{\otimes d}$-submodule isomorphic to $S^{\star d}$.  Furthermore, they are of type $\typeM$ whenever $\delta (\lambda) =0$ (i.e., always).

On the other hand, if $S$ is of $\typeQ$, then $eAe \cong \Cliff$ and $\mathcal{e}(\SS_{d} \wr A)\mathcal{e} \cong \SS_{d} \wr (eAe) \cong \Ser_{d}$, the Sergeev superalgebra.  For $\lambda \in \Lambda^{1}_{+}(d)$ let $D^{\lambda^{(\alpha)}}$ be the simple $\SS_{d} \wr A$-supermodule for which $F_{e}D^{\lambda^{(\alpha)}}$ is evenly isomorphic to the simple $\Ser_{d}$-supermodule, $T^{1, \lambda^{(\alpha)}}$, labeled by the strict partition $\lambda$.  Then the set 
\[
\left\{D^{\lambda^{(\alpha)}} \mid \lambda \in \Lambda_{+}^{1}(d) \right\}
\] is a complete, irredundant set of simple $\SS_{d} \wr A$-supermodules which contain a submodule isomorphic to $S^{\star d}$ when viewed as a $A^{\otimes d}$-supermodule.  Furthermore, $D^{\lambda^{(\alpha)}}$ is of type $\typeM$ if $\delta(\lambda)=0$ and is of type $\typeQ$ if $\delta (\lambda)=1$.

Now, fix a composition of $d$, $\bd = (d_{1}, \dotsc , d_{\ell})$ with parts labeled by $\Xi_{A}$.  Let $\SS_{\bd}= \SS_{d_{1}} \times \dotsb \times \SS_{d_{\ell}}$ be the corresponding Young subgroup of $\SS_{d}$ and let  $\SS_{\bd} \wr A =  \k \SS_{\bd} \otimes A^{\otimes d}$ be  the corresponding Young subalgebra of $\SS_{d} \wr A$. Given $\tuplambda = (\lambda^{(1)}, \dotsc , \lambda^{(\ell)}) \in  \Lambda^{A}_{+}(d)$ such that $(\lVert \lambda^{(1)} \rVert, \dotsc , \lVert \lambda^{(\ell)} \rVert ) = \bd$, we can apply the tensor product rule from \cref{SS:RepsofSemisimpleSuperalgebras} to $\SS_{\bd} \wr A \cong (\SS_{d_{1}} \wr A) \otimes \dotsb \otimes (\SS_{d_{\ell}} \wr A)$ to see that 
\[
 D^{\lambda^{(1)}} \star \dotsb \star D^{\lambda^{(\ell)}}
\] is a simple $\SS_{\bd} \wr A$-supermodule.  Furthermore, it is of type $\typeM$ if $\delta(\lambda^{(1)}, \dotsc , \lambda^{(\ell)}) = 0$ and of type $\typeQ$ if $\delta(\lambda^{(1)}, \dotsc , \lambda^{(\ell)}) = 1$.

Finally, observe that $D=D^{\lambda^{(1)}} \star \dotsb \star D^{\lambda^{(\ell)}}$ contains a simple $A^{\otimes d}$-supermodule isomorphic to $S^{\bd}:=(S^{1})^{\star d_{1}} \star \dotsb \star (S^{\ell})^{\star d_{\ell}}$.  Indeed, since the sum of all subsupermodules isomorphic to $S^{\bd}$ will be stable under the action of $\SS_{\bd} \wr A$, it follows that \emph{every} simple $A^{\otimes d}$-subsupermodule of $D$ is isomorphic to $S^{\bd}$.  Conversely, any simple $\SS_{\bd} \wr A$-supermodule which contains a simple $A^{\otimes d}$-supermodule isomorphic to $S^{\bd}$ is isomorphic to $ D^{\mu^{(1)}} \star \dotsb \star D^{\mu^{(\ell)}}$ for some $\tupmu \in \Lambda_{+}^{A}(d)$ which satisfies $(\lVert \mu^{(1)} \rVert, \dotsc , \lVert \mu^{(\ell)} \rVert ) = (d_{1}, \dotsc , d_{\ell})$,

\subsubsection{Simple supermodules for wreath product superalgebras}\label{SSS:simplesforwreathproducts}

For $\tupmu \in \Lambda^{A}_{+}(d)$, let $\bd  = (\lVert \mu^{(1)} \rVert, \dotsc , \lVert \mu^{(\ell)} \rVert )$ and let $\SS_{\tupmu} \wr A = \SS_{\bd} \wr A$.  Set 
\begin{equation}\label{E:WreathProductSimpleModules}
\left\{ D^{\tupmu} = \SS_{d} \wr A \otimes_{\SS_{\tupmu} \wr A} (D^{\mu^{(1)}} \star \dotsb \star D^{\mu^{(\ell)}}) \mid \tupmu \in \Lambda^{A}_{+}(d) \right\}.
\end{equation}
We claim this is a complete, irredundant set of simple supermodules for $\SS_{d}\wr A$.

Before proving the claim we make one observation which will be helpful in the proof.  Fix $\tupmu =(\mu^{(1)}, \dotsc , \mu^{(\ell)}) \in \Lambda^{A}_{+}(d)$ and let $\bd = (d_{1}, \dotsc , d_{\ell})= ({\lVert \mu^{(1)}\rVert}, \dotsc , {\lVert \mu^{(\ell)}\rVert})$. As discussed in the previous subsection,   $D^{\mu^{(1)}} \star \dotsb \star D^{\mu^{(\ell)}}$ will then be a direct sum of copies of $S^{\bd} = (S^{1})^{\star d_{1}} \star \dotsb \star (S^{\ell})^{\star d_{\ell}}$ when viewed as an $A^{\otimes d}$-supermodule.   Therefore, since 
\begin{equation}\label{E:DlambdaDecomposition}
D^{\tupmu} = \bigoplus_{\sigma \in \SS_{d}/\SS_{\tupmu}} \sigma \otimes (D^{\mu^{(1)}} \star \dotsb \star D^{\mu^{(\ell)}}) \cong \bigoplus_{\sigma \in \SS_{d}/\SS_{\tupmu}} {}^{\sigma} (D^{\mu^{(1)}} \star \dotsb \star D^{\mu^{(\ell)}})
\end{equation}
is a decomposition as $A^{\otimes d}$-supermodules, it follows that $S^{\beta_{1}} \star\dotsb \star S^{\beta_{d}}$ appears as an $A^{\otimes d}$-summand of $D^{\tupmu}$ if and only if $(\beta_{1}, \dotsc , \beta_{d}) \in \SS_{d}\cdot  (1^{\lVert \mu^{(1)}\rVert}, \dotsc , \ell^{\lVert \mu^{(\ell)}\rVert})$.

\begin{proposition}\label{P:WreathProductAlgebraSimples} Let $A$ be a finite-dimensional semisimple superalgebra.  Then, the set
\[
\left\{D^{\tuplambda} \mid \tuplambda \in \Lambda_{+}^{A}(d) \right\}
\] is a complete irredundant set of simple $\SS_{d} \wr A$-supermodules and $D^{\tuplambda}$ is of type $\typeM$ if $\delta (\tuplambda)=0$ and is of type $\typeQ$ if $\delta (\tuplambda ) =1$.
\end{proposition}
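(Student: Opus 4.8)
The plan is to follow the standard Clifford-theory / induction-from-a-Young-subalgebra argument, using the idempotent truncation already set up in \cref{SSS:simplesforYoungsubalgebras}. The setup is: since $A$ is finite-dimensional semisimple, $\SS_d \wr A$ is semisimple (by \cref{R:ArbitraryA}), so it suffices to show that the supermodules $D^{\tuplambda}$ for $\tuplambda \in \Lambda^A_+(d)$ are pairwise non-isomorphic, are simple, have the asserted types, and exhaust all simple supermodules. I would organize the proof into four steps.

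\textbf{Step 1: Simplicity and type of $D^{\tuplambda}$.} Fix $\tuplambda = (\lambda^{(1)}, \dots, \lambda^{(\ell)}) \in \Lambda^A_+(d)$ with $\bd = (\lVert\lambda^{(1)}\rVert, \dots, \lVert\lambda^{(\ell)}\rVert)$. By the discussion in \cref{SSS:simplesforYoungsubalgebras}, $N := D^{\lambda^{(1)}} \star \dots \star D^{\lambda^{(\ell)}}$ is a simple $\SS_{\tuplambda}\wr A$-supermodule whose $A^{\otimes d}$-socle consists only of copies of $S^{\bd} := (S^1)^{\star d_1} \star \dots \star (S^\ell)^{\star d_\ell}$. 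The key point is that the stabilizer in $\SS_d$ of the $A^{\otimes d}$-isomorphism class of $S^{\bd}$ (equivalently, of the tuple $(1^{d_1}, \dots, \ell^{d_\ell})$ up to the place-permutation action) is exactly $\SS_{\bd} = \SS_{\tuplambda}$, up to the subtlety of type-$\typeQ$ twists which is absorbed in \cref{E:TwistedAtensordSimples}. Then a standard Mackey / Clifford argument (as in the theory of induction from a subgroup whose induced module has "disjoint twists") shows $D^{\tuplambda} = \Ind_{\SS_{\tuplambda}\wr A}^{\SS_d \wr A} N$ is simple: one computes $\End_{\SS_d \wr A}(D^{\tuplambda})$ via the Mackey decomposition \cref{E:DlambdaDecomposition} and checks that the only coset contributing is the trivial one, because for nontrivial $\sigma \in \SS_d / \SS_{\tuplambda}$ the $A^{\otimes d}$-supermodules ${}^{\sigma}N$ and $N$ have disjoint sets of $A^{\otimes d}$-constituents (their constituent tuples lie in different $\SS_{\tuplambda}$-orbits). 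This forces $\End_{\SS_d \wr A}(D^{\tuplambda}) = \End_{\SS_{\tuplambda}\wr A}(N)$, which is $1|0$-dimensional if $\delta(\tuplambda) = 0$ and $1|1$-dimensional if $\delta(\tuplambda)=1$ — giving both simplicity and the type statement by the $\End$-characterization recalled in \cref{SS:RepsofSemisimpleSuperalgebras}. Here one must be a little careful about parity-preserving versus arbitrary homomorphisms and about the type of $N$ itself, which was determined in \cref{SSS:simplesforYoungsubalgebras} to be $\typeM$ or $\typeQ$ according to $\delta(\tuplambda)$.

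\textbf{Step 2: Pairwise non-isomorphism.} If $D^{\tuplambda} \cong D^{\tupmu}$ (possibly up to parity shift) as $\SS_d \wr A$-supermodules, then by the observation following \cref{E:DlambdaDecomposition} they have the same set of $A^{\otimes d}$-constituents, which determines $\bd = (\lVert\lambda^{(1)}\rVert, \dots)$ and hence $\SS_{\tuplambda} = \SS_{\tupmu}$. Restricting the isomorphism and projecting onto the $S^{\bd}$-isotypic component recovers an isomorphism $D^{\lambda^{(1)}} \star \dots \star D^{\lambda^{(\ell)}} \cong D^{\mu^{(1)}} \star \dots \star D^{\mu^{(\ell)}}$ of $\SS_{\tuplambda}\wr A$-supermodules; then the classification of simples for the Young subalgebra (which reduces via idempotent truncation to the known classifications of simples for $\k\SS_d$ by partitions and for $\Ser_d$ by strict partitions) forces $\lambda^{(\alpha)} = \mu^{(\alpha)}$ for all $\alpha$.

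\textbf{Step 3: Completeness.} Let $T$ be any simple $\SS_d \wr A$-supermodule. Restricting to $A^{\otimes d}$, it contains some simple summand $S^{\balpha} = S^{\alpha_1}\star\dots\star S^{\alpha_d}$, and by closure under the $\SS_d$-action we may (after permuting) assume $\balpha = (1^{d_1}, \dots, \ell^{d_\ell})$ for some composition $\bd$. Thus the idempotent $\mathcal{e}$ from \cref{SSS:simplesforYoungsubalgebras} (built from the Young subgroup $\SS_{\bd}$) does not annihilate $T$; Frobenius reciprocity / the adjunction between restriction and induction from $\SS_{\bd}\wr A$ then yields a nonzero map $\Ind_{\SS_{\bd}\wr A}^{\SS_d\wr A}(T') \to T$ for some simple $\SS_{\bd}\wr A$-summand $T'$ of $\Res T$; since $T'$ contains an $S^{\bd}$-constituent, $T' \cong D^{\lambda^{(1)}}\star\dots\star D^{\lambda^{(\ell)}}$ for some $\tuplambda \in \Lambda^A_+(d)$ with $(\lVert\lambda^{(\alpha)}\rVert) = \bd$; hence $T$ is a quotient of $D^{\tuplambda}$, and by simplicity of $D^{\tuplambda}$ (Step 1) we get $T \cong D^{\tuplambda}$. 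This shows $\{D^{\tuplambda}\}$ is complete; combined with Steps 1–2 it is a complete irredundant set.

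I expect the main obstacle to be Step 1 — more precisely, tracking the type-$\typeQ$ bookkeeping correctly: the twisting isomorphisms in \cref{E:TwistedAtensordSimples} involve parity shifts, the simple $N$ for the Young subalgebra can itself be of type $\typeQ$, and the endomorphism algebra computation for the induced module must correctly combine $\End_{\SS_{\tuplambda}\wr A}(N)$ with the (trivial) contribution of the Mackey decomposition while respecting the convention that all (not just even) homomorphisms are allowed. The disjointness-of-twists input needed to kill the nontrivial Mackey cosets is genuinely where the hypothesis "$A$ semisimple" and the combinatorial structure of $\Lambda^A_+(d)$ (strictness of $\lambda^{(\alpha)}$ exactly when $S^\alpha$ is type $\typeQ$) get used, so that step deserves the most care; the rest is a routine adaptation of the classical wreath-product representation theory.
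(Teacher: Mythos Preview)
Your proposal is correct and takes essentially the same approach as the paper: both use Frobenius reciprocity together with the coset decomposition \cref{E:DlambdaDecomposition} and the disjointness of $A^{\otimes d}$-constituents across cosets to reduce the relevant $\Hom$-spaces to $\Hom_{\SS_{\bd}\wr A}(D^{\lambda^{(1)}}\star\cdots\star D^{\lambda^{(\ell)}}, D^{\mu^{(1)}}\star\cdots\star D^{\mu^{(\ell)}})$. The only organizational difference is that the paper computes $\Hom_{\SS_d\wr A}(D^{\tuplambda},D^{\tupmu})$ in one stroke (getting simplicity, type, and irredundancy simultaneously from its graded dimension via Schur's Lemma), whereas you separate simplicity/type from non-isomorphism into Steps~1 and~2.
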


\begin{proof}

First, let $M$ be a simple $\SS_{d} \wr A$-supermodule.  We can choose a composition of $d$  indexed by $\Xi_{A}$, $\bd  = (d_{1}, \dotsc , d_{\ell})$, for which $S^{\bd} = (S^{1})^{\star d_{1}} \star \dotsb \star (S^{\ell})^{\star d_{\ell}}$ appears as an $A^{\otimes d}$-subsupermodule of $M$.   Since $\SS_{\bd} \wr A$ is a semisimple superalgebra, the discussion at the end of \cref{SSS:simplesforYoungsubalgebras} implies there is a $\tuplambda = (\lambda^{(1)}, \dotsc , \lambda^{(\ell)}) \in \Lambda^{A}_{+}(d)$ with $(\lVert \lambda^{(1)}\rVert, \dotsc ,\lVert \lambda^{(\ell)}\rVert)  = \bd$  for which $D^{\lambda^{(1)}} \star \dotsb \star D^{\lambda^{(\ell)}}$  appears as a $\SS_{\tuplambda}\wr A = \SS_{\bd } \wr A$-subsupermodule of $M$.  Frobenius reciprocity along with the simplicity of $M$ implies there is a homogenous surjective $\SS_{d} \wr A$-supermodule homomorphism 
\[
D^{\tuplambda} \twoheadrightarrow M.
\]  In short, every simple $\SS_{d}\wr A$-supermodule is the quotient of $D^{\tuplambda}$ for some $\tuplambda \in \Lambda^{A}_{+}(d)$.

Next, we show \cref{E:WreathProductSimpleModules} forms an irredundant set of simple $\SS_{d}\wr A$-supermodules by studying $\Hom_{\SS_{d}\wr A}(D^{\tuplambda}, D^{\tupmu})$. First, obviously for there to be nontrivial homomorphisms it must be that $D^{\tuplambda}$ and $D^{\tupmu}$  have $A^{\otimes d}$ composition factors in common.  From \cref{E:DlambdaDecomposition} it follows that  $(\lVert \lambda^{(1)} \rVert, \dotsc , \lVert \lambda^{(\ell)} \rVert) = (\lVert \mu^{(1)} \rVert, \dotsc , \lVert \mu^{(\ell)}\rVert)$.  Assume this is the case and let $\bd = (d_{1}, \dotsc , d_{\ell})$ be this composition of $d$.  By Frobenius reciprocity we have 
\begin{equation}\label{E:Hom1}
\Hom_{\SS_{d}\wr A}(D^{\tuplambda}, D^{\tupmu}) \cong \Hom_{\SS_{\bd} \wr A}( D^{\lambda^{(1)}} \star \dotsb \star D^{\lambda^{(\ell)}}, D^{\tupmu}).
\end{equation}  By considering $A^{\otimes d}$ composition factors and \cref{E:DlambdaDecomposition} it follows that the image of a $\SS_{\bd}\wr A$-supermodule homomorphism $D^{\lambda^{(1)}} \star \dotsb \star D^{\lambda^{(\ell)}} \to D^{\tupmu}$ must have its image contained within the subspace $1_{\k\SS_{d}} \otimes D^{\mu^{(1)}} \star\dotsb \star D^{\mu^{(\ell)}}$.  That is, 
\begin{equation}\label{E:Hom2}
\Hom_{\SS_{\bd} \wr A}( D^{\lambda^{(1)}} \star \dotsb \star D^{\lambda^{(\ell)}}, D^{\tupmu}) \cong \Hom_{\SS_{\bd} \wr A}( D^{\lambda^{(1)}} \star \dotsb \star D^{\lambda^{(\ell)}}, D^{\mu^{(1)}}\star \dotsb \star D^{\mu^{(\ell)}}).
\end{equation}

Combining \cref{E:Hom1,E:Hom2} along with the discussion in \cref{SSS:simplesforYoungsubalgebras}  demonstrates that $\Hom_{\SS_{d}\wr A}(D^{\tuplambda}, D^{\tupmu})=0$ whenever $\tuplambda \neq \tupmu$.  When $\tuplambda =\tupmu$ this endomorphism space has graded dimension $1|0$ whenever $\delta (\tuplambda ) =0$ and graded dimension $1|1$ whenever  $\delta (\tuplambda ) =1$.  By Schur's Lemma it follows that the set 
\[
\left\{D^{\tuplambda} \mid \tuplambda \in \Lambda^{A}_{+}(d) \right\}
\] is an irredundant set of simple $\SS_{d}\wr A$-supermodules and the type is as asserted.  The first paragraph implies this is also a complete set of simple supermodules.  This proves the claim.
\end{proof}

\begin{remark}\label{R:ArbitraryA}  While we will not use this fact, it is worth noting that the above construction gives the simple supermodules for $\SS_{d} \wr A$ whenever $A$ is a finite-dimensional superalgebra over an algebraically closed field of characteristic zero.  Write $J(B)$ for the Jacobson radical of a superalgebra $B$. By \cite[Lemma 2.6]{BK} this is a homogeneous ideal and coincides with the Jacobson radical of $B$ as an algebra. The arguments of \cite[Section 3]{CT}  show that
\[
(\SS_{d} \wr A)/J(\SS_{d} \wr A) \cong \SS_{d} \wr \left(A/J(A) \right).
\]  From this we see the semisimplicity condition can be dropped.
\end{remark}

Since dualizing is a contravariant equivalence between left- and right-supermodules which preserves type, the following is immediate.

\begin{corollary}\label{C:WreathProductRightSimples}  The set 
\[
\left\{D^{\tuplambda , *} := (D^{\tuplambda})^{*} \mid \tuplambda \in \Lambda^{A}_{+}(d) \right\}
\] is a complete, irredundant set of simple right $\SS_{d} \wr A$-supermodules.  The supermodule $D^{\tuplambda , *}$ is of type $\typeM$ if $\delta (\tuplambda ) =0$ and of type $\typeQ$ if $\delta(\tuplambda ) =1$.
\end{corollary}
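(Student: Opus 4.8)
The plan is to deduce the result directly from \cref{P:WreathProductAlgebraSimples} together with two standard facts about duality of supermodules over a finite-dimensional superalgebra. First I would set up the duality functor: for a finite-dimensional superalgebra $B$ over our algebraically closed field $\k$ of characteristic zero, the $\k$-linear dual $M \mapsto M^{*}$ sends a left $B$-supermodule to a right $B$-supermodule (with the action $(m^{*} \cdot b)(m) = (-1)^{\overline{b}\,\overline{m^{*}}} m^{*}(bm)$, in keeping with the paper's convention of writing maps on the side opposite the action), and is a contravariant equivalence from $B\text{-}\mathbf{smod}$ to $\mathbf{smod}\text{-}B$. Applying this to $B = \SS_{d}\wr A$ is exactly what is needed, since \cref{P:WreathProductAlgebraSimples} already provides a complete irredundant set of simple \emph{left} supermodules.

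The key steps, in order, are: (1) recall that a contravariant equivalence of supercategories carries a complete irredundant set of simple objects to a complete irredundant set of simple objects, so that $\{(D^{\tuplambda})^{*} \mid \tuplambda \in \Lambda^{A}_{+}(d)\}$ is a complete irredundant set of simple right $\SS_{d}\wr A$-supermodules; (2) recall that dualizing preserves type --- concretely, $\End_{B}(M)$ and $\End_{B}(M^{*})$ are anti-isomorphic as superalgebras, hence have the same graded dimension, so $M$ is of type $\typeM$ (graded dimension $1|0$ of its endomorphism superalgebra) if and only if $M^{*}$ is, and likewise for type $\typeQ$ (graded dimension $1|1$); (3) combine with the type assignment in \cref{P:WreathProductAlgebraSimples}, namely that $D^{\tuplambda}$ is of type $\typeM$ when $\delta(\tuplambda)=0$ and of type $\typeQ$ when $\delta(\tuplambda)=1$, to conclude that $D^{\tuplambda,*}$ has the same type, governed by the same $\delta(\tuplambda)$.

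I would write this up as a short paragraph invoking \cref{P:WreathProductAlgebraSimples} and the standard duality properties (citing, say, \cite{BK, CW} as the paper does for the general representation theory of finite-dimensional superalgebras), since the statement explicitly says ``the following is immediate.'' No genuine obstacle is expected here; the only point requiring a word of care is the parity-preservation of the duality functor and the fact that the endomorphism superalgebra of the dual is the opposite superalgebra of the original endomorphism superalgebra --- one must check that passing to the opposite superalgebra does not change whether the graded dimension is $1|0$ or $1|1$, which is clear since taking opposites fixes the underlying graded vector space. Thus the entire proof is a one-line appeal to \cref{P:WreathProductAlgebraSimples} plus the observation that $\k$-linear duality is a type-preserving contravariant equivalence between left and right $\SS_{d}\wr A$-supermodules.
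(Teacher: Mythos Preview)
Your proposal is correct and matches the paper's approach exactly: the paper simply states ``Since dualizing is a contravariant equivalence between left- and right-supermodules which preserves type, the following is immediate,'' and you have spelled out precisely why that sentence suffices. Your elaboration on why duality preserves type (via the anti-isomorphism of endomorphism superalgebras) is a faithful unpacking of the paper's one-line justification.
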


\subsection{Schur--Weyl--Berele--Regev--Sergeev duality}\label{SS:StrongMultiplicityFreeSWduality}  In this section we recall some well-known strong multiplicity-free decompositions.

Fix $M, N \geq 0$ and $d \geq 1$.  Let 
\[
\Lambda_{+}(M|N,d)^{\text{hook}} = \left\{\lambda = (\lambda_{1}, \dotsc , \lambda_{M+N}) \in \Lambda_{+}^{0}(M+N,d) \mid  \lambda_{M+1} \leq N \right\}.
\] That is, these are the partitions $\lambda$ of $d$ with not more than $M+N$ parts that also satisfy $\lambda_{M+1} \leq N$ (the so-called \emph{hook condition}). 

Let $W_{M|N}= \k^{M|N}$. For any $d \geq 1$, let $\SS_{d}$ act on the right on $W_{M|N}^{\otimes d}$ by signed place permutation and let 
\[
S(M|N,d) = \End_{\SS_{d}}\left(W_{M|N}^{\otimes d} \right)
\]
be the type $A$ Schur superalgebra.  For a partition $\lambda \in \Lambda_{+}^{0}(d)$ recall we write $T^{0, \lambda}$ for the simple left $\k \SS_{d}$-supermodule labeled by $\lambda$; it is viewed as a type $\typeM$ supermodule concentrated in parity $\bar{0}$.

Applying the double centralizer theorem for semisimple superalgebras (e.g., see \cite[Theorem 3.10]{CW}), there is a decomposition into simple $(S(M|N,d), \k  \SS_{d})$-bisupermodules:
\begin{equation}\label{E:TypeASchur-WeylDuality}
W_{M|N}^{\otimes d} \cong \bigoplus_{\lambda \in \Lambda_{+}(M|N,d)^{\text{hook}}}   L_{\gl (M|N)}(\lambda) \star (T^{0, \lambda})^{*}.
\end{equation}
The set 
\[
\left\{L_{\gl (M|N)}(\lambda) \mid  \lambda \in \Lambda_{+}(M|N,d)^{\text{hook}} \right\}
\] is a complete, irredundant set of simple $S(M|N,d)$-supermodules.  Every $L_{\gl (M|N)}(\lambda)$ is of type $\typeM$.  Correspondingly, for every $\lambda \in \Lambda_{+}(M|N, d)^{\text{hook}}$ we declare $\delta (\lambda) = 0$.
type

Next, fix $N \geq 0$ and $d \geq 1$.  Set $\Lambda_{+}(N,d)^{\text{strict}}$ to be the set of all strict partitions of $d$ with not more than $N$ parts.   Let $W_{N} = \k^{N|N}$.  There is a right action of $\Ser_{d}$, the Sergeev superalgebra, on $(W_{N})^{\otimes d}$.  Let 
\[
Q(N,d) = \End_{\Ser_{d}}\left(W_{N}^{\otimes d} \right)
\]
be the type $Q$ Schur superalgebra.  For a strict partition $\lambda$ of $d$ recall we write $T^{1, \lambda}$ for the simple left $\Ser_{d}$-supermodule labeled by $\lambda$.  For strict partitions define 
\[
\delta(\lambda) = \begin{cases} 0, & \text{ if $\ell(\lambda)$ is even}; \\
                                 1, & \text{ if $\ell(\lambda)$ is odd}.
\end{cases}
\] The supermodule $T^{1, \lambda}$ is of type $\typeM$ if $\delta (\lambda) = 0$ and of type $\typeQ$ if $\delta (\lambda) =1$.

Applying the double centralizer theorem for semisimple superalgebras (e.g., see \cite[Theorem 3.46]{CW}), there is a decomposition into simple $(Q(N,d), \Ser_{d})$-bisupermodules: 
\begin{equation}\label{E:TypeQSchur-WeylDuality}
W_{N}^{\otimes d} \cong \bigoplus_{\lambda \in \Lambda_{+}(N,d)^{\text{strict}}}   L_{\mathfrak{q}(N)}(\lambda) \star (T^{1, \lambda})^{*}.
\end{equation}
The set 
\[
\left\{L_{\mathfrak{q}(N)}(\lambda) \mid  \lambda \in \Lambda_{+}(N,d)^{\text{strict}} \right\}
\] is a complete, irredundant set of simple $Q(N,d)$-supermodules.   The supermodules $L_{\mathfrak{q}(N)}(\lambda)$ and $T^{1, \lambda}$ have the same type; namely, they are of type $\typeM$ if $\delta (\lambda) = 0$ and of type $\typeQ$ if $\delta(\lambda)=1$.

\subsection{Polynomial representations and generalized Schur--Weyl duality}\label{SS:PolyReps}

We next study how the previous section generalizes when $A$ is an arbitrary finite-dimensional semisimple superalgebra. The double centralizer theorem for semisimple superalgebras (see \cite[Proposition 3.5]{CW}) combined with the classification of simple right $\SS_{d} \wr A$-supermodules given in \cref{C:WreathProductRightSimples}  yields the following strong multiplicity-free result.
\begin{proposition}\label{L:SWMultiplicityFree}  If $A$ be a finite-dimensional semisimple superalgebra, then there is a subset 
\[
\mathbb{X}^{A}(n,d) \subseteq \Lambda^{A}_{+}(d)
\]
such that there is a decomposition
\[
V_{n}^{\otimes d}   \cong \bigoplus_{\tuplambda \in \mathbb{X}^{A}(n,d)} L^{A}_{n}(\tuplambda) \star D^{\tuplambda, *}
\] into simple $(S^{A}(n,d), \SS_{d} \wr A)$-bisupermodules.

The set 
\[
\left\{L(\tuplambda ):= L_{n}^{A}(\tuplambda )\mid \tuplambda \in \mathbb{X}^{A}(n,d)  \right\}
\] is a complete, irredundant set of simple $S^{A}(n,d)$-supermodules.  The supermodule $L(\tuplambda )$ is of type $\typeM$ if $\delta (\tuplambda ) =0$ and of type $\typeQ$ if $\delta (\tuplambda ) =1$.
\end{proposition}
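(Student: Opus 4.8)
The plan is to deduce \cref{L:SWMultiplicityFree} directly from the double centralizer theorem for finite-dimensional semisimple superalgebras, applied to the bisupermodule $V_{n}^{\otimes d}$. The key inputs already assembled in the excerpt are: first, that $\SS_{d}\wr A$ is semisimple when $A$ is (see \cref{R:ArbitraryA}); second, that $S^{A}(n,d) \cong \End_{\SS_{d}\wr A}(V_{n}^{\otimes d})$ by \cite[Lemma 5.7]{EK}; and third, the classification of simple right $\SS_{d}\wr A$-supermodules $\{D^{\tuplambda,*} \mid \tuplambda \in \Lambda^{A}_{+}(d)\}$ together with their types from \cref{C:WreathProductRightSimples}. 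Since $\k$ is algebraically closed of characteristic zero, $V_{n}^{\otimes d}$ is a finite-dimensional supermodule over the semisimple superalgebra $\SS_{d}\wr A$, so it decomposes (in the super sense) as a direct sum of simples, each appearing with some multiplicity space.

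First I would invoke the double centralizer theorem in the form of \cite[Proposition 3.5]{CW}: for a finite-dimensional semisimple superalgebra $B$ (here $B = \SS_{d}\wr A$ acting on the right on $M = V_{n}^{\otimes d}$), setting $C = \End_{B}(M)$, the $(C,B)$-bisupermodule $M$ decomposes as $\bigoplus_{T} L_{T}\star T$, where the sum ranges over the isomorphism classes of simple right $B$-supermodules $T$ that actually occur in $M$, and $\{L_{T}\}$ is precisely the complete irredundant set of simple left $C$-supermodules, with $L_{T}$ and $T$ having matching type. One must be a little careful with the $\star$ notation when type-$\typeQ$ factors are involved — this is exactly the bookkeeping encoded in \cref{SS:RepsofSemisimpleSuperalgebras} and \cref{C:WreathProductRightSimples} — but since $\delta(\tuplambda)$ records the type of $D^{\tuplambda,*}$, the decomposition is forced to take the stated form once we define $\mathbb{X}^{A}(n,d)$ to be exactly the set of $\tuplambda \in \Lambda^{A}_{+}(d)$ for which $D^{\tuplambda,*}$ occurs as a summand of $V_{n}^{\otimes d}$, and define $L^{A}_{n}(\tuplambda)$ to be the corresponding multiplicity space (a simple left $C = S^{A}(n,d)$-supermodule). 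The statements about $\{L(\tuplambda)\}$ being a complete irredundant set of simple $S^{A}(n,d)$-supermodules and about the type of $L(\tuplambda)$ matching $\delta(\tuplambda)$ are then immediate consequences of the double centralizer theorem's output.

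The step I expect to require the most care — though it is more bookkeeping than genuine obstacle — is the precise handling of the $\star$ product and parity shifts in the type-$\typeQ$ case, so that the formula ``$L^{A}_{n}(\tuplambda)\star D^{\tuplambda,*}$'' means the right thing as a bisupermodule and so that irredundancy of $\{L(\tuplambda)\}$ genuinely holds rather than only up to parity shift. This is controlled by the conventions already fixed in \cref{SS:RepsofSemisimpleSuperalgebras} (where the simple $A\otimes B$-supermodule $S\star T$ is pinned down by an arbitrary fixed choice when both factors are type $\typeQ$) and by Schur's Lemma for superalgebras, which gives $\End$ of a simple supermodule as $1|0$-dimensional in type $\typeM$ and $1|1$-dimensional in type $\typeQ$; this is exactly the computation carried out in the proof of \cref{P:WreathProductAlgebraSimples}, so one can cite that reasoning. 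Everything else is a direct application of cited results, so the proof is short.

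\begin{proof}
Since $\k$ is an algebraically closed field of characteristic zero and $A$ is finite-dimensional and semisimple, \cref{R:ArbitraryA} (or the hypothesis that $A$ is semisimple together with semisimplicity of wreath products of semisimple algebras) shows that $\SS_{d}\wr A$ is a finite-dimensional semisimple superalgebra. By \cite[Lemma 5.7]{EK} we have $S^{A}(n,d) \cong \End_{\SS_{d}\wr A}(V_{n}^{\otimes d})$. Thus $V_{n}^{\otimes d}$ is a finite-dimensional $(S^{A}(n,d), \SS_{d}\wr A)$-bisupermodule in which $\SS_{d}\wr A$ acts on the right through a semisimple superalgebra with endomorphism superalgebra exactly $S^{A}(n,d)$.

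Applying the double centralizer theorem for finite-dimensional semisimple superalgebras (see \cite[Proposition 3.5]{CW}, cf.\ also \cite[Theorem 3.10]{CW}), the $(S^{A}(n,d), \SS_{d}\wr A)$-bisupermodule $V_{n}^{\otimes d}$ decomposes as a direct sum, over the isomorphism classes of simple right $\SS_{d}\wr A$-supermodules occurring in $V_{n}^{\otimes d}$, of terms $L\star T$ where $T$ runs over those simples and the $L$ form a complete, irredundant set of simple left $S^{A}(n,d)$-supermodules, with $L$ and $T$ of the same type. By \cref{C:WreathProductRightSimples}, the simple right $\SS_{d}\wr A$-supermodules are exactly $\{D^{\tuplambda,*} \mid \tuplambda \in \Lambda^{A}_{+}(d)\}$, with $D^{\tuplambda,*}$ of type $\typeM$ when $\delta(\tuplambda)=0$ and of type $\typeQ$ when $\delta(\tuplambda)=1$. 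Let
\[
\mathbb{X}^{A}(n,d) = \{\tuplambda \in \Lambda^{A}_{+}(d) \mid D^{\tuplambda,*} \text{ occurs as a summand of } V_{n}^{\otimes d}\},
\]
and for $\tuplambda \in \mathbb{X}^{A}(n,d)$ let $L^{A}_{n}(\tuplambda)$ denote the corresponding simple left $S^{A}(n,d)$-supermodule furnished by the double centralizer theorem. This gives the decomposition
\[
V_{n}^{\otimes d} \cong \bigoplus_{\tuplambda \in \mathbb{X}^{A}(n,d)} L^{A}_{n}(\tuplambda) \star D^{\tuplambda,*}
\]
into simple bisupermodules, where the $\star$ product and any parity normalizations are taken according to the conventions of \cref{SS:RepsofSemisimpleSuperalgebras}.

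Finally, the double centralizer theorem also asserts that $\{L(\tuplambda) := L^{A}_{n}(\tuplambda) \mid \tuplambda \in \mathbb{X}^{A}(n,d)\}$ is a complete, irredundant set of simple $S^{A}(n,d)$-supermodules, and that $L(\tuplambda)$ has the same type as $D^{\tuplambda,*}$; by \cref{C:WreathProductRightSimples} this type is $\typeM$ if $\delta(\tuplambda)=0$ and $\typeQ$ if $\delta(\tuplambda)=1$. (Irredundancy and the type statement follow from Schur's Lemma for superalgebras exactly as in the proof of \cref{P:WreathProductAlgebraSimples}: $\End$ of a simple supermodule is $1|0$-dimensional in type $\typeM$ and $1|1$-dimensional in type $\typeQ$.) This completes the proof.
\end{proof}
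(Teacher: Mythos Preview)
Your proof is correct and follows essentially the same approach as the paper. The paper's proof is just the one-sentence remark preceding the proposition: the result follows from the double centralizer theorem for semisimple superalgebras \cite[Proposition 3.5]{CW} combined with the classification of simple right $\SS_{d}\wr A$-supermodules in \cref{C:WreathProductRightSimples}; you have simply spelled out the details (including the identification $S^{A}(n,d)\cong\End_{\SS_{d}\wr A}(V_{n}^{\otimes d})$ from \cite[Lemma 5.7]{EK} and the semisimplicity of $\SS_{d}\wr A$) that the paper leaves implicit.
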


The obvious problem is to determine the set $\mathbb{X}^{A}(n,d)$.  We do this next.  For each $\alpha \in \Xi_{A}$, say $S^{\alpha}$ has graded dimension $m_{\alpha}|n_{\alpha}$.  For a fixed $n \geq 1$, let $M_{\alpha}=nm_{\alpha}$ and $N_{\alpha}=nn_{\alpha}$.  For each $\alpha \in \Xi_{A}$, and fixed $n \geq 1$  and $d \geq 0$, let 
\[
\Lambda^{\alpha}_{+}(n,d) = \begin{cases}   \Lambda_{+}(M_{\alpha}|N_{\alpha},d)^{\text{hook}}, & \text{ if $\delta(\alpha)=0$};\\
                                       \Lambda_{+}(N_{\alpha},d)^{\text{strict}}, & \text{ if $\delta(\alpha) = 1$}.
\end{cases} 
\]
For each $\alpha \in \Xi_{A}$ and fixed $n \geq 1$, let
\[
\Lambda^{\alpha}_{+}(n,\bullet) = \bigcup_{d \geq 0} \Lambda_{+}^{\alpha}(n,d).
\]  In other words, elements of $\Lambda^{\alpha}_{+}(n, \bullet)$ are partitions which have a limited number of nonzero parts where the limit depends on $S^{\alpha}$ and $n$, and which are hook partitions if $S^{\alpha}$ is of type $\typeM$ and are strict partitions if $S^{\alpha}$ is of type $\typeQ$. 

Finally,
for fixed $n \geq 1$ and $d \geq 0$, set 
\[
\Lambda^{A}_{+}(n,d) = \left\{\tuplambda = (\lambda^{(1)}, \dotsc , \lambda^{(\ell)}) \in \prod_{\alpha \in \Xi_{A}} \Lambda^{\alpha}_{+}(n,\bullet) \;  \left| \; \sum_{i=1}^{\ell} \lVert \lambda^{(i)} \rVert = d  \right. \right\}.
\] In other words, $\Lambda^{A}_{+}(n,d)$ is the set of multipartitions of $d$ indexed by the set $\Xi_{A}$ where each $\lambda^{(\alpha)}$  has a limited number of nonzero parts where the limit depends on $S^{\alpha}$ and $n$, and which is a hook partition if $S^{\alpha}$ is of type $\typeM$ and is a strict partition if $S^{\alpha}$ is of type $\typeQ$.  For a fixed composition of $d$ with parts indexed by $\Xi_{A}$, $\bd =(d_{1}, \dotsc , d_{\ell})$, let 
\[
\Lambda^{A}_{+}(n,\bd) = \left\{\tuplambda = (\lambda^{(1)}, \dotsc , \lambda^{(\ell)}) \in \Lambda^{A}_{+}(n,d) \;  \left| \; \lVert \lambda^{(\alpha)} \rVert = d_{\alpha} \text{ for $\alpha \in \Xi_{A}$} \right. \right\}.
\]
Obviously, $\Lambda^{A}_{+}(n,\bd )\subseteq \Lambda^{A}_{+}(n,d) \subseteq \Lambda^{A}_{+}(d)$ and 
\[
\bigcup_{\bd} \Lambda^{A}_{+}(n,\bd ) = \Lambda^{A}_{+}(n,d),
\]
where the union is over all compositions of $d$ indexed by $\Xi_{A}$.

The next result confirms $\Lambda^{A}_{+}(n,d)$ is the set we seek.

\begin{theorem}\label{T:LabellingSimpleSAndmodules}  If $A$ is a finite-dimensional semisimple $\k$-superalgebra, then for all $n \geq 1$ and $d \geq 0$,
\[
\mathbb{X}^{A}(n,d) = \Lambda^{A}_{+}(n,d).
\]

That is, the set 
\[
\left\{ L(\tuplambda):=L_{n}^{A}(\tuplambda) \; \left| \;  \tuplambda \in \Lambda^{A}_{+}(n,d) \right. \right\}
\] is a complete, irredundant set of simple $S^{A}(n,d)$-supermodules.  The supermodule $L_{n}^{A}(\tuplambda)$ is of type $\typeM$ if $\delta (\tuplambda ) =0$ and of type $\typeQ$ if $\delta (\tuplambda )=1$.
\end{theorem}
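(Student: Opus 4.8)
The plan is to decompose the tensor space \(V_n^{\otimes d}\) as a \((S^A(n,d), \SS_d \wr A)\)-bisupermodule by combining the known strong multiplicity-free decompositions for the "local" pieces coming from each simple \(A\)-supermodule. The starting point is the decomposition \cref{E:SemisimpleDecomposition} of \(A\) into simple superalgebras, one for each \(\alpha \in \Xi_A\). Via the classical isotypic decomposition and the compatibility with the \(\SS_d\)-action, the \(A^{\otimes d}\)-module structure on \(V_n^{\otimes d}\) refines according to how the \(d\) tensor factors distribute among the simple \(A\)-supermodules; concretely, for each composition \(\bd = (d_1, \dotsc, d_\ell)\) of \(d\) indexed by \(\Xi_A\), there is a piece of \(V_n^{\otimes d}\) built by inducing from the Young subalgebra \(\SS_{\bd} \wr A\) the tensor product \((V_n^{(1)})^{\otimes d_1} \otimes \dotsb \otimes (V_n^{(\ell)})^{\otimes d_\ell}\), where \(V_n^{(\alpha)} = ({}_{?}V_n)\)-style pieces realize \(n\) copies of \(S^\alpha\). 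Each factor \((V_n^{(\alpha)})^{\otimes d_\alpha}\) is, as a \((S^A(n,d_\alpha)\text{-piece}, \SS_{d_\alpha} \wr A)\)-bimodule, governed by Schur--Weyl--Berele--Regev duality when \(S^\alpha\) is of type \(\typeM\) (using \cref{E:TypeASchur-WeylDuality} with the superspace \(W_{M_\alpha|N_\alpha} = \k^{M_\alpha|N_\alpha}\), \(M_\alpha = nm_\alpha\), \(N_\alpha = nn_\alpha\)), and by Sergeev duality when \(S^\alpha\) is of type \(\typeQ\) (using \cref{E:TypeQSchur-WeylDuality} with \(W_{N_\alpha} = \k^{N_\alpha|N_\alpha}\)).

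First I would set up the identification \(V_n^{\otimes d} \cong \bigoplus_{\bd} \SS_d \wr A \otimes_{\SS_{\bd}\wr A} \bigotimes_\alpha (V_n^{(\alpha)})^{\otimes d_\alpha}\) as \((S^A(n,d), \SS_d \wr A)\)-bisupermodules, where the sum runs over \(\Xi_A\)-indexed compositions \(\bd\) of \(d\); this uses the semisimplicity of \(A^{\otimes d}\), the type-product rules of \cref{SS:RepsofSemisimpleSuperalgebras}, and the reciprocity/induction formalism of \cref{SSS:simplesforwreathproducts}. Next I would feed in the Schur--Weyl and Sergeev decompositions \cref{E:TypeASchur-WeylDuality,E:TypeQSchur-WeylDuality} factor by factor, taking care that when \(S^\alpha\) is of type \(\typeQ\) the \(\star\)-product is the correct simple quotient of the tensor product. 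Then, assembling these over all \(\alpha\) via the tensor-product rule and inducing up along \(\SS_{\bd} \wr A \hookrightarrow \SS_d \wr A\), the simple \(\SS_d\wr A\)-summands that appear are precisely the \(D^{\tuplambda,*}\) with \(\tuplambda \in \Lambda^A_+(n, \bd)\) by \cref{E:WreathProductSimpleModules} and \cref{C:WreathProductRightSimples}, with the hook condition emerging from \cref{E:TypeASchur-WeylDuality} and the strictness condition from \cref{E:TypeQSchur-WeylDuality}. Ranging over \(\bd\) and using \(\bigcup_\bd \Lambda^A_+(n,\bd) = \Lambda^A_+(n,d)\) gives \(\mathbb{X}^A(n,d) = \Lambda^A_+(n,d)\), and then \cref{L:SWMultiplicityFree} immediately yields the classification of simple \(S^A(n,d)\)-supermodules, together with the type statement read off from \(\delta(\tuplambda)\).

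I would close by double-checking the type bookkeeping: the type of \(D^{\tuplambda,*}\) is controlled by \(\delta(\tuplambda)\) (\cref{C:WreathProductRightSimples}), and by the tensor-product type rules, the type of \(L(\tuplambda)\) in the bisupermodule decomposition must match; the \(\delta\)-function on \(\Lambda^A_+(d)\) was defined precisely to make \(\delta(\tuplambda) \equiv \sum_\alpha \delta(\lambda^{(\alpha)}) \pmod 2\), so consistency with \cref{E:TypeASchur-WeylDuality,E:TypeQSchur-WeylDuality} is automatic (noting \(\delta = 0\) always in the type \(\typeM\) hook case). The main obstacle is the first step: carefully justifying the Young-subalgebra induction description of \(V_n^{\otimes d}\) as a \textit{bi}supermodule — one must track simultaneously the right \(\SS_d\wr A\)-action (place permutation plus diagonal \(A\)-multiplication) and the left \(S^A(n,d) \cong \End_{\SS_d\wr A}(V_n^{\otimes d})\)-action, and verify that inducing from \(\SS_{\bd}\wr A\) is compatible with both sides, including getting the signs in the \(\star\)-products right when type \(\typeQ\) simples are involved. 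Once that identification is in place, the rest is an application of the cited dualities and the classification of \cref{C:WreathProductRightSimples}.
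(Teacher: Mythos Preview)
Your overall architecture matches the paper's: both arguments reduce to the known Schur--Weyl--Berele--Regev and Sergeev decompositions \cref{E:TypeASchur-WeylDuality,E:TypeQSchur-WeylDuality} after breaking up according to compositions $\bd$ of $d$ indexed by $\Xi_A$, and both then read off $\mathbb{X}^A(n,d) = \Lambda^A_+(n,d)$ from which $D^{\tuplambda,*}$ occur. The paper differs from your proposal in one technical respect, however, and it corresponds precisely to the step you flag as under-justified.

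Rather than attempt a direct induced-bisupermodule decomposition of $V_n^{\otimes d}$, the paper computes the multiplicity of $D^{\tuplambda}$ in $V_n^{\otimes d}$ as a \emph{left} $\SS_d\wr A$-module (after using $A\cong A^*$ to dualize from right to left). Frobenius reciprocity reduces this to a $\Hom$ over $\SS_{\bd}\wr A$, and then the key move is \emph{idempotent truncation}: setting $\mathcal{e} = 1_{\k\SS_{\bd}}\otimes e_1^{\otimes d_1}\otimes\dotsb\otimes e_\ell^{\otimes d_\ell}$ for minimal even idempotents $e_\alpha\in A$, one has $\mathcal{e}(\SS_{\bd}\wr A)\mathcal{e}\cong \SS_{d_1}^{\delta(1)}\otimes\dotsb\otimes\SS_{d_\ell}^{\delta(\ell)}$ (where $\SS^0_d=\k\SS_d$, $\SS^1_d=\Ser_d$), $\mathcal{e}D^{\tuplambda}\cong T^{\delta(1),\lambda^{(1)}}\star\dotsb\star T^{\delta(\ell),\lambda^{(\ell)}}$, and $\mathcal{e}V_n^{\otimes d}\cong\bigotimes_\alpha (e_\alpha V_n)^{\otimes d_\alpha}$ becomes literally the tensor product of the superspaces $W_{M_\alpha|N_\alpha}^{\otimes d_\alpha}$ or $W_{N_\alpha}^{\otimes d_\alpha}$ to which \cref{E:TypeASchur-WeylDuality,E:TypeQSchur-WeylDuality} apply. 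This truncation is exactly the mechanism that makes rigorous your claim that ``each factor $(V_n^{(\alpha)})^{\otimes d_\alpha}$ is governed by'' the classical dualities: without it (or an equivalent Morita argument), $(V_n^{(\alpha)})^{\otimes d_\alpha}$ carries a $\SS_{d_\alpha}\wr A$-action, not a $\k\SS_{d_\alpha}$- or $\Ser_{d_\alpha}$-action, so the cited decompositions do not apply on the nose. The paper's route also sidesteps your ``main obstacle'' entirely, since only the one-sided $\SS_d\wr A$-module structure enters the $\Hom$ computation; the $S^A(n,d)$-side is handled abstractly by \cref{L:SWMultiplicityFree}.
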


\begin{proof}  First, since $A \cong A^{*}$ as $(A,A)$-bisupermodules, we have 
\[
\left(V_{n}^{\otimes d}\right)^{*} \cong V_{n}^{\otimes d}
\] as left $\SS_{d} \wr A$-supermodules.  In particular, $D^{\tuplambda, *}$ appears as a summand of $V_{n}^{\otimes d}$ as a right $\SS_{d} \wr A$-supermodule if and only if $D^{\tuplambda}$ appears in $V_{n}^{\otimes d}$ as a left  $\SS_{d} \wr A$-supermodule.

Now let $\tuplambda = (\lambda^{(1)}, \dotsc , \lambda^{(\ell)}) \in  \Lambda^{A}_{+}(d)$ and let $\bd  = (\lVert\lambda^{(1)} \rVert, \dotsc , \lVert\lambda^{(\ell)}\rVert) = (d_{1}, \dotsc , d_{\ell})$ be the corresponding composition of $d$.  In light of \cref{L:SWMultiplicityFree} and the first paragraph, it suffices to compute the dimension of 
\[
\Hom_{\SS_{d} \wr  A}\left(D^{\tuplambda}, V_{n}^{\otimes d} \right)  \cong \Hom_{\SS_{\tuplambda} \wr A} \left(D^{\tuplambda^{(1)}} \star \dotsb \star D^{\lambda^{(\ell)}}, V_{n}^{\otimes d} \right),
\] where the isomorphism is parity preserving and follows from Frobenius reciprocity.

For $\alpha \in \Xi_{A}$, fix a minimal even idempotent $e_{\alpha} \in A$ so that $Ae_{\alpha}  \cong S^{\alpha}$ as left $A$-supermodules.  Set 
\[
\mathcal{e} =  1_{\k \SS_{\bd}} \otimes e_{1}^{\otimes d_{1}} \otimes \dotsb \otimes e_{\ell}^{\otimes d_{\ell}} \in \SS_{\bd} \wr A.
\]  A direct calculation verifies that 
\[
\mathcal{e} (\SS_{\tuplambda} \wr A) \mathcal{e} \cong \left( \SS_{d_{1}} \wr (e_{1}Ae_{1})\right) \otimes \dotsb  \otimes \left( \SS_{d_{\ell}} \wr (e_{\ell}Ae_{\ell}) \right).
\] 
Contemplating the description of $A$ as matrices given after \cref{E:SemisimpleDecomposition} shows that for all $\alpha \in \Xi_{A}$,
\[
e_{\alpha}Ae_{\alpha} \cong \begin{cases} \k, &\text{ if $\delta(\alpha) = 0$};\\
                                \Cliff, &\text{ if $\delta(\alpha) = 1$}.   %  Q_{2}(\k ) \cong
\end{cases}
\] For short, let $ \SS^{0}_{d} = \k \SS_{d}$ and $ \SS^{1}_{d} = \Ser_{d}$. Then,
\[
\mathcal{e} (\SS_{\tuplambda} \wr A) \mathcal{e} \cong \SS^{\delta(1)}_{d_{1}} \otimes   \dotsb \otimes  \SS^{\delta(\ell)}_{d_{\ell}}.
\] For $z=0,1$ and $\mu \in \Lambda_{+}^{z}(d)$, let $T^{z, \mu}$ be the simple $\SS^{z}_{d}$-supermodule labeled by $\mu$ using the labeling choice given in \cref{SS:RepsofWreathProductSuperalgebras}.

Using the theory of idempotent trunction (e.g., see \cite[Section 6.2]{Green} or see \cite[Section 2]{BK} for the super version), since $\mathcal{e}D^{\tuplambda} \neq 0$  it must be a simple $\mathcal{e}(\SS_{\tuplambda} \wr A)\mathcal{e}$-supermodule of the same type as $D^{\tuplambda}$ and, moreover, there is an isomorphism 
\[
 \Hom_{\SS_{\tuplambda} \wr A} \left(D^{\tuplambda}, V_{n}^{\otimes d} \right) \cong  \Hom_{\mathcal{e}(\SS_{\tuplambda} \wr A)\mathcal{e}} \left(\mathcal{e}D^{\tuplambda}, \mathcal{e} \left( V_{n}^{\otimes d}\right) \right).
\]  Since $\mathcal{e}$ is even the isomorphism is parity preserving. It follows from the discussion in \cref{SS:RepsofWreathProductSuperalgebras} that 
\[
\mathcal{e}D^{\tuplambda} = e_{1}^{\otimes d_{1}} \otimes \dotsb \otimes e_{\ell}^{\otimes d_{\ell}}\left(D^{\lambda^{(1)}} \star \dotsb \star D^{\lambda^{\ell}} \right) \cong T^{\delta(1), \lambda^{(1)}} \star \dotsb \star T^{\delta(\ell), \lambda^{(\ell)}}.
\] Thus we are reduced to determining the multiplicity of $T^{\delta(1), \lambda^{(1)}} \star \dotsb \star T^{\delta(\ell), \lambda^{(\ell)}}$ in $\mathcal{e}\left( {V}_{n}^{\otimes d} \right)$ as $\mathcal{e} \left(   \SS_{d} \wr A \right)\mathcal{e} \cong \SS^{\delta(1)}_{d_{1}} \otimes   \dotsb \otimes  \SS^{\delta(\ell)}_{d_{\ell}}$-supermodules.

For each $\alpha \in \Xi_{A}$, say $S^{\alpha}$ has graded dimension $m_{\alpha}|n_{\alpha}$. Then,
\begin{align}\label{E:eVinitialdecomposition}
\mathcal{e} \left( V_{n}^{\otimes d} \right) & = \left( e_{1}V_{n}\right)^{\otimes d_{1}} \otimes \dotsb \otimes \left( e_{\ell}V_{n}\right)^{\otimes d_{\ell}} \\
& = \left( (e_{1}A)^{\oplus n}\right)^{\otimes d_{1}} \otimes \dotsb \otimes \left( (e_{\ell}A)^{\oplus n}\right)^{\otimes d_{\ell}} \\
& = \left( (\k^{m_{1}|n_{1}})^{\oplus n}  \right)^{\otimes d_{1}} \otimes \dotsb \otimes \left( (\k^{m_{\ell}|n_{\ell}})^{\oplus n} \right)^{\otimes d_{\ell}}.
\end{align}
Let us analyze this supermodule by first studying the tensor factor corresponding to a fixed $\alpha \in \Xi_{A}$.

The first case is if $\delta (\alpha)=0$, $S^{\alpha}$ is of type $\typeM$, and $S^{\alpha}$ has graded dimension $m_{\alpha}|n_{\alpha}$.  Setting $M_{\alpha}=nm_{\alpha}$ and $N_{\alpha}=nn_{\alpha}$, we have $(\k^{m_{\alpha}|n_{\alpha}})^{\oplus n} \cong W_{M_{\alpha}|N_{\alpha}}$ as superspaces and $\SS^{0}_{d_{\alpha}} = \k \SS_{d_{\alpha}}$ acts on $(W_{M_{\alpha}|N_{\alpha}} )^{\otimes d_{\alpha}}$ by signed place permutation as in \cref{E:TypeASchur-WeylDuality}. By \cref{E:TypeASchur-WeylDuality} and the observation at the start of the proof, $T^{0, \lambda^{(\alpha)}}$ appears in $(W_{M_{\alpha}|N_{\alpha}})^{\otimes d_{\alpha} }$ if and only if $\lambda^{(\alpha)} \in \Lambda_{+}(M_{\alpha}|N_{\alpha},d_{\alpha})^{\text{hook}}$; moreover, its multiplicity equals $2^{-\delta (\lambda^{(\alpha)})} \dim_{\k} (L_{\gl (M_{\alpha}|N_{\alpha})}(\lambda^{(\alpha)}))$.  For the remainder of this proof when we are in this case we sometimes write $L^{0}_{M_{\alpha}|N_{\alpha}}(\lambda^{(\alpha)})$ for $L_{\gl (M_{\alpha}|N_{\alpha})}(\lambda^{(\alpha)})$.

The second case is if $\delta (\alpha) = 1$, $S^{\alpha}$ is of type $\typeQ$, and $S^{\alpha}$ has graded dimension $n_{\alpha}|n_{\alpha}$.  Setting $N_{\alpha}=nn_{\alpha}$, we have $(\k^{n_{\alpha}|n_{\alpha}})^{\oplus n} \cong \k^{N_{\alpha}|N_{\alpha}} = W_{N_{\alpha}}$ as superspaces and $\SS^{1}_{d_{\alpha}} = \Ser_{d_{\alpha}}$ acts on $W_{N_{\alpha}}^{\otimes d_{\alpha}}$ as in \cref{E:TypeQSchur-WeylDuality}.  By \cref{E:TypeQSchur-WeylDuality} and the observation at the start of the proof,  $T^{1, \lambda^{(\alpha)}}$ appears in $(W_{N_{\alpha}})^{d_{\alpha}}$ if and only if $\lambda^{(\alpha)} \in \Lambda_{+}(N_{\alpha},d_{\alpha})^{\text{strict}}$; moreover, its multiplicity equals $2^{-\delta (\lambda^{(\alpha)})} \dim_{\k} (L_{\mathfrak{q}(N_{\alpha})}(\lambda^{(\alpha)}))$.  For the remainder of this proof when we are in this case we declare $M_{\alpha}=N_{\alpha}$ and sometimes write $L^{1}_{M_{\alpha}|N_{\alpha}}(\lambda^{(\alpha)})$ for $L_{\mathfrak{q}(N_{\alpha})}(\lambda^{(\alpha)})$.

Applying these two cases to \cref{E:eVinitialdecomposition} yields the $\mathcal{e} \left(  \SS_{d} \wr A\right)  \mathcal{e} \cong \SS^{\delta(1)}_{d_{1}} \otimes \dotsb \otimes  \SS^{\delta(\ell)}_{d_{\ell}}$-supermodule isomorphism
\begin{equation}\label{E:tensorspacedecomposition}
\mathcal{e}\left(V_{n}^{\otimes d} \right) \cong \bigoplus_{\tuplambda = (\lambda^{(1)}, \dotsc , \lambda^{(\ell)}) \in \Lambda_{+}^{A}(n,\bd)} 2^{-h(\tuplambda )}d(\tuplambda) \left(   T^{\delta(1), \lambda^{(1)}} \otimes \dotsb \otimes T^{\delta (\ell), \lambda^{(\ell)}} \right),
\end{equation}
where 
\[
d(\tuplambda) =  \dim_{\k} \left( L^{\delta (1)}_{M_{1}|N_{1}}(\lambda^{(1)}) \otimes \dotsb \otimes L^{\delta(\ell)}_{M_{\ell}|N_{\ell}} (\lambda^{(\ell)}) \right)
\] and 
\[
h(\tuplambda ) = \sum_{i=1}^{\ell} \delta(\lambda^{(i)}).
\]  Recall that the function $\delta$ was defined on hook and strict partitions in \cref{SS:StrongMultiplicityFreeSWduality}.  Also note that the isomorphism in \cref{E:tensorspacedecomposition} is parity preserving.

Finally, applying the tensor product rule from \cref{SS:RepsofSemisimpleSuperalgebras} to \cref{E:tensorspacedecomposition} yields the following decomposition as $\mathcal{e} \left(  \SS_{d} \wr A\right)  \mathcal{e} \cong \SS^{\delta(1)}_{d_{1}} \otimes \dotsb \otimes  \SS^{\delta(\ell)}_{d_{\ell}}$-supermodules:
\[
\mathcal{e}\left(V_{n}^{\otimes d} \right) \cong \bigoplus_{\tuplambda = (\lambda^{(1)}, \dotsc , \lambda^{(\ell)}) \in \Lambda_{+}^{A}(n,\bd)} 2^{\lfloor h(\tuplambda )/2 \rfloor - h(\tuplambda )}d(\tuplambda) \left(    T^{\delta(1), \lambda^{(1)}} \star \dotsb \star T^{\delta (\ell), \lambda^{(\ell)}} \right).
\] 

Combined with the earlier reductions, we conclude that the multiplicity of $D^{\tuplambda, *}$ in $V_{n}^{\otimes d}$ is nonzero if and only if $\tuplambda \in \Lambda^{A}_{+}(n,d)$. When it is nonzero the multiplicity equals $ 2^{\lfloor h(\tuplambda )/2 \rfloor - h(\tuplambda )}d(\tuplambda)$.   The assertions of the theorem follow from this and \cref{L:SWMultiplicityFree}.
\end{proof}

\begin{remark}\label{R:gradedmultiplicites} Given a finite-dimensional semisimple superalgebra $B$, a $B$-supermodule $M$, and a simple $B$-supermodule $S$, the \emph{graded multiplicity} of $S$ in $M$ is defined to be $p|q$ if the $S$-isotypic component of $M$ is isomorphic via a parity preserving map to $S^{\oplus p} \oplus (\Pi S)^{\oplus q}$.  If the reader attends to the graded dimensions in the previous proof they will find that the graded multiplicity of $D^{\tuplambda}$ in $V_{n}^{\otimes d}$ is equal to
\begin{gather*}
2^{\lfloor h(\tuplambda )/2 \rfloor - h(\tuplambda )}  \sdim_{\k} \left( L^{\delta (1)}_{M_{1}|N_{1}}(\lambda^{(1)}) \otimes \dotsb \otimes L^{\delta(\ell)}_{M_{\ell}|N_{\ell}} (\lambda^{(\ell)}) \right) \\
= 2^{-\delta (\tuplambda )}\sdim_{\k} \left( L^{\delta (1)}_{M_{1}|N_{1}}(\lambda^{(1)}) \star \dotsb \star  L^{\delta(\ell)}_{M_{\ell}|N_{\ell}} (\lambda^{(\ell)}) \right).
\end{gather*}
Therefore, 
\[
\sdim_{\k} L^{A}_{n} (\tuplambda ) = \sdim_{\k} \left( L^{\delta (1)}_{M_{1}|N_{1}}(\lambda^{(1)}) \star \dotsb \star  L^{\delta(\ell)}_{M_{\ell}|N_{\ell}} (\lambda^{(\ell)}) \right).
\] for all $\tuplambda \in \Lambda_{+}^{A}(n,d)$.
\end{remark}

This is a numerical foreshadow of the fact $S^{A}(n,d)$ is isomorphic to direct sum of tensor products of Schur superalgebras of types $\mathtt{A}$ and $\mathtt{Q}$.  We show this next.  For $\alpha \in \Xi_{A}$ and $d \geq 0$, say $S^{\alpha}$ has graded dimension $m_{\alpha}|n_{\alpha}$ and let $M_{\alpha}=nm_{\alpha}$ and $N_{\alpha}=nn_{\alpha}$. Set
\[
S^{\alpha}(n,d) = \begin{cases} S(M_{\alpha}|N_{\alpha},d), &\text{ if $\delta(\alpha) = 0$};\\
                           Q(N_{\alpha},d), &\text{ if $\delta(\alpha)=1$}.
\end{cases} 
\]

\begin{proposition}\label{T:SchurAlgebraIsomorphism} Let $\k$ be an algebraically closed field of characteristic zero and let $A$ be a finite-dimensional semisimple $\k$-superalgebra.  Then, for all $n \geq 1$ and $d \geq 0$, there is an isomorphism of superalgebras 
\[
S^{A}(n,d) \cong \bigoplus_{\substack{\bd = (d_{1}, \dotsc , d_{\ell}) \in \Z_{\geq 0}^{\Xi_{A}}\\ \lVert \bd \rVert = d }} S^{1}(n,d_{1}) \otimes \dotsb \otimes S^{\ell}(n,d_{\ell}).
\] 
\end{proposition}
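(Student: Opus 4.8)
The plan is to prove this isomorphism by exhibiting both sides as finite-dimensional semisimple $\k$-superalgebras and matching their Wedderburn decompositions. Since $\k$ is an algebraically closed field of characteristic zero and $A$ is finite-dimensional semisimple, the wreath product $\SS_{d} \wr A$ is semisimple by \cref{R:ArbitraryA}. By \cite[Lemma 5.7]{EK} we have $S^{A}(n,d) \cong \End_{\SS_{d} \wr A}\left(V_{n}^{\otimes d}\right)$, and the double centralizer theorem for semisimple superalgebras gives that $S^{A}(n,d)$ is semisimple as well. So both sides are semisimple, and it suffices to show they have isomorphic Wedderburn components, i.e. the same set of simple supermodules with the same types and the same endomorphism superalgebras.

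\textbf{Key steps.}
First I would invoke \cref{T:LabellingSimpleSAndmodules}: the simple $S^{A}(n,d)$-supermodules are exactly $\{L_{n}^{A}(\tuplambda) \mid \tuplambda \in \Lambda^{A}_{+}(n,d)\}$, with $L_{n}^{A}(\tuplambda)$ of type $\typeM$ if $\delta(\tuplambda) = 0$ and type $\typeQ$ if $\delta(\tuplambda) = 1$. Second, I would unwind the definition of $\Lambda^{A}_{+}(n,d)$ as $\bigcup_{\bd} \Lambda^{A}_{+}(n,\bd)$ over all compositions $\bd = (d_{1}, \dotsc, d_{\ell})$ of $d$ indexed by $\Xi_{A}$, where each $\lambda^{(\alpha)} \in \Lambda^{\alpha}_{+}(n,d_{\alpha})$ labels a simple supermodule for $S^{\alpha}(n,d_{\alpha})$ via \cref{E:TypeASchur-WeylDuality} (when $\delta(\alpha)=0$) or \cref{E:TypeQSchur-WeylDuality} (when $\delta(\alpha)=1$). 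Third, I would apply the tensor product rule for simple supermodules over a tensor product of semisimple superalgebras from \cref{SS:RepsofSemisimpleSuperalgebras}: the simples of $S^{1}(n,d_{1}) \otimes \dotsb \otimes S^{\ell}(n,d_{\ell})$ are exactly $L^{\delta(1)}_{M_{1}|N_{1}}(\lambda^{(1)}) \star \dotsb \star L^{\delta(\ell)}_{M_{\ell}|N_{\ell}}(\lambda^{(\ell)})$ as $\tuplambda$ ranges over $\Lambda^{A}_{+}(n,\bd)$, with the type controlled by the parity of $\sum_{\alpha} \delta(\lambda^{(\alpha)}) = h(\tuplambda)$, which is precisely $\delta(\tuplambda)$. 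Fourth, by \cref{R:gradedmultiplicites} we have $\sdim_{\k} L^{A}_{n}(\tuplambda) = \sdim_{\k}\left(L^{\delta(1)}_{M_{1}|N_{1}}(\lambda^{(1)}) \star \dotsb \star L^{\delta(\ell)}_{M_{\ell}|N_{\ell}}(\lambda^{(\ell)})\right)$. So the two algebras have simple supermodules in bijection, of matching types, and of matching graded dimensions; hence their Wedderburn components (which are each a single copy of $M(V)$ or $Q(V)$ for $V$ the simple) are isomorphic, giving the asserted superalgebra isomorphism. I would also record that the direct sum decomposition on the right-hand side reflects the block decomposition of $S^{A}(n,d)$ coming from the central idempotents associated to the compositions $\bd$, which matches the decomposition $\modglnAT = \bigoplus_{d} \modglnAT(d)$-style argument: an element of $\SS_{d} \wr A$ preserves the isotypic decomposition of $V_{n}^{\otimes d}$ by the distinguished simple $A$-supermodules appearing, which refines the endomorphism algebra into the claimed direct sum.

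\textbf{Main obstacle.}
The one point requiring genuine care is that an abstract bijection of simple supermodules with matching types and graded dimensions formally suffices to conclude a superalgebra isomorphism only because each Wedderburn block of a semisimple superalgebra over an algebraically closed field is determined up to isomorphism by the graded dimension of its unique simple supermodule (this is the content of the Artin--Wedderburn classification recalled in \cref{SS:RepsofSemisimpleSuperalgebras}: a type $\typeM$ block is $M(V)$ and a type $\typeQ$ block is $Q(V)$, both pinned down by $\sdim_\k V$). I would state this reduction explicitly and cite \cref{E:SemisimpleDecomposition}. The other mild bookkeeping obstacle is ensuring the $\star$-products on the two sides are genuinely the same simple supermodule and not merely isomorphic ones when type $\typeQ$ factors are present; but since we only need an isomorphism of superalgebras (not an identification of a chosen splitting), the arbitrary choices involved in defining $\star$ are harmless. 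So I expect the proof to be short once the preceding structural results are in hand.
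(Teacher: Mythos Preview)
Your proposal is correct but takes a genuinely different route from the paper. The paper's proof is constructive: it first invokes \cite[Lemma 4.8]{KM} to obtain a direct-sum decomposition $S^{A}(n,d) \cong \bigoplus_{\bd} S^{A_{1}}(n,d_{1}) \otimes \cdots \otimes S^{A_{\ell}}(n,d_{\ell})$ coming from the Artin--Wedderburn splitting $A = \bigoplus_{\alpha} A_{\alpha}$, thereby reducing to the case where $A$ has a single simple. It then uses idempotent truncation by $\mathcal{e} = 1_{\k\SS_{d}} \otimes e^{\otimes d}$ (for $e$ a minimal even idempotent) to identify $S^{A_{\alpha}}(n,d_{\alpha}) = \End_{\SS_{d}\wr A_{\alpha}}(V_{n}^{\otimes d})$ with $\End_{\k\SS_{d}}((\k^{M_{\alpha}|N_{\alpha}})^{\otimes d})$ or $\End_{\Ser_{d}}((\k^{N_{\alpha}|N_{\alpha}})^{\otimes d})$, which is precisely $S^{\alpha}(n,d_{\alpha})$ by definition.

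Your argument instead leverages the already-proved \cref{T:LabellingSimpleSAndmodules} and \cref{R:gradedmultiplicites} to match Wedderburn blocks abstractly. This is efficient given those results are in hand, and it avoids repeating the idempotent-truncation computation (which the paper essentially redoes, since the same technique drove the proof of \cref{T:LabellingSimpleSAndmodules}). The paper's approach, on the other hand, yields an explicit isomorphism and is logically independent of the classification of simples, so in principle it could have been proved before \cref{T:LabellingSimpleSAndmodules} rather than after. Your final paragraph about central idempotents and the block decomposition by $\bd$ is not needed for your argument---the Wedderburn matching already delivers the full isomorphism, direct sum and all---but it does no harm.
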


\begin{proof}  Let  $A = \oplus_{\alpha \in \Xi_{A}} A_{\alpha}$ be the decomposition given in \cref{E:SemisimpleDecomposition}.  By \cite[Lemma 4.8]{KM} there is a superalgebra isomorphism 
\begin{equation}\label{E:KMSchurIsom}
S^{A}(n,d) \cong \bigoplus_{\substack{\bd = (d_{1}, \dotsc , d_{\ell}) \in \Z_{\geq 0}^{\Xi_{A}}\\ \lVert \bd \rVert = d }} S^{A_{1}}(n,d_{1}) \otimes \dotsb \otimes S^{A_{\ell}}(n,d_{\ell}).
\end{equation}  Thus it suffices to describe $S^{A}(n,d)$ when $A$ is a semisimple superalgebra with a single simple module.  Assume $A$ is such a superalgebra.

If $\delta (\alpha)=0$, then the simple supermodule $S=S^{\alpha}$ is of type $\typeM$.  Say it has graded dimension $m_{\alpha}|n_{\alpha}$, and set $M_{\alpha}=m_{\alpha}n$ and $N_{\alpha}=n_{\alpha}n$. Let $e \in A$ be a minimal even idempotent and set $\mathcal{e}= 1_{\k \SS_{d}} \otimes e^{\otimes d} \in \SS_{d} \wr A$.  Using the theory of idempotent trunction (e.g., see \cite[Section 6.2]{Green} or see \cite[Section 2]{BK} for the super version), the functor $F_{\mathcal{e}}: \Awreathsmod  \to \eAewreathsmod$ given by $M \mapsto Me$ defines a superalgebra isomorphism

\[
\End_{\SS_{d}\wr A} \left(V_{n}^{\otimes d} \right) \to \End_{\mathcal{e}(\SS_{d}\wr A)\mathcal{e}} \left((V_{n}^{\otimes d})\mathcal{e} \right).
\]  However, $\mathcal{e}(\SS_{d}\wr A)\mathcal{e} \cong \SS_{d} \wr eAe \cong \k\SS_{d}$ as superalgebras and 
\[
(V_{n}^{\otimes d})\mathcal{e} \cong (V_{n}e)^{\otimes d} \cong ((Ae)^{\oplus n})^{\otimes d} \cong (\k^{m_{\alpha}|n_{\alpha}})^{\oplus n})^{\otimes d} \cong (\k^{M_{\alpha}|N_{\alpha}})^{\otimes d}
\] as right $\k \SS_{d}$-supermodules.  Thus 
\[
S^{A}(n,d) \cong \End_{\k \SS_{d}}\left((k^{M_{\alpha}|N_{\alpha}})^{\otimes d}  \right) = S^{\alpha}(M_{\alpha}|N_{\alpha},d).
\]

An identical argument applies when $\delta (\alpha)=1$ and the simple supermodule $S=S^{\alpha}$ is of type $\typeQ$. 
\end{proof}

We close this section with an observation that will be needed later.

\begin{remark}\label{R:duals}  For $B \in \left\{\SS_{d} \wr A, S^{A}(n,d), U(\gl_{n}(A)) \right\}$ there are natural left and right actions on $V_{n}^{\otimes d}$.   As discussed in the proof of \cref{T:LabellingSimpleSAndmodules} When $A$ is semisimple, there is an isomorphism of $(A,A)$-bimodules $ A^{*} \cong A$ and $\left( V_{n}^{\otimes d}\right)^{*} \cong V_{n}^{\otimes d}$.  For each superalgebra $B$, this isomorphism identifies the natural left action with the dual of the natural right action of $B$, and vice versa.  In particular, dualizing \cref{L:SWMultiplicityFree} yields
\begin{equation}\label{E:HopefullyRight!}
V_{n}^{\otimes d} \cong \bigoplus_{\tuplambda \in \Lambda_{+}^{A}(n,d)} D^{\tuplambda } \star L^{A}_{n}(\tuplambda)^{*},
\end{equation} where the isomorphism is as $(\SS_{d} \wr A, S^{A}(n,d))$-bisupermodules.
\end{remark}

\section{Howe duality}\label{S:HoweDuality}

 Throughout this section we assume $\k$ is a field and that $(A,a)_I$ is a good pair with \(I\) finite.  In particular, \(A\) is unital with identity \(1 = \sum_{i \in I} i\).

\subsection{The symmetric space \texorpdfstring{$\SAmn$}{SAmn}} 

Fix $m,n \geq 1$.  Let  $V_{m} = A^{\oplus m}$ and $V_{n}= A^{\oplus n}$ be viewed as column and row vectors, respectively.  Since $V_{m}$ (resp., $V_{n}$) is a right (resp., left) $A$-supermodule by multiplication, we can consider the $\k$-supermodule $V_{m}\otimes_{A} V_{n}$.  Given $f \in A$, we write $v_{r}^{f}$ for the element of $V_{m}$  which is a column vector of all zeros except for $f$ in the $r$th position.  Likewise, $v_{r}^{f}$ will also be used to denote the element of $V_{n}$ which is a row vector of all zeros except for $f$ in the $r$th position.   For brevity we  write $z_{p,q}^{f}$ for $v_{p}^{1_{A}} \otimes v_{q}^{f} = v_{p}^{f} \otimes v_{q}^{1_{A}}$ in $V_{m} \otimes_{A} V_{n}$.

Via matrix multiplication there is an obvious left (resp., right) action by $U(\gl_{m}(A))$ (resp., $U(\gl_{n}(A))$) on $V_{m} \otimes_{A} V_{n}$.  The actions of $U(\gl_{m}(A))$ and $U(\gl_{n}(A))$ mutually commute.  Via their coproducts, $U(\gl_{m}(A))$ and $U(\gl_{n}(A))$ have corresponding commuting actions on 
\[
\SAmn := S^{\bullet} \left(V_{m} \otimes_{A} V_{n} \right).
\]

The monomial 
\[
z_{p_{1}, q_{1}}^{f_{1}}\dotsb z_{p_{r}, q_{r}}^{f_{r}} \in \SAmn 
\] has $\gl_{m}(A)$-weight 
\[
d'_{1}\varepsilon_{1}+\dotsb + d'_{m}\varepsilon_{m}, 
\] where $d'_{k} = | \left\{ i \mid p_{i}=k \right\}|$.  It also has $\gl_{n}(A)$-weight 
\[
d''_{1}\varepsilon_{1}+\dotsb + d''_{n}\varepsilon_{n}, 
\] where $d''_{\ell} = | \left\{ j \mid q_{j}=\ell \right\}|$.  For $\lambda \in \Lambda_{m}$ and $\mu \in \Lambda_{n}$, let $\SAmn^{\lambda, \mu}$ denote the set of vectors of $\SAmn$ which are simultaneously vectors of weight $\lambda$ for $\gl_{m}(A)$ and weight $\mu$ for $\gl_{(A)}$.  The $\k$-basis for $\SAmn$ given by monomials shows there is a direct sum decomposition of $\k$-supermodules:
\begin{equation*}
\SAmn = \bigoplus_{\substack{\lambda \in \Lambda_{m} \\ \mu \in \Lambda_{n}}} \SAmn^{\lambda, \mu}.
\end{equation*}
For $\lambda \in \Lambda_{m}$ and $\mu \in \Lambda_{n}$, set 
\begin{equation}\label{E:weightdecomp2}
\SAmn^{\bullet, \mu} = \bigoplus_{\nu \in \Lambda_{m}} \SAmn^{\nu, \mu} \qquad \text{and} \qquad  \SAmn^{\lambda, \bullet} = \bigoplus_{\nu \in \Lambda_{n}} \SAmn^{\lambda, \nu}.
\end{equation}
Then,
\begin{equation}\label{E:weightdecomp4}
\SAmn = \bigoplus_{\mu \in \Lambda_{n}} \SAmn^{\bullet, \mu} \qquad \text{and} \qquad \SAmn = \bigoplus_{\lambda \in \Lambda_{m}} \SAmn^{\lambda, \bullet}. 
\end{equation}
Since the actions of $U(\gl_{m}(A))$ and $U(\gl_{n}(A))$ commute, the decompositions of $\SAmn$ given in \cref{E:weightdecomp4} are as $U(\gl_{m}(A))$- and $U(\gl_{n}(A))$-supermodules, respectively.

\begin{lemma}\label{L:weightspacesaresymmetricspaces}
For each $\lambda = \sum_{i=1}^{m}\lambda_{i}\varepsilon_{i} \in \Lambda_{m}$, there is an isomorphism of $U(\gl_{n}(A))$-supermodules,
\[
\mathcal{S}^{\lambda, \bullet}_{m,n} \cong S^{\lambda_{1}}(V_{n}) \otimes \dotsb \otimes S^{\lambda_{m}}(V_{n}),
\]
For each $\mu = \sum_{i=1}^{n}\mu_{i}\varepsilon_{i} \in \Lambda_{n}$, there is an isomorphism of $U(\gl_{m}(A))$-supermodules,
\[
\mathcal{S}^{\bullet, \mu}_{m,n} \cong S^{\mu_{1}}(V_{m}) \otimes \dotsb \otimes S^{\mu_{n}}(V_{m}),
\]

\end{lemma}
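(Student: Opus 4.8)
\textbf{Proof strategy for \cref{L:weightspacesaresymmetricspaces}.} The two statements are symmetric (one swaps the roles of rows and columns, $m$ and $n$), so it suffices to prove the second isomorphism
\[
\mathcal{S}^{\bullet,\mu}_{m,n} \cong S^{\mu_1}(V_m) \otimes \dotsb \otimes S^{\mu_n}(V_m)
\]
as $U(\gl_m(A))$-supermodules, and the first follows by the analogous argument. The plan is to exhibit an explicit $\k$-linear map on monomials, check it is a bijection on a basis, and verify $\gl_m(A)$-equivariance.

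First I would fix notation for a basis. Recall $z^f_{p,q} = v^{1_A}_p \otimes v^f_q = v^f_p \otimes v^{1_A}_q$ in $V_m \otimes_A V_n$, so a general element of $V_m\otimes_A V_n$ is a $\k$-span of the $z^b_{p,q}$ with $p\in[1,m]$, $q\in[1,n]$, $b$ ranging over the chosen homogeneous basis $\BasisB$ of $A$. The symmetric superalgebra $\mathcal{S}_{m,n} = S^\bullet(V_m\otimes_A V_n)$ then has a $\k$-basis of restricted monomials in the $z^b_{p,q}$, taken in a fixed total order on the index set, with the usual restriction that odd generators occur with multiplicity at most one. The subspace $\mathcal{S}^{\bullet,\mu}_{m,n}$ is spanned by those restricted monomials in which, for each $\ell\in[1,n]$, exactly $\mu_\ell$ of the factors have second index $q=\ell$. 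Sorting a monomial so that all factors with $q=1$ come first, then those with $q=2$, etc., sets up a bijection between this basis and tensor products of restricted monomials, where the $\ell$th tensor slot is a degree-$\mu_\ell$ restricted monomial in the symbols $\{v^b_p \mid p\in[1,m], b\in\BasisB\}$ — i.e.\ a basis element of $S^{\mu_\ell}(V_m)$, since $v^b_p$ is naturally identified with $z^b_{p,\ell}$ after dropping the redundant index $\ell$. This gives the candidate $\k$-linear isomorphism
\[
\varphi : \mathcal{S}^{\bullet,\mu}_{m,n} \xrightarrow{\ \sim\ } S^{\mu_1}(V_m)\otimes\dotsb\otimes S^{\mu_n}(V_m),
\]
possibly with a Koszul sign depending on the reordering of odd factors; I would pin down that sign by the same $\varepsilon(T,U)$-type bookkeeping used in \cref{ExpSpl,ExpMer}.

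Next I would check $U(\gl_m(A))$-equivariance. It is enough to check it on the generators $E^f_{r,s}\in\gl_m(A)$. On $\mathcal{S}_{m,n}$ these act via the coproduct $\Delta(u)=1\otimes u + u\otimes 1$, i.e.\ by the Leibniz rule on monomials, with $E^f_{r,s}$ acting on a single factor $z^b_{p,q}$ by left matrix multiplication on the $V_m$-component: $z^b_{p,q}\cdot$-type formula giving $\delta_{p,s}z^{fb}_{r,q}$ up to the appropriate sign from moving $f$ past the intervening factors. On the target, $\gl_m(A)$ acts on each tensor slot $S^{\mu_\ell}(V_m)$ exactly as described in \cref{sympowsec} (the same coproduct/Leibniz rule), and the slots are independent. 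Because the $V_m$-action of $\gl_m(A)$ commutes with the reindexing/sorting that defines $\varphi$ (the sorting only permutes factors and never mixes different values of $q$, while $E^f_{r,s}$ only changes the $V_m$-index $p$ and the coefficient $b\mapsto fb$), the Leibniz action matches slot-by-slot under $\varphi$; the only subtlety is matching the Koszul signs on the two sides, which again reduces to the sign conventions already fixed in \cref{twistsec,sympowsec}. This establishes that $\varphi$ is an isomorphism of $U(\gl_m(A))$-supermodules. The analogous argument with the roles of $V_m$ and $V_n$ exchanged — now using the \emph{right} action of $\gl_n(A)$ on $V_n$ — gives the first isomorphism.

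\textbf{Main obstacle.} Nothing here is conceptually deep; the genuine work is the sign bookkeeping. The reordering of a restricted monomial in $\mathcal{S}^{\bullet,\mu}_{m,n}$ into $q$-sorted form introduces Koszul signs from transposing odd generators $z^b_{p,q}$, and one must check these are precisely absorbed by the signs built into the symmetric-power identifications and the tensor-product twist. I expect this to be the fiddly step, and I would handle it by writing the permutation taking the unsorted index sequence to the sorted one as a product of adjacent transpositions and tracking the accumulated sign, exactly in the style of the $\langle\tau;(m_1,\dotsc,m_d)\rangle$ and $\varepsilon(T,U)$ computations appearing in \cref{twistsec,sympowsec}. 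Once the sign on $\varphi$ is chosen consistently, equivariance on the generators $E^f_{r,s}$ follows by a direct comparison of the two Leibniz expansions.
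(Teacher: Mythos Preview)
Your approach is essentially the same as the paper's: define an explicit $\k$-linear map on monomials sorted by one of the two indices, then check equivariance on generators via the Leibniz rule. The paper proves the first isomorphism and you prove the second, but these are symmetric as you note.

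One small simplification relative to your write-up: the paper avoids all Koszul sign bookkeeping by defining the map only on monomials already written in sorted form. That is, since every monomial in $\mathcal{S}_{m,n}$ can be represented with its factors ordered so that all $z^{f}_{1,\ast}$ come first, then all $z^{f}_{2,\ast}$, etc., one simply declares
\[
z_{1, q_{1,1}}^{f_{1,1}}\dotsb z_{1, q_{1,\lambda_1}}^{f_{1, \lambda_1}}\,z_{2, q_{2,1}}^{f_{2,1}}\dotsb z_{m, q_{m,\lambda_m}}^{f_{m, \lambda_m}}
\;\longmapsto\;
v_{q_{1,1}}^{f_{1,1}}\dotsb v_{q_{1,\lambda_1}}^{f_{1,\lambda_1}} \otimes \dotsb \otimes v_{q_{m,1}}^{f_{m,1}}\dotsb v_{q_{m,\lambda_m}}^{f_{m,\lambda_m}}
\]
with no sign. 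Well-definedness is immediate because the symmetric-algebra relations on the source (commuting $z^f_{p,q}$ past $z^{f'}_{p,q'}$ for the same $p$) match exactly the symmetric-algebra relations in the $p$th tensor slot on the target. Equivariance then reduces to the observation that $E^g_{r,s}\in\gl_n(A)$ acts only on the second index and the coefficient, hence preserves the $p$-sorted form and acts identically on both sides. So your anticipated ``main obstacle'' dissolves once the map is set up this way.
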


\begin{proof} The first isomorphism is the $\k$-linear map given on monomials by 
\begin{multline*}
z_{1, q_{1,1}}^{f_{1,1}}\dotsb z_{1, q_{1,r_{1}}}^{f_{1, r_{1}}}z_{2, q_{2,1}}^{f_{2,1}}\dotsb z_{2, q_{2,r_{2}}}^{f_{2, r_{2}}}\dotsb z_{m, q_{m,1}}^{f_{m,1}}\dotsb z_{m, q_{m,r_{m}}}^{f_{m, r_{m}}} \\
 \mapsto v_{q_{1,1}}^{f_{1,1}}\dotsb v_{q_{1,r_{1}}}^{f_{1, r_{1}}} \otimes v_{q_{2,1}}^{f_{2,1}}\dotsb v_{q_{2,r_{2}}}^{f_{2, r_{2}}} \otimes \dotsb \otimes v_{q_{m,1}}^{f_{m,1}}\dotsb v_{q_{m,r_{m}}}^{f_{m, r_{m}}}.
\end{multline*}
A check on monomials verifies this is an even isomorphism of $U(\gl_{n}(A))$-supermodules.  The obvious variant gives the second isomorphism.
\end{proof}

Let 
\[
\dot{U}(\gl_{n}(A)) = \bigoplus_{\lambda, \mu \in \Lambda_{m}} \UglnA (\lambda, \mu)
\]
be the locally unital superalgebra associated to the supercategory $\UglnA$.  Note that $\dot{U}(\gl_{n}(A))$ is an idempotented form of the enveloping superalgebra $U(\gl_{n}(A))$ whose weight idempotents lie in $\Lambda_{n}$.     Whenever $M$ is a supermodule for $U(\gl_{n}(A))$ with a direct sum decomposition into weight spaces whose weights lie in $\Lambda_{n}$ there is a well-defined action of  $\dot{U}(\gl_{n}(A))$ and, conversely, any supermodule for  $\dot{U}(\gl_{n}(A))$ defines a supermodule for $U(\gl_{n}(A))$ with a weight space decomposition whose weights lie in $\Lambda_{n}$. In particular, from \cref{E:weightdecomp2} it follows that  $\SAmn$ is a $(\dot{U}(\gl_{m}(A)), \dot{U}(\gl_{n}(A)))$-bisupermodule.

Given a left $U(\gl_{m}(A))$-supermodule homomorphism $f: \SAmn \to \SAmn$, we say $f$ is \emph{left finite} if there is a finite set $J \subseteq \Lambda_{n}$ such that $(\SAmn^{\bullet, \mu})f=0$ for all $\mu \not\in J$ and if the image of $f$ is contained in $\oplus_{\mu \in J} \SAmn^{\bullet, \mu}$.  Similarly, given a right $U(\gl_{n}(A))$-supermodule homomorphism $f: \SAmn \to \SAmn$, we say $f$ is \emph{right finite} if there is a finite set $J \subseteq \Lambda_{m}$ such that $f(\SAmn^{\lambda, \bullet})=0$ for all $\lambda \not\in J$ and if the image of $f$ is contained in $\oplus_{\lambda \in J} \SAmn^{\lambda, \bullet}$.  In both cases we write a superscript $\fin$ for the set of finite maps and it should be clear from context whether they are left finite or right finite.

Since the actions of $\dot{U}(\gl_{m}(A))$ and $\dot{U}(\gl_{n}(A))$ on $\SAmn$ commute, the action of an element of one superalgebra defines a supermodule homomorphism for the other.  Furthermore, the action of $\dot{U}(\gl_{m}(A))$ and $\dot{U}(\gl_{n}(A))$ is by left and right finite maps, respectively.  That is, there are superalgebra maps
\begin{align}
\rho^{\ell}_{m} &: \dot{U}(\gl_{m}(A)) \to \End_{\dot{U}(\gl_{n}(A))}\left(\SAmn \right)^{\fin } \subseteq \End_{\k}\left(\SAmn \right) \label{E:LeftUmodReponS},\\
\rho^{r}_{n} &: \dot{U}(\gl_{n}(A)) \to \End_{\dot{U}(\gl_{m}(A))}\left(\SAmn \right)^{\fin }\subseteq \End_{\k}\left(\SAmn \right) \label{E:RightUmodReponS}.
\end{align}

\subsection{Miscellanea}\label{SS:Miscellanea}

In this section we record various notions which will be needed in what follows.  Given $\k$-linear supercategories $\catC$ and $\catD$, let $\Fun(\catC, \catD)$ denote the category whose objects are even superfunctors $\catC  \to \catD$ and whose morphisms are supernatural transformations.  Given a $\k$-linear supercategory $\catC $, there is the \emph{opposite supercategory} $\catC ^{\sop}$ with the same objects as $\catC $, $\Hom_{\catC^{\sop}}(X,Y) := \Hom_{\catC}(Y,X)$, and composition $\bullet$ given by the rule $f \bullet g = (-1)^{\bar{f}\bar{g}} g \circ f$ for all composable homogenous morphisms in $\catC^{\sop}$.  
Given $\k$-linear supercategories $\catC$ and $\catD$, we can form the supercategory $\catC  \boxtimes \catD$.  The objects of this category are given by 
\[
\left\{ (x, y)\mid \text{$x$ is an object of $\catC $ and $y$ is an object of $\mathbf{D}$}\right\}
\]
 and the morphisms are given by 
\[
\Hom_{\catC \boxtimes \catD}\left((x_{1}, x_{2}), (y_{1}, y_{2}) \right) = \Hom_{\catC }(x_{1}, y_{1}) \otimes \Hom_{\catD}(x_{2}, y_{2}).
\] Composition is given on homogeneous morphisms by 
\begin{equation}\label{E:composition}
(f_{1} \otimes f_{2}) \circ (g_{1} \otimes g_{2}) = (-1)^{\bar{f}_{2}\bar{g}_{1}} (f_{1} \circ g_{1}) \otimes (f_{2} \circ g_{2}).
\end{equation}
See \cite{BE} for details on these constructions in the super setting.   The correspondence between $\k$-linear supercategories and locally unital $\k$-superalgebras takes opposites to opposites, and takes tensor products to tensor products.

A \emph{right representation} of a $\k$-linear supercategory $\catA$ is a $\k$-linear functor $F: \catA^{\sop} \to \ksMod$, and a \emph{birepresentation} of the pair $(\catA, \catB)$ is a $\k$-linear functor $F : \catA \boxtimes \catB^{\sop} \to \ksMod$.  We write $\catA\text{-Rep}$, $\text{Rep-}\catA$, and $\catA\text{-Rep-}\catB$ for the supercategories of representations of $\catA$, right representations of $\catA$, and birepresentations of $(\catA, \catB)$, respectively.  The correspondence with locally unital $\k$-superalgebras gives category equivalences $\catA \text{-Rep} \cong \Asmod$, $\text{Rep-}\catA \cong \smodA$, and $\catA \text{-Rep-}\catB \cong (A,B)$-supermodules.

The following lemma is straightforward.

\begin{lemma}\label{L:transposeisomorphisms} Let $A$ be a superalgebra and let $(A,a)_{I}$ be a good pair.  Then the following statements are true.  

\begin{enumerate}
\item The map which is the identity on objects and flips diagrams across their horizontal axis defines an isomorphism of supercategories,
\[
\left( \Web^{A,a}_{I}\right)^{\sop} \to \Web^{A^{\sop}, a^{\sop}}_{I}.
\]
\item The map which is the identity on objects and is the transpose, $E_{r,s}^{f} \mapsto E_{s,r}^{f}$, on generating morphisms defines an isomorphism of supercategories,
\[
\UglnA^{\sop} \to \UglnAsop.
\] 
\item The map given on generating elements  by $z_{p,q}^{f}\mapsto z_{q,p}^{f}$ defines a parity preserving isomorphism of $(\dot{U}(\gl_{n}(A^{\sop})), \dot{U}(\gl_{m}(A^{\sop})))$-bisupermodules,
\[
\mathcal{S}^{A}_{m,n} \cong \mathcal{S}^{A^{\sop}}_{n,m}.
\] 
\end{enumerate}

\end{lemma}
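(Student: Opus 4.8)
The three assertions are ``transpose'' or ``duality'' statements; in each case the plan is to write down the proposed map explicitly on generators (objects and generating morphisms), verify it is well defined by checking that the defining relations are sent to relations in the target, and then exhibit an explicit two-sided inverse (which will be the ``same'' map applied in the other direction). Since opposites of $\k$-linear supercategories correspond to opposites of locally unital superalgebras and tensor products to tensor products (as recalled in \cref{SS:Miscellanea}), parts (1)--(3) will then follow once the functors/maps are shown to be isomorphisms.

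\textbf{Part (1).} First I would fix the convention: a morphism in $(\Web^{A,a}_I)^{\sop}$ from $\bi^{(\bx)}$ to $\bj^{(\by)}$ is a morphism in $\Web^{A,a}_I$ from $\bj^{(\by)}$ to $\bi^{(\bx)}$, and composition $\bullet$ in the opposite carries the Koszul sign $f \bullet g = (-1)^{\bar f \bar g} g \circ f$. The reflection-across-the-horizontal-axis map sends a split to a merge, a merge to a split, a crossing to a crossing, and a coupon labeled $f \in jA^{(z)}i$ (a strand going $i^{(z)} \to j^{(z)}$) to a coupon labeled $f \in j A^{(z)} i$ but now regarded as a morphism $j^{(z)} \to i^{(z)}$ in $\Web^{A,a}_I$, which is exactly the coupon labeled by $f \in iA^{(z)\,\sop}j = j(A^{\sop})^{(z)}i$-data appropriate to $\Web^{A^{\sop},a^{\sop}}_I$; note $(A^{\sop},a^{\sop})_I$ is again a good pair with the same underlying $\k$-module, so the object set $\widehat\Omega_I$ matches. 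The key step is that the defining relations \cref{AssocRel}--\cref{AaIntertwine} are each mapped, under vertical reflection, to another of those relations (web-associativity $\leftrightarrow$ web-associativity flipped, merge-split $\leftrightarrow$ itself, knothole $\leftrightarrow$ knothole, Coxeter/intertwining relations to their flips, coupon relations \cref{AaRel2} using that $hf$ in $A$ becomes $fh = f \cdot_{\sop} h$ in $A^{\sop}$, and \cref{OddKnotholeRel} to itself); the Koszul signs in the definition of $\bullet$ on $(\Web^{A,a}_I)^{\sop}$ are exactly what is needed to make the reflection compatible with composition, since in the diagrammatic calculus reversing the vertical order of two stacked coupons introduces precisely the sign $(-1)^{\bar f\bar g}$. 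Once well-definedness is checked, reflecting twice is the identity, so the functor is an isomorphism.

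\textbf{Part (2).} Here I would check directly that the transpose map $E^f_{r,s} \mapsto E^f_{s,r}$ sends the relations of \cref{UglnAdef} for $\gl_n(A)$ to those for $\gl_n(A^{\sop})$. Relations (1) and (2) are immediate (linearity in $f$ is preserved); relation (3), $E^1_{r,r}\lambda = \lambda_r \,\mathrm{id}_\lambda$, is symmetric under $r \leftrightarrow s$ restricted to $r=s$; relation (5), $(E^f_{r,s})^2 = \delta_{r,s}E^{f^2}_{r,s}$ for $\bar f = \bar 1$, goes to the same relation with $f^2$ now computed in $A^{\sop}$, which agrees with $f^2$ in $A$ for a single element; and the main check is the commutator relation (4): applying the transpose to $E^f_{p,q}E^g_{r,s} - (-1)^{\bar f\bar g}E^g_{r,s}E^f_{p,q} = \delta_{q,r}E^{fg}_{p,s} - (-1)^{\bar f\bar g}\delta_{p,s}E^{gf}_{r,q}$, and using that composition $\bullet$ in $\UglnA^{\sop}$ reverses order with a Koszul sign, one recovers exactly the commutator relation for $\gl_n(A^{\sop})$ with the products $fg, gf$ interpreted in $A^{\sop}$ (so $fg$ in $A$ becomes $g \cdot_{\sop} f$, which matches the swapped indices). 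The inverse is again the transpose, so this is an isomorphism of supercategories.

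\textbf{Part (3).} The map $z^f_{p,q} \mapsto z^f_{q,p}$ should be defined on monomials $z^{f_1}_{p_1,q_1}\cdots z^{f_r}_{p_r,q_r} \mapsto z^{f_1}_{q_1,p_1}\cdots z^{f_r}_{q_r,p_r}$ in $S^\bullet(V_m \otimes_A V_n)$, which I would first argue is well defined on the symmetric superalgebra: the defining relations of $S^\bullet$ (supercommutativity up to sign, and squares of odd elements being zero) are preserved because swapping the row/column roles does not change the parity $\bar f$ of $z^f_{p,q}$. Then I would check it intertwines the actions: the left $U(\gl_m(A))$-action on $\mathcal S^A_{m,n}$ (matrix multiplication on the $V_m$ factor) corresponds, after the swap, to a right action on the $V_m$ factor of $\mathcal S^{A^{\sop}}_{n,m}$, i.e.\ to the right $\dot U(\gl_m(A^{\sop}))$-action there (the appearance of $A^{\sop}$ comes from converting left multiplication by matrices to right multiplication, exactly as in the passage from $\gl$ to $\gl^{\sop}$), and symmetrically the right $U(\gl_n(A))$-action becomes the left $\dot U(\gl_n(A^{\sop}))$-action; the Koszul signs coming from the coproduct and from reordering tensor factors match on both sides because the parities are unchanged. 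The swap is clearly parity preserving and is its own inverse, giving the asserted bisupermodule isomorphism.

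\textbf{Main obstacle.} The routine part is seeing that the relation lists are permuted among themselves; the genuinely delicate point in all three parts is bookkeeping the Koszul signs — in particular making sure that the sign $(-1)^{\bar f\bar g}$ in the opposite-category composition $\bullet$ is precisely the one produced by reversing the vertical stacking order of two coupons in the diagrammatic calculus (part (1)), by the reversed product in $A^{\sop}$ together with the commutator sign (part (2)), and by reordering monomial factors after the row/column swap (part (3)). I would therefore devote the bulk of the write-up to one careful sign computation in each part and treat the rest as ``straightforward to check,'' citing the sign conventions for (monoidal) supercategories and $\boxtimes$ from \cite{BE}.
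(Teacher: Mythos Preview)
Your plan is correct and is exactly the natural verification; the paper itself gives no proof at all, merely declaring the lemma ``straightforward,'' so there is nothing to compare against beyond noting that your sketch is considerably more detailed than what the paper records. The sign bookkeeping you flag as the main obstacle is indeed the only nontrivial content, and your identification of where each Koszul sign arises (opposite composition versus coupon stacking in (1), opposite composition versus $A^{\sop}$ multiplication in the commutator in (2), and symmetric-algebra relations in (3)) is accurate.
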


\subsection{Categorical representations from \texorpdfstring{$\SAmn$}{Smn} }\label{}
Let 
\begin{align}
R^{\ell}_{m}: \UglmA \to \ksMod \label{E:LeftUdotReponS}, \\
R^{r}_{n}: \UglnA^{\sop} \to \ksMod  \label{E:RightUdotReponS},
\end{align} be the representations corresponding to the maps $\rho^{\ell}_{m}$ and $\rho^{r}_{n}$, respectively.  Recall the functors $G_{m}$ and $W_{m}$ from \cref{Gthm,T:UdottoWebs} 

\begin{lemma}\label{L:repscoincide}  For all $m \geq 0$ the functors $R^{\ell}_{m}$ and $G_{m} \circ W_{m}$ are naturally isomorphic.
\end{lemma}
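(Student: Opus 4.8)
The plan is to unravel what each of the two functors $R^\ell_m$ and $G_m \circ W_m$ does on objects and on the generating morphisms $E^f_{r,s}\lambda$ of $\UglmA$, and check they agree up to a canonical natural isomorphism. First I would describe $G_m \circ W_m$ explicitly: by \cref{T:UdottoWebs}, $W_m$ sends $\lambda = \sum_i \lambda_i \varepsilon_i$ to the object $1^{(\lambda_1)}\cdots 1^{(\lambda_m)}$ of $\WebAaOnen$ and sends $E^f_{r,s}\lambda$ to the diagram $e^{(f,1)}_{[r,s],\lambda}$, which is built from a split, a merge, and a single thin coupon labeled $f$ (with the obvious positioning depending on whether $r<s$, $r>s$, or $r=s$). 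Then $G_m$ (from \cref{Gthm}, restricted along the inclusion $\WebAaOnen \subseteq \WebAaOne = \Web^{A,a}_{\{1\}}$) sends $1^{(\lambda_i)}$ to $S^{\lambda_i}V_m$ and sends splits, merges, and coupons to ${}_{V_m}\textup{spl}$, ${}_{V_m}\textup{mer}$, and $L^f_{\cdot}$ respectively. Composing, $G_m\circ W_m(\lambda) = S^{\lambda_1}V_m \otimes \cdots \otimes S^{\lambda_m}V_m$, which by \cref{L:weightspacesaresymmetricspaces} is exactly $\mathcal{S}^{\lambda,\bullet}_{m,n}$, i.e.\ the value $R^\ell_m(\lambda)$ of the weight-decomposition representation. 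This gives the natural candidate isomorphism on objects: the collection of $\k$-supermodule isomorphisms $\theta_\lambda : G_m\circ W_m(\lambda) \xrightarrow{\sim} \mathcal{S}^{\lambda,\bullet}_{m,n} = R^\ell_m(\lambda)$ supplied by \cref{L:weightspacesaresymmetricspaces}.

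Next I would verify naturality, i.e.\ that for each generating morphism $E^f_{r,s}\lambda : \lambda \to \mu$ the square
\[
\begin{CD}
G_m W_m(\lambda) @>{\theta_\lambda}>> \mathcal{S}^{\lambda,\bullet}_{m,n} \\
@V{G_m W_m(E^f_{r,s})}VV @VV{\rho^\ell_m(E^f_{r,s})}V \\
G_m W_m(\mu) @>{\theta_\mu}>> \mathcal{S}^{\mu,\bullet}_{m,n}
\end{CD}
\]
commutes. Both legs are $\k$-linear, so it suffices to chase a monomial basis vector of $S^{\lambda_1}V_m\otimes\cdots\otimes S^{\lambda_m}V_m$. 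On the right, $\rho^\ell_m(E^f_{r,s})$ is just the left action of $E^f_{r,s}\in\gl_m(A)$ on $\mathcal{S}_{m,n}$ via the coproduct, which acts by the Leibniz-type formula moving one tensor slot from position $s$ to position $r$ (left matrix multiplication on $V_m\otimes_A V_n = (V_n)^{\oplus m}$ sending $z_{s,q}^{g}\mapsto z_{r,q}^{fg}$). On the left, $G_mW_m(E^f_{r,s})$ is the composite ${}_{V_m}\textup{spl}$ / coupon-$L^f$ / ${}_{V_m}\textup{mer}$ read off from the diagram $e^{(f,1)}_{[r,s],\lambda}$; using the explicit formulas \cref{ExpSpl,ExpMer,ExpL} one computes its effect on a monomial. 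The content of the check is that splitting off one factor from slot $s$, relabeling that factor via $L^f$, and merging it into slot $r$ reproduces exactly the weight-module action, with matching signs. The sign bookkeeping is the only delicate point: one must track the Koszul signs coming from ${}_{V_m}\textup{spl}$ (the $\varepsilon(T,U)$ sign in \cref{ExpSpl}) and confirm they coincide with the signs from the coproduct action as spelled out in \cref{leftmultact}; the $r=s$ case additionally uses the knothole relation to normalize. I would do this by a direct monomial computation, invoking \cref{explicitmaplemma} or the formula \cref{Getaformula} if it streamlines the sign analysis, and noting $\overline{f}=\overline{E^f_{r,s}}$ so parities also match.

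Once the square commutes on generating morphisms it commutes on all morphisms since both functors are $\k$-linear and $\UglmA$ is generated by the $E^f_{r,s}$ under composition; that $\theta$ is invertible is immediate since each $\theta_\lambda$ is an isomorphism of $\k$-supermodules. Hence $\theta = (\theta_\lambda)_{\lambda\in\Lambda_m}$ is an even supernatural isomorphism $G_m\circ W_m \xrightarrow{\sim} R^\ell_m$, which is the claim. The main obstacle, as noted, is purely a matter of matching the sign conventions in the three superalgebra ingredients — the split map, the coupon map, and the coproduct action — against each other; everything else is formal unwinding of definitions. A clean way to organize this is to observe that $G_m$ is a monoidal superfunctor and that both $e^{(f,1)}_{[r,s],\lambda}$ and $E^f_{r,s}$ are, in an appropriate sense, "transport one box" operators, so the comparison reduces to the single-slot statement that $G_m$ applied to a split-merge-coupon sandwich on $S^{x+y}V_m$ agrees with the $\gl_m(A)$-action, which is essentially already verified in the proof of \cref{Gthm}.
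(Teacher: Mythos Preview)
Your approach is essentially the same as the paper's: use the isomorphism of \cref{L:weightspacesaresymmetricspaces} to identify the values on objects, then verify naturality on the generating morphisms $E^f_{r,s}\lambda$ by a direct monomial chase (the paper simply asserts ``checking on a monomial confirms'' where you spell out the sign bookkeeping in more detail). One small index slip to watch: by definition $G_m$ sends $1^{(\lambda_i)}$ to $S^{\lambda_i}V_m$, whereas \cref{L:weightspacesaresymmetricspaces} identifies $\mathcal{S}^{\lambda,\bullet}_{m,n}$ with a tensor product of $S^{\lambda_i}(V_n)$'s --- the paper's own computation in fact writes $V_n$ here, so the composite should be read as $G_n\circ W_m$, after which your argument goes through verbatim.
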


\begin{proof}  Let $\lambda = \sum_{i=1}^{m} \lambda_{i}\varepsilon_{i} \in \Lambda_{m}$.  Then $R^{\ell}_{m}(\lambda) = \SAmn^{\lambda, \bullet}$  and 
\[
(G_{m} \circ W_{m})(\lambda) = G_{m}(1^{\lambda_{1}} \otimes \dotsb \otimes 1^{\lambda_{m}}) = S^{\lambda_{1}}(V_{n}) \otimes \dotsb \otimes S^{\lambda_{m}}(V_{n}).
\]  These are isomorphic $\k$-modules via the map given in  \cref{L:weightspacesaresymmetricspaces}.

Given $1 \leq r, s \leq m$ and $f \in A$, consider the generating morphism $E_{r,s}^{f} \in \UglmA (\lambda, \gamma)$.  Then 
\[
R^{\ell}_{m}(E_{r,s}^{f}):  \SAmn^{\lambda, \bullet} \to  \SAmn^{\gamma, \bullet}
\] is the map given by the left action of the element $E_{r,s}^{f} \in \gl_{m}(A)$.

On the other hand, 
\[
(G_{m} \circ W_{m})(E_{r,s}^{f}) :  S^{\lambda_{1}}(V_{n}) \otimes \dotsb \otimes S^{\lambda_{m}}(V_{n}) \to  S^{\gamma_{1}}(V_{n}) \otimes \dotsb \otimes S^{\gamma_{m}}(V_{n})
\] is given by the $\k$-module homomorphism $G_{m}(e_{[r,s],\lambda}^{f,1})$.   Checking on a monomial confirms this map agrees with the one in the previous paragraph via the isomorphism given in  \cref{L:weightspacesaresymmetricspaces}.
\end{proof}

Since $\SAmn$ is a $(\dot{U}(\gl_{m}(A)),\dot{U}(\gl_{n}(A))_{}$-bisupermodule, the following result is immediate.
\begin{lemma}\label{L:BiRepCreation}  Fix $m,n \geq 1$. There is a birepresentation 
\[
F_{m,n}: \UglmA  \boxtimes \UglnA^{\sop}  \to \ksMod.
\] It is given on an object $(\lambda, \mu) \in \Lambda_{m} \times \Lambda_{n}$ by
\[
F_{m.n}(\lambda, \mu) = \SAmn^{\lambda, \mu}.
\] On homogeneous morphisms $f \in \Hom_{\UglmA}(\lambda_{1}, \lambda_{2})$, $ g \in  \Hom_{\UglnA^{\sop}}(\mu_{1}, \mu_{2})$ the map 
\[
F_{m.n}(f \otimes g): \SAmn^{\lambda_{1}, \mu_{1}} \to \SAmn^{\lambda_{2}, \mu_{2}}
\]
is given on a homogeneous  $x \in \SAmn^{\lambda_{1}, \mu_{1}}$ by
\[
F_{m,n}(f \otimes g) (x) = (-1)^{\bar{g}\bar{x}} f((x)g) =  (-1)^{\bar{g}\bar{x}} (f(x))g.
\]  
\end{lemma}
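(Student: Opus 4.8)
The plan is to verify that the proposed assignment $F_{m,n}$ is a well-defined $\k$-linear superfunctor on the product supercategory $\UglmA \boxtimes \UglnA^{\sop}$, by assembling it from the two one-sided representations $R^\ell_m$ and $R^r_n$ of \cref{E:LeftUdotReponS,E:RightUdotReponS} together with the weight-space decomposition of $\SAmn$. First I would note that, since $\SAmn$ decomposes as $\bigoplus_{\lambda \in \Lambda_m, \mu \in \Lambda_n} \SAmn^{\lambda,\mu}$ by the remark following \cref{E:weightdecomp2}, and the $\dot U(\gl_m(A))$- and $\dot U(\gl_n(A))$-actions commute and respect the two gradings, each piece $\SAmn^{\lambda,\mu}$ is simultaneously the $\lambda$-weight space of $R^\ell_m$ and the $\mu$-weight space of $R^r_n$. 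This makes $F_{m,n}(\lambda,\mu) := \SAmn^{\lambda,\mu}$ the natural candidate on objects, and the candidate on morphisms is forced: a generating morphism $f \otimes g$ acts by applying $f \in \dot U(\gl_m(A))$ on the left and $g \in \dot U(\gl_n(A))$ on the right, with a Koszul sign arising because we move the odd element $g$ past the homogeneous vector $x$.

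The key steps, in order: (1) Record that $F_{m,n}$ is well-defined on objects since the weight-space decomposition of $\SAmn$ is into $\Lambda_m \times \Lambda_n$-indexed pieces. (2) Check $F_{m,n}$ is well-defined on morphisms: the Hom-space $\Hom_{\UglmA \boxtimes \UglnA^{\sop}}((\lambda_1,\mu_1),(\lambda_2,\mu_2))$ is by definition $\Hom_{\UglmA}(\lambda_1,\lambda_2) \otimes \Hom_{\UglnA^{\sop}}(\mu_1,\mu_2)$, so it suffices to define $F_{m,n}$ on elementary tensors $f \otimes g$ and extend bilinearly; the defining relations in $\UglmA$ (\cref{UglnAdef}) are satisfied by the left $\gl_m(A)$-action on $\SAmn$ — this is exactly the content that makes $R^\ell_m$ a functor — and dually for $\UglnA^{\sop}$ and the right action, so each factor is respected. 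The equality $f((x)g) = (f(x))g$ in the last displayed formula is just the statement that the two actions commute, which is immediate from matrix multiplication on $V_m \otimes_A V_n$. (3) Verify $F_{m,n}$ respects composition in $\UglmA \boxtimes \UglnA^{\sop}$: using the composition rule \cref{E:composition} in the box product, namely $(f_1 \otimes f_2) \circ (g_1 \otimes g_2) = (-1)^{\bar f_2 \bar g_1}(f_1 \circ g_1) \otimes (f_2 \circ g_2)$, and recalling that $\UglnA^{\sop}$ reverses composition with its own sign $f \bullet g = (-1)^{\bar f \bar g} g \circ f$, one computes $F_{m,n}((f_1 \otimes f_2) \circ (g_1 \otimes g_2))(x)$ and $F_{m,n}(f_1 \otimes f_2) \circ F_{m,n}(g_1 \otimes g_2)(x)$ and checks the Koszul signs match. (4) Check $F_{m,n}$ sends identities to identities and is parity-preserving as a superfunctor (the parity of $f \otimes g$ is $\bar f + \bar g$, matching the parity of the induced map). (5) Conclude $F_{m,n}$ is a birepresentation, i.e. an even superfunctor, hence an object of $\UglmA\text{-Rep-}\UglnA$.

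The main obstacle is the careful bookkeeping of Koszul signs in step (3), where one must reconcile three sources of sign: the sign in the opposite supercategory $\UglnA^{\sop}$, the interchange sign $(-1)^{\bar f_2 \bar g_1}$ in the box-product composition, and the sign $(-1)^{\bar g \bar x}$ built into $F_{m,n}$ itself when a morphism acts on a vector. The computation is routine but must be done with care: writing $f_i$ for the $\gl_m$-factors and $g_i$ for the $\gl_n$-factors, one expands both sides acting on a homogeneous $x$, uses commutativity of the two actions to slide all $f$'s to the left of all $g$'s, and tracks how many odd symbols each odd symbol is moved past. I expect everything to cancel correctly precisely because the sign conventions in \cite{BE} for $\catC \boxtimes \catD^{\sop}$ were designed so that birepresentations correspond to honest bimodules; indeed the cleanest way to organize the proof is to observe that under the equivalence $\UglmA\text{-Rep-}\UglnA \cong (\UglmA, \UglnA)\text{-bimodules}$ described in \cref{SS:Miscellanea}, the bimodule $\SAmn$ (with its commuting $\dot U(\gl_m(A))$- and $\dot U(\gl_n(A))$-actions noted at the end of that subsection, via the weight idempotents lying in $\Lambda_m$ and $\Lambda_n$) corresponds to exactly the functor $F_{m,n}$ described in the statement, so no further verification beyond unwinding definitions is required.
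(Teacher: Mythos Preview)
Your proposal is correct and takes essentially the same approach as the paper: the paper states that the lemma is immediate from the fact that $\SAmn$ is a $(\dot{U}(\gl_{m}(A)),\dot{U}(\gl_{n}(A)))$-bisupermodule, which is precisely the clean observation you arrive at in your final sentence. Your steps (1)--(4) are simply an explicit unwinding of the equivalence between bisupermodules and birepresentations described in \cref{SS:Miscellanea}, which the paper leaves implicit.
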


\subsection{Categorical and finitary Howe duality}\label{SS:HoweDuality}

For any supercategories $\mathbf{C}$ and $\mathbf{D}$, there is a functor 
\[
\iota^{\prime} : \mathbf{C} \to \Fun(\mathbf{D}, \mathbf{C} \boxtimes \mathbf{D})
\]
given as follows.  First, for an object $x$ in $\mathbf{C}$ define a functor $\iota^{\prime}(x) =  \iota^{\prime}_{x} : \mathbf{D} \to \mathbf{C} \boxtimes \mathbf{D}$  by $\iota^{\prime}_{x}(y) = (x, y)$ for all objects $y$ in $\mathbf{D}$ and by $\iota^{\prime}_{x}(g) = 1_{x} \otimes g$ for all morphisms $g \in \Hom_{\mathbf{D}}(y_1, y_2)$.  Next we define the functor $\iota^{\prime}$ on morphisms.  Given any morphism $f : x_1 \to x_2$ in $\mathbf{C}$ construct a supernatural transformation $\iota^{\prime}(f) : \iota^{\prime}_{x_{1}} \Rightarrow \iota^{\prime}_{x_{2}}$ by setting, for any object $y \in \mathbf{D}$, the morphism $\iota^{\prime}(f)_{y}=f \otimes 1_{y} : \iota^{\prime}_{x_1}(y)  \to \iota^{\prime}_{x_{2}}(y)$.  Making the obvious changes yields an analogous functor 
\[
\iota^{\prime \prime} : \mathbf{D} \to \Fun(\mathbf{C}, \mathbf{C} \boxtimes \mathbf{D}).
\]

Given a birepresentation $F : \mathbf{C}_1 \boxtimes \mathbf{C}^{\sop}_2 \to \ksMod$, there are functors $F_{*}:\Fun(\mathbf{C}^{\sop}_2 , \mathbf{C}_1 \boxtimes \mathbf C^{\sop}_{2}) \to \text{Rep-}\mathbf{C}_2$ and $F_{*}:\Fun(\mathbf{C}_1 , \mathbf{C}_1 \boxtimes \mathbf C_2^{\sop}) \to \mathbf{C}_1\text{-Rep}$ given in both cases by  $H \mapsto F \circ H$.  In turn, there are functors
\begin{align*}
{F}_1=F_* \circ \iota^{\prime} &: \mathbf{C}_1 \to \text{Rep-}\mathbf{C}_{2}, \\
{F}_2 =F_* \circ \iota^{\prime \prime}&: \mathbf{C}^{\sop}_2 \to \mathbf{C}_1\text{-Rep}.
\end{align*}

We say the birepresentation $F$ \emph{left centralizes} if the functor ${F}_{1}$ is full, it \emph{right centralizes} if the functor ${F}_{2}$ is full, and it \emph{has the double centralizer property} if both ${F}_{1}$ and ${F}_{2}$ are full.  As a sanity check, the reader may wish to verify that when $\mathbf{C}_{1}$ and $\mathbf{C}_{2}$ both have only a single object  these definitions specialize to the notion of two superalgebras having mutually centralizing actions on a bisupermodule.

\begin{theorem}\label{T:Centralizing}  Let $\k$ be a field of characteristic zero.  The following statements hold:
\begin{enumerate}
\item If for a given $m \geq 0$ the defining representation $G_{m}$ is full for $A$, then $F_{m,n}$ left centralizes for all $n \geq 0$.
\item If for a given $n \geq 0$ the defining representation $G_{n}$ is full for $A^{\sop}$, then $F_{m,n}$ right centralizes for all $m \geq 0$.
\end{enumerate}
\end{theorem}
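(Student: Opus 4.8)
The two statements are mirror images, and statement (1) should reduce to the fullness of the single-variable functor $G_m$ applied slice-by-slice over the $\gl_n(A)$-weight spaces. So I would prove (1) carefully and obtain (2) by the duality symmetry of \cref{L:transposeisomorphisms}: applying part (1) of that lemma to $\WebAaI$, part (2) to $\UglnA$, and part (3) to the bisupermodule $\SAmn$, the statement ``$F_{m,n}$ right centralizes for $A$'' becomes ``$F_{n,m}$ left centralizes for $A^{\sop}$'', which is exactly statement (1) with $(m,n,A)$ replaced by $(n,m,A^{\sop})$. One must check that the functor $\iota''$ and the induced functor $(F_{m,n})_2$ correspond, under these isomorphisms, to $\iota'$ and $((F_{n,m})_1$ for $A^{\sop}$; this is a formal but slightly fiddly bookkeeping exercise with the supercategory opposites and the sign conventions in \cref{E:composition}. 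I expect this identification to be the main technical obstacle—not conceptually deep, but one must be careful with parities.

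\textbf{The core of (1).} Fix $m$ with $G_m$ full, and fix $n \geq 0$. The functor $(F_{m,n})_1 : \UglmA \to \text{Rep-}\UglnA$ sends an object $\lambda \in \Lambda_m$ to the right $\UglnA$-representation $\mu \mapsto \SAmn^{\lambda,\mu}$, i.e. to the $\UglnA$-representation attached to the right $\UglnA$-supermodule $\SAmn^{\lambda,\bullet}$. Fullness of $(F_{m,n})_1$ means: for all $\lambda,\gamma \in \Lambda_m$, the map
\begin{align*}
\UglmA(\lambda,\gamma) \longrightarrow \Hom_{\UglnA}\bigl(\SAmn^{\lambda,\bullet}, \SAmn^{\gamma,\bullet}\bigr)^{\fin}
\end{align*}
induced by $\rho^{\ell}_m$ is surjective. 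Now by \cref{L:weightspacesaresymmetricspaces} we may identify $\SAmn^{\lambda,\bullet} \cong S^{\lambda_1}(V_n)\otimes\dotsb\otimes S^{\lambda_m}(V_n)$ as a right $\UglnA$-supermodule, so the right-hand side is a $\Hom$-space between objects of $\modglnAT$ (more precisely $\modglnAS$, after identifying $S^{x}(V_n)$ with $S^x_{i}V_n$ summed over the idempotents). By \cref{L:repscoincide}, $R^{\ell}_m \cong G_m \circ W_m$, and unwinding definitions the displayed map is precisely $G_m\circ W_m$ on the relevant morphism spaces. Since $W_n$ is full by \cref{C:FullnessofWebFunctor} (here I need $\k$ characteristic zero, which is hypothesized) and $G_m$ is full by assumption, the composite $G_m\circ W_m$ is full; hence the displayed map is surjective. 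This is the desired conclusion.

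\textbf{Two points requiring care.} First, the target above carries the decoration ``$\fin$'', and I must confirm that the image of $\UglmA(\lambda,\gamma)$ lands in, and exhausts, the finite part. That the image lands in the finite part is already built into \cref{E:LeftUmodReponS}. For surjectivity onto the finite part I would argue that a left-finite $\UglnA$-homomorphism $\SAmn^{\lambda,\bullet}\to\SAmn^{\gamma,\bullet}$ is supported on finitely many $\gl_n(A)$-weights; restricting to those weights it becomes a morphism between finite direct sums of symmetric-power modules in $\modglnAS$, so fullness of $G_m$ (which is a statement about these genuinely finite-dimensional morphism spaces) applies and produces a preimage in $\UglmA$. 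Second, I should double-check that the identification of $R^{\ell}_m$ with $G_m\circ W_m$ in \cref{L:repscoincide} is compatible with the $\UglnA$-module structure used to form $\Hom_{\UglnA}$ on the target—i.e. that the right $\gl_n(A)$-action on $\SAmn^{\lambda,\bullet}$ matches the $\gl_n(A)$-action on $S^{\lambda_1}(V_n)\otimes\dotsb$ via the coproduct, which is exactly what the monomial check in the proof of \cref{L:weightspacesaresymmetricspaces} establishes. With these checked, (1) follows, and (2) follows by the opposite-category symmetry described in the first paragraph.
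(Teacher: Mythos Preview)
Your proposal is correct and follows essentially the same approach as the paper: identify $(F_{m,n})_1$ with $R^\ell_m \cong G_m \circ W_m$ via \cref{L:repscoincide}, then invoke fullness of $W_m$ (\cref{C:FullnessofWebFunctor}) and of $G_m$; for (2), use the transpose isomorphisms of \cref{L:transposeisomorphisms} to reduce to (1) for $A^{\sop}$. Two small remarks: your first ``point requiring care'' about the $\fin$ decoration is a non-issue, since for fixed $\lambda \in \Lambda_m$ the module $\SAmn^{\lambda,\bullet}$ has nonzero $\gl_n(A)$-weight space only for the finitely many $\mu \in \Lambda_n$ with $\lVert\mu\rVert = \lVert\lambda\rVert$, so every $\UglnA$-homomorphism between such modules is automatically finite; and in your argument for (1) you wrote $W_n$ where you meant $W_m$.
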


\begin{proof} To prove that $F_{m,n}$ left centralizes we must show that $F_{m,n,1} = F_{m,n, *} \circ \iota' : \UglmA \to \text{Rep-}\UglnA$ is full.  Recall that the supercategory $\text{Rep-}\UglnA$ is equivalent to the supercategory of right $\dot{U}(\gl_{n}(A))$-supermodules.  Under this equivalence, for any object $\lambda \in \Lambda_{m}$ we have
\[
F_{m,n,1}(\lambda) = \mathcal{S}_{m.n}^{\lambda, \bullet}.
\] Given objects $\lambda, \gamma \in \Lambda_{m}$, the morphism $a \in \UglmA (\lambda, \mu)$ goes to the $\dot{U}(\gl_{n}(A))$-supermodule homomorphism 
\[
F_{m,n,1}(a): \mathcal{S}_{m.n}^{\lambda, \bullet} \to \mathcal{S}_{m.n}^{\gamma, \bullet}
\] given by acting on the left with the superalgebra element $a \in \dot{U}(\gl_{m}(A))$.  That is, under the equivalence $F_{m,n,1}$ goes to the functor $R^{\ell}_{m}$.  Therefore, by \cref{L:repscoincide} it suffices to show $G_{m} \circ W_{m}$ is full.  But $W_{m}$ is full by \cref{C:FullnessofWebFunctor} and $G_{m}$ is full by assumption.

For clarity, for the remainder of the proof we use a superscript $A$ to indicate the underlying algebra (e.g., $F^{A}_{m,n}$ for $F_{m,n}$). To prove that $F^{A}_{m,n}$ right centralizes we must show that 
\[
F^{A}_{m,n,2} = F^{A}_{m,n,*} \circ \iota^{'', A} : \UglnA^{\sop} \to \UglmA\text{-Rep}
\]
is full.  Recall that the supercategory $\UglmA\text{-Rep}$ is equivalent to the supercategory of left $\dot{U}(\gl_{m}(A))$-supermodules.   Under this equivalence, for any object $\mu \in \Lambda_{n}$ we have
\[
F^{A}_{m,n,2}(\mu) = \mathcal{S}_{m.n}^{\bullet, \mu}.
\] Given objects $\mu, \nu \in \Lambda_{n}$, the morphism $b \in \UglnA^{\sop} (\mu, \nu)$ goes to the $\dot{U}(\gl_{m}(A))$-supermodule homomorphism 
\[
F^{A }_{m,n,2}(b): \mathcal{S}_{m.n}^{\bullet, \mu} \to \mathcal{S}_{m.n}^{\bullet, \nu}
\] given by acting on the right with the superalgebra element $b \in \dot{U}(\gl_{n}(A))$.

That is, under the equivalence $F^{A }_{m,n,2}$ goes to the functor $R^{r, A }_{n}$ and fullness amounts to showing that any finite right $\dot{U}(\gl_{n}(A))$-supermodule endomorphism of $\mathcal{S}^{A}_{m,n}$ can be realized by the action on the left by an element of $\dot{U}(\gl_{m}(A))$.  That is, that any finite left  $\dot{U}(\gl_{n}(A))^{\sop}$-supermodule endomorphism of $\mathcal{S}^{A}_{m,n}$ can be realized by the action on the right by an element of $\dot{U}(\gl_{m}(A))^{\sop}$.  Applying the identifications given in \cref{L:transposeisomorphisms}  shows that fullness of $F^{A}_{m,n,2}$ follows from the fullness of $R^{\ell, A^{\sop}}_{n}$.  This was established in the previous paragraph (replacing $A$ with $A^{\sop}$, of course).
\end{proof}

Combining the above with the results from \cref{S:FunctorFullness} immediately implies the following categorical version of Howe duality.
\begin{proposition}\label{T:CategoricalHoweDuality}  Let $\k$ be a field of characteristic zero.  Assume that Schur--Weyl duality holds for $A$ and $A^{\sop}$.  Then the birepresentation
\[
F_{m,n}: \UglmA  \boxtimes \UglnA^{\sop}  \to \ksMod
\] has the double centralizer property.
\end{proposition}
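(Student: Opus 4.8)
The plan is to deduce \cref{T:CategoricalHoweDuality} directly from \cref{T:Centralizing} by translating the Schur--Weyl duality hypothesis into fullness of the relevant defining representations. Recall that \cref{T:Centralizing}(1) says that if $G_{m}$ is full for $A$, then $F_{m,n}$ left centralizes for all $n$, and \cref{T:Centralizing}(2) says that if $G_{n}$ is full for $A^{\sop}$, then $F_{m,n}$ right centralizes for all $m$. So the entire task reduces to showing that the stated hypotheses on Schur--Weyl duality imply that $G_{m}$ is full for $A$ (for every $m \geq 0$) and that $G_{n}$ is full for $A^{\sop}$ (for every $n \geq 0$).

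First I would invoke \cref{P:ReductionToThinStrands}: over a field of characteristic zero, $G_{m} \colon \WebAaI \to \modglnAS$ is full if and only if $F_{m} \colon \WrAI \to \modglnAT$ is full. Then \cref{P:fullnesseqaulssurjective} says $F_{m}$ is full if and only if the superalgebra map $\rho^{A}_{m,d} \colon \SS_{d} \wr A \to \End_{U(\gl_{m}(A))}(V_{m}^{\otimes d})^{\fin}$ is surjective for all $d \geq 0$. But this surjectivity for all $m, d$ is precisely the statement that Schur--Weyl duality holds for $A$, as formulated in \cref{SS:SchurWeylDuality} (using that $\End_{S^{A}(m,d)}(V_{m}^{\otimes d}) = \End_{T^{A}_{a}(m,d)}(V_{m}^{\otimes d})$, as noted there). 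Hence the hypothesis ``Schur--Weyl duality holds for $A$'' gives fullness of $G_{m}$ for all $m$, and by \cref{T:Centralizing}(1), $F_{m,n}$ left centralizes for all $m, n$. The symmetric argument, applied with $A$ replaced by $A^{\sop}$ throughout and using that $(A^{\sop}, a^{\sop})_{I}$ is again a good pair, gives fullness of $G_{n}$ for $A^{\sop}$ for all $n$, and then \cref{T:Centralizing}(2) yields that $F_{m,n}$ right centralizes for all $m, n$.

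Combining the two conclusions, $F_{m,n}$ both left centralizes and right centralizes, which is exactly the double centralizer property by the definition in \cref{SS:HoweDuality}. This completes the proof. The only mild points to check carefully are bookkeeping ones: that $(A^{\sop}, a^{\sop})_{I}$ satisfies all the standing hypotheses (it is a good pair with $I$ finite and $A^{\sop}$ unital, since good pairs are visibly closed under $\op$), and that ``Schur--Weyl duality holds for $A^{\sop}$'' is a genuine hypothesis supplied in the statement rather than something one derives from Schur--Weyl duality for $A$. I do not expect any real obstacle here: the proposition is essentially an assembly of \cref{T:Centralizing}, \cref{P:ReductionToThinStrands}, and \cref{P:fullnesseqaulssurjective}, together with the definitions of Schur--Weyl duality and of the double centralizer property. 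If anything, the subtlest step is simply making sure the finiteness conditions (the superscript $\fin$) match up between the formulation of Schur--Weyl duality and the hypotheses of \cref{T:Centralizing}, but these are handled by the discussion preceding \cref{P:fullnesseqaulssurjective}.
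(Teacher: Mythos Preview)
Your proposal is correct and follows exactly the route the paper takes: the paper's proof is a one-line appeal to \cref{T:Centralizing} together with ``the results from \cref{S:FunctorFullness},'' and what you have written is simply an explicit unpacking of that appeal via \cref{P:ReductionToThinStrands} and \cref{P:fullnesseqaulssurjective}. The one small identification you leave implicit---that over a characteristic-zero field $\End_{U(\gl_{m}(A))}(V_{m}^{\otimes d}) = \End_{S^{A}(m,d)}(V_{m}^{\otimes d})$, which is what makes the two formulations of Schur--Weyl duality match---follows from \cref{L:EnvelopingAlgebraisontoSchurAlgebra}.
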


These results can also be reformulated into the following finitary Howe duality.
\begin{proposition}\label{C:FinitaryHoweDuality}  Let $\k$ be a field of characteristic zero and let $A$ be a unital $\k$-linear superalgebra.  Assume that Schur--Weyl duality holds for $A$ or $A^{\sop}$.  Then, respectively, the superalgebra maps
\begin{align*}
\rho^{\ell}_{m} &: \dot{U}(\gl_{m}(A)) \to \End_{\dot{U}(\gl_{n}(A))}\left(\SAmn \right)^{\fin },\\
\rho^{r}_{n} &: \dot{U}(\gl_{n}(A)) \to \End_{\dot{U}(\gl_{m}(A))}\left(\SAmn \right)^{\fin },
\end{align*} are surjective.
\end{proposition}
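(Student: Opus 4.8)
The plan is to read off \cref{C:FinitaryHoweDuality} from the categorical centralizer statement \cref{T:Centralizing} via the equivalence between locally unital superalgebras and small $\k$-linear supercategories recalled in \cref{locunicov}; the two asserted surjectivities are handled separately, matching the hypotheses ``respectively.''

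For $\rho^{\ell}_{m}$, assume Schur--Weyl duality holds for $A$. By the definition of Schur--Weyl duality together with \cref{P:fullnesseqaulssurjective}, this is equivalent to the functor $F_{n}\colon \WrAI \to \modglnAT$ being full for every $n \geq 0$, and hence---this is where $\operatorname{char}\k = 0$ enters---equivalent by \cref{P:ReductionToThinStrands} to the defining representation $G_{n}\colon \WebAaI \to \modglnAS$ being full for every $n \geq 0$. In particular $G_{m}$ is full, so \cref{T:Centralizing}(1) applies and the birepresentation $F_{m,n}$ left centralizes for every $n$; that is, the functor $F_{m,n,1}\colon \UglmA \to \text{Rep-}\UglnA$ is full. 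As unwound in the proof of \cref{T:Centralizing}, $F_{m,n,1}$ sends $\lambda \in \Lambda_{m}$ to $\SAmn^{\lambda,\bullet}$ and a morphism of $\UglmA$ to the corresponding left-multiplication map, so it is identified with the representation attached to $\rho^{\ell}_{m}$. Since $\dot{U}(\gl_{m}(A)) = \bigoplus_{\lambda,\gamma \in \Lambda_{m}}\UglmA(\lambda,\gamma)$ and a morphism in $\text{Rep-}\UglnA$ from $\bigoplus_{\lambda}\SAmn^{\lambda,\bullet}$ to itself is precisely a right-finite $\dot{U}(\gl_{n}(A))$-supermodule endomorphism of $\SAmn$ (finiteness being the algebra avatar of a supernatural transformation landing in a direct sum rather than a product), summing the hom-space surjectivities over all pairs $(\lambda,\gamma)$ yields exactly the surjectivity of
\[
\rho^{\ell}_{m}\colon \dot{U}(\gl_{m}(A)) \to \End_{\dot{U}(\gl_{n}(A))}(\SAmn)^{\fin}.
\]

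For $\rho^{r}_{n}$ I would run the mirror argument with $A$ replaced by $A^{\sop}$. Assuming Schur--Weyl duality for $A^{\sop}$, the same chain shows $G_{n}$ is full for $A^{\sop}$ for all $n$, so \cref{T:Centralizing}(2) gives that $F_{m,n}$ right centralizes for every $m$: the functor $F_{m,n,2}\colon \UglnA^{\sop} \to \UglmA\text{-Rep}$ is full. As in the proof of \cref{T:Centralizing}, under the equivalence $\UglmA\text{-Rep} \cong \dot{U}(\gl_{m}(A))\text{-}\sMod$ this functor is identified with the representation attached to $\rho^{r}_{n}$---the transpose identifications of \cref{L:transposeisomorphisms} having already been folded into that proof---and exactly as above, summing the hom-space surjectivities over pairs $(\mu,\nu) \in \Lambda_{n}\times\Lambda_{n}$ and matching ``finite'' with locality produces the surjectivity of $\rho^{r}_{n}$.

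The step requiring the most care is the middle bookkeeping: one must check that abstract fullness of $F_{m,n,1}$ (resp. $F_{m,n,2}$) corresponds to surjectivity onto the \emph{finite} endomorphism algebra and not some larger object, which amounts to confirming that the weight idempotents of $\dot{U}(\gl_{m}(A))$ indexed by $\Lambda_{m}$ match the decomposition $\SAmn = \bigoplus_{\lambda \in \Lambda_{m}}\SAmn^{\lambda,\bullet}$, and that the ``$^{\fin}$'' superscripts on the two sides of $\rho^{\ell}_{m}$ (resp. $\rho^{r}_{n}$) refer to the same finiteness condition. Everything else is either quoted directly from \cref{T:Centralizing} or is a formal consequence of the superalgebra/supercategory correspondence.
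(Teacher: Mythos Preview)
Your proposal is correct and follows essentially the same route as the paper: both arguments reduce to the fullness of $R^{\ell}_{m}$ (resp.\ $R^{r}_{n}$) established in the proof of \cref{T:Centralizing}, and then observe that fullness of these functors is equivalent to surjectivity of $\rho^{\ell}_{m}$ (resp.\ $\rho^{r}_{n}$). You spell out more explicitly the chain from Schur--Weyl duality to fullness of $G_{m}$ via \cref{P:fullnesseqaulssurjective} and \cref{P:ReductionToThinStrands}, and the finiteness bookkeeping, whereas the paper's proof compresses all of this into a one-line reference back to \cref{T:Centralizing}; but the content is the same.
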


\begin{proof} As discussed in the proof of \cref{T:Centralizing}, the assumptions of the theorem prove the functors $R^{\ell}_{m}$ and $R^{r}_{n}$ are full which, in turn, is equivalent to $\rho^{\ell}_{m}$ and $\rho^{r}_{n}$ being surjective.
\end{proof}

\subsection{Strong multiplicity-free decompositions}\label{SS:MultiplicityFreeDecompositions}  In this section we assume $\k$ is an algebraically closed field of characteristic zero, and that $A$ is finite-dimensional and semisimple.

For homogeneous $a_{1}, \dotsc , a_{d}, b_{1}, \dotsc , b_{d} \in A$, define 
\[
\varepsilon((a_{1}, \dotsc , a_{d}), (b_{1}, \dotsc , b_{d})) = \sum_{1 \leq q < p \leq d} \bar{a}_{p}\bar{b}_{q}.
\]  Then in $A^{\otimes d}$ we have 
\[
(a_{1} \otimes \dotsb \otimes a_{d})(b_{1} \otimes \dotsb \otimes b_{d}) = (-1)^{\varepsilon((a_{1}, \dotsc , a_{d}), (b_{1}, \dotsc , b_{d}))} (a_{1}b_{1}) \otimes \dotsb (a_{d}b_{d}).
\]  Also, for $\tau \in \SS_{d}$ recall the notation $\langle \tau ; (a_{1}, \dotsc , a_{d}) \rangle$ defined in \cref{twistsec}.  With these formulas a calculation shows that for all $\tau \in \SS_{d}$, 
\begin{equation}\label{E:intertwiningproduct}
\left[(a_{1} \otimes \dotsb \otimes a_{d})(b_{1} \otimes \dotsb \otimes b_{d}) \right] \cdot \tau = \left[ (a_{1} \otimes \dotsb \otimes a_{d}) \cdot \tau \right]\left[(b_{1} \otimes \dotsb \otimes b_{d}) \cdot \tau \right].
\end{equation}
\begin{theorem}\label{L:HDMultiplicityFree}  If $m,n \geq 1$ and $r$ is the minimum of $m$ and $n$, then for all $d \geq 0$,
\[
\mathcal{S}^{d}_{m,n} \cong \bigoplus_{\tuplambda \in \Lambda^{A}_{+}(r,d)} L^{A}_{m}(\tuplambda) \star L^{A}_{n}(\tuplambda)^{*}
\] as $(U(\gl_{m}(A)), U(\gl_{n}(A)))$-supermodules.
\end{theorem}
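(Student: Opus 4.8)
The plan is to reduce the statement to the $d$-th graded piece of the classical symmetric-space decomposition and then invoke the double centralizer theorem for semisimple superalgebras together with the earlier identification of $\mathcal{S}^{d}_{m,n}$ as a quotient of $V_{m}^{\otimes d} \otimes_{A^{\otimes d}} V_{n}^{\otimes d}$-type data. Concretely, I would first observe that $V_{m} \otimes_{A} V_{n} = \bigoplus_{p,q} \k z_{p,q}^{\bullet}$ carries commuting left $\gl_{m}(A)$ and right $\gl_{n}(A)$ actions, and that by a standard argument (the symmetrization map, using $\mathrm{char}\,\k = 0$) the $d$-th symmetric power $\mathcal{S}^{d}_{m,n}$ is isomorphic, as a $(U(\gl_{m}(A)), U(\gl_{n}(A)))$-bisupermodule, to the space of $\SS_{d}$-coinvariants (equivalently invariants) of $\bigl(V_{m} \otimes_{A} V_{n}\bigr)^{\otimes d}$ under the signed place-permutation action. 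The identity \cref{E:intertwiningproduct} is exactly what is needed to see that this $\SS_{d}$-action commutes with both the $\gl_{m}(A)$- and $\gl_{n}(A)$-actions, so that passing to $\SS_{d}$-invariants is legitimate on the level of bisupermodules.

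Next I would unwind $\bigl(V_{m} \otimes_{A} V_{n}\bigr)^{\otimes d}$. Since $V_{m}$ is a right $A$-supermodule and $V_{n}$ a left $A$-supermodule, there is a natural identification $\bigl(V_{m} \otimes_{A} V_{n}\bigr)^{\otimes d} \cong V_{m}^{\otimes d} \otimes_{A^{\otimes d}} V_{n}^{\otimes d}$, and the diagonal left $A^{\otimes d}$-action here is precisely the one appearing in the wreath-product discussion of \cref{SS:fullnessforwreathcategory}. Taking $\SS_{d}$-invariants and using that $V_{m}^{\otimes d}$ (resp.\ $V_{n}^{\otimes d}$) is a $(S^{A}(n_{\bullet},d), \SS_{d}\wr A)$-bisupermodule (resp.\ with the roles of left/right and $m,n$ swapped), one sees that
\[
\mathcal{S}^{d}_{m,n} \cong V_{m}^{\otimes d} \otimes_{\SS_{d}\wr A} V_{n}^{\otimes d}
\]
as $(U(\gl_{m}(A)), U(\gl_{n}(A)))$-bisupermodules, where the $\SS_{d}\wr A$-module structure on $V_{m}^{\otimes d}$ is the one dualizing \cref{R:duals}. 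At this point I would plug in the strong multiplicity-free decompositions \cref{E:HopefullyRight!} for $V_{m}^{\otimes d}$ and \cref{L:SWMultiplicityFree} for $V_{n}^{\otimes d}$:
\[
V_{m}^{\otimes d} \cong \bigoplus_{\tupmu} D^{\tupmu} \star L^{A}_{m}(\tupmu)^{*}, \qquad V_{n}^{\otimes d} \cong \bigoplus_{\tupnu} L^{A}_{n}(\tupnu) \star D^{\tupnu,*},
\]
the first sum over $\tupmu \in \Lambda^{A}_{+}(m,d)$ and the second over $\tupnu \in \Lambda^{A}_{+}(n,d)$. Since $\SS_{d}\wr A$ is semisimple (it is a finite-dimensional semisimple superalgebra by \cref{R:ArbitraryA}), the tensor product over it picks out the terms with $\tupmu = \tupnu$, contributing $\mathrm{Hom}_{\SS_{d}\wr A}(D^{\tupmu,*} \text{-part}, \ldots)$ which by Schur's lemma for the two types $\typeM$ and $\typeQ$ evaluates to $L^{A}_{m}(\tuplambda) \star L^{A}_{n}(\tuplambda)^{*}$ (the $\star$-reduction absorbing the parity-shift subtleties when $\delta(\tuplambda)=1$). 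Finally, the index set collapses: $D^{\tuplambda}$ survives in both $V_{m}^{\otimes d}$ and $V_{n}^{\otimes d}$ exactly when $\tuplambda \in \Lambda^{A}_{+}(m,d) \cap \Lambda^{A}_{+}(n,d) = \Lambda^{A}_{+}(r,d)$ with $r = \min(m,n)$, since by \cref{T:LabellingSimpleSAndmodules} membership in $\Lambda^{A}_{+}(n,d)$ only constrains the number of parts of each $\lambda^{(\alpha)}$ by a bound increasing in $n$ (hook/strict conditions aside, which do not depend on $n$).

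The main obstacle I anticipate is the careful bookkeeping of parities and type-$\typeQ$ phenomena when forming $D^{\tupmu} \star L^{A}_{m}(\tupmu)^{*}$ and taking $\otimes_{\SS_{d}\wr A}$: one must verify that $\mathrm{Hom}_{\SS_{d}\wr A}(D^{\tuplambda,*}, D^{\tuplambda,*})$-weighted contraction of $(D^{\tuplambda} \star L^{A}_{m}(\tuplambda)^{*}) \otimes (L^{A}_{n}(\tuplambda) \star D^{\tuplambda,*})$ really yields a single copy of $L^{A}_{m}(\tuplambda) \star L^{A}_{n}(\tuplambda)^{*}$ and not, say, $L^{A}_{m}(\tuplambda) \otimes L^{A}_{n}(\tuplambda)^{*}$ with a spurious parity shift — this is where the $\star$ versus $\otimes$ conventions from \cref{SS:RepsofSemisimpleSuperalgebras} and the $1|1$-dimensional endomorphism ring in the type-$\typeQ$ case must be handled exactly as in the proof of \cref{T:LabellingSimpleSAndmodules}. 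A secondary technical point is justifying the first isomorphism $\mathcal{S}^{d}_{m,n} \cong (\text{symmetrized }(V_m\otimes_A V_n)^{\otimes d})$ at the level of bisupermodules rather than merely $\k$-supermodules; this should follow formally from the fact that the symmetric power is a quotient of the tensor power by the relations defining $S^{\bullet}$, and those relations are preserved by both group actions, with the $\mathrm{char}\,\k=0$ symmetrizer giving the splitting. I would also double-check via the graded-dimension count in \cref{R:gradedmultiplicites} that the multiplicities come out to $1|0$ (up to the $\star$-reduction), which serves as a useful sanity check on the whole argument.
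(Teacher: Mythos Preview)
Your proposal is correct and follows essentially the same route as the paper: identify $\mathcal{S}^{d}_{m,n}$ with $V_{m}^{\otimes d} \otimes_{\SS_{d}\wr A} V_{n}^{\otimes d}$ via \cref{E:intertwiningproduct} and the symmetrizer, then plug in the multiplicity-free decompositions from \cref{L:SWMultiplicityFree} and \cref{E:HopefullyRight!} and contract over $\SS_{d}\wr A$ using Schur's lemma. One small slip: you have the two decompositions assigned to the wrong tensor factor --- for the tensor product over $\SS_{d}\wr A$ to typecheck you need $V_{m}^{\otimes d} \cong \bigoplus_{\tupmu} L^{A}_{m}(\tupmu)\star D^{\tupmu,*}$ (right $\SS_{d}\wr A$-module) and $V_{n}^{\otimes d} \cong \bigoplus_{\tuplambda} D^{\tuplambda}\star L^{A}_{n}(\tuplambda)^{*}$ (left $\SS_{d}\wr A$-module), exactly as the paper writes; also, the hook condition \emph{does} depend on $n$ (both $M_\alpha$ and $N_\alpha$ scale with $n$), but it is monotone in $n$, so your conclusion $\Lambda^{A}_{+}(m,d)\cap\Lambda^{A}_{+}(n,d)=\Lambda^{A}_{+}(r,d)$ stands.
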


\begin{proof}  Let $A^{\otimes d}$ act by diagonal multiplication on the left on $V_{n}^{\otimes d}$ and on the right on $V_{m}^{\otimes d}$.  There is a $\k$-supermodule isomorphism 
\[
V_{m}^{\otimes d} \otimes_{A^{\otimes d}} V_{n}^{\otimes d} \xrightarrow{\cong} (V_{m} \otimes_{A} V_{n})^{\otimes d}
\] given on homogeneous elements by 
\begin{equation*}
(v_{i_{1}}^{a_{1}} \otimes \dotsb \otimes v_{i_{d}}^{ a_{d}}) \otimes (v_{j_{1}}^{b_{1}} \otimes \dotsb \otimes v_{j_{d}}^{ b_{d}})  \mapsto  (-1)^{\varepsilon((a_{1}, \dotsc , a_{d}), (b_{1}, \dotsc , b_{d}))} z_{i_{1}, j_{1}}^{a_{1}b_{1}} \otimes \dotsb \otimes z_{i_{d},j_{d}}^{a_{d}b_{d}}.
\end{equation*}

Let $\SS_{d}$ acts on the right on $ (V_{m} \otimes_{A} V_{n})^{\otimes d}$ via signed place permutation.  It also acts on the right on $V_{m}^{\otimes d} \otimes_{A^{\otimes d}} V_{n}^{\otimes d}$ by acting diagonally via simultaneous signed place permutations on $V_{m}^{\otimes d}$ and $V_{n}^{\otimes d}$.  That the above map is an isomorphism of $\SS_{d}$-supermodules follows from \cref{E:intertwiningproduct}.  Restricting this isomorphism to $\SS_{d}$-invariants yields the middle isomorphism:
\begin{equation}\label{E:tensorisomorphisms}
V_{m}^{\otimes d} \otimes_{\SS_{d} \wr A^{\otimes d}} V_{n}^{\otimes d} \cong  \left[V_{m}^{\otimes d} \otimes_{A^{\otimes d}} V_{n}^{\otimes d} \right]^{\SS_{d}} \cong  \left[(V_{m} \otimes_{A} V_{n})^{\otimes d} \right]^{\SS_{d}} = \mathcal{S}^{d}_{m,n}.
\end{equation}
The left isomorphism is straightforward. It is also straightforward to check that the maps in \cref{E:tensorisomorphisms}  are $(U(\gl_{m}(A)), U(\gl_{n}(A)))$-bisupermodule homomorphisms.

To further our analysis of \cref{E:tensorisomorphisms} we pause to consider, for $\tuplambda  \in \Lambda_{+}^{A}(n,d)$ and $\tupmu  \in \Lambda_{+}^{A}(m,d)$, 
\[
(L^{A}_{m}(\tupmu) \otimes D^{\tupmu, *} ) \otimes_{\SS_{d}\wr A} (D^{\tuplambda} \otimes L^{A}_{n}(\tuplambda)^{*} ).
\]  As a  $(U(\gl_{m}(A)), U(\gl_{n}(A)))$-bisupermodule it is isomorphic to a direct sum of copies of $L^{A}_{m}(\tupmu) \star L^{A}_{n}(\tuplambda)^{*}$.  To determine the number of summands will require knowing the dimension of $D^{\tupmu, *}  \otimes_{\SS_{d}\wr A} D^{\tuplambda}$.  By tensor-Hom adjunction there is a parity preserving isomorphism of $\k$-supermodules: 
\[
\Hom_{\k}\left( D^{\tupmu, *}  \otimes_{\SS_{d}\wr A} D^{\tuplambda}  , \k \right) \cong \Hom_{\SS_{d} \wr A}\left(D^{\tupmu, *},  D^{\tuplambda, *}\right) \cong \Hom_{\SS_{d} \wr A}\left(D^{\tuplambda},  D^{\tupmu} \right).
\]  By Schur's lemma the dimension of the right hand side is $2^{\delta (\tuplambda)}$ if $D^{\tuplambda} \cong  D^{\tupmu}$ as $\SS_{d} \wr A$-supermodules, and zero otherwise.  Note that when there is an isomorphism, $L^{A}_{n}(\tuplambda)^{*}$, $D^{\tuplambda, *}$, $D^{\tupmu, *}$, $D^{\tupmu}$, and $L^{A}_{m}(\tupmu)$ all have the same type.

Taken together with the tensor product rule from \cref{SS:RepsofSemisimpleSuperalgebras} this discussion shows
\[
(L^{A}_{m}(\tupmu) \otimes D^{\tupmu, *} ) \otimes_{\SS_{d}\wr A} (D^{\tuplambda} \otimes L^{A}_{n}(\tuplambda)^{*} ) \cong \begin{cases}   2^{\delta (\tuplambda)}L^{A}_{m}(\tupmu) \otimes L^{A}_{n}(\tuplambda)^{*}  \cong 2^{2\delta (\tuplambda)} L^{A}_{m}(\tupmu) \star L^{A}_{n}(\tuplambda)^{*}, & \text{ if }D^{\tuplambda} \cong  D^{\tupmu}; \\
0, & else.
\end{cases}
\]  Comparing multiplicities shows 
\[
(L^{A}_{m}(\tupmu) \star  D^{\tupmu, *} ) \otimes_{\SS_{d}\wr A} (D^{\tuplambda} \star L^{A}_{n}(\tuplambda)^{*} ) \cong \begin{cases}  L^{A}_{m}(\tupmu) \star L^{A}_{n}(\tuplambda)^{*}, & \text{ if } D^{\tuplambda} \cong  D^{\tupmu}; \\
0, & else.
\end{cases}
\]  

We are now prepared to prove the statement of the theorem.  Combining \cref{E:HopefullyRight!}, \cref{E:tensorisomorphisms},  \cref{L:SWMultiplicityFree}, and the previous calculation  yields the following isomorphisms of  $(U(\gl_{m}(A)), U(\gl_{n}(A)))$-bisupermodules:
\begin{align*}
\SAmn^{d}  & \cong V_{m}^{\otimes d} \otimes_{\SS_{d} \wr A^{\otimes d}} V_{n}^{\otimes d} \\
            & \cong  \left(\bigoplus_{\tupmu \in \Lambda_{+}^{A}(m,d)} L^{A}_{m}(\tupmu) \star D^{\tupmu, *}  \right) \otimes_{\SS_{d}\wr A}  \left(\bigoplus_{\tuplambda \in \Lambda_{+}^{A}(n,d)}   D^{\tuplambda} \star L^{A}_{n}(\tuplambda)^{*}   \right) \\
            & \cong \bigoplus_{\tupmu \in \Lambda_{+}^{A}(m,d)} \bigoplus_{\tuplambda \in \Lambda_{+}^{A}(n,d)}  \left( L^{A}_{m}(\tupmu) \star D^{\tupmu, *}  \right)  \otimes_{\SS_{d}\wr A} \left( D^{\tuplambda} \star L^{A}_{n}(\tuplambda)^{*}  \right) \\
	    & \cong \bigoplus_{\tuplambda \in \Lambda_{+}^{A}(r,d)}    L^{A}_{m}(\tuplambda) \star L^{A}_{n}(\tuplambda )^{*}.
\end{align*}
\end{proof}

The decomposition of $\mathcal{S}^{d}_{m,n}$ as a $(U(\gl_{m}(A)), U(\gl_{n}(A)))$-bimodule given in the previous theorem immediately implies that these actions are mutually centralizing. Thus we have the following corollary.

\begin{corollary}\label{T:HoweDuality}  Assume $\k$ is an algebraically closed field of characteristic zero and $A$ is a finite-dimensional semisimple superalgebra.  Then for all $m, n \geq 1$, the actions of $U(\gl_{m}(A))$ and $U(\gl_{n}(A))$ on $\mathcal{S}^{d}_{m,n}$ are mutually centralizing.
\end{corollary}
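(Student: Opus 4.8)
The plan is to deduce the corollary directly from the strong multiplicity-free decomposition of \cref{L:HDMultiplicityFree} and the semisimplicity of the Schur superalgebras $S^{A}(m,d)$ and $S^{A}(n,d)$. Writing $r=\min(m,n)$, \cref{L:HDMultiplicityFree} gives
\[
\mathcal{S}^{d}_{m,n} \cong \bigoplus_{\tuplambda \in \Lambda^{A}_{+}(r,d)} L^{A}_{m}(\tuplambda) \star L^{A}_{n}(\tuplambda)^{*}
\]
as a $(U(\gl_{m}(A)),U(\gl_{n}(A)))$-bisupermodule. By \cref{L:weightspacesaresymmetricspaces} the supermodule $\mathcal{S}^{d}_{m,n}$ is a polynomial representation of degree $d$ for each of $\gl_{m}(A)$ and $\gl_{n}(A)$, so the two actions factor through $S^{A}(m,d)$ and $S^{A}(n,d)$ respectively. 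Since $\k$ is algebraically closed of characteristic zero and $A$ is finite-dimensional and semisimple, $\SS_{d}\wr A$ is semisimple by \cref{R:ArbitraryA}, and hence so are $S^{A}(m,d)\cong \End_{\SS_{d}\wr A}(V_{m}^{\otimes d})$ and $S^{A}(n,d)\cong\End_{\SS_{d}\wr A}(V_{n}^{\otimes d})$; alternatively one may cite \cref{T:SchurAlgebraIsomorphism} directly.

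Next I would record that the decomposition above is into \emph{pairwise non-isomorphic simple} bisupermodules. By \cref{T:LabellingSimpleSAndmodules} each $L^{A}_{m}(\tuplambda)$ and $L^{A}_{n}(\tuplambda)^{*}$ is simple, and they share the same type (type $\typeM$ when $\delta(\tuplambda)=0$, type $\typeQ$ when $\delta(\tuplambda)=1$), so by the tensor product rule recalled in \cref{SS:RepsofSemisimpleSuperalgebras} the $\star$-product $L^{A}_{m}(\tuplambda)\star L^{A}_{n}(\tuplambda)^{*}$ is a simple $S^{A}(m,d)\otimes S^{A}(n,d)$-supermodule. Moreover, because $\Lambda^{A}_{+}(r,d)\subseteq\Lambda^{A}_{+}(m,d)$ and $\{L^{A}_{m}(\tupmu)\mid \tupmu\in\Lambda^{A}_{+}(m,d)\}$ is irredundant by \cref{T:LabellingSimpleSAndmodules}, even the left-hand tensor factors already distinguish the summands. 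Thus $\mathcal{S}^{d}_{m,n}$ is a multiplicity-free sum of pairwise non-isomorphic simple bisupermodules.

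Finally I would invoke the double centralizer theorem for finite-dimensional semisimple superalgebras (the same tool as in \cref{L:SchurWeylDuality}; see \cite[Proposition~3.5]{CW}). Let $B$ and $C$ be the images of $S^{A}(m,d)$ and $S^{A}(n,d)$ in $\End_{\k}(\mathcal{S}^{d}_{m,n})$; both are semisimple, and $C\subseteq\End_{B}(\mathcal{S}^{d}_{m,n})$ since the actions commute. As a left $B$-module, $\mathcal{S}^{d}_{m,n}$ is a sum of the distinct simples $L^{A}_{m}(\tuplambda)$, $\tuplambda\in\Lambda^{A}_{+}(r,d)$, with multiplicity superspace $L^{A}_{n}(\tuplambda)^{*}$ attached to $L^{A}_{m}(\tuplambda)$; comparing the resulting description of $\End_{B}(\mathcal{S}^{d}_{m,n})$ with the description of $C$ obtained by viewing $\mathcal{S}^{d}_{m,n}$ as a right $C$-module, and matching graded dimensions using \cref{R:gradedmultiplicites}, gives $C=\End_{B}(\mathcal{S}^{d}_{m,n})$. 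The double centralizer theorem for the semisimple superalgebra $B$ then yields $B=\End_{C}(\mathcal{S}^{d}_{m,n})$ as well, which is exactly the statement that the $U(\gl_{m}(A))$- and $U(\gl_{n}(A))$-actions on $\mathcal{S}^{d}_{m,n}$ are mutually centralizing. The only step requiring real care is this last graded-dimension comparison, since one must track how the parity-shift functor interacts with $\star$ for type $\typeQ$ simples, exactly as in \cref{SS:RepsofSemisimpleSuperalgebras}; everything else is formal.
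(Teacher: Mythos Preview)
Your proposal is correct and takes essentially the same approach as the paper: both deduce the corollary from the multiplicity-free decomposition of \cref{L:HDMultiplicityFree}. The paper's proof is just the one-line remark that this decomposition ``immediately implies'' mutual centralizing, whereas you have spelled out in detail why such a strong multiplicity-free decomposition into simple bisupermodules over semisimple superalgebras forces the images to be mutual commutants; your expanded justification via the double centralizer theorem (with the care about type $\typeQ$ factors) is exactly the content hidden behind the paper's word ``immediately.''
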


It is well-known that Howe dualities can be reformulated in the language of polynomial differential operators.  They also have a number of interesting applications to the study of $U(\gl_{n}(A))$ and its representation theory when, e.g., $A=\k$ or $A = \Cliff$.  See, for example, \cite{CW,Howe}.  We expect that most of these will have $A$-analogues and it would be interesting to develop them.

\section{Examples}\label{S:Examples}
We end with a discussion of some examples of interest.

\subsection{The category of \texorpdfstring{$\gl_n$}{gln}-webs}
If $ A = a = \k$ then \(\Web^{\k, \k}_{\{1\}}\) can be seen to be isomorphic to the category of \(\gl_n\)-webs defined in \cite{DKM}, the {\em Schur category} defined in \cite{BEPO}, and the web category introduced in \cite{CKM}. For \(n,d \in \Z_{\geq 0}\), the \(\k\)-superalgebra \(W^{\k, \k}_{n,d}\) is isomorphic to the well-studied classical Schur algebra \(S^\k(n,d)\).

\subsection{Cyclotomic Schur algebras} Let $C_{r}$ be a cyclic group of order $r$ and let $A= \k C_{r}$.  Then $\SS_{d} \wr \k C_{r}$ is a complex reflection group algebra. The generalized Schur algebra $S^{A}(n,d)$ is the cyclotomic Schur algebra.  It and related algebras have been studied in, for example, \cite{Ariki,ATY,BKLW,DDY,DJM,GeckHiss,GreenR,LNX,MS}.

\subsection{The category of \texorpdfstring{\(\mathfrak{q}_n\)}{q(n)}-webs}  \label{SS:typeQwebs}
Let \(A = \Cliff = \k[c]/(c^2-1)\) to be the Clifford superalgebra on one generator, where the $\Z_{2}$-grading is given by declaring \(\bar{c} = \bar{1} \).  Then $\SS_{d} \wr \Cliff$ is isomorphic to the Sergeev superalgebra $\Ser_{d}$ which, in turn, is related to the spin representations of the symmetric group.   It is not difficult to verify that \(\gl_n(\textup{Cl}_1) \cong \mathfrak{q}_n(\k )\) as Lie superalgebras. If we set \(a = \k1_{A}\) and \(I = \{1\}\), then there is an asymptotically faithful functor
\begin{equation*}
G_n: \Web^{\Cliff, \k}_{\left\{1 \right\}} \to \textup{mod}_{\mathcal{S}}\textup{-}\mathfrak{gl}_{n}(\Cliff).
\end{equation*}
When \(\k = \mathbb{C}\) a diagrammatic  supercategory  \(\mathfrak{q}\textup{-}\Web_{\uparrow}\) was introduced by Brown and the second author in \cite{BrKuWebs} and a similar functor 
\begin{equation*}
\mathfrak{q}\textup{-}\Web_{\uparrow} \to \mathfrak{q}_{n}(\C)\text{-supermodules}
\end{equation*} was defined and studied.  As we next explain, $\Web^{\Cliff, \k}_{{1}}$ and \(\mathfrak{q}\textup{-}\Web_{\uparrow}\) are in fact isomorphic as $\k$-linear monoidal supercategories and these two functors coincide.

The category \(\Web^{\Cliff, \C}_{\{1\}}\)  only has monotone generators and only needs the single coupon $c$ on the thin strands.  On the other hand, the category $\mathfrak{q}$-$\operatorname{\mathbf{Web}}_{\uparrow}$ was defined in \cite{BrKuWebs} as having generating objects $\uparrow_{k}$ for $k \geq 0$ and three kinds of (upward oriented) diagrammatic generators:  splits,  merges, and dots on $\uparrow_{k}$ for all $k$.  We skip the list of relations and ask the reader to refer to \cite[Section~4.1]{BrKuWebs}.  In \cite{BrKuWebs} the crossing of thin strands is defined by the same formula as in \cref{CrossingWebRel}, but for thick strands they are defined in terms of the single strand crossings by:
\[\beta_{\uparrow_{k}, \uparrow_{l}}=
\xy %%%%%%%%%%%%%%%%%%%%%%%%%%%%%% Arbitrary upward crossing
(0,0)*{
\begin{tikzpicture}[color=\clr, scale=1.25]
	\draw[thick, directed=1] (0,0) to (0.5,0.5);
	\draw[thick, directed=1] (0.5,0) to (0,0.5);
	\node at (0,-0.15) {\scriptsize $k$};
	\node at (0,0.65) {\scriptsize $l$};
	\node at (0.5,-0.15) {\scriptsize $l$};
	\node at (0.5,0.65) {\scriptsize $k$};
\end{tikzpicture}
};
\endxy :=\frac{1}{k!\,l!}
\xy
(0,0)*{
\begin{tikzpicture}[color=\clr, scale=.35]
	\draw [ thick, directed=1] (0, .75) to (0,1.5);
	\draw [ thick, directed=0.75] (1,-1) to [out=90,in=330] (0,.75);
	\draw [ thick, directed=0.75] (-1,-1) to [out=90,in=210] (0,.75);
	\draw [ thick, directed=1] (4, .75) to (4,1.5);
	\draw [ thick, directed=0.75] (5,-1) to [out=90,in=330] (4,.75);
	\draw [ thick, directed=0.75] (3,-1) to [out=90,in=210] (4,.75);
	\draw [ thick, directed=0.75] (0,-6.5) to (0,-5.75);
	\draw [ thick, ] (0,-5.75) to [out=30,in=270] (1,-4);
	\draw [ thick, ] (0,-5.75) to [out=150,in=270] (-1,-4); 
	\draw [ thick, directed=0.75] (4,-6.5) to (4,-5.75);
	\draw [ thick, ] (4,-5.75) to [out=30,in=270] (5,-4);
	\draw [ thick, ] (4,-5.75) to [out=150,in=270] (3,-4); 
	\draw [ thick ] (5,-1) to (1,-4);
	\draw [ thick ] (3,-1) to (-1,-4);
	\draw [ thick ] (1,-1) to (5,-4);
	\draw [ thick ] (-1,-1) to (3,-4);
	\node at (2.6, -0.5) {\scriptsize $1$};
	\node at (5.4, -0.5) {\scriptsize $1$};
	\node at (1.4, -0.5) {\scriptsize $1$};
	\node at (-1.4, -0.5) {\scriptsize $1$};
	\node at (2.6, -4.5) {\scriptsize $1$};
	\node at (5.4, -4.5) {\scriptsize $1$};
	\node at (1.4, -4.5) {\scriptsize $1$};
	\node at (-1.4, -4.5) {\scriptsize $1$};
	\node at (0.1, -0.65) { $\cdots$};
	\node at (4.1, -0.65) { $\cdots$};
	\node at (0.1, -4.6) { $\cdots$};
	\node at (4.1, -4.6) { $\cdots$};
%	\node at (2.1,-2.6) { $\cdots$};
%	\node at (2,-2.2) { $\vdots$};
	\node at (0,2) {\scriptsize $l$};
	\node at (4,2) {\scriptsize $k$};
	\node at (0,-7) {\scriptsize $k$};
	\node at (4,-7) {\scriptsize $l$};
\end{tikzpicture}
};
\endxy \ .
\]

\begin{proposition}  There is a  well-defined functor $\mathcal{F}:  \Web^{\Cliff, \C}_{\{1\}} \to \mathfrak{q}$-$\operatorname{\mathbf{Web}}_{\uparrow}$,  sending $1^{(k)}$ to $\uparrow_{k}$ for $k\in \mathbb{Z}_{\geq 0}$.  On morphisms it sends the merges and splits to their counterparts in $\mathfrak{q}$-$\operatorname{\mathbf{Web}}_{\uparrow}$,  the crossing in $\operatorname{Hom}(1^{(k)}1^{(\ell)},1^{(\ell)}1^{(k)})$ to the element $\beta_{\uparrow k, \uparrow \ell}$,  and, lastly,  it sends the coupon $c$ on the thin strand colored by $1$  to the dot on $\uparrow_{1}$.
\end{proposition}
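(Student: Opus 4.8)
The plan is to invoke the universal property of the presented supercategory $\Web^{\Cliff, \C}_{\{1\}}$: exactly as in the proof of \cref{Gthm}, a $\C$-linear strict monoidal superfunctor out of it is determined by, and exists for, any assignment of its generating objects and morphisms to objects and parity-matching morphisms of the target for which the defining relations \cref{AssocRel}--\cref{AaIntertwine} are preserved. First I would record the relevant data for the good pair $(\Cliff, \C)_{\{1\}}$: here $\Cliff^{(1)} = \Cliff$ has $\C$-basis $\{1, c\}$ with $\bar c = \bar 1$, while $\Cliff^{(z)} = \C$ for $z \ge 2$. I would then set $\mathcal{F}(1^{(k)}) = \uparrow_k$ on objects (this is compatible with the monoidal structures, which on both sides are free on these generators, with $1^{(0)} = \uparrow_0 = \mathbbm 1$); send the splits and merges to the splits and merges of $\mathfrak{q}$-$\operatorname{\mathbf{Web}}_{\uparrow}$; send the monochromatic crossing $1^{(k)}1^{(\ell)} \to 1^{(\ell)}1^{(k)}$ to $\beta_{\uparrow k, \uparrow \ell}$; send a thin-strand coupon $\alpha 1 + \beta c$ to $\alpha\,\mathrm{id}_{\uparrow_1} + \beta\cdot(\text{dot on }\uparrow_1)$; and send a scalar coupon $\lambda 1$ on a strand of thickness $z$ to $\lambda^{z}\,\mathrm{id}_{\uparrow_z}$ (this last is forced by \cref{AaRel1}). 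All images have the correct parity: splits, merges, crossings and scalar coupons are even, and the $c$-coupon is odd, matching the parity of the dot.

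The verification of the relations then splits into two essentially disjoint parts. The relations \cref{Cox}, \cref{SplitIntertwineRel}, \cref{MergeIntertwineRel} are imposed only for pairwise distinct colors, hence are vacuous when $I = \{1\}$. The relations among splits, merges and crossings — namely \cref{AssocRel}, \cref{MSrel}, \cref{KnotholeRel}, together with their consequences \cref{DiagSwitchRel}, \cref{CrossingWebRel} (available via \cref{SAME}) and the crossing identities \cref{DoubleCross}, \cref{SplitMergeCross}, \cref{BraidCox} — do not involve the coupon $c$; they are precisely the defining relations of the ($\C$-linear) $\gl_n$-web category discussed in \cref{S:Examples}, and they hold in $\mathfrak{q}$-$\operatorname{\mathbf{Web}}_{\uparrow}$ among its splits, merges and (thin and thick) crossings by \cite{BrKuWebs} — this is exactly how the web part of $\mathfrak{q}$-$\operatorname{\mathbf{Web}}_{\uparrow}$ is set up, the thin crossing there being defined by the same formula \cref{CrossingWebRel}. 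It remains to treat the coupon relations. Since $\Cliff^{(z)} = \C$ for $z \ge 2$, the relations \cref{AaRel1} and \cref{AaRel2} reduce — apart from $\C$-linearity of the dot assignment and the scalar normalization above (which holds in the target by the knothole relation) — to the single identity $\mathrm{dot}\circ\mathrm{dot} = \mathrm{id}_{\uparrow_1}$, i.e. the Clifford relation $c^2 = 1$, a defining relation of $\mathfrak{q}$-$\operatorname{\mathbf{Web}}_{\uparrow}$. The odd knothole annihilation \cref{OddKnotholeRel}, which for $(\Cliff,\C)$ concerns only $f \in \C c$, becomes the vanishing of a split--dot--dot--merge through $\uparrow_2$; the $A$-split/merge relations \cref{TAaSMRel} degenerate (for thickness $\ge 2$ the coupon is a scalar, and the remaining cases involve a thickness-$0$, hence deleted, output); and the only nontrivial instance of the $A$-intertwining relations \cref{AaIntertwine} is $x=1$, which says a dot on $\uparrow_1$ slides through a crossing. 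The last two are relations of $\mathfrak{q}$-$\operatorname{\mathbf{Web}}_{\uparrow}$. Once all relations are checked, $\mathcal{F}$ exists and has the stated effect on generators.

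I expect two points to require genuine care. The first is the compatibility of the crossing assignment with \cref{CrossingWebRel}, equivalently the assertion that $\mathcal{F}$ carries the thick crossing to $\beta_{\uparrow k,\uparrow\ell}$: this comes down to verifying inside $\mathfrak{q}$-$\operatorname{\mathbf{Web}}_{\uparrow}$ that the explosion-style definition of $\beta_{\uparrow k,\uparrow\ell}$ recalled in the text agrees with the iterated split/merge formula \cref{CrossingWebRel}, an identity established in \cite{BrKuWebs} by the same kind of manipulations used in \cref{imprels} (compare \cref{Same1} and \cref{DoubleCross}). The second is sign bookkeeping: since $c$ and the dot are odd, every slide of a $c$-coupon past a split, merge or crossing generates super-signs via the interchange law, and one must confirm these match on the two sides of \cref{AaRel2}, \cref{OddKnotholeRel} and \cref{AaIntertwine}. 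Both categories use the super-interchange convention of \cite{BE}, so the signs agree; the vanishing in \cref{OddKnotholeRel} in particular rests on the anticommutation of two odd strands, exactly as in the evaluation of the image of \cref{OddKnotholeRel} under $G_n$ carried out in the proof of \cref{Gthm}. Modulo these two points, which are the heart of the matter, the remaining work is the routine relation-by-relation comparison outlined above.
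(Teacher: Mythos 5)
Your overall strategy---reduce to checking the defining relations on the generators, observe that the multicolored relations \cref{Cox,SplitIntertwineRel,MergeIntertwineRel} are vacuous for \(I=\{1\}\), and note that the coupon relations largely collapse because \(\operatorname{Cl}_1^{(z)}=\C\) for \(z\geq 2\)---is the same as the paper's, and those parts of your argument are fine. The gap is in the step you yourself flag as the heart of the matter and then dispose of by citation: you assert that the identity expressing \(\beta_{\uparrow k,\uparrow\ell}\) (defined in \cite{BrKuWebs} by exploding into thin crossings, with the \(\tfrac{1}{k!\,\ell!}\) normalization) as the alternating ladder sum \cref{CrossingWebRel}, and more generally that the monochromatic split/merge/crossing relations \cref{MSrel,KnotholeRel} (equivalently \cref{DiagSwitchRel,CrossingWebRel}) hold in \(\mathfrak{q}\textup{-}\Web_{\uparrow}\), are ``established in \cite{BrKuWebs}.'' They are not: only the \emph{thin} crossing of \cite{BrKuWebs} is defined by the formula of \cref{CrossingWebRel}, the defining relations there are the list (4.1)--(4.9) (whose rung swap (4.6) is only a special case of \cref{DiagSwitchRel}), and nothing in that paper identifies the thick crossing \(\beta_{\uparrow k,\uparrow\ell}\) with the ladder formula. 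Proving these two identities \emph{inside} \(\mathfrak{q}\textup{-}\Web_{\uparrow}\) is precisely the content of the paper's proof: it works with the alternate presentation of \cref{alterpres} exactly so that the only nontrivial checks are the images of \cref{CrossingWebRel} and \cref{DiagSwitchRel}, and it verifies the former by a two-stage induction (first \(x=1\) with induction on \(y\), via an explicit diagrammatic computation resolving crossings and using the thin-strand relations of \cite{BrKuWebs}; then induction on \(x\)) and the latter by induction on \(s\), reducing to an elementary binomial identity.

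Two further remarks. First, in the original presentation you chose, \cref{MSrel} itself has same-colored crossings on its right-hand side, so you cannot cleanly separate ``the web part, already known in the target'' from the crossing assignment: verifying \cref{MSrel} in \(\mathfrak{q}\textup{-}\Web_{\uparrow}\) is essentially the same thick-crossing computation you deferred, so the choice of presentation does not avoid the work. Second, your treatment of \cref{AaIntertwine} only addresses the dot sliding through a thin--thin crossing; for \(y\geq 2\) the crossing in the image is the composite \(\beta_{\uparrow 1,\uparrow y}\), so sliding the dot through it requires an (easy, but not free) argument from the explosion definition. Until the compatibility of \(\beta_{\uparrow k,\uparrow\ell}\) with \cref{CrossingWebRel} and the validity of \cref{DiagSwitchRel} in the target are actually proved rather than cited, the well-definedness of \(\mathcal{F}\) has not been established.
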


\begin{proof}
Here we use the alternate presentation for $\WebAaQ$ given in \cref{alterpres}.  It is enough to show that the image of all defining relations for $\WebAaQ$  hold in $\mathfrak{q}$-$\operatorname{\mathbf{Web}}_{\uparrow}$.  Among them,  \cref{AssocRel},  \cref{Cox},  \cref{SplitIntertwineRel},  \cref{MergeIntertwineRel},  \cref{OddKnotholeRel}  and the first part of  \cref{AaRel2}  all have obvious analogues in $\mathfrak{q}$-$\operatorname{\mathbf{Web}}_{\uparrow}$.  In addition,  the second part in \cref{AaRel2},  as well as \cref{TAaSMRel} and \cref{AaIntertwine} are vacuous  since we only allow nontrivial coupons on thin strands. 

Among the remaining two relations,  we first argue that  \cref{CrossingWebRel} holds when $x=1$ by arguing by induction on $y$.  The base case when $x=y=1$ is immediate.  The induction step is given by
\begin{align}
\begin{tikzpicture}[color=\clr,  scale=.3,  baseline=0]
		\node (0) at (-3, 3) {};
		\node (1) at (3, -3) {};
		\node  (2) at (-3, -3) {};
		\node (3) at (3, 3) {};
		\draw [ thick, directed=1] (1.center) to (0.center);
		\draw [ thick, directed=1]  (2.center) to (3.center);
			\node at (-3,-4) {\scriptsize $1$};
	\node at (3,-4) {\scriptsize $\ell$};
\end{tikzpicture}
&=\frac{1}{\ell} \:
\begin{tikzpicture} [color=\clr,  scale=0.3,  baseline=0]
		\node (0) at (-3, 3) {};
		\node (1) at (-2, 2) {};
		\node (2) at (-3, -3) {};
		\node  (3) at (3, 3) {};
		\node (4) at (3, -3) {};
		\node (5) at (2, -2) {};
		\node (6) at (0, -2.5) {\scriptsize $1$};
		\node  (7) at (3.5, -0.75) {\scriptsize $\ell-1$};
		\draw [ thick, directed=1] (1.center) to (0.center);
		\draw [ thick, directed=1] (2.center) to (3.center);
		\draw [ thick, directed=1] (4.center) to (5.center);
		\draw  [thick, directed=0.75,  bend right=45, looseness=1.25] (5.center) to (1.center);
		\draw [thick, directed=0.75,  bend left=60] (5.center) to (1.center);
		\node at (-3,-4) {\scriptsize $1$};
	\node at (3,-4) {\scriptsize $\ell$};
\end{tikzpicture} \\
&=
-\frac{1}{\ell}\:
\begin{tikzpicture}[color=\clr,  scale=0.3,  baseline=0]
		\node (0) at (-3, 3) {};
		\node  (1) at (-3, -3) {};
		\node (2) at (3, -3) {};
		\node  (3) at (3, 3) {};
		\node (4) at (2.25, -2) {};
		\node  (5) at (-3, 2.5) {};
		\node  (6) at (0.5, 0) {};
		\node (7) at (-1, 1.75) {};
		\node  (8) at (0, -0.75) {\scriptsize $1$};
		\node  (9) at (3.5, 0) {\scriptsize $\ell-1$};
		\draw  [ thick, directed=1] (1.center) to (0.center);
		\draw   [ thick, directed=1] (2.center) to (4.center);
		\draw [ thick,  bend right=330] (4.center) to (6.center);
		\draw [ thick,  directed=.5,   bend left=45] (6.center) to (3.center);
		\draw  [ thick, directed=.5] (7.center) to (5.center);
		\draw [ thick, directed=1,  bend right=45, looseness=1.25] (4.center) to (7.center);
		\node at (-3,-4) {\scriptsize $1$};
	\node at (3,-4) {\scriptsize $\ell$};
\end{tikzpicture}+\frac{1}{\ell}\:
\begin{tikzpicture}[color=\clr,  scale=0.3,  baseline=0]
		\node (0) at (-3, 3) {};
		\node (1) at (-3, -3) {};
		\node  (2) at (3, -3) {};
		\node(3) at (3, 3) {};
		\node  (4) at (-2, 2) {};
		\node (5) at (-1, -0.25) {};
		\node(6) at (-1, -1.25) {};
		\node (7) at (1.5, 1) {};
		\node (8) at (1.5, -1.5) {};
		\node (9) at (-1.5, -0.75) {\scriptsize $2$};
		\node (10) at (3.5, 0) {\scriptsize $\ell-1$};
		\node  (11) at (0.5, 0.5) {\scriptsize $1$};
		\node (12) at (-2.25, 1) {\scriptsize $1$};
		\draw [thick, directed=.5] (1.center) to (6.center);
		\draw [thick, directed=.5,bend left, looseness=1.25] (8.center) to (6.center);
		\draw [thick, directed=.5] (2.center) to (8.center);
		\draw [thick, bend right=45] (8.center) to (7.center);
		\draw [thick, directed=.5,bend right, looseness=0.75] (7.center) to (4.center);
		\draw [thick, directed=1](4.center) to (0.center);
		\draw [thick, directed=.5](6.center) to (5.center);
		\draw [thick, directed=.5,bend left=15, looseness=1.25] (5.center) to (4.center);
		\draw [thick, directed=1] (5.center) to (3.center);
			\node at (-3,-4) {\scriptsize $1$};
	\node at (3,-4) {\scriptsize $\ell$};
\end{tikzpicture}  \label{thickcrossing1} \\
	&=-\frac{1}{\ell}\:
	\begin{tikzpicture}[color=\clr,  scale=0.3,  baseline=0]
		\node  (0) at (-3, 3) {};
		\node  (1) at (-3, 1) {};
		\node (2) at (-3, -3) {};
		\node(3) at (3, 3) {};
		\node  (4) at (3, -1) {};
		\node  (5) at (3, -3) {};
		\node (6) at (-0.25, 1) {\scriptsize $\ell-1$};
		\draw [thick, directed=1] (2.center) to (0.center);
		\draw [thick, directed=1] (5.center) to (3.center);
		\draw [thick, directed=.5] (4.center) to (1.center);
		\node at (-3,-4) {\scriptsize $1$};
	\node at (3,-4) {\scriptsize $\ell$};
\end{tikzpicture}
- \frac{1}{\ell}\: 
\begin{tikzpicture}[color=\clr,   scale=.3,  baseline=0]
		\node  (0) at (-3, 3) {};
		\node (1) at (-3, -3) {};
		\node  (2) at (3, 3) {};
		\node(3) at (3, -3) {};
		\node  (4) at (-1.75, 2.25) {};
		\node  (5) at (-1.25, -0.5) {};
		\node  (6) at (-0.5, 1) {};
		\node (7) at (-1.25, -1.5) {};
		\node (8) at (2, -2) {};
		\node  (9) at (2, 0.5) {};
		\node  (10) at (-0.75, 2.25) {};
		\node (11) at (1, 1.5) {\scriptsize$\ell-2$};
		\node  (12) at (-2, -1) {\scriptsize $2$};
		\node (13) at (3.5, -0.75) {\scriptsize$\ell-1$};
		\draw [thick, directed=.5] (1.center) to (7.center);
		\draw [thick, directed=.5] (7.center) to (5.center);
		\draw [thick, directed=.5,  bend left, looseness=1.25] (5.center) to (4.center);
		\draw [thick, directed=.5,  bend right] (5.center) to (6.center);
		\draw [thick, directed=.5,  bend right] (6.center) to (4.center);
		\draw [thick, directed=.5] (9.center) to (6.center);
		\draw [thick, directed=1] (9.center) to (2.center);
		\draw [thick, directed=.5] (8.center) to (9.center);
		\draw [thick, directed=.5] (3.center) to (8.center);
		\draw [thick, directed=.5] (8.center) to (7.center);
		\draw [thick, directed=1] (4.center) to (0.center);
		\node at (-3,-4) {\scriptsize $1$};
	\node at (3,-4) {\scriptsize $\ell$};
\end{tikzpicture}
+\frac{1}{\ell}\:
\begin{tikzpicture}[color=\clr,  scale=.3, baseline=0]
		\node  (0) at (-3, 3) {};
		\node(1) at (-3, -3) {};
		\node  (2) at (3, -3) {};
		\node  (3) at (3, 3) {};
		\node (4) at (-3, 2) {};
		\node (5) at (3, 1.25) {};
		\node (6) at (-3, -1.25) {};
		\node (6.5) at (-3.5, -1) {\scriptsize $2$};
		\node (7) at (3, -2) {};
		\node  (8) at (3, 0.5) {};
		\node (9) at (-3, -0.5) {};
		\node  (10) at (-0, 2.25) {\scriptsize $\ell-1$};
		\node  (11) at (3.5, 0.75) {\scriptsize $\ell$};
		\node  (12) at (4.25, -0.75) {\scriptsize $\ell-1$};
		\draw [thick, directed=1] (1.center) to (0.center);
		\draw [thick, directed=1] (2.center) to (3.center);
		\draw [thick, directed=.5] (5.center) to (4.center);
		\draw [thick, directed=.5] (9.center) to (8.center);
		\draw [thick, directed=.5]  (7.center) to (6.center);
			\node at (-3,-4) {\scriptsize $1$};
	\node at (3,-4) {\scriptsize $\ell$};
\end{tikzpicture}\\
&= - \begin{tikzpicture}[color=\clr,  scale=0.3,  baseline=0]
		\node  (0) at (-3, 3) {};
		\node  (1) at (-3, 1) {};
		\node (2) at (-3, -3) {};
		\node(3) at (3, 3) {};
		\node  (4) at (3, -1) {};
		\node  (5) at (3, -3) {};
		\node (6) at (-0.25, 1) {\scriptsize $\ell-1$};
		\draw [thick, directed=1] (2.center) to (0.center);
		\draw [thick, directed=1] (5.center) to (3.center);
		\draw [thick, directed=.5] (4.center) to (1.center);
		\node at (-3,-4) {\scriptsize $1$};
	\node at (3,-4) {\scriptsize $\ell$};
\end{tikzpicture}  + 
\begin{tikzpicture}[color=\clr,  scale=0.3,  baseline=0]
		\node  (0) at (-3, 3) {};
		\node (1) at (3, 3) {};
		\node  (2) at (-3, -3) {};
		\node  (3) at (3, -3) {};
		\node (4) at (0, 1) {};
		\node (5) at (0, -1) {};
		\node  (6) at (2, 0) {\scriptsize $\ell+1$};
		\draw  [thick, directed=1] (4.center) to (0.center);
		\draw  [thick, directed=1] (4.center) to (1.center);
		\draw   [thick, directed=.5](5.center) to (4.center);
		\draw   [thick, directed=.5](2.center) to (5.center);
		\draw  [thick, directed=.5] (3.center) to (5.center);
			\node at (-3,-4) {\scriptsize $1$};
	\node at (3,-4) {\scriptsize $\ell$};
\end{tikzpicture}\; . \label{thickcrossing2}
\end{align}
Note we omit the thickness labels when the strand is labeled by $1$.  Here,  the second equality follows from resolving the lower left crossing;  the third equality follows from \cite[(4.21)]{BrKuWebs},  generalized to the thick strand case,  as well as  applying the induction hypothesis to the last term in \cref{thickcrossing1}.   In the last equality,  the fourth term in \cref{thickcrossing2} appears three times in the previous step,  one from applying the association rule to the second term in \cref{thickcrossing2},   and another from applying the rung swap relation  \cite[(4.6)]{BrKuWebs} to the third term in \cref{thickcrossing2}.  These three terms appear with coefficients $-\frac{1}{\ell}$,  $-\frac{2(\ell-1)}{\ell}$, and $\frac{\ell-1}{\ell}$,  which sum to yield $-1$ in the last step.  We leave to the reader the general case where one fixes $y$ as an arbitrary integer and inducts on $x$.  The argument is similar and uses the above identity in the induction step.  

Finally,  we argue \cref{DiagSwitchRel} holds by induction on $s$.  The base case of $s=0$ is trivial.  Suppose  \cref{DiagSwitchRel} holds for $s-1$ and arbitrary $r$.   The induction step is given as follows.
\begin{align*}
\begin{tikzpicture}[color=\clr,  scale=0.3,  baseline=0]
		\node  (0) at (-2, 4) {};
		\node (1) at (-2, -4) {};
		\node  (2) at (2, -4) {};
		\node  (3) at (2, 4) {};
		\node  (4) at (-2, 2) {};
		\node(5) at (2, 1) {};
		\node (6) at (2, -1) {};
		\node  (7) at (-2, -2) {};
		\node  (8) at (-0.25, 2.25) {\scriptsize $r$};
		\node  (9) at (0.25, -2.25) {\scriptsize $s$};
		\draw [thick, directed=1] (1.center) to (0.center);
		\draw [thick, directed=1] (2.center) to (3.center);
		\draw [thick, directed=.5] (5.center) to (4.center);
		\draw [thick, directed=.5] (7.center) to (6.center);
		\node at (-2,-5) {\scriptsize $x$};
	\node at (2,-5) {\scriptsize $y$};
\end{tikzpicture} &=\frac{1}{s} \:
\begin{tikzpicture}[color=\clr,  scale=0.3,  baseline=0]
		\node  (0) at (-2, 4) {};
		\node  (1) at (-2, -4) {};
		\node  (2) at (2, -4) {};
		\node  (3) at (2, 4) {};
		\node  (4) at (-2, 2) {};
		\node  (5) at (2, 1) {};
		\node  (6) at (2, 0) {};
		\node  (7) at (-2, -1) {};
		\node  (8) at (-0.25, 2.25) {\scriptsize $r$};
		\node (9) at (0.4, -2.25) {\scriptsize $s-1$};
		\node  (10) at (-2, -2) {};
		\node (11) at (2, -1) {};
		\node (12) at (-4, -1.5) {\scriptsize $x-s+1$};
		\node  (13) at (4.5, 3.75) {\scriptsize $y+s-r$};
		\draw  [thick, directed=1] (1.center) to (0.center);
		\draw  [thick, directed=1] (2.center) to (3.center);
		\draw [thick, directed=.5] (5.center) to (4.center);
		\draw [thick, directed=.5] (7.center) to (6.center);
		\draw [thick, directed=.5] (10.center) to (11.center);
			\node at (-2,-5) {\scriptsize $x$};
	\node at (2,-5) {\scriptsize $y$};
\end{tikzpicture}\\
&= \frac{1}{s}\:
\begin{tikzpicture}[color=\clr,  scale=0.3,  baseline=0]
		\node  (0) at (-2, 4) {};
		\node(1) at (-2, -4) {};
		\node (2) at (2, -4) {};
		\node  (3) at (2, 4) {};
		\node (4) at (-2, 1) {};
		\node (5) at (2, -0.25) {};
		\node (6) at (2, 3) {};
		\node (7) at (-2, 1.5) {};
		\node (8) at (0, 1) {\scriptsize $r$};
		\node  (9) at (0, -2.25) {\scriptsize $s-1$};
		\node (10) at (-2, -2) {};
		\node(11) at (2, -1) {};
		\node (13) at (5, 1.5) {\scriptsize $y+s-1-r$};
		\draw [thick, directed=1] (1.center) to (0.center);
		\draw [thick, directed=1] (2.center) to (3.center);
		\draw [thick, directed=.5] (5.center) to (4.center);
		\draw [thick, directed=.5] (7.center) to (6.center);
		\draw [thick, directed=.5] (10.center) to (11.center);
			\node at (-2,-5) {\scriptsize $x$};
	\node at (2,-5) {\scriptsize $y$};
\end{tikzpicture}
+\frac{x+y-2s+r+1}{s}\:
\begin{tikzpicture}[color=\clr,  scale=0.3,  baseline=0]
		\node  (0) at (-2, 4) {};
		\node (1) at (-2, -4) {};
		\node  (2) at (2, -4) {};
		\node  (3) at (2, 4) {};
		\node  (4) at (-2, 2) {};
		\node(5) at (2, 1) {};
		\node (6) at (2, -1) {};
		\node  (7) at (-2, -2) {};
		\node  (8) at (-0.25, 2.25) {\scriptsize $r-1$};
		\node  (9) at (0.25, -2.25) {\scriptsize $s-1$};
		\draw [thick, directed=1] (1.center) to (0.center);
		\draw [thick, directed=1] (2.center) to (3.center);
		\draw [thick, directed=.5] (5.center) to (4.center);
		\draw [thick, directed=.5] (7.center) to (6.center);
		\node at (-2,-5) {\scriptsize $x$};
	\node at (2,-5) {\scriptsize $y$};
\end{tikzpicture}\\
&= \frac{s-t}{s}\binom{x-y+r-s+1}{t}\:
\begin{tikzpicture}[color=\clr,  scale=0.3,  baseline=0]
		\node  (0) at (-2, 4) {};
		\node (1) at (-2, -4) {};
		\node(2) at (2, -4) {};
		\node (3) at (2, 4) {};
		\node  (4) at (-2, 1) {};
		\node (5) at (2, 2) {};
		\node  (6) at (-2, -1) {};
		\node (7) at (2, -2) {};
		\node  (8) at (-0.5, 2.25) {\scriptsize $s-t$};
		\node(9) at (0.5, -2.5) {\scriptsize $r-t$};
		\draw [thick, directed=1] (1.center) to (0.center);
		\draw [thick, directed=1] (2.center) to (3.center);
		\draw [thick, directed=.5] (4.center) to (5.center);
		\draw [thick, directed=.5] (7.center) to (6.center);
		\node at (-2,-5) {\scriptsize $x$};
	\node at (2,-5) {\scriptsize $y$};
\end{tikzpicture}+
\frac{x-y-2s+r+1}{s}\binom{x-y+r-s}{t}\:
\begin{tikzpicture}[color=\clr,  scale=0.3,  baseline=0]
		\node  (0) at (-2, 4) {};
		\node (1) at (-2, -4) {};
		\node(2) at (2, -4) {};
		\node (3) at (2, 4) {};
		\node  (4) at (-2, 1) {};
		\node (5) at (2, 2) {};
		\node  (6) at (-2, -1) {};
		\node (7) at (2, -2) {};
		\node  (8) at (0, 2.25) {\scriptsize $s-t-1$};
		\node(9) at (0, -2.5) {\scriptsize $r-t-1$};
		\draw [thick, directed=1] (1.center) to (0.center);
		\draw [thick, directed=1] (2.center) to (3.center);
		\draw [thick, directed=.5] (4.center) to (5.center);
		\draw [thick, directed=.5] (7.center) to (6.center);
		\node at (-2,-5) {\scriptsize $x$};
	\node at (2,-5) {\scriptsize $y$};
\end{tikzpicture}
\end{align*}
Here,  after a change of variables $t\mapsto t-1$ in the second term,  one only needs to verify that 
\begin{align*}
\frac{s-t}{s}\binom{x-y+r-s+1}{t}+\frac{x-y-2s+r+1}{s}\binom{x-y+r-s}{t-1}=\binom{x-y+r-s}{t},
\end{align*}
which is straightforward.
\end{proof}

\begin{proposition}  There is a  well-defined functor $\mathcal{G}: \mathfrak{q}$-$\operatorname{\mathbf{Web}}_{\uparrow} \to \WebAaQ $,  sending  $\uparrow_{k}$ to $1^{(k)}$ for $k\in \mathbb{Z}_{\geq 0}$.  On morphisms it sends the merges and splits to their counterparts in $ \WebAaQ$,    and the dot in $\operatorname{End}_{\mathfrak{q}-\operatorname{\mathbf{Web}}_{\uparrow}}(\uparrow_k)$ to the following diagram in $\WebAaQ$:
\begin{align*}
\begin{tikzpicture}[scale=0.7]
  \draw[ultra thick, blue] (0,-0.1)--(0,0.1) .. controls ++(0,0.35) and ++(0,-0.35) .. (-0.4,0.6)--(-0.4,0.9);
    \draw[ultra thick, blue]  (-0.4,0.9)--(-0.4,1.2) 
  .. controls ++(0,0.35) and ++(0,-0.35) .. (0,1.7)--(0,1.9);
  \draw[ultra thick, blue] (0,-0.1)--(0,0.1) .. controls ++(0,0.35) and ++(0,-0.35) .. (0.4,0.6)--(0.4,0.9);
    \draw[ultra thick, blue]  (0.4,0.9)--(0.4,1.2) 
  .. controls ++(0,0.35) and ++(0,-0.35) .. (0,1.7)--(0,1.9);
  .. controls ++(0,0.35) and ++(0,-0.35) .. (0,1.7)--(0,1.9);
       \node[above] at (0,1.8) {$\scriptstyle 1^{\scriptstyle(k)}$};
         \node[below] at (0,-0.1) {$\scriptstyle 1^{\scriptstyle(k)}$};
             \node[right] at (0.4,0.9) {$\scriptstyle 1^{\scriptstyle(k-1)}$};
               \node[left] at (-0.5,0.9) {$\scriptstyle 1^{\scriptstyle(1)}$};
                %\node at (0,0.9) {$\scriptstyle \cdots $};
                \draw[thick, fill=yellow]  (-0.4,0.9) circle (7pt);  
                 \node at (-0.4,0.9) {$ \scriptstyle  c$};
\end{tikzpicture} \; .
\end{align*} 
\end{proposition}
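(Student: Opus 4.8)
The plan is to verify directly that the images, under the proposed assignment, of all the defining relations of $\mathfrak{q}\textup{-}\Web_{\uparrow}$ hold in $\WebAaQ$, so that $\mathcal{G}$ is a well-defined monoidal superfunctor. The relations of $\mathfrak{q}\textup{-}\Web_{\uparrow}$ split into two families: those involving only merges and splits (associativity, coassociativity, and the rung-swap/``square'' relations of \cite[Section 4.1]{BrKuWebs}), and those involving the dot. For the first family there is nothing new to do: $\mathcal{G}$ carries merges and splits of $\mathfrak{q}\textup{-}\Web_{\uparrow}$ to merges and splits of $\WebAaQ$, and the required identities are precisely \cref{AssocRel,MSrel,KnotholeRel} together with \cref{DiagSwitchRel} (available in $\WebAaQ$ by \cref{Same1}) and the monotone consequences collected in \cref{RevDiagSwitchLem,CoxeterWeb,CrossAbsorb}. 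Since $\mathfrak{q}\textup{-}\Web_{\uparrow}$ has no crossing generators, no crossing relations enter the definition of $\mathcal{G}$; when derived crossings appear inside dot relations they are handled by \cref{DoubleCross,SplitMergeCross,BraidCox,arbcol}.

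The substantive part is the family of dot relations. The image of the dot on $\uparrow_1$ is the $c$-coupon on a thin strand, so the relation ``dot squared equals the identity on $\uparrow_1$'' is immediate from $c^2 = 1_A$ in $\Cliff$ together with \cref{AaRel1,AaRel2}. For the dot on $\uparrow_k$ with $k \geq 2$, whose image is the displayed split-$c$-merge diagram $d_k$, the plan is: (a) show $d_k$ does not depend on which thin strand is split off, using \cref{DiagSwitchRel} and \cref{CrossingWebRel} to slide the coupon across; (b) verify the relations of \cite{BrKuWebs} that push the dot through a split or a merge, which reduce to \cref{TAaSMRel} for the thin $c$-coupon, \cref{AaIntertwine} to move odd coupons past the auxiliary strand, and \cref{AssocRel,KnotholeRel} for recombining strands; (c) treat the ``odd'' vanishing relation of $\mathfrak{q}\textup{-}\Web_{\uparrow}$ — that two $c$-dotted thin strands emanating from a common split of thickness $2$ compose to zero — which is literally \cref{OddKnotholeRel}, valid because $\bar c = \bar 1$; and (d) verify any dot-naturality relations with respect to crossings via \cref{AaIntertwine} applied to $f = c$.

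The main obstacle I expect is matching normalizations and signs. The dot-on-a-thick-strand relations in \cite{BrKuWebs} are stated with explicit binomial coefficients and factors such as $1/(k!\,l!)$, while in $\WebAaQ$ the corresponding combinatorics is carried by \cref{MSrel} and \cref{KnotholeRel}; because $c$ is \emph{odd}, the $c^{\diamond}$-style fully exploded coupon of \cref{greendotdef} is unavailable (it vanishes by \cref{OddKnotholeRel}), so one must work throughout with $c$ sitting on a single thin strand and carefully track the $\Z_2$-signs produced when resolving merge-split patterns. The sign bookkeeping of \cref{explicitmaplemma} and \cref{startogather}, together with the Chu--Vandermonde and Euler finite-difference manipulations already exploited in \cref{Same1,Same2,RevDiagSwitchLem,SplitMergeCross}, are the tools that should close these computations. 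Once every relation is checked, $\mathcal{G}$ is well defined, and comparing $\mathcal{G}$ with $\mathcal{F}$ on generating objects and morphisms shows that $\mathcal{F} \circ \mathcal{G}$ and $\mathcal{G} \circ \mathcal{F}$ are identities (it suffices to check on generators), which promotes the earlier observation to the isomorphism $\WebAaQ \cong \mathfrak{q}\textup{-}\Web_{\uparrow}$ under which the two functors into $\mathfrak{q}_n(\C)$-supermodules coincide.
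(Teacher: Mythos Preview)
Your proposal is correct and follows the same approach as the paper: verify that the images of all defining relations of $\mathfrak{q}\textup{-}\Web_{\uparrow}$ from \cite[Section~4.1]{BrKuWebs} hold in $\WebAaQ$. The paper's proof is terser than your outline suggests is necessary---it simply records, relation by relation, that (4.1)--(4.2) are identical to relations already in $\WebAaQ$, (4.3)--(4.5) and (4.7)--(4.9) follow from applying \cref{MSrel} (or \cref{CrossingWebRel}) to a specific local portion of the diagram, and (4.6) is a special case of \cref{DiagSwitchRel}.

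Two minor corrections to your plan. First, \cref{TAaSMRel} cannot be invoked with $f=c$ on anything but a thin strand, since $c\notin a=\k 1_A$; the dot-compatibility relations are instead handled by \cref{MSrel} applied to the split--merge portion of the image of the thick dot, as the paper indicates. Second, the Chu--Vandermonde and Euler finite-difference manipulations you anticipate are not needed here: once the thick dot is expressed via its defining split/$c$-coupon/merge form, the required identities are local rewrites using \cref{AssocRel,MSrel,KnotholeRel,OddKnotholeRel} rather than global binomial computations. Your closing remark about $\mathcal{F}\circ\mathcal{G}$ and $\mathcal{G}\circ\mathcal{F}$ being identities on generators is exactly how the paper then deduces the isomorphism in the subsequent theorem.
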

\begin{proof}
It is enough to check the image of all relations in \cite[Section~4.1]{BrKuWebs} hold in $\WebAaQ$.  Here we omit the explicit computations.  Relations \cite[(4.1)-(4.2)]{BrKuWebs} are identical to the relations in $\WebAaQ$.  Relations \cite[(4.3)-(4.4)]{BrKuWebs} can be deduced from applying \cref{MSrel} to the middle portion of each diagram.  Relation \cite[(4.5)]{BrKuWebs} can be deduced from applying \cref{CrossingWebRel} to the middle portion.   Relation  \cite[(4.6)]{BrKuWebs} is  a special case of \cref{DiagSwitchRel}.  The first equality in \cite[(4.7)]{BrKuWebs} is the result of applying \cref{MSrel} to the middle right portion of each diagram on the left,  and similarly the second equality holds.  \cite[(4.8)]{BrKu} follows from applying \cref{MSrel} to the middle portion of the third picture.  Similarly \cite[(4.9)]{BrKuWebs} holds.
\end{proof}

By construction the functors $\mathcal{F}$ and $\mathcal{G}$ are $\k$-linear monoidal functors and are inverses of each other.

\begin{theorem}  There is an isomorphism of $\k$-linear monoidal supercategories,
\[
\Web^{\Cliff, \k}_{\left\{1 \right\}} \cong \mathfrak{q}\textup{-}\Web.
\]
\end{theorem}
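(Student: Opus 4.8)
The plan is to obtain the isomorphism directly from the two functors $\mathcal{F}\colon \Web^{\Cliff,\C}_{\{1\}}\to\mathfrak{q}\textup{-}\Web$ and $\mathcal{G}\colon\mathfrak{q}\textup{-}\Web\to\Web^{\Cliff,\C}_{\{1\}}$ constructed in the two preceding propositions, by verifying that they are mutually inverse $\k$-linear monoidal functors. Those propositions already supply well-definedness, $\k$-linearity, and monoidality of $\mathcal{F}$ and $\mathcal{G}$, so the only thing left is to show $\mathcal{G}\circ\mathcal{F}=\id$ and $\mathcal{F}\circ\mathcal{G}=\id$. On objects this is immediate, since each functor restricts to the bijection $1^{(k)}\leftrightarrow\uparrow_k$ on the monoidal generators; hence it suffices to check that the two composites act as the identity on the generating morphisms of each category.

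First I would check $\mathcal{G}\circ\mathcal{F}=\id$ on the generators of $\Web^{\Cliff,\C}_{\{1\}}$: splits, merges, crossings, and coupons (by $\k$-linearity together with \cref{AaRel1,AaRel2} it suffices to treat the coupon $1$, which is the identity morphism, and the single nontrivial coupon $c$, since $\Cliff=\C\oplus\C c$ and coupons are forbidden on thick strands). Splits and merges are sent to splits and merges by both functors and are thus fixed. The crossings are likewise fixed: in both categories the thin crossing is the splits-and-merges expression of \cref{CrossingWebRel}, and the thick crossings are determined by the thin crossings and the splits and merges (via the explosion formula recalled in the proof of the first proposition), all of which are preserved by $\mathcal{F}$ and then by $\mathcal{G}$. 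Finally $\mathcal{F}$ sends $c$ to the dot on $\uparrow_1$, and $\mathcal{G}$ sends the dot on $\uparrow_1$ to the composite of the split $1^{(1)}\to 1^{(1)}\otimes 1^{(0)}$, the coupon $c$ on the thin strand, and the merge back; since $1^{(0)}=\mathbbm{1}$ and that split and merge are identity morphisms this composite is just $c$. Hence $\mathcal{G}\circ\mathcal{F}$ fixes every generator.

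Second I would check $\mathcal{F}\circ\mathcal{G}=\id$ on the generators of $\mathfrak{q}\textup{-}\Web$: splits, merges, and the dot on $\uparrow_k$ for each $k$. Splits and merges are fixed exactly as before. For the dot, $\mathcal{G}$ sends it to the split--$c$--merge element of $\Web^{\Cliff,\C}_{\{1\}}(1^{(k)},1^{(k)})$ from the second proposition, and applying the monoidal functor $\mathcal{F}$ (which takes that split to a split, $c$ to the dot on $\uparrow_1$, and that merge to a merge) produces the composite of the split $\uparrow_k\to\uparrow_1\otimes\uparrow_{k-1}$, the dot on the thin component, and the merge $\uparrow_1\otimes\uparrow_{k-1}\to\uparrow_k$. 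What I expect to be the one point requiring genuine input — the main obstacle — is the identification of this composite with the dot on $\uparrow_k$: this is precisely the relation in $\mathfrak{q}\textup{-}\Web$ expressing a thick dot through a thin dot, and I would justify it by citing the dot relations of \cite{BrKuWebs}. (As a backup, the same identity can be proved inside $\Web^{\Cliff,\C}_{\{1\}}$ from \cref{MSrel} and \cref{KnotholeRel} and then transferred through the already-verified part of the correspondence.) Granting this, $\mathcal{F}\circ\mathcal{G}$ fixes every generator, so both composites are the identity and $\mathcal{F}$, $\mathcal{G}$ are mutually inverse isomorphisms of $\k$-linear monoidal supercategories, establishing the theorem.
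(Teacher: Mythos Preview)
Your approach is exactly the paper's: the paper simply asserts that ``by construction the functors $\mathcal{F}$ and $\mathcal{G}$ are $\k$-linear monoidal functors and are inverses of each other,'' and you have spelled out the verification on generators that underlies this claim. The one substantive step you flag---that in $\mathfrak{q}\textup{-}\Web_{\uparrow}$ the thick dot on $\uparrow_k$ equals the split--thin dot--merge composite---is indeed available from the dot-sliding relations of \cite{BrKuWebs} (iterating the relation that pushes a dot through a split/merge), so your main line of argument goes through; your ``backup'' is unnecessary and, as stated, a bit circular.
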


Under the isomorphism $\gl_{n}(\Cliff) \cong \mathfrak{q}_{n}(\k )$ one readily checks that $V_{n}=\Cliff^{\oplus n} \cong \k^{n|n}$, the natural representation of $\mathfrak{q}_{n}(\k )$.  Given these identifications, one could reasonably expect that the functor $G_{m}$ coincides with the one given in \cite{BrKuWebs}.  This turns out to be true.  The only wrinkle is that we consider right $\gl_{n}(\Cliff)$-supermodules while in \cite{BrKuWebs} they considered left $\mathfrak{q}_{n}(\k )$-supermodules.  This can be resolved by suitably using opposites.  We leave the straightforward details to the interested reader.

Before leaving this example we should also mention that a Howe duality was already obtained for the pair $(\mathfrak{q}_{m}(\C ), \mathfrak{q}_{n}(\C ))$ by Sergeev \cite{Sergeev} (or see \cite{CW}).

\subsection{Zigzag superalgebras and RoCK blocks}\label{zigzagcat}
Let \(G = (I,E)\) be a simple connected graph with no loops. For each pair of connected vertices \(i\textup{\textemdash} j\), fix \(\varepsilon_{ij}, \varepsilon_{ji} \in \{\pm 1\}\). Then the associated locally unital {\em zigzag superalgebra} \(\overline{Z} = \bigoplus_{i,j \in I} j\overline{Z}i\) has distinguished set of orthogonal idempotents corresponding to the vertices \(I\) of \(G\), with the following \(\k\)-basis for each subspace:
\begin{align*}
j\overline{Z}i =
\begin{cases}
\k\{i, c_i\} & \textup{if } i =j;\\
\k\{a_{ji} \} & \textup{if } i\textup{\textemdash} j;\\
0 & \textup{otherwise},
\end{cases}
\end{align*} 
Multiplication is defined via the orthogonality of the idempotents \(I\), coupled with the additional rules:
\begin{align*}
a_{kj}a_{ji} =\delta_{ik} \varepsilon_{ji} c_i,
\qquad
c_ia_{ij} = a_{ij}c_j = 0,
\qquad
c_i^2 = 0.
\end{align*}
The parity of elements is defined by \(\bar i = \bar{c}_i = \bar 0\), \(\bar{a}_{ij} = \bar 1\). Taking \(\bar z= \k I\), we have a good pair \((\overline{Z}, \bar{z})_I\). It is straightforward to check that if \(G\) is a tree, then different choices of signs \(\varepsilon_{ij}\) nonetheless yield isomorphic zigzag superalgebras.

\subsubsection{The zigzag web category}\label{zigwebdef}
Following \cref{defweb} we may define the associated web category \(\Web^{\overline{Z}, \bar{z}}_{I}\). For the reader's benefit we describe a streamlined presentation of \(\Web^{\overline{Z}, \bar{z}}_{I}\) adapted to the particulars of the zigzag superalgebra.  
\(\Web^{\overline{Z}, \bar{z}}_{I}\) is a monoidal supercategory generated by objects \(i^{(x)}\), where \(i \in I\), \(x \in \Z_{\geq 0}\), with generating morphisms
\begin{align*}%\label{WebAaGenszigzag}
\hackcenter{
{}
}
\hackcenter{
\begin{tikzpicture}[scale=.8]
  \draw[ultra thick,blue] (0,0)--(0,0.2) .. controls ++(0,0.35) and ++(0,-0.35) .. (-0.4,0.9)--(-0.4,1);
  \draw[ultra thick,blue] (0,0)--(0,0.2) .. controls ++(0,0.35) and ++(0,-0.35) .. (0.4,0.9)--(0.4,1);
      \node[above] at (-0.4,1) {$ \scriptstyle i^{\scriptstyle (x)}$};
      \node[above] at (0.4,1) {$ \scriptstyle i^{\scriptstyle (y)}$};
      \node[below] at (0,0) {$ \scriptstyle i^{\scriptstyle (x+y)} $};
\end{tikzpicture}}
\qquad
\qquad
\hackcenter{
\begin{tikzpicture}[scale=.8]
  \draw[ultra thick,blue ] (-0.4,0)--(-0.4,0.1) .. controls ++(0,0.35) and ++(0,-0.35) .. (0,0.8)--(0,1);
\draw[ultra thick, blue] (0.4,0)--(0.4,0.1) .. controls ++(0,0.35) and ++(0,-0.35) .. (0,0.8)--(0,1);
      \node[below] at (-0.4,0) {$ \scriptstyle i^{ \scriptstyle (x)}$};
      \node[below] at (0.4,0) {$ \scriptstyle i^{ \scriptstyle (y)}$};
      \node[above] at (0,1) {$ \scriptstyle i^{ \scriptstyle (x+y)}$};
\end{tikzpicture}}
\qquad
\qquad
\hackcenter{
\begin{tikzpicture}[scale=.8]
  \draw[ultra thick,red] (0.4,0)--(0.4,0.1) .. controls ++(0,0.35) and ++(0,-0.35) .. (-0.4,0.9)--(-0.4,1);
  \draw[ultra thick,blue] (-0.4,0)--(-0.4,0.1) .. controls ++(0,0.35) and ++(0,-0.35) .. (0.4,0.9)--(0.4,1);
      \node[above] at (-0.4,1) {$ \scriptstyle j^{ \scriptstyle (y)}$};
      \node[above] at (0.4,1) {$ \scriptstyle i^{ \scriptstyle (x)}$};
       \node[below] at (-0.4,0) {$ \scriptstyle i^{ \scriptstyle (x)}$};
      \node[below] at (0.4,0) {$ \scriptstyle j^{ \scriptstyle (y)}$};
\end{tikzpicture}}
\qquad
\qquad
\hackcenter{
\begin{tikzpicture}[scale=.8]
  \draw[ultra thick, blue] (0,0)--(0,0.5);
   \draw[ultra thick, blue] (0,0.5)--(0,1);
   \draw[ultra thick, fill=white]  (0,0.5) circle (4pt);
     \node[below] at (0,0) {$ \scriptstyle i^{ \scriptstyle (1)}$};
      \node[above] at (0,1) {$ \scriptstyle i^{ \scriptstyle (1)}$};
\end{tikzpicture}}
\qquad
\qquad
\hackcenter{
\begin{tikzpicture}[scale=.8]
  \draw[ultra thick, blue] (0,0)--(0,0.5);
   \draw[ultra thick, green] (0,0.5)--(0,1);
     \draw[ultra thick, black] (-0.15,0.5)--(0.15,0.5);
     \node[below] at (0,0) {$ \scriptstyle i^{ \scriptstyle (1)}$};
      \node[above] at (0,1) {$ \scriptstyle k^{ \scriptstyle (1)}$};
\end{tikzpicture}}
\end{align*}
for \(i,j,k \in I\) with \(i\textup{\textemdash} k\), \(x,y \in \Z_{\geq 0}\), where all morphisms are even save the rightmost, which is odd. Here we are using the circle dot in lieu of the \(c_i\) coupon, and a color-change `tag' in lieu of the \(a_{ki}\) coupon. These morphisms are subject to relations (\ref{AssocRel}--\ref{MergeIntertwineRel}), together with the following streamlined analogues of relations (\ref{AaRel1}--\ref{AaIntertwine}):

\begin{align*}
\hackcenter{}
\hackcenter{
\begin{tikzpicture}[scale=.8]
  \draw[ultra thick, blue] (0,0)--(0,0.4);
   \draw[ultra thick, red] (0,0.4)--(0,1.1);
     \draw[ultra thick, green] (0,1.1)--(0,1.5);
     \draw[ultra thick, black] (-0.15,0.4)--(0.15,0.4);
          \draw[ultra thick, black] (-0.15,1.1)--(0.15,1.1);
     \node[below] at (0,0) {$ \scriptstyle i^{ \scriptstyle (1)}$};
      \node[left] at (0,0.8) {$ \scriptstyle j^{ \scriptstyle (1)}$};
        \node[above] at (0,1.5) {$ \scriptstyle k^{ \scriptstyle (1)}$};
\end{tikzpicture}}
=
\delta_{i,k}\varepsilon_{ji}
\hackcenter{
\begin{tikzpicture}[scale=.8]
  \draw[ultra thick, blue] (0,0)--(0,0.75);
   \draw[ultra thick, blue] (0,0.75)--(0,1.5);
   \draw[ultra thick, fill=white]  (0,0.75) circle (4pt);
     \node[below] at (0,0) {$ \scriptstyle i^{ \scriptstyle (1)}$};
      \node[above] at (0,1.5) {$ \scriptstyle i^{ \scriptstyle (1)}$};
\end{tikzpicture}}
\qquad
\qquad
\hackcenter{
\begin{tikzpicture}[scale=.8]
  \draw[ultra thick, blue] (0,0)--(0,0.4);
   \draw[ultra thick, blue] (0,0.4)--(0,1.1);
     \draw[ultra thick, red] (0,1.1)--(0,1.5);
     %\draw[ultra thick, black] (-0.15,0.4)--(0.15,0.4);
        \draw[ultra thick, fill=white]  (0,0.5) circle (4pt);
          \draw[ultra thick, black] (-0.15,1.1)--(0.15,1.1);
     \node[below] at (0,0) {$ \scriptstyle i^{ \scriptstyle (1)}$};
        \node[above] at (0,1.5) {$ \scriptstyle j^{ \scriptstyle (1)}$};
\end{tikzpicture}}
=
\hackcenter{
\begin{tikzpicture}[scale=.8]
  \draw[ultra thick, blue] (0,0)--(0,0.4);
   \draw[ultra thick, red] (0,0.4)--(0,1.1);
     \draw[ultra thick, red] (0,1.1)--(0,1.5);
     \draw[ultra thick, black] (-0.15,0.4)--(0.15,0.4);
        \draw[ultra thick, fill=white]  (0,1) circle (4pt);
          %\draw[ultra thick, black] (-0.15,1.1)--(0.15,1.1);
     \node[below] at (0,0) {$ \scriptstyle i^{ \scriptstyle (1)}$};
        \node[above] at (0,1.5) {$ \scriptstyle j^{ \scriptstyle (1)}$};
\end{tikzpicture}}
=
0
\qquad
\qquad
\hackcenter{
\begin{tikzpicture}[scale=.8]
  \draw[ultra thick, blue] (0,0)--(0,0.4);
   \draw[ultra thick, blue] (0,0.4)--(0,1.1);
     \draw[ultra thick, blue] (0,1.1)--(0,1.5);
     %\draw[ultra thick, black] (-0.15,0.4)--(0.15,0.4);
        \draw[ultra thick, fill=white]  (0,1) circle (4pt);
         \draw[ultra thick, fill=white]  (0,0.5) circle (4pt);
          %\draw[ultra thick, black] (-0.15,1.1)--(0.15,1.1);
     \node[below] at (0,0) {$ \scriptstyle i^{ \scriptstyle (1)}$};
        \node[above] at (0,1.5) {$ \scriptstyle j^{ \scriptstyle (1)}$};
\end{tikzpicture}}
=
0
\end{align*}
\begin{align*}
\hackcenter{}
\hackcenter{
\begin{tikzpicture}[scale=.8]
  \draw[ultra thick, blue] (0,0.2)--(0,0.7);
  \draw[ultra thick, red] (0,0.7)--(0,1) .. controls ++(0,0.35) and ++(0,-0.35)  .. (0.8,1.8)--(0.8,2); 
    \draw[ultra thick, green] (0.8,0.2)--(0.8,1) .. controls ++(0,0.35) and ++(0,-0.35)  .. (0,1.8)--(0,2); 
    %\draw[thick] (0,0)--(0,-0.2) .. controls ++(0,-0.35) and ++(0,0.35) .. (0.4,-0.9)--(0.4,-1) 
  %.. controls ++(0,-0.35) and ++(0,0.35) .. (0.8,-1.7)--(0.8,-1.8); 
  %\draw[thick] (0,0)--(0,-0.2) .. controls ++(0,-0.5) and ++(0,0.5) .. (-0.8,-1.5)--(-0.8,-1.8);
        \draw[ultra thick]  (-0.15,0.7)--(0.15,0.7);
     \node[below] at (0,0.2) {$ \scriptstyle i^{ \scriptstyle (1)}$};
     \node[below] at (0.8,0.2) {$  \scriptstyle  k^{ \scriptstyle (x)}$};
     \node[above] at (0,2) {$  \scriptstyle  k^{ \scriptstyle (x)}$};
      \node[above] at (0.8,2) {$ \scriptstyle j^{ \scriptstyle (1)}$};
      %\node[below] at (0.4,0.5) {$1$};
      %\node[above] at (0.4,1.5) {$1$};
\end{tikzpicture}}
=
\hackcenter{
\begin{tikzpicture}[scale=.8]
  \draw[ultra thick, red] (0,-0.2)--(0,-0.7);
  \draw[ultra thick, blue] (0,-0.7)--(0,-1) .. controls ++(0,-0.35) and ++(0,0.35)  .. (-0.8,-1.8)--(-0.8,-2); 
    \draw[ultra thick, green] (-0.8,-0.2)--(-0.8,-1) .. controls ++(0,-0.35) and ++(0,0.35)  .. (0,-1.8)--(0,-2); 
    %\draw[thick] (0,0)--(0,-0.2) .. controls ++(0,-0.35) and ++(0,0.35) .. (0.4,-0.9)--(0.4,-1) 
  %.. controls ++(0,-0.35) and ++(0,0.35) .. (0.8,-1.7)--(0.8,-1.8); 
  %\draw[thick] (0,0)--(0,-0.2) .. controls ++(0,-0.5) and ++(0,0.5) .. (-0.8,-1.5)--(-0.8,-1.8);
          \draw[ultra thick]  (-0.15,-0.7)--(0.15,-0.7);
     \node[above] at (0,-0.2) {$ \scriptstyle j^{ \scriptstyle (1)}$};
     \node[above] at (-0.8,-0.2) {$  \scriptstyle  k^{ \scriptstyle (x)}$};
     \node[below] at (0,-2) {$  \scriptstyle  k^{ \scriptstyle (x)}$};
      \node[below] at (-0.8,-2) {$ \scriptstyle i^{ \scriptstyle (1)}$};
      %\node[below] at (0.4,0.5) {$1$};
      %\node[above] at (0.4,1.5) {$1$};
\end{tikzpicture}}
\qquad
\qquad
\hackcenter{
\begin{tikzpicture}[scale=.8]
  \draw[ultra thick, blue] (0,0.2)--(0,0.5);
  \draw[ultra thick, blue] (0,0.5)--(0,1) .. controls ++(0,0.35) and ++(0,-0.35)  .. (0.8,1.8)--(0.8,2); 
    \draw[ultra thick, green] (0.8,0.2)--(0.8,1) .. controls ++(0,0.35) and ++(0,-0.35)  .. (0,1.8)--(0,2); 
    %\draw[thick] (0,0)--(0,-0.2) .. controls ++(0,-0.35) and ++(0,0.35) .. (0.4,-0.9)--(0.4,-1) 
  %.. controls ++(0,-0.35) and ++(0,0.35) .. (0.8,-1.7)--(0.8,-1.8); 
  %\draw[thick] (0,0)--(0,-0.2) .. controls ++(0,-0.5) and ++(0,0.5) .. (-0.8,-1.5)--(-0.8,-1.8);
        \draw[ultra thick, fill=white]  (0,0.7) circle (4pt);
     \node[below] at (0,0.2) {$ \scriptstyle i^{ \scriptstyle (1)}$};
     \node[below] at (0.8,0.2) {$  \scriptstyle  k^{ \scriptstyle (x)}$};
     \node[above] at (0,2) {$  \scriptstyle  k^{ \scriptstyle (x)}$};
      \node[above] at (0.8,2) {$ \scriptstyle i^{ \scriptstyle (1)}$};
      %\node[below] at (0.4,0.5) {$1$};
      %\node[above] at (0.4,1.5) {$1$};
\end{tikzpicture}}
=
\hackcenter{
\begin{tikzpicture}[scale=.8]
  \draw[ultra thick, blue] (0,-0.2)--(0,-0.5);
  \draw[ultra thick, blue] (0,-0.5)--(0,-1) .. controls ++(0,-0.35) and ++(0,0.35)  .. (-0.8,-1.8)--(-0.8,-2); 
    \draw[ultra thick, green] (-0.8,-0.2)--(-0.8,-1) .. controls ++(0,-0.35) and ++(0,0.35)  .. (0,-1.8)--(0,-2); 
    %\draw[thick] (0,0)--(0,-0.2) .. controls ++(0,-0.35) and ++(0,0.35) .. (0.4,-0.9)--(0.4,-1) 
  %.. controls ++(0,-0.35) and ++(0,0.35) .. (0.8,-1.7)--(0.8,-1.8); 
  %\draw[thick] (0,0)--(0,-0.2) .. controls ++(0,-0.5) and ++(0,0.5) .. (-0.8,-1.5)--(-0.8,-1.8);
        \draw[ultra thick, fill=white]  (0,-0.7) circle (4pt);
     \node[above] at (0,-0.2) {$ \scriptstyle i^{ \scriptstyle (1)}$};
     \node[above] at (-0.8,-0.2) {$  \scriptstyle  k^{ \scriptstyle (x)}$};
     \node[below] at (0,-2) {$  \scriptstyle  k^{ \scriptstyle (x)}$};
      \node[below] at (-0.8,-2) {$ \scriptstyle i^{ \scriptstyle (1)}$};
      %\node[below] at (0.4,0.5) {$1$};
      %\node[above] at (0.4,1.5) {$1$};
\end{tikzpicture}}
\qquad
\qquad
\hackcenter{
\begin{tikzpicture}[scale=.8]
  \draw[ultra thick, blue] (0,0)--(0,0.1) .. controls ++(0,0.35) and ++(0,-0.35) .. (-0.4,0.6)--(-0.4,0.9); 
 \draw[ultra thick, red] (-0.4,0.9)--(-0.4,1.2) .. controls ++(0,0.35) and ++(0,-0.35) .. (0,1.7)--(0,1.8);
  \draw[ultra thick, blue] (0,0)--(0,0.1) .. controls ++(0,0.35) and ++(0,-0.35) .. (0.4,0.6)--(0.4,0.9); 
 \draw[ultra thick, red] (0.4,0.9)--(0.4,1.2) .. controls ++(0,0.35) and ++(0,-0.35) .. (0,1.7)--(0,1.8);
 \draw[ultra thick]  (-0.55,0.9)--(-0.25,0.9);
 \draw[ultra thick]  (0.55,0.9)--(0.25,0.9);
       \node[above] at (0,1.8) {$  \scriptstyle j^{ \scriptstyle (2)}$};
         \node[below] at (0,0) {$  \scriptstyle i^{ \scriptstyle (2)}$};
             \node[right] at (0.3,0.3) {$ \scriptstyle  i^{ \scriptstyle (1)}$};
               \node[left] at (-0.3,0.3) {$ \scriptstyle  i^{ \scriptstyle (1)}$};
\end{tikzpicture}}
=
0.
\end{align*}

\subsubsection{RoCK blocks of symmetric groups}
Let \(H_N(q)\) be the Hecke algebra of the symmetric group \(\mathfrak{S}_N\) over a field \(\F\) with parameter \(q \in \F^\times\), and let \(e\) be the associated quantum characteristic. Let \(\mathcal{B}\) be a weight \(d\) {\em RoCK block} of \(H_N(q)\), see \cite{EKRoCK}.
Let \(G\) be the path graph \({\tt A}_{e-1}\) with vertices \(I = \{1,2,\ldots, e-1\}\):
\begin{align*}
\hackcenter{}
\hackcenter{
\begin{tikzpicture}[scale=.8]
  \draw[thick] (1,0)--(3.5,0);
  \draw[thick] (4.5,0)--(6,0);
         \node[] at (4,0) {$   \cdots $};
   \draw[ thick, fill=black]  (1,0) circle (3pt);
 \draw[ thick, fill=black]  (2,0) circle (3pt);
\draw[ thick, fill=black]  (3,0) circle (3pt);
 \draw[ thick, fill=black]  (5,0) circle (3pt);
\draw[ thick, fill=black]  (6,0) circle (3pt);
       \node[below] at (1,-0.1) {$ \scriptstyle 1$};
         \node[below] at (2,-0.1) {$ \scriptstyle 2$};
           \node[below] at (3,-0.1) {$ \scriptstyle 3$};
             \node[below] at (5,-0.1) {$ \scriptstyle e-2$};
               \node[below] at (6,-0.1) {$ \scriptstyle e-1$};
\end{tikzpicture}}
\end{align*}
Let \((\overline{Z},\bar{z})_I\) be the associated zigzag good pair as defined in \cref{zigzagcat}. Let \(\Web^{\overline{Z},\bar{z}}_I\) be the associated web category defined in \cref{zigwebdef}, and set:
\begin{align*}
\textup{Web}^{\overline{Z},\bar{z}}_{I,n,d} = \bigoplus_{ \bi^{(\bx)}, \bj^{(\by)} \in \hat{\Omega}_I(n,d)} \Web^{\overline{Z},\bar{z}}_I(\bi^{(\bx)}, \bj^{(\by)}).
\end{align*}
In other words, \(\Web^{\overline{Z},\bar{z}}_{I,n,d}\) is the full subcategory of \(\Web^{\overline{Z},\bar{z}}_I\) consisting of objects \(\bi^{(\bx)} \in \hat{\Omega}_I(n,d)\), and  \(\textup{Web}^{\overline{Z},\bar{z}}_{I,n,d}\) is the associated locally unital algebra, as described in \cref{locunicov}.

\begin{proposition}\label{zigdiagprop}
Let \(n \geq d\). Then \(\textup{Web}^{\overline{Z},\bar{z}}_{I,n,d}\) is Morita equivalent to \(\mathcal{B}\). 
\end{proposition}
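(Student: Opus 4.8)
The plan is to reduce the statement to the Evseev--Kleshchev theorem \cite{EKRoCK} by first proving that $\textup{Web}^{\overline{Z},\bar{z}}_{I,n,d}$ is \emph{isomorphic} as an algebra to the Schurification $T^{\overline{Z}}_{\bar{z}}(n,d)$. All constructions should be carried out over the field $\F$ of $H_N(q)$. Since the zigzag superalgebra $\overline{Z}$ and its even subalgebra $\bar{z}$ have all structure constants in $\{0,\pm 1\}$, the pair $(\overline{Z},\bar{z})_I$ is defined over $\Z$; treating $\Z$ as a characteristic-zero domain, \cref{defwebaa} and all of its consequences — in particular \cref{BasisThm,trunc1,sameSchur,moneq1} — are available over $\Z$, and because every relation in the presentation is integral, \cref{SS:remarkonArbRings} gives
\[
\Web^{\overline{Z}_\F,\bar{z}_\F}_I\;\cong\;\F\otimes_\Z\Web^{\overline{Z}_\Z,\bar{z}_\Z}_I,
\]
so that the basis of \cref{BasisThm} descends to an $\F$-basis and the cited isomorphisms remain valid over $\F$. (The streamlined presentation of \cref{zigwebdef} is just a repackaging of \cref{defwebaa} and may be used interchangeably.)

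First I would identify $\textup{Web}^{\overline{Z},\bar{z}}_{I,n,d}$ with $T^{\overline{Z}}_{\bar{z}}(n,d)$. By \cref{trunc1} the functor $\iota_I$ identifies $\Web^{\overline{Z},\bar{z}}_I$ with the $I$-truncation ${}_I\Web^{\overline{Z},\bar{z}}_{\{1\}}$, under which $\Web^{\overline{Z},\bar{z}}_I(\bi^{(\bx)},\bj^{(\by)})$ becomes $\varepsilon_{\bj^{(\by)}}\,W^{\overline{Z},\bar{z}}_{n,d}\,\varepsilon_{\bi^{(\bx)}}$, where $\varepsilon_{\bi^{(\bx)}}=\varepsilon_{i_1^{(x_1)}}\otimes\cdots\otimes\varepsilon_{i_t^{(x_t)}}$. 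Summing over all $\bi^{(\bx)},\bj^{(\by)}\in\hat\Omega_I(n,d)$, the image of $\textup{Web}^{\overline{Z},\bar{z}}_{I,n,d}$ is the corner $\varepsilon\,W^{\overline{Z},\bar{z}}_{n,d}\,\varepsilon$ with $\varepsilon=\sum_{\bi^{(\bx)}\in\hat\Omega_I(n,d)}\varepsilon_{\bi^{(\bx)}}$. One checks $\varepsilon=1$: fixing $\bx\in\Omega(n,d)$, the relation $\sum_{i\in I}i=1$ together with \cref{AaRel1,AaRel2} yields $\sum_{i\in I}\varepsilon_{i^{(x)}}=\id_{1^{(x)}}$ for each $x>0$, while a thickness-$0$ coupon is deleted, so the sum of $\varepsilon_{\bi^{(\bx)}}$ over all reduced colorings $\bi$ of $\bx$ equals $\id_{1^{(\bx)}}$; summing over $\bx$ gives $\varepsilon=1$. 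Hence $\textup{Web}^{\overline{Z},\bar{z}}_{I,n,d}\cong W^{\overline{Z},\bar{z}}_{n,d}$, and by \cref{sameSchur} the latter is isomorphic, as an $\F$-superalgebra, to $T^{\overline{Z}}_{\bar{z}}(n,d)$. (One may instead argue directly through the monoidal equivalence of \cref{moneq1}, using that $\{\xi_{\bi^{(\bx)}}\mid\bi^{(\bx)}\in\hat\Omega_I(n,d)\}$ is a complete orthogonal system of idempotents in $T^{\overline{Z}}_{\bar{z}}(n,d)$.)

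To finish, I would invoke \cite{EKRoCK}: under the hypothesis $n\ge d$, the Schurification $T^{\overline{Z}}_{\bar{z}}(n,d)$, regarded over $\F$, is Morita equivalent to the weight-$d$ RoCK block $\mathcal{B}$ of $H_N(q)$. Composing with the isomorphism of the previous paragraph (an isomorphism of superalgebras is in particular a Morita equivalence) gives $\textup{Web}^{\overline{Z},\bar{z}}_{I,n,d}\sim\mathcal{B}$.

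The hard part here is not mathematical but notational: the entire content of the Morita equivalence is imported from \cite{EKRoCK}, and the genuine obstacle is the ground-ring mismatch — the web formalism presumes a characteristic-zero domain while $H_N(q)$ is defined over an arbitrary field — which must be navigated by the integrality argument of the first paragraph so that \cref{BasisThm,trunc1,sameSchur} really do transport to $\F$. A secondary point is to reconcile the $\Z_2$-graded conventions used here for $T^{\overline{Z}}_{\bar{z}}(n,d)$ with the (ungraded) form of the Morita equivalence stated in \cite{EKRoCK}.
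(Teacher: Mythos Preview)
Your argument has a genuine gap: the claim that $\sum_{i\in I}\varepsilon_{i^{(x)}}=\id_{1^{(x)}}$ is false for $x\ge 2$ once $|I|>1$. The coupon addition relation \cref{AaRel2} on a thick strand is \emph{not} the naive sum of coupons --- it reads
\[
\text{($(f+g)$-coupon on $1^{(x)}$)}\;=\;\sum_{t=0}^{x}\bigl(\text{split into $1^{(t)}1^{(x-t)}$, apply $f$ and $g$, merge}\bigr),
\]
and the cross terms $0<t<x$ do not vanish for orthogonal idempotents $f=i$, $g=j$. Equivalently, on the Schurification side $\sum_{i\in I}(E^{i}_{1,1})^{\otimes x}\ne(\sum_{i}E^{i}_{1,1})^{\otimes x}=1$ in $S^{A}(1,x)$ as soon as $x\ge 2$. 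The paper states exactly this after \cref{ixidems}: the family $\{\xi_{\bi^{(\bx)}}\}$ sums to $1_{T^A_a(n,d)}$ only when $I=\{1\}$, and more generally $\TTAaI$ is a \emph{proper} idempotent truncation of $\bigoplus T^A_a(n,d)$. Since the zigzag algebra for quantum characteristic $e\ge 3$ has $|I|=e-1\ge 2$, your purported isomorphism $\textup{Web}^{\overline Z,\bar z}_{I,n,d}\cong T^{\overline Z}_{\bar z}(n,d)$ fails, and with it the parenthetical alternative via \cref{moneq1} (the system of idempotents there is orthogonal but not complete).

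What \cref{moneq1} actually gives is $\textup{Web}^{\overline Z,\bar z}_{I,n,d}\cong \xi_I\,T^{\overline Z}_{\bar z}(n,d)\,\xi_I$ for the proper idempotent $\xi_I=\sum\xi_{\bi^{(\bx)}_0}$, so an additional step is required: one must show this truncation is Morita equivalent to the full $T^{\overline Z}_{\bar z}(n,d)$. The paper does this by invoking the labeling of simple $T^{\overline Z}_{\bar z}(n,d)_\F$-modules $L_{\blam,\F}$ by $(e-1)$-multipartitions of $d$ from \cite{KMqh}, exhibiting for each $\blam$ a ``single-color'' weight idempotent $\xi_{\blam}$ with $\xi_{\blam}L_{\blam,\F}\ne 0$, and observing that each $\xi_{\blam}$ factors as $\xi_1\xi_I\xi_2$; hence $\xi_I$ kills no simple and the idempotent truncation is a Morita equivalence. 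Your base-change discussion via \cref{SS:remarkonArbRings} is correct and is implicit in the paper's argument.
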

\begin{proof}
It is shown in \cite[Theorem A]{EKRoCK} that the generalized Schur algebra \(T^{\overline{Z}}_{\bar{z}}(n,d)\) is Morita equivalent to \(\mathcal{B}\). 
For each \(\bi^{(\bx)} \in \hat{\Omega}_I(n,d)\), choose a representative \(\bi^{(\bx)}_0\) of this class in \(\Omega_I(n,d)\). Recalling the idempotents \(\xi_{\bi^{(\bx)}}\) defined in \cref{ixidems}, set  \(\xi_I = \sum_{\bi^{(\bx)}_0 \in \Omega_I(n,d)} \xi_{\bi^{(\bx)}_0} \in T^{\overline{Z}}_{\bar{z}}(n,d)\). 

Let \(\F\) be a field. As shown in \cite[\S7.9]{KMqh}, the simple \(T^{\overline{Z}}_{\bar{z}}(n,d)_{\F}\)-modules may be labeled as follows:
\begin{align*}
\left\{ L_{\blam, \F} \mid \blam = (\lambda^{(1)}, \ldots, \lambda^{(e-1)}) \textup{ an \((e-1)\)-multipartition of }d\right\},
\end{align*}
and there exist weight idempotents
\begin{align*}
\xi_{\blam} =\scalebox{2}{\textasteriskcentered}_{r=1}^n (E^{i}_{r,r})^{\otimes \lambda^{(i)}_{r}} \in T^{\overline{Z}}_{\bar{z}}(n,d)_{\F},
\end{align*}
such that \(\xi_{\blam} L_{\blam, \F} \neq 0\). 
It is straightforward to check that for any such multipartition \(\blam\) there exist elements \(\xi_1, \xi_2 \in T^{\overline{Z}}_{\bar{z}}(n,d)_{\F}\) such that \(\xi_{\blam} = \xi_1 \xi_I \xi_2\), so 
 we have \(\xi_I L_{\blam, \F} \neq 0\) for all multipartitions \(\blam\) as well.
 Hence by \cite[Lemma 8.26]{EKRoCK}, we have that 
 \begin{align*}
 \xi_IT_{\bar{z}}^{\overline{Z}}(n,d) \xi_I = \bigoplus_{\bi^{(\bx)}_0, \bj^{(\by)}_0 \in \Omega_I(n,d)} \xi_{\bj^{(\by)}_0} T^{\overline{Z}}_{\bar{z}}(n,d) \xi_{\bi^{(\bx)}_0}
 \end{align*} is Morita equivalent to \(T_{\bar{z}}^{\overline{Z}}(n,d)\). By \cref{moneq1}, the functor \(\chi\) induces an isomorphism of algebras \( \xi_IT_{\bar{z}}^{\overline{Z}}(n,d) \xi_I \cong \textup{Web}^{\overline{Z},\bar{z}}_{I,n,d}\), completing the proof.
\end{proof}

\subsubsection{Related zigzag/RoCK block conjectures}
Let \(n \geq d\). 
Taking \((Z,z)_I\) to be the good pair for the {\em extended} zigzag algebra \cite[\S7.9]{KMqh}, it is conjectured in \cite[Conjecture 7.58]{KMqh} that \(T^Z_z(n,d)\) is Morita equivalent to the level \(d\) RoCK block of the \(q\)-Schur algebra \(S_q(N,f)\). Similarly, taking \((A, a)_I\) to be the good pair for the Brauer tree superalgebra in \cite[\S2.2e]{KleLiv}, it is conjectured in \cite[Conjecture 1]{KleLiv} that \(T^{ A}_{ a}(n,d)\) is Morita (super)equivalent to the level \(d\) RoCK block of the Sergeev superalgebra \(\Ser_{n}\).
If these conjectures are found to hold, then a result analogous to \cref{zigdiagprop} should hold in these settings as well. If so, this will give diagrammatic models for these RoCK blocks which are similar to those in \cref{zigwebdef}.

\bibliographystyle{eprintamsplain}
\bibliography{Biblio}

\end{document}